\numberwithin{equation}{section}
\newtheorem{thm}{Theorem}[section]
\newtheorem{lem}[thm]{Lemma}
\newtheorem{prop}[thm]{Proposition}
\newtheorem{cor}[thm]{Corollary}
\newtheorem{conj}[thm]{Conjecture}
\newtheorem{assump}[thm]{Assumption}
\theoremstyle{remark}
\newtheorem{remark}[thm]{Remark}
\theoremstyle{definition}
\newtheorem{defn}[thm]{Definition}
\DeclareMathOperator{\Ind}{Ind}
\DeclareMathOperator{\nInd}{n-Ind}
\DeclareMathOperator{\ncInd}{nc-Ind}
\DeclareMathOperator{\cInd}{c-Ind}
\DeclareMathOperator{\AnInd}{Ind}
\DeclareMathOperator{\diag}{diag}
\DeclareMathOperator{\soc}{soc}
\DeclareMathOperator{\Hom}{Hom}
\DeclareMathOperator{\End}{End}
\DeclareMathOperator{\Ad}{Ad}
\DeclareMathOperator{\ad}{ad}
\DeclareMathOperator{\im}{im}
\DeclareMathOperator{\Lie}{Lie}
\DeclareMathOperator{\Rep}{Rep}
\DeclareMathOperator{\Res}{Res}
\DeclareMathOperator{\Gal}{Gal}
\DeclareMathOperator{\pr}{pr}
\DeclareMathOperator{\coker}{coker}
\DeclareMathOperator{\Ann}{Ann}
\DeclareMathOperator{\Aut}{Aut}
\newcommand{\C}{\mathbb{C}}
\newcommand{\Z}{\mathbb{Z}}
\newcommand{\Q}{\mathbb{Q}}
\newcommand{\R}{\mathbb{R}}
\DeclareMathOperator{\SL}{SL}
\DeclareMathOperator{\SU}{SU}
\newcommand{\tSU}{\widetilde{\SU}}
\DeclareMathOperator{\U}{U}
\DeclareMathOperator{\GU}{GU}
\DeclareMathOperator{\Sp}{Sp}
\DeclareMathOperator{\GSp}{GSp}
\DeclareMathOperator{\SO}{SO}
\DeclareMathOperator{\GSO}{GSO}
\DeclareMathOperator{\Spin}{Spin}
\DeclareMathOperator{\GSpin}{GSpin}
\DeclareMathOperator{\GL}{GL}
\DeclareMathOperator{\PGL}{PGL}
\DeclareMathOperator{\Sym}{Sym}
\DeclareMathOperator{\BCH}{BCH}
\newcommand{\sm}{\mathrm{sm}}
\newcommand{\adm}{\mathrm{adm}}
\DeclareMathOperator{\supp}{supp}
\newcommand{\alggrp}[1]{\underline{#1}}
\newcommand{\an}{\mathrm{an}}
\newcommand{\opp}{\mathrm{op}}
\newcommand{\der}{\mathrm{der}}
\newcommand{\abs}{\mathrm{abs}}
\newcommand{\cts}{\mathrm{cts}}
\newcommand{\alg}{\mathrm{alg}}
\newcommand{\unram}{\mathrm{ur}}
\newcommand{\red}{\mathrm{red}}
\newcommand{\is}{\mathrm{is}}
\renewcommand{\sc}{\mathrm{sc}}
\newcommand{\glob}[1]{\boldsymbol{#1}}
\newcommand{\xnr}{X_{\mathrm{nr}}}
\newcommand{\Nrd}{\mathrm{Nrd}}
\newcommand{\ang}[1]{\langle #1\rangle}
\newcommand{\congto}{\xrightarrow{\,\sim\,}}
\newcommand{\into}{\hookrightarrow}
\newcommand{\onto}{\twoheadrightarrow}
\renewcommand{\simeq}{\cong}
\newcommand{\lowprime}[1][\prime]{
  \mathchoice
    {\raisebox{-0.7ex}{$\scriptstyle #1$}}
    {\raisebox{-0.7ex}{$\scriptstyle #1$}}
    {\raisebox{-0.7ex}{$\scriptstyle #1$}}
    {\raisebox{-0.3ex}{$\scriptscriptstyle #1$}}
}
\renewcommand{\o}[1]{\overline{#1}}
\newcommand{\un}[1]{\underline{#1}}
\newcommand{\wt}[1]{\widetilde{#1}}
\newcommand{\wh}[1]{\widehat{#1}}
\title[On the irreducibility of $p$-adic Banach principal series]{On the irreducibility of $p$-adic Banach principal series of $p$-adic reductive groups}
\author{Noriyuki Abe}
\address[N. Abe]{Graduate School of Mathematical Sciences, the University of Tokyo, 3-8-1 Komaba, Meguro-ku, Tokyo 153-8914, Japan.}
\thanks{The first-named author was supported by JSPS KAKENHI Grant Number 18H01107.}
\email{abenori@ms.u-tokyo.ac.jp}
\author{Florian Herzig}
\address[F. Herzig]{Department of Mathematics, University of Toronto, 40 St.\ George Street, Toronto, ON M5S 2E4, Canada.}
\thanks{The second-named author was partially supported by an NSERC grant.}
\email{herzig@math.toronto.edu}
\begin{document}

\begin{abstract}
  Suppose that $G$ is the group of $F$-points of a connected reductive group over $F$, where $F/\Q_p$ is a finite extension.
  We study the (topological) irreducibility of principal series of $G$ on $p$-adic Banach spaces.
  For unitary inducing representations we obtain an optimal irreducibility criterion, and for $G = \GL_n(F)$ (as well as for arbitrary split groups under slightly stronger conditions) we obtain a variant of Schneider's conjecture \cite[Conjecture 2.5]{MR2275644}.
  In general we reduce the irreducibility problem to smooth inducing representations and almost simple simply-connected $G$.
  Our methods include locally analytic representation theory, the bifunctor of Orlik--Strauch, translation functors, as well as new results on reducibility points of smooth parabolic inductions.
\end{abstract}

\maketitle

\tableofcontents

\section{Introduction}
\label{sec:introduction}

Suppose that $F/\Q_p$ is a finite extension and $G = \alggrp G(F)$ the group of $F$-points of a connected reductive group $\alggrp G$ over $F$.
This paper concerns the continuous representations of $G$ on $p$-adic Banach spaces over a coefficient field $C$ that is a (sufficiently large) finite extension of $\Q_p$. Such Banach representations were introduced in the work of Schneider--Teitelbaum \cite{MR1900706} and play a fundamental role in the $p$-adic Langlands program (see for example \cite{MR2097893}, \cite{MR2359853}, \cite{Colmez}, \cite{emerton-local-global}, \cite{MR3150248}, \cite{MR3529394}), particularly the unitary such representations, i.e.\ those that admit a $G$-invariant defining norm.
Little has been known about Banach representations outside the group $\GL_2(\Q_p)$ so far.
The main goal of this paper is to greatly extend our knowledge about the (topological) irreducibility of Banach principal series representations.
In particular, we prove a variant of Schneider's conjecture \cite[Conjecture 2.5]{MR2275644}.

Fix a minimal parabolic subgroup $\alggrp B = \alggrp Z \alggrp U$ with Levi subgroup $\alggrp Z$ and unipotent radical $\alggrp U$.
Let $\alggrp P = \alggrp L \alggrp N$ be a parabolic subgroup containing $\alggrp B$ with Levi subgroup $\alggrp L$ containing $\alggrp Z$ and unipotent radical $\alggrp N$.
We write $B := \alggrp B(F)$, etc.
If $\sigma$ is a continuous representation of $L$ on a finite-dimensional $C$-vector space, then we inflate $\sigma$ to $P$ and form the parabolic induction
\begin{equation*}
  (\Ind_P^G \sigma)^\cts := \{ \text{$f\colon G\to \sigma$ cts.} : \text{$f(p'g) = \sigma(p')f(g)$ for any $g\in G$, $p'\in P$} \},
\end{equation*}
which carries a natural Banach topology making it into an (admissible) Banach representation of $G$ under right translation that we call a \emph{Banach principal series}.
We show moreover in Proposition~\ref{prop:adm-fin-length} that it is topologically of finite length, confirming an expectation of \cite[\S2]{MR2275644}.
(For technical reasons we will assume for the remainder of the introduction that $p > 2$, resp.\ $p > 3$, if the absolute root system of $\alggrp{G}$ has irreducible components of type $B$, $C$ or $F_{4}$, resp.\ $G_{2}$.)
From now on we will assume that $\sigma$ is absolutely irreducible.

\subsection{Unitary case}
\label{sec:unitary-case-intro}

Our first theorem gives an irreducibility criterion for general $\alggrp G$ in the case where $\sigma$ is unitary.
Let $\alggrp S$ be the maximal split subtorus of the center of $\alggrp Z$.
Let $\Delta$ (resp.\ $\Delta_L$) be the set of simple roots of $\alggrp S$ in $\alggrp B$ (resp.\ in $\alggrp B \cap \alggrp L$).
For any $\alpha \in \Delta$ let $\alggrp L_\alpha$ denote the Levi subgroup containing $\alggrp Z$ with $\Delta_{L_\alpha} = \{\alpha\}$.
Moreover, let $L_\alpha'$ denote the smallest normal subgroup of $L_\alpha$ generated by $U\cap L_{\alpha}$.
(This is not an algebraic subgroup in general.)

\begin{thm}[Theorem~\ref{thm:unitary case}]\label{thm:unitary case-parab-intro}
Let $\sigma$ be a finite-dimensional absolutely irreducible unitary Banach representation of $L$. Then the following are equivalent:
\begin{enumerate}
\item $(\Ind_{P}^{G}\sigma)^{\cts}$ is reducible;
\item there exists $\alpha \in \Delta \setminus \Delta_L$ such that $\sigma|_{Z \cap L'_{\alpha}}$ is trivial;
\item the representation $\sigma$ extends to a continuous representation of a Levi subgroup strictly containing $L$.
\end{enumerate}
\end{thm}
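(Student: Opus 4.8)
\emph{Overall plan.} I shall prove the cycle of implications (i)$\Rightarrow$(ii)$\Rightarrow$(iii)$\Rightarrow$(i). The implication (iii)$\Rightarrow$(i) is routine: if $\sigma$ extends to a continuous representation $\tilde\sigma$ of $M:=\alggrp M(F)$ with $M\supsetneq L$, let $\alggrp Q\supseteq\alggrp P$ be the parabolic with Levi $\alggrp M$. Transitivity of continuous parabolic induction gives $(\Ind_P^G\sigma)^\cts\cong\bigl(\Ind_Q^G(\Ind_{P\cap M}^M\sigma)^\cts\bigr)^\cts$, and Frobenius reciprocity applied to the identity $\tilde\sigma|_{P\cap M}=\sigma$ (inflated to $P\cap M$) yields an $M$-equivariant closed embedding $\tilde\sigma\hookrightarrow(\Ind_{P\cap M}^M\sigma)^\cts$ with infinite-dimensional, hence nonzero, cokernel. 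Since continuous parabolic induction is exact and faithful, this produces a proper nonzero closed $G$-subrepresentation $(\Ind_Q^G\tilde\sigma)^\cts$ of $(\Ind_P^G\sigma)^\cts$, so the latter is reducible.

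\emph{(ii)$\Rightarrow$(iii).} This step is group-theoretic. The key input is that every finite-dimensional continuous \emph{unitary} representation of $\alggrp H(F)$, for $\alggrp H$ connected semisimple, simply connected and $F$-isotropic, is trivial: such an $\alggrp H(F)$ is generated by the $F$-points of the unipotent radicals of its proper parabolic $F$-subgroups, each of which is a uniquely divisible group in characteristic zero, whereas the image of a unitary representation is a compact $p$-adic Lie group and so has no nontrivial divisible subgroup. Now write $\alggrp M$ for the Levi with $\Delta_M=\Delta_L\cup\{\alpha\}$ and let $\alggrp M_0^{\der}$ be the almost-simple factor of $\alggrp M^{\der}$ whose root system contains $\alpha$; it is $F$-isotropic, and $\alggrp M^{\der}=\alggrp M_0^{\der}\times(\text{factors already inside }\alggrp L^{\der})$, so extending $\sigma$ to $M$ amounts to extending it across $\alggrp M_0^{\der}(F)$. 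Since $L_\alpha$ has relative semisimple rank one, $L'_\alpha$ is the image of $\alggrp L_\alpha^{\sc}(F)$, and the triviality of $\sigma$ on $Z\cap L'_\alpha$ assumed in (ii) is exactly the compatibility needed to glue $\sigma$ with the (unitary, hence trivial on the simply-connected part) extension across $\alggrp M_0^{\der}(F)$. A direct verification with the structure theory of reductive groups over local fields — the only subtlety being at the finite component groups, where the standing hypothesis on $p$ enters — then produces the required continuous $\tilde\sigma$ on $M$.

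\emph{(i)$\Rightarrow$(ii).} This is the heart of the argument. As $\sigma$ is finite-dimensional and continuous it is locally analytic, and unitarity forces $\sigma\cong W\otimes\tau$ with $W$ a locally algebraic \emph{character} of $L$ trivial on $\alggrp L^{\der}(F)$ — a higher-dimensional algebraic part would be unbounded on the noncompact group $\alggrp L^{\der}(F)$ — and $\tau$ an irreducible smooth representation of $L$. Reducibility of the admissible Banach representation $(\Ind_P^G\sigma)^\cts$ passes to reducibility of the locally analytic principal series $(\Ind_P^G\sigma)^{\an}$; the latter is typically \emph{strictly} more reducible, and the point is that, by Emerton's comparison, the composition factors of the Banach representation are precisely those composition factors of $(\Ind_P^G\sigma)^{\an}$ that admit a $G$-invariant norm. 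By the Orlik--Strauch bifunctor $(\Ind_P^G\sigma)^{\an}\cong\mathcal F_P^G(M_{\mathfrak p}(\mu),\tau)$ for the scalar generalized Verma module attached to $W$, whose composition factors are certain $\mathcal F_Q^G(L(\nu),\pi)$ with $\alggrp Q\supseteq\alggrp P$, $L(\nu)$ a constituent of $M_{\mathfrak p}(\mu)$ in category $\mathcal O^{\mathfrak p}$, and $\pi$ a constituent of a smooth parabolic induction of $\tau$ to the Levi of $\alggrp Q$. Thus reducibility of $(\Ind_P^G\sigma)^\cts$ means that some factor other than the ``principal'' one carries an invariant norm. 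Translating the norm condition into a condition on the weight $\nu$ and the smooth datum $\pi$ — here translation functors reduce the category-$\mathcal O$ combinatorics to a regular integral block, and the new results on reducibility points of smooth parabolic inductions pin down when $\pi$ can be special in the unitary range, namely only when $\tau$ extends to a strictly larger Levi — one finds that such a normed extra factor exists precisely when there is $\alpha\in\Delta\setminus\Delta_L$ along whose coroot both $W$ is flat (equivalently $W|_{Z\cap L'_\alpha}$ is trivial) and $\tau$ is flat (equivalently $\tau|_{Z\cap L'_\alpha}$ is trivial). For such $\alpha$ we obtain $\sigma|_{Z\cap L'_\alpha}$ trivial, which is (ii).

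\emph{Main obstacle.} The hard step is the very last one: among the many locally analytic composition factors of $\mathcal F_P^G(M_{\mathfrak p}(\mu),\tau)$, deciding which carry a $G$-invariant norm, and proving that the existence of such an ``extra'' normed factor is equivalent to the simultaneous flatness of the algebraic datum $W$ and the smooth datum $\tau$ along a common simple coroot. This couples the category-$\mathcal O$ structure of the generalized Verma module with the smooth representation theory of Levi subgroups, and it is here that unitarity of $\sigma$ (which reduces $W$ to a character and constrains the central character of $\tau$), the new smooth reducibility results, and translation functors are all essential. By contrast, the comparison between Banach and locally analytic representations is used only in its easy direction, and the group theory behind (ii)$\Leftrightarrow$(iii) is elementary apart from a minor point at the finite component groups.
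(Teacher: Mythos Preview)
Your (iii)$\Rightarrow$(i) is correct and matches the paper. For (ii)$\Rightarrow$(iii) the paper simply cites \cite[II.7 Proposition]{AHHV}; this implication does not use unitarity at all, so your detour through triviality of unitary representations on simply-connected isotropic groups is unnecessary.

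The serious problems are in (i)$\Rightarrow$(ii). First, your claim that the $\mathfrak l_C$-simple part $W=\sigma_0$ must be a character is false: your reasoning that a higher-dimensional algebraic part is unbounded on ``the noncompact group $\alggrp L^{\der}(F)$'' fails whenever $\alggrp L^{\der}$ has anisotropic factors --- e.g.\ for $G=\GL_n(D)$ with $D$ a nontrivial division algebra one has $Z^{\der}=\SL_1(D)^n$, which is compact, and $\sigma_0$ can certainly be higher-dimensional. Second, your assertion that the Banach composition factors are exactly the locally analytic ones admitting an invariant norm is not an established result and is not how the paper argues; the subsequent ``translation of the norm condition'' is far too vague to be a proof. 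The paper's route is genuinely different: the criterion Theorem~\ref{thm:equivalence-irred} (with Corollary~\ref{cor:irreduciblity}) reduces reducibility of $(\Ind_P^G\sigma)^\cts$ to reducibility of the \emph{smooth} induction $(\Ind_{P\cap L_Q}^{L_Q}\tau)^\sm$, and after relabelling $Q=G$ and twisting so that $\sigma_0$ is algebraic, one passes to complex coefficients and analyses reducibility via Harish-Chandra's $\mu$-function. Writing $\xi\delta_B^{-1/2}\cong\xi_u\,\delta_{B\cap L_\alpha}^s\,\eta$ with $\xi_u$ complex-unitary, Silberger's bound gives $-1/2\le s\le 1/2$, and a tension between this bound and the $p$-adic unitarity of $\sigma$ (via Lemma~\ref{lm:rho}) forces $s=-1/2$ with $\alpha$ simple --- but this tension argument \emph{requires} $s\in\Q$, which is a new rationality result (Corollary~\ref{cor:rationality, rank one}) proved by a global comparison of $\mu$-functions using the Jacquet--Langlands correspondence. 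Finally, Waldspurger's Proposition~\ref{prop:Waldspurger} shows that reducibility at the extreme point $s=-1/2$ forces $\xi_u$, and hence $\sigma$, to be trivial on $Z\cap L'_\alpha$. None of these three essential ingredients --- the rationality of $\mu$-function poles, the Silberger-bound tension argument, or Waldspurger's proposition --- appears in your proposal.
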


We briefly explain the easy implications (ii)$\Rightarrow$(iii)$\Rightarrow$(i).
If (ii) holds, let $\alggrp L_1 \supset \alggrp Z$ be the Levi subgroup with $\Delta_{L_1} = \Delta_L \sqcup \{\alpha\}$.
If $\sigma$ is trivial on $Z \cap L'_{\alpha}$, then $\sigma$ has a unique (continuous) extension to $L_1$ which is trivial on $N\cap L_1$~\cite[II.7 Proposition]{AHHV}, and we get (iii).
If (iii) holds, $\sigma$ has a continuous extension to a larger Levi $L_1 \supsetneq L$, which we still denote by $\sigma$.
Let $\alggrp P_1$ be the parabolic subgroup containing $\alggrp P$ with Levi subgroup $\alggrp L_1$.
Then $(\Ind_{P_1}^{G}\sigma)^{\cts}\hookrightarrow (\Ind_{P}^{G}\sigma)^{\cts}$ is a proper closed subrepresentation and so (i) holds.

In fact, we obtain the equivalence in Theorem~\ref{thm:unitary case-parab-intro} under the weaker assumption that $\sigma$ lies in a certain closed cone, defined as follows.
Let $\omega_{\sigma_{U \cap L}} \colon S \to C^\times$ be the central character of the coinvariant representation $\sigma_{U \cap L}$ (which is absolutely irreducible).
Then $\lvert\omega_{\sigma_{U \cap L}}\rvert_C \colon S \to \R^\times_{>0}$ is an unramified character of $S$ (where $\lvert\cdot\rvert_{C}$ denotes the absolute value of $C$ with $\lvert p\rvert_C = p^{-1}$) and we let $e(\omega_{\sigma_{U \cap L}}) \in X^*(\alggrp S) \otimes \R$ denote its Harish-Chandra parameter.
Then it suffices for our proof to demand that $e(\omega_{\sigma_{U \cap L}})$ be dominant (or even that it is almost dominant in a precise and explicit sense, see Remark~\ref{rk:unitary-open-cone}).
We also have a version of this theorem where we only demand a certain dominance condition on the central character $\omega_\sigma$ of $\sigma$.

\subsection{Schneider's conjecture}
\label{sec:schn-conj}

Dropping the assumption that $\sigma$ is unitary, we prove a variant of Schneider's conjecture for the group $\GL_n(F)$ and, under slightly weaker conditions, for general split (or even quasisplit) groups $G$.

To state our results, let $\delta_B$ denote the modulus character of $B$.
We say that a character $\lambda \colon F^\times \to C^\times$ is \emph{non-positive algebraic} if it is of the form $\lambda(t) = \prod_{\kappa\colon F\to C} \kappa(t)^{a_\kappa}$
for some $(a_\kappa) \in \Z_{\le 0}^{\Hom(F,C)}$.
For $1 \le i < j \le n$ let $\alpha_{i,j}^\vee \colon \mathbb G_m \to \SL_n$ denote the coroot sending $t$ to $\diag(1,\dots,t,\dots,t^{-1},\dots,1)$ with $t$ (resp.\ $t^{-1}$) appearing in the $i$-th (resp.\ $j$-th) entry.
Let $\lvert \cdot\rvert_F$ denote the normalized absolute value of $F$.

\begin{thm}[Theorem~\ref{thm:GLn-irred}]\label{thm:GLn-irred-intro}
  Let $G = \GL_{n}(F)$ or $\SL_{n}(F)$, $B$ the upper-triangular Borel subgroup, and $Z$ the diagonal maximal torus.
  Let $\chi\colon Z \to C^{\times}$ be a continuous character.
  If $(\Ind_{B}^{G}\chi)^{\cts}$ is reducible, then there exist $1\le i < j \le n$ and non-positive algebraic characters $\lambda_k$ for $i \le k < j$ such that 
  \begin{itemize}
  \item $\chi\delta_B^{-1/2} \circ \alpha_{k,k+1}^\vee \equiv \lambda_k$ near the identity for any $i \le k < j$, and
  \item $\chi\delta_B^{-1/2} \circ \alpha_{i,j}^\vee = \lvert \cdot\rvert_F^{-1}\prod_{k=i}^{j-1} \lambda_k$.
  \end{itemize}
\end{thm}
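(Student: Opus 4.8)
The strategy is to reduce from the $p$-adic Banach setting to the locally analytic setting, and there to exploit the Orlik–Strauch bifunctor together with known reducibility results for generalized Verma modules and for smooth principal series of $\GL_n$. First I would use the compatibility of continuous and locally analytic parabolic induction: since $(\Ind_B^G\chi)^{\cts}$ is admissible, its reducibility is detected by the reducibility of the locally analytic induced representation $(\Ind_B^G\chi)^{\an}$, or more precisely by the structure of the $\widehat{\otimes}$-family of locally analytic principal series obtained by twisting $\chi$ by locally algebraic characters. The point is that a nontrivial closed subrepresentation of the Banach induction, after passing to locally analytic vectors, produces a nontrivial subobject that the Orlik–Strauch functor $\mathcal{F}_B^G(-,-)$ lets us analyze in terms of (a) a BGG-type module $M$ for the Lie algebra $\mathfrak{g}$ with $B$-highest weight determined by the "algebraic part" of $\chi\delta_B^{-1/2}$, and (b) a smooth character $\chi_{\sm}$ of $Z$ determined by the "smooth part." By the irreducibility criterion for $\mathcal{F}_B^G$, reducibility forces either the generalized Verma module $M = M_{\mathfrak{b}}(\lambda)$ to be reducible, or the smooth principal series $(\Ind_B^G \chi_{\sm})^{\sm}$ (for the appropriate standard Levi) to be reducible — and in the mixed case, a subtle interaction between the two.

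The second step is to translate each of these two failure modes into the two bulleted conditions. For the Lie-algebra side: a generalized Verma module $M_{\mathfrak{b}}(\lambda)$ for $\mathfrak{sl}_n$ is reducible iff $\langle \lambda + \rho, \alpha^\vee\rangle \in \Z_{>0}$ for some positive root $\alpha$; since $\lambda$ records the derivative of the algebraic part of $\chi\delta_B^{-1/2}$, the condition $\langle\lambda+\rho,\alpha_{i,j}^\vee\rangle \in \Z_{>0}$ unwinds, via $\rho$ being the half-sum of positive roots (so $\langle\rho,\alpha_{i,j}^\vee\rangle = j-i$), precisely to an integrality/positivity statement about $\chi\delta_B^{-1/2}\circ\alpha_{i,j}^\vee$ relative to the simple coroots in between. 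This is where the non-positive algebraic characters $\lambda_k$ with $(a_\kappa)\in\Z_{\le 0}^{\Hom(F,C)}$ enter: "$\chi\delta_B^{-1/2}\circ\alpha_{k,k+1}^\vee \equiv \lambda_k$ near the identity" says that on the $\alpha_{k,k+1}^\vee$ line the character is locally algebraic with a specified non-positive algebraic part, which is exactly the condition guaranteeing that the relevant simple reflection is "integral and on the correct side" for the block. The final equation, $\chi\delta_B^{-1/2}\circ\alpha_{i,j}^\vee = |\cdot|_F^{-1}\prod_{k=i}^{j-1}\lambda_k$, encodes both the $\Z_{>0}$ (rather than merely integral) condition and, crucially, the smooth-side reducibility: for smooth principal series of $\GL_n$ the reducibility points are governed by the condition that $\chi_{\sm}$ composed with the full coroot $\alpha_{i,j}^\vee$ equals $|\cdot|_F^{\pm 1}$, and combining this with the algebraic part produces exactly the stated normalization $|\cdot|_F^{-1}$.

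The third step is bookkeeping to handle $\GL_n$ versus $\SL_n$ uniformly — restriction along $\SL_n\hookrightarrow\GL_n$ and the fact that principal series reducibility is insensitive to twists by characters of the center — and to take care of the "near the identity" hypotheses, which are forced because a continuous character of $F^\times$ need not be locally algebraic but its restriction to a sufficiently small open subgroup of $\mathcal{O}_F^\times$ must agree with an algebraic character for the locally analytic theory to see a nonzero Lie-algebra action; this is what lets us extract a well-defined $\lambda$. I expect the main obstacle to be the \emph{mixed case} of the Orlik–Strauch criterion: when neither the generalized Verma module nor the smooth induction is reducible on its own, but the subrepresentation arises from a subquotient of the Lie-algebra module that fails to be in the image of $\mathcal{F}$ for the larger Levi, one must carefully match up the combinatorics of the BGG block (Weyl-group/linkage data for $\mathfrak{sl}_n$) with the Zelevinsky-type combinatorics of smooth $\GL_n$-representations. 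Keeping these two parametrizations aligned — so that the single pair $(i,j)$ and the single chain $(\lambda_k)_{i\le k<j}$ suffices to describe all reducibility — is the delicate heart of the argument; the rest is a matter of carefully propagating reducibility back up through locally analytic vectors to the Banach space, using admissibility and Proposition~\ref{prop:adm-fin-length} to ensure no information is lost.
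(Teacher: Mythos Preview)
Your first step contains a genuine misunderstanding that would derail the argument. You write that Banach reducibility ``is detected by the reducibility of the locally analytic induced representation $(\Ind_B^G\chi)^{\an}$,'' but this implication carries no information: the locally analytic principal series $\mathcal{F}_B^G(\underline{M}(\chi'),1)$ is essentially \emph{always} reducible (whenever the Verma module is, which is generic), even when the Banach completion is irreducible. The paper's criterion (Theorem~\ref{thm:equivalence-irred} and Corollary~\ref{cor:irreduciblity}) works differently: one first uses the derivative of $\chi$ to determine the parabolic $Q$ maximal with $\underline{L}(\sigma_0')\in\mathcal{O}^Q$ --- this is where the first bullet enters, since the simple roots $\alpha_{k,k+1}$ for which $\chi\delta_B^{-1/2}\circ\alpha_{k,k+1}^\vee$ is locally non-positive algebraic are exactly those in $\Delta_{L_Q}$ --- and then shows that the Banach induction is irreducible iff $(\Ind_{B\cap L_Q}^{L_Q}\tau)^{\cts}$ is, where $\tau$ is the smooth part of $\chi$.

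Your dichotomy ``Verma reducible or smooth reducible, plus a mixed case'' is therefore off-target: the Lie-algebra side is used \emph{only} to locate $Q$, after which the entire content lives on the smooth side for $L_Q$. The tool you are missing is the genericity criterion (Proposition~\ref{prop:whittaker criterion}): if every irreducible subrepresentation of $(\Ind_{B\cap L_Q}^{L_Q}\tau)^{\sm}$ is generic (admits a Whittaker functional), then a density argument via Lemma~\ref{lem:supported on big cell} forces the Banach induction to be irreducible. For $\GL_n$, Bernstein--Zelevinsky \cite[4.11~Theorem]{MR0579172} gives exactly this genericity whenever $\tau_i\tau_j^{-1}\ne|\cdot|_F^{j-i-1}$ for all $i<j$ with $\alpha_{i,j}$ a root of $L_Q$, i.e., whenever the second bullet fails. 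There is no ``mixed case'' to align; the combinatorial matching you anticipate as the delicate heart of the argument is simply absent.
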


In particular, there exists a positive root $\alpha_{i,j}$ and a non-positive algebraic character $\lambda$ such that $\chi\delta_B^{-1/2} \circ \alpha_{i,j}^\vee = \lvert \cdot\rvert_F^{-1} \lambda$.
As in Theorem~\ref{thm:unitary case-parab-intro}(ii)$\Rightarrow$(i), if the condition of the previous sentence holds with $j = i+1$, then $(\Ind_{B}^{G}\chi)^{\cts}$ is reducible.
When $n = 3$ we show that these are in fact the only possible reducibilities, see our companion paper \cite{gl3}.

We have a slightly weaker theorem valid for any split connected reductive group (adding a Weyl group regularity condition).

\begin{thm}[Theorem~\ref{thm:split-groups}]\label{thm:split-groups-intro}
  Suppose that $\alggrp G$ is split.   Let $\chi \colon Z \to C^{\times}$ be a continuous character.
  Let $\alggrp P = \alggrp L \alggrp N$ be the largest standard parabolic such that for all positive roots $\alpha \colon \alggrp Z \to \mathbb G_m$ occurring in $\alggrp L$ we have
  $\chi\delta_B^{-1/2} \circ \alpha^\vee \equiv \lambda_\alpha$ near the identity, for some non-positive algebraic character $\lambda_\alpha$.
  Assume that for all $w \in N_L(Z) \setminus Z$ there exists a root $\alpha$ of $\alggrp L$ such that
  \begin{itemize}
  \item $(\chi \delta_B^{-1/2} \circ w^{-1}\alpha^\vee)\cdot (\chi \delta_B^{-1/2} \circ \alpha^\vee)^{-1} \ne \lambda_{w^{-1}\alpha} \lambda_\alpha^{-1}$.
  \end{itemize}
  If $(\Ind_{B}^{G}\chi)^{\cts}$ is reducible, then there exists a positive root $\alpha$ occurring in $\alggrp L$ such that
  \begin{itemize}
  \item $\chi\delta_B^{-1/2} \circ \alpha^\vee = \lvert \cdot\rvert_F^{-1} \lambda_\alpha$.
  \end{itemize}
\end{thm}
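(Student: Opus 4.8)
\emph{Strategy and reduction to locally analytic vectors.} The plan is to pass from the Banach principal series to the locally analytic one, to describe the latter via the Orlik--Strauch functor and translation functors, and then --- using the new reducibility results for smooth parabolic inductions and a comparison of the two topologies --- to determine which locally analytic subquotients can arise from a closed Banach subrepresentation. Since $\chi$ is continuous it is automatically locally analytic, and the locally analytic vectors of $(\Ind_B^G\chi)^{\cts}$ form the dense subspace $(\Ind_B^G\chi)^{\an}$, the locally analytic principal series. If $W\subsetneq(\Ind_B^G\chi)^{\cts}$ is a nonzero closed subrepresentation, then $W\cap(\Ind_B^G\chi)^{\an}$ is dense in $W$, hence is a proper nonzero subrepresentation of $(\Ind_B^G\chi)^{\an}$ whose Banach closure is the proper subspace $W$. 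So it suffices to show that $(\Ind_B^G\chi)^{\an}$ has a subrepresentation with proper Banach closure only if the asserted root condition holds; the converse is the easy direction already indicated in the introduction.

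\emph{The locally analytic structure.} By Orlik--Strauch one has $(\Ind_B^G\chi)^{\an}\cong\mathcal F_B^G(M,\chi^{\sm})$, where $\chi^{\sm}$ is the smooth part of $\chi$ and $M$ is a module in the category $\mathcal O$ for $\Lie(\alggrp{G})\otimes_{\Q_p}C\cong\prod_\kappa\mathfrak g_\kappa$ (each $\mathfrak g_\kappa$ split reductive over $C$) attached to $d\chi$ with the usual $\rho$-shift. By definition $\alggrp{P}=\alggrp{L}\alggrp{N}$ is the largest standard parabolic along whose coroots $d\chi$ is integral and dominant in the relevant sense, so $M\in\mathcal O^{\mathfrak p}$, its submodule lattice is controlled inside the block of $\mathcal O$ for $\mathfrak l$ (linkage transverse to $\mathfrak p$ being obstructed by the maximality of $\alggrp{P}$), and translation functors reduce the analysis to a regular infinitesimal character. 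The Weyl-regularity hypothesis unwinds to $W_L$-regularity of $\chi^{\sm}$ along the coroots of $\mathfrak l$, which rules out the ``non-regular'' reducibilities of the smooth principal series (for instance at quadratic characters of rank-one factors), so that by the new reducibility results for smooth parabolic inductions the subquotients of $(\Ind_{B\cap L}^L\chi^{\sm})^{\sm}$ are governed by rank-one data: reducibility occurs only at a coroot $\alpha^\vee$ of $\mathfrak l$ with $\chi^{\sm}\delta_B^{-1/2}\circ\alpha^\vee=\lvert\cdot\rvert_F^{\pm1}$, the non-generic subquotient being a subrepresentation in the $\lvert\cdot\rvert_F^{-1}$ case and a quotient in the $\lvert\cdot\rvert_F^{+1}$ case. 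Combining this with the exactness and irreducibility properties of $\mathcal F_B^G$ describes the subrepresentations of $(\Ind_B^G\chi)^{\an}$: up to translation they come from proper quotients of $M$ in $\mathcal O^{\mathfrak p}$ and from proper subrepresentations of $(\Ind_{B\cap L}^L\chi^{\sm})^{\sm}$.

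\emph{Conclusion.} Assume $(\Ind_B^G\chi)^{\cts}$ is reducible; then $(\Ind_B^G\chi)^{\an}$ has a proper subrepresentation $U$ with proper Banach closure, equivalently $(\Ind_B^G\chi)^{\an}/U$ maps continuously and nontrivially to an admissible Banach representation. This is impossible for the ``wrong-orientation'' constituents: a smooth or locally algebraic quotient is cut out by integration against a section of a modulus- or weight-twisted line bundle whose $p$-adic absolute value grows without bound along the opposite unipotent radical (its small archimedean size forcing a large $p$-adic size in $C$), so the defining functional does not extend continuously to $(\Ind_B^G\chi)^{\cts}$, and $\mathcal F_B^G(\text{proper quotient of }M,\chi^{\sm})$ likewise carries no bounded invariant pairing. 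Hence the surviving possibility is a locally algebraic subrepresentation in the dominant orientation, which by the description above forces a positive root $\alpha$ occurring in $\mathfrak l$ at which $\chi\delta_B^{-1/2}\circ\alpha^\vee=\lvert\cdot\rvert_F^{-1}\lambda_\alpha$ for some non-positive algebraic $\lambda_\alpha$ --- the assertion.

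\emph{Main obstacle.} The heart of the argument is this last topological comparison: deciding, for each locally analytic constituent, whether its closure in the Banach completion is proper, through the interplay between the archimedean size of the modulus and weight characters and their $p$-adic absolute values. It is also here that the Weyl-regularity hypothesis is essential --- it keeps the block of $\mathcal O^{\mathfrak p}$ and the cuspidal support of the smooth induction regular enough for the translation-functor bookkeeping and for the rank-one description of smooth reducibilities --- and dispensing with it, as is done for $\GL_n$ in Theorem~\ref{thm:GLn-irred}, requires a substantially more delicate analysis.
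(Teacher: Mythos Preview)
Your general framework is sound: passing to locally analytic vectors, invoking the Orlik--Strauch description, and identifying $\alggrp P$ as the maximal parabolic for which the relevant Verma module lies in $\mathcal O^{\mathfrak p}$ are all correct starting moves, and the paper does exactly this. But the decisive step --- your ``topological comparison'' in the Conclusion --- is where the argument breaks down. The claim that a proper quotient of $M$ (or a wrong-orientation smooth constituent) gives rise to an $\mathcal F_B^G$ that ``carries no bounded invariant pairing'' is asserted rather than proved; the heuristic about modulus characters growing $p$-adically is not made into an actual obstruction, and it is not clear how to turn it into one for the constituents that matter here (those coming from non-generic subrepresentations of the smooth induction, not from the $\mathcal O$-side).

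The paper avoids this difficulty entirely by a different route. Rather than analyze directly which locally analytic constituents have proper Banach closure, it proves a general criterion (Theorem~\ref{thm:equivalence-irred}, Corollary~\ref{cor:irreduciblity}): with $Q$ the maximal parabolic for $\underline L(\sigma_0')$ (here $Q=P$), the Banach induction $(\Ind_B^G\chi)^{\cts}$ is irreducible if and only if every irreducible subrepresentation of the \emph{smooth} induction $(\Ind_{B\cap L}^{L}\tau)^{\sm}$ is dense in $(\Ind_{B\cap L}^{L}\tau)^{\cts}$. The translation-functor machinery you mention is used in the proof of this criterion, not in the application. Then density is established by a concrete mechanism you do not invoke: \emph{genericity}. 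Proposition~\ref{prop:whittaker criterion} shows that any generic subrepresentation of $(\Ind_{B\cap L}^{L}\tau)^{\sm}$ contains a function supported on the big cell with values in a line, and such a function topologically generates the Banach induction by an explicit approximation argument (Lemma~\ref{lem:supported on big cell}). Finally, Rodier's theorem guarantees that under your regularity hypothesis and the negation of the root condition, the smooth induction has irreducible generic socle --- so the criterion applies and the Banach induction is irreducible. Your proposal misses the genericity step entirely, and that is the idea that makes the proof work.
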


We now recall Schneider's conjecture and discuss its relationship with Theorems~\ref{thm:GLn-irred-intro}, \ref{thm:split-groups-intro}.
For this, we assume in addition that $\alggrp G$ is semisimple simply connected and split (as is assumed there).
Let $\delta^{1/2} \colon \alggrp Z \to \mathbb G_m$ denote the half-sum of positive roots, which is integral by our assumption on $\alggrp G$, and note that $\lvert \delta^{1/2}\rvert_F = \delta_B^{1/2}$.

\begin{conj}[{\cite[Conj.\ 2.5]{MR2275644}}]\label{conj:schneider}
  If $(\Ind_{B}^{G}\chi)^{\cts}$ is reducible, then there exists a positive root $\alpha$ of $\alggrp G$ such that
\begin{itemize}
\item $\chi\delta^{-1/2} \circ \alpha^\vee = (\cdot)^{m_\alpha-1}$ for some non-positive integer $m_\alpha$.
\end{itemize}
\end{conj}

This formulation is not quite correct when $F \ne \Q_p$:
let $\alggrp G = \SL_2$ and choose $\chi$ such that $\chi\delta^{-1/2} \circ \alpha^\vee = (\cdot)^{-1} \lambda$ where $\alpha$ denotes the unique positive root and $\lambda$ is non-positive algebraic but not an integer power.
(The last condition requires $F \ne \Q_p$.)
Equivalently $\chi = \lambda$, by using the identification $\alpha^\vee : \mathbb G_m \congto \alggrp Z \subset \SL_2$.
Therefore $(\Ind_{B}^{G}\chi)^{\cts}$ contains the algebraic induction of $\chi$ (the irreducible algebraic representation of lowest weight $\lambda$) as nonzero finite-dimensional, hence closed, subrepresentation.
However this issue can be avoided by modifying the condition in Conjecture~\ref{conj:schneider} to
\begin{equation}\label{eq:schneider-modif}
\chi\delta^{-1/2} \circ \alpha^\vee = (\cdot)^{-1} \lambda_\alpha\ \text{for some non-positive algebraic character $\lambda_\alpha$},
\end{equation}
and we do not know any counterexample to this modified conjecture.
(If $\alpha$ is a simple root, condition~\eqref{eq:schneider-modif} is equivalent to $\chi\delta_B^{-1/2} \circ \alpha^\vee = \lvert \cdot\rvert_F^{-1} \lambda_\alpha$, exactly as in Theorem~\ref{thm:split-groups-intro}.
For general $\alpha$ we remark that the conditions/conclusion in Theorems~\ref{thm:GLn-irred-intro}, \ref{thm:split-groups-intro} are compatible with the equivalence in Theorem~\ref{thm:criterion-intro} below, whereas condition~\eqref{eq:schneider-modif} is not.)

We obtain more refined versions of Theorem~\ref{thm:split-groups-intro} for any classical quasisplit group in \S\ref{subsec:classical quasisplit groups}.
(These results are stated for smooth $\sigma$, for simplicity.)

For general reductive $\alggrp G$ one may wonder whether $(\Ind_{B}^{G}\sigma)^{\cts}$ is reducible if and only if $(\Ind_{B\cap L_\alpha}^{L_\alpha}\sigma)^{\cts}$ is reducible for some \emph{simple} root $\alpha$.
Our evidence is rather limited, consisting mostly of Theorem~\ref{thm:unitary case-parab-intro} and our result for $\GL_3(F)$ \cite{gl3}.
The group $\alggrp L_\alpha$ is of semisimple rank 1, and we obtain an optimal irreducibility criterion for $(\Ind_{B\cap L_\alpha}^{L_\alpha}\sigma)^{\cts}$,
at least when $\sigma$ is simple as $\mathfrak z_C$-module (which holds when $\dim_C \sigma = 1$) or when the unipotent radical $\alggrp U$ is abelian.
See Theorem~\ref{thm:rank-1} and also Corollary~\ref{cor:rank-1-split}.
In particular, if the answer to our question is indeed yes, Corollary~\ref{cor:rank-1-split} would prove the modified version of Schneider's conjecture above (using condition~\eqref{eq:schneider-modif} for only simple roots $\alpha$).

\subsection{A criterion and some reductions}
\label{sec:criterion}

By replacing $\alggrp G$ by $\Res_{F/\Q_p} \alggrp G$ we may and will assume that $F = \Q_p$.
Then the action of $L$ on $\sigma$ becomes locally analytic~\cite[\S V.9]{MR1176100} and we can study the Banach representation $(\Ind_{P}^{G}\sigma)^{\cts}$ by means of its dense subspace of locally analytic vectors
\begin{equation*}
  (\Ind_P^G \sigma)^\an := \{ \text{$f\colon G\to \sigma$ loc.\ an.} : \text{$f(p'g) = \sigma(p')f(g)$ for any $g\in G$, $p'\in P$} \},
\end{equation*}
see \cite{MR1900706}, \cite{locallyanalytic-memoir}.
Note that $(\Ind_{P}^{G}\sigma)^{\an}$ naturally carries a compact type topology and becomes an admissible locally analytic representation of $G$ in the sense of Schneider--Teitelbaum \cite{ST}.
If $\sigma$ is smooth and we replace locally analytic functions by locally constant functions, then we obtain a smooth subrepresentation $(\Ind_P^G \sigma)^\sm$ of $(\Ind_P^G \sigma)^\an$. 
Orlik and Strauch \cite{OS2}, \cite{OS3} introduced a beautiful theory to understand the structure of the locally analytic principal series $(\Ind_{P}^{G}\sigma)^{\an}$.
(In their papers they restrict to split groups $\alggrp G$. 
See Appendix~\ref{app:orlik-strauch} for the general case, and note that another treatment will appear in the revised version of \cite{OS3}.)
We just recall some basic definitions for now.
Let $\mathfrak g$ denote the Lie algebra of $G$, $\mathfrak g_C := \mathfrak g \otimes_F C$, and likewise for other subgroups of $G$.
For any parabolic subgroup $\alggrp P$, Orlik--Strauch define noetherian and artinian abelian categories $\mathcal O^{\mathfrak p}$, an adaptation of a parabolic BGG category $\mathcal O$ over $C$ whose objects consist of certain finitely generated $U(\mathfrak g_C)$-modules, and $\mathcal O^P$ whose objects consist of pairs $(M,\tau)$ with $M \in \mathcal O^{\mathfrak p}$ and $\tau$ a locally finite locally analytic action of $P$ on $M$ satisfying $\tau(p') \circ X \circ \tau(p')^{-1} = \Ad(p')(X)$ for $p' \in P$, $X \in \mathfrak g_C$ which lifts the given action of $\mathfrak p_C$ (see~\S\ref{sec:funct-orlik-stra} for the precise definition).
If $\alggrp Q \supset \alggrp P$ is another parabolic subgroup, then $\mathcal O^{\mathfrak q} \subset \mathcal O^{\mathfrak p}$ and $\mathcal O^Q \subset \mathcal O^P$ are naturally full subcategories, and we say that $M \in \mathcal O^P$ is \emph{equimaximal} if for any parabolic subgroup $\alggrp Q$ containing $\alggrp P$ we have $M \in \mathcal O^Q$ if and only if $M|_{\mathfrak g_C} \in \mathcal O^{\mathfrak q}$.
For any simple object $W \in \mathcal O^L$ there is a unique simple object $\underline L(W) \in \mathcal O^P$ such that $\underline L(W)^N \cong W$ (Lemma~\ref{lm:N-invts}).
(Here, $\mathcal O^L$ is the category $\mathcal O^G$ with $G = L$. Its objects are automatically finite-dimensional.)

Recall that $\sigma$ is an absolutely irreducible finite-dimensional continuous representation of $L$.
Then $\sigma$ lies in $\mathcal O^L$ (recall that $F = \Q_p$ now). Assume now that $\sigma \cong \sigma_0 \otimes \tau$ for some $\sigma_0 \in \mathcal O^L$ whose underlying $\mathfrak l_C$-module is simple and some smooth $L$-representation $\tau$ such that moreover $\underline L(\sigma_0')$ is equimaximal, where $\sigma_0'$ denotes the dual of $\sigma_0$. Such a decomposition of $\sigma$ always exists when the derived subgroup $\alggrp G^\der$ is simply connected by Lemma~\ref{lem:sigma0-tau-decomp} and is unique up to smooth characters of $L_Q$ in general, where $\alggrp Q = \alggrp L_{\alggrp Q}\alggrp N_{\alggrp Q}$ is the largest parabolic subgroup containing $\alggrp P$ such that $\underline L(\sigma_0') \in \mathcal O^Q$.

We then have the following theorem, which is fundamental to our work (in particular, to the proof of all the above theorems).

\begin{thm}[Theorem~\ref{thm:equivalence-irred}, Corollary~\ref{cor:irreduciblity}]\label{thm:criterion-intro}
  The following are equivalent:
  \begin{enumerate}
  \item $(\Ind_P^G \sigma)^\cts$ is irreducible;
  \item $(\Ind_{P\cap L_Q}^{L_Q} \sigma)^\cts$ is irreducible;
  \item $(\Ind_{P\cap L_Q}^{L_Q} \tau)^\cts$ is irreducible;
  \item any irreducible subrepresentation of $(\Ind_{P\cap L_Q}^{L_Q} \tau)^\sm$ is dense in $(\Ind_{P\cap L_Q}^{L_Q} \tau)^\cts$.
  \end{enumerate}
\end{thm}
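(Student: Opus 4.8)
The plan is to pass from each Banach principal series to its dense subspace of locally analytic vectors, to rewrite the latter by means of the Orlik--Strauch functor, and then to read off the four equivalences from the structure of that functor together with the defining maximality property of $Q$. First I would use admissibility: since $(\Ind_{P}^{G}\sigma)^{\cts}$ is an admissible Banach representation, the maps $W\mapsto\overline{W}$ and $V\mapsto V\cap(\Ind_{P}^{G}\sigma)^{\an}$ are mutually inverse bijections between its closed $G$-subrepresentations and those of the admissible locally analytic representation $(\Ind_{P}^{G}\sigma)^{\an}$, so that $(\Ind_{P}^{G}\sigma)^{\cts}$ is topologically irreducible if and only if $(\Ind_{P}^{G}\sigma)^{\an}$ is, and likewise for every other continuous induction occurring in the statement. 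By Orlik--Strauch one has $(\Ind_{P}^{G}\sigma)^{\an}\cong\mathcal{F}_{P}^{G}\bigl(U(\mathfrak{g}_{C})\otimes_{U(\mathfrak{p}_{C})}\sigma_{0}',\tau\bigr)$, and similarly over $L_{Q}$; throughout I rely on $\mathcal{F}_{P}^{G}$ being exact and contravariant in the first variable, on the classification of its irreducible subquotients, and on its reduction, for a simple $L(W)\in\mathcal{O}^{\mathfrak{p}}$, to $\mathcal{F}_{Q_{W}}^{G}$ where $Q_{W}$ is the largest parabolic with $L(W)\in\mathcal{O}^{\mathfrak{q}_{W}}$.

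For (i)$\Leftrightarrow$(ii) I use induction in stages for locally analytic induction, $(\Ind_{P}^{G}\sigma)^{\an}\cong\Ind_{Q}^{G}\bigl((\Ind_{P\cap L_{Q}}^{L_{Q}}\sigma)^{\an}\bigr)$. That (ii) reducible implies (i) reducible is immediate: induce a proper closed subrepresentation along the exact, faithful functor $\Ind_{Q}^{G}$. Conversely I would show that $\Ind_{Q}^{G}$ induces a bijection on closed subrepresentations here: the module $U(\mathfrak{g}_{C})\otimes_{U(\mathfrak{p}_{C})}\sigma_{0}'$ has simple head $\underline{L}(\sigma_{0}')|_{\mathfrak{g}_{C}}$, and since $\underline{L}(\sigma_{0}')$ is equimaximal and $Q$ is the largest parabolic with $\underline{L}(\sigma_{0}')\in\mathcal{O}^{Q}$, the largest parabolic for the $\mathfrak{g}_{C}$-module $\underline{L}(\sigma_{0}')|_{\mathfrak{g}_{C}}$ is again $Q$; combined with the classification of irreducible subquotients this forces every irreducible subquotient of $(\Ind_{P}^{G}\sigma)^{\an}$ to be built by $\mathcal{F}_{Q}^{G}$ from $L_{Q}$-data, so that no new subrepresentation appears on inducing from $Q$ up to $G$. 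For (ii)$\Leftrightarrow$(iii) I work inside $L_{Q}$ and must discard the algebraic twist $\sigma_{0}$: here $(\Ind_{P\cap L_{Q}}^{L_{Q}}\sigma)^{\an}$ and $(\Ind_{P\cap L_{Q}}^{L_{Q}}\tau)^{\an}$ are $\mathcal{F}_{P\cap L_{Q}}^{L_{Q}}(-,\tau)$ applied to the parabolic Verma modules for $L_{Q}$ of highest weights $\sigma_{0}'$ and $0$ respectively, and the equimaximality condition---which, interpreted inside $L_{Q}$, controls the structure of the former relative to the latter (Loewy layers, the largest parabolics attached to the constituents, and the way the smooth datum $\tau$ is distributed among them)---guarantees that $\mathcal{F}_{P\cap L_{Q}}^{L_{Q}}(-,\tau)$ sees them identically, so that the two locally analytic inductions have isomorphic lattices of closed subrepresentations.

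For (iii)$\Leftrightarrow$(iv) I note that both statements involve only the datum $(L_{Q},P\cap L_{Q},\tau)$ with $\tau$ smooth, for which the associated parabolic is all of $L_{Q}$; so we may as well assume $Q=G$ and $\sigma=\tau$ smooth, and then $(\Ind_{P}^{G}\tau)^{\an}\cong\mathcal{F}_{P}^{G}(M_{0},\tau)$ with $M_{0}=U(\mathfrak{g}_{C})\otimes_{U(\mathfrak{p}_{C})}\mathbf{1}$ a parabolic Verma module whose unique irreducible quotient is the trivial $\mathfrak{g}_{C}$-module, and $\mathcal{F}_{P}^{G}(\mathbf{1},\tau)=(\Ind_{P}^{G}\tau)^{\sm}$. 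The implication (iii)$\Rightarrow$(iv) is easy: $(\Ind_{P}^{G}\tau)^{\sm}$ is a nonzero admissible smooth representation, hence has an irreducible subrepresentation, and any such is a nonzero closed $G$-subrepresentation of $(\Ind_{P}^{G}\tau)^{\cts}$, hence dense by topological irreducibility. For (iv)$\Rightarrow$(iii) the key input---which I would extract from the Orlik--Strauch structure of $\mathcal{F}_{P}^{G}(M_{0},\tau)$, using that $M_{0}$ has the trivial module as its unique irreducible quotient and that the non-smooth constituents $\mathcal{F}_{Q'}^{G}(L(W),\pi)$ with $L(W)$ in the radical of $M_{0}$ (so $Q'\subsetneq G$) cannot occur in the socle---is that $\soc_{G}(\Ind_{P}^{G}\tau)^{\an}\subseteq(\Ind_{P}^{G}\tau)^{\sm}$, i.e.\ every irreducible subrepresentation of the locally analytic induction of a smooth representation is smooth. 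Granting this, if $(\Ind_{P}^{G}\tau)^{\cts}$ were reducible a proper nonzero closed subrepresentation $V$ would meet $(\Ind_{P}^{G}\tau)^{\an}$ in a proper nonzero closed subrepresentation of the finite-length representation $(\Ind_{P}^{G}\tau)^{\an}$, which then contains an irreducible subrepresentation $S$; by the above $S$ is an irreducible subrepresentation of $(\Ind_{P}^{G}\tau)^{\sm}$, so $\overline{S}=(\Ind_{P}^{G}\tau)^{\cts}$ by (iv), contradicting $\overline{S}\subseteq V$.

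The step I expect to be the main obstacle is making the Orlik--Strauch structure theory precise and general enough to drive all of this: one needs a sufficiently fine classification of the irreducible subquotients of $\mathcal{F}_{P}^{G}(M,\tau)$, control of how the radical filtration of the generalized Verma module $M$ governs the socle of $\mathcal{F}_{P}^{G}(M,\tau)$ (this is what the maximality of $Q$ buys in (i)$\Leftrightarrow$(ii), and is the socle computation needed in (iii)$\Leftrightarrow$(iv)), and---most delicately---the comparison of the two generalized Verma modules in (ii)$\Leftrightarrow$(iii), which is precisely where the notion of equimaximality does its real work. Moreover all of this has to be developed for an arbitrary connected reductive $G$, not only for split groups as in the original work of Orlik--Strauch (cf.\ the appendix).
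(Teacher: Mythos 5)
Your opening reduction is incorrect, and this undermines most of the strategy. You claim that $V\mapsto V\cap(\Ind_P^G\sigma)^{\an}$ and $W\mapsto\overline W$ are mutually inverse bijections between closed $G$-subrepresentations of $(\Ind_P^G\sigma)^{\cts}$ and of $(\Ind_P^G\sigma)^{\an}$, so that irreducibility of the Banach induction is equivalent to irreducibility of its locally analytic vectors. The first map is injective and order-preserving (by exactness and density of analytic vectors), but the second is not its inverse: one can have $(\overline W)^{\an}\supsetneq W$. Indeed, take $\tau$ smooth with $(\Ind_P^G\tau)^{\sm}$ irreducible; then $W:=(\Ind_P^G\tau)^{\sm}$ is a \emph{proper} closed subrepresentation of $(\Ind_P^G\tau)^{\an}$, yet its closure in $(\Ind_P^G\tau)^{\cts}$ is the whole Banach space. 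So $(\Ind_P^G\tau)^{\an}$ is reducible while $(\Ind_P^G\tau)^{\cts}$ may well be irreducible — in fact the very content of the theorem, and of condition $(*)$ in the paper, is to decide exactly which analytic subrepresentations have dense image in the Banach induction. This is where the $p$-adic approximation argument on the big cell (Lemma~\ref{lem:supported on big cell} and Proposition~\ref{prop:loc-const-fns}) enters, and your proposal omits it entirely. Consequently your arguments for $(i)\Leftrightarrow(ii)$ and $(ii)\Leftrightarrow(iii)$, which you carry out purely on the analytic side, do not descend to statements about the Banach inductions.

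A second, independent gap is in $(ii)\Leftrightarrow(iii)$: you gesture at ``equimaximality controls Loewy layers \dots\ so the two inductions have isomorphic lattices of closed subrepresentations,'' but you do not identify the actual mechanism. The paper's proof uses translation functors $T^{\lambda-\rho}_{-\rho}(G,V)$ for $V=\underline L(\sigma_0')'$ together with the computation $T^{\lambda-\rho}_{-\rho}(G,V)\bigl((\Ind_P^G\tau)^{\an}\bigr)\cong(\Ind_P^G\sigma)^{\an}$ (Proposition~\ref{prop:translation functors(parabolic induction)}) and the equivalence-of-categories property (Proposition~\ref{prop:translation functors(analytic)}(iii)); moreover the argument is applied not to abstract lattices but to the image of a given Banach subrepresentation $\pi\subset(\Ind_P^G\tau)^{\cts}$, and the surjectivity of $V\otimes\pi^{\an}\to(\Ind_P^G\sigma)^{\an}$ is extracted from hypothesis $(ii)$ plus the socle computation (Corollary~\ref{cor:socle-orlik-strauch}), precisely because one cannot pass freely between the Banach and analytic sides. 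Your $(iii)\Rightarrow(iv)$ direction is trivial and your $(iv)\Rightarrow(iii)$ argument via $\soc_G(\Ind_P^G\tau)^{\an}\subseteq(\Ind_P^G\tau)^{\sm}$ is essentially correct (it only uses the injective, not bijective, direction of the subrepresentation correspondence), and $(i)\Rightarrow(ii)$ by exactness of $\Ind_Q^G$ is fine — but those are the easy parts, and the two genuine difficulties above are not resolved.
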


In particular, $(\Ind_P^G \sigma)^\cts$ is irreducible if $(\Ind_{P\cap L_Q}^{L_Q} \tau)^\sm$ is irreducible, by part (iv).
We obtain the following corollary.

\begin{cor}[Corollary~\ref{cor:criterion-easy-cases}]\label{cor:criterion-easy-cases-intro}
  Each of the following conditions implies the next:
  \begin{enumerate}
  \item $U(\mathfrak g_C) \otimes_{U(\mathfrak p_C)} \sigma_0'$ is irreducible as $U(\mathfrak g_C)$-module;
  \item $Q = P$;
  \item $(\Ind_P^G \sigma)^\cts$ is irreducible.
  \end{enumerate}
\end{cor}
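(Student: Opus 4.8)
The plan is to prove the two implications (i)$\Rightarrow$(ii) and (ii)$\Rightarrow$(iii) separately; essentially all of the content lies in the first, the second being an immediate consequence of Theorem~\ref{thm:criterion-intro}. For (ii)$\Rightarrow$(iii): if $Q=P$ then $\alggrp L_Q=\alggrp L$, so $P\cap L_Q=P\cap L=L$, and hence $(\Ind_{P\cap L_Q}^{L_Q}\sigma)^{\cts}=(\Ind_L^L\sigma)^{\cts}\cong\sigma$, which is irreducible because $\sigma$ is absolutely irreducible. By the equivalence of (i) and (ii) in Theorem~\ref{thm:criterion-intro}, $(\Ind_P^G\sigma)^{\cts}$ is irreducible.

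For (i)$\Rightarrow$(ii): recall that $\alggrp Q\supseteq\alggrp P$ is by definition the largest parabolic with $\underline L(\sigma_0')\in\mathcal O^Q$, and that $\underline L(\sigma_0')$ is the unique simple object of $\mathcal O^P$ whose $N$-invariants are $\sigma_0'$. I would first identify the underlying $\mathfrak g_C$-module of $\underline L(\sigma_0')$ with the unique simple quotient of the generalized Verma module $M:=U(\mathfrak g_C)\otimes_{U(\mathfrak p_C)}\sigma_0'$: this $M$ has a simple head because $\sigma_0'$ is simple over $\mathfrak l_C$, and matching its $\mathfrak n_C$-invariants with the characterization of $\underline L(\cdot)$ by $N$-invariants (via Lemma~\ref{lm:N-invts}, and noting that no smooth twist intervenes because $\sigma_0$ is already of the required type) gives the identification. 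Under hypothesis (i) this forces $\underline L(\sigma_0')|_{\mathfrak g_C}\cong M$. Now suppose for contradiction that some parabolic $\alggrp Q_1\supsetneq\alggrp P$ satisfies $\underline L(\sigma_0')\in\mathcal O^{Q_1}$; then $M\in\mathcal O^{\mathfrak q_1}$, so $M$ is locally finite over $\mathfrak l_{\mathfrak q_1,C}\supsetneq\mathfrak l_C$. But by the Poincar\'e--Birkhoff--Witt theorem $M$ is free of rank $\dim_C\sigma_0'$ over $U(\overline{\mathfrak n}_C)$, where $\overline{\mathfrak n}$ is the Lie algebra of the unipotent radical of the parabolic opposite to $\alggrp P$, and choosing a nonzero root vector $X\in\mathfrak l_{\mathfrak q_1,C}\cap\overline{\mathfrak n}_C$ (possible since $\alggrp L_{Q_1}\supsetneq\alggrp L$), the vectors $X^k\otimes v$ for $k\ge0$ are linearly independent for any nonzero $v\in\sigma_0'$, contradicting the local finiteness of $M$ over $\mathfrak l_{\mathfrak q_1,C}$. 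Hence no such $\alggrp Q_1$ exists, i.e.\ $Q=P$. (Alternatively, one may compare Gelfand--Kirillov dimensions: every object of $\mathcal O^{\mathfrak q_1}$ has dimension at most that of the nilradical of the parabolic opposite to $\alggrp Q_1$, which is strictly less than $\dim\overline{\mathfrak n}_C=\dim M$.)

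The main --- indeed the only --- obstacle is the identification step at the beginning of the proof of (i)$\Rightarrow$(ii), where one must carefully reconcile the $N$-invariant characterization of $\underline L(\sigma_0')$ with highest-weight theory in the parabolic category $\mathcal O^{\mathfrak p}$ (in particular being attentive to the precise sign conventions of \cite{OS2}, \cite{OS3} and Appendix~\ref{app:orlik-strauch}); once that is in place the freeness computation is routine, and (ii)$\Rightarrow$(iii) is immediate from Theorem~\ref{thm:criterion-intro}.
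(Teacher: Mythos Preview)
Your proof is correct and close in spirit to the paper's. For (ii)$\Rightarrow$(iii) you invoke Theorem~\ref{thm:criterion-intro} exactly as the paper does (via Corollary~\ref{cor:irreduciblity}). For (i)$\Rightarrow$(ii), both arguments rest on the same PBW observation that $M(\sigma_0')\cong U(\overline{\mathfrak n}_C)\otimes_C\sigma_0'$ is infinite-dimensional in the direction of any root vector in $\mathfrak l_{Q,C}\cap\overline{\mathfrak n}_C$. You use this to contradict local finiteness of $M$ over $\mathfrak l_{Q,C}$ directly; the paper instead factors the surjection $M(\sigma_0')\twoheadrightarrow L(\sigma_0')$ through the intermediate generalized Verma $U(\mathfrak g_C)\otimes_{U(\mathfrak q_C)}L(\sigma_0')^{\mathfrak n_{Q,C}}$ and observes that the first map cannot be an isomorphism (since the $L_Q$-Verma $U(\mathfrak l_{Q,C})\otimes_{U(\mathfrak p_C\cap\mathfrak l_{Q,C})}\sigma_0'$ is infinite-dimensional while $L(\sigma_0')^{\mathfrak n_{Q,C}}\in\mathcal O^{\mathfrak l_Q}$ is finite-dimensional). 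Your route is marginally more direct; the paper's factorization is perhaps more suggestive of the transitivity of parabolic induction that underlies the whole section.

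One remark: your concern about the ``identification step'' is overstated. The fact that $\underline L(\sigma_0')|_{\mathfrak g_C}$ is the unique simple quotient $L(\sigma_0')$ of $M(\sigma_0')$ in $\mathcal O^{\mathfrak p}$ is exactly the content of the last sentence of Lemma~\ref{lm:N-invts} (``$Q$ is the largest semisimple quotient of $\underline M(M')$ in $\mathcal O^{\mathfrak p}$''), applied with $M'=\sigma_0'$ which is $\mathfrak l_C$-simple by hypothesis. No delicate sign-chasing or smooth-twist bookkeeping is required here.
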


By Theorem~\ref{thm:criterion-intro} we can reduce the irreducibility problem of $(\Ind_P^G \sigma)^\cts$ to the case where $\sigma$ is smooth.
The following general result on intertwiners and some Clifford theory furthermore allows us to reduce to the case where $\alggrp G$ is almost simple and simply connected (and isotropic, meaning it contains a non-trivial split torus),
see Propositions~\ref{prop:isogenies} and \ref{prop:irreducibility for product group}.

\begin{prop}[Proposition~\ref{prop:intertwiners}]\label{prop:intertwiners_intro}
  Suppose that $\alggrp P = \alggrp L\alggrp N$ is a parabolic subgroup and $\sigma$, $\tau$ are Banach representations of $L$.
  Then the natural map $$\Hom_{L}^\cts(\sigma,\tau) \to \Hom_G^\cts((\Ind_{P}^G\sigma)^{\cts},(\Ind_{P}^G\tau)^{\cts})$$ is an isomorphism.
\end{prop}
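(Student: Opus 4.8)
The plan is to reduce the statement, via a continuous form of Frobenius reciprocity, to a computation of the topological $N$-coinvariants of $(\Ind_P^G\sigma)^{\cts}$, and then to carry out that computation using the geometry of the flag variety $P\backslash G$. Injectivity of the natural map is immediate: if $\phi\colon\sigma\to\tau$ induces the zero map then $\phi(f(g))=0$ for every continuous $f$ and every $g$, and since $f(1)$ ranges over all of $\sigma$ this forces $\phi=0$.

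For surjectivity I would first record the adjunction between $(\Ind_P^G(-))^{\cts}$ and restriction to $P$: for any Banach representation $V$ of $G$, evaluation at $1$ induces a bijection
\[
  \Hom_G^{\cts}\bigl(V,(\Ind_P^G\tau)^{\cts}\bigr)\;\xrightarrow{\ \sim\ }\;\Hom_P^{\cts}(V|_P,\tau),
  \qquad \Phi\longmapsto \mathrm{ev}_1\circ\Phi,
\]
whose inverse sends $\psi$ to $\bigl(v\mapsto(g\mapsto\psi(gv))\bigr)$; the only point needing an argument is continuity of this inverse, which follows from the Iwasawa decomposition $G=PK$ together with the uniform boundedness of the $K$-action on $V$ (Banach--Steinhaus for the compact group $K$). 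Under this bijection the natural map of the proposition corresponds to $\phi\mapsto\phi\circ\mathrm{ev}_1$, so it is enough to prove that precomposition with $\mathrm{ev}_1\colon(\Ind_P^G\sigma)^{\cts}\to\sigma$ is a bijection $\Hom_L^{\cts}(\sigma,\tau)\xrightarrow{\sim}\Hom_P^{\cts}((\Ind_P^G\sigma)^{\cts},\tau)$. Since $\tau$ is inflated from $L$, any $\psi$ in the target kills all differences $nv-v$ ($n\in N$), hence (by continuity) factors as an $L$-equivariant map through the topological $N$-coinvariants $V_N$ of $V:=(\Ind_P^G\sigma)^{\cts}$; as $\mathrm{ev}_1$ is itself $N$-invariant ($\sigma$ being trivial on $N$), it induces $\overline{\mathrm{ev}}_1\colon V_N\to\sigma$, and an easy diagram chase reduces everything to the vanishing of $(\ker\mathrm{ev}_1)_N$ (which then forces $\overline{\mathrm{ev}}_1$ to be a topological isomorphism, $V_N$ being Banach).

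To prove $(\ker\mathrm{ev}_1)_N=0$, I would pass to continuous duals: since $\ker\mathrm{ev}_1$ is a Banach space, by Hahn--Banach this is equivalent to $\bigl((\ker\mathrm{ev}_1)'\bigr)^N=0$, i.e.\ to the absence of nonzero $N$-invariant vectors in the dual. Now identify $V$ with the Banach space of continuous sections of the homogeneous bundle $G\times_P\sigma$ over the compact space $\mathcal{F}:=P\backslash G$; then $\ker\mathrm{ev}_1$ is the space of sections vanishing at the base point $x_0=[1]$, and its dual, modulo the functionals supported at $x_0$, is a space of measures on $\mathcal{F}\setminus\{x_0\}$. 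An $N$-invariant such measure is supported on $(\mathcal{F}\setminus\{x_0\})^N$, which is empty because $\mathcal{F}^N=\{x_0\}$ — the unipotent radical $N$ of $P$ is contained in no $G$-conjugate of $P$ other than $P$ itself. Together with the fact that a finite measure invariant under an action of a unipotent group with no fixed points must vanish — for this one uses a one-parameter subgroup of $N$ not fixing a given point, whose polynomial trajectory lets one translate a small ball off itself infinitely often, so that the ball has measure zero — this yields $\bigl((\ker\mathrm{ev}_1)'\bigr)^N=0$, hence $V_N\cong\sigma$, completing the proof. The step I expect to be the main obstacle is this last one: describing the continuous dual of the section space precisely, justifying the reduction to $N$-invariant measures on $\mathcal{F}\setminus\{x_0\}$, and proving the vanishing of such measures (which itself rests on the fixed-point computation $\mathcal{F}^N=\{x_0\}$).
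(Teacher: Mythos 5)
Your overall strategy — reduce by Frobenius reciprocity to studying $\Hom_P^{\cts}((\Ind_P^G\sigma)^{\cts},\tau)$, then argue that the map $\mathrm{ev}_1$ identifies the topological $N$-coinvariants of $(\Ind_P^G\sigma)^{\cts}$ with $\sigma$ — is close in spirit to the paper's (which also begins with Frobenius reciprocity), and the geometric observation $\mathcal{F}^N=\{x_0\}$ is true and relevant. But there is a genuine gap at the crucial measure-theoretic step, and it is one that cannot be patched by the argument you sketch.

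The claim that an $N$-invariant continuous functional on $\ker\mathrm{ev}_1$ must be ``supported on the $N$-fixed locus'', justified by translating a small ball off itself infinitely often, is a purely Archimedean argument. Over a $p$-adic coefficient field $C$ it fails: if $B, gB, g^2B,\dots$ are disjoint and $\mu(1_{g^iB,v})=c$ for all $i$, then the partial sums $\sum_{i=0}^n 1_{g^iB,v}$ have sup-norm $\lvert v\rvert$ for all $n$ (bounded), while $\mu$ applied to them is $(n+1)c$, whose $p$-adic absolute value satisfies $\lvert(n+1)c\rvert_C\le\lvert c\rvert_C$ for every $n\in\Z_{\ge0}$. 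There is no contradiction, so continuity places no constraint via this mechanism and nothing forces $c=0$. In particular, the statement that an invariant ``measure'' on a $p$-adic space is supported on the fixed-point set is not correct for $C$-valued functionals, so your reduction to $\mathcal{F}^N=\{x_0\}$ does not go through.

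The correct $p$-adic mechanism runs in the opposite direction and is precisely the content of the paper's Lemma~\ref{lem:no-haar}: rather than translating a fixed ball off itself, one partitions a fixed coset $H_1'H_2'g$ into $(H_1':H_1'')$ translates of a smaller coset $H_1''H_2'g$. Invariance then forces $\mu(1_{H_1''H_2'g,v})=(H_1':H_1'')^{-1}\mu(1_{H_1'H_2'g,v})$, and since the indices $(H_1':H_1'')$ are powers of $p$ tending to infinity, $\lvert(H_1':H_1'')^{-1}\rvert_C\to\infty$ while the indicator functions stay bounded, contradicting continuity. To run this argument one needs the domain to be built from a profinite group on which $N$ (or a root subgroup of $N$) acts by translation; this is why the paper restricts $\mu$ to individual cells of the Iwahori--Bruhat decomposition $G=\coprod_{w}PwI$ and exhibits, for $w\notin W_L$, a root subgroup $U_\alpha\subset N\cap w^{-1}\overline N w$ acting by translation. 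Your global picture on $\mathcal{F}\setminus\{x_0\}$ does not directly supply the coset structure needed to run the refinement argument, so the Bruhat-cell reduction is doing real work. Finally, once the non-identity cells are killed, the paper still has to extract the $L$-linear map $\phi_\mu$ on $\sigma$ (using the $L$-action to shrink to germs at $x_0$ and then evaluating on constant functions) and verify its $L$-equivariance; your proposal glosses over this by asserting $\overline{\mathrm{ev}}_1$ is a topological isomorphism, which is fine once the kernel is shown to have vanishing $N$-coinvariants, but it is the $p$-adic continuity constraint, not a fixed-point-support principle, that delivers that vanishing.
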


\begin{remark}
  Based on the equivalence in Theorem~\ref{thm:criterion-intro} we would expect that it is extremely difficult to find an optimal irreducibility criterion of Banach principal series, even for $G = \GL_n(F)$.
The problem is that the submodule structure of the smooth principal series $(\Ind_{P\cap L_Q}^{L_Q} \tau)^\sm$ is badly understood in general, and even when it is understood it is challenging to prove the $p$-adic density of a proper smooth subrepresentation.
\end{remark}

\subsection{Sketch of proof of Theorem \ref{thm:unitary case-parab-intro}}
\label{sec:proof-intro2}

We have already seen the implications (ii)$\Rightarrow$(iii)$\Rightarrow$(i) right after the statement of Theorem~\ref{thm:unitary case-parab-intro}, so we only need to show that (i) implies (ii).
For simplicity we will assume that $P = B$.
We first make a reduction to $F = \Q_p$ and $\alggrp G^\der$ simply-connected, so in particular we have a decomposition $\sigma = \sigma_0 \otimes \tau$ as in \S\ref{sec:criterion}. Then Theorem~\ref{thm:criterion-intro} implies that if $(\Ind_B^G \sigma)^\cts$ is reducible, then $(\Ind_{B\cap L_Q}^{L_Q} \tau)^\sm$ is reducible, where $Q$ is defined as in \S\ref{sec:criterion}.
By relabeling we may assume that $Q = G$, i.e.\ $\underline L(\sigma_0') \in \mathcal O^G$.
This implies that, up to twisting $\sigma$ by a locally analytic character of $G$, we may assume that $\sigma_0$ is algebraic with antidominant central character.
Write $(\Ind_{B}^{G} \tau)^\sm$ as normalized induction $(\nInd_{B}^{G} \tau\delta_B^{-1/2})^\sm$.
Using that $\sigma$ is unitary we deduce that $\tau\delta_B^{-1/2}$ is Weyl group regular.
We now fix an isomorphism $\overline C \cong \C$ and work over the complex numbers.
Then a result of Harish-Chandra implies that there exists a reduced positive root $\alpha$ such that $(\nInd_{B \cap L_\alpha}^{L_\alpha} \tau\delta_B^{-1/2})^\sm$ is reducible, where $\alggrp L_\alpha$ is the Levi subgroup of semisimple rank 1 associated to $\alpha$.
We can write $\tau\delta_B^{-1/2} \cong \tau_u \delta_{B \cap L_\alpha}^s \eta$ with $\tau_u$ unitary (in the complex sense!), $s \in \R$, and $\eta$ a positive-real unramified character of $L_\alpha$.
A result of Silberger shows that $-\frac 12 \le s \le \frac 12$.
There is a tension between $\tau$ being $p$-adic unitary and the Silberger bound that allows us to deduce that $s = -\frac 12$, $\alpha$ is simple, and $\sigma_0$ is trivial on $Z \cap L_\alpha'$.
Crucially, the extreme bounds $s = \pm \frac{1}{2}$
imply that $\tau_u$ is trivial on $Z\cap L_\alpha'$.
This was our guess, based on the available literature, but Jean-Loup Waldspurger kindly provided a beautiful argument in general, see Proposition~\ref{prop:Waldspurger}.
We then deduce that $\sigma$ is trivial on $Z\cap L_\alpha'$.

However, the above sketch glosses over one very important point.
In comparing real and $p$-adic absolute values it is essential to know that the real number $s$ is in fact \emph{rational}.
For this we need to prove Corollary~\ref{cor:rationality, rank one-intro} below.

\subsection{Harish-Chandra's \texorpdfstring{$\mu$}{mu}-function and rationality of poles}
\label{sec:harish-chandra-mu}

The following results in smooth representation theory over $\C$ may be of independent interest.
Harish-Chandra's $\mu$-function (or Plancherel measure) $\mu^G$ controls the reducibility points of smooth parabolic inductions, as recalled in subsection~\ref{sec:reduc-points-parab}.

Suppose that $\alggrp{G}$ is any connected reductive group over $F$, $\alggrp{P} = \alggrp{L}\alggrp{N}$ a maximal parabolic subgroup such that $\alggrp{L}$ is an inner form of a group $\alggrp{L}'$ satisfying 
\[(\widetilde{\alggrp{L}}^{\lowprime})^{\der} \subset \alggrp{L}' \subset \widetilde{\alggrp{L}}^{\lowprime},\]
where 
\[\widetilde{\alggrp{L}}^{\lowprime} := \left(\prod_{i=1}^r \Res_{E_i/F}\GL_{n_i}\right)/\alggrp{H}\]
for some finite extensions $E_i/F$, integers $n_i \ge 1$ and a central, induced subtorus $\alggrp{H}$. 
\begin{thm}[Theorem~\ref{thm:rationality, inner}]\label{thm:rationality, inner-intro}
Let $\sigma$ be a discrete series representation of $L$.
If $\mu^G(\sigma\delta_{P}^{s})$ has a pole at $s = s_{0}\in \R$, then $s_{0}\in \Q$.
\end{thm}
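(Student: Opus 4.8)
The plan is to reduce to an explicit computation of Harish-Chandra's $\mu$-function for the group $\widetilde{\alggrp{L}}^{\lowprime}$ and then to propagate the rationality statement through inner twisting. First I would recall the standard fact that $\mu^G(\sigma\delta_P^s)$ is, up to a nonzero constant and a factor that depends on $\sigma$ through discrete data only, a product over the reduced roots of $\alggrp S_{\alggrp L} := $ the maximal split central torus of $\alggrp L$ in $\alggrp G$ of rational functions in the complex variable $q_\alpha^{s \langle \alpha^\vee, \cdot\rangle}$, where $q_\alpha$ is a power of the residue cardinality $q$ of $F$. Since $\alggrp P$ is maximal, the split rank of $\alggrp S_{\alggrp L}/\alggrp S_{\alggrp G}$ is one, so $\mu^G(\sigma\delta_P^s)$ is a rational function in a single variable $X = q^{cs}$ for an explicit positive rational $c$. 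The assertion that any real pole $s_0$ is rational is then equivalent to the assertion that the numerator and denominator of this rational function of $X$, which a priori could be Laurent polynomials in $X$ with algebraic (even transcendental-looking) exponents, in fact have \emph{rational} exponents, because a real solution of $X = q^{cs}$ forces $s = \log_q X / c \in \Q$ precisely when $X$ is a rational power of $q$.

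The key step is therefore to show that the exponents appearing in $\mu^G(\sigma\delta_P^s)$ are rational. Here I would use the theory of the $R$-group and, more efficiently, the explicit Plancherel formula: by Shahidi's theory (or the Silberger--Zink / Harish-Chandra analysis for inner forms of products of general linear groups) the $\mu$-function of a discrete series of $L$ is built from Rankin--Selberg type local factors $\gamma$-factors attached to pairs of the cuspidal supports, and for the groups in the class $(\widetilde{\alggrp{L}}^{\lowprime})^{\der} \subset \alggrp L' \subset \widetilde{\alggrp{L}}^{\lowprime}$ — that is, inner forms of products of (twists by a central torus quotient of) restrictions of scalars of $\GL_{n_i}$ — these factors are completely explicit: they are, up to units, products of terms of the form $1 - q_E^{-m s} \epsilon$ with $m \in \Z$, $q_E$ a power of $q$, and $\epsilon$ a root of unity coming from the discrete series parameters. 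In particular all the exponents of $q$ occurring are rational (indeed integral multiples of $1/[E_i:F]$). The passage from $\alggrp L'$ to an inner form $\alggrp L$ only rescales these exponents by the (rational) ratios of the relevant ``$q$''s, as in the comparison of Plancherel measures under inner twisting due to Arthur, so rationality is preserved. I would isolate the needed algebraic input as a lemma: a rational function in $X$ whose zeros and poles (in $X$) are all of the form $\zeta q^{r}$ with $\zeta$ a root of unity and $r \in \Q$ has the property that, writing $X = q^{cs}$ with $c \in \Q_{>0}$, every real $s_0$ with $X(s_0)$ a zero or pole satisfies $s_0 \in \Q$.

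The main obstacle I anticipate is \emph{not} the general structure of the $\mu$-function but pinning down the exponents in the right generality: one must handle simultaneously (a) the restriction of scalars $\Res_{E_i/F}$, which introduces the factor $[E_i : F]$ in the exponents — harmless, still rational; (b) the central induced torus quotient $\alggrp H$, which can alter the relevant lattice $X_*(\alggrp S_{\alggrp L})$ and hence the coroot pairings $\langle \alpha^\vee,\cdot\rangle$ by rational (not necessarily integral) amounts, requiring care that no irrational factor sneaks in; and (c) the inner twist, where one must quote the correct normalization for how $\mu^{G}$ transfers, e.g.\ via the local Jacquet--Langlands correspondence for these $\GL$-type Levis, under which the $\mu$-function of a discrete series matches that of its transfer up to an explicit power of $q$. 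Once these three normalizations are tracked and shown to be rational, the conclusion is immediate from the elementary lemma above. I would structure the write-up as: (1) reduce to $\alggrp P$ maximal and a single variable $X = q^{cs}$; (2) state and prove the elementary lemma on rational functions of $X$; (3) compute $\mu^G(\sigma\delta_P^s)$ for $\alggrp L = \widetilde{\alggrp{L}}^{\lowprime}$ via the explicit $\GL$ Plancherel measure, checking rationality of all exponents; (4) descend to $(\widetilde{\alggrp{L}}^{\lowprime})^{\der} \subset \alggrp L' \subset \widetilde{\alggrp{L}}^{\lowprime}$ and then pass to the inner form $\alggrp L$, noting that both operations preserve the shape required by the lemma; (5) conclude.
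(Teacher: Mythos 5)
There is a genuine gap, and it concerns precisely the step you call ``quote the correct normalization for how $\mu^G$ transfers \ldots\ via the local Jacquet--Langlands correspondence.'' The local Jacquet--Langlands correspondence is a statement about representations of the Levi $L$; it says nothing about the embedding of $\alggrp L$ into the ambient group $\alggrp G$, and $\mu^G$ depends on that embedding through the adjoint action of $\alggrp L$ on $\mathfrak n$. In particular $\alggrp G$ is \emph{not} assumed to be of $\GL$-type here --- only $\alggrp L$ is --- so $\mu^G$ is not a Rankin--Selberg $\gamma$-factor that you can compute ``via the explicit $\GL$ Plancherel measure.'' The comparison of $\mu^G$ between an inner form $\alggrp G$ and its quasisplit form $\alggrp G'$ that you invoke is exactly what is \emph{not} known a priori; indeed the paper's Corollary~\ref{cor:rationality, inner} explicitly records that it ``verifies \cite[Working Hypothesis 1.1]{MR3194161}'' --- i.e.\ it had the status of a working hypothesis before this argument, not a theorem of Arthur or anyone else in this generality.

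The paper's actual route is global: it globalizes $(\alggrp G, \alggrp P, \alggrp L, \sigma)$ to a number field, uses the trace formula and the \emph{global} Jacquet--Langlands correspondence of Badulescu to produce cuspidal automorphic representations $\Pi$ on $\glob L(\mathbb A)$ and $\Pi'$ on $\glob L'(\mathbb A)$ whose components agree outside a finite set and are discrete series at the places where $\glob G$ is not quasisplit, and then uses the functional equation of global intertwining operators ($J_{\glob P|\overline{\glob P}} J_{\overline{\glob P}|\glob P} = \mathrm{id}$) to equate $\prod_{v\in V}\mu^{\glob G(\glob F_v)}(\Pi_v\delta^s)$ with $\prod_{v\in V}\mu^{\glob G'(\glob F_v)}(\Pi'_v\delta^s)$ up to a positive constant. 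Isolating the place $v_0$ then reduces rationality of poles of $\mu^G(\sigma\delta_P^s)$ to the quasisplit case, where one \emph{does} have Shahidi's $\tfrac12\Z$-integrality of pole locations (via the fact that the supercuspidal support of a discrete series on a $\GL$-type quasisplit Levi is automatically generic). Your step (2), the elementary lemma on rational functions of $X = q^{cs}$ with zeros and poles of the form $\zeta q^r$, is fine and is morally what underlies Lemma~\ref{lm:rationality-supercusp}; and your observation that restriction of scalars and the induced central quotient $\alggrp H$ only introduce rational rescalings of exponents is also correct and is used in the quasisplit case. But the descent from the quasisplit form to the inner form in your step (4) cannot be done locally; it requires the global intertwining-operator argument (or something equivalent), and without it the proof does not close.
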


Our proof uses global intertwining operators and the global Jacquet--Langlands correspondence \cite{MR2390289} to reduce to the case where $\alggrp G$ is quasisplit,
in which case the rationality follows from results of Shahidi \cite{MR1070599} (see Proposition~\ref{prop:rationality-discrete-series}), using that the supercuspidal support of $\sigma$ is automatically generic.
(This generalizes an argument of Mui\'c--Savin \cite{MR1749954}, who used this method to compare $\mu$-functions of $G$ and its quasisplit inner form when $\alggrp G$ is a hermitian quaternionic group of maximal Witt rank and $\alggrp P$ is the Siegel parabolic.
See also \cite{MR3194161} for further work in this direction.)

By the classification of almost simple rank one groups we obtain the following corollary.

\begin{cor}[Corollary~\ref{cor:rationality, rank one}]\label{cor:rationality, rank one-intro}
  Suppose that the adjoint group $\alggrp{G}^{\ad}$ is almost simple of rank one over $F$.
  Let $\sigma$ be a unitary supercuspidal representation of $Z$.
  If $\mu^G(\sigma\delta_{B}^{s})$ has a pole at $s = s_{0}\in \R$, then $s_{0}\in \Q$.
\end{cor}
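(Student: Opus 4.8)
The plan is to reduce to Theorem~\ref{thm:rationality, inner-intro} by running through the classification of rank one groups over $F$. As a first step I would reduce to the case that $\alggrp{G}$ is absolutely almost simple and simply connected. This relies on two standard features of Harish-Chandra's $\mu$-function: that $\mu^{G}(\sigma\delta_{B}^{s})$, viewed as a function of $s$, is unchanged when $\alggrp{G}$ is replaced by a central isogenous group or by its derived group (since $\mu$ depends on $\alggrp{G}$ only through the adjoint action of $\alggrp{Z}$ on $\Lie\alggrp{U}$, and central factors contribute nothing), and that it is compatible with Weil restriction of scalars. Note also that a unitary supercuspidal representation of $Z$ is in particular a discrete series representation, so we land in the setting of Theorem~\ref{thm:rationality, inner-intro}. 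Since $\alggrp{G}$ has relative rank one, the minimal parabolic $\alggrp{B} = \alggrp{Z}\alggrp{U}$ is a maximal parabolic subgroup, and hence it suffices, by Theorem~\ref{thm:rationality, inner-intro} applied with $\alggrp{P} = \alggrp{B}$, to verify that $\alggrp{Z}$ is an inner form of a group $\alggrp{L}'$ with $(\wt{\alggrp{L}}{}')^{\der}\subset\alggrp{L}'\subset\wt{\alggrp{L}}{}'$, where $\wt{\alggrp{L}}{}' = (\prod_{i}\Res_{E_{i}/F}\GL_{n_{i}})/\alggrp{H}$ for suitable $E_{i}/F$, $n_{i}\ge 1$, and a central induced subtorus $\alggrp{H}$.

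The key input is then Tits' classification of absolutely almost simple simply connected isotropic groups of relative rank one over the $p$-adic field $F$, together with the fact that over such $F$ the only anisotropic absolutely simple groups are the norm one groups $\SL_{1}(D)$ of central division $F$-algebras $D$, so that every anisotropic kernel occurring is of type $A$, a product of groups $\Res_{E_{i}/F}\SL_{1}(D_{i})$. The groups to be treated then form a short list. For the inner type $A$ groups $\SL_{2}(D)$ one has $\alggrp{Z}\cong$ the inner form, attached to $D$, of $\{(x,y)\in\GL_{n}\times\GL_{n} : \det(x)\det(y) = 1\}$ (with $n = \deg D$), which sits between $\SL_{n}\times\SL_{n}$ and $\GL_{n}\times\GL_{n}$. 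For the outer type $A$ groups --- the special unitary groups $\SU(h)$ of an $E/F$-hermitian space of Witt index one, in at most four variables or over a quaternion division algebra --- I would compute $\alggrp{Z}$ directly in each subcase; for instance, for the quasi-split $\SU_{3}$ one finds $\alggrp{Z}\cong\Res_{E/F}\GL_{1}$, and for $\SU_{4}$ of Witt index one $\alggrp{Z}$ is an inner form of $(\Res_{E/F}\GL_{1}\times\GL_{2})/\mathbb{G}_{m}$. For the remaining types the rank one forms over $F$ are the groups $\Spin(q)$ with $q$ a quadratic $F$-form of Witt index one, necessarily in at most six variables, and those in five or six variables reduce to $\Sp_{2}(D)$ (with $\alggrp{Z}\cong\GL_{1}(D)$, an inner form of $\GL_{2}$) or to $\SU_{4}$-type groups already considered; there are no rank one forms of exceptional type over $F$. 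In each case the displayed inclusions are immediate once $\alggrp{Z}$ has been written down.

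The main obstacle is the case-by-case verification in the second paragraph, especially for the non-quasi-split unitary groups of type ${}^{2}A$: there one must recognize the anisotropic unitary factor of $\alggrp{Z}$ as an inner form of some $\GL_{m}$ --- using the exceptional isomorphism $\SU_{2}\cong\SL_{1}(D)$ and the low-rank coincidences of types $A$, $B$, $D$ --- and then correctly pin down the central subtorus $\alggrp{H}$ one quotients by, checking that it is induced. A more routine point is the justification, in the first step, that passing from $\alggrp{G}$ to a central isogenous group or to $\alggrp{G}^{\der}$ does not move the poles of $\mu^{G}(\sigma\delta_{B}^{s})$ in the variable $s$; this is part of the standard formalism of the Plancherel measure and should be recorded explicitly.
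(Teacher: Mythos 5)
The overall strategy you propose is the same as the paper's: reduce to $\alggrp G$ absolutely almost simple and simply connected (using that $\mu^G$ is insensitive to central isogeny and Weil restriction, via Proposition~\ref{prop:solleveld}), then run through Tits' classification of rank-one groups over $F$ and verify in each case that the minimal Levi $\alggrp Z$ has quasisplit inner form $\alggrp L'$ satisfying the hypothesis of Theorem~\ref{thm:rationality, inner}. The reduction step is fine modulo citing the correct comparison result for $\mu$-functions under such morphisms.

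However, your classification is incomplete, and this is the crux of the corollary. You claim that, beyond the type-$A$ groups, "the rank one forms over $F$ are the groups $\Spin(q)$ with $q$ a quadratic $F$-form of Witt index one, necessarily in at most six variables." This is false. The rank-one almost simple simply-connected groups over a $p$-adic field (Proposition~\ref{prop:rank-one-groups}) also include the quaternionic hermitian group $\SU(h)$ for $h$ a rank-two $D$-hermitian form of Witt index one (an inner form of $\Sp_6$, which is not a spin group at all), as well as the quaternionic skew-hermitian groups $\tSU(h)$ for $h$ a $D$-skew-hermitian form of rank $4$ or $5$ and Witt index one (inner forms of quasisplit $\Spin_8^*$ and of split $\Spin_{10}$, whose underlying quadratic forms have $8$ and $10$ variables respectively, well above your bound). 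These three families, the cases (iii$_2$), (iv$_4$), (iv$_5$) of Proposition~\ref{prop:rank-one-groups}, are exactly the ones where the rationality result is new (cf.\ Remark~\ref{rk:rank-1-known-cases}); your proposal never reaches them. Relatedly, you subsume quaternionic unitary groups under "outer type $A$," but they are of type $C$ or $D$, and your list of which quaternionic (skew-)hermitian forms have Witt index one over $F$ is not the correct one. To close the gap you would need to identify the quasisplit Levi $\alggrp L'$ in each of these three cases as sandwiched between $(\wt{\alggrp L}')^\der$ and $\wt{\alggrp L}' = (\prod_i\Res_{E_i/F}\GL_{n_i})/\alggrp H$ with $\alggrp H$ induced; this requires nontrivial identifications such as $\GSpin_6\cong\{(g_1,g_2)\in\GL_1\times\GL_4 : g_1^2=\det g_2\}$ and the description of $\GSpin_4^*$ and $\U_2$ in Lemmas~\ref{lm:gspin4} and~\ref{lm:u2}, which is precisely the content of Proposition~\ref{prop:rank-one-levis}.
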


In fact, using \cite[Theorem 8.1]{MR1070599} we can also bound the denominator and obtain an explicit finite list of possible poles $s_0$, see Remark~\ref{rk:rank-1-poles}.
(This finiteness property is in fact related to a conjecture of Howe, see \cite{MR806844}.)
We remark that there are 7 families of almost simple rank one groups, and Corollary~\ref{cor:rationality, rank one-intro} was only known in 4 cases previously, see Remark~\ref{rk:rank-1-known-cases}.

We also obtain the following corollary from our argument.
Here, $\widetilde{\alggrp{L}}$ is an inner form of $\widetilde{\alggrp{L}}^{\lowprime}$, naturally obtained from the inner form $\alggrp L$ of $\alggrp{L}'$, such that $\widetilde{\alggrp{L}}^{\lowprime[\der]} \subset \alggrp{L} \subset \widetilde{\alggrp{L}}$.

\begin{cor}[Corollary~\ref{cor:rationality, inner}]\label{cor:rationality, inner-intro}
  Suppose that $\sigma_1$, $\sigma_2$ are discrete series representations of $L$ that are conjugate under the action of $\widetilde{L}$.
  Then $\mu^G(\sigma_1\delta_{P}^{s}) = \mu^G(\sigma_2\delta_{P}^{s})$.
\end{cor}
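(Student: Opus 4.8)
The plan is to re-run the global argument behind Theorem~\ref{thm:rationality, inner-intro}, now tracking the full dependence of $\mu^G(\sigma\delta_P^s)$ on $\sigma$ rather than only the location of its poles. The starting point is structural: since $\widetilde{\alggrp L}^{\lowprime[\der]}\subset\alggrp L$ and $\widetilde{\alggrp L}/\widetilde{\alggrp L}^{\der}$ is a torus, $L$ is normal in $\widetilde L$ with abelian quotient, so Clifford theory provides a discrete series representation $\widetilde\sigma$ of $\widetilde L$ with $\sigma_1\subseteq\widetilde\sigma|_L$; and since $\sigma_2\cong\sigma_1^g$ for some $g\in\widetilde L$, applying the operator $\widetilde\sigma(g)$ shows $\sigma_2\subseteq\widetilde\sigma|_L$ as well. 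Thus $\sigma_1$ and $\sigma_2$ admit a common ``$\GL$-type lift'' $\widetilde\sigma$, and the corollary reduces to showing that $\mu^G(\sigma\delta_P^s)$ depends on the discrete series $\sigma$ of $L$ only through such a $\widetilde\sigma$.

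Next I would reduce to the quasisplit case exactly as in the proof of Theorem~\ref{thm:rationality, inner-intro}: choose a number field with a place $v_0$ giving $F$ and a global form of $\alggrp G$, globalize $\widetilde\sigma$ (hence $\sigma_1$ and $\sigma_2$, which occur in its restriction) to a cuspidal automorphic representation of $\widetilde{\alggrp L}(\mathbb A)$, transfer via the global Jacquet--Langlands correspondence \cite{MR2390289} to the quasisplit inner form $\widetilde{\alggrp L}^{\lowprime}$, and restrict back to $\alggrp L^{\lowprime}$. Comparing the $\mu$-function of $\alggrp G$ with that of its quasisplit inner form at $v_0$ (generalizing Mui\'c--Savin, as in that proof) yields discrete series $\sigma_1^{\lowprime},\sigma_2^{\lowprime}$ of the quasisplit Levi with $\mu^G(\sigma_i\delta_P^s)=\mu^{G^{\lowprime}}(\sigma_i^{\lowprime}\delta_{P^{\lowprime}}^s)$; moreover these share a common $\GL$-type lift $\widetilde\sigma^{\lowprime}$, namely the local component at $v_0$ of the Jacquet--Langlands transfer of the globalized $\widetilde\sigma$ (a discrete series, hence generic, representation of the genuine $\GL$-type group $\widetilde L^{\lowprime}$). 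So it suffices to prove the corollary when $\alggrp G$ is quasisplit.

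In the quasisplit case I would use the Langlands--Shahidi input from the proof. Write $r=\bigoplus_i r_i$ for the adjoint action of the $L$-group of the Levi on the dual of the nilradical; choosing $\psi$ so that $\sigma^{\lowprime}$ is generic, Shahidi's formula \cite{MR1070599} writes $\mu^{G^{\lowprime}}(\sigma^{\lowprime}\delta^s)$ as $\prod_i\gamma(is,\sigma^{\lowprime},r_i,\psi)\,\gamma(-is,(\sigma^{\lowprime})^{\vee},r_i,\overline\psi)$. Each $r_i$ factors through the $L$-group of $\alggrp L^{\lowprime}$, which is a quotient of the $L$-group of $\widetilde{\alggrp L}^{\lowprime}$, and the key claim is that for a $\psi$-generic discrete series $\sigma^{\lowprime}$ occurring in $\widetilde\sigma^{\lowprime}|_{L^{\lowprime}}$ one has $\gamma(s,\sigma^{\lowprime},r_i,\psi)=\gamma(s,\widetilde\sigma^{\lowprime},\widetilde r_i,\psi)$ for the corresponding constituent $\widetilde r_i$ of the adjoint action of the $L$-group of $\widetilde{\alggrp L}^{\lowprime}$ --- immediate at the unramified places from the Satake parameters, and in general a consequence of the Langlands--Shahidi axioms (multiplicativity and the global functional equation) applied to a global lift, using that the $r_i$ here are among the Rankin--Selberg, Asai and symmetric/exterior square representations, for which the relevant $L$-functions are known to be ``nice''. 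Granting this, and using that $\mu^{G^{\lowprime}}$ is manifestly independent of $\psi$, both $\mu^{G^{\lowprime}}(\sigma_1^{\lowprime}\delta^s)$ and $\mu^{G^{\lowprime}}(\sigma_2^{\lowprime}\delta^s)$ equal $\prod_i\gamma(is,\widetilde\sigma^{\lowprime},\widetilde r_i,\psi)\gamma(-is,(\widetilde\sigma^{\lowprime})^{\vee},\widetilde r_i,\overline\psi)$, which proves the corollary; alternatively one can bypass Shahidi's formula by globalizing $\sigma_1^{\lowprime},\sigma_2^{\lowprime}$ to cuspidal automorphic representations of $\alggrp L^{\lowprime}(\mathbb A)$ occurring in the restriction of one cuspidal automorphic representation of $\widetilde{\alggrp L}^{\lowprime}(\mathbb A)$ and agreeing at all places away from $v_0$ (in the style of Labesse--Langlands), and invoking the product formula $\prod_v\mu^{G^{\lowprime}}_v=1$ for Plancherel measures. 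The main obstacle in either route is precisely this ramified-place comparison: away from $v_0$ everything is controlled by Satake parameters and is routine, but one must choose the auxiliary global data so that the restrictions at the other places are irreducible (or at least have matching $\gamma$-factors) while preserving cuspidality, so that the global functional equation pins down the local $\gamma$-factor at $v_0$ in terms of $\widetilde\sigma^{\lowprime}$ alone.
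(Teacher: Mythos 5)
Your proposal takes a genuinely different route from the paper's, and as written it has two real gaps.

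First, your primary route requires a term-by-term identity $\mu^G(\sigma_i\delta_P^s) = \mu^{G'}(\sigma_i'\delta_{P'}^s)$ comparing the inner form $G$ with its quasisplit form $G'$ for a single Jacquet--Langlands pair of discrete series. The paper's Theorem~\ref{thm:rationality, inner} establishes only the weaker \emph{product} comparison $\prod_{v\in V_0}\mu^{\glob G(\glob F_v)}(\Pi_v\delta^s)\sim\prod_{v\in V_0}\mu^{\glob G'(\glob F_v)}(\Pi'_v\delta^s)$, not a place-by-place equality; indeed the remark immediately after that proof explicitly flags the term-by-term equality as something that ``with more work it might be possible to show.'' So the reduction to the quasisplit case that you take as a black box is itself an open step. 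Second, the local $\gamma$-factor equality $\gamma(s,\sigma',r_i,\psi)=\gamma(s,\widetilde\sigma',\widetilde r_i,\psi)$ relating a $\psi$-generic constituent of $\widetilde\sigma'|_{L'}$ to its $\mathrm{GL}$-type lift is a genuine theorem about $L$-indistinguishability of Langlands--Shahidi $\gamma$-factors; you yourself describe it as ``the main obstacle'' and do not prove it, so as given this is a gap rather than a proof.

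Your alternative route — globalize $\sigma_1,\sigma_2$ to cuspidal automorphic representations occurring in the restriction of a single cuspidal $\widetilde\Pi$, matching at all places away from $v_0$, and use the global product formula for intertwining operators — is essentially the right idea, but you apply it on the quasisplit side (so you still need the unproven $\mu^G=\mu^{G'}$ comparison). The paper applies exactly this idea \emph{directly on the inner form}: keep the globalization $\widetilde\Pi$ from the proof of Theorem~\ref{thm:rationality, inner}, run the density argument of Lemma~\ref{lem:density argument} twice to produce $\Pi_1,\Pi_2$ in $\widetilde\Pi|_{\glob L(\mathbb A)}$ with $\Pi_{i,v_0}\simeq\sigma_i$, $\Pi_{1,v}\simeq\Pi_{2,v}$ for $v\in V_1\setminus\{v_0\}$, and both quotients of the same unramified $\widetilde\Pi_v$ for $v\in V\setminus V_1$ (so that Lemma~\ref{lem:comparison of mu for unramified} gives equal $j$-functions there). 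Since the partial $L$-factor $c^V$ depends only on the unramified data of $\widetilde\Pi$, the global intertwining identity forces $\prod_{v\in V}\mu^{\glob G(\glob F_v)}(\Pi_{1,v}\delta^s)=\prod_{v\in V}\mu^{\glob G(\glob F_v)}(\Pi_{2,v}\delta^s)$, and cancelling the matching factors at $v\ne v_0$ gives the result, with no passage to the quasisplit form and no Shahidi $\gamma$-factor input. If you redo your alternative route on the inner form side, you recover the paper's proof; as written, the quasisplit detour introduces unresolved dependencies.
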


This verifies \cite[Working Hypothesis 1.1]{MR3194161} in our more general setup.

\subsection{Sketch of proof of Theorems \ref{thm:GLn-irred-intro} and \ref{thm:split-groups-intro}}
\label{sec:proof-intro3}

Consider $\alggrp G$ a split group over $F$ and $\chi \colon Z \to C^\times$ a continuous character.
Let us assume for simplicity that $\alggrp G^\der$ is simply connected.
By thinking of $G$ as the $\Q_p$-points of $\Res_{F/\Q_p} \alggrp G$ we can work over $\Q_p$ when needed.
As in \S\ref{sec:criterion} we write $\chi = \sigma_0\tau$ with $\underline L(\sigma_0') \in \mathcal O^Q$ (with $Q$ maximal) and $\tau$ smooth.
It is not hard to see that $Q = P$ in the notation of Theorem~\ref{thm:split-groups-intro}.
We now prove the contrapositive of Theorems \ref{thm:GLn-irred-intro} and \ref{thm:split-groups-intro}.
By Theorem~\ref{thm:criterion-intro} we may reduce to the case where $Q = G$, and it suffices to show that any irreducible subrepresentation of $(\Ind_{B}^{G} \tau)^\sm$ is dense in $(\Ind_{B}^{G} \tau)^\cts$.
As $\tau$ is smooth, the first bullet in Theorem~\ref{thm:split-groups-intro} implies that $\tau\delta_B^{-1/2}$ is Weyl group regular and the second bullet in Theorem~\ref{thm:split-groups-intro} becomes that $\tau\delta_B^{-1/2} \circ \alpha^\vee \ne \lvert \cdot\rvert_F^{-1}$ for all positive roots $\alpha$.
Work of Rodier \cite{MR644842} then shows that $(\Ind_{B}^{G} \tau)^\sm = (\nInd_{B}^{G} \tau\delta_B^{-1/2})^\sm$ has an irreducible socle that is moreover generic.
(When $G = \GL_n(F)$ we do not need the regularity condition by Bernstein--Zelevinsky \cite{MR0579172}.)

We then conclude by the following result.

\begin{prop}\label{prop:generic-intro}
  Any generic subrepresentation of $(\Ind_{B}^{G} \tau)^\sm$ is dense in $(\Ind_{B}^{G} \tau)^\cts$.
\end{prop}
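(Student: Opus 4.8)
The plan is to prove that a generic subrepresentation $V \subseteq (\Ind_B^G\tau)^\sm$ is dense in the Banach space $(\Ind_B^G\tau)^\cts$ by passing through the locally analytic induction and invoking the Orlik--Strauch machinery. First I would recall that $(\Ind_B^G\tau)^\sm \subseteq (\Ind_B^G\tau)^\an \subseteq (\Ind_B^G\tau)^\cts$, with the locally analytic vectors dense in the Banach completion (by Schneider--Teitelbaum). So it suffices to show that the closure $\overline{V}$ in $(\Ind_B^G\tau)^\cts$ contains $(\Ind_B^G\tau)^\an$, or equivalently that $\overline{V} \cap (\Ind_B^G\tau)^\an = (\Ind_B^G\tau)^\an$. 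Since $\overline{V}$ is a closed $G$-subrepresentation of the Banach representation and the latter is topologically of finite length (Proposition~\ref{prop:adm-fin-length}), passing to locally analytic vectors is exact on the relevant short exact sequences, so it is enough to show that the image of $V$ under $(\Ind_B^G\tau)^\sm \hookrightarrow (\Ind_B^G\tau)^\an$ generates $(\Ind_B^G\tau)^\an$ as a closed $G$-subrepresentation.

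The key point is then a statement purely about the locally analytic principal series: \emph{any nonzero $G$-invariant closed subspace $W \subseteq (\Ind_B^G\tau)^\an$ containing a generic smooth vector must be everything.} Here I would use the Orlik--Strauch description: $(\Ind_B^G\tau)^\an$ is glued from constituents indexed by pairs $(M,\pi)$ with $M \in \mathcal{O}^{\mathfrak b}$ a simple highest-weight module (since $\tau$ is smooth, the only $M$ that occurs in the relevant filtration is the trivial one $M = C$) and $\pi$ a smooth constituent of $(\Ind_B^G\tau)^\sm$. Concretely, because $\tau$ is smooth, the Orlik--Strauch functor $\mathcal{F}_B^G$ sends $(C,\tau)$ to $(\Ind_B^G\tau)^\an$ and its irreducible constituents correspond bijectively and exactly to the irreducible smooth constituents of $(\Ind_B^G\tau)^\sm$; that is, $(\Ind_B^G\tau)^\an$ and $(\Ind_B^G\tau)^\sm$ have ``the same'' lattice of subrepresentations. (This exactness of $\mathcal{F}_B^G$ restricted to smooth coefficients, and the resulting isomorphism of subrepresentation lattices, is part of the Orlik--Strauch formalism; see Appendix~\ref{app:orlik-strauch}.) In particular a closed $G$-subrepresentation $W$ of $(\Ind_B^G\tau)^\an$ meeting the socle (which is generic and irreducible by Rodier, resp.\ Bernstein--Zelevinsky) is forced to contain that socle, and conversely the smooth socle, being the image under $\mathcal{F}_B^G$ of the smooth socle, generates a closed subrepresentation whose only possibility — again by the lattice correspondence — is all of $(\Ind_B^G\tau)^\an$, provided the smooth socle is essential, i.e.\ meets every nonzero subrepresentation. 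The latter is exactly the content of ``irreducible socle'' from Rodier/Bernstein--Zelevinsky as used in \S\ref{sec:proof-intro3}.

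Assembling: let $V \subseteq (\Ind_B^G\tau)^\sm$ be generic. Then $V$ contains the (unique, by the socle statement) irreducible generic subrepresentation $V_0$ of $(\Ind_B^G\tau)^\sm$, which is the socle; hence $\overline{V} \supseteq \overline{V_0}$. Under the lattice isomorphism above, the closed $G$-subrepresentation of $(\Ind_B^G\tau)^\an$ generated by $V_0$ corresponds to a nonzero subrepresentation of $(\Ind_B^G\tau)^\sm$ containing the essential socle, hence to all of $(\Ind_B^G\tau)^\sm$, so the closure of $V_0$ in $(\Ind_B^G\tau)^\an$ is dense, and therefore $\overline{V_0} = (\Ind_B^G\tau)^\cts$ after taking Banach completions. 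Thus $\overline{V} = (\Ind_B^G\tau)^\cts$.

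The step I expect to be the main obstacle is the compatibility claim in the second paragraph: that the Orlik--Strauch bifunctor $\mathcal{F}_B^G$, applied with smooth coefficient $\tau$, is exact and induces an isomorphism between the lattice of (closed) $G$-subrepresentations of $(\Ind_B^G\tau)^\an$ and the lattice of subrepresentations of the smooth principal series $(\Ind_B^G\tau)^\sm$ — and in particular that there are no ``extra'' locally analytic subrepresentations not seen smoothly. This requires knowing that in the relevant block of category $\mathcal{O}^{\mathfrak b}$ the only simple object contributing to $(\Ind_B^G\tau)^\an$ is the trivial module (so that the locally analytic structure is ``entirely smooth''), together with the exactness and faithfulness properties of $\mathcal{F}_B^G$; these should all be available from the Orlik--Strauch papers \cite{OS2}, \cite{OS3} and Appendix~\ref{app:orlik-strauch}, but marshalling them precisely — including checking that genericity is preserved along $(\Ind_B^G\tau)^\sm \hookrightarrow (\Ind_B^G\tau)^\an \hookrightarrow (\Ind_B^G\tau)^\cts$ and that density in the compact-type topology implies density after Banach completion — is where the real work lies.
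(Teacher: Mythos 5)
Your proposal has a fundamental gap at the very first reduction, and it would not recover: you try to replace density of $V$ in $(\Ind_B^G\tau)^\cts$ by the assertion that $V$ generates $(\Ind_B^G\tau)^\an$ as a closed subrepresentation in the compact-type topology, and then argue inside the locally analytic world. But that assertion is \emph{false} already for $V = (\Ind_B^G\tau)^\sm$. Since $\tau$ is smooth, $(\Ind_B^G\tau)^\an = \mathcal{F}_B^G(\underline M(1),\tau)$ and the Verma quotient $\underline M(1)\twoheadrightarrow 1$ yields, by contravariance, a \emph{proper closed} embedding $(\Ind_B^G\tau)^\sm = \mathcal{F}_B^G(1,\tau)\hookrightarrow (\Ind_B^G\tau)^\an$. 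Hence any smooth $V\subset(\Ind_B^G\tau)^\sm$ has closure in $(\Ind_B^G\tau)^\an$ contained in $(\Ind_B^G\tau)^\sm\neq(\Ind_B^G\tau)^\an$. What you actually need is that $\overline{V}\cap(\Ind_B^G\tau)^\an$ (closure $\overline{V}$ taken in the Banach topology) is all of $(\Ind_B^G\tau)^\an$ — and this set is strictly larger than the locally analytic closure of $V$, precisely because smooth vectors are dense in the Banach completion but \emph{not} in the locally analytic subspace with its finer topology. Your key claim, ``any nonzero closed $G$-subspace of $(\Ind_B^G\tau)^\an$ containing a generic smooth vector must be everything,'' is therefore disproved by the subspace $(\Ind_B^G\tau)^\sm$ itself.

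A secondary but also serious problem is the claimed lattice isomorphism between closed subrepresentations of $(\Ind_B^G\tau)^\an$ and subrepresentations of $(\Ind_B^G\tau)^\sm$. When $\tau$ is smooth, the relevant $U(\mathfrak g_C)$-module is the full Verma module $\underline M(1)$ of weight $0$, not just its trivial quotient; $\underline M(1)$ has many simple constituents, and $\mathcal{F}_B^G$ applied to each of them contributes genuinely new constituents to $(\Ind_B^G\tau)^\an$ that have no counterpart in $(\Ind_B^G\tau)^\sm$. So the lattices are very far from isomorphic.

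The paper's actual argument (see Proposition~\ref{prop:whittaker criterion}) avoids the locally analytic detour entirely: genericity plus the Bruhat filtration forces $\pi$ to contain a function supported on the big cell $B\backslash Bw_0 U_0$, and averaging against $\theta^{-1}$ produces such a function with values in a single line $Cv$; then the explicit $p$-adic approximation argument of Lemma~\ref{lem:supported on big cell} shows such an $f$ is a topological generator of the Banach induction. The point you correctly flag as likely to be hard — that density in some auxiliary topology gives density in the Banach completion — is exactly where the approach breaks down; the paper circumvents it by proving Banach density directly.
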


To prove the proposition, we first show by the geometric lemma (cf.\ Proposition~\ref{prop:whittaker criterion}) that any generic subrepresentation $\pi$ of $(\Ind_{B}^{G} \tau)^\sm$ contains an element $f$ that is supported on the big cell $B\backslash B w_0 U$, where $w_0$ is the longest Weyl group element.
Then we deduce by a $p$-adic approximation argument (Lemma~\ref{lem:supported on big cell}) that $\pi$ is dense in $(\Ind_{B}^{G} \tau)^\cts$.

In fact, Proposition~\ref{prop:generic-intro} generalizes to arbitrary $\alggrp G$, where we say that a smooth representation $\pi$ is \emph{generic} if the twisted coinvariants $(\pi \otimes \o C)_{U,\theta}$ are nonzero for some character $\theta \colon U \to \o C^\times$ that is non-trivial on each simple root subgroup.
(Here $\o C$ is an algebraic closure of $C$.)
We then generalize Theorem~\ref{thm:GLn-irred-intro} to $\GL_n(D)$, where $D$ is a finite-dimensional division algebra, see Theorem~\ref{thm:GL(n,D),dim>1}.
(Note that by Theorem~\ref{thm:criterion-intro} we may assume that the inducing representation $\sigma$ is smooth.) For this we generalize Bernstein--Zelevinsky's theory of derivatives~\cite{MR0579172}, cf.\ subsection~\ref{sec:GL(n,D)}.

\subsection{Sketch of proof of Theorem~\ref{thm:criterion-intro}}
\label{sec:proof-intro1}

We recall that Orlik--Strauch \cite{OS2}, \cite{OS3} define a functor $\mathcal{F}_{P}^{G}$ from $(\mathcal{O}^{P})^{\opp}\times \Rep^{\adm}(L)$ to locally analytic representations of $G$, where $\Rep^{\adm}(L)$ denotes the category of admissible smooth $L$-representations.
This functor satisfies the following properties (see \cite{OS3} and subsection~\ref{sec:funct-orlik-stra}):
\begin{itemize}
\item The functor $\mathcal{F}_{P}^{G}$ is exact in both arguments.
\item Let $P' = L'N'\supset P$ be another parabolic subgroup.
If $M\in \mathcal{O}^{P'}$ and $\pi\in\Rep^{\adm}(L)$, then $\mathcal{F}_{P}^{G}(M,\pi)\simeq \mathcal{F}_{P'}^{G}(M,(\Ind_{P \cap L'}^{L'}\pi)^{\sm})$.
\item Assume that $M\in \mathcal{O}^{P}$ is equimaximal with maximal parabolic $P$ and $\pi \in \Rep^{\adm}(L)$.
Assume that $M|_{\mathfrak{g}_{C}} \in \mathcal O^{\mathfrak p}$ is simple and $\pi$ is irreducible.
Then $\mathcal{F}_{P}^{G}(M,\pi)$ is irreducible.
\end{itemize}
For $W\in \mathcal{O}^{L}$ the generalized Verma module $\underline{M}(W)$ is defined to be $U(\mathfrak{g}_{C})\otimes_{U(\mathfrak{p}_{C})}W$, where $P$ acts by $p'(X\otimes w) = \Ad(p')X\otimes p'w$ for $p'\in P$, $w\in W$ and $X\in \mathfrak{g}_{C}$.
Then $\underline{M}(W)\in \mathcal{O}^{P}$, and if moreover $W\in \mathcal O^L$ is simple, then $\underline{M}(W)$ has $\underline{L}(W)$ as unique simple quotient. Then the following property of $\mathcal{F}_{P}^{G}$ holds by construction:
\begin{itemize}
\item Suppose that $W \in \mathcal O^L$ and $\pi \in \Rep^{\adm}(L)$.
  Then $\mathcal{F}_{P}^{G}(\underline M(W),\pi) \cong (\AnInd_{P}^{G}W'\otimes\pi)^{\an}$.
\end{itemize}

We now discuss the proof of Theorem~\ref{thm:criterion-intro}.
The implication (i)$\Rightarrow$(ii) is clear by exactness of parabolic induction, and (iii)$\Rightarrow$(iv) is obvious.

To explain why (iv)$\Rightarrow$(i), we assume for simplicity that $Q = G$ (the proof is a bit more involved in general).
By the density of locally analytic vectors in Banach representations \cite{MR1900706}, it suffices to show that any irreducible closed subrepresentation of $(\Ind_{P}^{G} \sigma)^{\an}$ is dense in $(\Ind_{P}^{G} \sigma)^{\cts}$.
Note that $(\Ind_{P}^{G} \sigma)^{\an} \cong \mathcal{F}_{P}^{G} (\underline{M}(\sigma_0'),\tau)$.
Let $V := \underline{L}(\sigma_{0}')'$, which is by assumption in $\mathcal O^G$ and in particular is a finite-dimensional locally analytic representation of $G$.
The canonical $P$-linear surjection $V \onto \sigma_0$ gives rise to a commutative diagram
\begin{equation*}
  \xymatrix{
    V\otimes (\Ind_{P}^{G}\tau)^{\sm}\ar@{^{(}->}[r]\ar@{^{(}->}[rd] & V\otimes (\Ind_{P}^{G}\tau)^{\an}\ar[r]^\sim\ar@{^{(}->}[d] & (\Ind_{P}^{G}V \otimes \tau)^{\an}\ar@{->>}[r]\ar@{^{(}->}[d] & (\Ind_{P}^{G} \sigma)^{\an}\ar@{^{(}->}[d] \\ 
    & V\otimes (\Ind_{P}^{G}\tau)^{\cts}\ar[r]^\sim & (\Ind_{P}^{G}V\otimes \tau)^{\cts}\ar@{->>}[r] & (\Ind_{P}^{G} \sigma)^{\cts}
  }
\end{equation*}
It is not difficult to see (cf.\ Lemma~\ref{lem:reduction to smooth}) that the composition of the top row is injective with image $\mathcal{F}_{P}^{G} (\underline{L}(\sigma_0'),\tau)$.
Crucially, our generalization of a result of Breuil \cite{socle1} (based on \cite{orlik-schraen}) on locally analytic socles allows us to deduce that any irreducible closed subrepresentation of $(\Ind_{P}^{G} \sigma)^{\an}$ is contained in $\mathcal{F}_{P}^{G} (\underline{L}(\sigma_0'),\tau) \cong V\otimes (\Ind_{P}^{G}\tau)^{\sm}$.
It is then of the form $V \otimes \rho$ for some irreducible subrepresentation $\rho \subset (\Ind_{P}^{G}\tau)^{\sm}$.
By (iv) we see that $V \otimes \rho$ is dense in $V\otimes (\Ind_{P}^{G}\tau)^{\cts}$ and hence by the diagram $V \otimes \rho$ is dense in $(\Ind_{P}^{G} \sigma)^{\cts}$, as desired.

It remains to explain (ii)$\Rightarrow$(iii).
We relabel $Q$ as $G$ and let again $V := \underline L(\sigma_0')' \in \mathcal O^G$.
Let $\pi$ be a nonzero closed subrepresentation of $(\Ind_P^G \tau)^\cts$.
We consider a natural sequence
\begin{equation*}
  V \otimes \pi^\an \into V \otimes (\Ind_P^G \tau)^\an \congto (\Ind_P^G V \otimes \tau)^\an \onto (\Ind_P^G \sigma)^\an
\end{equation*}
of locally analytic representations and first use (ii) (comparing with a corresponding sequence of Banach representations) and locally analytic socles to show that the composition is surjective.
Let $\chi \colon Z(\mathfrak g_C) \to C$ denote the infinitesimal character of $(\Ind_P^G \sigma)^\an$.
Projecting onto generalized $\chi$-eigenspaces, by the definition of the translation functors, we obtain a sequence
\begin{equation*}
  T(\pi^\an) \into T((\Ind_P^G \tau)^\an) \onto (\Ind_P^G \sigma)^\an,
\end{equation*}
whose composition is surjective. Here $T$ is a suitable translation functor in the sense of \cite{MR581584} (it is an equivalence by \cite{MR581584}, see subsection \ref{sec:translation-functors} for more details).
By showing that the second map is an isomorphism (Proposition~\ref{prop:translation functors(parabolic induction)}) and applying a quasi-inverse of $T$ we obtain that $\pi^\an = (\Ind_P^G \tau)^\an$, which implies (iii), by density of locally analytic vectors.

\subsection{Previous work}
Theorems~\ref{thm:unitary case-parab-intro} and \ref{thm:GLn-irred-intro} were known for $\GL_2(\Q_p)$ by Schneider (and Teitelbaum) \cite[Proposition 2.6(i)]{MR2275644}, which was also based on locally analytic techniques.
The infinitesimal irreducibility criterion Corollary \ref{cor:criterion-easy-cases-intro}(i)$\Rightarrow$(iii) was known for split groups over $\Q_p$ due to the work of Frommer \cite{frommer}, \cite[Proposition 2.6(ii)]{MR2275644} and in general by Orlik--Strauch \cite{OS} (when $\dim_C \tau = 1$).

In a different direction, Ban--Hundley \cite{MR3537231} argue on the dual side like \cite{MR1900706} to prove the irreducibility of $(\Ind_B^G \chi)^\cts$ for split $\alggrp G$ and $\lvert\chi\rvert_C$ lying in a certain open cone, namely the cone where $e(\chi)$ (defined in \S\ref{sec:unitary-case-intro}) is \emph{strictly} dominant. 
(This region excludes the unitary locus if $\alggrp G$ is not a torus.)
Ban--Strauch \cite{arxiv.1912.11125} characterize the irreducibility of principal series of $\SL_n(F)$ in terms of the irreducibility of principal series of $\GL_n(F)$.

Finally, in the unitary case weak results can be obtained from smooth mod $p$ representation theory, by using that an admissible unitary Banach space representation is irreducible provided its reduction modulo a $G$-stable open and bounded lattice is irreducible as smooth representation.
The main results of our earlier work \cite{AHHV} then show that if $\sigma$ is unitary and the reduction $\o\sigma|_{Z \cap L'_{\alpha}}$ of $\sigma|_{Z \cap L'_{\alpha}}$ is non-trivial for all simple roots $\alpha$, then $(\Ind_{B}^{G}\sigma)^{\cts}$ is irreducible.

\subsection{Notation}
\label{sec:notation}
Let $C$ be a finite extension of $\Q_{p}$ with uniformizer $\varpi_C$. In this paper, unless otherwise stated, the coefficient field of any representation is $C$.

Let $F$ be a finite extension of $\Q_{p}$ contained in $C$ and $\alggrp{G}$ a connected reductive group over $F$, $\alggrp{Z}_{\alggrp{G}}$ the center of $\alggrp{G}$, $G = \alggrp{G}(F)$ is the group of rational points, $\mathfrak{g}$ the Lie algebra of $\alggrp{G}$, $\mathfrak{g}_{C} := \mathfrak{g}\otimes_{F}C$ and $U(\mathfrak{g}_{C})$ the enveloping algebra of $\mathfrak{g}_{C}$.
We use the same notation for other groups.
For simplicity we assume that $\alggrp{G}$ splits over $C$.
Moreover, starting in subsection~\ref{sec:funct-orlik-stra} we will assume that $C$ be sufficiently large, depending only on $\alggrp G$.
Let $\alggrp{S}\subset \alggrp{G}$ be a maximal split torus and $\alggrp{Z}$ the centralizer of $\alggrp{S}$ in $\alggrp{G}$.
This is a Levi subgroup of a minimal parabolic subgroup $\alggrp{B}$, which we fix from now on, and we let $\alggrp{U}$ denote its unipotent radical.

We fix a special point in the apartment of $\alggrp{S}$ and let $K$ be the corresponding special parahoric subgroup.

Suppose $\alggrp{P} = \alggrp{L}\alggrp{N}$ is a parabolic subgroup of $\alggrp{G}$ with Levi part $\alggrp{L}$ and unipotent radical $\alggrp{N}$.
For a smooth representation $\sigma$ of $L = \alggrp{L}(F)$, let $(\Ind_{P}^{G}\sigma)^{\sm}$ be the smooth parabolically induced representation, namely the space of locally constant functions $f\colon G\to \sigma$ such that $f(\ell ng) = \sigma(\ell )f(g)$ for $\ell \in L$, $n\in N$ and $g\in G$.
We use this notation even when $\sigma$ is defined over a field different from $C$ (for example, over the residue field of $C$ or the field of complex numbers).
Let $\delta_{P}$ be the modulus function of $P$ defined by $\int_{P}f(xg)dx = \delta_{P}(g)\int_{P}f(x)dx$ for any integrable function $f$ on $P$, where we use a left-invariant Haar measure.
Let $(\nInd_{P}^{G}\sigma)^{\sm} := (\Ind_{P}^{G}\sigma\delta_{P}^{1/2})^{\sm}$ be the normalized induced representation.

We say that a continuous representation of a topological group is \emph{irreducible} if it is topologically irreducible.

\subsection{Acknowledgements}
\label{acknowledgements}

We are very grateful to Jean-Loup Waldspurger for sending us a proof of Proposition~\ref{prop:Waldspurger} and for allowing us to reproduce it here,
and to Alberto M\'inguez for his help with Proposition~\ref{prop:qsplit-irred-gen-socle} as well as for useful discussions.
We thank Tasho Kaletha for help with the proof of Proposition~\ref{prop:special-point-base-change}.
We are thankful to Sascha Orlik and Matthias Strauch for helpful conversations about their work and for allowing us to include our appendix.
The debt this paper owes to the work of Orlik--Strauch should be evident to the reader.
We thank Hiraku Atobe, Laurent Clozel, Wee Teck Gan, Sug Woo Shin, and the referee for helpful comments, and Dubravka Ban for noting that we need to work over an algebraic closure in section~\ref{sec:generic}.
Part of this work was done during a pleasant stay of the first-named author at University of Toronto.

\section{Irreducibility criterion}
\label{sec:irred-crit}
\subsection{Banach representations}
\label{sec:banach-repr}
A representation $\pi$ of $G$ is called a \emph{Banach representation} if $\pi$ is a Banach space and the action map $G\times \pi\to \pi$ is continuous.
The notion of \emph{admissible} Banach representations was introduced by Schneider--Teitelbaum \cite{MR1900706}.
Any morphism between admissible Banach representations is strict and, in particular, has a closed image.
By \cite[Theorem 3.5]{MR1900706} admissible Banach representations of $G$ form an artinian abelian category, anti-equivalent (for any fixed compact open subgroup $H$ of $G$) to the category of $\mathcal O_C[[H]][\frac 1p]$-modules that have a compatible action of $G$.
Since it is an abelian category, we have notions such as finite length objects (traditionally called admissible Banach representations that are topologically of finite length).
The subobjects (resp.\ quotient objects) are precisely the closed $G$-subrepresentations (resp.\ Hausdorff quotient representations), using for example \cite[Proposition 1.3]{MR1900706}.
Admissible representations satisfy ``Schur's lemma'', namely if $\pi$ is an irreducible admissible Banach representation, then the $C$-algebra of continuous $G$-endomorphisms $\End_{G}^{\cts}(\pi)$ is finite-dimensional~\cite[Theorem~1.1]{MR3053464}.
In particular, if $\pi$ is absolutely irreducible (i.e.\ irreducible after any finite extension of scalars), then $\End_{G}^{\cts}(\pi)= C$ and $\pi$ therefore has a central character.

We say that a parabolic subgroup is \emph{semistandard} if it contains out fixed maximal split torus $\alggrp{S}$.
Let $\alggrp{P}$ be a semistandard parabolic subgroup, $\alggrp{N}$ the unipotent radical of $\alggrp{P}$, $\alggrp{L}$ a Levi part containing $\alggrp{S}$ (hence $P = LN$ and $P\cap K = (L\cap K)(N\cap K)$) and $\sigma$ a Banach representation of $L$. Then the representation $(\Ind_{P}^{G}\sigma)^{\cts}$ is defined as the space of continuous functions $f\colon G\to \sigma$ such that $f(\ell ng) = \sigma(\ell)f(g)$ for any $g\in G$, $\ell\in L$ and $n\in N$.
It is naturally a closed subspace of $\mathcal C^0(G,\sigma)$ equipped with the compact open topology, on which $G$ acts continuously \cite[Proposition 3.1.5]{locallyanalytic-memoir}.
The isomorphism $(\Ind_{P}^{G}\sigma)^{\cts} \cong (\Ind_{P \cap K}^{K}\sigma)^{\cts}$ shows that it is a Banach space.
It is admissible if $\sigma$ is admissible \cite[Lemma 3.3]{fu}.
By \cite[Lemma 6.5.5]{locallyanalytic-memoir} the topology of $\sigma$ can be defined by an $L\cap K$-invariant norm $\lvert\cdot\rvert$.
On the right-hand side of the isomorphism $(\Ind_{P}^{G}\sigma)^{\cts}\simeq(\Ind_{P\cap K}^{K}\sigma)^{\cts}$ we have a $K$-invariant norm $\lVert\cdot\rVert$ defined by $\lVert f\rVert = \sup_{x\in K/(P\cap K)}\lvert f(x)\rvert$, which defines the topology. Its unit ball is $((\Ind_{P}^{G}\sigma)^{\cts})^{0} = \{f\in (\Ind_{P\cap K}^{K}\sigma)^{\cts}\mid f(K)\subset \sigma^{0}\} = (\Ind_{P\cap K}^{K}\sigma^{0})^{\cts}$ and $((\Ind_{P}^{G}\sigma)^{\cts})^{0}/\varpi_{C}((\Ind_{P}^{G}\sigma)^{\cts})^{0}\simeq (\Ind_{P\cap K}^{K}(\sigma^{0}/\varpi_{C}\sigma^{0}))^{\sm}$ and this is a smooth representation of $K$.
If $\pi\subset(\Ind_{P}^{G}\sigma)^{\cts}$ is a closed subspace, then it is a Banach space with the induced norm, and
the natural map $\pi^0/\varpi_{C}\pi^0\to ((\Ind_{P}^{G}\sigma)^{\cts})^{0}/\varpi_{C}((\Ind_{P}^{G}\sigma)^{\cts})^{0}$ is injective.

\subsection{Locally analytic representations}
\label{sec:locally-analyt-repr}
Let $\pi$ be a Banach representation of $G$.
We say that $v\in \pi$ is an ($F$-)\emph{locally analytic vector} if the map $G\ni g\mapsto gv\in \pi$ is ($F$-)locally analytic.
The space of locally analytic vectors in $\pi$ is denoted by $\pi^{\an}$.
We impose a topology on $\pi^{\an}$ as in \cite[Definition~3.5.3]{locallyanalytic-memoir}.
Then $\pi^{\an}$ is a locally analytic representation in the sense that $\pi^{\an}$ is a barreled locally convex Hausdorff vector space and the map $G\ni g\mapsto gv\in \pi^{\an}$ is locally analytic for any $v\in \pi^{\an}$.
We have a notion of \emph{admissible} locally analytic representations introduced by Schneider--Teitelbaum \cite[page 176]{ST}, and the category of such representations is abelian~\cite[Proposition~6.4]{ST}.
Again, any morphism in this category is strict and, in particular, has closed image.
Likewise, the subobjects (resp.\ quotient objects) are precisely the closed $G$-subrepresentations (resp.\ Hausdorff quotient representations).
We will again talk about finite length objects in this abelian category.

\begin{thm}[{\cite[Theorem~7.1]{ST}, \cite[Proposition 6.2.4]{locallyanalytic-memoir}}]\label{thm:property of an}
Suppose that $F = \Q_{p}$.
Then the functor $(\cdot)^\an$ sends admissible Banach representations of $G$ to
admissible locally analytic representations of $G$ on compact type spaces.
It is moreover exact.
If $\pi$ is an admissible Banach representation, then $\pi^{\an}$ is dense in $\pi$.
\end{thm}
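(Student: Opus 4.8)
The plan is to dualize and reduce everything to the structure theory of $p$-adic Iwasawa and distribution algebras, following Schneider--Teitelbaum (and Emerton). Fix a compact open subgroup $H\subset G$; since local analyticity is a local condition, the underlying space of $\pi^{\an}$ depends only on $\pi|_H$, and admissibility on both sides as well as exactness may be checked after restriction to $H$, the $G$-action then being carried along for free (and the hypothesis $F=\Q_p$ lets us work with $\Q_p$-analyticity; a general $F$ would be reduced to this by restriction of scalars). On the Banach side I use the anti-equivalence recalled in \S\ref{sec:banach-repr}: admissible Banach representations of $H$ correspond to finitely generated modules over the Iwasawa algebra $\Lambda:=\mathcal O_C[[H]][\tfrac1p]$. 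On the locally analytic side I use Schneider--Teitelbaum's description: admissible locally analytic representations of $H$ correspond to \emph{coadmissible} modules over the locally analytic distribution algebra $D(H):=D(H,C)$, which is a Fr\'echet--Stein algebra, $D(H)=\varprojlim_n D_n(H)$, with each $D_n(H)$ left and right Noetherian and all transition maps flat.

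The key is to identify the functor $(\cdot)^{\an}$ on the dual side. One writes $\pi^{\an}=\varinjlim_n\pi^{(n)}$ as a locally convex inductive limit of Banach spaces $\pi^{(n)}$ of ``$H_n$-analytic vectors'' (vectors whose orbit map is analytic on the cosets of a suitable shrinking system of open subgroups $H_n$), and checks that the continuous dual of $\pi^{(n)}$ is $D_n(H)\otimes_\Lambda\pi'$, where $\pi'$ is the finitely generated $\Lambda$-module dual to $\pi$ and one uses the natural continuous ring homomorphism $\Lambda\to D_n(H)$. Passing to the projective limit, the dual of $\pi^{\an}$ is $D(H)\otimes_\Lambda\pi'$; so under the two anti-equivalences the functor $(\cdot)^{\an}$ becomes the base-change functor $M\mapsto D(H)\otimes_\Lambda M$ on finitely generated $\Lambda$-modules.

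Granting this, the first three assertions follow from one crucial input: the ring map $\Lambda\to D_n(H)$ is \emph{flat} for every $n$ (a theorem of Schneider--Teitelbaum, resting on explicit Mahler/Amice-type expansions and norm estimates in the distribution algebra). First, for $M$ finitely generated over $\Lambda$, each $D_n(H)\otimes_\Lambda M$ is finitely generated over the Noetherian ring $D_n(H)$, and the system $(D_n(H)\otimes_\Lambda M)_n$ visibly satisfies the coadmissibility conditions, so $D(H)\otimes_\Lambda M$ is coadmissible; dually, $\pi^{\an}$ is an admissible locally analytic representation. Second, flatness at each finite level together with the exactness of projective limits of coadmissible modules in the Fr\'echet--Stein formalism shows that $M\mapsto D(H)\otimes_\Lambda M$ is exact on finitely generated $\Lambda$-modules, hence $(\cdot)^{\an}$ is exact. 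Third, the transition maps $D_{n+1}(H)\otimes_\Lambda M\to D_n(H)\otimes_\Lambda M$ inherit compactness from those of the Fr\'echet--Stein structure, so dually $\pi^{\an}=\varinjlim_n\pi^{(n)}$ is a space of compact type.

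Finally, density of $\pi^{\an}$ in $\pi$ is equivalent, by Hahn--Banach, to injectivity of the dual map $\pi'\to(\pi^{\an})'=D(H)\otimes_\Lambda\pi'$, which in turn follows from faithful flatness of $\Lambda\to D(H)$ (so that no nonzero finitely generated $\Lambda$-module is killed); alternatively one can argue more directly using that locally analytic functions are dense in continuous functions on $H$. I expect the main obstacle to be exactly the flatness (and faithful flatness) of $\Lambda\to D_n(H)$ and $D(H)$, together with a careful topological verification of the identification $(\pi^{(n)})'\cong D_n(H)\otimes_\Lambda\pi'$: these rest on the hard $p$-adic functional analysis of distribution algebras rather than on anything formal. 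By contrast, left exactness of $(\cdot)^{\an}$, the reduction to a compact open subgroup, and the packaging into the coadmissibility and compact-type framework are comparatively routine once the dual picture and the flatness are in hand.
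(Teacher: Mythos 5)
Your proposal is correct and reproduces, in outline, exactly the argument of the cited references: the paper gives no proof of its own beyond citing Schneider--Teitelbaum~\cite[Theorem~7.1]{ST} and Emerton~\cite[Proposition~6.2.4]{locallyanalytic-memoir}, and what you describe --- dualizing via the anti-equivalences with finitely generated $\Lambda$-modules and coadmissible $D(H)$-modules, identifying $(\cdot)^{\an}$ with $M\mapsto D(H)\otimes_\Lambda M$, and deriving admissibility, exactness, compact type, and density from (faithful) flatness of $\Lambda\to D(H)$ and its finite-level avatars --- is precisely the content of those results and their proofs. The one place to be slightly more careful, which you flag yourself, is the topological identification of the dual of the ``$H_n$-analytic vectors'' with a base change to an intermediate Banach distribution algebra; this is the substance of~\cite[Theorem~7.1]{ST}, and the exactness assertion (not explicitly stated there) is the added content coming from~\cite[Proposition~6.2.4]{locallyanalytic-memoir}, but both rest on the same flatness input you identify as the crux.
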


For proving density, the following criterion is sometimes useful.
A more general version can be found in \cite[\S\ref{gl3:sec:density}]{gl3}.
Let $\overline{\alggrp{N}}$ denote the unipotent radical of the opposite parabolic subgroup $\overline{\alggrp{P}} = \alggrp{L} \overline{\alggrp{N}}$.
\begin{lem}\label{lem:supported on big cell}
Let $\sigma$ be an irreducible Banach representation of $L$ with a central character and $0\ne f\in (\Ind_{P}^{G}\sigma)^{\cts}$.
Assume that $\supp(f)\subset P\backslash P\overline{N}$ and there exists $v\in \sigma$ such that $f(\overline{n})\in Cv$ for all $\overline{n}\in \overline{N}$.
Then $f$ is a topological generator of $(\Ind_{P}^{G}\sigma)^{\cts}$, i.e.\ the subspace spanned by $\{gf\mid g\in G\}$ is dense in $(\Ind_{P}^{G}\sigma)^{\cts}$.
\end{lem}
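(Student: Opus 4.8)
\emph{Proof strategy.}
The goal is to show that the closed $G$-subrepresentation $\pi\subseteq(\Ind_P^G\sigma)^\cts$ generated by $f$ — i.e.\ the closure of the span of $\{gf:g\in G\}$ — is all of $(\Ind_P^G\sigma)^\cts$. We may assume $\alggrp P\ne\alggrp G$, since otherwise $\overline N=\{1\}$ and there is nothing to prove. Using the isomorphism $(\Ind_P^G\sigma)^\cts\cong(\Ind_{P\cap K}^K\sigma)^\cts$ we work with continuous $\sigma$-valued functions on the compact space $(P\cap K)\backslash K$, with the $K$-invariant sup-norm attached (as in \S\ref{sec:banach-repr}) to an $(L\cap K)$-invariant norm $\lvert\cdot\rvert$ on $\sigma$; under $P\backslash P\overline N\cong\overline N$, a function supported on the big cell is just a continuous map $\overline N\to\sigma$ tending to $0$ along the boundary of $P\backslash P\overline N$ inside $P\backslash G$. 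First I would replace $f$ by $\overline n_0\cdot f$ for a suitable $\overline n_0\in\overline N$: since $(\overline n_0\cdot f)(e)=f(\overline n_0)$ and $f$ is nonzero on $\overline N$, this arranges $f(e)\ne 0$ while preserving the support condition and the property $f(\overline N)\subseteq Cv$; after rescaling I may assume $v=f(e)$.

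The heart of the argument is to prove that $\pi$ contains, for every vector $w\in\sigma$ and every sufficiently small compact open subset $V\subseteq P\backslash G$, the section supported on $V$ which equals $w$ on $V$ in a local trivialization (write it $w\cdot\mathbf 1_V$). The main device is a contracting torus element: choose $z\in\alggrp Z_{\alggrp L}(F)$ such that conjugation by $z^k$ ($k\ge 1$) contracts $\overline N$ to the identity (possible since $\alggrp P\ne\alggrp G$). As $\sigma$ has a central character $\omega_\sigma$, the action of $z^k$ on the big-cell model is $(z^k\cdot g)(\overline n)=\omega_\sigma(z)^k\,g(z^{-k}\overline n z^k)$; applying $z^k$ to $f$, averaging its $\overline N$-translates over small compact open subgroups $\overline N_j\subseteq\overline N$ (such integrals lie in $\pi$, being norm-limits of finite sums of group translates), and using that $f$ is $\lvert\cdot\rvert$-close to $v$ on a small neighbourhood of $e$ — together with the ultrametric inequality, which keeps the relevant averages from degenerating — I would produce in $\pi$ functions arbitrarily close to $v\cdot\mathbf 1_{\overline N_j}$. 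Acting then by $\ell\in L$ and using that $\{\sigma(\ell)v:\ell\in L\}$ spans a dense subspace of $\sigma$ by irreducibility of $\sigma$ (and translating the resulting indicator back to $\overline N_j$ by an element of $L\cap K$), I would upgrade this to $w\cdot\mathbf 1_{\overline N_j}\in\pi$ for all $w\in\sigma$, and then by $\overline N$-translation to all $w\cdot\mathbf 1_{\overline n_0\overline N_j}$. Finally, to reach $w\cdot\mathbf 1_V$ when $V$ is near a boundary point — e.g.\ the $P$-fixed point, which lies in \emph{no} single big cell — I would transport the preceding constructions by the longest element $w_0$ of the relative Weyl group (or, more systematically, by representatives from the Bruhat decomposition), which carries a neighbourhood of $e$ onto a neighbourhood of such a boundary point.

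Granting that $\pi$ contains every $w\cdot\mathbf 1_V$ with $V$ small, one concludes quickly: by compactness and total disconnectedness of $P\backslash G$, an arbitrary $\Phi\in(\Ind_P^G\sigma)^\cts$ is a uniform limit of finite sums $\sum_i w_i\cdot\mathbf 1_{V_i}$ over disjoint compact open $V_i$, so $\Phi\in\pi$ and hence $\pi=(\Ind_P^G\sigma)^\cts$.

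I expect the main obstacle to be precisely the middle step: it is a genuine $p$-adic approximation argument on the \emph{noncompact} group $\overline N$, and it is essential to feed in the behaviour near the boundary of the big cell through the $G$-action (via $w_0$), because — already for $\mathrm{SL}_2$ acting on $\mathcal C^0(\mathbb{P}^1(\Q_p))$ — functions supported on a single affine chart are not dense, evaluation at the $P$-fixed point being a nonzero obstructing functional. A cleaner and more general form of this lemma appears in \cite[\S 2.2]{gl3}, and I would follow that treatment.
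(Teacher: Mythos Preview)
Your proposal has a genuine gap in the central approximation step. The averaging you propose --- integrating $\overline n\mapsto\overline n\cdot f$ over a compact open $\overline N_j$ --- does not exist as a norm-limit of Riemann sums: the Riemann sum over $\overline N_j/\overline N_j'$ with $[\overline N_j:\overline N_j']=p^n$ carries a normalizing factor $p^{-n}$ whose $p$-adic absolute value is $p^n$, and for a merely continuous $f$ there is no reason the oscillation $\lVert\overline m\cdot f-f\rVert$ decays fast enough to compensate. More conceptually, there is \emph{no} nonzero bounded $\overline N_j$-invariant linear functional on $C^0(\overline N_j,C)$ (this is exactly Lemma~\ref{lem:no-haar}), so one cannot ``average'' in the archimedean sense; the ultrametric is precisely what makes the normalizing constants blow up, not what rescues them. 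If instead you take unnormalized sums $\sum_{\overline n\in\overline N_j/\overline N_j'}\overline n\cdot f$, these lie in $\pi$ but there is no reason they stay nonzero modulo $\varpi_C$, so you still cannot conclude that $v\cdot\mathbf 1_{\overline N_j}\in\pi$.

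The paper replaces this step by a genuinely $p$-adic argument. After contracting (via the center of $L$) so that $\supp f\subset P\overline N_0$, one works modulo $\varpi_C$: the image of $\pi^0\cap X_0^0$ inside $X_0^0/\varpi_C X_0^0\simeq C^\infty(\overline N_0,(\mathcal O_C/\varpi_C)\bar v)$ is a nonzero \emph{smooth} representation of the pro-$p$ group $\overline N_0$ over a field of characteristic $p$, hence contains a nonzero $\overline N_0$-fixed vector, and the only such are scalar multiples of $\mathbf 1_{\overline N_0}\cdot\bar v$. Lifting back iteratively gives $v\cdot\mathbf 1_{\overline N_0}\in\pi$ exactly. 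This pro-$p$ fixed-vector trick is the missing idea. From there, acting by $L$ and the center (and using irreducibility of $\sigma$) produces all of $C^0(\overline N\cap K,\sigma)$ inside $\pi$; since these functions already generate $(\Ind_P^G\sigma)^{\cts}$ as a $G$-representation, there is no need to treat boundary points or invoke $w_0$ separately.
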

Here, $\supp(f)$ denotes the \emph{closure} of the set $P\backslash \{x \in G : f(x) \ne 0 \}$ in $P\backslash G$.
In particular, $\supp(f)$ is compact.
\begin{proof}
We choose an $L\cap K$-invariant norm $\lvert \cdot \rvert$ on $\sigma$ that defines its topology.
We may assume $\{\lvert v'\rvert\mid v'\in \sigma\} = \lvert C^{\times}\rvert\cup \{0\}$ and $\lvert v\rvert = 1$.

Let $\overline{N}_{0}$ be a compact open subgroup of $\overline{N}\cap K$.
Let $\pi$ be the closure of the subspace spanned by $\{gf\mid g\in G\}$ in $(\Ind_{P}^{G}\sigma)^{\cts}$.
By the action of the center of $L$ there exists $f'\in \pi$ such that $\supp(f')\subset P\overline{N}_{0}$ and $f'(\overline{n})\in Cv$ for any $\overline{n}\in \overline{N}$.
We may scale $f'$ such that $\lVert f'\rVert = 1$.
Hence there exists $f'\in \pi$ such that $\lVert f'\rVert = 1$, $\supp(f')\subset P\overline{N}_{0}$ and $f'(\overline{n})\in Cv$ for any $\overline{n}\in \overline{N}$.

We will now prove that $\pi = (\Ind_{P}^{G}\sigma)^{\cts}$.
Let $X_{0}$ be the closed subspace of $h\in (\Ind_{P}^{G}\sigma)^{\cts}$ such that $\supp(h)\subset P\overline{N}_{0}$ and $h(\overline{n})\in Cv$ for any $\overline{n}\in \overline{N}$.
Let $C^{0}(\overline{N}_{0},Cv)$ be the Banach space of continuous functions $\overline{N}_{0}\to Cv$ equipped with the supremum norm.
The restriction to $\overline{N}_{0}$ gives an $\overline{N}_{0}$-equivariant norm-preserving isomorphism $X_{0}\simeq C^{0}(\overline{N}_{0},Cv)$ of Banach spaces.

We have an embedding $(X_{0}\cap \pi)^{0}/\varpi_{C}(X_{0}\cap \pi)^{0}\hookrightarrow X_{0}^{0}/\varpi_{C}X_{0}^{0}$ and from the second paragraph the image is nonzero.
Since the image is a smooth $\overline{N}_{0}$-representation, it contains a nonzero $\overline{N}_{0}$-fixed vector.
Let $\overline{v}\in \sigma^{0}/\varpi_{C}\sigma^{0}$ be the image of $v$.
The space of $\overline{N}_{0}$-fixed vectors in $X_{0}^{0}/\varpi_{C}X_{0}^{0}\simeq C^{\infty}(\overline{N}_{0},(\mathcal{O}_{C}/\varpi_{C}\mathcal{O}_{C})\overline{v})$ is one-dimensional and spanned by the constant function $1_{\overline{N}_{0}}$.

Let $X$ be the space of $h\in (\Ind_{P}^{G}\sigma)^{\cts}$ such that $\supp(h)\subset P(\overline{N}\cap K)$ and $h(\overline{n})\in Cv$ for any $\overline{n}\in \overline{N}\cap K$.
Then $X^{0}/\varpi_{C}X^{0}\simeq C^{\infty}(\overline{N}\cap K,(\mathcal{O}_{C}/\varpi_{C}\mathcal{O}_{C})\overline{v})$ is spanned by $\{1_{\overline{N}_{0}}\mid \text{$\overline{N}_{0}\subset \overline{N}\cap K$ is an open subgroup}\}$ as $\overline{N}\cap K$-representation.
Since each $1_{\overline{N}_{0}}$ is in $(\pi \cap X)^{0}/\varpi_{C}(\pi \cap X)^{0}$ as we have proved, we have $(\pi \cap X)^{0}/\varpi_{C}(\pi \cap X)^{0} = X^{0}/\varpi_{C}X^{0}$.
In other words, for any $h\in X^{0}$, there exists $f_{0}\in (\pi\cap X)^{0}$ such that $h - f_{0}\in \varpi_{C}X^{0}$.
By iterating this argument, for each $k = 0,1,\ldots$ there exists $f_{k}\in \pi$ such that $h - f_{k}\in \varpi_{C}^{k}X^{0}$.
Then $h = \lim_{k\to\infty}f_{k}\in \pi$.
This shows that $X^{0}\subset \pi$, hence $X\subset \pi$.
By the action of the center of $L$, for any continuous function $\varphi\colon \overline{N}\to C$ with compact support, the function $\varphi\otimes v\colon \overline{N}\to Cv$ defined by $(\varphi\otimes v)(\overline{n}) = \varphi(\overline{n})v$ is in $\pi$.
Here we regard $\varphi\otimes v$ as element of $(\Ind_{P}^{G}\sigma)^{\cts}$ (supported on $P\overline{N}$).

Let $\ell\in L$ and define $\ell\varphi\colon \overline{N}\to C$ by $(\ell\varphi)(\overline{n}) = \varphi(\ell^{-1}\overline{n}\ell)$.
For each $\varphi\colon \overline{N}\cap K\to C$ and $\ell\in L$ there exists $z$ in the center of $L$ such that $z\ell^{-1}\varphi$ is supported on $\overline{N}\cap K$.
We have $\varphi\otimes \ell v = \omega_{\sigma}(z)\ell z^{-1}(z\ell^{-1}\varphi\otimes v)$, where $\omega_{\sigma}$ is the central character of $\sigma$.
Since $z\ell^{-1}\varphi\otimes v\in \pi$ by the previous paragraph, we have $\varphi\otimes \ell v\in \pi$.
Since $\sigma$ is irreducible, we have $\varphi\otimes v'\in \pi$ for any $\varphi \in C^0(\overline{N} \cap K,C)$ and $v'\in \sigma$.
In other words, $C^0(\overline{N} \cap K,C) \otimes \sigma \subset \pi$.

Let $h\colon \overline{N}\cap K\to \sigma^{0}$ be any continuous function and for each $k = 0,1,\ldots$ define $h_{k}$ by $\overline{N}\cap K\xrightarrow{h}\sigma^{0}\to \sigma^{0}/\varpi_{C}^{k}\sigma^{0}$.
Then $h_{k}$ is a locally constant function and therefore by compactness has finite image in $\sigma^{0}/\varpi_{C}^{k}\sigma^{0}$.
Hence there exists $h'_{k}\in C^{0}(\overline{N}\cap K,C)\otimes \sigma$ (which we can even take to be locally constant) such that $h - h'_{k}\in \varpi_{C}^{k}C^{0}(\overline{N}\cap K,\sigma)^{0}$.
Therefore $h = \lim_{k\to\infty} h'_{k}$.
Regarding $h$ as element of $(\Ind_{P}^{G}\sigma)^{\cts}$, the previous paragraph shows that $h\in \pi$.
Since such $h$ generate $(\Ind_{P}^{G}\sigma)^{\cts}$ as a $G$-representation, we get $\pi = (\Ind_{P}^{G}\sigma)^{\cts}$.
\end{proof}

If $\sigma$ is a locally analytic representation of $P$ on a locally convex topological vector space of compact type, we denote by $(\Ind_{P}^{G}\sigma)^{\an}$
the space of locally analytic functions $G\to \sigma$ such that $f(p'g) = \sigma(p')f(g)$ for all $g\in G$, $p'\in P$.
This is a locally analytic representation of $G$ of compact type, see e.g.\ \cite[\S2]{MR4190408}.
(Usually, $\sigma$ will arise by inflation from a locally analytic representation of $L$.)

\begin{lem}\label{lem:induction-tensor}\ 

  \begin{enumerate}
  \item If $\tau$ is a locally analytic representation of $P$ on a compact type space
    and $V$ a finite-dimensional locally analytic representation of $G$,
    then $V \otimes (\Ind_{P}^{G}\tau)^{\an} \cong (\Ind_{P}^{G}V \otimes \tau)^{\an}$ as locally analytic representations of $G$.
  \item If $\tau$ is a Banach representation of $P$
    and $V$ a finite-dimensional continuous representation of $G$,
    then $V \otimes (\Ind_{P}^{G}\tau)^{\cts} \cong (\Ind_{P}^{G}V \otimes \tau)^{\cts}$ as 
    Banach representations of $G$.
  \item If $\tau$ is a Banach representation of $P$,
    then $((\Ind_{P}^{G}\tau)^{\cts})^\an \cong (\Ind_{P}^{G}\tau^\an)^\an$
    as locally analytic representations of $G$.
  \item If $\pi$ is an Banach representation of $G$
    and $V$ a finite-dimensional locally analytic representation of $G$,
    then $(V \otimes \pi)^\an \cong V \otimes \pi^\an$ as locally analytic representations of $G$.
  \item If $\pi$ is an admissible Banach representation of $G$
    and $V$ a finite-dimensional continuous representation of $G$,
    then $V \otimes \pi$ is an admissible Banach representation of $G$.
  \end{enumerate}
\end{lem}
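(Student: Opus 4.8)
The plan is to prove the five statements essentially in the order (iv), (i), (ii), (iii), (v), since (i)--(iii) are ``induction versions'' of the pointwise identities in (iv)--(v) and it is cleanest to isolate the underlying linear-algebra fact first. Throughout, the key elementary observation is that for a finite-dimensional representation $V$ of $G$ the diagonal action on $V \otimes_C W$ is related to the ``external'' action $g \mapsto (\text{id}_V) \otimes g_W$ by the invertible (continuous, in fact bianalytic in $g$) operator $g \mapsto V(g) \otimes \text{id}_W$; this is the standard ``untwisting'' trick that will make every isomorphism below visibly bicontinuous.

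\emph{Part (iv).} I would fix a $C$-basis $v_1,\dots,v_d$ of $V$, so that $V \otimes \pi \cong \pi^{\oplus d}$ as a Banach space; the diagonal $G$-action is then given by a matrix of operators whose entries are $G$-translation composed with the (locally analytic, hence continuous) matrix coefficients $g \mapsto V(g)$. A vector $\sum v_i \otimes w_i$ is locally analytic for the diagonal action if and only if, after applying the untwisting operator $g \mapsto V(g)^{-1} \otimes \text{id}$ (whose entries are locally analytic $C$-valued functions of $g$), the resulting vector is locally analytic for the external action, i.e.\ each $w_i \in \pi^\an$. This gives $(V \otimes \pi)^\an = V \otimes \pi^\an$ as subspaces; that the subspace topology on the left agrees with the natural topology on $V \otimes \pi^\an$ follows from the description of the topology on locally analytic vectors in \cite[Definition~3.5.3]{locallyanalytic-memoir} together with the fact that the untwisting operator and its inverse preserve the relevant spaces of locally analytic functions (their entries being locally analytic scalar functions multiplying the argument).

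\emph{Parts (i)--(iii).} For (i), the map $V \otimes (\Ind_P^G \tau)^\an \to (\Ind_P^G V \otimes \tau)^\an$ sends $v \otimes f$ to $g \mapsto V(g)v \otimes f(g)$; I would check directly that this lands in the target (the $P$-equivariance $f(p'g) = \tau(p')f(g)$ combines with $V(p'g)v = V(p')V(g)v$ to give the required transformation law for the diagonal $P$-action on $V \otimes \tau$), that it is $G$-equivariant for right translation, and that it is a topological isomorphism with inverse $F \mapsto (g \mapsto (V(g)^{-1} \otimes \text{id})F(g))$ — again the untwisting operator, now applied fibrewise, which is continuous because $G$-orbits in $(\Ind_P^G V \otimes \tau)^\an$ are compact-type and the operators involved are continuous. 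Part (ii) is the verbatim same argument with ``locally analytic'' replaced by ``continuous'' and compact-type topologies replaced by Banach topologies, using that $\mathcal{C}^0(G,-)$ with the compact-open topology is well-behaved under the untwisting operator; here one uses that continuous induction is computed on the compact set $K/(P\cap K)$, as in \S\ref{sec:banach-repr}. Part (iii) is the standard compatibility $((\Ind_P^G \tau)^\cts)^\an \cong (\Ind_P^G \tau^\an)^\an$: a function $G \to \tau$ that is continuous and $P$-equivariant is a locally analytic vector of $(\Ind_P^G \tau)^\cts$ precisely when it is itself locally analytic as a map $G \to \tau$ and takes values in $\tau^\an$ (the latter because the $P$-orbit through any $f(g)$ is the image of $f$ restricted to $Pg$, composed with $\tau$); I would cite \cite[\S2]{MR4190408} or argue directly via the exactness of $(\cdot)^\an$ in Theorem~\ref{thm:property of an} together with the explicit orbit map.

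\emph{Part (v).} Since $V \otimes \pi \cong \pi^{\oplus d}$ as a Banach space with a $G$-action, and $\pi^{\oplus d}$ is admissible (admissibility is closed under finite direct sums, being an abelian category by \cite[Theorem 3.5]{MR1900706}), it suffices to observe that admissibility depends only on the underlying topological $\mathcal{O}_C[[H]]$-module structure for a fixed compact open $H \le G$: the untwisting operator $\pi^{\oplus d} \xrightarrow{\sim} V \otimes \pi$ is an isomorphism of topological $C$-vector spaces intertwining the $H$-actions up to the automorphism $g \mapsto V(g) \otimes \text{id}$, and since $V|_H$ is a continuous finite-dimensional representation this does not change finite generation of the dual module over $\mathcal{O}_C[[H]][\tfrac1p]$. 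Hence $V \otimes \pi$ is admissible. I expect the main obstacle to be purely bookkeeping rather than conceptual: namely, verifying that the untwisting operator and its inverse are continuous for the \emph{precise} topologies in play (the compact-type topology on locally analytic induced representations in (i), (iii), (iv), and the subtler topology on $\pi^\an$ defined via a limit over analytic radii in \cite[Definition~3.5.3]{locallyanalytic-memoir}) — this requires knowing that multiplication by a fixed locally analytic matrix-valued function preserves each of the Banach pieces $\pi_h$ (resp.\ each space of $h$-analytic vectors) in the defining inductive limit, which is where a little care is needed. All the $G$-equivariance and the pointwise $P$-equivariance checks are routine.
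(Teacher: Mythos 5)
Your approach for (i)--(ii) is essentially the paper's: define $\Theta\colon v\otimes f \mapsto (g\mapsto gv\otimes f(g))$ and recover the inverse by writing $h(g) = \sum_i gv_i \otimes f_i(g)$ in the moving basis and showing the coordinate functions $f_i$ are locally analytic. The point you repeatedly flag as ``bookkeeping'' — that applying a fixed locally analytic $\mathrm{GL}(V)$-valued function preserves local analyticity with the right topology on the inductive limit — is not quite routine; it is exactly the content of \cite[Satz 2.4.3]{feaux-diss}, which the paper invokes and whose hypothesis (``BIL'') is satisfied precisely because $V$ is finite-dimensional. So the gap you identify is real, and in the paper it is closed by that citation rather than by inspection.

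For (iii), (iv), (v) the paper does not argue directly at all: it cites \cite[Theorem 3.6]{fu}, \cite[Proposition 3.6.15]{locallyanalytic-memoir}, and \cite[Proposition 6.2.6]{locallyanalytic-memoir}. Your direct sketches for (iii) and (iv) are in the right spirit, but your sketch for (v) has a genuine gap: the untwisting isomorphism $\pi^{\oplus d} \cong V\otimes\pi$ is \emph{not} $H$-equivariant (the diagonal $H$-action on $V\otimes\pi$ differs from $\mathrm{id}_V\otimes h_\pi$ by the automorphism $V(h)\otimes\mathrm{id}$), so the assertion that this ``does not change finite generation of the dual module over $\mathcal O_C[[H]][\frac1p]$'' is asserting, not proving, the thing at issue — twisting the module structure by a non-central 1-cocycle of automorphisms is precisely the kind of operation that could a priori destroy finite generation. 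The standard argument (and Emerton's) instead chooses an $H$-stable $\mathcal O_C$-lattice $V_0\subset V$ (which exists because $V$ is a continuous finite-dimensional representation of the compact group $H$) and an $H$-stable unit ball $\pi^0\subset\pi$, observes that $V_0\otimes_{\mathcal O_C}\pi^0$ is an $H$-stable open bounded lattice in $V\otimes\pi$, and checks that its reduction mod $\varpi_C$ is admissible smooth because $V_0/\varpi_C V_0$ is finite free and $\pi^0/\varpi_C\pi^0$ is admissible smooth. You should either run that lattice argument or simply cite Emerton.
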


Note that the tensor products carry the projective, or equivalently inductive, topology.

\begin{proof}
  (i) We have a natural map of locally analytic representations
  \begin{align*}
    \Theta : V \otimes (\Ind_{P}^{G}\tau)^{\an} &\to (\Ind_{P}^{G}V \otimes \tau)^{\an}\\
    v \otimes f &\mapsto (g \mapsto gv \otimes f(g)).
  \end{align*}
  Let $(v_i)_{i=1}^n$ be a basis of $V$.
  Suppose that $h \in (\Ind_{P}^{G}V \otimes \tau)^{\an}$ and write $h(g) = \sum_i gv_i \otimes f_i(g)$
  for unique $P$-equivariant functions $f_i : G \to \tau$. 
  It suffices to show that the functions $f_i$ are locally analytic.
  Let $(v_i^*)_{i=1}^n$ denote the dual basis of $V$ and let $\lambda_j : G \to V^\vee$ denote the locally analytic function $\lambda_j(g) := gv_j^*$.
  Then $f_j$ is obtained from $h$ and $\lambda_j$ via the natural bilinear pairing
  $(V \otimes \tau) \times V^\vee \to \tau$, so it is locally analytic by \cite[Satz 2.4.3]{feaux-diss}.
  (The assumption BIL there is satisfied, as $V$ is finite-dimensional.)

  (ii) The proof is analogous but easier, using that the bilinear pairing above is continuous.

  (iii) is proved in \cite[Theorem 3.6]{fu}, (iv) is proved in \cite[Proposition 3.6.15]{locallyanalytic-memoir}
  (more generally), and (v) is proved in \cite[Proposition 6.2.6]{locallyanalytic-memoir}.
  \end{proof}

The following lemmas about extension of scalars and semisimplicity will be useful later.

\begin{lem}\label{lm:density-and-ext-of-scalars}
Let $V$ be a Banach space over $C$, $V_0$ a subspace of $V$ and $C'$ a finite extension of $C$.
Then $\overline{V_0\otimes_{C}C'} = \overline{V_0}\otimes_{C}C'$ inside $V \otimes_C C'$, where $\overline{V_0}$ is the closure of $V_0$ in $V$.
In particular, $V_0\otimes_{C}C'$ is dense in $V\otimes_{C}C'$ if and only if $V_0$ is dense in $V$.
\end{lem}
Here, $V \otimes_C C'$ carries the projective, or equivalently inductive, tensor product topology.
\begin{proof}
  Set $n = [C':C]$ and fix a basis of $\mathcal{O}_{C'}$ as a $\mathcal{O}_{C}$-module.
  Then $V\otimes_{C}C'\simeq V^{\oplus n}$ and $V_0\otimes_{C}C'\simeq (V_0)^{\oplus n}$ and $\o{V_0}\otimes_{C}C'\simeq (\o{V_0})^{\oplus n}$ as $C$-vector spaces.
  Once we show that the first isomorphism is a homeomorphism, the lemma is obvious.
  To see this, fix a bounded open $\mathcal{O}_C$-lattice $L$ of $V$.
  By the definition of the projective tensor product topology, the topology of $V \otimes_C C'$ is generated by the scalar multiples of the lattice $L \otimes_{\mathcal{O}_{C}} \mathcal{O}_{C'} \cong L^{\oplus n}$.
  Hence the first isomorphism is indeed a homeomorphism.
\end{proof}

\begin{lem}\label{lem:adm-ext-of-scalars}
  Suppose that $C'/C$ is a finite extension.
  \begin{enumerate}
  \item If $\pi$ is an admissible Banach (resp.\ locally analytic) representation of $G$ over $C$, then $\pi \otimes_C C'$ is admissible (over $C'$).
  \item If $\pi'$ is an admissible Banach (resp.\ locally analytic) representation of $G$ over $C'$, then $\pi'|_C$ is admissible (over $C$).
  \end{enumerate}
\end{lem}

\begin{proof}
First we prove the lemma when $C'/C$ is Galois.
  In (ii), note that $\pi'|_{C}\otimes_{C}C' = \bigoplus_{\gamma\in\Gal(C'/C)}\pi'\otimes_{C',\gamma}C'$ is admissible.
  Hence we are reduced to showing that if $\pi$ is a Banach (resp.\ locally analytic) representation over $C$, then $\pi$ is admissible (over $C$) if and only if $\pi \otimes_C C'$ is admissible (over $C'$).

  Suppose that $\pi$ is Banach.
  Fix a compact open subgroup $H \le G$ and let $\pi^0$ be an $H$-stable unit ball in $\pi$.
  Then $\pi^0 \otimes_{\mathcal O_C} \mathcal O_{C'}$ is an $H$-stable unit ball in $\pi \otimes_C C'$.
  Let $\o\pi := \pi^0 \otimes_{\mathcal O_C} k_C$, where $k_C$ (resp.\ $k_C'$) denotes the residue field of $C$ (resp.\ $C'$).
  The claim is then equivalent to showing that the smooth representation $\o\pi$ is admissible if and only if $\o\pi \otimes_{k_C} k_C'$ is admissible, which is clear.

  Suppose that $\pi$ is locally analytic.
  If $V$ is a locally convex space over $C$, we first show that $V$ is of compact type (over $C$) if and only if $V \otimes_C C'$ is of compact type (over $C'$).
  We observe that if $f : V' \to W'$ is a continuous map of Banach spaces over $C'$, then $f$ is compact if and only if $f|_C$ is compact, as follows from the definitions in \cite[\S1]{MR1887640}.
  If $V = \varinjlim_{n\ge 1} V_n$ is of compact type with $V_n$ Banach and compact, injective transition maps, then $V \otimes_C C' = \varinjlim_{n\ge 1} (V_n \otimes_C C')$ is of compact type over $C$ with compact, injective transition maps (over $C$) by \cite[Lemma 1.1.32]{locallyanalytic-memoir}, hence of compact type over $C'$ (by our observation).
  Conversely, if $V \otimes_C C'$ is of compact type, then $V \otimes_C C'$ is of compact type over $C$ (by our observation), hence so is its closed subspace $V$ (cf.\ \cite[Proposition 1.2]{MR1887640}).

  \renewcommand{\H}{\mathbb H}

  We now use Emerton's characterization of admissible locally analytic representations in \cite[Definition 6.1.1]{locallyanalytic-memoir} (and the lines that follow):
  $\pi$ is admissible if and only if it is of compact type and for a cofinal sequence of analytic (compact) open subgroups $H = \H(F)$ of $G$ the space $\pi_{\H\text{-}\an}$ admits an $H$-equivariant closed embedding in $\mathcal C^\an(\H,C)^{\oplus n}$ for some $n = n(\H) \ge 0$.
  (See \cite[Definition 2.1.9, Definition 3.3.13, \S3.5]{locallyanalytic-memoir} for the rigid analytic functions $\mathcal C^\an(\H,C)$, $\H$-analytic vectors $(-)_{\H\text{-}\an}$, and analytic open subgroups, respectively.)

  We claim that the functor $(-)_{\H\text{-}\an}$ commutes with scalar extension from $C$ to $C'$ (on locally analytic representations of compact type over $C$).
  Assuming this, if $\pi$ is admissible then for a cofinal sequence of analytic open subgroups $H$ of $G$ there exists a closed embedding $\pi_{\H\text{-}\an} \into \mathcal C^\an(\H,C)^{\oplus n}$ for some $n \ge 0$.
  Then we obtain a closed embedding $\pi_{\H\text{-}\an} \otimes_C C' \into \mathcal C^\an(\H,C)^{\oplus n} \otimes_C C'$ (\cite[Corollary 17.5]{MR1869547}), i.e.\ a closed embedding
  \begin{equation}
    \label{eq:pi-C'-emb}
    (\pi \otimes_C C')_{\H\text{-}\an} \into \mathcal C^\an(\H,C')^{\oplus n}.
  \end{equation}
  Conversely, if we have a closed embedding~\eqref{eq:pi-C'-emb} over $C'$, then by the claim we have closed embeddings over $C$:
  \begin{equation*}
    \pi_{\H\text{-}\an} \into (\pi \otimes_C C')_{\H\text{-}\an} \into \mathcal C^\an(\H,C')^{\oplus n} = \mathcal C^\an(\H,C)^{\oplus n[C':C]}.
  \end{equation*}
  We deduce that $\pi$ is admissible if and only if $\pi \otimes_C C'$ is admissible.

  To prove the claim we first note that the functor $(-)_{\H\text{-}\an}$ commutes with scalar extension from $C$ to $C'$ on Banach spaces over $C$ equipped with a topological action of $H$ (recall that
  $V_{\H\text{-}\an} = \mathcal C^\an(\H,V)^H$, and both functors $\mathcal C^\an(\H,-)$ and $(-)^H$ commute with scalar extensions by the definitions).
  Now if $V$ is of compact type over $C$, we present it as $V = \varinjlim_{n \ge 1} V_n$ as earlier in this proof, so $V \otimes_C C' = \varinjlim_{n \ge 1} (V_n \otimes_C C')$ presents $V \otimes_C C'$ as compact type space over $C'$.
  If $V$ is moreover equipped with a locally analytic action of $H$, then by compactness of $H$ we may assume that each $V_n$ is $H$-stable by \cite[Proposition 3.2.15]{locallyanalytic-memoir} and we obtain $V_{\H\text{-}\an} = \varinjlim_{n \ge 1} (V_n)_{\H\text{-}\an}$ by \cite[Proposition 3.3.27]{locallyanalytic-memoir}.
  Then
  \begin{equation*}
    (V \otimes_C C')_{\H\text{-}\an} = \varinjlim_{n \ge 1} (V_n \otimes_C C')_{\H\text{-}\an} = \varinjlim_{n \ge 1} (V_n)_{\H\text{-}\an} \otimes_C C' = V_{\H\text{-}\an} \otimes_C C'.
  \end{equation*}

In general, let $C''$ be the Galois closure of $C'/C$.
If $\pi$ is admissible, then, since $\pi\otimes_{C}C'\subset (\pi\otimes_{C}C'')|_{C'}$ is a closed subrepresentation and $(\pi\otimes_{C}C'')|_{C'}$ is admissible, $\pi\otimes_{C}C'$ is also admissible.
If $\pi'$ is admissible, then $\pi'|_{C}$ is admissible as $\pi'|_{C}\subset (\pi'\otimes_{C'}C'')|_{C}$ is a closed subrepresentation.
\end{proof}

We say that an admissible Banach representation (resp.\ locally analytic) is \emph{semisimple} if it is a direct sum of finitely many simple subobjects (i.e.\ irreducible closed subrepresentations).

\begin{lem}\label{lem:semisimple-banach}
Let $\pi$ be a finite-length admissible Banach (resp.\ locally analytic) representation.
Then the following are equivalent.
\begin{enumerate}
\item $\pi$ is semisimple.
\item $\pi$ is a finite sum of irreducible subrepresentations.
\item For any closed subrepresentation $\sigma\subset\pi$ there exists a closed subrepresentation $\tau$ such that $\pi = \sigma\oplus\tau$.
\end{enumerate}
In particular, any closed subrepresentation of a semisimple representation is semisimple.
\end{lem}

\begin{proof}
The proof works in any abelian category.
Assume that $\pi = \sum_{i = 1}^{n}\pi_{i}$ such that $\pi_{i}$ are simple subobjects and $n$ is minimal.
If the sum is not direct there exists $1 \le j \le n$ such that $\pi_j \cap \sum_{i \ne j} \pi_i \ne 0$, i.e.\ $\pi_j \subset \sum_{i \ne j} \pi_i$, contradicting the minimality of $n$.
Therefore (ii) implies (i), and the converse is obvious.

Assume $\pi$ is semisimple and write $\pi = \bigoplus_{i=1}^n \pi_i$ with $\pi_i$ simple.
Let $\sigma\subset\pi$ be a subobject.
Pick a sequence $1 \le k_1 < k_2 < \cdots < k_s \le n$ of minimal length such that $\sigma + \sum_{j=1}^s \pi_{k_j} = \pi$.
If the sum is not direct, then $\pi_{k_{j'}} \cap (\sigma + \sum_{j \ne j'} \pi_{k_j}) \ne 0$ for some $1 \le j' \le s$, i.e.\ $\pi_{k_{j'}} \subset \sigma + \sum_{j \ne j'} \pi_{k_j}$, which contradicts the minimality of $s$.
Hence we can take $\tau = \sum_{j=1}^s \pi_{k_j}$.
Therefore (i) implies (iii), and the converse follows by induction on the length of $\pi$.

Any subobject of a semisimple representation $\pi$ is also a quotient of $\pi$ by (iii) and hence satisfies (ii).
\end{proof}

If $\pi$ is an admissible Banach (resp.\ locally analytic) representation of finite length, we let its \emph{$G$-socle} $\soc_G(\pi)$ denote the largest semisimple subobject.

\begin{lem}\label{lem:soc,field extension}
Suppose that $C'/C$ is a finite extension.
\begin{enumerate}
\item If $\pi$ is a semisimple admissible Banach (resp.\ locally analytic) representation over $C$, then $\pi\otimes_{C}C'$ is admissible and semisimple.
\item If $\pi'$ is a semisimple admissible Banach (resp.\ locally analytic) representation over $C$, then $\pi'|_{C}$ is admissible and semisimple.
\item If $\pi$ is an admissible Banach (resp.\ locally analytic) representation of finite length over $C$, then $\soc_{G}(\pi)\otimes_{C}C' = \soc_{G}(\pi\otimes_{C}C')$.
\end{enumerate}
\end{lem}
For (i), see also \cite[Lemma 3.7]{MR3053464} in the case $C'/C$ Galois and $F = \Q_p$.
\begin{proof}
  As at the end of proof of Lemma~\ref{lem:adm-ext-of-scalars} we may assume that $C'/C$ is Galois.  
  The admissibility assertions follow from Lemma~\ref{lem:adm-ext-of-scalars}.

(i)
We may assume $\pi$ is irreducible.
The representation $\pi\otimes_{C}C'$ is of finite length, as its restriction to $C$ is ($C'/C$ being finite), and if $\pi'$ is any irreducible closed subrepresentation, then $\pi \otimes_C C' = \sum_{\gamma\in\Gal(C'/C)} \gamma(\pi')$ is semisimple.

(ii)
We may assume $\pi'$ is irreducible.
Note that $\pi'|_{C}\otimes_{C}C' = \bigoplus_{\gamma\in\Gal(C'/C)}\pi'\otimes_{C',\gamma}C'$ is of finite length.
Take $0 \ne \pi\subset\pi'|_{C}$ such that $\pi\otimes_{C}C'$ has minimal possible length.
Then $\pi$ is irreducible.
Take a $C$-basis $c_{1},\ldots,c_{d}$ of $C'$.
Then $\pi'|_{C} = \sum_{i}c_{i}\pi$ is semisimple.

(iii)
The inclusion $\soc_{G}(\pi)\otimes_{C}C' \subset \soc_{G}(\pi\otimes_{C}C')$ follows from (i).
By Galois descent, $\soc_{G}(\pi\otimes_{C}C')$ is defined over $C$.
Take a $C$-representation $\tau$ such that $\soc_{G}(\pi\otimes_{C}C') = \tau\otimes_{C}C'$.
Then $\tau\subset \soc_{G}(\pi\otimes_{C}C')|_{C}$ is semisimple by (ii).
We get $\soc_{G}(\pi) \supset \tau$.
\end{proof}

Let $G'$ denote the smallest normal subgroup of $G$ generated by $U$.
It is a closed locally analytic subgroup of $G$ \cite[\S6]{MR316587}.

\begin{lem}\label{lm:loc-an-char}\ 
  \begin{enumerate}
  \item 
    Any locally analytic character $\psi \colon G \to C^{\times}$ and any finite-dimensional smooth representation $\tau$ of $G$ is trivial on $G'$.
  \item 
    Suppose that $\alggrp{P} = \alggrp{L}\alggrp{N}$ is a parabolic subgroup and that $\tau$ is a finite-dimensional continuous representation of $P$.
    Then $\tau^N \ne 0$.
    In particular, if $\tau$ is irreducible, then $\tau$ is trivial on $N$.
  \end{enumerate}
\end{lem}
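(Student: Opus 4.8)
The plan is to treat (i) and (ii) separately; (i) is a direct reduction to relative root subgroups, while (ii) is where the real content lies.

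For (i), I would first show that $\psi$ and any finite-dimensional smooth $\tau$ are trivial on $U$; this suffices, since $G'$ is by definition the subgroup generated by the conjugates $gUg^{-1}$ ($g\in G$), and a representation trivial on $U$ is trivial on each $gUg^{-1}$ (conjugate the relation), hence on $G'$. Now $U$ is generated by the relative root subgroups $\alggrp U_\alpha(F)$ for $\alpha$ a positive root of $\alggrp S$, so it is enough to handle each $\alggrp U_\alpha(F)$. The torus $S$ normalizes $\alggrp U_\alpha(F)$ and acts on the successive quotients of $\alggrp U_\alpha(F)\supseteq\alggrp U_{2\alpha}(F)\supseteq 1$ through the nontrivial characters $\alpha$ and $2\alpha$; choosing $s\in S$ with $\alpha(s)\notin\{1,-1\}$ (possible as $\alpha\colon S\to F^\times$ is a nontrivial homomorphism), the map $u\mapsto [s,u]$ is bijective on each quotient, hence surjective onto $\alggrp U_\alpha(F)$, so that $\alggrp U_\alpha(F)=[S,\alggrp U_\alpha(F)]$. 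Since $\psi$ has abelian target it kills all commutators, so $\psi|_{\alggrp U_\alpha(F)}=1$. For smooth $\tau$ I would restrict further to a one-parameter subgroup $\cong\mathbb G_a(\Q_p)$: the image in $\GL(\tau)$ is then abelian, divisible, and torsion (a smooth representation of $\Z_p$ has finite image), hence trivial, because $C/\Q_p$ finite forbids a subgroup isomorphic to $\mathbb Z(\ell^{\infty})$ in $\GL(\tau)$ (otherwise the elements of order $\ell^{k}$ would keep $[\Q_p(\mu_{\ell^{k}}):\Q_p]$ bounded, whereas it tends to $\infty$); as such subgroups generate $\alggrp U_\alpha(F)$, we get $\tau|_{\alggrp U_\alpha(F)}=1$. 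This proves (i).

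For (ii) I may assume $N\ne 1$ and write $V$ for the space of $\tau$. A continuous homomorphism of $p$-adic Lie groups is locally analytic, so $\tau$ has a derivative $d\tau\colon\mathfrak p\to\mathfrak{gl}(V)$, where $\mathfrak p$ is the $\Q_p$-Lie algebra of $P$ with nilradical $\mathfrak n$. I would fix a cocharacter $\lambda\colon\mathbb G_m\to\alggrp L$ with $\alggrp P=P(\lambda)$, so that $\Ad(\lambda(t))$ acts on $\mathfrak n=\bigoplus_{i\ge 1}\mathfrak n_i$ with strictly positive weights, and put $T:=\tau(\lambda(p))\in\GL(V)$. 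Applying the equivariance $d\tau(\Ad(g)X)=\tau(g)\,d\tau(X)\,\tau(g)^{-1}$ to $g=\lambda(p)$ and $X\in\mathfrak n_i$ gives $T\,d\tau(X)\,T^{-1}=p^{i}\,d\tau(X)$, from which a standard computation shows that $d\tau(X)$ maps the generalized $T$-eigenspace $V[\mu]$ into $V[p^{i}\mu]$. Letting $r_0>0$ be the least absolute value of an eigenvalue of $T$ and $V_0:=\bigoplus_{|\mu|=r_0}V[\mu]\ne 0$, the inequality $|p^{i}\mu|=p^{-i}|\mu|<r_0$ (valid since $i\ge 1$, there being no eigenvalue of absolute value $<r_0$) forces $d\tau(X)v=0$ for all $v\in V_0$ and all $X\in\mathfrak n$.

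It remains to upgrade this to $N$-invariance, which is where the $p$-adic analysis enters. Fix $v\in V_0$ and $X\in\mathfrak n$; since $d\tau(X)^{k}v=0$ for all $k$, the locally analytic homomorphism $\phi\colon\Q_p\to\GL(V)$, $\phi(t):=\tau(\exp(tX))$, satisfies $\phi(t)v=\exp(t\,d\tau(X))v=v$ for $|t|$ small. As $\phi(pt)=\phi(t)^{p}$, the relation $\phi(t)v=v$ propagates along $t\mapsto pt$, so writing any $t\in\Q_p$ as $p^{m}(t/p^{m})$ with $|t/p^{m}|$ small and iterating $m$ times yields $\phi(t)v=v$ for all $t$; in particular $\tau(\exp(X))v=v$. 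Since $\exp\colon\mathfrak n\to N$ is surjective, $V_0\subseteq V^{N}$, so $V^{N}\ne 0$; and $V^{N}$ is $P$-stable (for $v\in V^{N}$, $\ell\in L$, $n\in N$ one has $\tau(n)\tau(\ell)v=\tau(\ell)\tau(\ell^{-1}n\ell)v=\tau(\ell)v$), so if $\tau$ is irreducible then $V^{N}=V$. The step I expect to be the main obstacle — or the one most easily overlooked — is realizing that (ii) is false for finite-dimensional continuous representations of $N$ alone (e.g.\ $\diag(\psi,\psi^{-1})$ for a nontrivial continuous character $\psi$ of $N$), so that the $P$-action must be used essentially; the grading cocharacter $\lambda$ is exactly what achieves this, by forcing $d\tau(\mathfrak n)$ to strictly contract the $T$-eigenvalue and hence annihilate the bottom eigenspace $V_0$.
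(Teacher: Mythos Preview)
Your proof of (i) is correct but takes a different route from the paper, which derives (i) from (ii) directly (applying (ii) with $N=U$ to the one-dimensional $\psi$, and a similar argument for smooth $\tau$). Your commutator identity $U_\alpha(F)=[S,U_\alpha(F)]$ and the Pr\"ufer-group obstruction both work; note that only $\ell=p$ matters in the latter, since the image of $(\Q_p,+)$ under any smooth homomorphism to $\GL(\tau)$ is a quotient of $\Q_p/p^k\Z_p\cong\Z(p^\infty)$.

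In (ii), your construction of $V_0$ via the cocharacter $\lambda$ is a nice concrete alternative to the paper's appeal to Lie's theorem for a Borel subalgebra of $\mathfrak p_C$, and it correctly gives $0\ne V_0\subset\tau^{\mathfrak n}$. But the propagation step is wrong: the implication $\phi(t)v=v\Rightarrow\phi(pt)v=v$ moves $t$ \emph{toward} $0$ in the $p$-adic metric (since $|pt|=p^{-1}|t|$), so starting from small $t$ you never reach large $t$. Your formula $t=p^m(t/p^m)$ with $|t/p^m|$ small forces $m<0$ (as $|t/p^m|=p^m|t|$), and then reaching $t$ from $t/p^m$ by iterating $t\mapsto pt$ goes the wrong way; you would instead need $\phi(s)^pv=v\Rightarrow\phi(s)v=v$, which is false in general. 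As you yourself observe, the claim fails for $N$ alone, yet your propagation uses only $\tau|_N$; the $P$-action must enter again. The fix---and this is exactly the paper's ``by using the $S$-action''---is to note that $\tau^{\mathfrak n}$ is $P$-stable and that $N$ acts smoothly on it (derivative zero), hence through $N/N_1$ for some open $N_1$; since conjugation by $\lambda(p)$ contracts any $n\in N$ into $N_1$ and $\tau^{\mathfrak n}$ is $T$-stable, one gets $\tau(n)|_{\tau^{\mathfrak n}}=T^{-k}\tau(\lambda(p)^kn\lambda(p)^{-k})|_{\tau^{\mathfrak n}}T^k=\mathrm{id}$, i.e.\ $\tau^{\mathfrak n}=\tau^N$.
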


\begin{proof}
  (ii) We may assume that $F = \Q_p$, so $\tau$ is a locally analytic representation of $P$.
  By finite-dimensional representation theory of (a Borel subalgebra of) $\mathfrak{p}_C$, we have $\tau^{\mathfrak{n}_{C}}\ne 0$.
  This is a $P$-subrepresentation of $\tau$ on which $N$ acts smoothly.
  By using the $S$-action, we see that $\tau^{\mathfrak{n}_{C}} = \tau^N$.

  (i) The claim for $\psi$ follows from (ii), as $G'$ is the normal subgroup generated by $U$, and the argument for smooth $\tau$ is similar to (ii) (but easier and well-known).
\end{proof}

\subsection{The functor of Orlik--Strauch}
\label{sec:funct-orlik-stra}
In this section we discuss the categories $\mathcal{O}^{\mathfrak{p}}$, $\mathcal{O}^{P}$ and functors $\mathcal F_P^G$ of Orlik--Strauch for general $\alggrp G$.
We also take up and extend some general notions from \cite{orlik-functorial}.

Let $\alggrp{P} = \alggrp{L}\alggrp{N}$ be a standard parabolic subgroup.
Recall the abelian categories $\mathcal{O}^{\mathfrak{p}}$ and $\mathcal{O}^{P}$.
First, $\mathcal{O}^{\mathfrak{p}}$ is the full subcategory of $U(\mathfrak{g}_{C})$-modules $M$ such that
\begin{enumerate}
\item $M$ is a finitely generated $U(\mathfrak{g}_{C})$-module,
\item $M$ is a direct sum of absolutely simple finite-dimensional $\mathfrak{l}_{C}$-modules,
\item the action of $\mathfrak{p}_{C}$ on $M$ is locally finite.
\end{enumerate}

The objects of $\mathcal{O}^{P}$ consist of pairs $(M,\tau)$, where $M \in \mathcal{O}^{\mathfrak{p}}$ and $\tau$ is a locally finite-dimensional locally analytic action of $P$ on $M$
whose derivative equals the given action of $\mathfrak{p}_{C} \subset \mathfrak{g}_{C}$ and such that $g \circ X \circ g^{-1} = \Ad(g)(X)$ on $M$ for all $g \in P$, $X \in U(\mathfrak{g}_{C})$.
The morphisms consist of maps in $\mathcal O^{\mathfrak p}$ that are moreover $P$-linear.

In particular, $\mathcal{O}^{G}$ consists of all finite-dimensional locally analytic representations of $G$ on which the derived action of $\mathfrak{g}_{C}$ is a direct sum of absolutely simple $\mathfrak{g}_{C}$-modules.
This category is stable under duality.
Note that if $\tau$ is a finite-dimensional smooth representation of $P$, then $\tau \in \mathcal{O}^{P}$ (killed by $\mathfrak{g}_{C}$).
We also remark that any object in $\mathcal{O}^{\mathfrak{p}}$ or $\mathcal{O}^{P}$ is of finite length~\cite[1.11~Theorem]{humphreys-bgg}.

Let $\alggrp{T}'$ be a maximal split torus containing $\alggrp{S}_C$ in the split group $\alggrp{Z}_C$ and let $\mathfrak{t}'$ be its Lie algebra.
Let $\mathfrak{z}_{L}$ denote the Lie algebra of the center $\alggrp Z_{\alggrp L}$ of $\alggrp L$.

\begin{lem}\label{lm:cat-O-p-equivalence}
  Condition (ii) in the definition of category $\mathcal{O}^{\mathfrak p}$ may be replaced by either
  \begin{enumerate}
  \item[\upshape{(ii$'$)}] $M$ is a direct sum of 1-dimensional $\mathfrak{t}'$-modules; or
  \item[\upshape{(ii$''$)}] $M$ is a direct sum of 1-dimensional $\mathfrak{z}_{L,C}$-modules.
  \end{enumerate}
\end{lem}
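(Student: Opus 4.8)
The plan is to show that, for a $U(\mathfrak{g}_C)$-module $M$ satisfying conditions (i) and (iii) of the definition of $\mathcal{O}^{\mathfrak{p}}$, the three conditions (ii), (ii$'$) and (ii$''$) are equivalent. I would first record the obvious implications and then close the cycle. Since $\mathfrak{t}' \supset \mathfrak{s}_C$ is a maximal split (hence Cartan) subalgebra of $\mathfrak{z}_C = \mathfrak{l}_C$ after extending scalars (recall $\alggrp{G}$ splits over $C$), and $\mathfrak{z}_{L,C} \subset \mathfrak{t}'_C$, the implications (ii$'$)$\Rightarrow$(ii$''$) and (ii)$\Rightarrow$(ii$'$) are immediate: a $\mathfrak{t}'$-semisimple module is a fortiori $\mathfrak{z}_{L,C}$-semisimple, and an absolutely simple finite-dimensional $\mathfrak{l}_C$-module is a direct sum of weight spaces for $\mathfrak{t}'$ by the highest weight theory for the reductive Lie algebra $\mathfrak{l}_C$. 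So it remains to prove (ii$''$)$\Rightarrow$(ii).

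For (ii$''$)$\Rightarrow$(ii), assume $M$ is a direct sum of $1$-dimensional $\mathfrak{z}_{L,C}$-modules. First I would use condition (iii): the $\mathfrak{p}_C$-action on $M$ is locally finite, so $M$ is a union of finite-dimensional $\mathfrak{p}_C$-stable subspaces, and it suffices to treat a single such finite-dimensional $V \subset M$. On $V$ the nilradical $\mathfrak{n}_C$ acts as commuting nilpotent operators (it is an ideal acting locally finitely, and $\mathfrak{n}_C$ is nilpotent), and $\mathfrak{z}_{L,C}$ acts semisimply with scalar eigenvalues by hypothesis; since $[\mathfrak{z}_{L,C},\mathfrak{n}_C] \subset \mathfrak{n}_C$, the generalized eigenspace decomposition for $\mathfrak{z}_{L,C}$ is $\mathfrak{p}_C$-stable, and in fact (by the scalar hypothesis) it is the actual eigenspace decomposition. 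Now decompose $\mathfrak{l}_C = \mathfrak{z}_{L,C} \oplus \mathfrak{l}_C^{\der}$. The key point is that $V$, being finite-dimensional, is a semisimple $\mathfrak{l}_C^{\der}$-module by Weyl's theorem (as $\mathfrak{l}_C^{\der}$ is semisimple over the characteristic-zero field $C$). Combining this with the $\mathfrak{z}_{L,C}$-eigenspace decomposition, $V$ is a direct sum of modules of the form $(\text{scalar character of }\mathfrak{z}_{L,C}) \boxtimes (\text{simple }\mathfrak{l}_C^{\der}\text{-module})$, each of which is a simple $\mathfrak{l}_C$-module, and each is absolutely simple because $\alggrp{G}$ — hence $\alggrp{L}^{\der}$ — splits over $C$, so finite-dimensional simple $\mathfrak{l}_C^{\der}$-modules are absolutely simple by Weyl's construction. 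Thus $M = \bigcup V$ is a direct sum of absolutely simple finite-dimensional $\mathfrak{l}_C$-modules, i.e.\ (ii) holds.

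The main obstacle is the step extracting $\mathfrak{l}_C$-semisimplicity from mere $\mathfrak{z}_{L,C}$-semisimplicity: a priori the $\mathfrak{p}_C$-action could mix the $\mathfrak{l}_C^{\der}$-isotypic pieces via $\mathfrak{n}_C$, so one genuinely needs to reduce to the finite-dimensional $\mathfrak{p}_C$-stable pieces (using (iii)) before invoking Weyl complete reducibility for $\mathfrak{l}_C^{\der}$, and one must check the $\mathfrak{z}_{L,C}$-grading is honest (not merely a filtration) — which is exactly where the \emph{$1$-dimensional} (scalar) hypothesis, rather than just local finiteness, is used. The absolute simplicity claim is then routine from the splitting hypothesis on $\alggrp{G}$. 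I would write the argument for (ii$''$)$\Rightarrow$(ii) in detail and dismiss the other two implications in a sentence each.
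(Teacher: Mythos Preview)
Your approach matches the paper's: the same cycle of implications, with (ii$''$)$\Rightarrow$(ii) handled via Weyl's theorem for $\mathfrak{l}_C^{\der}$ on a finite-dimensional piece combined with the $\mathfrak{z}_{L,C}$-eigenspace decomposition, and absolute simplicity coming from the splitting of $\alggrp{G}$ over $C$. Two small corrections: the equality ``$\mathfrak{z}_C = \mathfrak{l}_C$'' is a notational slip ($\alggrp{Z}$ is the \emph{minimal} Levi, generally strictly smaller than $\alggrp{L}$), and the claim that the $\mathfrak{z}_{L,C}$-eigenspace decomposition of $V$ is $\mathfrak{p}_C$-stable is false ($\mathfrak{n}_C$ shifts these eigenvalues) and also unnecessary --- you only need $\mathfrak{l}_C$-stability, which is immediate since $\mathfrak{z}_{L,C}$ is central in $\mathfrak{l}_C$, so you may as well take $V$ to be a finite-dimensional $\mathfrak{l}_C$-submodule (as the paper does) and drop the $\mathfrak{n}_C$ discussion entirely.
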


\begin{proof}
  This is a question about $\mathfrak g_C$-modules, so we may work over $F = C$.
  In particular, $\alggrp G$ is split and $\alggrp{T}' = \alggrp{S}$ (and we can drop all
  extensions of scalars to $C$).
  Note that (ii$'$) implies (ii$''$), as $\mathfrak{z}_{L} \subset \mathfrak{t}'$.
  To see that (ii) implies (ii$'$), let $W$ be any absolutely simple finite-dimensional $\mathfrak{l}$-module.
  Then $W^{\mathfrak{u}}$ is 1-dimensional, so the surjection
  $U(\mathfrak{g}) \otimes_{U(\mathfrak{b})} W^{\mathfrak{u}} \onto W$ shows that $W$ is a direct sum of 1-dimensional $\mathfrak{t}'$-modules.

  Finally we show that (ii$''$) implies (ii).
  We know that $M$ is a sum of finite-dimensional $\mathfrak{l}$-submodules by (iii). 
  It suffices to show that any finite-dimensional $\mathfrak{l}$-module $W$ on which $\mathfrak{z}_{L}$ acts diagonalizably is a sum of absolutely simple $\mathfrak{l}$-modules.
  Write $\mathfrak{l} = \mathfrak{l}^{\der} \oplus \mathfrak{z}_{L}$.
  By the diagonalizability, $W$ is a direct sum of $\mathfrak{z}_{L}$-isotypic components, and this decomposition is preserved by $\mathfrak{l}^{\der}$.
  As $\mathfrak{l}^{\der}$ is a semisimple Lie algebra we may assume that $W$ is simple.
  By highest weight theory, any simple module of $\mathfrak{l}^{\der} \otimes \overline C$ may be defined over $C$, hence $W$ is absolutely simple.
\end{proof}

\begin{remark}\label{rk:tensor-O-P}
  Note that for any $M$, $M' \in \mathcal{O}^{P}$ with $M'$ finite-dimensional the tensor product $M \otimes M'$ lies in $\mathcal{O}^{P}$ as well. (Use condition (ii$'$).)
\end{remark}

\begin{remark}
  Suppose that $V$ is a finite-dimensional locally analytic representation of a parabolic subgroup $P = LN$.
  From the locally analytic homomorphism $P \to \GL(V)$ and the functoriality of the logarithm map \cite[III.7.6]{MR0573068} we see that
  \begin{equation}
    \label{eq:log-u}
    nv = \sum_{i=0}^\infty \frac{(\log n)^i}{i!} v
  \end{equation}
  for all $n \in N$ near 1 and all $v \in V$.
  By using the action of $L$ we see that in fact formula~\eqref{eq:log-u} is valid for all $n \in N$.
  We will just say that we ``integrate'' over $N$.
\end{remark}

\begin{lem}\label{lem:cop}
  Suppose $P \subset Q$.
  \begin{enumerate}
  \item The forgetful functor $\mathcal{O}^{Q}\to \mathcal{O}^{P}$ is fully faithful.
  \item If $M \in \mathcal{O}^{Q}$ and $M' \in \mathcal{O}^{P}$ is a subquotient, then $M' \in \mathcal{O}^{Q}$.
  \item If $M \in \mathcal{O}^{Q}$ is simple, it is simple in $\mathcal{O}^{P}$ as well.
  \end{enumerate}
\end{lem}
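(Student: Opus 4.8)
The plan is to deduce (iii) from (ii) and to reduce (ii) to the case of a subobject. Since $\mathcal{O}^{P}$ is abelian, a subquotient of $M$ is of the form $A/B$ for subobjects $B\subseteq A\subseteq M$ in $\mathcal{O}^{P}$; granting the subobject case, $A$ is then a $Q$-stable submodule, so $A\in\mathcal{O}^{Q}$, then $B\subseteq A$ lies in $\mathcal{O}^{Q}$ for the same reason, and finally $M'\cong A/B\in\mathcal{O}^{Q}$ as a quotient of an object of $\mathcal{O}^{Q}$ by a subobject. For (iii): were $M\in\mathcal{O}^{Q}$ simple and $0\ne M'\subsetneq M$ a subobject in $\mathcal{O}^{P}$, then by the subobject case of (ii) it would be a proper nonzero $Q$-stable submodule, contradicting simplicity in $\mathcal{O}^{Q}$. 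Thus everything reduces to: \emph{a $U(\mathfrak{g}_{C})$-submodule $M'\subseteq M$ of $M\in\mathcal{O}^{Q}$ which is $P$-stable is automatically $Q$-stable} (with an analogous statement for morphisms used in (i)). The main obstacle is that $\mathfrak{l}_{Q,C}$-stability does \emph{not} imply $L_{Q}$-stability --- the $p$-adic group $L_{Q}=\alggrp{L}_{Q}(F)$ is far from topologically generated by exponentials (already a torus carries smooth characters with trivial derivative) --- so the naive argument fails. What rescues it are two facts. $(\ast)$~Along the unipotent radical $\overline{N}\cap L_{Q}$ of the parabolic $\overline{\alggrp{P}}\cap\alggrp{L}_{Q}$ of $\alggrp{L}_{Q}$ one \emph{can} integrate: by~\eqref{eq:log-u}, any $(\overline{\mathfrak{n}}_{C}\cap\mathfrak{l}_{Q,C})$-stable subspace is $(\overline{N}\cap L_{Q})$-stable, and any $(\overline{\mathfrak{n}}_{C}\cap\mathfrak{l}_{Q,C})$-equivariant map is $(\overline{N}\cap L_{Q})$-equivariant. $(\ast\ast)$~By the Bruhat decomposition, $L_{Q}$ is generated by $P\cap L_{Q}$ and $\overline{N}\cap L_{Q}$ (the subgroup they generate contains $L$, $N\cap L_{Q}$ and $\overline{N}\cap L_{Q}$, hence $Z$ together with all root subgroups of $\alggrp{L}_{Q}$, hence $L_{Q}$). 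We also use $N_{Q}\subseteq N\subseteq P$.

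For the subobject case of (ii), let $M'\subseteq M$ be a $U(\mathfrak{g}_{C})$-submodule that is $P$-stable, hence $(P\cap L_{Q})$-stable and $N_{Q}$-stable. Since $M\in\mathcal{O}^{\mathfrak{q}}$ has finite length, $M'$ is finitely generated, and as a $\mathfrak{g}_{C}$-submodule it is $\mathfrak{l}_{Q,C}$-semisimple with locally finite $\mathfrak{q}_{C}$-action, so $M'\in\mathcal{O}^{\mathfrak{q}}$. Being $\mathfrak{g}_{C}$-stable, $M'$ is $(\overline{\mathfrak{n}}_{C}\cap\mathfrak{l}_{Q,C})$-stable, hence $(\overline{N}\cap L_{Q})$-stable by $(\ast)$; with $(P\cap L_{Q})$-stability and $(\ast\ast)$ this makes $M'$ $L_{Q}$-stable, and together with $N_{Q}$-stability, $Q$-stable. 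The induced $Q$-action on $M'$ is locally finite and locally analytic, lifts the $\mathfrak{q}_{C}$-action and is $\Ad$-compatible (all inherited from $M$), so $M'\in\mathcal{O}^{Q}$. With the first paragraph this gives (ii) and (iii).

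For (i), the forgetful functor is well defined: for $M\in\mathcal{O}^{\mathfrak{q}}$ one has $M\in\mathcal{O}^{\mathfrak{p}}$, since $\mathfrak{t}'$ is a Cartan subalgebra of $\mathfrak{l}_{Q,C}$ (it lies in the minimal Levi $\alggrp{Z}_{C}\subseteq\alggrp{L}_{Q,C}$), so by highest weight theory it acts diagonalizably on the finite-dimensional simple $\mathfrak{l}_{Q,C}$-summands of $M$, i.e.\ condition (ii$'$) of Lemma~\ref{lm:cat-O-p-equivalence} holds; and the $P$-action data restricts. Faithfulness is trivial, morphisms being maps of underlying modules. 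For fullness, let $\phi\colon M\to M'$ be $U(\mathfrak{g}_{C})$-linear and $P$-linear with $M,M'\in\mathcal{O}^{Q}$. Then $\phi$ is $(\overline{\mathfrak{n}}_{C}\cap\mathfrak{l}_{Q,C})$-equivariant, hence $(\overline{N}\cap L_{Q})$-equivariant by $(\ast)$, and it is $(P\cap L_{Q})$-equivariant and $N_{Q}$-equivariant since these groups lie in $P$; by $(\ast\ast)$ (and $N_{Q}$) it is $Q$-linear. The delicate point throughout --- essentially the only one --- is this division of labour: $\mathfrak{g}_{C}$ handles the opposite unipotent radical $\overline{N}\cap L_{Q}$, where one can integrate, while ``$P$-stable / $P$-linear'' supplies control in the $Z$-direction, where one cannot, with Bruhat gluing these into control of $L_{Q}$ and hence of $Q$. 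This also makes precise the earlier assertion that $\mathcal{O}^{\mathfrak{q}}\subseteq\mathcal{O}^{\mathfrak{p}}$ and $\mathcal{O}^{Q}\subseteq\mathcal{O}^{P}$ are full subcategories.
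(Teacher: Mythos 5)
Your proof is correct and follows essentially the same strategy as the paper: integrate the $\mathfrak{g}_C$-action along the opposite unipotent direction inside $Q$ to get stability/equivariance there, then observe that this unipotent piece together with $P$ generates $Q$. The only cosmetic difference is your choice of unipotent group — you use $\overline{N}\cap L_Q$ where the paper uses the slightly larger $\overline{U}\cap Q$ — but both generate $Q$ together with $P$ and both support the same integration argument.
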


\begin{proof}
(i)
Let $M_{1},M_{2}\in \mathcal{O}^{Q}$ and we prove that any morphism $\varphi\colon M_{1}\to M_{2}$ in $\mathcal{O}^{P}$ is $Q$-equivariant.
Since $\varphi$ is $\overline{\mathfrak{u}}_{C}\cap \mathfrak{q}_{C}$-equivariant and this subalgebra acts locally nilpotently on $M_{1},M_{2}$, by integration, $\varphi$ is $\overline{U}\cap Q$-equivariant.
Since $Q$ is generated by $P$ and $\overline{U}\cap Q$, $\varphi$ is $Q$-equivariant.

(ii)
We may assume $M'$ is a subobject of $M$ in $\mathcal{O}^{P}$.
As in (i), by integration, $M'$ is $\overline{U}\cap Q$-stable and hence $Q$-stable.
This implies (iii).
\end{proof}

\begin{remark}\label{rk:cat-O-isogeny}
  If $\varphi\colon \alggrp G_{1}\to \alggrp G$ is a morphism such that $\varphi(\alggrp{G}_{1}^{\der}) = \alggrp{G}^{\der}$, $\ker\varphi\subset \alggrp{Z}_{\alggrp G_{1}}$, and the parabolic $\alggrp P_1 = \varphi^{-1}(\alggrp P)$ is obtained as pre-image of $\alggrp P$, then we obtain functors $\mathcal O^{\mathfrak p} \to \mathcal O^{\mathfrak p_1}$ and $\mathcal O^P \to \mathcal O^{P_1}$ by inflation, which will occasionally be useful.
\end{remark}

\begin{lem}\label{lem:abs-simple-g-module}
  Any simple object of $\mathcal{O}^{\mathfrak{p}}$ is absolutely simple.
  In particular, any $\mathfrak{g}_{C}$-simple object of $\mathcal{O}^{P}$ is absolutely simple.
\end{lem}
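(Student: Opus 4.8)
The plan is to prove the statement in two steps, using the structure theory of category $\mathcal{O}^{\mathfrak{p}}$ recalled above, especially the weight-space decomposition available via Lemma~\ref{lm:cat-O-p-equivalence}.

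First I would reduce to the case $\alggrp{P}=\alggrp{B}$, i.e.\ ordinary category $\mathcal{O}$. Indeed, by Lemma~\ref{lem:cop}(iii) (applied with the roles reversed, or rather directly since $\mathcal{B}\subset\mathcal{P}$ gives $\mathcal{O}^{\mathfrak p}\subset\mathcal{O}^{\mathfrak b}$ as full subcategories and simplicity is inherited downward), any simple $M\in\mathcal{O}^{\mathfrak{p}}$ is simple as an object of $\mathcal{O}^{\mathfrak{b}}$; and absolute simplicity is likewise insensitive to which parabolic we use, since it is a statement about $\mathfrak{g}_C$-modules after base change. Moreover this is a pure Lie-algebra question, so (as in the proof of Lemma~\ref{lm:cat-O-p-equivalence}) we may assume $F=C$, drop all base-change decorations, and assume $\alggrp{G}$ split with maximal torus $\alggrp{S}$.

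Next I would use highest weight theory. A simple object $M$ of $\mathcal{O}=\mathcal{O}^{\mathfrak{b}}$ over $C$ has, by condition (ii$'$) in Lemma~\ref{lm:cat-O-p-equivalence} together with local finiteness of $\mathfrak{b}$, a nonzero highest weight vector: choosing a weight $\lambda\in(\mathfrak{t}')^{*}$ maximal among weights of $M$ (maximal in the standard partial order; such a maximal weight exists because $M$ is finitely generated hence its weights lie in finitely many downward cones), the corresponding weight space $M_{\lambda}$ is annihilated by $\mathfrak{u}$ (or $\mathfrak n$, the nilradical of $\mathfrak b$). Any $0\ne v\in M_{\lambda}$ generates a quotient of the Verma module $M(\lambda)=U(\mathfrak{g})\otimes_{U(\mathfrak{b})}C_{\lambda}$, and since $M$ is simple it equals this quotient; hence $M\cong L(\lambda)$, the unique simple quotient of $M(\lambda)$, and in particular $\dim M_{\lambda}=1$. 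Now extend scalars: $M\otimes_{C}\overline{C}$ is a quotient of $M(\lambda)\otimes_C\overline C=M_{\overline C}(\lambda)$, the Verma module for $\mathfrak{g}_{\overline C}$ with the same highest weight, which has unique simple quotient $L_{\overline C}(\lambda)$. Since $M(\lambda)_{\lambda}$ is already one-dimensional over $C$, the highest weight space of $M\otimes_C\overline C$ is one-dimensional over $\overline C$, so the maximal proper submodule of $M_{\overline C}(\lambda)$ meets $M\otimes_C\overline C$ in a proper submodule; but $M\otimes_C\overline C$ is a nonzero quotient of $M_{\overline C}(\lambda)$, forcing $M\otimes_C\overline C\cong L_{\overline C}(\lambda)$, which is simple. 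Therefore $M$ is absolutely simple.

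Finally, for the ``in particular'' clause: if $M\in\mathcal{O}^{P}$ is simple as a $\mathfrak{g}_{C}$-module, then $M|_{\mathfrak g_C}$ is a simple object of $\mathcal{O}^{\mathfrak p}$ (the $P$-action is irrelevant for $\mathfrak g_C$-submodules being $P$-stable — actually one only needs that a $\mathfrak g_C$-submodule is a $\mathcal O^{\mathfrak p}$-subobject, which is automatic), so by the first part $M$ is absolutely simple as $\mathfrak g_C$-module, and a fortiori as an object of $\mathcal{O}^{P}$. The main obstacle I anticipate is the bookkeeping in the base-change step: one must be careful that ``absolutely simple'' for the object of $\mathcal{O}^{\mathfrak p}$ is tested by base change to finite extensions of $C$ (equivalently $\overline C$), and that Verma modules and their unique simple quotients behave compatibly under such base change — this is standard BGG theory but worth stating cleanly. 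Everything else is routine highest-weight-theory manipulation.
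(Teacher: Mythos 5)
Your strategy matches the paper's up to the final step: both reduce to a split Lie algebra over $F = C$, realize the simple object $W \in \mathcal O^{\mathfrak p}$ as a highest weight module via a one-dimensional $\mathfrak t'$-submodule $\lambda \into W^{\mathfrak u}$ giving $U(\mathfrak g)\otimes_{U(\mathfrak b)}\lambda \onto W$, and note that $W_\lambda$ is one-dimensional and generates $W$. Where the two proofs diverge is in how they close. The paper observes that, since any $\mathfrak g$-endomorphism of $W$ preserves $W_\lambda$ and is determined by its (scalar) action there, one has $\End_{\mathfrak g}(W) = C$; since $W$ is simple with scalar endomorphisms, it is absolutely simple (Jacobson density then gives that any submodule of $W\otimes_C C'$ containing a single nonzero element is everything). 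This is clean and avoids touching Verma modules over extensions.

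Your proposal instead tries to show $W\otimes_C\overline C \cong L_{\overline C}(\lambda)$ directly, and there the argument has a genuine gap. You write that ``the maximal proper submodule of $M_{\overline C}(\lambda)$ meets $M\otimes_C\overline C$ in a proper submodule,'' but $M\otimes_C\overline C$ is a \emph{quotient} of $M_{\overline C}(\lambda)$, not a submodule, so the ``meets'' phrasing is off; and even interpreting it as ``the image of $N'$ in $M\otimes_C\overline C$ is proper,'' that does not force the image to be zero, which is what you need. What is missing is the identification $N' = N\otimes_C\overline C$, where $N = \ker(M(\lambda)\onto W)$ and $N'$ is the maximal proper submodule of $M_{\overline C}(\lambda)$. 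You flag this as the ``main obstacle'' and defer to ``standard BGG theory,'' but it is precisely the step that needs an argument. It can be repaired in (at least) two ways: (a) Galois descent, using that each weight space $M(\lambda)_\mu$ is finite-dimensional and defined over $C$, so the $\Gal(\overline C/C)$-stable subspace $N'$ descends to a proper $\mathfrak g$-submodule of $M(\lambda)$ and is therefore contained in $N$; or (b) $C$-rationality of the Shapovalov form, whose radical is $N'$. But the cleanest repair is to drop the base-change computation entirely and use the $\End$-ring argument as in the paper.
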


\begin{proof}
  This is a question about $\mathfrak{g}_{C}$-modules, so we may work over $F = C$.
  In particular, $\alggrp G$ is split (and we can drop all extensions of scalars to $C$). 
  Let $W \in \mathcal{O}^{\mathfrak{p}}$ simple.
  As in the proof of Lemma~\ref{lm:cat-O-p-equivalence} we can take $\lambda \into W^{\mathfrak u}$ a 1-dimensional $\mathfrak t'$-submodule, 
  so $U(\mathfrak{g}) \otimes_{U(\mathfrak b)} \lambda \onto W$. Therefore the weight space $W_\lambda$ is 1-dimensional
  and generates $W$, hence $\End_{\mathfrak{g}}(W) = C$, i.e.\ $W$ is absolutely simple.
\end{proof}

Let $W\in \mathcal{O}^{L}$.
Then on the generalized Verma module $\underline{M}(W) = \underline{M}_{G}(W) := U(\mathfrak{g}_{C})\otimes_{U(\mathfrak{p}_{C})}W$, $P$ acts by $p'(X\otimes w) = \Ad(p')X\otimes p'w$ for $p'\in P$, $w\in W$ and $X\in \mathfrak{g}_{C}$.
We have $\underline{M}(W)\in \mathcal{O}^{P}$.
(More generally, this construction works if $W$ is a finite-dimensional locally analytic representations of $P$ on which $\mathfrak{l}_{C}$ is a direct sum of absolutely simple $\mathfrak{l}_{C}$-modules, but this is not traditionally called a generalized Verma module.)
If $W\in \mathcal O^L$ is simple, then $\underline{M}(W)$ has a unique simple quotient in $\mathcal{O}^{P}$ (by the lemma that follows), which we denote by $\underline{L}(W) = \underline{L}_{G}(W)$.

Occasionally we will also use the generalized Verma module $M(W) := U(\mathfrak{g}_{C})\otimes_{U(\mathfrak{p}_{C})} W$ for $W \in \mathcal O^{\mathfrak l}$ and its simple quotient $L(W)$ (if $W$ is simple).

\begin{lem}\label{lm:N-invts}
  If $M \in \mathcal{O}^{P}$ is simple (resp.\ absolutely simple), then $M^N \in \mathcal{O}^{L}$ is simple (resp.\ absolutely simple). 
  Moreover, $\underline M(M^N)$ has $M$ as unique simple quotient in $\mathcal{O}^{P}$. 

  Conversely, if $M' \in \mathcal{O}^{L}$ is simple (resp.\ absolutely simple), then $\underline M(M')$ has a unique simple (resp.\ absolutely simple)
  quotient $Q$ in $\mathcal{O}^{P}$, and $Q^N \cong M'$. Moreover, $Q$ is the largest semisimple quotient of $\underline M(M')$ in $\mathcal{O}^{\mathfrak{p}}$.
\end{lem}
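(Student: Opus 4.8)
The plan is to grade objects of $\mathcal{O}^{\mathfrak{p}}$ by a central cocharacter of $\alggrp{L}$ and exploit the principle that a module generated ``in top degree'' has its $N$-invariants equal to its top graded piece. Fix a cocharacter $\xi$ of the central torus of $\alggrp{L}_C$ vanishing on all roots of $\alggrp{L}$ and strictly positive on all roots occurring in $\alggrp{N}$. By Lemma~\ref{lm:cat-O-p-equivalence}, condition (ii$''$), every $M \in \mathcal{O}^{\mathfrak{p}}$ decomposes as $M = \bigoplus_c M[c]$ into $\xi$-weight spaces, with $\mathfrak{l}_C$ and $L$ preserving degree, $\mathfrak{n}_C$ strictly raising it, and $\overline{\mathfrak{n}}_C$ lowering it; since $M$ is finitely generated and $\mathfrak{p}_C$ acts locally finitely, the support is bounded above, with top degree $c_{\max}$. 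Integrating over $N$ (formula~\eqref{eq:log-u}, applied on finite-dimensional $P$-stable subspaces) shows $M^N = M^{\mathfrak{n}_C}$, which is nonzero by Engel's theorem on a finite-dimensional $\mathfrak{p}_C$-stable subspace, is an $\mathcal{O}^L$-subobject, and contains $M[c_{\max}]$.

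Suppose $M \in \mathcal{O}^P$ is simple. For nonzero homogeneous $m \in M^{\mathfrak{n}_C}$, simplicity gives $U(\mathfrak{g}_C)m = M$; by the PBW decomposition $U(\mathfrak{g}_C) = U(\overline{\mathfrak{n}}_C)U(\mathfrak{l}_C)U(\mathfrak{n}_C)$ and $\mathfrak{n}_C m = 0$ this equals $U(\overline{\mathfrak{n}}_C)U(\mathfrak{l}_C)m$, which lies in degrees $\le \deg m$; as $M[c_{\max}] \ne 0$ we get $\deg m = c_{\max}$, hence $M^{\mathfrak{n}_C} = M[c_{\max}]$ and $M = U(\overline{\mathfrak{n}}_C)M^N$. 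If $0 \ne W' \subseteq M^N$ is an $\mathcal{O}^L$-subobject, then $U(\mathfrak{g}_C)W' = U(\overline{\mathfrak{n}}_C)W'$ is $\mathfrak{g}_C$-stable and $P$-stable (since $p'(Xw) = \Ad(p')(X)(p'w)$), hence a nonzero $\mathcal{O}^P$-subobject of $M$, so it equals $M$; its degree-$c_{\max}$ part is $W'$, whence $W' = M^N$. Thus $M^N$ is simple in $\mathcal{O}^L$, and $M = U(\mathfrak{g}_C)M^N$ produces the surjection $\underline{M}(M^N) \onto M$ in $\mathcal{O}^P$.

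Conversely, let $W' \in \mathcal{O}^L$ be simple. The top graded piece of $\underline{M}(W') = U(\overline{\mathfrak{n}}_C) \otimes_C W'$ is $1 \otimes W' \cong W'$, in some degree $c_0$, and generates $\underline{M}(W')$. Any proper $\mathcal{O}^P$-subobject $N'$ has $N'[c_0]$ an $\mathcal{O}^L$-subobject of $W'$, hence zero (otherwise $N' \supseteq 1\otimes W'$ would be everything); so the sum $R$ of all proper $\mathcal{O}^P$-subobjects has $R[c_0] = 0$, is proper, and is the unique maximal one, and $\underline{L}(W') := \underline{M}(W')/R$ is the unique simple quotient of $\underline{M}(W')$ in $\mathcal{O}^P$. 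The map $W' = 1\otimes W' \to \underline{L}(W')$ is injective with image $\underline{L}(W')[c_0]$, and running the top-degree argument on the $\mathcal{O}^P$-submodule generated by any nonzero homogeneous $\mathfrak{n}_C$-invariant of the simple module $\underline{L}(W')$ — which must be all of $\underline{L}(W')$ — shows $\underline{L}(W')^N = \underline{L}(W')[c_0] \cong W'$. The ``unique simple quotient'' clause of the first half is this statement applied to $W' = M^N$. To see $\underline{L}(W')$ is the largest semisimple quotient of $\underline{M}(W')$ in $\mathcal{O}^{\mathfrak{p}}$: the $\mathfrak{g}_C$-Jacobson radical is $P$-stable ($P$ permutes the maximal $\mathfrak{g}_C$-submodules) and proper, hence $\subseteq R$, and for the reverse inclusion one reduces to $W'$ being $\mathfrak{l}_C$-simple by splitting off $\mathfrak{l}_C$-isotypic components, in which case the same top-degree argument gives $\underline{M}(W')$ a \emph{unique} maximal $\mathfrak{g}_C$-submodule. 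The parenthetical ``absolutely simple'' assertions follow by flat base change, using that invariants, generalized Verma modules and the above constructions all commute with extension of $C$, together with Lemma~\ref{lem:abs-simple-g-module}.

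The step I expect to be the main obstacle is the identification $M^{\mathfrak{n}_C} = M[c_{\max}]$: this is false for reducible modules (for $\mathfrak{sl}_2$, a Verma module at a dominant integral weight has $2$-dimensional $\mathfrak{n}$-invariants), so the proof must genuinely invoke simplicity — or, for a generalized Verma module, the fact that it is generated in top degree — through PBW and the strict lowering of $\xi$-degree by $\overline{\mathfrak{n}}_C$. A secondary point is that a module simple in $\mathcal{O}^L$ need not be simple over $\mathfrak{l}_C$ (e.g.\ when $\alggrp{L}^{\der}$ is anisotropic), which is what forces the reduction to $\mathfrak{l}_C$-isotypic components in the last step.
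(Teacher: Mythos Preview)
Your approach via the $\xi$-grading is correct in outline and genuinely different from the paper's, but the sentence ``simplicity gives $U(\mathfrak{g}_C)m = M$'' for a single homogeneous vector $m \in M^{\mathfrak{n}_C}$ is a slip: simplicity in $\mathcal{O}^P$ does not say $M$ is $\mathfrak{g}_C$-cyclic, only that it has no proper nonzero $(U(\mathfrak{g}_C),P)$-submodule, and $U(\mathfrak{g}_C)m$ need not be $P$-stable. The fix is exactly what you do in the next sentence: replace $m$ by the $\mathcal{O}^L$-subobject $W_m$ it generates in $M^N$ (finite-dimensional, concentrated in degree $\deg m$, and $P$-stable since $M^N$ is), so that $U(\mathfrak{g}_C)W_m = U(\overline{\mathfrak{n}}_C)W_m$ is an $\mathcal{O}^P$-subobject and hence all of $M$, forcing $\deg m = c_{\max}$. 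Your self-identified ``main obstacle'' is thus resolved by the very device you already use, applied one line earlier. The reduction ``to $W'$ being $\mathfrak{l}_C$-simple'' in the last step is a bit telegraphic; a cleaner route is: once $\underline L(W')$ is known to be simple in $\mathcal{O}^P$, its $\mathfrak{g}_C$-radical is $P$-stable and proper, hence zero, so $\underline L(W')|_{\mathfrak{g}_C}$ is semisimple and $J \subseteq R$; for $R \subseteq J$, observe that any $\mathfrak{g}_C$-submodule of $\underline M(W')$ with vanishing degree-$c_0$ part maps to zero in the semisimple quotient $\underline M(W')/J$ (each simple summand $L(W_j'')$ has nonzero top piece $W_j''$), hence lies in $J$.

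The paper takes a different route for the forward direction: rather than grade by $\xi$, it first observes that $M|_{\mathfrak{g}_C}$ is semisimple (the $\mathfrak{g}_C$-radical being $P$-stable) and writes $M \cong \bigoplus_V V \otimes M_V$ over simple $V \in \mathcal{O}^{\mathfrak{p}}$; then $M^{\mathfrak{n}_C} \cong \bigoplus_V V^{\mathfrak{n}_C} \otimes M_V$ is the $\mathfrak{l}_C$-isotypic decomposition (since $V \mapsto V^{\mathfrak{n}_C}$ is injective on simples), any $\mathfrak{l}_C$-submodule $M'$ is $\bigoplus_V V^{\mathfrak{n}_C} \otimes M_V'$ for subspaces $M_V' \subseteq M_V$, and $U(\mathfrak{g}_C)M' = M$ forces $M_V' = M_V$. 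For the converse the paper \emph{defines} $Q$ as the largest $\mathfrak{g}_C$-semisimple quotient and then shows it is $\mathcal{O}^P$-simple, whereas you go the other way. Your grading argument is a bit more elementary (no isotypic bookkeeping), while the paper's makes the identification $Q^{\mathfrak{n}_C} \cong W'$ immediate and avoids having to verify that $M^N$ sits in a single $\xi$-degree.
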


\begin{proof}
Pick any nonzero submodule $M' \subset M^N = M^{\mathfrak{n}_{C}}$ in $\mathcal{O}^{L}$, so $\underline M(M') = U(\mathfrak{g}_{C}) \otimes_{U(\mathfrak{p}_{C})} M' \twoheadrightarrow M$.
As $M$ is simple, $M|_{\mathfrak{g}_{C}}$ is semisimple, so $M|_{\mathfrak{g}_{C}} \simeq\bigoplus_{V}V\otimes M_{V}$, where $V$ runs through isomorphism classes of simple modules in $\mathcal{O}^{\mathfrak{p}}$ and $M_{V}$ is the multiplicity space.
Note that $M^{\mathfrak{n}_{C}}\simeq \bigoplus_{V}V^{\mathfrak{n}_{C}}\otimes M_{V}$ with $V^{\mathfrak{n}_{C}}\in \mathcal{O}^{\mathfrak{l}}$ (absolutely) simple $\mathfrak{l}_{C}$-modules.
Moreover by highest weight theory we see that $V_{1} \cong V_{2}$ if and only if $V_{1}^{\mathfrak{n}_{C}} \cong V_{2}^{\mathfrak{n}_{C}}$ for simple modules $V_{1},V_{2}\in \mathcal{O}^{\mathfrak{p}}$, i.e.\ $M^{\mathfrak{n}_{C}}\simeq \bigoplus_{V}V^{\mathfrak{n}_{C}}\otimes M_{V}$ is the decomposition into isotypic components.
Since $M'$ is an $\mathfrak{l}_{C}$-submodule of $M^{\mathfrak{n}_{C}}$, $M'\simeq \bigoplus_{V}V^{\mathfrak{n}_{C}}\otimes M'_{V}$ for some subspaces $M'_{V}\subset M_{V}$.
Therefore $M' = W^{\mathfrak{n}_{C}}$, where $W := \bigoplus_{V}V\otimes M'_{V}$ (considered as $\mathfrak g_C$-submodule of $M$).
Now the image of $U(\mathfrak{g}_{C}) \otimes_{U(\mathfrak{p}_{C})} M'\twoheadrightarrow M$ is contained in $W$, so $W = M$.
  Hence $M' = M^N$, i.e.\ $M^N$ is simple. The absolutely simple case follows.
  
  If $M' \in \mathcal{O}^{L}$ is simple, then $M'|_{\mathfrak{l}_{C}} \cong \bigoplus_{i=1}^r W'_i$ with $W'_i$ absolutely simple $\mathfrak{l}_{C}$-modules.
  Then $\underline M(M') = U(\mathfrak{g}_{C}) \otimes_{U(\mathfrak{p}_{C})} M'$ has a largest semisimple quotient $Q$ as $\mathfrak{g}_{C}$-module, and this quotient is in $\mathcal{O}^{P}$,
  as the $\mathfrak{g}_{C}$-radical is $P$-stable.
    Moreover the composition $M' \hookrightarrow \underline M(M') \twoheadrightarrow Q$ has image $Q^{\mathfrak{n}_{C}}$, 
  so $M' \cong Q^N$ as $L$-representations. Let $Q' \subset Q$ denote a simple subobject. Then $0 \ne {Q'}^N \subset Q^N$,
  so ${Q'}^N = Q^N$. By construction, $Q$ is a direct sum of absolutely simple $\mathfrak{g}_{C}$-modules, hence
  no proper $\mathfrak{g}_{C}$-submodule of $Q$ can have the same $\mathfrak{n}_{C}$-invariants as $Q$, so $Q' = Q$, i.e.\ $Q$ is simple in $\mathcal{O}^{P}$.
  The absolutely simple case follows, as our construction of $Q$ commutes with scalar extensions.

  Going back to the first part, $\underline M(M')$ surjects onto $M$, hence by the previous paragraph $M$ is its unique simple quotient.
\end{proof}

\begin{cor}\label{cor:N-invts}
  Suppose that $W \in \mathcal{O}^{L}$ is simple and that $Q = L_{Q}N_{Q}$ is a parabolic subgroup containing $P$.
  Then $\underline{L}(W)^{N_Q} \cong \underline{L}_{L_Q}(W)$ in $\mathcal{O}^{P \cap L_Q}$.
  Moreover, we have $\underline{L}(W) \in \mathcal{O}^{Q}$ if and only if $\underline{L}_{L_Q}(W) \in \mathcal{O}^{L_Q}$, and in this case $\underline L(W) \cong \underline L(\underline{L}_{L_Q}(W))$ in $\mathcal{O}^{Q}$.
\end{cor}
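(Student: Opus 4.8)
The plan is to prove the isomorphism $\underline{L}(W)^{N_Q}\cong\underline{L}_{L_Q}(W)$ first, by a grading argument, and then to deduce the equivalence and the identification $\underline{L}(W)\cong\underline{L}(\underline{L}_{L_Q}(W))$ formally from it together with Lemmas~\ref{lm:N-invts} and~\ref{lem:cop}. Write $N':=N\cap L_Q$, so that $P\cap L_Q=LN'$ is a parabolic subgroup of $L_Q$ with Levi $L$ and unipotent radical $N'$, and $N=N'N_Q$; correspondingly $\mathfrak{n}_C=\mathfrak{n}'_C\oplus\mathfrak{n}_{Q,C}$ and $\overline{\mathfrak{n}}_C=\overline{\mathfrak{n}}'_C\oplus\overline{\mathfrak{n}}_{Q,C}$. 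Since $\mathfrak{n}_{Q,C}$ is an ideal of $\mathfrak{q}_C$ annihilating $W$, the $\mathfrak{l}_{Q,C}$-module $\underline{M}_{L_Q}(W)$ inflates to a $\mathfrak{q}_C$-module killed by $\mathfrak{n}_{Q,C}$, and the PBW theorem gives a natural isomorphism $\underline{M}_G(W)\cong U(\mathfrak{g}_C)\otimes_{U(\mathfrak{q}_C)}\underline{M}_{L_Q}(W)$ in $\mathcal{O}^P$ (compatibly with the $P$-actions, by the formula $p'(X\otimes w)=\Ad(p')X\otimes p'w$); under it $\underline{M}_{L_Q}(W)$ becomes the bottom copy $1\otimes\underline{M}_{L_Q}(W)$, which is $\mathfrak{n}_{Q,C}$-killed and identified, $(P\cap L_Q)$-equivariantly, with $\underline{M}_{L_Q}(W)$. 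The statement is immediate when $Q=G$ or $Q=P$, so assume $P\subsetneq Q\subsetneq G$ and fix $X_0$ in the center of $\mathfrak{l}_{Q,C}$ such that $\ad(X_0)$ has strictly positive eigenvalues on the weight spaces of $\mathfrak{n}_{Q,C}$ and strictly negative ones on those of $\overline{\mathfrak{n}}_{Q,C}$ (such $X_0$ comes from a cocharacter defining $\alggrp{Q}$; note $\ad(X_0)$ acts by $0$ on $\mathfrak{l}_{Q,C}$).

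Because $X_0$ is central in $\mathfrak{l}_{Q,C}$ it acts on $\underline{M}_{L_Q}(W)$ by a single scalar $c_0$, so by PBW it acts diagonalizably on $\underline{M}_G(W)$ with eigenvalues in $c_0+\Z_{\le 0}$, the top eigenspace being $1\otimes\underline{M}_{L_Q}(W)$. Every $U(\mathfrak{g}_C)$-submodule is $X_0$-stable, hence $X_0$-graded; therefore the grading descends to $\underline{L}(W)=\bigoplus_{d\ge 0}\underline{L}(W)[c_0-d]$, and $\underline{L}(W)[c_0]$ (the image of $1\otimes\underline{M}_{L_Q}(W)$) is a nonzero quotient of $\underline{M}_{L_Q}(W)$ in $\mathcal{O}^{P\cap L_Q}$. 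On the graded module $\mathfrak{n}_{Q,C}$ strictly raises, $\overline{\mathfrak{n}}_{Q,C}$ strictly lowers, and $\mathfrak{l}_{Q,C}$ preserves the $X_0$-degree; as $\mathfrak{n}_{Q,C}$ normalizes itself, $\underline{L}(W)^{\mathfrak{n}_{Q,C}}$ is again $X_0$-graded. It is concentrated in degree $c_0$: if a nonzero homogeneous $\mathfrak{n}_{Q,C}$-invariant subspace lay in degree $c_1$, then the $U(\mathfrak{g}_C)$-submodule it generates would be $P$-stable (being generated by a $(P\cap L_Q)$-stable, $\mathfrak{n}_{Q,C}$-killed subspace), hence all of $\underline{L}(W)$ by simplicity, yet would lie in $\bigoplus_{c\le c_1}\underline{L}(W)[c]$, forcing $c_1=c_0$. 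Since $\underline{L}(W)[c_0]$ is killed by $\mathfrak{n}_{Q,C}$ and $\underline{L}(W)^{N_Q}=\underline{L}(W)^{\mathfrak{n}_{Q,C}}$ (as in the proof of Lemma~\ref{lm:loc-an-char}(ii)), we get $\underline{L}(W)^{N_Q}=\underline{L}(W)[c_0]$. Finally $\underline{L}(W)[c_0]$ is simple in $\mathcal{O}^{P\cap L_Q}$: any nonzero subobject $V'$ generates $\underline{L}(W)=U(\overline{\mathfrak{n}}_{Q,C})V'$ over $U(\mathfrak{g}_C)$, and extracting the degree-$c_0$ part forces $V'=\underline{L}(W)[c_0]$. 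Being a simple quotient of $\underline{M}_{L_Q}(W)$, it must be $\underline{L}_{L_Q}(W)$ by Lemma~\ref{lm:N-invts}; in particular the kernel of $\underline{M}_{L_Q}(W)\twoheadrightarrow\underline{L}(W)[c_0]$ is the maximal submodule of $\underline{M}_{L_Q}(W)$.

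For the equivalence, first suppose $\underline{L}_{L_Q}(W)\in\mathcal{O}^{L_Q}$, so it is finite-dimensional. Applying the exact functor $U(\mathfrak{g}_C)\otimes_{U(\mathfrak{q}_C)}(-)$ to $\underline{M}_{L_Q}(W)\twoheadrightarrow\underline{L}_{L_Q}(W)$ and using the last sentence of the previous paragraph, the kernel of the resulting surjection $\underline{M}_G(W)\twoheadrightarrow\underline{M}_G(\underline{L}_{L_Q}(W))$ is contained in the maximal submodule of $\underline{M}_G(W)$, whence a surjection $\underline{M}_G(\underline{L}_{L_Q}(W))\twoheadrightarrow\underline{L}(W)$ in $\mathcal{O}^P$ whose source lies in $\mathcal{O}^Q$. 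By Lemma~\ref{lem:cop}(ii), $\underline{L}(W)\in\mathcal{O}^Q$; by Lemma~\ref{lem:cop}(i) it is then simple in $\mathcal{O}^Q$ and this surjection is a map in $\mathcal{O}^Q$, so $\underline{L}(W)\cong\underline{L}(\underline{L}_{L_Q}(W))$ in $\mathcal{O}^Q$, which also proves the last assertion. Conversely, if $\underline{L}(W)\in\mathcal{O}^Q$ then it is simple in $\mathcal{O}^Q$ by Lemma~\ref{lem:cop}(i), so Lemma~\ref{lm:N-invts} applied to the pair $(G,Q)$ gives $\underline{L}(W)^{N_Q}\in\mathcal{O}^{L_Q}$, and the first assertion identifies this with $\underline{L}_{L_Q}(W)$.

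The \textbf{main obstacle} is the grading step, specifically the fact that $\underline{L}(W)^{N_Q}$ is concentrated in the single top $X_0$-degree: the analogous statement for $\underline{M}_G(W)$ itself is false (its $\mathfrak{n}_{Q,C}$-invariants can contain singular vectors in strictly lower degree), so simplicity of $\underline{L}(W)$ must genuinely be used, through the principle that a nonzero $U(\mathfrak{g}_C)$-submodule is everything. One must also verify that the identifications above respect the $(P\cap L_Q)$- and $P$-actions, which is routine from the defining formula $p'(X\otimes w)=\Ad(p')X\otimes p'w$ but should be spelled out.
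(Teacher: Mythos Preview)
Your proof is correct and takes a genuinely different route from the paper's. The paper argues abstractly that for any simple $M\in\mathcal{O}^P$ the module $M^{N_Q}$ is simple in $\mathcal{O}^{P\cap L_Q}$: given a putative short exact sequence $0\to M'\to M^{N_Q}\to M''\to 0$ in $\mathcal{O}^{P\cap L_Q}$, it passes to $(N\cap L_Q)$-invariants to obtain a nontrivial short exact sequence with middle term $M^N$, contradicting the simplicity of $M^N$ from Lemma~\ref{lm:N-invts}; the identification $\underline{L}(W)^{N_Q}\cong\underline{L}_{L_Q}(W)$ then follows immediately from that lemma. You instead introduce an explicit $\Z$-grading by a central element $X_0\in\mathfrak{z}_{L_Q}$, identify $\underline{L}(W)^{N_Q}$ with the top graded piece, and verify directly that this piece is a simple quotient of $\underline{M}_{L_Q}(W)$. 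For the equivalence, the paper simply takes $N$-invariants in stages to match $\underline{L}(\underline{L}_{L_Q}(W))$ with $\underline{L}(W)$, whereas you factor the quotient map $\underline{M}_G(W)\twoheadrightarrow\underline{L}(W)$ through $\underline{M}_G(\underline{L}_{L_Q}(W))$ and invoke Lemma~\ref{lem:cop}. Your grading argument is more hands-on and makes the structure of $\underline{L}(W)^{N_Q}$ visible; the paper's approach is shorter and works uniformly for any simple $M\in\mathcal{O}^P$ without singling out the Verma picture. One small point of phrasing: when you write ``a nonzero homogeneous $\mathfrak{n}_{Q,C}$-invariant subspace'' you implicitly mean the full graded piece $\underline{L}(W)^{\mathfrak{n}_{Q,C}}[c_1]$, since the $(P\cap L_Q)$-stability you invoke requires this; it would be cleaner to say so explicitly.
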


\begin{proof}
  We first note that if $M \in \mathcal O^{\mathfrak p}$, then $M^{\mathfrak n_{Q,C}} \in \mathcal O^{\mathfrak p \cap \mathfrak l_Q}$.   Hence if $M \in \mathcal O^P$, then $M^{N_Q} = M^{\mathfrak n_{Q,C}} \in \mathcal O^{P \cap L_Q}$.
  We claim that if $M \in \mathcal O^P$ is simple, then $M^{N_Q}$ is simple in $\mathcal O^{P \cap L_Q}$.   Suppose by contradiction that $0 \to M' \to M^{N_Q} \to M'' \to 0$ in $\mathcal O^{P \cap L_Q}$ with $M'$, $M''$ nonzero.
  Then the sequence splits as $\mathfrak l_{Q,C}$-modules, so it remains exact on $\mathfrak n_C \cap \mathfrak l_{Q,C}$-invariants.
  In other words, $0 \to (M')^{N \cap L_Q} \to M^N \to (M'')^{N \cap L_Q} \to 0$ is exact in $\mathcal O^L$.
  On the other hand, $(M')^{N \cap L_Q} = (M')^{\mathfrak n_C \cap \mathfrak l_{Q,C}} \ne 0$ and likewise for $(M'')^{N \cap L_Q}$, contradicting the simplicity of $M^N$ in $\mathcal O^L$ (Lemma~\ref{lm:N-invts}).
  This proves the claim.
  The isomorphism $\underline{L}(W)^{N_Q} \cong \underline{L}_{L_Q}(W)$ in $\mathcal{O}^{P \cap L_Q}$ follows by taking $M := \underline L(W)$ and applying Lemma~\ref{lm:N-invts}.

  In particular, if $\underline{L}(W) \in \mathcal{O}^{Q}$, then $\underline{L}_{L_Q}(W) \in \mathcal{O}^{L_Q}$.   Conversely, if $\underline{L}_{L_Q}(W) \in \mathcal{O}^{L_Q}$ then $\underline L(\underline{L}_{L_Q}(W)) \in \mathcal{O}^{Q}$ and by taking $N$-invariants in stages we can identify $\underline L(\underline{L}_{L_Q}(W))$ with $\underline L(W)$ in $\mathcal{O}^{P}$, and hence in $\mathcal{O}^{Q}$, by Lemma~\ref{lm:N-invts}.
\end{proof}

\begin{lem}\label{lem:verma-tensor}
  Suppose that $W \in \mathcal{O}^{L}$ and $X \in \mathcal{O}^{P}$ with $X = X^N$.
  Then $W \otimes X \in \mathcal{O}^{L}$ and $\underline{M}(W \otimes X) \cong \underline{M}(W) \otimes X$ in $\mathcal{O}^{P}$.
  If moreover $W \otimes X$ is simple in $\mathcal O^L$ and $\underline L(W)\otimes X$ is simple in $\mathcal O^P$, then $\underline{L}(W \otimes X) \cong \underline{L}(W) \otimes X$ in $\mathcal{O}^{P}$.
\end{lem}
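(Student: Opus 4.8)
The plan is to prove the three assertions in turn; the heart of the matter is the isomorphism of generalized Verma modules.

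\emph{$X$ is finite dimensional and $W\otimes X\in\mathcal{O}^{L}$.} First I would observe that the hypothesis $X=X^{N}$ forces $\mathfrak{n}_{C}$ to annihilate $X$: indeed $X^{N}=X^{\mathfrak{n}_{C}}$ by the usual argument with the action of $S$ (cf.\ the proof of Lemma~\ref{lm:loc-an-char}(ii), as $N$ acts smoothly on $X^{\mathfrak{n}_{C}}$). The kernel of the action map $\mathfrak{g}_{C}\to\End_{C}(X)$ is then an ideal containing $\mathfrak{n}_{C}$, hence contains the ideal of $\mathfrak{g}_{C}$ generated by $\mathfrak{n}_{C}$; since $\mathfrak{n}$ and $\overline{\mathfrak{n}}$ are supported on exactly the same simple factors of $\mathfrak{g}_{C}$, that ideal also contains $\overline{\mathfrak{n}}_{C}$, so $\overline{\mathfrak{n}}_{C}$ annihilates $X$ as well. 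Using the PBW decomposition $U(\mathfrak{g}_{C})=U(\overline{\mathfrak{n}}_{C})\,U(\mathfrak{l}_{C})\,U(\mathfrak{n}_{C})$ together with $\mathfrak{n}_{C}X=\overline{\mathfrak{n}}_{C}X=0$, one gets $U(\mathfrak{g}_{C})x=U(\mathfrak{l}_{C})x$ for every $x\in X$, which is finite dimensional because $\mathfrak{l}_{C}\subseteq\mathfrak{p}_{C}$ acts locally finitely on $X$; as $X$ is finitely generated over $U(\mathfrak{g}_{C})$, it is finite dimensional. Since $N$ acts trivially, the $P$-action descends to $L$, and the other defining properties of $\mathcal{O}^{L}$ are inherited from $\mathcal{O}^{\mathfrak{p}}$ (cf.\ Lemma~\ref{lm:cat-O-p-equivalence}), so $X\in\mathcal{O}^{L}$. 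Then $W\otimes X\in\mathcal{O}^{L}$ by Remark~\ref{rk:tensor-O-P} (applied with $G$ replaced by $L$).

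\emph{$\underline{M}(W\otimes X)\cong\underline{M}(W)\otimes X$ in $\mathcal{O}^{P}$.} This is a version of the tensor identity for the inclusion $U(\mathfrak{p}_{C})\subseteq U(\mathfrak{g}_{C})$. Because $\mathfrak{n}_{C}$ acts trivially on both $W$ and $X$, each of $\underline{M}(W)\otimes X$ and $\underline{M}(W\otimes X)$ carries a $\mathfrak{p}_{C}$-action inflated from $\mathfrak{l}_{C}$, and a direct check shows that $w\otimes x\mapsto(1\otimes w)\otimes x$ defines a $\mathfrak{p}_{C}$-linear map $W\otimes X\to\underline{M}(W)\otimes X$. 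By Frobenius reciprocity it extends to a $\mathfrak{g}_{C}$-linear map $\Psi\colon\underline{M}(W\otimes X)\to\underline{M}(W)\otimes X$, given by $u\otimes(w\otimes x)\mapsto\sum(u_{(1)}\otimes w)\otimes u_{(2)}x$ where $\Delta(u)=\sum u_{(1)}\otimes u_{(2)}$. Since $\overline{\mathfrak{n}}_{C}$ kills $X$, one computes $\Psi(u\otimes(w\otimes x))=(u\otimes w)\otimes x$ for $u\in U(\overline{\mathfrak{n}}_{C})$, so under the PBW identifications $\underline{M}(W\otimes X)\cong U(\overline{\mathfrak{n}}_{C})\otimes_{C}(W\otimes X)$ and $\underline{M}(W)\otimes X\cong U(\overline{\mathfrak{n}}_{C})\otimes_{C}W\otimes_{C}X$ the map $\Psi$ becomes the obvious identification and is therefore bijective. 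It remains to verify that $\Psi$ is $P$-equivariant; expanding both sides, this reduces to $\Ad(p')$ being a Hopf algebra automorphism of $U(\mathfrak{g}_{C})$ together with the relation $p'\circ u=\Ad(p')(u)\circ p'$ on $X$ from the definition of $\mathcal{O}^{P}$. I expect this equivariance check, though entirely routine, to be the fiddliest part of the argument; the only genuinely substantive point is the finite dimensionality of $X$ established above.

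\emph{$\underline{L}(W\otimes X)\cong\underline{L}(W)\otimes X$.} Here $W$ is simple in $\mathcal{O}^{L}$ (implicit, since $\underline{L}(W)$ is invoked), $W\otimes X$ is simple in $\mathcal{O}^{L}$, and $\underline{L}(W)\otimes X$ is simple in $\mathcal{O}^{P}$. Tensoring the canonical surjection $\underline{M}(W)\twoheadrightarrow\underline{L}(W)$ with the finite-dimensional module $X$ (which is exact and stays in $\mathcal{O}^{P}$ by Remark~\ref{rk:tensor-O-P}) yields a surjection $\underline{M}(W)\otimes X\twoheadrightarrow\underline{L}(W)\otimes X$; composing with the isomorphism of the previous paragraph exhibits the simple object $\underline{L}(W)\otimes X$ as a quotient of $\underline{M}(W\otimes X)$. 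By Lemma~\ref{lm:N-invts}, applied to the simple object $W\otimes X\in\mathcal{O}^{L}$, the module $\underline{M}(W\otimes X)$ has a unique simple quotient, namely $\underline{L}(W\otimes X)$. Hence $\underline{L}(W\otimes X)\cong\underline{L}(W)\otimes X$ in $\mathcal{O}^{P}$, as claimed.
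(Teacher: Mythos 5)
Your proof is correct and takes essentially the same route as the paper's: it invokes Remark~\ref{rk:tensor-O-P} for the first claim, establishes the tensor identity for generalized Verma modules (the paper cites Humphreys \S3.6 directly and asserts $P$-equivariance, where you give the explicit comultiplication computation), and deduces the statement about $\underline{L}$ from the uniqueness of the simple quotient in Lemma~\ref{lm:N-invts}. The only genuine added content is your careful argument that $X$ is finite dimensional and hence in $\mathcal{O}^{L}$ (via $\overline{\mathfrak n}_{C}$ also annihilating $X$), which the paper asserts without proof.
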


In particular, $X$ could be a locally analytic character of $G$ (cf.\ Lemma~\ref{lm:loc-an-char}).

\begin{proof}
  We have $X = X^N \in \mathcal{O}^{L}$, so $W \otimes X \in \mathcal{O}^{L}$ by Remark~\ref{rk:tensor-O-P}.
  The natural isomorphism $U(\mathfrak{g}_{C}) \otimes_{U(\mathfrak{p}_{C})} (W \otimes X) \to (U(\mathfrak{g}_{C}) \otimes_{U(\mathfrak{p}_{C})} W) \otimes X$ of $\mathfrak g_C$-modules sending $1 \otimes (w \otimes x)$ to $(1 \otimes w) \otimes x$ (cf.\ \cite[\S3.6]{humphreys-bgg}) is also $P$-equivariant, which completes the proof of the first claim.
  The second claim follows by Lemma~\ref{lm:N-invts} (noting that $W$ is simple, as $W \otimes X$ is simple).
\end{proof}

Let $\Rep^{\adm}(L)$ be the category of admissible smooth $L$-representations.
When $\alggrp{G}$ is split, Orlik--Strauch \cite{OS3} define a functor $\mathcal{F}_{P}^{G}$ from $(\mathcal{O}^{P})^{\opp}\times \Rep^{\adm}(L)$ to the category of admissible locally analytic representations of $G$.
Their functor can be generalized to any $\alggrp{G}$, see Appendix~\ref{app:orlik-strauch}.

For the theory of Orlik--Strauch, we need a small assumption on $p$ as follows \cite[Assumption 4.1]{OS3}.
\begin{assump}\label{assump:on p}
If the absolute root system of $\alggrp{G}$ has irreducible components of type $B$, $C$ or $F_{4}$, we assume $p > 2$.
If the absolute root system of $\alggrp{G}$ has irreducible components of type $G_{2}$, we assume $p > 3$.
\end{assump}

\begin{defn}
  We say that $M \in \mathcal O^P$ is \emph{equimaximal} if for any parabolic subgroup $Q$ containing $P$ we have $M \in \mathcal O^Q$ if and only if $M|_{\mathfrak g_C} \in \mathcal O^{\mathfrak q}$.
  In this case we say that $M$ has \emph{maximal parabolic $Q$} if $Q$ is largest among all parabolic subgroups containing $P$ such that $M \in \mathcal O^{Q}$.
\end{defn}

Recall that we assumed in \S\ref{sec:notation} that $C$ be sufficiently large.
The reason is part (iii) of the following theorem (see Appendix~\ref{app:orlik-strauch}).

\begin{thm}[Theorem~\ref{thm:OS-main}]\label{thm:OS-main-in-main}
Assume Assumption~\ref{assump:on p}.
\begin{enumerate}
\item The functor $\mathcal{F}_{P}^{G}$ is exact in both arguments.
\item Let $Q = L_{Q}N_{Q}\supset P$ be another parabolic subgroup.
If $M\in \mathcal{O}^{Q}$ and $\pi\in\Rep^{\adm}(L)$, then $\mathcal{F}_{P}^{G}(M,\pi)\simeq \mathcal{F}_{Q}^{G}(M,(\Ind_{P \cap L_{Q}}^{L_{Q}}\pi)^{\sm})$.
\item Assume that $M\in \mathcal{O}^{P}$ is equimaximal with maximal parabolic $P$ and $\pi \in \Rep^{\adm}(L)$.
Assume that $M|_{\mathfrak{g}_{C}} \in \mathcal O^{\mathfrak p}$ is simple and $\pi$ is irreducible.
Then $\mathcal{F}_{P}^{G}(M,\pi)$ is irreducible.
\end{enumerate}
\end{thm}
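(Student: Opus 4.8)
The plan is to establish this in Appendix~\ref{app:orlik-strauch} by extending the construction of $\mathcal{F}_P^G$ and the arguments of Orlik--Strauch \cite{OS2}, \cite{OS3} from split $\alggrp G$ to arbitrary $\alggrp G$; here I sketch the skeleton. Replacing $\alggrp G$ by $\Res_{F/\Q_p}\alggrp G$ we may assume $F = \Q_p$, so that $P$ acts locally analytically and the Schneider--Teitelbaum distribution algebra $D(G,C)$ is available. Since $\mathcal{O}^{\mathfrak p}$ depends only on the (split) pair $(\mathfrak g_C,\mathfrak p_C)$, it is literally the Orlik--Strauch category; the new feature is the locally analytic $P$-action recorded in $\mathcal{O}^P$. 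Recall the basic identity $\mathcal{F}_P^G(\underline M(W),\pi)\cong(\Ind_P^G W'\otimes\pi)^{\an}$; for general $M\in\mathcal{O}^P$ one can compute $\mathcal{F}_P^G(M,\pi)$ from a resolution $\cdots\to\underline M(W_1)\to\underline M(W_0)\to M\to 0$ by generalized Verma modules attached to finite-dimensional locally analytic $P$-representations $W_i$, which exist because $\mathcal{O}^{\mathfrak p}$ is abelian and every object is a quotient of some $\underline M(W)$, the $P$-structure propagating through the syzygies. The only facts peculiar to the general case that I need are that the homological algebra in $\mathcal{O}^{\mathfrak p}$, $\mathcal{O}^P$ is as in the split case (using finiteness of length, \cite[1.11~Theorem]{humphreys-bgg}) and that locally analytic parabolic induction over a non-split group remains exact.

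Granting this, part (i) is a long exact sequence argument: since $(\Ind_P^G-)^{\an}$ and $-\otimes\pi$ are exact, $\mathcal{F}_P^G(-,\pi)$ is independent of the resolution and is exact; exactness in $\pi$ is immediate because $(\Ind_P^G W'\otimes-)^{\an}$ is exact on $\Rep^{\adm}(L)$.

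For part (ii) I would use (i) to reduce, via a resolution of $M$ inside $\mathcal{O}^Q$, to the case where $M = U(\mathfrak g_C)\otimes_{U(\mathfrak q_C)}V$ is a generalized Verma module for $Q$ associated to a finite-dimensional locally analytic $L_Q$-representation $V$. Transitivity of the generalized Verma construction, $U(\mathfrak g_C)\otimes_{U(\mathfrak q_C)}V\cong U(\mathfrak g_C)\otimes_{U(\mathfrak p_C)}\bigl(U(\mathfrak q_C)\otimes_{U(\mathfrak p_C)}V\bigr)$, together with the basic identity above, transitivity of parabolic induction $(\Ind_Q^G(\Ind_{P\cap L_Q}^{L_Q}-)^{\an})^{\an}\cong(\Ind_P^G-)^{\an}$, and the projection formula, then identifies $\mathcal{F}_P^G(M,\pi)$ with $\mathcal{F}_Q^G(M,(\Ind_{P\cap L_Q}^{L_Q}\pi)^{\sm}) = (\Ind_Q^G V'\otimes(\Ind_{P\cap L_Q}^{L_Q}\pi)^{\sm})^{\an}$; this mirrors Orlik--Strauch's argument in the split case.

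Part (iii) is the main obstacle. By equimaximality, maximality of the parabolic $P$ for the $P$-representation $M$ coincides with maximality of $\mathfrak p$ for the $\mathfrak g_C$-module $M|_{\mathfrak g_C}$, which is the form of the hypothesis used by Orlik--Strauch. Given a nonzero closed $G$-subrepresentation $\Pi\subset\mathcal{F}_P^G(M,\pi)$, one analyzes it through the canonical embedding $\mathcal{F}_P^G(M,\pi)\hookrightarrow\mathcal{F}_P^G(\underline M(M^N),\pi)\cong(\Ind_P^G (M^N)'\otimes\pi)^{\an}$ arising from the surjection $\underline M(M^N)\onto M$ of Lemma~\ref{lm:N-invts}. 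Applying a suitable locally analytic Jacquet functor and using the structure of $M|_{\mathfrak g_C}$, one computes the Jacquet module of $\mathcal{F}_P^G(M,\pi)$ to be an irreducible $L$-representation built from $M^N$ and $\pi$, so that $\Pi$ has either the full Jacquet module or the zero one; the $\mathfrak p$-maximality of $M$ excludes the latter, since a nonzero locally analytic subrepresentation with vanishing big-cell part would force $M|_{\mathfrak g_C}\in\mathcal{O}^{\mathfrak q}$ for some $\mathfrak q\supsetneq\mathfrak p$, and irreducibility of $\pi$ then forces $\Pi=\mathcal{F}_P^G(M,\pi)$. The delicate points — the Jacquet-module computation and the precise mechanism by which $\mathfrak p$-maximality obstructs proper closed subrepresentations — must be carried out with $\mathfrak g_C$ split but $G$ only assumed to split over $C$, which is the technical core of Appendix~\ref{app:orlik-strauch}.
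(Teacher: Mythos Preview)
Your treatment of parts (i) and (ii) is fine and matches the paper: both follow from the split-case arguments of \cite{OS2}, \cite{OS3} essentially verbatim, since the constructions involved depend only on $(\mathfrak g_C,\mathfrak p_C)$ together with the locally analytic $P$-action recorded in $\mathcal O^P$.

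Part (iii), however, has a genuine gap, and the approach you sketch is not the one the paper takes. Your plan is to compute a locally analytic Jacquet module of $\mathcal F_P^G(M,\pi)$, find it irreducible, and then argue that a proper nonzero closed subrepresentation would have vanishing Jacquet module, forcing $M\in\mathcal O^{\mathfrak q}$ for some larger $\mathfrak q$. There are two problems. First, the Jacquet-module computation you allude to (essentially Proposition~\ref{prop:N-invariants}) is \emph{not} an independent input: its proof already uses the distribution-algebra machinery, specifically the simplicity of $\mathfrak m_r$ and the non-isomorphism of its $\delta_g$-twists (Theorem~\ref{thm:OS-4.7}), which is precisely the technical core of part (iii). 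So the argument as written is circular. Second, even granting the Jacquet-module value, you give no mechanism for the implication ``vanishing big-cell part $\Rightarrow$ $M\in\mathcal O^{\mathfrak q}$''; this is not a formal consequence and in the paper it is the other way around: the maximality hypothesis is consumed inside the distribution-algebra argument to rule out coincidences among the $\delta_g\star\mathfrak m_r$.

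What the paper actually does for (iii) is dual-side and Banach-algebraic, following \cite[Theorem 4.5]{OS3}. One fixes a special point $x$ in the building and, crucially, a finite Galois extension $L'/L$ splitting $\alggrp G$ such that $x$ remains special for the split group $\alggrp G' = \alggrp G\times_L L'$ (Proposition~\ref{prop:special-point-base-change}). This yields compatible $L$- and $L'$-uniform open subgroups $H\subset H'$ with Iwahori decompositions, and hence compatible norm completions $U_r(\mathfrak g)\to U_r(\mathfrak g')$ (Corollary~\ref{cor:distrib}). One shows $\mathfrak m_r\cong U_r(\mathfrak g)\otimes_{U(\mathfrak g)}M$ and that Lemma~\ref{lm:feaux} applies, giving simplicity of $\mathfrak m_r$ over $U_r(\mathfrak g)$. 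The heart is Theorem~\ref{thm:OS-4.7}: if $\delta_{g_1}\star\mathfrak m_r\cong\delta_{g_2}\star\mathfrak m_r$ then $g_1^{-1}g_2\in H^{-,m}P_0$. This is proved by base-changing to $\mathfrak m'_r = U_r(\mathfrak g')\otimes_{U_r(\mathfrak g)}\mathfrak m_r$, invoking the split case \cite[Theorem 4.7]{OS3} for $\alggrp G'$ with its (possibly larger) maximal parabolic $\alggrp Q'$, and then descending via a Galois-intersection argument on parahoric group schemes to get back to $P$. The equimaximality hypothesis is exactly what makes the relative maximality of $P$ match the absolute maximality of $\mathfrak p$ needed for the split input. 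Irreducibility of $\mathcal F_P^G(M,\pi)$ then follows as in \cite[Theorem 4.5]{OS3} from the resulting semisimple, multiplicity-free decomposition of the coadmissible dual. None of this is captured by a Jacquet-functor sketch.
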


We also note two more basic properties that follow by construction.
For a locally convex space $V$, let $V'$ be the strong dual space of $V$, namely $V'$ is the space of continuous linear maps $V\to C$ with the topology of uniform convergence on bounded subsets of $V$.

\begin{prop}\label{prop:OS-ind}\ 
  \begin{enumerate}
  \item Suppose that $W$ is a finite-dimensional locally analytic representation of $P$ on which $\mathfrak{l}_{C}$ acts as a direct sum of absolutely simple $\mathfrak{l}_{C}$-modules and $\pi \in \Rep^{\adm}(L)$.
    Then
    $\mathcal{F}_{P}^{G}(\underline M(W),\pi) \cong (\AnInd_{P}^{G}W'\otimes\pi)^{\an}$.
  \item If $\tau$ is any finite-dimensional smooth representation of $L$, $M \in \mathcal O^P$, $\pi \in \Rep^{\adm}(L)$, then
    $\mathcal F_P^G(M \otimes \tau,\pi) \cong \mathcal F_P^G(M,\pi \otimes \tau')$.
  \end{enumerate}
\end{prop}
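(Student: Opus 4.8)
The plan is to deduce both statements formally from the construction of $\mathcal{F}_{P}^{G}$ in Appendix~\ref{app:orlik-strauch}, so the proof amounts to unwinding that construction. Recall that for $M \in \mathcal{O}^{P}$ one fixes a presentation
\[
  \bigoplus_{j \in J} \underline{M}(W_{j}) \xrightarrow{\ \varphi\ } \bigoplus_{i \in I} \underline{M}(W_{i}) \longrightarrow M \longrightarrow 0
\]
in $\mathcal{O}^{P}$ with $I$, $J$ finite and each $W_{i}$, $W_{j}$ a finite-dimensional locally analytic representation of $P$ on which $\mathfrak{l}_{C}$ acts semisimply (such presentations exist because $M$ is finitely generated with locally finite $P$-action and $\mathcal{O}^{\mathfrak{p}}$ is noetherian), and then defines
\[
  \mathcal{F}_{P}^{G}(M,\pi) := \ker\Bigl( \textstyle\bigoplus_{i \in I} (\AnInd_{P}^{G} W_{i}' \otimes \pi)^{\an} \xrightarrow{\ \mathcal{F}_{P}^{G}(\varphi,\pi)\ } \bigoplus_{j \in J} (\AnInd_{P}^{G} W_{j}' \otimes \pi)^{\an} \Bigr),
\]
this being independent of the presentation up to canonical isomorphism.

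For (i) I would take $M = \underline{M}(W)$ together with the trivial presentation ($I$ a singleton with $W_{i} = W$, and $J = \emptyset$); the displayed definition then reads off $\mathcal{F}_{P}^{G}(\underline{M}(W),\pi) = (\AnInd_{P}^{G} W' \otimes \pi)^{\an}$ directly.

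For (ii), regard the finite-dimensional smooth $L$-representation $\tau$ as an object of $\mathcal{O}^{P}$ by inflation (killed by $\mathfrak{g}_{C}$). Tensoring the presentation of $M$ with $\tau$ is exact, and by the proof of Lemma~\ref{lem:verma-tensor} — i.e.\ the natural $P$-equivariant isomorphism $U(\mathfrak{g}_{C}) \otimes_{U(\mathfrak{p}_{C})} (W \otimes \tau) \cong (U(\mathfrak{g}_{C}) \otimes_{U(\mathfrak{p}_{C})} W) \otimes \tau$ — one has $\underline{M}(W_{i}) \otimes \tau \cong \underline{M}(W_{i} \otimes \tau)$ naturally in $W_{i}$, and likewise for the $W_{j}$. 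Hence
\[
  \bigoplus_{j \in J} \underline{M}(W_{j} \otimes \tau) \xrightarrow{\ \varphi \otimes \mathrm{id}\ } \bigoplus_{i \in I} \underline{M}(W_{i} \otimes \tau) \longrightarrow M \otimes \tau \longrightarrow 0
\]
is a presentation of $M \otimes \tau$ in $\mathcal{O}^{P}$. Feeding it into the definition of $\mathcal{F}_{P}^{G}$ and using part (i), the canonical isomorphism $(W_{i} \otimes \tau)' \cong W_{i}' \otimes \tau'$, and $W_{i}' \otimes \tau' \otimes \pi \cong W_{i}' \otimes (\pi \otimes \tau')$ with $\pi \otimes \tau' \in \Rep^{\adm}(L)$ (as $\tau'$ is finite-dimensional smooth), I obtain term-by-term identifications
\[
  \mathcal{F}_{P}^{G}(\underline{M}(W_{i} \otimes \tau),\pi) \cong (\AnInd_{P}^{G} W_{i}' \otimes (\pi \otimes \tau'))^{\an} = \mathcal{F}_{P}^{G}(\underline{M}(W_{i}),\pi \otimes \tau')
\]
that are compatible with the maps induced by $\varphi$; passing to kernels gives $\mathcal{F}_{P}^{G}(M \otimes \tau,\pi) \cong \mathcal{F}_{P}^{G}(M,\pi \otimes \tau')$.

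There is no substantive obstacle: the proposition really is "by construction". The only thing to be careful about is the bookkeeping of naturality — checking that each isomorphism above is functorial in the $W_{i}$, $W_{j}$, hence compatible with $\varphi$ and with change of presentation, so that it descends to the kernels — and this uses nothing beyond the formal properties of the construction recalled in Appendix~\ref{app:orlik-strauch}. (Alternatively, one could phrase (ii) without presentations: both $M \mapsto \mathcal{F}_{P}^{G}(M \otimes \tau,\pi)$ and $M \mapsto \mathcal{F}_{P}^{G}(M,\pi \otimes \tau')$ are contravariant exact functors on $\mathcal{O}^{P}$ which, by (i), agree on generalized Verma modules; since every object of $\mathcal{O}^{P}$ has a two-term presentation by generalized Verma modules, they agree.)
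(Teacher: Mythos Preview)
Your proof is correct and matches the paper's treatment, which simply says both parts ``follow by construction'' without further detail. One minor discrepancy: the paper (following \cite[\S3.8]{OS3}) phrases the construction not via a two-term presentation but by choosing a single generating $W \subset M$ and taking the closed subspace of $(\Ind_P^G W' \otimes \pi)^{\an}$ annihilated by $\ker(\underline{M}(W) \onto M)$; your presentation formulation is equivalent (by exactness in the first variable), and your argument goes through verbatim in either description.
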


The following corollary will be proved following Proposition~\ref{prop:twist-to-O-P} (which is needed as input).

\begin{cor}\label{cor:O-S-fin-length}
Assume Assumption~\ref{assump:on p}.
Let $M\in \mathcal{O}^{P}$ and $\pi\in \Rep^{\adm}(L)$ such that $\pi$ is of finite length.
Then $\mathcal{F}_{P}^{G}(M,\pi)$ is strongly admissible and topologically of finite length.
\end{cor}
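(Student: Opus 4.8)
The plan is to reduce, via the exactness of $\mathcal{F}_P^G$ in both variables (Theorem~\ref{thm:OS-main-in-main}(i)), to the case of a generalized Verma module and an irreducible smooth $\pi$, and then to combine Theorem~\ref{thm:OS-main-in-main}(ii), (iii) with Proposition~\ref{prop:twist-to-O-P}. First, every object of $\mathcal{O}^P$ is of finite length by \cite[1.11~Theorem]{humphreys-bgg}, and $\pi$ is of finite length by hypothesis. Choosing composition series of $M$ in $\mathcal{O}^P$ and of $\pi$ in $\Rep^{\adm}(L)$ and using exactness of $\mathcal{F}_P^G$ in each variable repeatedly shows that $\mathcal{F}_P^G(M,\pi)$ carries a finite filtration by closed subrepresentations whose graded pieces are of the form $\mathcal{F}_P^G(N,\rho)$ with $N$ simple in $\mathcal{O}^P$ and $\rho\in\Rep^{\adm}(L)$ irreducible. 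Since topological finite length is inherited by subquotients, and strong admissibility is preserved under extensions (over the distribution algebra of a compact open subgroup $H$ of $G$, an extension of a finitely generated module by a finitely generated module is finitely generated), it suffices to treat the case $M$ simple and $\pi$ irreducible.

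Assume now $M$ is simple. Then $W:=M^N$ is simple in $\mathcal{O}^L$ and $\underline M(W)$ has $M$ as its unique simple quotient (Lemma~\ref{lm:N-invts}); the kernel of the canonical surjection $\underline M(W)\twoheadrightarrow M$ lies in $\mathcal{O}^P$, hence is of finite length. Applying the exact contravariant functor $\mathcal{F}_P^G(-,\pi)$ and Proposition~\ref{prop:OS-ind}(i) yields a closed embedding
\[
  \mathcal{F}_P^G(M,\pi)\hookrightarrow \mathcal{F}_P^G(\underline M(W),\pi)\cong (\Ind_P^G W'\otimes\pi)^{\an}.
\]
As strong admissibility passes to closed subrepresentations and finite length to subquotients, it is enough to prove that $\mathcal{F}_P^G(\underline M(W),\pi)$ is strongly admissible and of finite length for $W$ simple in $\mathcal{O}^L$ and $\pi$ irreducible. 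Strong admissibility here is part of the construction of $\mathcal{F}_P^G$ (see Appendix~\ref{app:orlik-strauch}), using that the irreducible $\pi$ is finitely generated. For finite length we filter once more: $\underline M(W)$ has a composition series in $\mathcal{O}^P$ with simple subquotients $M_1,\dots,M_r$, so by exactness $\mathcal{F}_P^G(\underline M(W),\pi)$ is a successive extension of the $\mathcal{F}_P^G(M_i,\pi)$, and we are reduced to showing each of these has finite length.

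Fix a simple $M_i\in\mathcal{O}^P$ and let $Q\supseteq P$ be the largest parabolic with $M_i\in\mathcal{O}^Q$. By Theorem~\ref{thm:OS-main-in-main}(ii), $\mathcal{F}_P^G(M_i,\pi)\cong\mathcal{F}_Q^G\big(M_i,(\Ind_{P\cap L_Q}^{L_Q}\pi)^{\sm}\big)$; since $(\Ind_{P\cap L_Q}^{L_Q}\pi)^{\sm}$ is admissible smooth of finite length, filtering it by irreducibles reduces us to $\mathcal{F}_Q^G(M_i,\rho)$ with $\rho$ irreducible smooth. Now Proposition~\ref{prop:twist-to-O-P} lets us, after tensoring $M_i$ by a suitable locally analytic character of $L_Q$ (which tensors the output by a locally analytic character of $G$, preserving both strong admissibility and finite length), assume $M_i$ is equimaximal with maximal parabolic $Q$ and write $M_i\cong\widetilde V\otimes\pi_0$ with $\widetilde V\in\mathcal{O}^Q$ whose underlying $\mathfrak g_C$-module is absolutely simple and $\pi_0$ a finite-dimensional smooth representation of $L_Q$. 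Then Proposition~\ref{prop:OS-ind}(ii) gives $\mathcal{F}_Q^G(M_i,\rho)\cong\mathcal{F}_Q^G(\widetilde V,\rho\otimes\pi_0')$ with $\widetilde V$ equimaximal of maximal parabolic $Q$ and $\widetilde V|_{\mathfrak g_C}$ simple, so Theorem~\ref{thm:OS-main-in-main}(iii) shows this is irreducible, which finishes the argument. The main obstacle is precisely this last step: arranging, by a locally analytic twist, that a simple object of $\mathcal{O}^P$ becomes equimaximal and acquires an absolutely simple underlying $\mathfrak g_C$-module up to a finite-dimensional smooth factor, so that the irreducibility criterion Theorem~\ref{thm:OS-main-in-main}(iii) applies — this is the role of Proposition~\ref{prop:twist-to-O-P}. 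Everything else is routine bookkeeping with Jordan--Hölder series, exactness of $\mathcal{F}_P^G$, and the stability of strong admissibility and finite length under the operations used above.
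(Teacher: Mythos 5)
Your overall strategy—reduce to simple $M$ and irreducible $\pi$ by exactness, then make $M$ equimaximal by a twist and apply the irreducibility criterion—is the right one, and it is essentially the paper's approach. However, there is a genuine gap: you never reduce to the case where $\alggrp{G}^{\der}$ is simply connected, and this is not optional. Both Proposition~\ref{prop:twist-to-O-P} and the decomposition of a simple object of $\mathcal{O}^{P}$ as $\widetilde V\otimes\pi_0$ with $\widetilde V$ being $\mathfrak g_C$-simple (Proposition~\ref{prop:simple-O-P}(iii)) are proved \emph{only} under the hypothesis that $\alggrp{G}^{\der}$ is simply connected, and Remark~\ref{rk:need-simply-conn} shows this hypothesis cannot be dropped (the $\PGL_3$ example with $p\equiv 1\pmod 3$). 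The paper's proof begins with a $z$-extension $\wt{\alggrp G}\onto\alggrp G$ with simply connected $\wt{\alggrp G}^{\lowprime[\der]}$, uses that $\wt G\onto G$ on $F$-points, and transfers the finite length statement via compatibility of $\mathcal{F}_P^G$ with inflation; your proof must do the same, or the invocation of Proposition~\ref{prop:twist-to-O-P} is unjustified.

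Two smaller inaccuracies are worth noting. First, Proposition~\ref{prop:twist-to-O-P} is not what gives the decomposition $M_i\cong\widetilde V\otimes\pi_0$; that is Proposition~\ref{prop:simple-O-P}(iii), which moreover requires $M_i$ to be \emph{absolutely} simple and therefore requires allowing finite extensions of $C$ (the paper says so explicitly). Proposition~\ref{prop:twist-to-O-P}(ii) then supplies a \emph{smooth character of $L$} making the $\mathfrak g_C$-simple factor equimaximal—not a locally analytic character of $L_Q$, and the twist does not "tensor the output by a locally analytic character of $G$"; it is absorbed into the smooth argument of $\mathcal{F}_P^G$ via Proposition~\ref{prop:OS-ind}(ii). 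Second, after twisting, the maximal parabolic of the equimaximal object is determined by its underlying $\mathfrak g_C$-module and can strictly exceed your $Q$ (chosen as the largest with $M_i\in\mathcal{O}^Q$ before the twist), so the bookkeeping of "equimaximal with maximal parabolic $Q$" needs to be redone after the twist; doing the decomposition and twist \emph{before} applying Theorem~\ref{thm:OS-main-in-main}(ii), as the paper does, avoids this complication.
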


We say that a finite-dimensional representation of $G$ (or more generally $P$) is \emph{algebraic} if it is obtained by restriction
from an algebraic representation of the split group $\alggrp{G}_{C} := \alggrp{G}\times_{F}C$ (or $\alggrp{P}_{C}$).

We let $\mathcal O^P_\alg \subset \mathcal O^P$ be the full subcategory consisting of those objects where the action of $P$ is locally finite-dimensional algebraic.
It is closed under subquotients.
The composition $\mathcal O^P_\alg \to \mathcal O^P \to \mathcal O^{\mathfrak p}$ is fully faithful with essential image $\mathcal O_\alg^{\mathfrak p}$, consisting of those objects where $\mathfrak t'$ acts by elements of $X^*(\alggrp T') \subset \mathfrak t'^*$ \cite[\S2, Lemma 3.2]{OS2}.
If $W \in \mathcal O^L_\alg$, then $\underline M(W) \in \mathcal O^P_\alg$ and hence $\underline L(W) \in \mathcal O^P_\alg$ (if $W$ is simple).
We state a useful consequence.

\begin{lem}\label{lem:alg-equimax}
  Any $M \in \mathcal O^P_\alg$ is equimaximal.
\end{lem}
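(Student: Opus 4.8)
The ``only if'' direction of the equimaximality condition is automatic: if $M\in\mathcal O^{Q}$ then its underlying $\mathfrak g_{C}$-module lies in $\mathcal O^{\mathfrak q}$ by definition (equivalently, apply the forgetful functor $\mathcal O^{Q}\to\mathcal O^{\mathfrak q}$). So the content is the converse: given $M\in\mathcal O^{P}_{\alg}$ and a parabolic $Q\supset P$ with $M|_{\mathfrak g_{C}}\in\mathcal O^{\mathfrak q}$, I must produce a compatible $Q$-action on $M$, i.e.\ show $M\in\mathcal O^{Q}$. The plan is to do this purely formally from the equivalence recalled just before the statement: for \emph{any} parabolic $P'$, the composite $\mathcal O^{P'}_{\alg}\to\mathcal O^{P'}\to\mathcal O^{\mathfrak p'}$ is fully faithful with essential image the full subcategory $\mathcal O^{\mathfrak p'}_{\alg}$ of objects on which $\mathfrak t'$ acts through $X^*(\alggrp T')\subset\mathfrak t'^{*}$ (\cite[\S2, Lemma 3.2]{OS2}, applied to $P'=P$ and to $P'=Q$).

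First, applying the equivalence for $P$ shows that the $\mathfrak t'$-weights of the $\mathfrak g_{C}$-module $M|_{\mathfrak g_{C}}$ all lie in $X^*(\alggrp T')$. This is a condition on the $\mathfrak g_{C}$-module alone, independent of the ambient parabolic subalgebra; combined with the hypothesis $M|_{\mathfrak g_{C}}\in\mathcal O^{\mathfrak q}$ it gives $M|_{\mathfrak g_{C}}\in\mathcal O^{\mathfrak q}\cap\mathcal O^{\mathfrak p}_{\alg}=\mathcal O^{\mathfrak q}_{\alg}$. Now invoke the equivalence for $Q$: since $\mathcal O^{\mathfrak q}_{\alg}$ is the essential image of $\mathcal O^{Q}_{\alg}\to\mathcal O^{\mathfrak q}$, there is $\widetilde M\in\mathcal O^{Q}_{\alg}$ with underlying $\mathfrak g_{C}$-module isomorphic to $M|_{\mathfrak g_{C}}$. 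The forgetful functor $\mathcal O^{Q}\to\mathcal O^{P}$ of Lemma~\ref{lem:cop}(i) carries $\mathcal O^{Q}_{\alg}$ into $\mathcal O^{P}_{\alg}$ (algebraicity of the $Q$-action is inherited by the $P$-action), so $\widetilde M|_{P}\in\mathcal O^{P}_{\alg}$ has the same underlying $\mathfrak g_{C}$-module as $M$. By full faithfulness of $\mathcal O^{P}_{\alg}\to\mathcal O^{\mathfrak p}$, the identity of $M|_{\mathfrak g_{C}}$ lifts to an isomorphism $M\cong\widetilde M|_{P}$ in $\mathcal O^{P}_{\alg}$; transporting the $Q$-action of $\widetilde M$ across it equips $M$ with a compatible $Q$-action, so $M\in\mathcal O^{Q}$ (indeed $M\in\mathcal O^{Q}_{\alg}$), as wanted.

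\textbf{Main obstacle.} There is essentially no obstacle once the cited equivalence $\mathcal O^{P'}_{\alg}\simeq\mathcal O^{\mathfrak p'}_{\alg}$ is available uniformly in $P'$; the only things to verify are the bookkeeping points that the ``algebraic'' constraint on the $\mathfrak t'$-action is intrinsic to the $\mathfrak g_{C}$-module and that $\mathcal O^{Q}\to\mathcal O^{P}$ preserves algebraicity. (If one preferred a hands-on argument, one could instead note that $M|_{\mathfrak g_{C}}\in\mathcal O^{\mathfrak q}$ forces the $\mathfrak l_{Q,C}$-action to be a sum of finite-dimensional modules, hence makes $\o{\mathfrak u}_{C}\cap\mathfrak q_{C}$ act locally nilpotently on $M$, integrate this to an action of $\o U\cap Q$, and check compatibility with the $P$-action over the group generated by $P$ and $\o U\cap Q$, which is $Q$; but this is longer and the categorical route avoids it.)
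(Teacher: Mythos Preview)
Your proof is correct and is precisely the argument the paper has in mind: the lemma is stated there without proof, as a ``useful consequence'' of the equivalence $\mathcal O^{P'}_{\alg}\simeq\mathcal O^{\mathfrak p'}_{\alg}$ recorded just before it. Your write-up simply unpacks this, using the equivalence for $P$ to see that the weight condition $M|_{\mathfrak g_C}\in\mathcal O^{\mathfrak p}_{\alg}$ together with $M|_{\mathfrak g_C}\in\mathcal O^{\mathfrak q}$ gives $M|_{\mathfrak g_C}\in\mathcal O^{\mathfrak q}_{\alg}$, and then the equivalence for $Q$ (plus full faithfulness for $P$) to upgrade this to $M\in\mathcal O^{Q}$.
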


\subsection{Some decompositions in \texorpdfstring{$\mathcal{O}$}{O}}
\label{sec:some-decomp}
Recall that $\alggrp{T}'$ is a maximal split torus containing $\alggrp{S}_C$ in the split group $\alggrp{Z}_C$ with Lie algebra $\mathfrak{t}'$.
Let $\mathfrak{b}' = \mathfrak{t}'\oplus\mathfrak{u}'$ be a Borel subalgebra of $\mathfrak{g}_{C}$.
We write $\widetilde{\Phi}$ for the set of $\mathfrak{t}'$-roots in $\mathfrak{g}_{C}$ and $\widetilde{\Phi}^{+}$ for the set of $\mathfrak{t}'$-roots in $\mathfrak{u}'$.
For $\lambda\in (\mathfrak{t}')^{*}$, the Verma module $U(\mathfrak{g}_{C})\otimes_{U(\mathfrak{b}')}\lambda$ has a unique irreducible quotient $L(\lambda)$ in $\mathcal O^{\mathfrak{b}'}$.

\begin{lem}\label{lem:extend-char}
  If $H$ is a compact abelian locally $F$-analytic group, then any $F$-linear map $\lambda \colon  \Lie H \to C$ lifts
  to a locally analytic homomorphism $H \to C^{\times}$ (after perhaps replacing $C$ by a finite extension).
  If $\alggrp{H}$ is a torus over $F$, then any $F$-linear map $\lambda \colon \Lie \alggrp{H} \to C$ lifts to a locally analytic
  homomorphism $\alggrp{H}(F) \to C^{\times}$ (after perhaps replacing $C$ by a finite extension).
\end{lem}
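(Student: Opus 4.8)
The plan is to prove the first statement (lift $\lambda\colon\Lie H\to C$ for $H$ compact abelian locally $F$-analytic) and then deduce the torus case by applying it to a maximal compact subgroup. For the first statement, I would use the $p$-adic logarithm and exponential maps. Since $H$ is a compact abelian locally $F$-analytic group, there is an open subgroup $H_0\subset H$ together with the logarithm $\log\colon H_0\to\Lie H$, which is a local isomorphism of locally $F$-analytic groups onto an open neighborhood of $0$ in $\Lie H$ (see \cite[III.7]{MR0573068}); since $H$ is abelian, $\log$ is a homomorphism where defined. After shrinking $H_0$, we may assume $\log$ is defined on all of $H_0$ and that $\lambda(\log H_0)$ is contained in the open subgroup of $(C,+)$ on which the $p$-adic exponential $\exp\colon \mathfrak{p}^{N}\mathcal{O}_C\to 1+\mathfrak{p}^{N}\mathcal{O}_C$ converges and is a homomorphism (this can be arranged by replacing $H_0$ by a smaller open subgroup, using that $\log H_0$ is a $\mathbb{Z}_p$-lattice that can be scaled by passing to $H_0^{p^k}$). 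Then $\psi_0 := \exp\circ\lambda\circ\log\colon H_0\to C^\times$ is a locally analytic homomorphism whose derivative is $\lambda|_{\Lie H_0}=\lambda$ (identifying $\Lie H_0=\Lie H$).

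It then remains to extend $\psi_0$ from the open subgroup $H_0$ to all of $H$. Since $H$ is compact, $H/H_0$ is a finite abelian group, say of exponent $m$. The obstruction to extending $\psi_0$ lies in a finite group: concretely, choosing coset representatives, one must choose compatible $m$-th roots of certain values $\psi_0(h^m)\in C^\times$ for $h$ ranging over generators of $H/H_0$, subject to finitely many relations coming from the relations in $H/H_0$. All of this can be solved after adjoining finitely many roots of unity and finitely many radicals to $C$, i.e.\ after replacing $C$ by a suitable finite extension. (More abstractly: $H\cong \mathbb{Z}_p^d\times(\text{finite})$ as a topological group by the structure theory of compact abelian $p$-adic Lie groups, the character $\psi_0$ is already defined on a finite-index subgroup, and $\Hom_{\mathrm{cts}}$ into the divisible group $\overline{C}^\times$ extends along finite-index inclusions; one then observes the extension takes values in a finite extension of $C$ because only finitely many new values are involved.) Any such extension is automatically locally analytic, since it agrees with the locally analytic $\psi_0$ on the open subgroup $H_0$, and it has derivative $\lambda$ as desired.

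For the torus case, let $\alggrp H$ be a torus over $F$ and let $H^{\circ}\subset\alggrp H(F)$ be a maximal compact open subgroup; it is a compact abelian locally $F$-analytic group with $\Lie H^{\circ}=\Lie\alggrp H$. By the first part we obtain a locally analytic homomorphism $\psi^{\circ}\colon H^{\circ}\to C^\times$ lifting $\lambda$ (over a finite extension of $C$). Now $\alggrp H(F)/H^{\circ}$ is a finitely generated free abelian group (it embeds into $X_*(\alggrp H)$ via valuation of a cocharacter, up to finite index — more precisely $\alggrp H(F)\cong H^{\circ}\times\mathbb{Z}^r$ after possibly enlarging $H^{\circ}$, or one argues with a finite-index free quotient), so $\psi^{\circ}$ extends freely to a homomorphism $\alggrp H(F)\to C^\times$ by assigning arbitrary values in $C^\times$ to a basis of the free part; this extension is locally analytic since it is locally analytic on the open subgroup $H^{\circ}$, and its derivative is still $\lambda$.

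The main obstacle, and the only place a finite extension of $C$ is genuinely needed, is the extension step from the open subgroup to the full group $H$: one must solve for roots of elements of $C^\times$, which requires enlarging $C$. Everything else (convergence of $\exp$ and $\log$, the chain rule computing the derivative, local analyticity of an extension that is locally analytic near the identity) is routine $p$-adic analysis and I would not belabor it.
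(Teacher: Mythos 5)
Your proof is correct and follows essentially the same route as the paper's: a local lift on an open subgroup (you construct it by hand as $\exp\circ\lambda\circ\log$, whereas the paper cites \cite[Proposition~18.17]{MR2810332}, which is the same $\exp/\log$ argument packaged as a result), followed by an extension step using divisibility of the target (you extract $m$-th roots directly; the paper phrases this as injectivity of $\mathcal{O}_{\overline{C}}^{\times}$ as an abelian group), and finally the torus case via $\alggrp{H}(F)\cong \alggrp{H}(F)^1\times\Z^d$. The only slip is the parenthetical ``after possibly enlarging $H^{\circ}$'' in the torus step, which is vacuous since $H^{\circ}$ is already the maximal compact subgroup; the correct justification for freeness of $\alggrp{H}(F)/H^{\circ}$ is the reference the paper cites, \cite[\S2.5(b),(c)]{KP}.
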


\begin{remark}\label{rk:base-ext}
  The finite extension of $C$ may depend on $\lambda$, as one can already see in case $H = \Z_{p}$ over $F = \Q_{p}$.
  (Note that if $\chi \colon \Z_{p} \to C^{\times}$, then $d\chi(\Z_{p}) \subset \log(\mathcal{O}^{\times}_{C})$ is bounded.)
\end{remark}

\begin{proof}
  The given $\lambda$ lifts to a locally analytic homomorphism $f \colon H_0 \to C^{\times}$ for some open subgroup $H_0$ of $H$ by \cite[Proposition~18.17]{MR2810332},
  so $H_0$ is of finite index. Using for example that $\mathcal{O}_{\overline{C}}^{\times}$ (where $\overline{C}$ is an algebraic closure of $C$) is an injective abelian group we deduce that we can extend
  $f$ to a homomorphism $H \to {C'}^{\times}$ for some finite extension $C'$ of $C$.  

  For the second part, note that $\alggrp{H}(F) \cong \alggrp{H}(F)^1 \times \Z^{d}$ for some $d \ge 0$, where $\alggrp{H}(F)^1$ is the maximal compact subgroup \cite[\S2.5(b), (c)]{KP}.
  In particular, $\alggrp{H}(F)^1$ is a compact abelian locally $F$-analytic group and we can apply the first part.
\end{proof}

\begin{lem}\label{lem:semisimple-lift}
  Suppose that $\alggrp{G}^{\der}$ is simply connected.   \begin{enumerate}
  \item If $\alggrp{G}$ is semisimple (simply-connected), then any simple object of $\mathcal{O}^{\mathfrak{g}}$ lifts to $\mathcal{O}^{G}$ (even $\mathcal{O}_\alg^{G}$). \label{lem:semisimple-lift-1}
    If moreover all simple factors of $\alggrp{G}$ are isotropic, then $\mathcal{O}^{\mathfrak{g}} = \mathcal{O}_\alg^{G} = \mathcal{O}^{G}$.
  \item In general, any simple object of $\mathcal{O}^{\mathfrak{p}}$ lifts to $\mathcal{O}^{P}$, after perhaps replacing $C$ by a finite extension. \label{lem:semisimple-lift-2}
    Any two lifts differ by a smooth character of $L$.
          \item If $M$ is a $\mathfrak{g}_{C}$-simple object in $\mathcal{O}^{G}$, then $M \cong M_\alg \otimes \psi$, where $M_\alg$ is an algebraic representation of $G$ and 
    $\psi$ a locally analytic character of $G$ (after perhaps replacing $C$ by a finite extension). \label{lem:semisimple-lift-3}
  \end{enumerate}
\end{lem}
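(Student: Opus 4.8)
The plan is to establish (ii) first, deduce (iii) as an immediate consequence, and then treat (i). For (ii), given a simple $W\in\mathcal O^{\mathfrak p}$, I would first reduce to the case $P=L$. Set $W_L:=W^{\mathfrak n_C}\in\mathcal O^{\mathfrak l}$, which is simple, $W$ being the unique simple quotient of $U(\mathfrak g_C)\otimes_{U(\mathfrak p_C)}W_L$. If $W_L$ lifts to some $\underline W_L\in\mathcal O^L$, then $\underline M_G(\underline W_L)\in\mathcal O^P$, and by Lemma~\ref{lm:N-invts} its unique simple quotient $\underline L(\underline W_L)\in\mathcal O^P$ has underlying $\mathfrak g_C$-module the largest semisimple quotient of $U(\mathfrak g_C)\otimes_{U(\mathfrak p_C)}W_L$, which, as $W_L$ is absolutely simple by Lemma~\ref{lem:abs-simple-g-module}, is just $W$. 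So $\underline L(\underline W_L)$ is a lift of $W$, and since $\alggrp L^{\der}$ is again simply connected it suffices to lift a simple object $W_L$ of $\mathcal O^{\mathfrak l}$ to $\mathcal O^L$.

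To do this, write $\mathfrak l_C=\mathfrak l_C^{\der}\oplus\mathfrak z_{\mathfrak l,C}$; by Lemma~\ref{lem:abs-simple-g-module} $W_L$ is absolutely simple, so $\mathfrak z_{\mathfrak l,C}$ acts by a linear functional $\mu$ while $W_L|_{\mathfrak l_C^{\der}}$ is a simple finite-dimensional module. As $\alggrp L^{\der}_C$ is split and simply connected, the highest weight of $W_L|_{\mathfrak l_C^{\der}}$ lies in $X^*(\alggrp T_1)$ for the maximal torus $\alggrp T_1:=\alggrp T'\cap\alggrp L^{\der}_C$ of $\alggrp L^{\der}_C$, and lifts along the surjection $X^*(\alggrp T')\twoheadrightarrow X^*(\alggrp T_1)$ to a dominant weight $\lambda$; let $W_{L,\alg}$ be the algebraic representation of $\alggrp L_C$ of highest weight $\lambda$, so that $W_{L,\alg}|_{\mathfrak l_C^{\der}}\cong W_L|_{\mathfrak l_C^{\der}}$. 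Now $\nu:=\mu-d\lambda|_{\mathfrak z_{\mathfrak l,C}}$ is a functional on $\mathfrak z_{\mathfrak l,C}\cong\Lie(\alggrp L/\alggrp L^{\der})\otimes_FC$; by Lemma~\ref{lem:extend-char} it lifts (after perhaps enlarging $C$) to a locally analytic character $\psi_0$ of $(\alggrp L/\alggrp L^{\der})(F)$ with $d\psi_0=\nu$, which I pull back to $\psi\colon L\to C^\times$ along the surjection $L\twoheadrightarrow(\alggrp L/\alggrp L^{\der})(F)$ — this surjectivity is precisely the vanishing $H^1(F,\alggrp L^{\der})=1$, valid since $\alggrp L^{\der}$ is simply connected over the $p$-adic field $F$. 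Then $W_{L,\alg}|_L\otimes\psi\in\mathcal O^L$ (Remark~\ref{rk:tensor-O-P}) has underlying $\mathfrak l_C$-module isomorphic to $W_L$, which is the desired lift. For the uniqueness clause, if $\underline W,\underline W'\in\mathcal O^P$ both lift $W$, then for each $g\in P$ the operator $\rho_{\underline W}(g)\rho_{\underline W'}(g)^{-1}$ commutes with $\mathfrak g_C$ (using $\rho(g)X\rho(g)^{-1}=\Ad(g)X$ for both lifts), hence is a scalar $c(g)$ by absolute simplicity; the resulting homomorphism $c\colon P\to C^\times$ has $dc=0$ and satisfies $c|_N=1$ by \eqref{eq:log-u}, so it descends to a smooth character of $L$ with $\underline W\cong\underline W'\otimes c$.

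Part (iii) is then immediate: for $\mathfrak g_C$-simple $M\in\mathcal O^G$ the construction lifts the underlying $\mathfrak g_C$-module as $M_\alg\otimes\psi$ with $M_\alg$ algebraic and $\psi$ locally analytic, and as $M$ is another lift the uniqueness clause produces a smooth character $c$ with $M\cong M_\alg\otimes(\psi c)$, $\psi c$ still locally analytic. For (i), the first assertion is the case $\alggrp L=\alggrp G$ semisimple (then $\mathfrak z_{\mathfrak g}=0$, $\psi$ is trivial, and the lift lies in $\mathcal O_\alg^G$). For the equality $\mathcal O^{\mathfrak g}=\mathcal O_\alg^G=\mathcal O^G$ under the additional isotropy hypothesis, the forgetful functor $\mathcal O_\alg^G\to\mathcal O^G\to\mathcal O^{\mathfrak g}$ is clearly faithful and is essentially surjective since each object of $\mathcal O^{\mathfrak g}$ is a finite direct sum of simple modules, each of which lifts to $\mathcal O_\alg^G$; it remains to check that every $M\in\mathcal O^G$ is algebraic, which also gives fullness because a $\mathfrak g_C$-linear map between algebraic representations of the connected group $\alggrp G_C$ is automatically $\alggrp G_C$-equivariant. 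Writing $M\cong\bigoplus_\lambda L(\lambda)^{m_\lambda}$ as $\mathfrak g_C$-module and setting $M_\alg:=\bigoplus_\lambda L(\lambda)_\alg^{m_\lambda}|_G$ (with $L(\lambda)_\alg$ the algebraic representation of highest weight $\lambda$), the map $c\colon g\mapsto\rho_M(g)\rho_{M_\alg}(g)^{-1}$ takes values in $\Aut_{\mathfrak g_C}(M)\cong\prod_\lambda\GL_{m_\lambda}(C)$; since the $G$-action on $M_\alg$ is trivial on each multiplicity space, $c$ is an honest homomorphism with $dc=0$, i.e.\ a finite-dimensional smooth representation of $G$. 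Because $\alggrp G$ is semisimple, simply connected and isotropic, $G=G'$ by the Kneser--Tits theorem over $F$, so $c$ is trivial by Lemma~\ref{lm:loc-an-char}(i) and $M\cong M_\alg$ is algebraic.

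The main obstacle is not the representation theory but the two arithmetic inputs used above: the vanishing $H^1(F,\alggrp L^{\der})=1$ (Kneser), which is what lets the central functional $\nu$ be realised by a locally analytic character of all of $L$ rather than merely of $\alggrp Z_{\alggrp L}^\circ(F)$; and the equality $G=G'$ for isotropic simply connected $\alggrp G$ (Kneser--Tits over the local field $F$), which kills the discrepancy homomorphism in part (i). One must also keep track of the finite extensions of $C$ produced by Lemma~\ref{lem:extend-char}, which are the source of the phrase ``after perhaps replacing $C$ by a finite extension'' in (ii) and (iii).
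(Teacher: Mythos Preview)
Your proof is correct and follows essentially the same strategy as the paper: reduce (ii) to the Levi via $N$-invariants and Lemma~\ref{lm:N-invts}, then build an algebraic lift of the $\mathfrak l^{\der}_C$-module, extend to $\alggrp L_C$, and twist by a locally analytic character produced via Lemma~\ref{lem:extend-char}; (iii) and (i) follow. Two minor remarks. First, your invocation of Kneser's vanishing $H^1(F,\alggrp L^{\der})=1$ in (ii) is unnecessary: you only need to \emph{pull back} $\psi_0$ along $L\to(\alggrp L/\alggrp L^{\der})(F)$, which requires no surjectivity, so this is not an ``obstacle'' at all (the paper likewise lifts to a character of $G/G^{\der}$ and needs nothing more). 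Second, for the ``moreover'' in (i) the paper takes a slightly different tack: rather than showing every $M\in\mathcal O^G$ is algebraic by comparing $\rho_M$ with $\rho_{M_\alg}$, it proves full faithfulness of $\mathcal O^G\to\mathcal O^{\mathfrak g}$ directly by observing that any $U(\mathfrak g_C)$-linear map integrates to a $U$- and $\overline U$-linear map, hence is $G$-linear since $G=\langle U,\overline U\rangle$ by Kneser--Tits. Your route via the discrepancy homomorphism $c$ into $\prod_\lambda\GL_{m_\lambda}(C)$ is a valid alternative; the paper's argument is shorter. Your uniqueness argument in (ii) (the scalar $c(g)=\rho_{\underline W}(g)\rho_{\underline W'}(g)^{-1}$) is also a legitimate variant of the paper's $\eta:=\Hom_{\mathfrak l_C}(M_1^N,M_2^N)$ approach.
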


\begin{remark}\label{rk:need-simply-conn}
  The second claim in \ref{lem:semisimple-lift-1} fails without the condition on the simple factors. For example, let $G = \SL_1(D)$, where $D$ is a finite-dimensional non-commutative
  division algebra over $F$. Then $G$ admits a (finite-dimensional) irreducible smooth representation of dimension greater than 1, which becomes reducible in $\mathcal{O}^{\mathfrak{g}}$.

  Part \ref{lem:semisimple-lift-2} may require an extension of scalars even when $\alggrp{G}$ is a torus, cf.\ Remark~\ref{rk:base-ext}.

  We cannot drop the condition that $\alggrp{G}^{\der}$ is simply connected. 
  For parts \ref{lem:semisimple-lift-1} and \ref{lem:semisimple-lift-2}, suppose $\alggrp{G} = \PGL_3$, $F = \Q_{p}$,
  $p \equiv 1 \pmod 3$.  Then we have $L(2/3, -1/3, -1/3) \in \mathcal{O}^{\mathfrak{g}}$, and we claim that it does not lift to $\mathcal{O}^{G}$.
  If there was a lift in $\mathcal{O}^{G}$, then we can inflate it to $\mathcal{O}^{\GL_3}$, so by part \ref{lem:semisimple-lift-3} it is of the form 
  $L(1,0,0) \otimes (\psi \circ \det)$, where $\psi$ is a locally analytic character of $\Q_{p}^{\times}$. 
  But by considering the 3-torsion subgroup of $\Q_{p}^\times$ it is easy to see that such a lift does not exist,
  as we cannot solve $\psi(x)^3 = x^{-1}$ ($x \in \Q_{p}^{\times}$).
  A similar example with $\alggrp{G} = \PGL_2$ can be found in \cite[Example 2.4]{OS3}.
  For part \ref{lem:semisimple-lift-3}, suppose $\alggrp{G} = \PGL_3$, $F = \Q_{p}$, $p \equiv 2 \pmod 3$, and $p^{1/3} \in C$.
  Then there exists a continuous (hence locally $\Q_{p}$-analytic) character $\psi \colon \Q_{p}^{\times} \to C^{\times}$ such that $\psi(x)^3 = x^{-1}$. Let $V$ be the standard representation
  of $\GL_3(\Q_{p})$. Then $V \otimes (\psi\circ\det) \in \mathcal{O}^{G}$ but is not an algebraic representation up to twist.
  (The last example is related to \cite[\S3 Example]{MR1835001}.)
\end{remark}

\begin{proof}
  \ref{lem:semisimple-lift-1} Take any $M \in \mathcal{O}^{\mathfrak{g}}$ simple.
  Then $M \cong L(\lambda)$ for some $\lambda \in (\mathfrak{t}')^*$ with $\langle\lambda,\alpha^{\vee}\rangle \in \Z_{\ge 0}$ for all $\alpha \in \widetilde{\Phi}^{+}$.
  As $\alggrp G$ is semisimple simply-connected, $\lambda \in X^*(\alggrp{T}')$ is dominant and so $M$ lifts to the algebraic representation $L(\lambda)$
  of $\alggrp G_{C}$; by restriction to $G$ we get our desired object of $\mathcal{O}_\alg^{G}$.

  If moreover all simple factors of $\alggrp{G}$ are isotropic, then by Kneser--Tits we know that $G$ is generated by the unipotent radicals
  $U$ and $\overline{U}$. If $M_1, M_2 \in \mathcal{O}^{G}$ and $f \colon M_1 \to M_2$ is $U(\mathfrak{g}_{C})$-linear, then by integrating the action over unipotents
  we deduce that $f$ is $U$-linear and $\overline{U}$-linear, hence $G$-linear. Therefore the forgetful functor $\mathcal{O}^{G} \to \mathcal{O}^{\mathfrak{g}}$ is fully faithful.
    By the previous paragraph the composite $\mathcal{O}_\alg^{G} \to \mathcal{O}^{G} \to \mathcal{O}^{\mathfrak{g}}$ of fully faithful embeddings is an essential surjection (as $\mathcal{O}^{\mathfrak{g}}$ is semisimple), hence we get equivalences $\mathcal{O}_\alg^{G} = \mathcal{O}^{G} = \mathcal{O}^{\mathfrak{g}}$.

  \ref{lem:semisimple-lift-2} First we consider the case $P = G$ and take $W \in \mathcal{O}^{\mathfrak{g}}$ (absolutely) simple.
  By the above we can lift $W|_{\mathfrak{g}_{C}^{\der}}$ to an algebraic representation of $G_{C}^{\der}$,
  which extends to an algebraic representation of $G_{C}$ (as $\alggrp{T}'^{\der}$ is a direct factor of $\alggrp{T}'$). So there exists an algebraic
  $G$-representation $M$ that agrees with $W$ on $\mathfrak{g}_{C}^{\der}$.
  The space $\lambda := \Hom_{\mathfrak{g}_C^{\der}}(M,W)$ has a natural action of $\mathfrak{g}_C/\mathfrak{g}_C^{\der}$ and, since $M,W$ are absolutely simple $\mathfrak{g}_C^{\der}$-modules, $\lambda$ is $1$-dimensional.
  Hence $M\otimes \lambda\to W$ is an isomorphism in $\mathcal{O}^{\mathfrak{g}}$.
  By Lemma~\ref{lem:extend-char} (extending scalars if necessary) we can lift $\lambda \colon \mathfrak{g}_{C}/\mathfrak{g}_{C}^{\der} \to C$ to a locally analytic character $G/G^{\der} \to C^{\times}$, 
  and this implies the claim.
  
  Now suppose $P$ is arbitrary standard parabolic subgroup and $W \in \mathcal{O}^{\mathfrak{p}}$ simple. Then $W^{\mathfrak{n}_C} \in \mathcal{O}^{\mathfrak{l}}$ is simple and we can lift it to an object $M'$ of $\mathcal{O}^{L}$ by the previous paragraph
  (after a scalar extension).
  Then the unique simple quotient $\underline{L}(M')$ of $\underline{M}(M')$ in $\mathcal{O}^{P}$ lifts $W$ by Lemma~\ref{lm:N-invts}.
  
  If $M_1$, $M_2 \in \mathcal{O}^{P}$ are lifts of the same simple object of $\mathcal{O}^{\mathfrak{p}}$, then $M_1^N|_{\mathfrak{l}_{C}} \cong M_2^N|_{\mathfrak{l}_{C}}$ and we let
  $\eta := \Hom_{\mathfrak{l}_{C}}(M_1^N,M_2^N)$, a smooth character of $L$. Then $M_1^N \otimes \eta \cong M_2^N$ in $\mathcal{O}^{L}$ and hence
  $M_1 \otimes \eta \cong M_2$ in $\mathcal{O}^{P}$ by Lemma~\ref{lm:N-invts}.

  \ref{lem:semisimple-lift-3} In the proof of part \ref{lem:semisimple-lift-2} we saw that $M|_{\mathfrak{g}_{C}}$ admits a lift of the form $M' = M_\alg \otimes \psi'$ in $\mathcal{O}^{G}$, where $M_\alg$ is algebraic
  and $\psi'$ a character of $G/G^{\der}$. Then $\eta := \Hom_{\mathfrak{g}_{C}}(M',M)$ is a 1-dimensional smooth representation of $G$ and
  $M \cong M' \otimes \eta \cong M_\alg \otimes \psi'\eta$.
\end{proof}

\begin{prop}\label{prop:simple-O-P}\ 
  \begin{enumerate}
  \item If $M_0 \in \mathcal{O}^{P}$ is $\mathfrak{g}_{C}$-simple and $\tau$ is finite-dimensional irreducible (resp.\ absolutely irreducible) smooth $L$-representation,
    then $M_0 \otimes \tau$ is a simple (resp.\ absolutely simple) object of $\mathcal{O}^{P}$.
  \item The decomposition $M_0 \otimes \tau$ in (i) is unique up to a smooth character of $L$.
  \item If $\alggrp{G}^{\der}$ is simply connected, then any absolutely simple object of $\mathcal{O}^{P}$ can be decomposed as in (i), after perhaps
    replacing $C$ by a finite extension.
  \end{enumerate}
\end{prop}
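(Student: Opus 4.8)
The plan is to prove all three parts first in the case $P = G$ and then descend to a general standard parabolic $P$ by passing to $N$-invariants, using Lemmas~\ref{lm:N-invts} and~\ref{lem:verma-tensor}. For part (i) with $P = G$: by Lemma~\ref{lem:abs-simple-g-module} the $\mathfrak g_C$-module $M_0$ is absolutely simple, so $\End_{\mathfrak g_C}(M_0) = C$, while $\tau$, being smooth, is killed by $\mathfrak g_C$; hence as a $\mathfrak g_C$-module $M_0 \otimes \tau \cong M_0 \otimes_C \tau$ with $\mathfrak g_C$ acting through the first factor. A subobject of $M_0 \otimes \tau$ in $\mathcal O^G$ is in particular a $\mathfrak g_C$-submodule, hence of the form $M_0 \otimes V$ for a subspace $V \subseteq \tau$, and the $G$-equivariant isomorphism $\tau \congto \Hom_{\mathfrak g_C}(M_0, M_0 \otimes \tau)$ identifies $G$-stability of $M_0 \otimes V$ with that of $V$; (absolute) irreducibility of $\tau$ then gives the claim. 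For general $P$, set $W := M_0^N = M_0^{\mathfrak n_C}$: since $M_0|_{\mathfrak g_C}$ is absolutely simple, $W$ is an absolutely simple $\mathfrak l_C$-module, and $U(\mathfrak g_C)\cdot W = M_0$ by $\mathfrak g_C$-simplicity, so $U(\mathfrak g_C)\cdot(W \otimes \tau) = M_0 \otimes \tau$ (again as $\mathfrak g_C$ kills $\tau$). By the case $P = G$ applied to $L$, the object $W \otimes \tau = (M_0 \otimes \tau)^N$ is simple (resp.\ absolutely simple) in $\mathcal O^L$. Now any nonzero subobject $X \subseteq M_0 \otimes \tau$ in $\mathcal O^P$ has $X^N = X^{\mathfrak n_C}$ a nonzero $L$-subobject of $W \otimes \tau$, hence all of it, whence $X \supseteq U(\mathfrak g_C)\cdot(W \otimes \tau) = M_0 \otimes \tau$.

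For part (ii), applying $(-)^N$ to an isomorphism $M_0 \otimes \tau \cong M_0' \otimes \tau'$ in $\mathcal O^P$ gives $W \otimes \tau \cong W' \otimes \tau'$ in $\mathcal O^L$ with $W := M_0^N$, $W' := M_0'^N$ absolutely simple $\mathfrak l_C$-modules; restricting to $\mathfrak l_C$ forces $W \cong W'$ as $\mathfrak l_C$-modules and $\dim\tau = \dim\tau'$. Identifying $W$ with $W'$ as $\mathfrak l_C$-modules, the two locally analytic $L$-actions have the same derivative and both satisfy the $\Ad$-compatibility, so by Schur's lemma (absolute simplicity) they differ by a locally analytic character $\eta\colon L \to C^\times$; thus $W \cong W' \otimes \eta$, and cancelling $W'$ gives $\tau' \cong \tau \otimes \eta$. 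Comparing determinants, $\eta^{\dim\tau}$ is smooth, hence so is $\eta$ (a continuous character of $L$ some power of which is smooth has finite image on a suitable compact open subgroup, so open kernel there). Finally $M_0' = \underline L(W') \cong \underline L(W \otimes \eta^{-1}) \cong \underline L(W) \otimes \eta^{-1} = M_0 \otimes \eta^{-1}$ by Lemma~\ref{lem:verma-tensor} and part (i).

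For part (iii), the main case is $P = G$. Let $M$ be absolutely simple in $\mathcal O^G$. Since $\alggrp G$ is connected, $\Ad(G)$ acts on $\mathfrak g_C^{\der}$ by inner automorphisms, which preserve isomorphism classes of finite-dimensional $\mathfrak g_C^{\der}$-modules (being generated by exponentials of ad-nilpotent elements); hence $G$ preserves each $\mathfrak g_C^{\der}$-isotypic component of $M$, so by absolute simplicity of $M$ in $\mathcal O^G$ the restriction $M|_{\mathfrak g_C^{\der}}$ is isotypic, of type an absolutely simple $\mathfrak g_C^{\der}$-module $V^{\der}$. Its highest weight is dominant and, as $\alggrp G^{\der}$ is simply connected, integral; lifting it to an irreducible algebraic representation of $\alggrp G_C^{\der}$ and extending the highest weight to the maximal torus of $\alggrp G_C$ (possible since the character-lattice restriction map is surjective), I obtain $V_0 \in \mathcal O_\alg^G$ with $V_0|_{\mathfrak g_C^{\der}} \cong V^{\der}$, and $V_0$ is $\mathfrak g_C$-simple. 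Then $\tau := \Hom_{\mathfrak g_C^{\der}}(V_0, M)$ is a finite-dimensional $G$-representation on which $\mathfrak g_C^{\der}$ acts trivially, the evaluation map $V_0 \otimes \tau \to M$ is an isomorphism in $\mathcal O^G$ (as $M|_{\mathfrak g_C^{\der}}$ is $V_0|_{\mathfrak g_C^{\der}}$-isotypic), and $\tau$ is absolutely irreducible (a nonzero proper subrepresentation of $\tau$, after any scalar extension, would produce one of $M$). The centre $\mathfrak z(\mathfrak g_C)$ acts on $\tau$ by a Lie algebra character, which lifts to a locally analytic character $\psi$ of $G$ via $G \to (\alggrp G/\alggrp G^{\der})(F)$ and Lemma~\ref{lem:extend-char} (after possibly enlarging $C$); then $\mathfrak g_C$ acts trivially on $\tau \otimes \psi^{-1}$, so the latter is smooth, and $M \cong (V_0 \otimes \psi) \otimes (\tau \otimes \psi^{-1})$ is a decomposition of the required form with $V_0 \otimes \psi$ a $\mathfrak g_C$-simple object of $\mathcal O^G$. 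For general $P$: Lemma~\ref{lm:N-invts} gives $M \cong \underline L(M^N)$ with $M^N$ absolutely simple in $\mathcal O^L$, and since $\alggrp L^{\der}$ is again simply connected I decompose $M^N \cong W_0 \otimes \tau$ by the $P = G$ case applied to $L$, with $W_0 \in \mathcal O^L$ having $\mathfrak l_C$-simple underlying module and $\tau$ absolutely irreducible smooth; then $\underline L(W_0)$ is $\mathfrak g_C$-simple (its underlying $\mathfrak g_C$-module is the simple head $L(W_0)$ of $M(W_0)$), and Lemma~\ref{lem:verma-tensor} together with part (i) gives $M \cong \underline L(W_0 \otimes \tau) \cong \underline L(W_0) \otimes \tau$.

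I expect part (iii) to be the main obstacle: the delicate point is separating the algebraic piece $V_0$ from the central twist $\psi$, which is precisely where simple-connectedness of $\alggrp G^{\der}$ is indispensable (cf.\ the counterexamples in Remark~\ref{rk:need-simply-conn}), and it is also what forces careful tracking of when a finite extension of $C$ has to be made.
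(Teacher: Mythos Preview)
Your proof is essentially correct and follows the same overall architecture as the paper's: reduce to $P=G$ via $N$-invariants and then descend using Lemmas~\ref{lm:N-invts} and~\ref{lem:verma-tensor}. The differences are in execution.

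In part (ii), the paper is more direct: it sets $\eta := \Hom_{\mathfrak l_C}(M_1^N, M_2^N)$ and observes this $1$-dimensional $L$-representation is smooth because the derived $\mathfrak l_C$-action on $\Hom_{\mathfrak l_C}(-,-)$ is trivial. Your detour through ``$\eta^{\dim\tau}$ smooth $\Rightarrow$ $\eta$ smooth'' works (most simply since $d(\eta^n)=n\,d\eta$), but is unnecessary.

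In part (iii), the paper avoids your isotypic-component argument entirely: it picks \emph{any} simple $\mathfrak g_C$-submodule $W_0 \subset M$, lifts it to $M_0 \in \mathcal O^G$ via Lemma~\ref{lem:semisimple-lift}(ii), and sets $\tau := \Hom_{\mathfrak g_C}(M_0,M)$, which is then automatically smooth. Your route via $\mathfrak g_C^{\der}$ followed by a twist essentially inlines that lemma.

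There is one genuine wobble in your part (iii). The assertion that $\Ad(G)$ acts by ``inner automorphisms \dots\ generated by exponentials of ad-nilpotent elements'' is not correct as stated: over $C$ (not algebraically closed) the subgroup generated by such exponentials is the image of $\alggrp G_C^{\sc,\der}(C)$, which can be strictly smaller than $\alggrp G_C^{\ad}(C)$, and $\Ad(G)$ need not land in the former. What is true and suffices is that $\Ad(G)$ lands in the $C$-points of the identity component of the automorphism scheme; any such element acts trivially on dominant weights and hence preserves isomorphism classes. Relatedly, your claim that $\mathfrak z(\mathfrak g_C)$ acts on $\tau$ by a single character needs a word: since $\Ad(G)$ is trivial on $\mathfrak z(\mathfrak g_C)$, this action commutes with $G$, and then Schur's lemma (via the absolute irreducibility of $\tau$ you already proved) gives the conclusion.
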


\begin{proof}\
  (i) Consider $M = M_0 \otimes \tau$, where $M_0 \in \mathcal{O}^{P}$ is $\mathfrak{g}_{C}$-simple and $\tau$ is a finite-dimensional irreducible smooth $L$-representation.
  Then $M$ lies in $\mathcal{O}^{P}$ clearly. By restricting to $\mathfrak{g}_{C}$ we see that any subobject is of the form $M_0 \otimes V'$ for some $L$-stable subspace 
  $V' \subset \tau$, hence $M$ is simple. If moreover $\tau$ is absolutely irreducible, then $M$ is absolutely simple.

  (ii) Suppose $M_1 \otimes \tau_1 \cong M_2 \otimes \tau_2$ in $\mathcal{O}^{P}$ with $M_i, \tau_i$ as in the statement of part (i). 
  By taking $N$-invariants we get $M_1^N \otimes \tau_1 \cong M_2^N \otimes \tau_2$ in $\mathcal{O}^{L}$, where $M_i^N \in \mathcal{O}^{L}$ are $\mathfrak{l}_{C}$-simple.

  Restricting to $\mathfrak{l}_{C}$ we deduce $M_1^N|_{\mathfrak{l}_{C}} \cong M_2^N|_{\mathfrak{l}_{C}}$. 
  Then $\eta := \Hom_{\mathfrak{l}_{C}}(M_1^N,M_2^N)$ is a smooth 1-dimensional representation of $L$ and $M_1^N \otimes \eta \cong M_2^N$.
    By replacing $(M_1,\tau_1)$ by $(M_1 \otimes \eta,\tau_1 \otimes \eta^{-1})$
  we may assume $\eta = 1$. From $\Hom_{\mathfrak{l}_{C}}(M_1^N, M_1^N \otimes \tau_i) \cong \tau_i$ we then deduce $\tau_1 \cong \tau_2$.
  Finally we deduce $M_1 \cong M_2$ from 
    $\underline{M}(M_{1}^{N})\simeq \underline{M}(M_{2}^{N})$ and Lemma~\ref{lm:N-invts}.

  (iii) We first treat the case where $P = G$, so suppose $M \in \mathcal{O}^{G}$ absolutely simple. 
      Pick $W_0$ a simple subobject of $M$ in $\mathcal{O}^{\mathfrak{g}}$.
  Extending scalars if necessary, we can find $M_0 \in \mathcal{O}^{G}$ lifting $W_0$ by Lemma~\ref{lem:semisimple-lift} (using that $\alggrp{G}^{\der}$ is simply connected).
  We now have absolutely irreducible $G$-representations $M_0, M$ such that
  \begin{equation*}
    \tau := \Hom_{\mathfrak{g}_{C}}(M_0, M) \ne 0.
  \end{equation*}
  Note that $G$ acts smoothly on $\tau$ and that we get a nonzero $G$-linear map $M_0 \otimes \tau \to M$. This is an isomorphism,
  as the both sides are simple. It follows that $\tau$ is absolutely irreducible.

  Now in general, suppose that $M \in \mathcal{O}^{P}$.
  Note that $M^N \in \mathcal{O}^{L}$ is absolutely simple by Lemma~\ref{lm:N-invts}. Write $M^{N} = M_0 \otimes \tau$ by the previous paragraph (after perhaps extending scalars), 
  where $M_0 \in \mathcal{O}^{L}$ is $\mathfrak{l}_{C}$-simple and 
  $\tau$ is an absolutely irreducible smooth $L$-representation.
  Then $M \simeq \underline{L}(M^{N})\simeq \underline{L}(M_{0})\otimes \tau$ by Lemma~\ref{lem:verma-tensor} and $\underline{L}(M_{0})\in \mathcal{O}^{P}$ is $\mathfrak{g}_{C}$-simple.
\end{proof}

\begin{cor}\label{cor:tau-trivial}
  Suppose that $P = LN$ and that all simple factors of the adjoint group $\alggrp{L}^{\ad} = \alggrp{L}/\alggrp{Z}_{\alggrp{L}}$ are isotropic.   Then every absolutely simple object of $\mathcal O^P$ is $\mathfrak{g}_{C}$-simple, i.e.\ a decomposition as in Proposition~\ref{prop:simple-O-P}(i) exists (with $\tau = 1$).
\end{cor}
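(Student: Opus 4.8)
The plan is to reduce the statement to the case where $\alggrp G^{\der}$ is simply connected, where it follows quickly from Proposition~\ref{prop:simple-O-P}(iii) together with the fact that, under the isotropy hypothesis, finite-dimensional absolutely irreducible \emph{smooth} representations of $L$ are one-dimensional.

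I would first treat the case $\alggrp G^{\der}$ simply connected. Let $M \in \mathcal O^{P}$ be absolutely simple. By Proposition~\ref{prop:simple-O-P}(iii) we may write $M \cong M_{0} \otimes \tau$ (after perhaps enlarging $C$, which does not affect $\mathfrak g_{C}$-simplicity of absolutely simple objects) with $M_{0} \in \mathcal O^{P}$ being $\mathfrak g_{C}$-simple and $\tau$ a finite-dimensional absolutely irreducible smooth representation of $L$; it then suffices to show $\dim_{C}\tau = 1$. Note that $\alggrp L^{\der}$ is simply connected (being the derived group of a Levi of a group with simply connected derived group) and has all simple factors isotropic (these being the simple factors of $\alggrp L^{\ad}$); hence the normal subgroup $L'$ of $L$ generated by $U \cap L$ contains $\alggrp L^{\der}(F)^{+}$ by Borel--Tits (using that $U \cap L$ and its opposite in $\alggrp L$ are $L$-conjugate), and $\alggrp L^{\der}(F)^{+} = \alggrp L^{\der}(F)$ by Kneser--Tits, so $L/L'$ is a quotient of $L/\alggrp L^{\der}(F) \into (\alggrp L/\alggrp L^{\der})(F)$ and is therefore abelian. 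By Lemma~\ref{lm:loc-an-char}(i) the smooth representation $\tau$ is trivial on $L'$, so it factors through the abelian group $L/L'$; being finite-dimensional and absolutely irreducible it is one-dimensional, say $\tau = \psi$, a smooth character. Then $M \cong M_{0} \otimes \psi$, and since $\psi$ has trivial derivative we get $M|_{\mathfrak g_{C}} \cong M_{0}|_{\mathfrak g_{C}}$, which is simple; thus $M$ is $\mathfrak g_{C}$-simple, and $M \cong (M_{0}\otimes\psi)\otimes\mathbf{1}$ is the decomposition of Proposition~\ref{prop:simple-O-P}(i) with $\tau = 1$.

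For general $\alggrp G$ I would pass to a $z$-extension $\varphi \colon \alggrp G_{1} \to \alggrp G$, that is, a central extension with kernel an induced torus and with $\alggrp G_{1}^{\der}$ simply connected, and put $\alggrp P_{1} := \varphi^{-1}(\alggrp P) = \alggrp L_{1}\alggrp N_{1}$. Then $\alggrp L_{1}^{\ad} = \alggrp L^{\ad}$ (so its simple factors are still isotropic), and since an induced torus has vanishing degree-one Galois cohomology, $\varphi$ is surjective on $F$-points and $d\varphi \colon \mathfrak g_{1,C} \to \mathfrak g_{C}$ is surjective. Consequently the inflation functor $\mathcal O^{P} \to \mathcal O^{P_{1}}$ of Remark~\ref{rk:cat-O-isogeny} sends an absolutely simple $M \in \mathcal O^{P}$ to an absolutely simple $M_{1} := \varphi^{*}M \in \mathcal O^{P_{1}}$ whose $\mathfrak g_{1,C}$-submodules are exactly the $\mathfrak g_{C}$-submodules of $M$. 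Applying the previous paragraph to $\alggrp G_{1}$ shows that $M_{1}$ is $\mathfrak g_{1,C}$-simple, hence $M$ is $\mathfrak g_{C}$-simple.

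I expect the main obstacle to be the simply-connected case, and within it the one-dimensionality of the smooth factor $\tau$: this is precisely where the isotropy hypothesis enters (together with Lemma~\ref{lm:loc-an-char}(i) and Kneser--Tits). By contrast, the reduction step should be essentially formal, relying only on the existence of $z$-extensions and on the facts that they preserve the adjoint group and are surjective on $F$-points and on Lie algebras.
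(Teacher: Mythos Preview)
Your proof is correct and follows essentially the same approach as the paper: reduce to the simply connected case via a $z$-extension, then use Proposition~\ref{prop:simple-O-P}(iii) together with Kneser--Tits (all simple factors of $\alggrp L^{\der}$ isotropic) to see that $L/L'$ is abelian, forcing the smooth factor $\tau$ to be a character. The paper phrases the key step slightly more tersely as ``$L' = L^{\der}$ by Kneser--Tits,'' but your more explicit unwinding via $\alggrp L^{\der}(F)^{+}$ and the embedding $L/L^{\der} \hookrightarrow (\alggrp L/\alggrp L^{\der})(F)$ amounts to the same thing.
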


\begin{proof}
  Suppose that $M \in \mathcal O^P$ is absolutely simple.
  If $\alggrp{G}^{\der}$ is simply connected, then we can write $M \cong M_0 \otimes \tau$ as in Proposition~\ref{prop:simple-O-P}.
  As $\alggrp{L}^{\der}$ is simply connected and all simple factors of $\alggrp{L}^{\der}$ are isotropic, by Kneser--Tits we know that $L' = L^\der$.   Hence the finite-dimensional smooth representation $\tau$ is trivial on $L^\der$ by Lemma~\ref{lm:loc-an-char}.
  Since $L/L^\der$ is abelian, we deduce that $\tau$ is 1-dimensional and hence $M$ is $\mathfrak{g}_C$-simple.

  For general $\alggrp{G}$, let $\wt{\alggrp G} \onto \alggrp G$ be a $z$-extension, so $\wt {\alggrp G}^{\lowprime[\der]}$ is simply connected, $\wt G \onto \wt G$ on $F$-points, and $\wt{\mathfrak g}_C \onto \mathfrak g_C$ for the Lie algebras.
  Then the inflation $\wt M$ of $M$ becomes an absolutely simple object of $\mathcal O^{\wt P}$, where $\wt P$ is the pre-image of $P$ in $\wt G$.
  By the previous paragraph, $\wt M$ is $\wt{\mathfrak g}_C$-simple, so $M$ is $\mathfrak{g}_C$-simple.
\end{proof}

\begin{prop}\label{prop:twist-to-O-P}  
  Suppose that $\alggrp{G}^{\der}$ is simply connected.
  \begin{enumerate}
  \item If $M$ is a $\mathfrak{g}_{C}$-simple object in $\mathcal{O}^{B} \cap \mathcal{O}^{\mathfrak{p}}$, then, perhaps after replacing $C$ by a finite extension, there
    is a smooth character $\eta$ of $Z$ such that $M \otimes \eta$ lies in $\mathcal{O}^{P}$.  Moreover, $\eta$ is unique up to a smooth
    character of $L$.
  \item 
    If $M$ is a $\mathfrak{g}_{C}$-simple object of $\mathcal{O}^{P}$, then, perhaps after replacing $C$ by a finite extension, there is a smooth character $\eta$ of $L$ such that $M \otimes \eta$ is
    equimaximal.  Moreover, $\eta$ is unique up to a smooth character of $L_Q$, where $Q$ is the maximal parabolic such that
    $M \in \mathcal{O}^{\mathfrak{q}}$.
  \end{enumerate}
\end{prop}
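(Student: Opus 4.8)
The plan is to deduce both parts from Lemma~\ref{lem:semisimple-lift}(ii), together with the behaviour of generalized Verma modules under twisting (Lemma~\ref{lem:verma-tensor}) and under passage to a Levi (Corollary~\ref{cor:N-invts}); throughout, $\End_{\mathfrak g_C}(M) = C$ by Lemma~\ref{lem:abs-simple-g-module}, and the derived subgroup of $L_Q$ is again simply connected, so these results apply to the group $L_Q$ as well. For part (i), I would argue as follows. Since $M|_{\mathfrak g_C}$ is a simple object of $\mathcal O^{\mathfrak p}$, Lemma~\ref{lem:semisimple-lift}(ii) produces, after a finite extension of $C$, a lift $M_1 \in \mathcal O^P$ of $M|_{\mathfrak g_C}$. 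Both $M$ and $M_1$ lie in $\mathcal O^B$ (using $\mathcal O^P \subset \mathcal O^B$) and have isomorphic, simple restrictions to $\mathfrak g_C$, so by the uniqueness clause of Lemma~\ref{lem:semisimple-lift}(ii) applied to the parabolic $B$ (whose Levi is $Z$) there is a smooth character $\eta$ of $Z$ with $M \otimes \eta \cong M_1$, which therefore lies in $\mathcal O^P$. For uniqueness, if $M \otimes \eta_1$ and $M \otimes \eta_2$ both lie in $\mathcal O^P$, they are two lifts of $M|_{\mathfrak g_C}$ in $\mathcal O^P$, hence $M \otimes \eta_1 \cong (M \otimes \eta_2) \otimes \psi$ in $\mathcal O^P$ for some smooth character $\psi$ of $L$; such an isomorphism is $\mathfrak g_C$-linear and hence a scalar, and $Z$-equivariance then forces $\eta_1 = \eta_2(\psi|_Z)$, i.e.\ $\eta_1 \eta_2^{-1}$ extends to a smooth character of $L$.

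For part (ii), I would first reduce equimaximality of $M \otimes \eta$ to the single condition $M \otimes \eta \in \mathcal O^Q$. Indeed, by the classification of simple objects of $\mathcal O^{\mathfrak p}$ via highest weights, the standard parabolics $Q' \supseteq P$ with $M|_{\mathfrak g_C} \in \mathcal O^{\mathfrak q'}$ are exactly those with $Q' \subseteq Q$ (which is also why the maximal parabolic $Q$ is well defined); for such $Q'$ one has $\mathcal O^Q \subset \mathcal O^{Q'}$, so $M \otimes \eta \in \mathcal O^Q$ forces $M \otimes \eta \in \mathcal O^{Q'}$, while for every other $Q' \supseteq P$ neither side of the equivalence in the definition of equimaximality can hold. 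Next I set $W := M^N \in \mathcal O^L$, which by Lemma~\ref{lm:N-invts} is $\mathfrak l_C$-simple with $M \cong \underline L(W)$. For a smooth character $\eta$ of $L$ one has $M \otimes \eta \cong \underline L(W \otimes \eta)$ by Lemma~\ref{lem:verma-tensor}, and by Corollary~\ref{cor:N-invts} this lies in $\mathcal O^Q$ if and only if $\underline L_{L_Q}(W \otimes \eta) \in \mathcal O^{L_Q}$; so it suffices to find $\eta$ with $\underline L_{L_Q}(W \otimes \eta) \in \mathcal O^{L_Q}$.

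To produce such an $\eta$, I would note that $\underline L_{L_Q}(W) \cong M^{\mathfrak n_{Q,C}}$ by Corollary~\ref{cor:N-invts}, and that its underlying $\mathfrak l_{Q,C}$-module is finite-dimensional and simple precisely because $M|_{\mathfrak g_C} \in \mathcal O^{\mathfrak q}$; hence it is a simple object of $\mathcal O^{\mathfrak l_Q}$, and Lemma~\ref{lem:semisimple-lift}(ii) applied to the group $L_Q$ provides, after a further finite extension of $C$, a lift $V_1 \in \mathcal O^{L_Q}$. Viewing $V_1$ and $\underline L_{L_Q}(W)$ as two lifts in $\mathcal O^{P \cap L_Q}$ of the same simple $\mathfrak l_{Q,C}$-module, the uniqueness clause of Lemma~\ref{lem:semisimple-lift}(ii) for the parabolic $P \cap L_Q$ of $L_Q$ (whose Levi is $L$) yields a smooth character $\eta$ of $L$ with $\underline L_{L_Q}(W) \otimes \eta \cong V_1$; since $\underline L_{L_Q}(W) \otimes \eta \cong \underline L_{L_Q}(W \otimes \eta)$ by Lemma~\ref{lem:verma-tensor}, it follows that $\underline L_{L_Q}(W \otimes \eta) \in \mathcal O^{L_Q}$, and this $\eta$ works. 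Uniqueness is as in part (i): two equimaximal twists both lie in $\mathcal O^Q$, hence differ by a smooth character $\psi$ of $L_Q$ (Lemma~\ref{lem:semisimple-lift}(ii) for the parabolic $Q$), and $\mathfrak g_C$-linearity together with $L$-equivariance force $\eta_1 \eta_2^{-1}$ to extend to a smooth character of $L_Q$.

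The routine work is checking that each lift produced is simple (so that Lemma~\ref{lem:verma-tensor} applies) and that the various ``differ by a smooth character'' isomorphisms are compatible with twisting and with taking $N$- or $\mathfrak n_{Q,C}$-invariants. The step I expect to require the most care is the double reduction in part (ii) --- first from equimaximality to membership in $\mathcal O^Q$, and then, via Corollary~\ref{cor:N-invts}, to a lifting problem for the finite-dimensional $\mathfrak l_{Q,C}$-module $M^{\mathfrak n_{Q,C}}$ over $L_Q$ --- the crucial point being that Lemma~\ref{lem:semisimple-lift}(ii) applied to the pair $(L_Q, P \cap L_Q)$ delivers a character of $L$, which is exactly the type of character the statement demands.
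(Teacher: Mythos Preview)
Your argument is correct; part (i) is essentially identical to the paper's proof. For part (ii), however, the paper takes a noticeably shorter route. Rather than passing to $L_Q$ via Corollary~\ref{cor:N-invts} and lifting $M^{\mathfrak n_{Q,C}}$ there, the paper simply applies part (i) with $P$ replaced by $Q$: since $M \in \mathcal O^B \cap \mathcal O^{\mathfrak q}$, part (i) gives a smooth character $\eta$ of $Z$ with $M \otimes \eta \in \mathcal O^Q$; but then $M$ and $M \otimes \eta$ both lie in $\mathcal O^P$ (as $\mathcal O^Q \subset \mathcal O^P$), so the uniqueness clause of part (i) forces $\eta$ to extend to a smooth character of $L$. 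This avoids Corollary~\ref{cor:N-invts}, Lemma~\ref{lem:verma-tensor}, and the separate application of Lemma~\ref{lem:semisimple-lift}(ii) to $L_Q$. Your approach has the minor conceptual advantage of making explicit where the character of $L$ comes from (namely, as the discrepancy between two lifts of the finite-dimensional $\mathfrak l_{Q,C}$-module $M^{\mathfrak n_{Q,C}}$), but the paper's bootstrap from (i) to (ii) is cleaner and requires no additional input beyond (i) itself.
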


\begin{remark}\label{rk:need-simply-conn2}
  We cannot drop the condition that $\alggrp{G}^{\der}$ is simply connected in part (i) (see the example with $G = \PGL_3(F)$, $p \equiv 1\pmod 3$ in Remark~\ref{rk:need-simply-conn}).
\end{remark}

\begin{proof}
  (i) Take $M$ a $\mathfrak{g}_{C}$-simple object in $\mathcal{O}^{B} \cap \mathcal{O}^{\mathfrak{p}}$.
  By Lemma~\ref{lem:semisimple-lift}(ii) there exists $W\in \mathcal{O}^{P}$ such that $W|_{\mathfrak{g}_{C}}\simeq M|_{\mathfrak{g}_{C}}$ and moreover there exists a smooth character $\eta$ of $Z$ such that $W\simeq M\otimes \eta$ in $\mathcal{O}^{B}$.
  The uniqueness follows from the uniqueness of Lemma~\ref{lem:semisimple-lift}(ii).  
  
  (ii) By part (i) there exists a smooth character $\eta$ of $Z$ such that $M \otimes \eta \in \mathcal{O}^{Q}$, i.e. $M \otimes \eta$ is equimaximal.
  By the uniqueness part of (i) applied to $M, M \otimes \eta \in \mathcal{O}^{P}$, we see that $\eta$ is in fact a smooth character of $L$.
  The uniqueness assertion follows by the same reasoning.
\end{proof}

We can now prove Corollary~\ref{cor:O-S-fin-length}.

\begin{proof}[Proof of Corollary~\ref{cor:O-S-fin-length}]
  The strong admissibility follows from the proof of \cite[Lemma 2.4(ii)]{OS2}.
  (By Lemma~\ref{lm:loc-an-char} any irreducible finite-dimensional locally analytic representation of $P$ is trivial on $N$.)
  
  For the finite length claim, we first make a reduction to the case where $\alggrp{G}^{\der}$ is simply connected.
  We take a $z$-extension $\wt {\alggrp G} \onto \alggrp G$, so $\wt {\alggrp G}^{\lowprime[\der]}$ is simply connected and $\wt G \onto G$ on $F$-points.
  By pullback to $\wt G$ we obtain $\wt P = \wt L \wt N$, and inflation gives $\wt M \in \mathcal O^{\wt P}$ and $\wt\pi$ finite-length smooth of $\wt L$.
  The construction of Orlik--Strauch is compatible with pullback, i.e.\ the inflation of $\mathcal F_P^G(M,\pi)$ to $\wt G$ is naturally isomorphic to $\mathcal F_{\wt P}^{\wt G}(\wt M,\wt \pi)$.
  Thus if $\mathcal F_{\wt P}^{\wt G}(\wt M,\wt \pi)$ is topologically of finite length, so is $\mathcal F_P^G(M,\pi)$.

  Suppose $\alggrp{G}^{\der}$ is simply connected.
  By exactness of $\mathcal F_P^G$, we may assume that $M$ is simple.
  It suffices to prove the result after a finite scalar extension, and we allow such extensions in the proof without further comment.
  In particular, we may assume that $M$ is absolutely simple.
  By Proposition~\ref{prop:simple-O-P}(iii) we can write $M = M_0 \otimes \tau$, where $M_0$ is $\mathfrak g_C$-simple and $\tau$ is a finite-dimensional smooth $L$-representation.
  Using Proposition~\ref{prop:twist-to-O-P}(ii) we may moreover twist by a smooth character of $L$ and assume that $M_0$ is equimaximal.
  By Proposition~\ref{prop:OS-ind}(ii), $\mathcal F_P^G(M,\pi) \cong \mathcal F_P^G(M_0,\pi \otimes \tau')$.
  Moreover, $\pi \otimes \tau'$ is of finite length, since it is admissible and finitely generated.
  Hence we may assume that $M$ is $\mathfrak g_C$-simple and equimaximal.
  If $Q$ denotes the maximal parabolic of $M$, then $\mathcal F_P^G(M,\pi) \cong \mathcal F_Q^G(M,(\Ind_{P \cap L_{Q}}^{L_{Q}}\pi)^{\sm})$, and $(\Ind_{P \cap L_{Q}}^{L_{Q}}\pi)^{\sm}$ is still of finite length.
  Finally, $\mathcal F_Q^G(M,(\Ind_{P \cap L_{Q}}^{L_{Q}}\pi)^{\sm})$ is of finite length by exactness of $\mathcal F_Q^G$ in the second argument and Theorem~\ref{thm:OS-main-in-main}(iii).
\end{proof}

\begin{lem}\label{lem:subobject-verma}
      Suppose that $W \in \mathcal{O}^{L}$ is $\mathfrak{l}_{C}$-simple. Then every subobject of $\underline{M}(W)$ in $\mathcal{O}^{\mathfrak{p}}$ is $P$-stable, i.e.\ lies in $\mathcal{O}^{P}$.
  In particular, every Jordan--H\"older factor of $\underline{M}(W)$ in $\mathcal{O}^{P}$ is $\mathfrak{g}_{C}$-simple.
\end{lem}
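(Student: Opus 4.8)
The plan is to prove the displayed assertion — that every subobject of $\underline{M}(W)$ in $\mathcal{O}^{\mathfrak p}$ is $P$-stable — and then read off the ``in particular'' formally. For the latter: if $S = M_{1}/M_{2}$ is a Jordan--H\"older factor of $\underline{M}(W)$ in $\mathcal{O}^{P}$ (so $M_{2}\subset M_{1}\subseteq\underline{M}(W)$ in $\mathcal{O}^{P}$ and $S$ simple in $\mathcal{O}^{P}$), then any $\mathfrak g_{C}$-submodule of $S$ has the form $M_{3}/M_{2}$ for some $\mathfrak g_{C}$-submodule $M_{2}\subseteq M_{3}\subseteq M_{1}$ of $\underline{M}(W)$; by the main assertion $M_{3}\in\mathcal{O}^{P}$, so $M_{3}/M_{2}$ is a subobject of $S$ in $\mathcal{O}^{P}$, hence $0$ or $S$, and $S$ is $\mathfrak g_{C}$-simple.

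For the main assertion, let $M'\subseteq\underline{M}(W)$ be a subobject in $\mathcal{O}^{\mathfrak p}$. The $N$-stability of $M'$ is immediate: $\mathfrak n_{C}$ acts locally nilpotently on $\underline{M}(W)$ (a standard property of objects of $\mathcal{O}^{\mathfrak p}$), hence on the $\mathfrak g_{C}$-stable subspace $M'$, so integrating over $N$ via $nv=\sum_{i}\frac{(\log n)^{i}}{i!}v$ — valid for all $n\in N$ since the $P$-action on $\underline{M}(W)\in\mathcal{O}^{P}$ is locally finite, cf.\ \eqref{eq:log-u} and the proof of Lemma~\ref{lem:cop} — shows that $M'$ is $N$-stable.

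The real point is the $L$-stability. I would first reduce to the case that $\alggrp{G}^{\der}$ is simply connected, exactly as in the proof of Corollary~\ref{cor:tau-trivial}: passing to a $z$-extension $\wt{\alggrp{G}}\onto\alggrp{G}$, the inflation $\wt{M'}$ of $M'$ is a $\wt{\mathfrak g}_{C}$-submodule of the inflation $\underline{M}_{\wt{G}}(\wt{W})$ of $\underline{M}(W)$ (the central torus $\ker(\wt{\mathfrak g}_{C}\to\mathfrak g_{C})$ acting trivially throughout), and $\wt{P}$-stability of $\wt{M'}$ descends to $P$-stability of $M'$. So assume $\alggrp{G}^{\der}$, hence $\alggrp{L}^{\der}$, simply connected; by Lemma~\ref{lem:semisimple-lift}(iii) applied to $L$ (enlarging $C$ if needed) write $W\cong W_{\mathrm{alg}}\otimes\psi$ as $L$-representations, with $W_{\mathrm{alg}}$ algebraic and $\psi$ a locally analytic character of $L$. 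By PBW, $\underline{M}(W)\cong U(\overline{\mathfrak n}_{C})\otimes_{C}W_{\mathrm{alg}}\otimes_{C}\psi$ as $L$-representations (where $\overline{\mathfrak n}_{C}$ is the Lie algebra of $\overline{\alggrp{N}}$), with $L$ acting by $\Ad$ on $U(\overline{\mathfrak n}_{C})$, algebraically on $W_{\mathrm{alg}}$, and by $\psi$ on the line. Since $M'$ is $\mathfrak l_{C}$-stable, and the two $\mathfrak l_{C}$-actions $\Ad\otimes(\mathrm{alg})$ and $\Ad\otimes(\mathrm{alg})\otimes\psi$ on this space differ only by the scalar operators $d\psi(X)\cdot\mathrm{id}$, the subspace $M'$, viewed in $U(\overline{\mathfrak n}_{C})\otimes_{C}W_{\mathrm{alg}}$, is stable under the $\Ad\otimes(\mathrm{alg})$-action of $\mathfrak l_{C}$. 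Now $U(\overline{\mathfrak n}_{C})\otimes_{C}W_{\mathrm{alg}}$ is a union of finite-dimensional algebraic representations of the split connected group $\alggrp{L}_{C}$ (the $\Ad$-action preserves the PBW filtration), so in characteristic $0$ any $\mathfrak l_{C}$-stable subspace is $\alggrp{L}_{C}(C)$-stable; hence $M'$ is stable under $L\hookrightarrow\alggrp{L}_{C}(C)$ for the $\Ad\otimes(\mathrm{alg})$-action, and therefore, scaling by the character $\psi$, for the original $L$-action. Combined with $N$-stability, $M'$ is $P$-stable.

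The only genuine difficulty is the $L$-stability: unlike $N$, the group $L$ is not generated by arbitrarily small neighbourhoods of the identity, so one cannot integrate, and the argument instead rests on the observation that after the $z$-extension and after untwisting the character $\psi$, the module $\underline{M}(W)$ becomes locally algebraic for the split group $\alggrp{L}_{C}$, for which Lie-algebra stability and group stability coincide; everything else is routine bookkeeping.
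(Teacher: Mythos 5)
Your proof is correct and follows essentially the same route as the paper: reduce to $\alggrp G^{\der}$ simply connected via a $z$-extension, write $W \cong W_{\alg}\otimes\psi$ by Lemma~\ref{lem:semisimple-lift}, get $N$-stability by integrating the locally nilpotent $\mathfrak n_{C}$-action, and get $L$-stability by untwisting $\psi$ and observing that $U(\overline{\mathfrak n}_{C})\otimes W_{\alg}$ is a locally finite algebraic $L$-representation, so $\mathfrak l_{C}$-submodules are automatically $L$-stable. You spell out somewhat more explicitly (via $\alggrp L_{C}(C)$-stability of Lie-stable subspaces of rational representations in characteristic $0$) what the paper calls ``clear,'' but the argument is the same.
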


\begin{proof}
  Suppose first that $\alggrp{G}^{\der}$ is simply connected.
  It suffices to check this after a finite scalar extension. So by Lemma~\ref{lem:semisimple-lift} we may write $W = W_\alg \otimes \psi$
  with $W_\alg$ algebraic and $\psi$ a locally analytic character of $L$. By integration, every subobject of $\underline{M}(W)$ in $\mathcal{O}^{\mathfrak{p}}$ is $N$-stable.
  As an $L$-representation we have $\underline{M}(W) = U(\mathfrak{g}_{C}) \otimes_{U(\mathfrak{p}_{C})} W \cong U(\mathfrak{u}_{P,C}^{-}) \otimes W_\alg \otimes \psi$, and by twisting it suffices
  to check $L$-stability when $\psi = 1$. As $U(\mathfrak{u}_{P,C}^{-}) \otimes W_\alg$ is a (locally finite) algebraic representation of $L$,
  it is clear that any $\mathfrak{l}_{C}$-submodule is $L$-stable.
  For general $\alggrp G$, take a $z$-extension $\varphi : \wt {\alggrp G} \onto \alggrp G$, so $\wt {\alggrp G}^{\lowprime[\der]}$ is simply connected and $\wt G \onto G$ on $F$-points.
  Let $\wt{\alggrp P} := \varphi^{-1}(\alggrp P)$, $\wt{\alggrp L} := \varphi^{-1}(\alggrp L)$ and let $\wt W \in \mathcal O^{\wt L}$ be obtained by inflation (cf.\ Remark~\ref{rk:cat-O-isogeny}).
  Then $\underline M_{\wt G}(\wt W) \in \mathcal O^{\wt P}$ is obtained from $\underline M_{G}(W) \in \mathcal O^{P}$ by inflation, and the claim follows from the previous case.
\end{proof}

\begin{lem}\label{lem:simple-in-O-Q}
  Suppose that $W \in \mathcal{O}^{L}$ is absolutely simple and that $Q = L_Q N_Q$ is any parabolic subgroup containing $P$.
  \begin{enumerate}
  \item 
    Suppose that $W \cong W_0 \otimes \tau$ in $\mathcal O^L$, where $W_0$ is $\mathfrak l_C$-simple and $\tau$ is smooth.     Then $\underline L(W) \in {\mathcal O}^Q$ if and only if $\underline L(W_0) \in {\mathcal O}^{Q}$ and $\tau$ extends to a smooth representation of $L_Q$.
  \item   
    Suppose that $\alggrp{G}^{\der}$ is simply connected. 
    Then $\underline L(W) \in {\mathcal O}^Q$ if and only if (after perhaps replacing $C$ by a finite extension) $W \cong W_\alg \otimes \psi|_L \otimes \wt\tau|_L$ for some algebraic representation $W_\alg$ of $L$, a locally analytic character $\psi$ of $L_Q$, and a smooth representation $\wt\tau$ of $L_Q$ such that moreover $L(W_\alg) \in {\mathcal O}^{\mathfrak{q}}$.
      \end{enumerate}
\end{lem}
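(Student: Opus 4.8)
The plan is to prove (ii) first — it uses only the structure theory of \S\ref{sec:some-decomp}, not (i) — and then deduce (i) from it together with the uniqueness results of that section. Throughout I would reduce to the case $Q=G$ by means of Corollary~\ref{cor:N-invts}: that corollary gives $\underline L(W)\in\mathcal O^Q$ iff $\underline L_{L_Q}(W)\in\mathcal O^{L_Q}$, and since $\underline L_{L_Q}(W_0)$ is $\mathfrak l_{Q,C}$-simple (Lemma~\ref{lem:subobject-verma}) a further application of Lemma~\ref{lem:verma-tensor} inside $L_Q$ identifies $\underline L_{L_Q}(W)$ with $\underline L_{L_Q}(W_0)\otimes\tau$; likewise $\underline L(W_0)\in\mathcal O^Q$ iff $\underline L_{L_Q}(W_0)\in\mathcal O^{L_Q}$, and for an algebraic $W_\alg$ of $L$ one checks that $L(W_\alg)\in\mathcal O^{\mathfrak q}$ is equivalent to $\mathfrak l_{Q,C}$-dominance of its highest weight (using Lemma~\ref{lem:alg-equimax}), i.e.\ to $L_{L_Q}(W_\alg)\in\mathcal O^{\mathfrak l_Q}$. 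Since $\alggrp L_Q^{\der}$ is simply connected when $\alggrp G^{\der}$ is, this reduces everything to $Q=G$, which I assume from now on.

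For (ii) with $Q=G$: in the direction $(\Leftarrow)$, given $W\cong W_\alg\otimes\psi|_L\otimes\widetilde\tau|_L$ with $W_\alg$ algebraic, $\psi$ a locally analytic character of $G$, and $\widetilde\tau$ a finite-dimensional smooth representation of $G$ (necessarily irreducible, as $W$ is absolutely simple, cf.\ Proposition~\ref{prop:simple-O-P}), I would use that $\widetilde\tau$ is trivial on $N$ (Lemma~\ref{lm:loc-an-char}(ii)) and Lemma~\ref{lem:verma-tensor} (twice) to get $\underline L(W)\cong\underline L(W_\alg)\otimes\psi|_L\otimes\widetilde\tau|_L$ in $\mathcal O^P$; since $\underline L(W_\alg)$ is equimaximal (Lemma~\ref{lem:alg-equimax}) and $\underline L(W_\alg)|_{\mathfrak g_C}=L(W_\alg)$ is finite-dimensional, $\underline L(W_\alg)\in\mathcal O^G$, hence $\underline L(W_\alg)\otimes\psi\otimes\widetilde\tau\in\mathcal O^G$ restricts to $\underline L(W)$, so $\underline L(W)\in\mathcal O^G$. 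Conversely, if $\underline L(W)\in\mathcal O^G$, then since $\alggrp G^{\der}$ is simply connected I would apply Proposition~\ref{prop:simple-O-P}(iii) and Lemma~\ref{lem:semisimple-lift}(iii) (after enlarging $C$) to write $\underline L(W)\cong V_\alg\otimes\psi\otimes\widetilde\tau$ in $\mathcal O^G$ with $V_\alg$ an algebraic $G$-representation, and then take $N$-invariants: $W\cong V_\alg^N\otimes\psi|_L\otimes\widetilde\tau|_L$ with $V_\alg^N$ algebraic of $L$ and $L(V_\alg^N)=V_\alg|_{\mathfrak g_C}$ finite-dimensional. This proves (ii).

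For (i): the implication $(\Leftarrow)$ is immediate — if $\underline L(W_0)\in\mathcal O^Q$ and $\tau$ extends to a smooth $\widetilde\tau$ of $L_Q$ (trivial on $N_Q$ by Lemma~\ref{lm:loc-an-char}(ii)), then $\underline L(W_0)\otimes\widetilde\tau\in\mathcal O^Q$ restricts to $\underline L(W_0)\otimes\tau\cong\underline L(W)$ in $\mathcal O^P$ (Lemma~\ref{lem:verma-tensor}), so $\underline L(W)\in\mathcal O^Q$. For $(\Rightarrow)$ I would first reduce to $\alggrp G^{\der}$ simply connected by pulling back along a $z$-extension $\widetilde{\alggrp G}\onto\alggrp G$ and inflating, as in the proof of Corollary~\ref{cor:O-S-fin-length}: the conditions $\underline L(W)\in\mathcal O^Q$ and $\underline L(W_0)\in\mathcal O^Q$ are clearly invariant, and so is "$\tau$ extends to $L_Q$" — the nonobvious direction being that any extension of the inflated $\widetilde\tau$ is irreducible and hence trivial on the central kernel of $\widetilde L_Q\onto L_Q$, since its restriction to $\widetilde L$ is. With $\alggrp G^{\der}$ simply connected and $Q=G$, I would then apply part (ii) to $\underline L(W)\in\mathcal O^G$ to obtain a second decomposition of $W$, compare it with the given $W\cong W_0\otimes\tau$ via the uniqueness in Proposition~\ref{prop:simple-O-P}(ii) to produce a smooth character $\mu$ of $L$ with $W_0\cong W_\alg\otimes(\psi|_L\cdot\mu)$ and $\tau\cong\widetilde\tau|_L\otimes\mu^{-1}$, observe that both "$\underline L(W_0)\in\mathcal O^G$" and "$\tau$ extends to $G$" amount to "$\mu$ extends to a character of $G$", and finally deduce this from the equimaximality of $\underline L(W_0)$ together with $\underline L(W)\in\mathcal O^G$.

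The step I expect to be the main obstacle is precisely this last one in $(\Rightarrow)$ of (i): extracting the correction character $\mu$ and showing that it extends to $L_Q$. It is here that the equimaximality of $\underline L(W_0)$ (built into the standard decomposition of \S\ref{sec:criterion}) enters essentially — without it one only learns that $\underline L(W_0)\otimes\mu\in\mathcal O^Q$ for a not-necessarily-extending $\mu$. The $z$-extension reduction, the verification that one-dimensional characters extend after inflation, and the bookkeeping identifying $L(W_\alg)\in\mathcal O^{\mathfrak q}$ with $\mathfrak l_{Q,C}$-dominance in the reduction to $Q=G$ are routine, but should be carried out with some care.
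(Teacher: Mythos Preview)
Your overall strategy reverses the paper's: the paper proves (i) directly first (via $N_Q$-invariants and the construction $\widetilde\tau := \Hom_{\mathfrak g_C}(\underline L(W_0),\underline L(W))$, which carries a natural $L_Q$-action once $\underline L(W_0)\in\mathcal O^Q$ is established), and then deduces (ii) from (i) together with Lemma~\ref{lem:semisimple-lift}\ref{lem:semisimple-lift-3}; you do the opposite. Your argument for (ii) is correct and essentially parallel to the paper's, and the reductions to $Q=G$ via Corollary~\ref{cor:N-invts} are fine.

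The genuine gap is in $(\Rightarrow)$ of (i). You correctly isolate the crux: after comparing decompositions one must show the correction character $\mu$ extends to $L_Q$, and you propose to get this from equimaximality of $\underline L(W_0)$. But equimaximality is \emph{not} a hypothesis of (i) --- the lemma only assumes $W_0$ is $\mathfrak l_C$-simple --- and it cannot be deduced from the remaining hypotheses. Indeed, the forward implication in (i) is false as stated: take $\alggrp G$ semisimple simply connected (say $\SL_2$), $P=B$, $Q=G$, let $\nu$ be any nontrivial smooth character of $Z$, and set $W_0=\nu$, $\tau=\nu^{-1}$, $W=1$. Then $\underline L(W)=1\in\mathcal O^G$, yet $\underline L(W_0)$ is the one-dimensional $B$-representation $\nu$, which is not in $\mathcal O^G$ since every locally analytic character of $G=G'$ is trivial (Lemma~\ref{lm:loc-an-char}(i)), and $\tau=\nu^{-1}$ does not extend either. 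The paper's own proof contains the same gap, at the step ``$\underline L(W_0)^{N_Q}$ is finite-dimensional, hence lies in $\mathcal O^{L_Q}$'': in this example $\underline L(W_0)^{N_G}=\nu$ is one-dimensional but does not lie in $\mathcal O^G$. In every application of the lemma in the paper the decomposition is the one of \S\ref{subsec:A criterion}, where $\underline L(W_0)$ is equimaximal by construction; with that added hypothesis both the paper's argument and yours go through. So your instinct that equimaximality ``enters essentially'' is exactly right --- the resolution is to add it as a hypothesis in (i), not to attempt to deduce it.
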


In part (ii) we could alternatively demand that $W \cong W_\alg \otimes \psi|_L \otimes \tau$, where $\tau$ is a smooth representation $L$ that is trivial on $L \cap L_Q'$ (after perhaps replacing $C$ by a finite extension).
(If $\wt\tau$ is as above, then it is trivial on $L_Q'$ by Lemma~\ref{lm:loc-an-char}.
Conversely, $\tau$ is a representation of $L/(L \cap L_Q') \cong L_Q/L_Q'$, so extends to $L_Q$.)
Also note that the property $L(W_\alg) \in {\mathcal O}^{\mathfrak{q}}$ is equivalent to the lowest weight of $W_\alg$ in $X^*(\alggrp T')$ being antidominant relative to the Levi $\alggrp L_{\alggrp Q}$.

\begin{proof}
  (i) Suppose that $\underline L(W) \in {\mathcal O}^Q$ and $W \cong W_0 \otimes \tau$ as in the statement of the lemma.
  Then $\underline L(W) \cong \underline L(W_0) \otimes \tau$ in $\mathcal O^P$ by Lemma~\ref{lem:verma-tensor}.
  Moreover $\underline L(W)^{N_Q} \in \mathcal O^{L_Q}$ is finite-dimensional and by Lemma~\ref{lm:loc-an-char} we have $\underline L(W)^{N_Q} \cong \underline L(W_0)^{N_Q} \otimes \tau$.
  Hence $\underline L(W_0)^{N_Q}$ is finite-dimensional, hence lies in $\mathcal O^{L_Q}$, so $\underline L(W_0) \in \mathcal O^Q$ as it is a quotient of $\underline M(\underline L(W_0)^{N_Q}) \in \mathcal O^Q$.
  Let $\wt\tau := \Hom_{\mathfrak g_C}(\underline L(W_0),\underline L(W))$.
  Then $\wt\tau$ has a smooth action of $L_Q$ and $\wt\tau|_L \cong \tau$ by the isomorphism above.

  Conversely, let $\wt\tau$ be the (unique) smooth extension of $\tau$ to $L_Q$.
  Then $\underline L(W) \cong \underline L(W_0) \otimes \wt\tau \in \mathcal O^Q$ because they are isomorphic in $\mathcal O^P$.

  (ii) If $\alggrp{G}^{\der}$ is simply connected, then we can always decompose $W \cong W_0 \otimes \tau$ as in (i) by Proposition~\ref{prop:simple-O-P} (after perhaps extending scalars).
  If $\underline L(W) \in {\mathcal O}^Q$, then by (i) we deduce that $\tau$ extends to a smooth representation $\wt\tau$ of $L_Q$ and that $\underline L(W_0) \in {\mathcal O}^{Q}$.
  By Lemma~\ref{lem:semisimple-lift}\ref{lem:semisimple-lift-3} we can write $\underline L(W_0)^{N_Q} \cong M_\alg \otimes \psi$ in $\mathcal O^{L_Q}$ with $M_\alg$ algebraic and $\psi$ a locally analytic character of $L_Q$ (after perhaps extending scalars).
  Taking $N \cap L_Q$-invariants and using Lemma~\ref{lm:loc-an-char} we get that $W_0 \cong M_\alg^{N \cap L_Q} \otimes \psi|_L$ with $M_\alg^{N \cap L_Q}$ algebraic.

  Conversely, if $W \cong W_\alg \otimes \psi|_L \otimes \wt\tau|_L$ as in the statement of the lemma, then $\underline L(W_\alg)$ is equimaximal as $W_\alg$ is algebraic (Lemma~\ref{lem:alg-equimax}), so $\underline L(W_\alg) \in \mathcal O^Q$.
  Let $W_1 := W_\alg \otimes \psi|_L$, which is an $\mathfrak l_C$-simple object of $\mathcal O^L$, and $\underline L(W_1) \cong \underline L(W_\alg) \otimes \psi|_L$ in $\mathcal O^P$ by Lemmas~\ref{lem:verma-tensor}, \ref{lm:loc-an-char} and shows that $\underline L(W_1) \cong \underline L(W_\alg) \otimes \psi\in \mathcal O^Q$.
  Then $W \cong W_1 \otimes \wt\tau|_L$ lies in $\mathcal O^Q$ by (i).
\end{proof}

\subsection{The socle of locally analytic parabolic induction}
\label{sec:socle-locally-analyt}

The following proposition generalizes \cite[Theorem 3.5]{orlik-schraen} and \cite[Proposition 2.4]{socle1}, which assumed $M \in \mathcal{O}_{\alg}^P$ and $\alggrp{G}$ split.

\begin{prop}\label{prop:N-invariants}
  Suppose that $P = LN$ and that $M \in \mathcal{O}^{P}$ is $\mathfrak{g}_{C}$-simple and equimaximal with maximal parabolic $P$.
  Let $\pi$ be an admissible smooth representation of $L$. Then
  \begin{equation}\label{eq:10}
    H^0(N,\mathcal{F}_P^G(M,\pi)') \cong M^N \otimes \pi'
  \end{equation}
  as representations of $L$ (on nuclear Fr\'echet spaces).
\end{prop}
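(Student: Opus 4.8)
The plan is to run the argument of \cite[Theorem~3.5]{orlik-schraen} (see also \cite[Proposition~2.4]{socle1}), which treats split $\alggrp G$ and algebraic $M$, in the present generality. First I would make the standard reductions. Replacing $\alggrp G$ by $\Res_{F/\Q_p}\alggrp G$ we may assume $F=\Q_p$, so that $\mathcal F_P^G$ is available and $\pi$ is locally analytic. Passing to a $z$-extension $\wt{\alggrp G}\onto\alggrp G$ — under which $\mathcal F_P^G$, the passage to $N$-invariants of the dual, and the right-hand side of \eqref{eq:10} are all compatible with inflation, exactly as in the proof of Corollary~\ref{cor:O-S-fin-length} — we may assume $\alggrp G^{\der}$ is simply connected. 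By Proposition~\ref{prop:simple-O-P}(iii) and Lemma~\ref{lem:semisimple-lift}\ref{lem:semisimple-lift-3} (applied to $L$) we may then write $M\cong\underline{L}(W_{\alg})\otimes\psi\otimes\tau$ in $\mathcal O^P$, where $W_{\alg}$ is an algebraic $\mathfrak l_C$-simple object of $\mathcal O^L$, $\psi$ is a locally analytic character of $L$, and $\tau$ is a finite-dimensional smooth representation of $L$; here one uses Lemmas~\ref{lem:verma-tensor} and \ref{lm:loc-an-char}, and then $M^N\cong W_{\alg}^N\otimes\psi\otimes\tau$.

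Assume first $P=G$; then $N=\{1\}$, $\mathcal F_G^G(M,\pi)\cong M'\otimes\pi$ by Proposition~\ref{prop:OS-ind}(i), and \eqref{eq:10} is immediate. So suppose $P\subsetneq G$. The module $\underline L(W_{\alg})\in\mathcal O^P_{\alg}$ admits a finite resolution $0\to Q_d\to\cdots\to Q_0\to\underline L(W_{\alg})\to 0$ by finite direct sums $Q_i=\bigoplus_j\underline{M}(W_{ij})$ of generalized Verma modules with $W_{ij}\in\mathcal O^L_{\alg}$ $\mathfrak l_C$-simple (using that $\mathcal O^{\mathfrak p}_{\alg}$ has finite global dimension and projectives with generalized-Verma flags lifting to $\mathcal O^P_{\alg}$; when $\underline L(W_{\alg})$ is finite-dimensional one may instead use a Lepowsky--BGG resolution). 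Tensoring with $\psi$ — which is exact and, by Lemmas~\ref{lem:verma-tensor} and \ref{lm:loc-an-char}, sends $\underline{M}(W_{ij})$ to $\underline{M}(W_{ij}\otimes\psi)$ — and then applying the exact contravariant functor $\mathcal F_P^G(-,\pi\otimes\tau')$ followed by the exact strong-duality functor, I would obtain a finite complex of $D(G)$-modules quasi-isomorphic to $\mathcal F_P^G(M,\pi)'$ whose terms are direct sums of duals $\bigl((\Ind_P^G (W_{ij}\otimes\psi)'\otimes\pi\otimes\tau')^{\an}\bigr)'$, by Propositions~\ref{prop:OS-ind}(i), (ii). A hypercohomology spectral sequence then computes $H^\bullet(N,\mathcal F_P^G(M,\pi)')$ out of the groups $H^q\bigl(N,((\Ind_P^G\rho)^{\an})'\bigr)$ for the various $\rho$ occurring, so everything comes down to this base case.

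For the base case one analyses $H^\bullet\bigl(N,((\Ind_P^G\rho)^{\an})'\bigr)$ (with $\rho$ a finite-length locally analytic representation of $P$, inflated from $L$) via the Bruhat stratification of $P\backslash G$: localising $((\Ind_P^G\rho)^{\an})'$ along the stratum filtration, the open cell $P\backslash P\overline N$ carries a non-compact $N$-action and contributes nothing to $H^0$, while the unique closed $N$-fixed point $Pe$ contributes $\rho'$; more generally the cell of length $\ell$ should contribute in cohomological degree $\ell$. Feeding this — together with the combinatorics of the weights of the $\underline{M}(W_{ij})$ and of the differentials in the resolution — into the spectral sequence, the surviving terms in total degree $0$ should collapse to $\underline L(W_{\alg})^N\otimes\psi\otimes\tau\otimes\pi'=M^N\otimes\pi'$. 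Here the hypothesis that $M$ is equimaximal with maximal parabolic $P$ is exactly what guarantees that no contribution coming from a larger parabolic (equivalently, from a higher Bruhat cell of $P\backslash G$) survives in degree $0$. The resulting identification is visibly $L$-equivariant, and the spaces involved are nuclear Fr\'echet because $\pi'$ is.

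The hard part will be the base-case computation together with the spectral-sequence bookkeeping: one must establish the vanishing of $H^q\bigl(N,((\Ind_P^G\rho)^{\an})'\bigr)$ outside the expected range, identify its associated graded precisely, and check that equimaximality forces the spectral sequence to degenerate so as to leave exactly $M^N\otimes\pi'$ in degree $0$ (this is the only place where the maximality of $P$ is genuinely used). By contrast, the reductions — to $F=\Q_p$, to simply connected $\alggrp G^{\der}$, and to algebraic $W_{\alg}$ — and the comparison with the split algebraic case of \cite{orlik-schraen} are routine; the one mild subtlety is the absence of a projection formula for the twist by the locally analytic character $\psi$, which is circumvented using $\underline{M}(W_{\alg}\otimes\psi)\cong\underline{M}(W_{\alg})\otimes\psi$ and the fact that the whole construction is natural in $\psi$.
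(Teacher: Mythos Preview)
Your approach is substantively different from the paper's, and it has a genuine gap.

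The paper does \emph{not} reduce to algebraic $M$, does not resolve by Verma modules, and uses no spectral sequence. It works directly with the given $\mathfrak g_C$-simple equimaximal $M$. Using the machinery of Appendix~\ref{app:orlik-strauch} one decomposes
\[
\mathcal F_P^G(M)'\;\cong\;\bigoplus_{w\in W^I}\delta_{\dot w}\,D(\dot w^{-1}\mathcal I\dot w\cdot P_0)\otimes_{D(\mathfrak g,P_0)}M
\]
and passes to the completions $\mathcal M_r$, $\mathfrak m_r$ at radius $r$ (Lemma~\ref{lm:m_r}, Lemma~\ref{lm:feaux}). For $w\neq 1$ one finds $\beta\in\Phi^+\setminus\Phi^+_I$ with $w^{-1}\beta\in\Phi^-\setminus\Phi^-_I$ and shows, arguing as in Theorem~\ref{thm:OS-4.7}, that equimaximality forces $H^0(\mathfrak n_C,\delta_{\dot w}\mathcal M_r)=0$. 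For $w=1$ one embeds $\mathfrak m_r$ into the formal completion $\widehat M=\prod_\lambda M_\lambda$, integrates the $\overline{\mathfrak n}$-action to $\overline N$, and uses the simplicity of $\mathfrak m_r$ (Theorem~\ref{thm:OS-4.7}) to identify $H^0(\mathfrak n_C,\mathcal M_r)=M^{\mathfrak n_C}$. General $\pi$ is then deduced from $\pi=1$ via a locally analytic section $s\colon G/P\to G$ and the $\mathfrak g_C$-linear isomorphism $\mathcal F_P^G(M,\pi)'\cong\mathcal F_P^G(M)'\,\widehat\otimes\,\pi'$, after which one checks that the resulting identification~\eqref{eq:10} is $L$-equivariant by computing the dual map.

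Your plan has two real problems. First, the input you cite --- \cite[Theorem~3.5]{orlik-schraen} and \cite[Proposition~2.4]{socle1} --- is a statement about \emph{simple} equimaximal $M\in\mathcal O^P_{\alg}$, proved (as here) by a direct distribution-algebra computation on each Bruhat piece; it is not a computation of $H^q(N,((\Ind_P^G\rho)^{\an})')$ for Verma-type $\rho$. Your ``base case'' is therefore not in the literature even for split $G$, and for non-split $G$ establishing it would require exactly the appendix machinery that the paper's direct argument already uses. Second, even granting the base case, the Verma modules $\underline M(W_{ij})$ in a resolution of $\underline L(W_{\alg})$ are \emph{not} equimaximal, so contributions from non-identity cells populate the $E_1$-page; you offer no mechanism by which equimaximality of $M$ propagates through the differentials to kill them in total degree $0$. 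Your reductions (to $F=\Q_p$, simply-connected derived group, algebraic highest weight) are correct but orthogonal to the actual difficulty, which is the non-split group --- and that is precisely what Theorem~\ref{thm:OS-4.7} and the surrounding lemmas were built to handle.
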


\begin{proof}
  The proof follows the same lines as \cite[Theorem~3.5]{orlik-schraen} and \cite[Proposition~2.4]{socle1}.
  We let $\o P = L\o N$ denote the opposite parabolic subgroup and $\o{\mathfrak n}$ the Lie algebra of $\o N$.

    We fix some notation.
  Recall that we fixed a special parahoric subgroup $K\subset G$.
  Let $P_0 := P \cap K$.
      For $X$ a locally $F$-analytic manifold, let $D(X)$ be the locally convex space of locally $F$-analytic distributions on $X$ with coefficients in $C$.
  If $H$ is a locally $F$-analytic group, then $D(H)$ is a (separately continuous) locally convex algebra.
  For $h\in H$, we have $\delta_{h}\in D(H)$ defined by $f\mapsto f(h)$.
    If $H$ is a closed subgroup of $G$,   let $D(\mathfrak{g},H)$ be the subalgebra of $D(G)$ generated by $U(\mathfrak{g}_{C})$ and $D(H)$.
  Let $\Phi$ be the root system for $(\alggrp G,\alggrp S)$, $\Phi^{+}$ the set of roots in $\alggrp B$, $\Delta$ the set of simple roots and $W$ the Weyl group.
  
  First assume $\pi = 1$.
  Assume $r^{p^m} \in p^{\Q} \cap (p^{-1},p^{-1/\kappa(p-1)})$ for some $m \ge 0$ and $r$ sufficiently close to $1$
  (where $\kappa \in \{1,2\}$ is as in \S\ref{app:orlik-strauch}).
  Let $I \subset \Delta$ be the set of simple roots corresponding to $P$ and $W_{I}$ the subset of $W$ generated by reflections for elements in $I$.
  For each $w\in W$ we fix a representative $\dot{w}\in K$.
  Let $\mathcal{I}$ be an Iwahori subgroup fixing a facet of the apartment of $\alggrp S$ having vertex $x_0$.
  Then
  \begin{equation*}
    \mathcal{F}_P^G(M)' \cong \bigoplus_{w \in W^I} \delta_{\dot w} D(\dot{w}^{-1} \mathcal{I} \dot{w} \cdot P_0) \otimes_{D(\mathfrak{g},P_0)} M,
  \end{equation*}
  where $W^I$ denotes the Kostant representatives of $W/W_I$.
  Fix $w$ and let $\hat H := \dot{w}^{-1} \mathcal{I} \dot{w}$, $\mathcal{M} := D(\hat HP_0) \otimes_{D(\mathfrak{g},P_0)} M$ (coadmissible).
  We now use the notation and definitions of Appendix~\ref{app:orlik-strauch}, in particular defining $G_0$ and an open normal $L$-uniform subgroup $H \lhd G_0$ that has an Iwahori decomposition $H = H^- H^+$ with respect to $\o N \times P$. 
      We may assume that $H$ is contained in $\hat H$.
  The group $H$ is used to define norms $q_r$ (cf.\ \cite[2.2.6]{OS}) on $D(\widetilde{H})$ for any compact subgroup $\widetilde{H}$ of $G$ that contains $H$
  (and likewise $H^{\pm}$ is used to define norms on $D(\widetilde{H}^\pm)$ for any compact subgroup $\widetilde{H}^\pm$ of $\o N$, resp.\ $P$, that contains $H^\pm$).
  We also have an Iwahori decomposition $\hat H = \hat H^- \hat H^+$ with respect to $\o N \times P$.
  Let $D_{r}(H)$ be the completion of $D(H)$ with respect to $q_{r}$.
  Let $U_{r}(\mathfrak{g})$ denote the closure of $U(\mathfrak{g}_{C})$ in $D_r(H)$ (or equivalently in $D_r(\hat H)$) and $D_r(\mathfrak{g},P_0)$ the subring of $D_r(K)$ generated by $U_{r}(\mathfrak{g})$ and $D_r(P_0)$.
  Let $\mathcal{M}_{r} := D_r(\hat HP_0) \otimes_{D(\hat HP_0)} \mathcal{M} \cong D_r(\hat H) \otimes_{D(\mathfrak{g},\hat H \cap P_0)} M$ (so $\mathcal M \cong \varprojlim_r \mathcal M_r$ by coadmissibility) and $\mathfrak{m}_{r} := U_{r}(\mathfrak{g}) M \subset \mathcal{M}_{r}$.
  As in the proof of \cite[Theorem 4.5]{OS3} the module $\mathfrak{m}_{r}$ is $D_r(\mathfrak{g},P_0)$-stable, $D_r(\hat HP_0) = \bigoplus_{\hat HP_0/H^mP_0} \delta_g D_r(\mathfrak{g},P_0)$,
  $\mathfrak{m}_{r} \cong D_r(\mathfrak{g},P_0) \otimes_{D(\mathfrak{g},P_0)} M$, and $\mathcal{M}_{r} \cong \bigoplus_{\hat H^-/H^{-,m}} \delta_u \mathfrak{m}_{r}$,
  where $H^m$ (resp.\ $H^{-,m}$) is the $(m+1)$-st term in the lower $p$-series of $H$ (resp.\ $H^{-}$).
  By Lemma~\ref{lm:m_r} we get $\mathfrak{m}_{r} \cong U_{r}(\mathfrak{g})\otimes_{U(\mathfrak{g}_{C})} M$ and Lemma~\ref{lm:feaux} applies to $M \subset \mathfrak{m}_{r}$
  (by the beginning of the proof of Theorem~\ref{thm:OS-4.7}).

  If $w \in W^I \setminus \{1\}$, then there exists a reduced root $\beta \in \Phi^+ \setminus \Phi^+_I$ such that $w^{-1}\beta \in \Phi^- \setminus \Phi^-_I$.
  (For any $w \not\in W_I$ we can write $w = w_1 w' w_2$ with $w_i \in W_I$ and $w' \ne 1$ the Kostant representative for the double coset. Then there exists a reduced $\beta > 0$ such that $w'^{-1}\beta < 0$.) 
  If moreover $H^0(\mathfrak{n}_C, \delta_{\dot w}\mathcal{M}_{r}) \ne 0$, then $H^0(\mathfrak{n}_C, \delta_{\dot w}\delta_u \mathfrak{m}_{r}) \ne 0$ for some $u \in \hat H^-$,
  so $\Ad(u^{-1})y$ fails to act injectively on $\mathfrak{m}_{r}$ for any $y \in \mathfrak{g}_{(w^{-1}\beta),C} \subset \Ad(w^{-1})\mathfrak{n}_C \cap \o{\mathfrak{n}}_C$.
  Arguing as in the proof of Theorem~\ref{thm:OS-4.7}, by equimaximality, we deduce that $-w^{-1}\beta \in \Phi^+_I$, contradiction.

  Now suppose $w = 1$ and $H^0(\mathfrak{n}_C, \delta_u \mathfrak{m}_{r}) \ne 0$ for some $u \in \hat H^-$. 
  Note that $\mathfrak{m}_{r}$ is simple as $U_{r}(\mathfrak{g})$-module by Theorem~\ref{thm:OS-4.7}.
  As in Step 2 of the proof of \cite[Theorem 4.7]{OS3} we can embed $\mathfrak{m}_{r}$ into the formal completion $\widehat{M} = \prod_{\lambda \in \mathfrak{a}_P^*} M_\lambda$,
  where $\mathfrak{a}_P$ denotes the Lie algebra of the maximal split torus of the center of $\alggrp L$, where each $\lambda$-weight space $M_\lambda$ is finite-dimensional.
  The action of $\o{\mathfrak{n}}$ exponentiates to a locally analytic action of $\o N$ and we have $\o n \circ X \circ {\o n}^{-1} = \Ad(\o n)(X)$ on $\widehat M$
  for all $\o n \in \o N$, $X \in \mathfrak{g}_C$. 
  (Use, for example, that $\log(\Ad(\o n)) = \ad(\log(\o n)) \in \GL(\mathfrak{g}_C)$ for all $\o n \in \o N$.)
    By assumption, $0 \ne H^0(\Ad(u^{-1})\mathfrak{n}_C, \mathfrak{m}_{r}) = \mathfrak{m}_{r} \cap u^{-1} \widehat{M}^{\mathfrak{n}_C} = \mathfrak{m}_{r} \cap u^{-1} M^{\mathfrak{n}_C}$ inside $\widehat{M}$, so $M^{\mathfrak{n}_C} \into u\mathfrak{m}_{r}$ as $\mathfrak{p}_C$-modules,
  hence we get a surjective map $U_{r}(\mathfrak{g}) \otimes_{U(\mathfrak{p}_{C})} M^{\mathfrak{n}_C} \onto \delta_u \star\mathfrak{m}_{r}$ of $U_{r}(\mathfrak{g})$-modules.
  (Here, $\delta_u \star\mathfrak{m}_{r}$ denotes the space $\mathfrak{m}_{r}$ equipped with the action of $U_{r}(\mathfrak{g})$ twisted by $\delta_{u}$.)
  By Lemma~\ref{lm:feaux} the left-hand side has $\mathfrak{m}_{r}$ as unique simple quotient, so we get $\mathfrak{m}_{r} \congto \delta_u \star\mathfrak{m}_{r}$,
  hence $u \in H^{-,m}$ by Theorem~\ref{thm:OS-4.7}.

  Therefore, $H^0(\mathfrak{n}_C, \mathcal{M}_{r}) = H^0(\mathfrak{n}_C, \mathfrak{m}_{r}) = M^{\mathfrak{n}_C}$, giving an isomorphism
  $M^{\mathfrak{n}_C} \cong H^0(\mathfrak{n}_C, \mathcal{F}_P^G(M)')$. More precisely this is the image of the map $i : M^N = M^{\mathfrak{n}_C} \into M \to D(G) \otimes_{D(\mathfrak{g},P)} M \cong \mathcal{F}_P^G(M)'$
  sending $x$ to $\delta_1 \otimes x$. Using a choice of locally analytic section $s : G/P \to G$ of the projection $G \onto G/P$
  we obtain $\mathfrak{g}_C$-linear isomorphisms $\mathcal{F}_P^G(M,\pi) \cong \mathcal{F}_P^G(M) \widehat{\otimes} \pi$ and $\mathcal{F}_P^G(M,\pi)' \cong \mathcal{F}_P^G(M)' \widehat{\otimes} \pi'$.
  We may take $s$ such that $s(\overline{1}) = 1$.
  As in \cite[Theorem 3.5]{orlik-schraen} we find that $H^0(\mathfrak{n}_C, \mathcal{F}_P^G(M,\pi)')$ is the image of the map
  \begin{equation}\label{eq:9}
    i \otimes 1 \colon M^N \otimes \pi' \into \mathcal{F}_P^G(M)' \widehat{\otimes} \pi' \cong \mathcal{F}_P^G(M,\pi)'
  \end{equation}
  whose strong dual is computed to be $\mathcal{F}_P^G(M,\pi) \into (\Ind_P^G (M^N)' \otimes \pi)^{\an} \onto (M^N)' \otimes \pi$,
  where the second map is $f \mapsto f(1)$
  (using $s(\overline{1}) = 1$), which is $P$-linear (this is not clear a
  priori). Taking $N$-invariants in~\eqref{eq:9} we finally obtain the $L$-linear isomorphism~\eqref{eq:10}.
\end{proof}

The following corollary generalizes \cite[Corollary~2.5]{socle1}, which assumed $M \in \mathcal{O}_{\alg}^{P}$ and $\alggrp{G}$ split.

\begin{cor}\label{cor:socle-orlik-strauch}
  Suppose that $P = LN$ and that $M \in \mathcal{O}^{P}$ is $\mathfrak{g}_{C}$-simple and equimaximal with maximal parabolic $Q = L_Q N_Q$. 
      Let $\pi$ be an (admissible) smooth representation of $L$ of finite length.  Then
  \begin{align*}
    \soc_G \mathcal{F}_P^G(M,\pi) &= \mathcal{F}_Q^G(M, \soc_{L_Q} (\Ind_{P\cap L_Q}^{L_Q} \pi)^{\sm}) \\
    &= \soc_G ((\Ind_P^G (M^N)' \otimes \pi)^{\an}).
  \end{align*}
\end{cor}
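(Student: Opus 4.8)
\emph{Reduction.} First I would reduce to the case $Q=P$. Since $M$ is equimaximal with maximal parabolic $Q$ we have $M\in\mathcal O^Q$, so Theorem~\ref{thm:OS-main-in-main}(ii) gives $\mathcal F_P^G(M,\pi)\cong\mathcal F_Q^G(M,(\Ind_{P\cap L_Q}^{L_Q}\pi)^{\sm})$. Viewed as an object of $\mathcal O^Q$, $M$ is still $\mathfrak g_C$-simple and equimaximal with maximal parabolic $Q$ (the equimaximality condition for parabolics containing $Q$ is inherited from that of $M$ in $\mathcal O^P$), and $(\Ind_{P\cap L_Q}^{L_Q}\pi)^{\sm}$ is admissible smooth of finite length. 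Hence the first displayed equality is precisely the assertion $\soc_G\mathcal F_Q^G(M,\sigma)=\mathcal F_Q^G(M,\soc_{L_Q}\sigma)$ for $\sigma=(\Ind_{P\cap L_Q}^{L_Q}\pi)^{\sm}$, so I may assume $Q=P$ and am reduced to proving
\[\soc_G\mathcal F_P^G(M,\pi)=\mathcal F_P^G(M,\soc_L\pi)=\soc_G\big((\Ind_P^G(M^N)'\otimes\pi)^{\an}\big)\]
for $M$ equimaximal with maximal parabolic $P$.

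\emph{The easy inclusions and a $\Hom$-computation.} Writing $\soc_L\pi$ as a finite direct sum of irreducibles and using exactness of $\mathcal F_P^G$ together with Theorem~\ref{thm:OS-main-in-main}(iii), $\mathcal F_P^G(M,\soc_L\pi)$ is a semisimple subrepresentation of $\mathcal F_P^G(M,\pi)$, hence of the larger space $(\Ind_P^G(M^N)'\otimes\pi)^{\an}$ as well, and so lies in both socles. For the reverse inclusions the central point is
\[\Hom_G\!\big(\mathcal F_P^G(M,\tau),(\Ind_P^G(M^N)'\otimes\pi)^{\an}\big)\;\cong\;\Hom_L(\tau,\pi)\;\cong\;\Hom_G\!\big(\mathcal F_P^G(M,\tau),\mathcal F_P^G(M,\pi)\big)\]
for every irreducible smooth $L$-representation $\tau$, the three spaces being identified through the map induced by $\mathcal F_P^G(M,{-})$ and the $P$-equivariant evaluation-at-$1$ map $e\colon\mathcal F_P^G(M,\pi)\to(M^N)'\otimes\pi$, which is surjective, being dual to the inclusion of $H^0(N,{-})$ in Proposition~\ref{prop:N-invariants}. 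I would deduce this from Frobenius reciprocity for $(\Ind_P^G{-})^{\an}$, from $(\Ind_P^G(M^N)'\otimes\pi)^{\an}\cong\mathcal F_P^G(\underline M(M^N),\pi)$ (Proposition~\ref{prop:OS-ind}(i)) together with the canonical embedding $\mathcal F_P^G(M,\pi)=\mathcal F_P^G(\underline L(M^N),\pi)\into\mathcal F_P^G(\underline M(M^N),\pi)$ coming from $\underline M(M^N)\onto\underline L(M^N)$, and from $(\mathcal F_P^G(M,\tau))_N\cong(M^N)'\otimes\tau$ (dual of Proposition~\ref{prop:N-invariants}), using $\mathfrak l_C$-simplicity of $M^N$ to get $\Hom_L((M^N)'\otimes\tau,(M^N)'\otimes\pi)=\Hom_L(\tau,\pi)$.

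\emph{The hard inclusions.} By induction on the length of $\pi$ I would prove that every irreducible subrepresentation $V$ of $\mathcal F_P^G(M,\pi)$, resp.\ of $(\Ind_P^G(M^N)'\otimes\pi)^{\an}$, is isomorphic to $\mathcal F_P^G(M,\tau)$ for some irreducible $\tau\into\pi$ and lies in $\mathcal F_P^G(M,\soc_L\pi)$. Since $f(g)=e(gf)$ the restriction $e|_V$ (resp.\ evaluation at $1$) is nonzero; as its target has trivial $N$-action it factors through $V_N$, and its image, a nonzero $L$-subrepresentation of $(M^N)'\otimes\pi$ with $(M^N)'$ irreducible, equals $(M^N)'\otimes\pi_1$ for a nonzero subrepresentation $\pi_1\subseteq\pi$; moreover $V$ then lands in $\mathcal F_P^G(M,\pi_1)$ (resp.\ $(\Ind_P^G(M^N)'\otimes\pi_1)^{\an}$), the latter being cut out inside $(\Ind_P^G(M^N)'\otimes\pi)^{\an}$ by the pointwise condition of taking values in $(M^N)'\otimes\pi_1$ (a diagram chase using exactness of $\mathcal F_P^G$ and of $(\Ind_P^G{-})^{\an}$). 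If $\pi_1\subsetneq\pi$ the inductive hypothesis finishes the argument; if $\pi_1=\pi$, then $V$ maps into $\mathcal F_P^G(M,\pi/\soc_L\pi)$ (resp.\ into the corresponding induction), and either this map vanishes — whence $V\subseteq\mathcal F_P^G(M,\soc_L\pi)$ — or it is injective and the inductive hypothesis applied to the shorter $\pi/\soc_L\pi$ gives $V\cong\mathcal F_P^G(M,\tau)$ with $\tau$ irreducible, whereupon the $\Hom$-computation identifies the original inclusion with $\mathcal F_P^G(M,\phi)$ for a nonzero, hence injective, $\phi\colon\tau\to\pi$, so $V=\mathcal F_P^G(M,\phi(\tau))\subseteq\mathcal F_P^G(M,\soc_L\pi)$. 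The base case is $\pi$ irreducible: then $\mathcal F_P^G(M,\pi)$ is irreducible by Theorem~\ref{thm:OS-main-in-main}(iii), settling the first equality; for the larger space, by Lemma~\ref{lem:subobject-verma} every Jordan--H\"older factor of $\underline M(M^N)$ in $\mathcal O^P$ is $\mathfrak g_C$-simple, so the Jordan--H\"older factors of $\mathcal F_P^G(\underline M(M^N),\pi)$ are $\mathcal F_P^G(M,\pi)$ together with simple objects $\mathcal F_{Q'}^G(\underline L(W'),\rho)$ with $\underline L(W')\not\cong\underline L(M^N)$, and I would rule out $V\cong\mathcal F_{Q'}^G(\underline L(W'),\rho)$ by showing $\Hom_G(\mathcal F_{Q'}^G(\underline L(W'),\rho),\mathcal F_P^G(\underline M(M^N),\pi))=0$ — a weight computation, since the action of $\mathfrak z_{L,C}$ on the relevant layer of the $N$-coinvariants of $\mathcal F_{Q'}^G(\underline L(W'),\rho)$ differs from its action on $(M^N)'\otimes\pi$ because $\underline L(W')$ and $\underline L(M^N)$ have distinct $\mathfrak z_{L,C}$-weights; by the $\Hom$-computation the unique irreducible subrepresentation is then the image of the canonical embedding of $\mathcal F_P^G(M,\pi)$. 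Combining the induction with the easy inclusions gives both displayed equalities.

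\emph{Main obstacle.} I expect the delicate part to be exactly the $\pi_1=\pi$ step and, for the larger space, the base case: both rely not just on the abstract isomorphism of Proposition~\ref{prop:N-invariants} but on its explicit form (the evaluation-at-$1$ description and the surjectivity of $e$), and the base case additionally needs the $\Hom$-vanishing from the non-top Orlik--Strauch constituents of $(\Ind_P^G(M^N)'\otimes\pi)^{\an}$; making that vanishing rigorous via the $\mathfrak z_{L,C}$-action on $N$-coinvariants is where the real work lies, everything else being a formal consequence of the exactness of $\mathcal F_P^G$ and of Theorem~\ref{thm:OS-main-in-main}(iii).
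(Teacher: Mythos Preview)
Your reduction to $Q=P$ is fine for the first displayed equality, but it does \emph{not} reduce the second one. After relabeling, you would only be computing the socle of $(\Ind_Q^G (M^{N_Q})'\otimes\sigma)^{\an}$ with $\sigma=(\Ind_{P\cap L_Q}^{L_Q}\pi)^{\sm}$, whereas the statement asks for the socle of the genuinely larger space $(\Ind_P^G (M^N)'\otimes\pi)^{\an}=\mathcal F_P^G(\underline M(M^N),\pi)$ with the original $P$. One has a canonical embedding of the former into the latter (via $\underline M(M^N)\onto\underline M_Q(M^{N_Q})$), but showing that no extra irreducible subrepresentations appear in the bigger space is precisely the hard part, and your induction on $\pi$ never touches it.

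Even within the $Q=P$ case, the base case for the big induction has a real gap. You need to rule out that an irreducible $V\subset\mathcal F_P^G(\underline M(M^N),\pi)$ is a constituent coming from some other simple subquotient $\widetilde M$ of $\underline M(M^N)$. To make your ``weight computation on $N$-coinvariants'' work you implicitly need to know $H^0(N,\mathcal F_{Q'}^G(\widetilde M,\rho)')$, i.e.\ to apply Proposition~\ref{prop:N-invariants} to $\widetilde M$. But that proposition requires $\widetilde M$ to be equimaximal, and the simple constituents of $\underline M(M^N)$ need not be (cf.\ Remark~\ref{rk:need-simply-conn2}). The paper handles this by first reducing to $\alggrp G^{\der}$ simply connected via a $z$-extension and then, after a scalar extension, twisting each constituent by a smooth character of $L$ to make it equimaximal (Proposition~\ref{prop:twist-to-O-P}); only then can Proposition~\ref{prop:N-invariants} be applied. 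The paper also does not use Frobenius reciprocity for $(\Ind_P^G-)^{\an}$ as you do; instead it works on the dual side, takes $H^0(N,-)$, and invokes \cite[Lemme~2.2]{socle1} to guarantee that the resulting map $M^N\otimes(\pi')^N\to\widetilde M^N\otimes\eta(\pi_{\widetilde Q}')^{N\cap L_{\widetilde Q}}$ is nonzero, forcing $M^N\cong\widetilde M^N$ as $\mathfrak l_C$-modules and hence $\widetilde M\cong M$. Your sketch of ``$(\mathcal F_P^G(M,\tau))_N\cong(M^N)'\otimes\tau$ (dual of Proposition~\ref{prop:N-invariants})'' is morally right but would need this same functional-analytic care to be made precise.
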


\begin{remark}
We note that any finite-length smooth $C$-representation of $G$ is admissible.
For the proof, we can use a classical argument as follows.
We may assume that the representation is irreducible.
Let $\pi$ be an irreducible smooth representation.
Then there exists a parabolic subgroup $P = LU$ and an irreducible cuspidal representation $\sigma$ such that $\pi\hookrightarrow (\Ind_{P}^{G}\sigma)^\sm$~\cite[II.2.4]{MR1395151}.
Since $(\Ind_{P}^{G} -)^\sm$ preserves admissible representations, it is sufficient to prove that $\sigma$ is admissible.
Hence we may assume that $\pi$ is cuspidal.

Since $G$ is $\sigma$-compact, $\dim_C \pi$ is countable.
Hence $\dim_C \End_{G}(\pi)$ is countable.
Since $C$ is uncountable, the division algebra $\End_{G}(\pi)$ cannot contain a field of rational functions.
Therefore for any $z\in Z_{G}$, the image of $z$ in $\End_{G}(\pi)$ is algebraic over $C$.
Since $Z_{G}$ is topologically finitely generated, after tensoring with a finite extension of $C$ and taking an irreducible subquotient, we may assume that $\pi$ has a central character.
Here note that a representation $\pi$ is admissible (resp.\ finite length) if and only if $\pi\otimes_{C}C'$ is admissible (resp.\ finite length) for a finite extension $C'$.
By \cite[II.2.7]{MR1395151}, $\pi$ is $Z_{G}$-compact.
Let $K'$ be a compact open subgroup of $G$ and let $e_{K'}\colon V\to V^{K'}$ be the $K'$-equivariant projection.
Fix a nonzero vector $v\in \pi$ and set $D := \{g\in G\mid e_{K'}gv\ne 0\}$.
Then $D/Z_{G}$ is compact by \cite[I.7.3]{MR1395151}.
Therefore $K'\backslash D/Z_{G}$ is finite.
Since $\pi$ is irreducible, $\pi = \sum_{g\in G}Cgv$.
Hence $\pi^{K'} = \sum_{g\in G}Ce_{K'}gv = \sum_{g\in K'\backslash D/Z_{G}}\sum_{z\in Z_{G}}Ce_{K'}gzv$.
The representation $\pi$ has a central character, hence $Ce_{K'}gzv = Ce_{K'}gv$.
Therefore $\pi^{K'} = \sum_{g\in K'\backslash D/Z_{G}}Ce_{K'}gv$ is finite-dimensional.
\end{remark}

\begin{proof}[Proof of Corollary~\ref{cor:socle-orlik-strauch}]
  The proof proceeds as in \cite{socle1}, using Proposition \ref{prop:N-invariants} instead of \cite[Proposition~2.4]{socle1}.
  We first make a reduction to the case where $\alggrp{G}^{\der}$ is simply connected.
  We take a $z$-extension $1 \to \alggrp T \to \wt {\alggrp G} \to \alggrp G \to 1$, where ${\wt{\alggrp{G}}}^{\lowprime[\der]}$ is simply connected and $\alggrp T$ is a central induced torus, so $1 \to T \to \wt G \to G \to 1$.
  By pullback to $\wt G$ we obtain $\wt P = \wt L \wt N$ and $\wt Q = \wt L_Q \wt N_Q$, and by inflation we obtain $\wt M \in \mathcal O^{\wt P}$ and $\wt\pi$ finite-length smooth of $\wt L$.
  Note that $\wt M$ is $\wt{\mathfrak{g}}_{C}$-simple and equimaximal with maximal parabolic $\wt Q$.
  Moreover, the construction of Orlik--Strauch is compatible with pullback, i.e.\ the inflation of $\mathcal F_P^G(M,\pi)$ to $\wt G$ is naturally isomorphic to $\mathcal F_{\wt P}^{\wt G}(\wt M,\wt \pi)$ and likewise for $\mathcal F_P^G(\underline M(M^N),\pi) = (\Ind_P^G (M^N)' \otimes \pi)^{\an}$.
  This completes the reduction.

  It is clear that the functor $\mathcal{F}_P^G$ commutes with finite extensions of scalars.
  Also, the functor $\soc_G$ (resp.\ $\soc_{L_Q}$) commutes with finite extensions of scalars on the category of finite-length admissible locally analytic representations of $G$ by Lemma~\ref{lem:soc,field extension}.
             Therefore it is enough to prove the result after a finite extension of scalars.
  So by Proposition~\ref{prop:twist-to-O-P} we may assume that each simple constituent of $\underline M(M^N)$ in $\mathcal{O}^{P}$ is equimaximal
  up to twist by a smooth character of $L$. (By Lemma~\ref{lem:subobject-verma} all such constituents are $\mathfrak{g}_{C}$-simple.)

  By the equimaximality assumption the irreducible constituents of $\mathcal{F}_P^G(M,\pi)$ are precisely
  the $\mathcal{F}_Q^G (M,\pi_Q)$, where $\pi_Q$ is an irreducible constituent of $(\Ind_{P\cap L_{Q}}^{L_{Q}}\pi)^{\sm}$.
  If $\mathcal{F}_Q^G(M,\pi_Q)$ injects into $\mathcal{F}_P^G(M,\pi) \cong \mathcal{F}_Q^G(M,I)$ where $I := (\Ind_{P\cap L_Q}^{L_Q} \pi)^{\sm}$ and
  $\pi_Q$ an irreducible constituent of $I$, then from Proposition \ref{prop:N-invariants} we get an $L_Q$-linear map
  $M^{N_Q} \otimes I' \to M^{N_Q} \otimes \pi_Q'$ which has to be nonzero by \cite[Lemme 2.2]{socle1} (whose proof remains unchanged
    if $W$ is any finite-dimensional locally analytic representation of $P$).  As $M^{N_Q}$ is
  $\mathfrak{l}_Q$-simple we can dualize and take $\Hom_{\mathfrak{l}_Q}((M^{N_Q})',-)$ to obtain an injection $\pi_Q \into I$ of smooth $L_Q$-representations.
  In fact, this gives an injection (hence isomorphism) $\Hom_G(\mathcal{F}_Q^G(M,\pi_Q),\mathcal{F}_P^G(M,\pi)) \into \Hom_{L_Q}(\pi_Q,I)$, which justifies the first equality.

  Suppose that $\sigma$ is an irreducible subrepresentation of $(\Ind_P^G (M^N)' \otimes \pi)^{\an} = \mathcal{F}_P^G(\underline M(M^N),\pi)$. 
  Then $\sigma$ is a constituent of $\mathcal{F}_P^G(\widetilde{M},\pi)$ for some simple constituent $\widetilde{M}$ of $\underline M(M^N)$ in $\mathcal{O}^{P}$.
  By the beginning of the proof there exists a smooth character $\eta$ of $L$ such that $\widetilde{M}\eta$ is equimaximal,
  and we denote its maximal parabolic by $\widetilde{Q} = L_{\widetilde{Q}} N_{\widetilde{Q}}$.
  We deduce that $\sigma \cong \mathcal{F}_{\widetilde{Q}}^G(\widetilde{M} \eta,\pi_{\widetilde{Q}})$ for some irreducible smooth representation $\pi_{\widetilde{Q}}$ of $L_{\widetilde{Q}}$.
  Then as in \cite{socle1} we obtain from Proposition \ref{prop:N-invariants} a nonzero $L$-equivariant map
  \begin{align*}
    M^N \otimes (\pi')^N \xrightarrow{\eqref{eq:9}} H^0(N, \mathcal{F}_{\widetilde{Q}}^G (\widetilde{M} \eta,\pi_{\widetilde{Q}})') & = H^0(N \cap L_{\widetilde{Q}}, (\widetilde{M} \eta)^{N_{\widetilde{Q}}} \otimes \pi_{\widetilde{Q}}')\\
    &= (\widetilde{M}\eta)^N \otimes (\pi_{\widetilde{Q}}')^{N \cap L_{\widetilde{Q}}}
    = \widetilde{M}^N \otimes \eta(\pi_{\widetilde{Q}}')^{N \cap L_{\widetilde{Q}}}.
  \end{align*}
  In particular, $M^N \cong \widetilde{M}^N$ as $\mathfrak{l}_{C}$-modules, but this implies $\widetilde{M} \cong M$ in $\mathcal{O}^{P}$ (only $M^N$ contains the highest weight for the action of $\mathfrak{t}'$).
  Therefore, $\sigma$ has to lie in the image of the map $\mathcal{F}_P^G(M,\pi) \into \mathcal{F}_P^G(\underline M(M^N),\pi)$.
\end{proof}

The following corollary generalizes \cite[Corollary 2.7]{socle1}.

\begin{cor}\label{cor:F-P-G-intertwiner}
  Suppose $M_1, M_2 \in \mathcal{O}^{B}$ are $\mathfrak{g}_{C}$-simple and equimaximal. Let $P_i = L_i N_i$ denote the maximal parabolic for $M_i$ and suppose that
  $\pi_i$ is a smooth representation of $L_i$ of finite length. Then we have $\mathcal{F}_{P_1}^G(M_1,\pi_1) \cong \mathcal{F}_{P_2}^G(M_2,\pi_2)$
  if and only if $P_1 = P_2$ and there is a smooth character $\eta$ of $L_1 = L_2$ such that $M_1 \otimes \eta \cong M_2$ and $\pi_1 \otimes \eta \cong \pi_2$.
\end{cor}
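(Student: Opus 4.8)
The plan is as follows. The ``if'' direction is immediate from Proposition~\ref{prop:OS-ind}(ii): if $M_1 \otimes \eta \cong M_2$ and $\pi_1 \otimes \eta \cong \pi_2$ for a smooth character $\eta$ of $L_1 = L_2$, then $\mathcal{F}_{P_2}^G(M_2,\pi_2) \cong \mathcal{F}_{P_1}^G(M_1 \otimes \eta, \pi_1 \otimes \eta) \cong \mathcal{F}_{P_1}^G(M_1, \pi_1 \otimes \eta \otimes \eta^{-1}) = \mathcal{F}_{P_1}^G(M_1, \pi_1)$. For the converse I would start from an isomorphism $\Psi \colon \mathcal{F}_{P_1}^G(M_1,\pi_1) \congto \mathcal{F}_{P_2}^G(M_2,\pi_2)$ (we may assume $\pi_1, \pi_2 \neq 0$) and let $\Psi'$ be the transpose isomorphism of strong duals. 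Since $M_i \in \mathcal{O}^B$ is $\mathfrak{g}_C$-simple, $M_i|_{\mathfrak{g}_C} \cong L(\lambda_i)$ for a unique character $\lambda_i \in (\mathfrak{t}')^{*}$, and then $M_i^{N_i}|_{\mathfrak{l}_{i,C}}$ is the simple highest weight $\mathfrak{l}_{i,C}$-module of highest weight $\lambda_i$ by Lemma~\ref{lm:N-invts}; crucially, since $M_i$ is equimaximal, $P_i$ is determined by $\lambda_i$ alone (it is the largest standard parabolic with $L(\lambda_i) \in \mathcal{O}^{\mathfrak{p}_i}$, and then $M_i \cong \underline{L}(M_i^{N_i})$).

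The key first step is to recover $\lambda_i$ from $\mathcal{F}_{P_i}^G(M_i,\pi_i)$. By Proposition~\ref{prop:N-invariants}, $H^0(N_i, \mathcal{F}_{P_i}^G(M_i,\pi_i)') \cong M_i^{N_i} \otimes \pi_i'$ as representations of $L_i$. Taking further $(\mathfrak{u}_C \cap \mathfrak{l}_{i,C})$-invariants computes $H^0(\mathfrak{u}_C, \mathcal{F}_{P_i}^G(M_i,\pi_i)')$ (as $\mathfrak{u}_C$ is spanned by $\mathfrak{n}_{i,C}$ and $\mathfrak{u}_C \cap \mathfrak{l}_{i,C}$, the latter normalizing the former), and since $\mathfrak{t}' \subseteq \mathfrak{l}_{i,C}$ acts trivially on the smooth module $\pi_i'$ this equals
\[
  H^0(\mathfrak{u}_C, \mathcal{F}_{P_i}^G(M_i,\pi_i)') \cong (M_i^{N_i})^{\mathfrak{u}_C \cap \mathfrak{l}_{i,C}} \otimes \pi_i',
\]
a nonzero space on which $\mathfrak{t}'$ acts through the single character $\lambda_i$. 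Hence $\Psi'$ forces $\lambda_1 = \lambda_2$, so $P_1 = P_2 =: P = LN$ and $N_1 = N_2 = N$.

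It then remains to produce $\eta$. I would set $\eta := \Hom_{\mathfrak{l}_C}(M_1^{N}, M_2^{N})$; as $M_1^{N}$ and $M_2^{N}$ are $\mathfrak{l}_C$-simple with the same highest weight $\lambda_1 = \lambda_2$, this is a $1$-dimensional smooth $L$-representation with $M_1^{N} \otimes \eta \cong M_2^{N}$ in $\mathcal{O}^L$ (cf.\ the proof of Proposition~\ref{prop:simple-O-P}(ii)). Applying Proposition~\ref{prop:N-invariants} again, $\Psi'$ yields an isomorphism of $L$-representations $M_1^{N} \otimes \pi_1' \cong M_2^{N} \otimes \pi_2' \cong M_1^{N} \otimes (\eta \otimes \pi_2')$. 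Now $\mathfrak{l}_C$ acts trivially on $\pi_i'$ and on $\eta$, and $M_1^{N}$ is $\mathfrak{l}_C$-simple, so $\Hom_{\mathfrak{l}_C}(M_1^{N}, M_1^{N} \otimes A) \cong A$ naturally in the $L$-representation $A$; applying this exact $L$-equivariant functor gives $\pi_1' \cong \eta \otimes \pi_2'$ as $L$-representations. Passing to smooth vectors gives $\pi_1^{\vee} \cong \eta \otimes \pi_2^{\vee}$ of smooth contragredients, and since each $\pi_i$ is admissible of finite length, taking smooth contragredients once more yields $\pi_1 \otimes \eta \cong \pi_2$. Finally $M_2 \cong \underline{L}(M_2^{N}) \cong \underline{L}(M_1^{N} \otimes \eta) \cong \underline{L}(M_1^{N}) \otimes \eta \cong M_1 \otimes \eta$ by Lemmas~\ref{lm:N-invts} and \ref{lem:verma-tensor}, which completes the proof.

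The heart of the matter is the second paragraph: extracting the numerical datum $\lambda_i$ — equivalently, the parabolic $P_i$ — from the Orlik--Strauch representation, which is exactly where Proposition~\ref{prop:N-invariants} (our generalization of Breuil's computation of $N$-invariants) does the work. Once $P_1 = P_2$ is known, everything else is a formal consequence of Lemmas~\ref{lm:N-invts} and \ref{lem:verma-tensor} together with elementary module theory; the only residual care is in the bookkeeping with strong duals versus smooth contragredients in the last paragraph.
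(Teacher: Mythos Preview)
Your approach is essentially the paper's own, and everything after ``$P_1=P_2$'' is correct and matches the paper (the paper dualizes the isomorphism $M_1^N\otimes\pi_1'\cong M_2^N\otimes\pi_2'$ directly rather than passing through smooth contragredients, but your route is fine too).

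There is, however, a genuine inaccuracy in your second paragraph. You claim that $\mathfrak t'$ acts through the \emph{single} character $\lambda_i$ on $(M_i^{N_i})^{\mathfrak u_C\cap\mathfrak l_{i,C}}\otimes\pi_i' = M_i^{\mathfrak u_C}\otimes\pi_i'$. This is only true when $\alggrp G$ is quasisplit, i.e.\ when $\mathfrak z_C=\mathfrak t'$. In general $\mathfrak u_C\subsetneq\mathfrak u'$ (the nilradical of the chosen Borel $\mathfrak b'\subset\mathfrak b_C$), and $M_i^{\mathfrak u_C}$ is the simple finite-dimensional $\mathfrak z_C$-module of highest weight $\lambda_i$, which typically has many $\mathfrak t'$-weights. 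The fix is immediate: either record that $\lambda_i$ is the \emph{highest} $\mathfrak t'$-weight occurring (and these must agree under $\Psi'$), or do as the paper does and read off the isomorphism class of $M_i^U$ as a $\mathfrak z_C$-module (which determines $\lambda_i$). With that correction the argument goes through. The paper stays at the group level throughout, taking $H^0(U,-)$ and factoring it as $H^0(U\cap L_i,H^0(N_i,-))$; your mixing of $N_i$-invariants with $(\mathfrak u_C\cap\mathfrak l_{i,C})$-invariants is harmless because the proof of Proposition~\ref{prop:N-invariants} actually shows $H^0(N_i,-)=H^0(\mathfrak n_{i,C},-)$ on these duals, so the composite is $H^0(\mathfrak u_C,-)$, which is independent of $i$.
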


\begin{proof}
  By Proposition~\ref{prop:N-invariants} we have
  \begin{equation*}
    H^0(U,\mathcal{F}_{P_i}^G(M_i,\pi_i)) = H^0(U \cap L_i, M_i^{N_i} \otimes \pi_i') \cong M_i^U \otimes (\pi_i')^{U \cap L_i}.
  \end{equation*}
  We deduce that $M_1^U|_{\mathfrak{z}_C} \cong M_2^U|_{\mathfrak{z}_C}$, hence $M_1$, $M_2$ have the same highest weight in $(\mathfrak{t}')^*$.
  In particular, $P_1 = P_2$, and we henceforth denote it by $P = LN$.
  The isomorphism of $L$-representations $M_1^{N} \otimes \pi_1'\cong M_2^{N} \otimes \pi_2'$ shows that
  $M_1^N|_{\mathfrak{l}_{C}} \cong M_2^N|_{\mathfrak{l}_{C}}$, so as in the proof of Lemma~\ref{lem:semisimple-lift}(ii) we deduce that $M_1 \otimes \eta \cong M_2$ 
  for some smooth character $\eta$ of $L$.
  Applying $\Hom_{\mathfrak{l}_{C}}((M_1^{N})',-)$ to the dual isomorphism $(M_1^{N})' \otimes \pi_1\cong (M_1^{N} \otimes \eta)' \otimes \pi_2$ we get $\pi_1 \cong \pi_2 \otimes \eta^{-1}$.
  The converse is clear.
\end{proof}

We also have a weak version when we drop the equimaximality condition.

\begin{cor}\label{cor:F-P-G-intertwiner2}
  Suppose $M_i \in \mathcal{O}^{P_i}$ is $\mathfrak{g}_{C}$-simple and $\pi_i$ is a smooth representation of $L_i$ of finite length ($i = 1,2$).
  Let $Q_i \supset P_i$ be maximal such that $M_i \in \mathcal{O}^{\mathfrak q_i}$.
  Suppose that $\mathcal{F}_{P_1}^G(M_1,\pi_1)$, $\mathcal{F}_{P_2}^G(M_2,\pi_2)$ share at least one irreducible constituent.
  Then $Q_1 = Q_2$ and there exists a smooth character $\eta \colon L_1 \cap L_2 \to C^\times$ such that $M_1 \otimes \eta \cong M_2$ in $\mathcal O^{P_1 \cap P_2}$.
  In particular, $M_1 \cong M_2$ in $\mathcal O^{\mathfrak b}$.
\end{cor}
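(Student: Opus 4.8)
The plan is to deduce this from Corollary~\ref{cor:F-P-G-intertwiner} by twisting each $M_i$ to an equimaximal module. First I would reduce to the case where $\alggrp G^\der$ is simply connected by inflating along a $z$-extension $\wt{\alggrp G}\onto\alggrp G$, exactly as in the proofs of Corollaries~\ref{cor:O-S-fin-length} and~\ref{cor:socle-orlik-strauch}: inflation is compatible with $\mathcal F$, sends $Q_i$ to its preimage, and is fully faithful on modules on which the kernel of $\wt{\mathfrak g}_C\onto\mathfrak g_C$ acts trivially, so both hypothesis and conclusion transfer. It is likewise harmless to enlarge $C$ to a finite extension: $\mathcal F$ commutes with scalar extension; two absolutely simple $\mathfrak g_C$-modules that become isomorphic over $C'$ are already isomorphic over $C$; and an irreducible constituent of $\sigma\otimes_C C'$ is still a shared irreducible constituent of the base-changed $\mathcal F$'s. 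So I may assume $\alggrp G^\der$ simply connected, and then by Proposition~\ref{prop:twist-to-O-P}(ii) pick (possibly after enlarging $C$) smooth characters $\eta_i$ of $L_i$ such that $M_i':=M_i\otimes\eta_i$ is equimaximal; since $M_i'|_{\mathfrak g_C}=M_i|_{\mathfrak g_C}$, its maximal parabolic is exactly $Q_i$.

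Next I would move everything up to the parabolic $Q_i$. By Proposition~\ref{prop:OS-ind}(ii), $\mathcal F_{P_i}^G(M_i,\pi_i)\cong\mathcal F_{P_i}^G(M_i',\pi_i\otimes\eta_i)$, and by Theorem~\ref{thm:OS-main-in-main}(ii) this is isomorphic to $\mathcal F_{Q_i}^G(M_i',\rho_i)$ with $\rho_i:=(\Ind_{P_i\cap L_{Q_i}}^{L_{Q_i}}\pi_i\otimes\eta_i)^{\sm}$, a smooth finite-length representation of $L_{Q_i}$. Because $M_i'$ is equimaximal with maximal parabolic $Q_i$ and $\mathfrak g_C$-simple, Theorem~\ref{thm:OS-main-in-main}(iii) makes $\mathcal F_{Q_i}^G(M_i',\sigma)$ irreducible for every irreducible smooth $\sigma$; combined with exactness (Theorem~\ref{thm:OS-main-in-main}(i)) this identifies the irreducible constituents of $\mathcal F_{Q_i}^G(M_i',\rho_i)$ with the $\mathcal F_{Q_i}^G(M_i',\sigma)$ for $\sigma$ a Jordan--H\"older factor of $\rho_i$. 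A common constituent of $\mathcal F_{P_1}^G(M_1,\pi_1)$ and $\mathcal F_{P_2}^G(M_2,\pi_2)$ thus forces an isomorphism $\mathcal F_{Q_1}^G(M_1',\sigma_1)\cong\mathcal F_{Q_2}^G(M_2',\sigma_2)$ for suitable irreducible $\sigma_i$, and Corollary~\ref{cor:F-P-G-intertwiner} (applicable since $M_i'\in\mathcal O^{P_i}\subset\mathcal O^{B}$ is $\mathfrak g_C$-simple, equimaximal, with maximal parabolic $Q_i$) yields $Q_1=Q_2$ together with a smooth character $\mu$ of $L_{Q_1}=L_{Q_2}$ such that $M_1'\otimes\mu\cong M_2'$. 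Restricting to $\mathfrak g_C$ and using $M_i'|_{\mathfrak g_C}=M_i|_{\mathfrak g_C}$ gives $M_1|_{\mathfrak g_C}\cong M_2|_{\mathfrak g_C}$; by the descent remarks above this and the equality $Q_1=Q_2$ hold over the original $C$ and $\alggrp G$. In particular $M_1\cong M_2$ in $\mathcal O^{\mathfrak b}$.

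It remains to produce the character $\eta$ over $C$ itself. Here I would view $M_1,M_2$ in $\mathcal O^{P_1\cap P_2}$ via Lemma~\ref{lem:cop}(i) (note $P_1\cap P_2$ is a standard parabolic with standard Levi $L_1\cap L_2$); let $N$ be its unipotent radical and set $\mathfrak m:=\Lie(L_1\cap L_2)$. Then $M_i^N\in\mathcal O^{L_1\cap L_2}$ is absolutely simple by Lemmas~\ref{lm:N-invts} and~\ref{lem:abs-simple-g-module}, and $M_1^N|_{\mathfrak m_C}\cong M_2^N|_{\mathfrak m_C}$ because $M_1|_{\mathfrak g_C}\cong M_2|_{\mathfrak g_C}$. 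Hence $\eta:=\Hom_{\mathfrak m_C}(M_1^N,M_2^N)$ is one-dimensional over $C$ with a smooth $L_1\cap L_2$-action, i.e.\ a smooth character $\eta\colon L_1\cap L_2\to C^\times$, and the evaluation map gives $M_1^N\otimes\eta\cong M_2^N$ in $\mathcal O^{L_1\cap L_2}$. Since $M_1\otimes\eta$ is $\mathfrak g_C$-simple with $(M_1\otimes\eta)^N\cong M_2^N$, comparing the unique simple quotients of $\underline{M}(M_1^N\otimes\eta)$ and $\underline{M}(M_2^N)$ in $\mathcal O^{P_1\cap P_2}$ (Lemma~\ref{lm:N-invts}) yields $M_1\otimes\eta\cong M_2$, as required.

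Beyond invoking Corollary~\ref{cor:F-P-G-intertwiner} the only genuine work is the reduction to the equimaximal case (keeping track that the maximal parabolic stays $Q_i$) and the descent of $\eta$ to the base field $C$; I expect the latter, together with the routine but necessary bookkeeping of the $z$-extension and scalar-extension reductions, to be the main points to get right.
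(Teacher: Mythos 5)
Your proposal is correct and follows the same architecture as the paper's proof: pass to a $z$-extension to make $\alggrp G^\der$ simply connected, twist each $M_i$ to an equimaximal module via Proposition~\ref{prop:twist-to-O-P}(ii), pass to $Q_i$ by Theorem~\ref{thm:OS-main-in-main}(ii), and invoke Corollary~\ref{cor:F-P-G-intertwiner}. The one real point of divergence is in how the smooth character $\eta$ over $C$ itself is obtained. The paper defines $\wt\eta := \wt\eta_1\wt\eta_3\wt\eta_2^{-1}$ on $\wt L_1\cap\wt L_2$, observes it is trivial on the central torus $T$ of the $z$-extension, and lets $\eta$ be its descent to $L_1\cap L_2$; it does not explicitly address the possible finite extension of $C$ that Proposition~\ref{prop:twist-to-O-P}(ii) may have introduced. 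You instead first deduce the $C$-linear isomorphism $M_1|_{\mathfrak g_C}\cong M_2|_{\mathfrak g_C}$ (descending from the scalar extension via absolute simplicity and finite presentation over the noetherian algebra $U(\mathfrak g_C)$), then reconstruct $\eta$ intrinsically as the one-dimensional smooth $L_1\cap L_2$-representation $\Hom_{\mathfrak m_C}(M_1^N,M_2^N)$, and recover $M_1\otimes\eta\cong M_2$ via Lemma~\ref{lm:N-invts}. This is a clean way to sidestep both the $T$-descent and the $C'\!/C$-descent simultaneously, at the modest cost of re-deriving $\eta$ rather than reading it off from the twisting characters. Both routes are sound; yours is slightly more scrupulous on the rationality of $\eta$ over the base field.
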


\begin{proof}
  We take a $z$-extension $1 \to \alggrp T \to \wt {\alggrp G} \to \alggrp G \to 1$, as in the proof of Corollary~\ref{cor:socle-orlik-strauch}.
  We keep the notation of that proof.
  By Proposition~\ref{prop:twist-to-O-P} for each $i$ there exists a smooth character $\wt\eta_i$ of $\wt L_i$ such that $\wt M_i \wt\eta_i$ is equimaximal, i.e.\ lies in $\mathcal O^{\wt Q_i}$.
  The inflation of $\mathcal{F}_{P_i}^G(M_i,\pi_i)$ to $\wt G$ becomes
  \begin{equation*}
    \mathcal{F}_{\wt P_i}^{\wt G}(\wt M_i,\wt\pi_i) \cong \mathcal{F}_{\wt Q_i}^{\wt G}(\wt M_i\wt \eta_i,(\Ind_{\wt P_i \cap \wt L_{Q_i}}^{\wt L_{Q_i}} \wt\pi_i\wt\eta_i)^\sm).
  \end{equation*}
  By assumption and Theorem~\ref{thm:OS-main-in-main} we deduce from Corollary~\ref{cor:F-P-G-intertwiner} that $\wt Q_1 = \wt Q_2$ and $\wt M_1 \wt\eta_1 \wt\eta_3 \cong \wt M_2 \wt\eta_2$ in $\mathcal O^{\wt Q_1} = \mathcal O^{\wt Q_2}$ for some smooth character $\wt\eta_3$ of $\wt L_{Q_1} = \wt L_{Q_2}$.
  In particular, $\wt\eta := \wt\eta_1 \wt\eta_3 \wt\eta_2^{-1}$ is trivial on $T$, so descends to a smooth character $\eta$ of $L_1 \cap L_2$.
  It follows that $M_1 \otimes \eta \cong M_2$ in $\mathcal O^{P_1 \cap P_2}$.
\end{proof}

Recall that if $W$ is a finite-dimensional locally analytic
$P$-representation on which $\mathfrak{t}'$ acts diagonalizably and $\pi$ an admissible smooth representation of $L$, then we have a pairing
\begin{equation*}
  \ang{\cdot,\cdot}_{C^{\an}(G,\pi)} \colon (U(\mathfrak{g}_{C}) \otimes_{U(\mathfrak{p}_{C})} W) \times (\Ind_P^G W' \otimes \pi)^{\an} \to C^{\an}(G,\pi),
\end{equation*}
which is used to define $\mathcal{F}_P^G(M,\pi)$ for $M \in \mathcal{O}^{P}$ \cite[\S 3.8]{OS3} (recalled in Appendix~\ref{app:orlik-strauch}).
Recall that $\mathcal{F}_P^G(U(\mathfrak{g}_{C}) \otimes_{U(\mathfrak{p}_{C})} W,\pi) = (\Ind_P^G W' \otimes \pi)^{\an}$.

The following lemma generalizes \cite[Lemma 3.1]{socle1}. 

\begin{lem}\label{lm:breuil-3-1}
  Suppose that $P = LN$, $M \in \mathcal{O}^{P}$, and $\pi$ an admissible smooth $L$-representation.
  Suppose we are given morphisms $W_1 \to M^N \leftarrow W_2$ in $\mathcal{O}^{L}$ so that we have corresponding
  diagrams in $\mathcal{O}^{P}$, respectively locally analytic representations of $G$:
  \begin{equation*}
  \xymatrix@R-15pt{
      U(\mathfrak{g}_{C}) \otimes_{U(\mathfrak{p}_{C})} W_1 \ar^(0.6){\phi_1}[rd] && (\Ind_P^G W_1' \otimes \pi)^{\an}  \\
      & M && \mathcal{F}_P^G(M,\pi) \ar[lu] \ar[ld] \\
      U(\mathfrak{g}_{C}) \otimes_{U(\mathfrak{p}_{C})} W_2 \ar_(0.6){\phi_2}[ru] && (\Ind_P^G W_2' \otimes \pi)^{\an} \\
    }
  \end{equation*}  
  Let $f \in \mathcal{F}_P^G(M,\pi)$ with images $h_i \in (\Ind_P^G W_i' \otimes \pi)^{\an}$ and $x_i \in U(\mathfrak{g}_{C}) \otimes_{U(\mathfrak{p}_{C})} W_i$
  such that $\phi_1(x_1) = \phi_2(x_2)$ in $M$. Then $\ang{x_1,h_1}_{C^{\an}(G,\pi)} = \ang{x_2,h_2}_{C^{\an}(G,\pi)}$ in $C^{\an}(G,\pi)$.
\end{lem}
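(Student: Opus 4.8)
The plan is to follow the strategy of \cite[Lemma~3.1]{socle1}, reducing the statement to two formal properties of the Orlik--Strauch pairing: that it descends to a canonical pairing between $M$ and $\mathcal{F}_{P}^{G}(M,\pi)$, and that it is natural with respect to morphisms of generalized Verma modules. Observe first that each $W_i$, being a finite-dimensional object of $\mathcal{O}^{L}$, has $\mathfrak{t}'$ acting diagonalizably (Lemma~\ref{lm:cat-O-p-equivalence}), so the pairing $\ang{\cdot,\cdot}_{C^{\an}(G,\pi)}$ on $\underline{M}(W_i) \times (\Ind_{P}^{G} W_i' \otimes \pi)^{\an}$ is defined; moreover, via the identification $\mathcal{F}_{P}^{G}(\underline{M}(W_i),\pi) = (\Ind_{P}^{G} W_i'\otimes\pi)^{\an}$ of Proposition~\ref{prop:OS-ind}(i), the maps in the displayed diagram are $\mathcal{F}_{P}^{G}(\phi_i,\pi)$, so that $h_i = \mathcal{F}_{P}^{G}(\phi_i,\pi)(f)$, where $\phi_i$ is the morphism attached by Frobenius reciprocity to the $\mathfrak{p}_{C}$-linear composite $W_i \to M^{N} \into M$ (recall $M^{N} = M^{\mathfrak{n}_{C}}$).

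Next I would choose a finite-dimensional locally analytic $P$-subrepresentation $W_0 \subset M$ that generates $M$ as a $U(\mathfrak{g}_{C})$-module and contains the images of $W_1$ and $W_2$ in $M$; this is possible since the $P$-action on $M$ is locally finite and $M$ is finitely generated over $U(\mathfrak{g}_{C})$, and the two images lie in $M^{\mathfrak{n}_{C}}$. As $\mathfrak{t}' \subset \mathfrak{p}_{C}$, the space $W_0$ is $\mathfrak{t}'$-diagonalizable, so $\underline{M}(W_0) := U(\mathfrak{g}_{C}) \otimes_{U(\mathfrak{p}_{C})} W_0 \in \mathcal{O}^{P}$ and $\mathcal{F}_{P}^{G}(\underline{M}(W_0),\pi) = (\Ind_{P}^{G} W_0' \otimes \pi)^{\an}$. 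Let $\psi \colon \underline{M}(W_0) \onto M$ be the induced surjection. By the construction of $\mathcal{F}_{P}^{G}$ recalled in Appendix~\ref{app:orlik-strauch}, $\mathcal{F}_{P}^{G}(\psi,\pi)$ is a closed embedding $\iota \colon \mathcal{F}_{P}^{G}(M,\pi) \into (\Ind_{P}^{G} W_0' \otimes \pi)^{\an}$ whose image consists of those $\omega$ with $\ang{x,\omega}_{C^{\an}(G,\pi)} = 0$ for all $x \in \ker\psi$; hence $\ang{\cdot,\cdot}_{C^{\an}(G,\pi)}$ on $\underline{M}(W_0) \times \iota(\mathcal{F}_{P}^{G}(M,\pi))$ descends to a well-defined pairing $\ang{\cdot,\cdot}_{M} \colon M \times \mathcal{F}_{P}^{G}(M,\pi) \to C^{\an}(G,\pi)$, characterised by $\ang{\psi(x),f}_{M} = \ang{x,\iota(f)}_{C^{\an}(G,\pi)}$.

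Because the composite $W_i \to M^{N} \into M$ factors through $W_0$, the morphism $\phi_i$ factors as $\underline{M}(W_i) \xrightarrow{\wt{\phi}_i} \underline{M}(W_0) \xrightarrow{\psi} M$, where $\wt{\phi}_i$ is the map of generalized Verma modules induced by the $\mathfrak{p}_{C}$-linear map $W_i \to W_0$. Contravariant functoriality of $\mathcal{F}_{P}^{G}$ gives $h_i = \mathcal{F}_{P}^{G}(\wt{\phi}_i,\pi)(\iota(f))$, where $\mathcal{F}_{P}^{G}(\wt{\phi}_i,\pi) \colon (\Ind_{P}^{G} W_0' \otimes \pi)^{\an} \to (\Ind_{P}^{G} W_i' \otimes \pi)^{\an}$ is obtained by applying $(\Ind_{P}^{G}(-)\otimes\pi)^{\an}$ to the transpose $W_0' \to W_i'$. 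The one point that requires genuine work is the naturality identity
\[ \ang{\wt{\phi}_i(x),\omega}_{C^{\an}(G,\pi)} = \ang{x, \mathcal{F}_{P}^{G}(\wt{\phi}_i,\pi)(\omega)}_{C^{\an}(G,\pi)} \]
for all $x \in \underline{M}(W_i)$ and $\omega \in (\Ind_{P}^{G} W_0' \otimes \pi)^{\an}$, i.e.\ that $\mathcal{F}_{P}^{G}(\wt{\phi}_i,\pi)$ is the transpose of $\wt{\phi}_i$ with respect to the pairings. I would prove this directly from the explicit formula for $\ang{\cdot,\cdot}_{C^{\an}(G,\pi)}$ in \cite[\S3.8]{OS3} (see Appendix~\ref{app:orlik-strauch}): the pairing is assembled from the $U(\mathfrak{g}_{C})$-action on the analytic induction and the evaluation $W_0 \otimes (W_0' \otimes \pi) \to \pi$, and under the $\mathfrak{p}_{C}$-equivariant map $W_i \to W_0$ the former is intertwined and the latter adjoint, so the identity follows by unwinding definitions, exactly as in the corresponding step of \cite{socle1}. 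I expect this naturality check to be the main (and essentially the only) obstacle.

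Granting this, the lemma follows formally: for $i = 1,2$ we compute
\begin{align*}
  \ang{x_i,h_i}_{C^{\an}(G,\pi)} &= \ang{x_i, \mathcal{F}_{P}^{G}(\wt{\phi}_i,\pi)(\iota(f))}_{C^{\an}(G,\pi)} = \ang{\wt{\phi}_i(x_i), \iota(f)}_{C^{\an}(G,\pi)} \\
  &= \ang{\psi(\wt{\phi}_i(x_i)), f}_{M} = \ang{\phi_i(x_i), f}_{M},
\end{align*}
and since $\phi_1(x_1) = \phi_2(x_2)$ in $M$ by hypothesis, the right-hand sides agree, giving $\ang{x_1,h_1}_{C^{\an}(G,\pi)} = \ang{x_2,h_2}_{C^{\an}(G,\pi)}$ in $C^{\an}(G,\pi)$.
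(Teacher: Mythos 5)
Your proof is correct, and it takes a mildly different route from the paper's. The paper's argument replaces the zigzag $W_1 \to M^N \leftarrow W_2$ by the auxiliary module $W_1\oplus W_2 \to M^N$, which permits a reduction to the case of a single $\mathfrak{p}_C$-linear map $\theta\colon W_1\to W_2$; it then uses that $h_i$ is annihilated (under the pairing) by $\ker(\phi_i)$, substitutes $x_2$ by $(1\otimes\theta)(x_1)$, and invokes the naturality identity $\ang{x_1,\theta^*(h_2)}=\ang{(1\otimes\theta)(x_1),h_2}$, which the paper records as ``obvious from the definitions.'' You instead factor both $\phi_i$ through the canonical surjection $\underline{M}(W_0)\onto M$ for a single finite-dimensional $P$-subrepresentation $W_0\subset M$ containing the images of $W_1,W_2$, descend the pairing to $M\times\mathcal{F}_P^G(M,\pi)$, and then conclude by the same naturality property applied to $W_i\to W_0$. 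Both variants rely on precisely the same two ingredients --- the annihilator description of $\iota(\mathcal F_P^G(M,\pi))\subset(\Ind_P^G W_0'\otimes\pi)^{\an}$, and the adjunction of $\mathcal F_P^G(\wt\phi_i,\pi)$ with $\wt\phi_i$ with respect to the pairings --- so the ``main obstacle'' you correctly identify is the one the paper treats as a definitional unwinding, and your expectation that it goes through is justified. Your variant has the small advantage of packaging the descended pairing $\ang{\cdot,\cdot}_M$ once and for all; the paper's variant avoids introducing $W_0$ at the cost of the less transparent $W_1\oplus W_2$ device.

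Two small points of rigor. First, the reason $W_0$ is $\mathfrak{t}'$-diagonalizable is not merely that $\mathfrak{t}'\subset\mathfrak{p}_C$, but that $M$ itself is $\mathfrak t'$-diagonalizable (Lemma~\ref{lm:cat-O-p-equivalence}, condition (ii$'$)) and $W_0$ is a $\mathfrak t'$-stable subspace. Second, you should note explicitly that the images $\phi_i(W_i)\subset M^N$ are $P$-stable (not merely $L$-stable): this follows since $M^N$ is a $P$-subrepresentation of $M$ on which $N$ acts trivially, so any $L$-subrepresentation of $M^N$ is automatically a $P$-subrepresentation. With these clarifications your $W_0$ is legitimately a finite-dimensional locally analytic $P$-subrepresentation of $M$ generating $M$ over $U(\mathfrak g_C)$, and the rest of your argument is sound.
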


\begin{proof}
  By considering $W_1\oplus W_2 \to M^N$ we may reduce to the case where there exists a map $\theta \colon W_1 \to W_2$ inducing
  commutative diagrams
  \begin{equation*}
  \xymatrix@R-15pt{
      W_1 \ar[dd]_\theta\ar[rd] && U(\mathfrak{g}_{C}) \otimes_{U(\mathfrak{p}_{C})} W_1 \ar[dd]_{1\otimes\theta}\ar[rd] && (\Ind_P^G W_1' \otimes \pi)^{\an} \\
      &M^N && M && \mathcal{F}_P^G(M,\pi) \ar[lu] \ar[ld]\\
      W_2 \ar[ru] && U(\mathfrak{g}_{C}) \otimes_{U(\mathfrak{p}_{C})} W_2 \ar[ru] && (\Ind_P^G W_2' \otimes \pi)^{\an} \ar[uu]_{\theta^*}\\
    }
  \end{equation*}
  Next note that we have
  \begin{equation*}
    \mathcal{F}_P^G(M,\pi) \onto \mathcal{F}_P^G(\im(\phi_i),\pi) = (\Ind_P^G W_i' \otimes \pi)^{\ker(\phi_i)}
  \end{equation*}
  by the definition of $\mathcal{F}_P^G$, i.e.\ $h_i$ is killed by $\ker(\phi_i)$ with respect to the pairing $\ang{\cdot,\cdot}_{C^{\an}(G,\pi)}$.
  By assumption, $x_2-(1\otimes\theta)(x_1) \in \ker(\phi_{2})$, so we may replace $x_2$ by $(1\otimes\theta)(x_1)$. But then
  the lemma comes down to the claim that $\ang{x_1,\theta^*(h_2)}_{C^{\an}(G,\pi)} = \ang{(1\otimes\theta)(x_1),h_2}_{C^{\an}(G,\pi)}$,
  which is obvious from the definitions.
\end{proof}

The following proposition generalizes \cite[Proposition 3.2]{socle1}. 

\begin{prop}\label{prop:loc-const-fns}
  Suppose that $P = LN$, $M \in \mathcal{O}^{P}$ $\mathfrak{g}_{C}$-simple, and $\pi$ an (admissible) smooth $L$-representation of finite length.
  Let $f \in (\Ind_P^G (M^N)' \otimes \pi)^{\an}$ such that the restriction $f|_{\overline{N}}$ is locally constant.
  Then $f \in (\Ind_P^G (M^N)' \otimes \pi)^{\ker(\phi)} = \mathcal{F}_P^G(M,\pi)$, where $\phi$ is the natural map $U(\mathfrak{g}_{C}) \otimes_{U(\mathfrak{p}_{C})} M^N \onto M$.
\end{prop}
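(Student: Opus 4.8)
The plan is to reduce the statement to a purely algebraic identity about $\ker(\underline M(M^N)\onto M)$ and to prove that identity by a weight argument combined with the local constancy of $f|_{\overline N}$. Write $W:=M^{N}$ (a finite-dimensional object of $\mathcal{O}^{L}$, hence of $\mathcal{O}^{P}$ with $N$ acting trivially) and let $\phi\colon \underline{M}(W)=U(\mathfrak{g}_{C})\otimes_{U(\mathfrak{p}_{C})}W\onto M$ be the canonical surjection; it is onto because $M$ is $\mathfrak{g}_{C}$-simple and is generated by $W=M^{N}\neq 0$. By the construction of $\mathcal{F}_{P}^{G}$ via the pairing $\ang{\cdot,\cdot}_{C^{\an}(G,\pi)}$ one has $\mathcal{F}_{P}^{G}(M,\pi)=\{h\in(\Ind_{P}^{G}W'\otimes\pi)^{\an}:\ang{x,h}_{C^{\an}(G,\pi)}=0\text{ in }C^{\an}(G,\pi)\text{ for all }x\in\ker\phi\}$, so it suffices to show $\ang{x,f}_{C^{\an}(G,\pi)}$ is the zero function for every $x\in\ker\phi$. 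Using the PBW identification $\underline{M}(W)\cong U(\overline{\mathfrak{n}}_{C})\otimes_{C}W$, where $\overline{\mathfrak{n}}$ is the Lie algebra of the unipotent radical $\overline{N}$ of the opposite parabolic, write $x=\sum_{i}\overline{z}_{i}\otimes w_{i}$ with $\overline{z}_{i}\in U(\overline{\mathfrak{n}}_{C})$, $w_{i}\in W$; then $\phi(x)=\sum_{i}\overline{z}_{i}w_{i}=0$ in $M$.

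First I would reduce to a dense open subset. The multiplication maps $\overline{N}\times P\to G$ and $P\times\overline{N}\to G$ are open immersions onto the big cells $\overline{N}P$ and $P\overline{N}$, whose complements are finite unions of lower-dimensional cells and so have empty interior in the analytic manifold $G$; hence $\Omega:=\overline{N}P\cap P\overline{N}$ is open and dense, and by continuity it suffices to show $\ang{x,f}_{C^{\an}(G,\pi)}$ vanishes on $\Omega$. By construction the value $\ang{x,f}_{C^{\an}(G,\pi)}(g)$ depends only on the germ at $1\in\overline{N}$ of the locally analytic function $\overline{n}\mapsto f(\overline{n}g)$, through the operators $\overline{z}_{i}$, which — lying in $U(\overline{\mathfrak{n}}_{C})$ — act as differential operators tangent to $\overline{N}$; explicitly $\ang{x,f}_{C^{\an}(G,\pi)}(g)=\sum_{i}\ang{(\overline{z}_{i}\cdot_{\overline{N}}[\overline{n}\mapsto f(\overline{n}g)])(1),\,w_{i}}_{W}$, where $\overline{z}_{i}\cdot_{\overline{N}}$ is the corresponding differential operator and $\ang{\cdot,\cdot}_{W}$ pairs $W$ with $W'$ in the first factor of $W'\otimes\pi$.

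Now fix $g\in\Omega$. Since $\Omega$ is open and $g\in P\overline{N}$, there is a compact open subgroup $\overline{N}_{0}\subset\overline{N}$ with $\overline{n}g\in P\overline{N}$ for all $\overline{n}\in\overline{N}_{0}$; writing $\overline{n}g=p(\overline{n})\overline{m}(\overline{n})$ with locally analytic $p\colon\overline{N}_{0}\to P$, $\overline{m}\colon\overline{N}_{0}\to\overline{N}$, the $P$-equivariance of $f$ gives $f(\overline{n}g)=(p(\overline{n})\otimes1)f(\overline{m}(\overline{n}))$. As $f|_{\overline{N}}$ is locally constant we may shrink $\overline{N}_{0}$ so that $f(\overline{m}(\overline{n}))=v_{0}$ is a fixed element of $W'\otimes\pi$. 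Substituting and using $\ang{w_{i},(p(\overline{n})\otimes1)v_{0}}_{W}=\ang{p(\overline{n})^{-1}w_{i},v_{0}}_{W}$, I obtain
\[
\ang{x,f}_{C^{\an}(G,\pi)}(g)=\Big\langle\sum_{i}\big(\overline{z}_{i}\cdot_{\overline{N}}[\overline{n}\mapsto p(\overline{n})^{-1}w_{i}]\big)(1),\,v_{0}\Big\rangle ,
\]
where now $\overline{z}_{i}\cdot_{\overline{N}}$ acts on $W$-valued functions on $\overline{N}_{0}$. Hence the proposition reduces to the algebraic claim: \emph{for every $g\in\Omega$ (with $p$ as above) and every $x=\sum_{i}\overline{z}_{i}\otimes w_{i}\in\underline{M}(W)$ with $\sum_{i}\overline{z}_{i}w_{i}=0$ in $M$, one has $\sum_{i}(\overline{z}_{i}\cdot_{\overline{N}}[\overline{n}\mapsto p(\overline{n})^{-1}w_{i}])(1)=0$ in $W$.}

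This last claim is the crux, and I would prove it along the lines of \cite[Proposition~3.2]{socle1}. Set $p_{0}:=p(1)$, so $g=p_{0}\overline{m}(1)$ is the $P\overline{N}$-decomposition of $g$. Two structural facts enter: $\ker\phi$ is stable under the $P$-action on $\underline{M}(W)$; and $M$ decomposes into $\mathfrak{z}_{L,C}$-weight spaces with $W=M^{N}$ in a single weight $\zeta_{0}$ and $\overline{\mathfrak{n}}_{C}M$ in weights $\ne\zeta_{0}$, so $W\cap\overline{\mathfrak{n}}_{C}M=0$. When every $\overline{z}_{i}$ has degree $\le 1$ the claim is immediate: differentiating $p(\overline{n})^{-1}\overline{n}g=\overline{m}(\overline{n})$ at $1$ gives $(\overline{y}\cdot_{\overline{N}}[\overline{n}\mapsto p(\overline{n})^{-1}w])(1)=-[\Ad(p_{0}^{-1})\overline{y}]_{\mathfrak{p}_{C}}(p_{0}^{-1}w)$ for $\overline{y}\in\overline{\mathfrak{n}}_{C}$, while applying $p_{0}^{-1}$ to $\sum_{i}\overline{y}_{i}w_{i}=0$ yields $\sum_{i}(\Ad(p_{0}^{-1})\overline{y}_{i})(p_{0}^{-1}w_{i})=0$, whose $\zeta_{0}$-component is exactly $\sum_{i}[\Ad(p_{0}^{-1})\overline{y}_{i}]_{\mathfrak{p}_{C}}(p_{0}^{-1}w_{i})=0$ (using that the $\mathfrak{n}_{C}$-part of $\Ad(p_{0}^{-1})\overline{y}_{i}$ kills $p_{0}^{-1}w_{i}\in M^{\mathfrak{n}_{C}}$, the $\mathfrak{l}_{C}$-part preserves weights, and $\overline{\mathfrak{n}}_{C}$ strictly lowers them). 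For higher-order $\overline{z}_{i}$ the same mechanism applies after an induction on the $U(\overline{\mathfrak{n}}_{C})$-degree; equivalently, after a finite scalar extension and a $z$-extension reducing to $\alggrp{G}^{\der}$ simply connected, Lemmas~\ref{lem:semisimple-lift} and~\ref{lem:verma-tensor} let one write $W\cong W_{\alg}\otimes\psi|_{L}$ with $W_{\alg}$ algebraic, so that $\ker\phi$ is the radical of a generalized Verma module with integral highest weight $\Lambda$ and $\overline{n}\mapsto p(\overline{n})^{-1}w$ becomes, near $1$, a polynomial of degree $\langle\Lambda,\alpha^{\vee}\rangle$ along each root direction $\overline{U}_{-\alpha}$ (the non-algebraic twist only contributing a locally constant factor), which is annihilated by the defining operators $\overline{z}_{\alpha}^{\langle\Lambda,\alpha^{\vee}\rangle+1}$ of $\ker\phi$. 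The main obstacle is exactly this higher-order bookkeeping: showing that the relations cutting out $\ker\phi$ kill precisely the part of the Taylor expansion of $\overline{n}\mapsto p(\overline{n})^{-1}w_{i}$ that the pairing with $v_{0}$ detects.
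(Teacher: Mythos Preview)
Your reduction to the ``algebraic claim'' is reasonable in spirit, but the argument you offer for that claim has a genuine gap, and this is exactly the point where the paper's approach diverges from yours.

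The problem is your treatment of $\ker\phi$ for higher degree. You assert that, after reducing to the algebraic case, $\ker\phi$ is ``the radical of a generalized Verma module with integral highest weight $\Lambda$'' cut out by the operators $\overline{z}_{\alpha}^{\langle\Lambda,\alpha^{\vee}\rangle+1}$, and that the relevant function is polynomial of the right degree so as to be annihilated. But the hypothesis is only that $M$ is $\mathfrak{g}_{C}$-simple, not that $M\in\mathcal{O}^{G}$; in particular $M$ is typically infinite-dimensional, the highest weight of $W$ need not be dominant integral, and the maximal submodule of $\underline{M}(W)$ has no simple description by monomials $\overline{z}_{\alpha}^{n_{\alpha}}$. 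Even in the finite-dimensional case, such a presentation is not available for a general parabolic. Your degree-$1$ computation is fine, but it gives no handle on the structure of $\ker\phi$ in higher degree, and the ``induction on the $U(\overline{\mathfrak{n}}_{C})$-degree'' is never set up (there is no obvious inductive structure on $\ker\phi$ compatible with your formula).

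The paper avoids this entirely by arguing indirectly, following Breuil. Rather than attacking $\ker\phi$ head-on, one observes that $\Delta_{f}:=\langle\cdot,f\rangle_{C^{\an}(G,\pi)}$ is $(\mathfrak{g}_{C},P)$-linear, so $\im(\Delta_{f})\in\mathcal{O}^{P}$, and $f\in\mathcal{F}_{P}^{G}(\im(\Delta_{f}),\pi)$ by construction. It then suffices to show $M':=\ker(\im(\Delta_{f})\onto M)=0$. If $M'\neq 0$, take $V:=(M')^{N}\neq 0$ and the induced map $\psi\colon U(\mathfrak{g}_{C})\otimes_{U(\mathfrak{p}_{C})}V\to\im(\Delta_{f})$. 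Now Lemma~\ref{lm:breuil-3-1} lets one compare the two presentations of elements of $\im(\Delta_{f})$ via $\phi$ and $\psi$: picking a $\mathfrak{t}'$-weight vector $v\in V$ of weight $\mu$ and writing $\psi(1\otimes v)=\phi(\sum_i \overline{z}_i\otimes w_i)$, the local constancy of $f|_{\overline N}$ forces the value $\langle 1\otimes v, h\rangle(g)$ (for the image $h$ of $f$) to agree with an expression that vanishes by a $\mathfrak{t}'$-weight argument, since the weights occurring in $\sum_i \overline{z}_i\otimes w_i$ are strictly below the highest weight of $W$ while $v$ has weight $\mu$. This yields the contradiction without ever needing generators for $\ker\phi$. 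The key idea you are missing is precisely this use of $\im(\Delta_{f})$ as an object of $\mathcal{O}^{P}$ together with Lemma~\ref{lm:breuil-3-1}; it replaces your unproven algebraic identity by a clean highest-weight comparison.
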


\begin{proof}
  The proof proceeds exactly as in \cite{socle1}, noting that by Lemma~\ref{lm:N-invts} we know that $W := M^N = M^{\mathfrak{n}_C}$ is in $\mathcal{O}^{L}$ and $\mathfrak{l}_{C}$-simple.
  We may assume $f\ne 0$.
  Note that $\Delta_{f} = \langle\cdot,f\rangle_{C^{\an}(G,\pi)} \colon U(\mathfrak{g}_{C}) \otimes_{U(\mathfrak{p}_{C})} W \to C^{\an}(G,\pi)$ is also $P$-linear, by letting $P$ act on $C^{\an}(G,\pi)$ by $(p'f)(g) := p'f(p'^{-1}g)$.
  Now $\ker(\Delta_f)$ is $(\mathfrak{g}_C,P)$-stable, and hence a subobject in $\mathcal{O}^{P}$, so also $\im(\Delta_f)$ lies in $\mathcal{O}^{P}$.
  As $f \in \mathcal{F}_P^G(\im(\Delta_f),\pi)$ by construction, it suffices to show that $M' := \ker(\im(\Delta_f) \onto M) \in \mathcal{O}^{P}$ is zero.
    If not, then $V := M'^N = M'^{\mathfrak{n}_C}$ is nonzero in $\mathcal{O}^{L}$ (it need not be simple, but it does not matter for the argument).
      Then let $\psi \colon U(\mathfrak{g}_{C}) \otimes_{U(\mathfrak{p}_{C})} V \to \im(\Delta_f)$ be the induced map in $\mathcal{O}^{P}$.
  Following the proof \cite[Proposition 3.2]{socle1}, we use Lemma~\ref{lm:breuil-3-1} instead of \cite[Lemma 3.1]{socle1}.
  For the argument with weights near the end, we may use weights of $\mathfrak{t}'$.
\end{proof}

\subsection{Intertwiners}
\label{sec:intertwiners}

Suppose that $\alggrp{P} = \alggrp{L}\alggrp{N}$ be a parabolic subgroup and $\sigma$, $\tau$ are continuous representations of $L$ on Banach spaces.

\begin{prop}\label{prop:intertwiners}
  The natural map $\Hom_{L}^\cts(\sigma,\tau) \to \Hom_G^\cts((\Ind_{P}^G\sigma)^{\cts},(\Ind_{P}^G\tau)^{\cts})$ is an isomorphism.
\end{prop}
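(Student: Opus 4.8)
The plan is to exhibit an explicit inverse to the natural "induction" map. Given $\varphi \in \Hom_L^\cts(\sigma,\tau)$, post-composition with $\varphi$ sends $(\Ind_P^G\sigma)^\cts$ to $(\Ind_P^G\tau)^\cts$ and is visibly $G$-equivariant and continuous, and the assignment $\varphi\mapsto \Ind_P^G\varphi$ is the map in question. For the other direction, I would use evaluation at the identity. Recall that $(\Ind_P^G\sigma)^\cts \cong (\Ind_{P\cap K}^K\sigma)^\cts$ and, more to the point, that the restriction map $f\mapsto f|_{\overline N}$ realizes the subspace of functions supported on the big cell $P\backslash P\overline N$ as (a subspace of) $C^0(\overline N,\sigma)$; in particular for any $v\in\sigma$ and any compactly supported continuous scalar function on $\overline N$ we can build an element of $(\Ind_P^G\sigma)^\cts$ taking the value $v$ at $1$. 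So evaluation at $1$, $f\mapsto f(1)$, is a surjection $(\Ind_P^G\sigma)^\cts \onto \sigma$. The key point is that although $\mathrm{ev}_1$ is not $G$-equivariant, it is $P$-equivariant: for $p'\in P$, $(p'f)(1) = f(1\cdot p') = f(p')$... wait, one must be careful with left/right conventions; with right translation $(gf)(x) = f(xg)$ and the left-equivariance $f(p'g) = \sigma(p')f(g)$, one checks that $(p'f)(1) = f(p') = \sigma(p')f(1)$, so indeed $\mathrm{ev}_1$ intertwines the $P$-action on $(\Ind_P^G\sigma)^\cts$ (by right translation) with $\sigma$.

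Now given $\Psi\in\Hom_G^\cts((\Ind_P^G\sigma)^\cts,(\Ind_P^G\tau)^\cts)$, define $\varphi_\Psi := \mathrm{ev}_1\circ\Psi\circ s$ for a suitable continuous $P$-equivariant section $s\colon\sigma\to(\Ind_P^G\sigma)^\cts$ of $\mathrm{ev}_1$ — but of course no $P$-equivariant section need exist globally, so instead I would argue as follows. For $f\in(\Ind_P^G\sigma)^\cts$, the value $(\Psi f)(1)\in\tau$ should depend only on $f(1)$. To see this, one shows: if $f(1)=0$ then $(\Psi f)(1)=0$. This is where the big-cell approximation enters. Any $f$ with $f(1)=0$ can be approximated (using the $S$-action to shrink supports toward the big cell, exactly as in the proof of Lemma~\ref{lem:supported on big cell}, combined with a $p$-adic successive-approximation argument) by $G$-translates and sums of functions supported away from $1$ in $P\backslash G$, hence lying in the kernel of $\mathrm{ev}_1$... actually the cleanest route: show that the closed span of $\{f\in(\Ind_P^G\sigma)^\cts : f(1)=0\}$ contains all functions vanishing at $1$, and that the subspace $\{f : \supp f \subset P\backslash P\overline N,\ f(\overline N)\subset f(1)\text{'s "fiber"}\}$ already surjects onto $\sigma$ via $\mathrm{ev}_1$ with continuous splitting after restricting to a compact open $\overline N_0$. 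Concretely: fix a compact open $\overline N_0\subset\overline N\cap K$ and let $s_0\colon \sigma\to(\Ind_P^G\sigma)^\cts$ send $v$ to the function supported on $P\overline N_0$ whose restriction to $\overline N_0$ is the constant function $v$ (transported via the local chart, extended by $P$-equivariance, then by $0$). This $s_0$ is continuous and $\mathrm{ev}_1\circ s_0 = \mathrm{id}_\sigma$, though $s_0$ is not $P$-equivariant. Set $\varphi_\Psi := \mathrm{ev}_1\circ\Psi\circ s_0$.

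It then remains to prove: (a) $\varphi_\Psi\in\Hom_L^\cts(\sigma,\tau)$, i.e. it is $L$-linear — this uses that $\Psi$ is $G$-equivariant and an averaging/limiting argument showing independence of the choice $\overline N_0$ and hence compatibility with the $P$-action, together with the fact that a $P$-equivariant continuous map $(\Ind_P^G\sigma)^\cts\to\tau$ is determined by its restriction to the big-cell functions; (b) $\Ind_P^G(\varphi_\Psi) = \Psi$ — for this one shows both sides agree on the dense subspace generated by big-cell functions of the form $\varphi\otimes v$ (Lemma~\ref{lem:supported on big cell} guarantees density), where the computation is immediate from $P$-equivariance and the explicit form of $s_0$; (c) $\varphi_{\Ind_P^G\varphi} = \varphi$, which is immediate since $\mathrm{ev}_1\circ(\Ind_P^G\varphi)\circ s_0 = \varphi\circ\mathrm{ev}_1\circ s_0 = \varphi$. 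The main obstacle is step (a)/(b): proving that $\Psi$ genuinely "comes from" a map of coefficient spaces, which amounts to showing $(\Psi f)(1)$ depends only on $f(1)$ and is $P$-equivariant in $f$; this is exactly the content of the big-cell density lemma (Lemma~\ref{lem:supported on big cell}) applied with $\sigma$ replaced by the possibly-not-irreducible $\sigma$ — so one either reduces to the irreducible case by dévissage along a composition series of $\sigma$ (morphisms of admissible Banach representations being strict, so the five lemma applies) or proves the requisite density directly, the estimates being the same $p$-adic successive-approximation estimates already carried out in the proof of Lemma~\ref{lem:supported on big cell}.
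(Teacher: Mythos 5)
Your overall architecture is right (and matches the paper's first step): by Frobenius reciprocity, giving a $G$-map $\Psi$ is the same as giving the $P$-equivariant continuous map $\mu := \mathrm{ev}_1\circ\Psi \colon (\Ind_P^G\sigma)^\cts\to\tau$, and one wants to show $\mu$ comes from a map $\sigma\to\tau$ by evaluating at $1$ on a section. The gap is in the step you yourself flag as ``the main obstacle'': you assert as a ``fact'' that a $P$-equivariant continuous map $(\Ind_P^G\sigma)^\cts\to\tau$ is determined by its restriction to big-cell functions (equivalently, that $\mu(f)$ depends only on $f(1)$, equivalently that $\mu$ kills every $f$ supported away from the identity coset), and you attribute this to Lemma~\ref{lem:supported on big cell}. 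But that lemma is about something else: it shows that the $G$-span of a suitable big-cell function is dense in $(\Ind_P^G\sigma)^\cts$. Density of a subspace in the \emph{source} of a continuous map tells you nothing about what the map does on the \emph{complementary} functions; it certainly does not force $\mu$ to vanish on $\{f : f \text{ supported away from } 1\}$. A priori $\mu$ could be something like an ``integration over $\overline N$'' functional, and ruling that out is the entire content of the proposition.

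The paper closes exactly this gap with a different mechanism that your sketch never touches. Using the Iwahori--Bruhat decomposition $G=\coprod_{w\in W_L\backslash W}PwI$, it splits $(\Ind_P^G\sigma)^\cts$ into pieces $(\Ind_P^{PwI}\sigma)^\cts\cong\mathcal C^0(I\cap w^{-1}\overline N w,\sigma)$; for $w\notin W_L$ there is a reduced $\alpha>0$ with $U_\alpha\subset N\cap w^{-1}\overline N w$, so $\mu$ is left $I\cap U_\alpha$-invariant on that piece (since $N$ acts trivially on $\tau$). Lemma~\ref{lem:no-haar} then kills $\mu$ there by a genuinely $p$-adic phenomenon: an $H_1$-invariant continuous functional on $\mathcal C^0(H_1H_2,\cdot)$ must scale indicator functions by $(H_1':H_1'')^{-1}$, whose $p$-adic absolute value is unbounded, contradicting continuity. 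This ``no $p$-adic Haar measure'' argument is the crux; it has no analogue in the successive-approximation argument of Lemma~\ref{lem:supported on big cell} (which is an estimate about reductions mod $\varpi_C^k$ of a $G$-span, not about $P$-equivariant linear functionals). Without something like Lemma~\ref{lem:no-haar}, your steps (a) and (b) — independence of the choice of $\overline N_0$, $L$-linearity of $\varphi_\Psi$, and the identity $\Ind_P^G(\varphi_\Psi)=\Psi$ — are not established: the ``averaging/limiting argument'' you invoke is exactly what is missing, and the attempted bootstrap (using the $S$-action to relate $s_0^{(\overline N_0)}$ to $s_0^{(z\overline N_0 z^{-1})}$) is circular because it presupposes either $L$-equivariance or independence of $\overline N_0$, neither of which is yet known.
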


When $\alggrp{G}$ is split, $\alggrp{P}$ is minimal, and $\dim_C \sigma = \dim_C \tau = 1$, then it follows from the main theorem of \cite{MR4346019}.

\begin{proof}
  By Frobenius reciprocity \[\Hom_G^\cts((\Ind_{P}^G\sigma)^{\cts},(\Ind_{P}^G\tau)^{\cts}) \cong \Hom^\cts_{P}((\Ind_{P}^G\sigma)^{\cts},\tau).\]
  Fix a nonzero element $\mu \in \Hom^\cts_{P}((\Ind_{P}^G\sigma)^{\cts},\tau)$.

  Recall that we fixed a special point $x_0$ in the apartment of $\alggrp S$ corresponding to $K$.
  Let $I$ be an Iwahori subgroup fixing a facet of the apartment of $\alggrp S$ having vertex $x_0$.
  Then $G = \coprod_{w \in W_{L}\backslash W} PwI$ where $W_{L}$ is the Weyl group $N_{\alggrp L}(\alggrp S)/\alggrp Z$.
  Then $(\Ind_{P}^G\sigma)^{\cts} \cong \bigoplus_{w} (\Ind_{P}^{P w I}\sigma)^{\cts}$
  and $(\Ind_{P}^{P w I}\sigma)^{\cts} \cong \mathcal C^0(I \cap w^{-1}\o Nw, \sigma)$.
  We first claim that the restriction of $\mu$ to $(\Ind_{P}^{P w I}\sigma)^{\cts}$ is zero for all $w \in W\setminus W_{L}$.
    If $w \notin W_{L}$ there exists a reduced root $\alpha > 0$ that appears in $\Lie(\alggrp{N})$ such that $w\alpha$ appears in $\Lie(\overline{\alggrp{N}})$, so $U_\alpha \subset N \cap w^{-1}\overline{N}w$.
  (The argument is exactly as in the proof of Proposition \ref{prop:N-invariants}.)
      Note that $I \cap w^{-1}\overline{N}w = \prod_{\beta} (I \cap U_{w^{-1}\beta})$ in any fixed order, with $\beta$ running through the roots of $\o{\alggrp N}$, and we list $\beta = w\alpha$ first.
  Then Lemma~\ref{lem:no-haar} shows that any continuous $I \cap U_\alpha$-linear map $\mathcal C^0(I \cap w^{-1}\overline{N}w,\sigma)\to\tau$ vanishes.
  This implies the claim, as $I \cap U_\alpha \subset I \cap N \cap w^{-1}\overline{N}w$.

  By the claim, the given map $\mu$ factors as
  \begin{equation*}
    (\Ind_{P}^G\sigma)^{\cts} \onto (\Ind_{P}^{P I}\sigma)^{\cts} \cong \mathcal C^0(I \cap \overline{N}, \sigma) \to \tau,
  \end{equation*}
  where the first map is given by restriction.
  By the action of $L$, $\mu$ also factors through $(\Ind_{P}^{G}\sigma)^{\cts}\to (\Ind_{P}^{PI\ell}\sigma)^{\cts}\simeq \mathcal{C}^{0}(\ell^{-1}I\ell\cap \overline{N},\sigma)$ for any $\ell\in L$.
  In other words, $\mu$ factors through $\mathcal C^0(\overline{N}_0, \sigma)$ for any compact open subgroup $\overline{N}_0$ of $\overline{N}$.
  Fix now some compact open subgroup $\overline{N}_0$ of $\overline{N}$ and $v \in \sigma$.
  Write $1_{\overline{N}_0,v} \in (\Ind_B^{B \overline{N}_0}\sigma)^{\cts}$ for the element taking constant value $v$ on $\overline{N}_0$.
  Then $\mu(1_{\ell \overline{N}_0 \ell^{-1},\ell v}) = \mu(\ell 1_{\overline{N}_0,v}) = \ell\mu(1_{\overline{N}_0,v})$ for $\ell\in L$ by $L$-linearity.
  But we also have $\mu(1_{\overline{N}_0,\ell v}) = \mu(1_{\ell \overline{N}_0 \ell^{-1},\ell v})$, as $\mu$ only depends on the restriction to a small neighborhood of $1 \in \overline{N}$.
  Defining $\phi_\mu : \sigma \to \tau$ by $\phi_\mu(v) := \mu(1_{\overline{N}_0,v})$, we deduce that $f$ is $L$-linear.
  It is clearly continuous.
  Moreover, $\mu(f) = \phi_\mu(f(1))$ for all $f \in \mathcal C^\infty(\overline{N}_0, \sigma)$ and hence by continuity for all $f \in \mathcal C^0(\overline{N}_0, \sigma)$.
  It is clear that $\mu \mapsto \phi_\mu$ is inverse to the given map.
\end{proof}

\begin{lem}\label{lem:no-haar}
  Suppose that $H$ is a compact locally analytic group with closed subgroups $H_1$, $H_2$ such that multiplication induces a topological isomorphism $H_1 \times H_2 \congto H$.
  Suppose there exist a basis of open neighborhoods of 1 consisting of subgroups of the form $H_1'H_2'$ with $H_i'$ an open subgroup of $H_i$.
  Suppose that $H_1$ is infinite and that $V_1$, $V_2$ are Banach spaces.
  Then any left $H_1$-invariant continuous map $\mu\colon \mathcal C^0(H,V_1) \to V_2$ vanishes.
\end{lem}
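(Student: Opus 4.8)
The crux is that $C$ is non-archimedean, so that the cancellation $\lvert\sum_j c_j\rvert_{C}\le\max_j\lvert c_j\rvert_{C}$ is available; over $\R$ or $\C$ the statement is false, since a Haar measure on $H_1$ would produce a nonzero invariant $\mu$. The plan is to separate the two factors and then reduce to a one-line computation.

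\emph{Reduction.} Using the topological isomorphism $H_1\times H_2\congto H$ and the exponential law (which is isometric because $H_2$ is compact), I would identify $\mathcal C^0(H,V_1)\cong\mathcal C^0(H_1,W)$ as Banach spaces, where $W:=\mathcal C^0(H_2,V_1)$; under this identification, left translation by $h\in H_1$ on $\mathcal C^0(H,V_1)$ corresponds to the left regular action $F\mapsto\ell_hF$, $(\ell_hF)(x):=F(h^{-1}x)$, on $\mathcal C^0(H_1,W)$. Since $H_1$, being a closed subgroup of the compact locally analytic group $H$, is compact and totally disconnected, it is profinite; in particular locally constant $W$-valued functions are dense in $\mathcal C^0(H_1,W)$, and each of them is a finite sum $\sum_i 1_{X_i}\otimes w_i$ with $X_i\subset H_1$ clopen and $w_i\in W$. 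As $\ell_h(\alpha\otimes w)=(\ell_h\alpha)\otimes w$, continuity of $\mu$ reduces us to showing: for each $w\in W$, the continuous linear map $\nu_w\colon\mathcal C^0(H_1,C)\to V_2$, $\alpha\mapsto\mu(\alpha\otimes w)$ — which is invariant under the left regular action of $H_1$ — vanishes.

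\emph{Core computation.} So let $V$ be a Banach space and $\nu\colon\mathcal C^0(H_1,C)\to V$ continuous, linear, and invariant under the left regular action of $H_1$; I claim $\nu=0$. Because $H_1$ is infinite profinite it admits open subgroups of arbitrarily large finite index. Fix a locally constant $\alpha\in\mathcal C^0(H_1,C)$, constant on the left cosets of an open subgroup $U_0\le H_1$, and fix $n\ge1$; pick an open subgroup $U\le U_0$ with $N:=[H_1:U]\ge n$ and coset representatives $a_1,\dots,a_N$, and write $\alpha=\sum_{j=1}^{N}c_j\,1_{a_jU}$, so that $\max_j\lvert c_j\rvert_{C}=\lVert\alpha\rVert_\infty$. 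Since $1_{a_jU}=\ell_{a_j}1_U$, invariance gives $\nu(1_{a_jU})=\nu(1_U)$ for every $j$; summing over $j$ and using $\sum_j 1_{a_jU}=1_{H_1}$ yields $\nu(1_U)=\tfrac1N\nu(1_{H_1})$. Hence $\nu(\alpha)=\bigl(\sum_j c_j\bigr)\nu(1_U)=\tfrac1N\bigl(\sum_j c_j\bigr)\nu(1_{H_1})$, and the ultrametric inequality gives
\[
\lVert\nu(\alpha)\rVert\le\frac{1}{N}\,\max_j\lvert c_j\rvert_{C}\,\lVert\nu(1_{H_1})\rVert\le\frac{1}{n}\,\lVert\alpha\rVert_\infty\,\lVert\nu(1_{H_1})\rVert .
\]
The left-hand side is independent of $n$ and $n$ was arbitrary, so $\nu(\alpha)=0$; by density of locally constant functions and continuity of $\nu$, $\nu=0$.

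\emph{Main obstacle.} There is no genuine difficulty once the non-archimedean cancellation is noticed; the step needing care is the reduction — checking that the exponential-law identification is isometric and intertwines the $H_1$-actions, and that the rank-one tensors $\alpha\otimes w$ span a dense subspace. (The infinitude of $H_1$ is used only to produce open subgroups of arbitrarily large index, and the hypothesis on neighborhoods of the form $H_1'H_2'$ is not needed for this particular argument.)
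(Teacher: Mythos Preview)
Your core computation contains a genuine error: you treat $\tfrac{1}{N}$ as the small real number $1/N$, but the norm on $V_2$ is with respect to the non-archimedean field $C$, so $\lVert N^{-1}v\rVert = \lvert N^{-1}\rvert_C\,\lVert v\rVert$, not $\tfrac{1}{N}\lVert v\rVert$. When $N$ is a power of $p$ --- and you can always arrange this, since $H_1$ contains an open pro-$p$ subgroup --- the factor $\lvert N^{-1}\rvert_C = p^{v_p(N)}$ is \emph{large}, not small. So your displayed inequality is false and in fact runs in the wrong direction. Your opening remark that the non-archimedean nature of $C$ is the crux is correct, but the ultrametric bound on $\lvert\sum_j c_j\rvert_C$ is not what does the work here.

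The fix, and essentially what the paper does, is to exploit exactly this blow-up. From your correct identity $\nu(1_U) = N^{-1}\nu(1_{H_1})$ one gets $\lVert\nu(1_U)\rVert = \lvert N^{-1}\rvert_C\,\lVert\nu(1_{H_1})\rVert$, which is unbounded as $N$ ranges over powers of $p$, while $\lVert 1_U\rVert_\infty = 1$ stays bounded; this contradicts continuity unless $\nu(1_{H_1}) = 0$, and then $\nu$ vanishes on all coset indicators, hence everywhere. The paper runs this argument directly on $\mathcal C^0(H,V_1)$ with functions of the form $1_{H_1''H_2'h,v}$ (which is where the basis-of-neighborhoods hypothesis enters), without your exponential-law reduction --- though that reduction is valid and does render the hypothesis superfluous.
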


\begin{proof}
  Suppose that $\mu$ is nonzero.
  By density of smooth functions and by assumption, $\mu$ has to be nonzero on a function of the form $1_{Uh,v}$ for some open subgroup $U$, some $h \in H$, and $v \in V_1$.
  Without loss of generality, $U = H_1'H_2'$ with $H_i'$ an open subgroup of $H_i$.
  Then for any open subgroup $H_1''$ of $H_1'$ we have $\mu(1_{H_1''H_2'g,v}) = (H_1':H_1'')^{-1}\mu(1_{H_1'H_2'g,v})$ by left $H_1$-invariance.
  By assumption on $H_1$ we deduce that the $p$-adic absolute value of $(H_1':H_1'')^{-1}$ is unbounded.
  (Note that $H_1'$ is compact locally analytic, hence profinite and contains an open normal pro-$p$ subgroup, which is infinite by assumption.)
  This contradicts the continuity of $\mu$, as the set $\{1_{H_1''H_2'g,v}\}$ is bounded.
\end{proof}

\begin{cor}\label{cor:indecomposable}
If $\sigma$ is indecomposable then $(\Ind_{P}^{G}\sigma)^{\cts}$ is indecomposable.
\end{cor}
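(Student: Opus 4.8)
The plan is to deduce this immediately from Proposition~\ref{prop:intertwiners} by transporting the question to endomorphism rings. First I would apply Proposition~\ref{prop:intertwiners} with $\tau = \sigma$, obtaining a bijection $\End_L^\cts(\sigma) \to \End_G^\cts((\Ind_P^G\sigma)^\cts)$. Since this map sends $\phi$ to the induced endomorphism $f \mapsto \phi \circ f$, and composition of $L$-morphisms corresponds to composition of the induced $G$-morphisms while $\mathrm{id}_\sigma$ goes to the identity, it is a unital ring homomorphism, hence a ring isomorphism. In particular, as $\sigma \ne 0$ (being indecomposable) the ring $\End_G^\cts((\Ind_P^G\sigma)^\cts)$ is nonzero, so $(\Ind_P^G\sigma)^\cts \ne 0$.

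Next I would record the standard dictionary: a nonzero Banach representation $\pi$ of a group $H$ (here $H = L$ or $H = G$) is indecomposable if and only if $\End_H^\cts(\pi)$ has no idempotents other than $0$ and $1$. One direction is trivial, a topological direct sum $\pi = \pi_1 \oplus \pi_2$ into nonzero closed subrepresentations yielding the projection onto $\pi_1$ as a nontrivial idempotent. For the converse, given an idempotent $e \notin \{0,1\}$, the closed $H$-stable subspaces $\pi_1 := e\pi = \ker(1-e)$ and $\pi_2 := (1-e)\pi = \ker e$ are nonzero and give an algebraic direct sum decomposition, and the continuous $H$-equivariant bijection $\pi \congto \pi_1 \oplus \pi_2$, $v \mapsto (ev,(1-e)v)$, is then automatically a topological isomorphism by the open mapping theorem, all the spaces being Banach spaces. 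I expect this last point — promoting the algebraic splitting to a topological one — to be the only place requiring any care, and it is exactly what the open mapping theorem supplies.

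Finally I would combine the two ingredients: since $\sigma$ is indecomposable, $\End_L^\cts(\sigma)$ has no nontrivial idempotents, hence neither does the isomorphic ring $\End_G^\cts((\Ind_P^G\sigma)^\cts)$; as $(\Ind_P^G\sigma)^\cts$ is moreover nonzero, the dictionary above shows that $(\Ind_P^G\sigma)^\cts$ is indecomposable. Note that no admissibility hypothesis on $\sigma$ is needed, since Proposition~\ref{prop:intertwiners} itself is stated for arbitrary continuous Banach representations.
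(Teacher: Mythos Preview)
Your proof is correct and matches the paper's intended argument: the paper states the corollary without proof and immediately records the equivalence between indecomposability and the absence of nontrivial idempotents in the continuous endomorphism ring, making clear that the result follows directly from Proposition~\ref{prop:intertwiners} exactly as you outline. One minor simplification: you do not need the open mapping theorem for the topological splitting, since both the map $v \mapsto (ev,(1-e)v)$ and its inverse $(v_1,v_2) \mapsto v_1 + v_2$ are visibly continuous.
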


Here, we say that a Banach representation $\pi$ is \emph{indecomposable} if it cannot be written as a direct sum of two closed subrepresentations.
(Equivalently, the ring $\End_G^\cts(\pi)$ does not contain any non-trivial idempotents.)

\begin{lem}\label{lem:roche}
  Suppose that $G$ is a locally analytic group with open normal subgroup $N$ such that $G/N$ is finite abelian.
  If $V$ is an absolutely irreducible admissible Banach representation of $G$ and $C$ is sufficiently large, 
  then there exists an absolutely irreducible closed subrepresentation $W$ of $V|_N$ such that, if we let $H := \{g\in G\mid gW = W\}$ (stabilizer of the subspace), we have
   $W \circ \Ad(g) \cong W$ as $H$-representations for $g \in G$ implies $g \in H$.
  In particular, the natural map $\Ind_H^G W \to V$ is an isomorphism.
\end{lem}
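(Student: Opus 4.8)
The plan is to run Clifford theory for the finite abelian quotient $A := G/N$: first reduce to the case where $V|_N$ is isotypic, then choose $W$ by picking a ``Lagrangian'' subgroup for an alternating pairing attached to $A$.

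\textbf{Step 1 (reduction to the isotypic case).} Using that $C$ may be taken sufficiently large, I would first enlarge $C$ so that $V|_N$ is a direct sum of absolutely irreducible $N$-representations; this is possible because $\End_N^\cts$ of an irreducible admissible Banach representation is a finite-dimensional division $C$-algebra (\S\ref{sec:banach-repr}), so a single finite extension splits the finitely many occurring types, and it preserves absolute irreducibility of $V$. Since admissible Banach representations form an artinian category, $V|_N$ has a simple subobject $W_0$, whence $V|_N = \sum_{g\in G/N} gW_0$ is a finite sum of simple $N$-subrepresentations, hence semisimple by Lemma~\ref{lem:semisimple-banach}. Write $V|_N = \bigoplus_j V_j$ into isotypic components; $G$ permutes them, transitively because a $G$-orbit spans a nonzero $G$-subrepresentation. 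Let $V_0$ be the component of type $W_0$ and $T_0 := \{g : gV_0 = V_0\} = \{g : W_0\circ\Ad(g)\cong W_0\}$, an open subgroup containing $N$. Standard Clifford theory gives that $V_0$ is an absolutely irreducible $T_0$-representation and that the natural map $\Ind_{T_0}^G V_0 \to V$ is an isomorphism on restriction to $N$, hence an isomorphism. Since for $g\in G\setminus T_0$ any absolutely irreducible $N$-subrepresentation $W\subseteq V_0$ already satisfies $W\circ\Ad(g)\not\cong W$ over $N$, and $\Ind_H^G = \Ind_{T_0}^G\Ind_H^{T_0}$ whenever $N\le H\le T_0$, it suffices to prove the lemma with $(G,V)$ replaced by $(T_0,V_0)$. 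Hence I may assume $V|_N\cong W_0\otimes_C M$ with $W_0$ absolutely irreducible over $N$, $M$ finite-dimensional, $N$ acting through the first factor.

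\textbf{Step 2 (the projective representation and the choice of $W$).} For $g\in G$ fix an $N$-intertwiner $\phi_g\colon W_0\to W_0\circ\Ad(g)$ (unique up to $C^\times$) and, via a section $A\to G$, operators $\psi_a\in GL(M)$ with $\rho_V(g) = \phi_{\bar g}\otimes\psi_{\bar g}$ on $W_0\otimes M$; writing $\rho_V(gg')$ two ways and using that $X\otimes Y = X'\otimes Y'$ forces $X' = \lambda X$, $Y' = \lambda^{-1}Y$, one gets $\psi_a\psi_{a'} = \alpha(a,a')\psi_{aa'}$ for a $2$-cocycle $\alpha\in Z^2(A,C^\times)$. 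Since $G$-subrepresentations of $V$ correspond to submodules of $M$ over the twisted group algebra $C_\alpha[A]$, $M$ is $C_\alpha[A]$-irreducible. Put $\langle a,b\rangle := \alpha(a,b)/\alpha(b,a)$, an alternating bi-multiplicative pairing $A\times A\to C^\times$, and choose $B\le A$ maximal isotropic, so $B = B^\perp$. Because $\langle\cdot,\cdot\rangle$ is trivial on $B\times B$, the cocycle $\alpha|_B$ is a coboundary, so $C_\alpha[B]\cong C[B]$ is split (for $C$ large enough) commutative semisimple; hence $M|_B$ decomposes into lines and I fix a $B$-stable line $\ell\subseteq M$. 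Set $W := W_0\otimes\ell$, a closed absolutely irreducible $N$-subrepresentation, and $H := \{g : gW = W\}$. If $a\in A$ fixes $\ell$ and $b\in B$, then $\psi_a,\psi_b$ act on $\ell$ by scalars, so comparing $\psi_a\psi_b$ with $\psi_b\psi_a$ on $\ell$ gives $\langle a,b\rangle = 1$; thus the stabilizer of $\ell$ in $A$ lies in $B^\perp = B$, and $H$ is exactly the preimage of $B$ in $G$ (in particular $H$ is normal in $G$, as $A$ is abelian).

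\textbf{Step 3 (rigidity and the induced isomorphism).} Given $g\in G$ with $W\circ\Ad(g)\cong W$ as $H$-representations, equivalently $gW\cong W$ as $H$-representations with $gW = W_0\otimes\psi_{\bar g}\ell$: both restrict to $\rho_{W_0}$ on $N$, so since $W_0$ is absolutely irreducible over $N$ and $H/N = B$ is abelian they differ by a character $\eta$ of $B$ and are isomorphic iff $\eta = 1$. A short computation using $\psi_b\psi_{\bar g} = \langle b,\bar g\rangle\,\psi_{\bar g}\psi_b$ and $\psi_b|_\ell$, $\psi_{\bar g}|_\ell$ scalar identifies $\eta(b) = \langle b,\bar g\rangle$, so the hypothesis forces $\bar g\in B^\perp = B$, i.e.\ $g\in H$ — the required rigidity. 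For the final assertion, $\Ind_H^G W$ is admissible ($H$ is open of finite index) with $\Ind_H^G W|_N \cong \bigoplus_{g\in G/H}W_0$ semisimple, hence $\Ind_H^G W$ is semisimple over $G$ (average an $N$-equivariant projection over the finite group $G/N$, invertible in $C$); the Mackey formula together with the rigidity give $\End_G^\cts(\Ind_H^G W) = \End_H^\cts(W) = C$, so $\Ind_H^G W$ is irreducible, and the nonzero Frobenius-reciprocity map $\Ind_H^G W\to V$ is then an isomorphism. Unwinding Step 1 (via transitivity of induction) yields the statement for the original $G$.

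The main obstacle is the bookkeeping in Steps 2--3: correctly setting up the projective $A$-action on the multiplicity space $M$ with its $2$-cocycle $\alpha$ and the alternating pairing $\langle\cdot,\cdot\rangle$, and the computation $\eta(b) = \langle b,\bar g\rangle$ which simultaneously pins $H$ down as a ``Lagrangian'' subgroup, yields the rigidity, and (via $\End_G = C$) produces the induced isomorphism. One must also be careful to invoke ``$C$ sufficiently large'' only for the finitely many needed conditions — splitting the endomorphism division algebras of the $N$-types and splitting $C_\alpha[B]$.
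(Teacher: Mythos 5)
Your proof is correct, but it takes a genuinely different route from the paper's. The paper's argument is shorter and avoids the cocycle machinery: it picks $W$ among all irreducible closed $N$-subrepresentations of $V|_N$ so that $H := \{g : gW = W\}$ is \emph{maximal}, observes $H \subseteq \wt H := \{g : W\circ\Ad(g)\cong W\}$, and derives a contradiction from $H \subsetneq \wt H$ by choosing an intermediate $H \subsetneq H_1 \subseteq \wt H$ with $H_1/H$ \emph{cyclic}: since $C$ is large, $H^2(H_1/H,C^\times)$ dies, so $W$ extends to an irreducible $\wt W$ of $H_1$, and Frobenius reciprocity together with $\Ind_H^{H_1}W \cong \bigoplus_\eta \wt W\otimes\eta$ forces some extension $\wt W\otimes\eta$ (still irreducible over $N$, with strictly larger stabilizer) to occur as a closed $H_1$-subrepresentation of $V$, contradicting maximality. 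Your proof instead reduces to the isotypic case, sets up the $2$-cocycle $\alpha$ on $A = G/N$ acting projectively on the multiplicity space $M$, passes to the alternating commutator pairing, and constructs $W$ explicitly from a Lagrangian subgroup $B$ and a $B$-eigenline in $M$. Both routes are standard Clifford theory; yours is more explicit (you identify $H$ structurally as the preimage of $B = B^\perp$ and compute the obstruction character $\eta(b) = \langle b,\bar g\rangle$), whereas the paper's is shorter and needs less bookkeeping (no multiplicity space, no cocycle, no Lagrangian). One small thing to phrase more carefully: the step ``$\alpha|_B$ is a coboundary because $\langle\cdot,\cdot\rangle|_{B\times B}$ is trivial'' is a statement about $H^2(B,C^\times)$ that only becomes literally true after further enlarging $C$; what is immediate from triviality of the pairing is that $C_\alpha[B]$ is a \emph{commutative} $C$-algebra of dimension $|B|$, hence a product of finite extensions of $C$, and then ``$C$ sufficiently large'' makes it split and gives the $B$-eigenline $\ell$. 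Finally, you supply the averaging/Mackey argument for ``In particular'', which the paper leaves as understood.
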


\begin{proof}
  (See \cite[1.6.3]{MR2508719} in the context of smooth representations.)
  By Clifford theory \cite[Proposition~2.1.1]{arxiv.1912.11125}, $V|_N$ is a direct sum of finitely many irreducible subrepresentations.
  So as $C$ is sufficiently large, using Schur's lemma and Lemma~\ref{lem:soc,field extension}, we may assume that any irreducible closed subrepresentation of $V|_N$ is absolutely irreducible.
  Choose now $W$ among all irreducible closed subrepresentations of $V|_N$ such that $H := \{g\in G\mid gW = W\}$ is maximal.
        Note that $H$ is contained in $\wt H := \{g\in G\mid \text{$W\circ\Ad(g)\simeq W$ as $H$-representations}\}$, and if $H = \wt H$, then we are done.
  If not, pick $H \subsetneq H_1 \subset \wt H$ such that $H_1/H$ is cyclic.   As $C$ is sufficiently large, $W$ extends to an irreducible representation $\wt W$ of $H_1$.
  From $\Hom_H(W,V|_H) \cong \Hom_{H_1}(\Ind_H^{H_1} W,V|_{H_1})$ and $\Ind_H^{H_1} W \cong \bigoplus_{\eta:H_1/H \to C^\times} \wt W \otimes \eta$
  we deduce that some extension $\wt W \otimes \eta$ of $W$ occurs as closed subrepresentation of $V|_{H_1}$.
  (Note that the image of any morphism of admissible Banach representations is closed.)
  This contradicts the maximality of $H$.
\end{proof}

\begin{prop}\label{prop:isogenies}
Let $\varphi\colon \alggrp G_{1}\to \alggrp G$ be a morphism such that $\varphi(\alggrp{G}_{1}^{\der}) = \alggrp{G}^{\der}$ and $\ker\varphi\subset \alggrp{Z}_{\alggrp G_{1}}$.
Suppose that $\alggrp P = \alggrp L \alggrp N$ is a parabolic subgroup of $\alggrp G$ and let $\alggrp{P}_{1} := \varphi^{-1}(\alggrp{P})$, $\alggrp{L}_{1} := \varphi^{-1}(\alggrp{L})$, $\alggrp{N}_{1} := \varphi^{-1}(\alggrp{N})$.
Suppose that $\sigma$ is an admissible Banach representation of $L$, and
write $\sigma_{1}$ for the composition of $\sigma$ with $L_{1}\to L$.
Then we have the following.
\begin{enumerate}
\item If $\sigma$ is irreducible or it has a central character, then $\sigma_1$ is admissible.
\item If $(\Ind_{P_{1}}^{G_{1}}\sigma_{1})^{\cts}$ is irreducible, then $(\Ind_{P}^{G}\sigma)^{\cts}$ and $\sigma_{1}$ are irreducible.
\item If $(\Ind_{P}^{G}\sigma)^{\cts}$ is irreducible, then $(\Ind_{P_{1}}^{G_{1}}\tau_{1})^{\cts}$ is irreducible for any irreducible closed subrepresentation $\tau_{1}$ of $\sigma_{1}$.
\item Assume that $\sigma$ is absolutely irreducible.
If $(\Ind_{P_{1}}^{G_{1}}\tau_{1})^{\cts}$ is absolutely irreducible for one (equivalently any) absolutely irreducible closed subrepresentation $\tau_{1}$ of $\sigma_{1}$, then $(\Ind_{P}^{G}\sigma)^{\cts}$ is absolutely irreducible.
\end{enumerate}
\end{prop}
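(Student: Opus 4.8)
The proof rests on a single structural identification together with Clifford theory. Write $\varphi(G_{1})$, $\varphi(L_{1})$, $\varphi(P_{1})$, $\varphi(N_{1})$ for the images, which are closed subgroups of $G$, $L$, $P$, $N$. Since $\ker\varphi$ is central, $\varphi$ induces a central isogeny $\alggrp{G}_{1}\to\varphi(\alggrp{G}_{1})$ which is an isomorphism on root subgroups, so $\varphi(G_{1})$ contains the subgroup $\alggrp{G}^{\der}(F)^{+}$ generated by the $F$-points of unipotent radicals of parabolic subgroups of $\alggrp{G}$; by Steinberg's canonical lifting of commutators through the simply connected cover $\alggrp{G}^{\sc}$ of $\alggrp{G}^{\der}$ one has $[G,G]\subseteq\alggrp{G}^{\der}(F)^{+}$, so $\varphi(G_{1})\trianglelefteq G$ with abelian quotient, and since moreover $\varphi(G_{1})\supseteq\alggrp{G}^{\der}(F)^{+}$ and $\alggrp{G}=\alggrp{G}^{\der}\cdot\alggrp{Z}_{\alggrp{G}}^{\circ}$, the index $[G:\varphi(G_{1})\,Z_{\alggrp{G}}(F)]$ is finite (the Whitehead group of a reductive $p$-adic group being finite). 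The same applies to $\varphi(L_{1})\trianglelefteq L$, and $\varphi(P_{1})=\varphi(G_{1})\cap P$, $\varphi(L_{1})=\varphi(G_{1})\cap L$. Finally $\varphi$ induces an isomorphism $\alggrp{G}_{1}/\alggrp{P}_{1}\congto\alggrp{G}/\alggrp{P}$ of $F$-varieties, and since $\alggrp{G}(F)$ (resp.\ $\alggrp{G}_{1}(F)$) acts transitively on the $F$-points of $\alggrp{G}/\alggrp{P}$ (resp.\ $\alggrp{G}_{1}/\alggrp{P}_{1}$) by Borel--Tits, $\varphi$ induces a bijection $G_{1}/P_{1}\congto G/P$ (equivalently $G=\varphi(G_{1})\,P$). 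Hence $f\mapsto f\circ\varphi$ is an isomorphism of $G_{1}$-representations $(\Ind_{P}^{G}\sigma)^{\cts}\congto(\Ind_{P_{1}}^{G_{1}}\sigma_{1})^{\cts}$; as $\sigma_{1}$, $(\Ind_{P_{1}}^{G_{1}}\sigma_{1})^{\cts}$, and any $\tau_{1}$ as in the statement are all inflated along $\varphi$, everything below takes place over $\varphi(G_{1})$, and I pass freely between $G_{1}$-representations and $\varphi(G_{1})$-representations.

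Part (i) is a routine consequence of admissibility: one reduces (after a finite extension of scalars and passing to an absolutely irreducible constituent in the irreducible case) to the case where $\sigma$ has a central character, and then $\sigma_{1}^{H_{1}}=\sigma^{\varphi(H_{1})}$ for a compact open $H_{1}\subseteq L_{1}$ is finite-dimensional, since $\varphi(H_{1})$ together with a compact open subgroup of $Z_{\alggrp{L}}(F)$ generates a compact open subgroup of $L$ and the central character lets one compare the two. For (ii): by the displayed isomorphism $(\Ind_{P}^{G}\sigma)^{\cts}$ is irreducible already as a $G_{1}$-representation, hence a fortiori as a $G$-representation; moreover a proper nonzero closed subrepresentation $\tau_{1}\subseteq\sigma_{1}$ would give, by exactness of continuous parabolic induction, a proper nonzero closed $G_{1}$-subrepresentation $(\Ind_{P_{1}}^{G_{1}}\tau_{1})^{\cts}\subseteq(\Ind_{P_{1}}^{G_{1}}\sigma_{1})^{\cts}$, so $\sigma_{1}$ is irreducible.

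Parts (iii) and (iv) are the substance and use Clifford theory for the normal subgroup $\varphi(G_{1})\trianglelefteq G$, together with the finiteness of $[G:\varphi(G_{1})\,Z_{\alggrp{G}}(F)]$ and with central characters for the relevant admissible Banach representations (automatic after extending scalars for absolutely irreducible inducing data, and in general controlled by Proposition~\ref{prop:intertwiners}, which gives $\End_{G}^{\cts}((\Ind_{P}^{G}\sigma)^{\cts})\cong\End_{L}^{\cts}(\sigma)$). For (iii): if $(\Ind_{P}^{G}\sigma)^{\cts}$ is irreducible then, after reducing to a central character, Clifford's theorem (via the finite-index subgroup $\varphi(G_{1})\,Z_{\alggrp{G}}(F)$, then removing the central factor) shows $(\Ind_{P}^{G}\sigma)^{\cts}|_{\varphi(G_{1})}$ is semisimple; hence $(\Ind_{P_{1}}^{G_{1}}\sigma_{1})^{\cts}$ is semisimple as a $G_{1}$-representation, so its closed subrepresentation $(\Ind_{P_{1}}^{G_{1}}\tau_{1})^{\cts}$ is semisimple by Lemma~\ref{lem:semisimple-banach}, and being a semisimple object with division-algebra endomorphism ring $\End_{L_{1}}^{\cts}(\tau_{1})$ (Proposition~\ref{prop:intertwiners} and Schur's lemma), it is irreducible. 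For (iv): after a finite extension of scalars we may take ``absolutely irreducible'' to mean ``irreducible''; then $\sigma$ is irreducible, so $\sigma|_{\varphi(L_{1})}=\bigoplus_{i}\tau_{1}^{(i)}$ is a finite direct sum of irreducibles permuted transitively by $L$-conjugation, with $\tau_{1}^{(1)}=\tau_{1}$. Each conjugating element lies in $L\subseteq P$, so conjugation by it is an automorphism of $\varphi(G_{1})$ preserving $\varphi(P_{1})$ and $\varphi(L_{1})$ and carrying $\tau_{1}$ to $\tau_{1}^{(i)}$; thus each $(\Ind_{\varphi(P_{1})}^{\varphi(G_{1})}\tau_{1}^{(i)})^{\cts}$ is a twist of the irreducible $(\Ind_{\varphi(P_{1})}^{\varphi(G_{1})}\tau_{1})^{\cts}$, hence irreducible. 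Therefore $(\Ind_{P}^{G}\sigma)^{\cts}|_{\varphi(G_{1})}=\bigoplus_{i}(\Ind_{\varphi(P_{1})}^{\varphi(G_{1})}\tau_{1}^{(i)})^{\cts}$ is semisimple, and since $[G:\varphi(G_{1})\,Z_{\alggrp{G}}(F)]$ is finite, $C$ has characteristic $0$, and $(\Ind_{P}^{G}\sigma)^{\cts}$ has a central character, an averaging argument upgrades this to semisimplicity over $G$. Finally $\End_{G}^{\cts}((\Ind_{P}^{G}\sigma)^{\cts})\cong\End_{L}^{\cts}(\sigma)=C$ by Proposition~\ref{prop:intertwiners}, and a semisimple object whose endomorphism ring is the base field is irreducible; undoing the scalar extension gives absolute irreducibility of $(\Ind_{P}^{G}\sigma)^{\cts}$.

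The main obstacle is the Clifford-theoretic bookkeeping around the (not necessarily finite-index) subgroup $\varphi(G_{1})\trianglelefteq G$: one must know that restriction to it of an irreducible admissible Banach representation is semisimple of finite length, and, for (iv), that semisimplicity over $\varphi(G_{1})$ propagates up to $G$ (which is where finiteness modulo the center and characteristic $0$ enter). The various reductions to representations with a central character, and the accompanying extensions of scalars, also require some care, but are routine given Proposition~\ref{prop:intertwiners} and the fact that extension and restriction of scalars preserve admissibility and semisimplicity of Banach representations.
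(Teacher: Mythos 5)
Your proof of parts (i)--(iii) follows essentially the paper's own route: the identification $(\Ind_P^G\sigma)^\cts|_{\varphi(G_1)}\cong(\Ind_{P_1}^{G_1}\sigma_1)^\cts$, Clifford theory over the open normal subgroup $Z_G\varphi(G_1)$ of finite abelian index, and Proposition~\ref{prop:intertwiners} to conclude irreducibility (you phrase the last step via a division-ring endomorphism algebra, the paper via Corollary~\ref{cor:indecomposable}; these are the same). Part (iv) is where you take a genuinely different path. The paper invokes Lemma~\ref{lem:roche} to produce an absolutely irreducible summand $\tau\subset\sigma|_{Z_L L'_1}$ whose stabilizer $H$ in $L$ coincides with its twist-stabilizer, rewrites $(\Ind_P^G\sigma)^\cts\cong\Ind_{HG'_1}^GW$ with $W\cong(\Ind_{P_1}^{G_1}\tau_1)^\cts$, and verifies the Mackey irreducibility criterion by showing (via Proposition~\ref{prop:intertwiners} once more) that the twist-stabilizer of $W$ in $G$ is precisely $HG'_1$. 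You instead observe that $(\Ind_P^G\sigma)^\cts|_{\varphi(G_1)}$ is a finite direct sum of irreducibles (each an $L$-conjugate of the irreducible $(\Ind_{\varphi(P_1)}^{\varphi(G_1)}\tau_1)^\cts$), upgrade this to $G$-semisimplicity by averaging over the finite quotient $G/\varphi(G_1)Z_G$ (using the central character and characteristic~$0$), and finish with $\End_G^\cts((\Ind_P^G\sigma)^\cts)\cong\End_L^\cts(\sigma)=C$. Both arguments are correct and rest on Proposition~\ref{prop:intertwiners} together with Clifford theory; yours replaces the stabilizer-matching of Lemma~\ref{lem:roche} with a standard averaging lemma and is arguably more transparent, at no real cost since the finite length needed for the averaging is already supplied by the Clifford decomposition. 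One small inaccuracy in your preliminaries: the finiteness of $[G:\varphi(G_1)Z_G]$ is not the statement ``the Whitehead group is finite''; it follows from \cite[3.20 Corollary]{MR316587} applied to the surjection $\alggrp G_1\times\alggrp Z_{\alggrp G}^\circ\onto\alggrp G$, as the paper records.
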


\begin{remark}\label{rk:isogenies}
In particular, if $\dim_C \sigma = 1$, then $(\Ind_{P}^{G}\sigma)^{\cts}$ is irreducible if and only if $(\Ind_{P_{1}}^{G_{1}}\sigma_{1})^{\cts}$ is irreducible.
A special case of this statement can be found in \cite[Proposition~2.3.2]{arxiv.1912.11125}.
\end{remark}

\begin{proof}
Set $G'_{1} := \varphi(G_{1})$, a closed locally analytic subgroup of $G$.
(It is closed by \cite[3.19 Proposition]{MR316587}.)
We observe that $Z_{G}G'_{1}$ is open and normal in $G$ with finite abelian quotient group.
(This follows from \cite[3.20 Corollary]{MR316587} applied to the surjective homomorphism $\alggrp{G}_1 \times \alggrp{Z}_{\alggrp{G}}^\circ \onto \alggrp{G}$.)

We justify that $\sigma_1$ is admissible.
We put $L'_{1} := \varphi(L_{1})$.
Note that $\varphi(\alggrp{L}_{1}^{\der}) = \alggrp{L}^{\der}$, so we have a surjective homomorphism $\alggrp{L}_{1} \times \alggrp Z_{\alggrp L}^\circ \onto \alggrp L$, which is therefore surjective on Lie algebras and hence induces an open homomorphism on $F$-points.
Pick any compact open subgroups $H$ of $L_1$ and $H_Z$ of $Z_L^\circ$.
Then $\varphi(H)H_Z$ is a compact open subgroup of $L$, so there exists a $\varphi(H)H_Z$-stable unit ball $\sigma^0$ in $\sigma$ and its reduction is admissible smooth as $\varphi(H)H_Z$-representation (as $\sigma$ is admissible).
If $\sigma$ has a central character, then, taking $H_Z$ small enough so that it acts trivially on $\sigma$, the reduction of $\sigma^0$ is admissible as $\varphi(H)$-representation, i.e.\ $\sigma_1$ admissible \cite[Proposition 6.5.7]{locallyanalytic-memoir}.
If $\sigma$ is irreducible, then the endomorphism algebra $\End_{L}^{\cts}(\sigma)$ is a finite-dimensional division algebra~\cite{MR3053464}.
Let $C'$ be the subalgebra of $\End_{L}^{\cts}(\sigma)$ generated by the image of $Z_{L}$, which is a finite extension of $C$.
Then $\sigma$ is admissible as Banach representation of $L$ over $C'$ (Lemma~\ref{lem:adm-ext-of-scalars}) and has a central character, so $\sigma_1$ is admissible over $C'$, so $\sigma_1$ is admissible over $C$ (Lemma~\ref{lem:adm-ext-of-scalars}).

The group $G_1$ acts on $(\Ind_{P_{1}}^{G_{1}}\sigma_{1})^{\cts}$ through $G_{1}\onto G'_{1}$ and as $G'_{1}$-representations we have $(\Ind_{P_{1}}^{G_{1}}\sigma_{1})^{\cts}\simeq (\Ind_{P}^{G}\sigma)^{\cts}|_{G'_{1}}$.
(To see this, we note that $\varphi$ induces an isomorphism $P_1\backslash G_1 \congto P\backslash G$ because it induces an isomorphism $\alggrp P_1\backslash \alggrp G_1 \congto \alggrp P\backslash \alggrp G$ and we obtain the former isomorphism by passing to $F$-points.)
Hence if $(\Ind_{P_{1}}^{G_{1}}\sigma_{1})^{\cts}$ is irreducible then $(\Ind_{P}^{G}\sigma)^{\cts}$ is also irreducible.
Moreover $\sigma_1$ is clearly irreducible.

Conversely, assume that $(\Ind_{P}^{G}\sigma)^{\cts}$ is irreducible.
Then in particular $\sigma$ is irreducible and again $\End_{L}^{\cts}(\sigma)$ is a finite-dimensional division algebra.
Let $C'$ be the subfield of $\End_{L}^{\cts}(\sigma)$ generated by the image of $Z_{G}$.
Considering $\sigma$ as irreducible admissible representation of $L$ over $C'$, by Clifford theory \cite[Proposition~2.1.1]{arxiv.1912.11125} $(\Ind_{P}^{G}\sigma)^{\cts}|_{Z_{G}G'_{1}}$ is semisimple (as defined in \S\ref{sec:locally-analyt-repr}) over $C'$.
As $\sigma$ has a central character over $C'$, $(\Ind_{P}^{G}\sigma)^{\cts}|_{G'_1}$ is semisimple over $C'$ and hence semisimple over $C$ (Lemma~\ref{lem:soc,field extension}).
In particular, by Lemma~\ref{lem:semisimple-banach} the direct summand $(\Ind_{P_{1}}^{G_{1}}\tau_{1})^{\cts}$ of $(\Ind_{P}^{G}\sigma)^{\cts}|_{G'_1}$ is semisimple over $C$.
But it is also indecomposable by Corollary~\ref{cor:indecomposable}, hence irreducible.

Finally assume that $\sigma$ is absolutely irreducible.
As we have seen before (for $\alggrp{G} = \alggrp{L}$), $Z_L L'_1$ is open and normal in $L$ with finite abelian quotient group.
By Lemma~\ref{lem:roche}, applied with $N = Z_{L}L'_{1}$ and extending scalars if necessary, we can take an absolutely irreducible closed subrepresentation $\tau$ of $\sigma|_{Z_{L}L'_{1}}$ such that $H := \{g\in L\mid g\tau = \tau\}$ (stabilizer of the subspace) is equal to $\{g\in L\mid \text{$\tau\circ\Ad(g)\simeq \tau$ as $H$-representations}\}$, so $\sigma \simeq \Ind_{H}^{L}\tau$.
Since $\sigma$ has a central character, $\tau$ is also absolutely irreducible as $L'_{1}$-representation.
Let $\tau_{1}$ be the composition of $\tau$ with $L_1 \onto L_1' \subset H$, which is again absolutely irreducible.
Then the subspace $W := \{f\in (\Ind_{P}^{G}\sigma)^{\cts}\mid f(Z_G G'_{1})\subset \tau\}$ is $G'_{1}$-stable and isomorphic to $(\Ind_{P_{1}}^{G_{1}}\tau_{1})^{\cts}$.
Note that if $\ell \in L$ and $f\in (\Ind_{P}^{G}\sigma)^{\cts}$ such that $f(Z_GG'_{1})\subset \tau$, then for $g\in Z_GG'_{1}$ we have $(\ell f)(g) = f(g\ell) = f(\ell (\ell^{-1}g\ell)) = \ell f(\ell^{-1}g\ell) \in \ell \tau_1$.
In particular, $W$ is also $H$-stable.
Moreover, together with $\sigma = \bigoplus_{\ell \in L/H} \ell \tau$ we deduce that $(\Ind_{P}^{G}\sigma)^{\cts} = \bigoplus_{\ell \in L/H} \ell W$.
As $H \backslash L \congto HG'_1 \backslash G$, it follows that the natural continuous homomorphism $\Ind_{HG'_{1}}^{G} W \to (\Ind_{P}^{G}\sigma)^{\cts}$ is an isomorphism.

We assume $(\Ind_{P_{1}}^{G_{1}}\tau_{1})^{\cts} (\cong W)$ is absolutely irreducible.
To prove $(\Ind_{P}^{G}\sigma)^{\cts}$ is absolutely irreducible, by the previous paragraph it is sufficient to prove that for $g\in G$, if $(\Ind_{P_{1}}^{G_{1}}\tau_{1})^{\cts}\circ\Ad(g)\simeq (\Ind_{P_{1}}^{G_{1}}\tau_{1})^{\cts}$ as $HG'_{1}$-representations, then $g\in HG'_{1}$.
As $G = LG'_{1}$, we may assume $g\in L$.
The map $f\mapsto (x\mapsto f(gxg^{-1}))$ gives an $HG_1'$-linear isomorphism $(\Ind_{P_{1}}^{G_{1}}\tau_{1})^{\cts}\circ \Ad(g)\simeq (\Ind_{P_{1}}^{G_{1}}\tau_{1}\circ\Ad(g))^{\cts}$.
By Proposition~\ref{prop:intertwiners} any $G_1$-linear isomorphism $\Psi \colon (\Ind_{P_{1}}^{G_{1}}\tau_{1}\circ\Ad(g))^{\cts} \congto (\Ind_{P_{1}}^{G_{1}}\tau_{1})^{\cts}$
arises from an $L_1$-linear isomorphism $\psi \colon \tau_{1}\circ\Ad(g)\congto \tau_{1}$.
From the relations $\Psi(f)(1) = \psi(f(1))$ and $(hf)(1) = h\cdot f(1)$ for $f \in (\Ind_{P_{1}}^{G_{1}}\tau_{1})^{\cts}$ and $h\in H$ we deduce that $\psi$ is even $H$-linear.
Therefore, $g \in H$.
\end{proof}

\begin{remark}\label{rk:reduce-simply-conn}
  In this way the problem of understanding when a parabolic induction $(\Ind_{P}^{G}\sigma)^{\cts}$ (with $\sigma$ admissible) is absolutely irreducible can be reduced to the case where $\alggrp G$ is a simply-connected group.
  If $\sigma$ is finite-dimensional we may reduce further, see Remark~\ref{rk:reduce-abs-simple-simply-conn} below.
\end{remark}

\subsection{Translation functors}
\label{sec:translation-functors}
In this subsection we recall translation functors and extend some properties to locally analytic representations.
This topic is also studied in \cite{arxiv.2107.08493} for split reductive groups, from the point of view of distribution algebras.
(In particular, in \cite[Theorem 4.2.12]{arxiv.2107.08493} they compute the effect of translation functors on locally analytic principal series.)
We do not refer to this paper and give proofs for the sake of completeness.

We first recall some facts about translation functors on $\mathfrak{g}_{C}$-modules from \cite{MR581584}.
Let $Z(\mathfrak{g}_{C})$ be the center of $U(\mathfrak{g}_{C})$.
We say that a $\mathfrak{g}_{C}$-module $M$ is \emph{$Z(\mathfrak{g}_{C})$-finite} if the ideal $\Ann_{Z(\mathfrak{g}_{C})}M\subset Z(\mathfrak{g}_{C})$ has finite codimension.
Let $\mathcal{M}_{\mathrm{Zf}}$ be the category of $Z(\mathfrak{g}_{C})$-finite $\mathfrak{g}_{C}$-modules.
By the Harish-Chandra isomorphism we have an embedding $Z(\mathfrak{g}_{C})\hookrightarrow \Sym(\mathfrak{t}')$ whose image is the set of vectors fixed by the absolute Weyl group.
For $\lambda\in (\mathfrak{t}')^{*}$, let $\chi_{\lambda}\colon Z(\mathfrak{g}_{C})\to C$ be the composition of the Harish-Chandra isomorphism and the evaluation at $\lambda$.
We say that a $\mathfrak{g}_C$-module $M$ \emph{admits generalized infinitesimal character $\lambda$} if $(\ker\chi_{\lambda})^{n}M = 0$ for some $n\in\Z_{\ge0}$ and let $\mathcal{M}_{\lambda}$ be the full subcategory of $\mathcal{M}_{\mathrm{Zf}}$ consisting of all $M$ which admit a generalized infinitesimal character $\lambda$.
(We note that our definitions are slightly more restrictive than the ones used in \cite{MR581584}.)

In this subsection, we say that $\lambda\in (\mathfrak{t}')^{*}$ is \emph{integral} if its value at any absolute coroot is an integer.
Note that this is weaker than $\lambda\in X^{*}(\alggrp T')$.
Let $\lambda,\mu\in (\mathfrak{t}')^{*}$ and assume that $\mu - \lambda$ is integral.
We define the translation functor $T_{\lambda}^{\mu}\colon \mathcal{M}_{\lambda}\to \mathcal{M}_{\mu}$ as follows.
Let $V$ be the absolutely simple finite-dimensional $\mathfrak{g}_{C}$-module having $\mu - \lambda$ as an extremal weight.
By a theorem of Kostant, we have $V\otimes \mathcal{M}_{\mathrm{Zf}}\subset \mathcal{M}_{\mathrm{Zf}}$~\cite[2.6 Corollary]{MR581584} (and its proof).
Then we define $T_{\lambda}^{\mu}(X) := \pr_{\mu}(V\otimes X)$, where $\pr_{\mu}\colon \mathcal{M}_{\mathrm{Zf}}\to \mathcal{M}_{\mu}$ is the projection.
The properties of translation functors are summarized as follows.
\begin{prop}\label{prop:translation functors(algebraic)}
Let $\lambda,\mu\in (\mathfrak{t}')^{*}$ such that $\mu - \lambda$ is integral.
\begin{enumerate}
\item The functor $T_{\lambda}^{\mu}$ is exact.
\item The pair $(T_{\lambda}^{\mu},T_{\mu}^{\lambda})$ is an adjoint pair.
\item Assume that for any absolute root $\alpha$ we have $\langle \lambda,\alpha^{\vee}\rangle \in\Z_{>0}$ if and only if $\langle\mu,\alpha^{\vee}\rangle \in\Z_{>0}$. Then $T_{\lambda}^{\mu}$ gives an equivalence of categories.
\end{enumerate}
\end{prop}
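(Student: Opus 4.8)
The plan is to obtain (i) and (ii) formally from the block decomposition of $\mathcal{M}_{\mathrm{Zf}}$ together with the biadjunction of $V \otimes_{C}(-)$ with $V^{\vee}\otimes_{C}(-)$, and to derive (iii) from Jantzen \cite{MR581584}; the only substantive point is (iii). For (i), observe first that every object of $\mathcal{M}_{\mathrm{Zf}}$ is the direct sum of its generalized $\chi_{\nu}$-eigenspaces for $Z(\mathfrak{g}_{C})$, because $Z(\mathfrak{g}_{C})$ acts on it through a finite-dimensional, hence Artinian, quotient; so $\mathcal{M}_{\mathrm{Zf}} = \prod_{\nu}\mathcal{M}_{\nu}$ and each $\pr_{\nu}$ is a direct-summand projection, in particular exact. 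Since $C$ is a field, $V \otimes_{C}(-)$ is exact, and it preserves $\mathcal{M}_{\mathrm{Zf}}$ by Kostant's theorem \cite[2.6 Corollary]{MR581584}; hence $T_{\lambda}^{\mu} = \pr_{\mu}\circ(V\otimes -)$ is exact.

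For (ii), recall that for a finite-dimensional $C$-vector space $V$ the functor $V \otimes_{C}(-)$ has both a left and a right adjoint, namely $V^{\vee}\otimes_{C}(-)$, and on $\mathfrak{g}_{C}$-modules the adjunction isomorphisms are $\mathfrak{g}_{C}$-equivariant. The extremal weights of $V$ form a single $W$-orbit, so $\lambda-\mu = -(\mu-\lambda)$ is an extremal weight of $V^{\vee}$, whence $T_{\mu}^{\lambda}(-) \cong \pr_{\lambda}(V^{\vee}\otimes -)$. Then for $X \in \mathcal{M}_{\lambda}$ and $Y \in \mathcal{M}_{\mu}$, using that $\Hom_{\mathfrak{g}_{C}}$ vanishes between modules with distinct generalized infinitesimal characters (so that $\pr_{\mu}$, resp.\ $\pr_{\lambda}$, may be dropped when the target, resp.\ source, already lies in $\mathcal{M}_{\mu}$, resp.\ $\mathcal{M}_{\lambda}$),
\[
  \Hom(T_{\lambda}^{\mu}X, Y) = \Hom(V\otimes X, Y) = \Hom(X, V^{\vee}\otimes Y) = \Hom(X, T_{\mu}^{\lambda}Y),
\]
naturally in both variables; by symmetry the pair is in fact biadjoint.

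For (iii) I would show that the unit $X \to T_{\mu}^{\lambda}T_{\lambda}^{\mu}X$ and the counit $T_{\lambda}^{\mu}T_{\mu}^{\lambda}Y \to Y$ of this adjunction are isomorphisms. As all four functors are exact and commute with filtered colimits, it is enough to test this on a class of modules that detects isomorphisms of such functors; after the standard reductions one may restrict to Verma modules $M(\nu)$ (and more generally to objects of the relevant blocks of category $\mathcal{O}$). On $M(\nu)$ one uses the standard filtration of $V \otimes M(\nu)$ whose subquotients are the Verma modules $M(\nu+\nu')$, with $\nu'$ running over the weights of $V$ with multiplicity, and likewise for $V^{\vee}\otimes V \otimes M(\nu)$, whose bottom subquotient is $M(\nu)$ itself. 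The hypothesis that $\lambda$ and $\mu$ lie on exactly the same ``positive'' reflection walls --- that is, $\langle\lambda,\alpha^{\vee}\rangle\in\Z_{>0}$ if and only if $\langle\mu,\alpha^{\vee}\rangle\in\Z_{>0}$ for every absolute root $\alpha$ --- then forces every subquotient other than the one matching $X$ to be annihilated by $\pr_{\mu}$ or by $\pr_{\lambda}$, so that both composites collapse to the identity. The hard part is precisely this last step: the bookkeeping of which translates survive the double projection, and the verification that the reduction from the (possibly large) category $\mathcal{M}_{\lambda}$ to $\mathcal{O}$-type generators is legitimate. This is exactly the content of \cite[\S2]{MR581584}, which remains valid with the slightly more restrictive definition of $\mathcal{M}_{\lambda}$ used here (that only shrinks the categories involved), so I would simply invoke it.
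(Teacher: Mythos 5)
Your proposal follows the same route as the paper: parts (i) and (ii) are handled by the exactness of $V\otimes(-)$, the block decomposition of $\mathcal{M}_{\mathrm{Zf}}$, and the biadjunction of $V\otimes(-)$ with $V^{\vee}\otimes(-)$, while (iii) is delegated to Bernstein--Gelfand. One small correction: the equivalence statement you need is \cite[4.1~Theorem]{MR581584} (not \cite[\S2]{MR581584}), which is what the paper cites; the Verma-filtration sketch you give is one way to motivate it, but since $\mathcal{M}_{\lambda}$ is larger than category $\mathcal{O}$ the actual reduction to a suitable generating class is part of what that theorem supplies, so the deferral is the right move.
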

\begin{proof}
(i) and (ii) are easy and (iii) is \cite[4.1~Theorem]{MR581584}.
\end{proof}

We upgrade the above constructions to locally analytic representations.
Let $\mathcal{M}^{\an}_{\lambda} = \mathcal{M}_{\lambda}^{\an}(G)$ be the category of locally analytic representations $\pi$ of $G$ such that $\pi|_{\mathfrak{g}_{C}}\in \mathcal{M}_{\lambda}$.
Assume that there exists a finite-dimensional locally analytic representation $V$ of $G$ such that $V|_{\mathfrak{g}_{C}}$ is absolutely simple with extremal weight $\mu - \lambda$ and we fix such $V$.
(It exists by Lemma~\ref{lem:semisimple-lift}(ii) if $\alggrp{G}^{\der}$ is simply connected.)
Then for $X\in \mathcal{M}^{\an}_{\lambda}$, $V\otimes X$ is also a locally analytic representation of $G$.
Since $\pr_{\mu}(V\otimes X)$ is the kernel of certain $z\in Z(\mathfrak{g}_{C})$, it is a closed subspace.
Moreover it is $G$-stable since the action of $G$ commutes with that of $Z(\mathfrak{g}_C)$, as $Z(\mathfrak{g}_{C}) = U(\mathfrak{g}_{C})^{\ad(\mathfrak{g}_{C})} = U(\mathfrak{g}_{C})^{\Ad(G_{C})}$. (Note that the action on $U(\mathfrak{g}_{C})$ is locally finite-dimensional algebraic.)
Therefore we can define the functor $T_{\lambda}^{\mu}(G,V)\colon \mathcal{M}_{\lambda}^{\an}\to \mathcal{M}_{\mu}^{\an}$ by $T_{\lambda}^{\mu}(G,V)(X) := \pr_{\mu}(V\otimes X)$.

\begin{lem}\label{lem:V-tensor-decomposition}
  Suppose that $\pi$ is a locally analytic representation such that   $\pi|_{\mathfrak{g}_{C}}$ is $Z(\mathfrak{g}_{C})$-finite.
  For any $\mu\in (\mathfrak{t}')^{*}$ there exists a functorial decomposition $\pi \cong \pr_{\mu}(\pi) \oplus \pr_\mu'(\pi)$ as locally analytic representations of $G$.
\end{lem}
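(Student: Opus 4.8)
The plan is to produce the decomposition in two stages: first as $\mathfrak{g}_C$-modules, where it is classical, and then to upgrade it to a $G$-stable decomposition, which is the only subtle point.

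First I would recall from \cite{MR581584} that for a $Z(\mathfrak{g}_C)$-finite $\mathfrak{g}_C$-module $M$ one has a canonical direct sum decomposition $M = \bigoplus_{\nu} \pr_\nu(M)$ indexed by the finitely many $\nu \in (\mathfrak{t}')^*/(W\text{-dot-action})$ such that $\pr_\nu(M) \ne 0$, where $\pr_\nu(M)$ is the generalized $\chi_\nu$-eigenspace, i.e.\ the subspace annihilated by a power of $\ker\chi_\nu$. (Here I use that $M$ being $Z(\mathfrak{g}_C)$-finite means $\Ann_{Z(\mathfrak{g}_C)}(M)$ has finite codimension, so $Z(\mathfrak{g}_C)/\Ann$ is a finite-dimensional commutative $C$-algebra which, over the algebraically closed $\overline C$, decomposes as a product of local rings; one checks the corresponding idempotents are already defined over $C$ since $\chi_\nu$ and $\chi_{\nu'}$ are distinct as characters $Z(\mathfrak{g}_C) \to C$ for inequivalent $\nu$.) Applying this to $M = \pi|_{\mathfrak{g}_C}$, I set $\pr_\mu(\pi)$ to be the generalized $\chi_\mu$-eigenspace and $\pr_\mu'(\pi) := \bigoplus_{\nu \ne \mu} \pr_\nu(\pi)$ the complementary summand, so that $\pi = \pr_\mu(\pi) \oplus \pr_\mu'(\pi)$ as $\mathfrak{g}_C$-modules, functorially in $\pi$.

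The key step is to check that each summand $\pr_\nu(\pi)$ is a closed, $G$-stable subspace, so that the decomposition takes place in the category of locally analytic representations of $G$. For $G$-stability I would argue exactly as in the paragraph preceding the lemma in the excerpt: the action of $G$ on $\pi$ commutes with the action of $Z(\mathfrak{g}_C)$, because $Z(\mathfrak{g}_C) = U(\mathfrak{g}_C)^{\ad(\mathfrak{g}_C)} = U(\mathfrak{g}_C)^{\Ad(G_C)}$ (the action of $G$ on $U(\mathfrak{g}_C)$ being locally finite-dimensional algebraic, so that $G$-invariants and $\mathfrak{g}_C$-invariants agree), hence any $z \in Z(\mathfrak{g}_C)$ acts as a $G$-equivariant endomorphism of $\pi$ and therefore $\pr_\nu(\pi) = \bigcap_{n} \ker\bigl((z_\nu)^n\bigr)$ is $G$-stable, where $z_\nu \in Z(\mathfrak{g}_C)$ is any element generating $\ker\chi_\nu$ modulo $\Ann_{Z(\mathfrak{g}_C)}(\pi)$ (such an element exists since the quotient is finite-dimensional, so $\ker\chi_\nu$ modulo $\Ann$ is finitely generated and in fact, being the kernel of a character of an Artinian local-by-semisimple algebra, principal up to taking a power). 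For closedness: $z_\nu$ acts continuously on the locally analytic representation $\pi$ (the action of $U(\mathfrak{g}_C)$ is by continuous operators), and since $\pi$ is $Z(\mathfrak{g}_C)$-finite there is a fixed $N$ with $(z_\nu)^N \pr_\nu(\pi) = 0$, so $\pr_\nu(\pi) = \ker\bigl((z_\nu)^N\bigr)$ is the kernel of a continuous operator, hence closed. The finitely many summands then give a topological direct sum decomposition $\pi \cong \bigoplus_\nu \pr_\nu(\pi)$ of locally analytic representations; regrouping gives $\pi \cong \pr_\mu(\pi) \oplus \pr_\mu'(\pi)$.

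Finally, functoriality: any morphism $\pi_1 \to \pi_2$ of locally analytic representations is $\mathfrak{g}_C$-linear, hence commutes with each $z_\nu$, hence preserves the generalized eigenspaces, so $\pr_\mu$ and $\pr_\mu'$ are functors and the decomposition is functorial. I expect the only real point requiring care is the verification that $\pr_\nu(\pi)$ is closed and $G$-stable — i.e.\ that the purely algebraic eigenspace decomposition is compatible with the topology and the group action — but this follows cleanly from the continuity of the $U(\mathfrak{g}_C)$-action and the identity $Z(\mathfrak{g}_C) = U(\mathfrak{g}_C)^{\Ad(G_C)}$ already invoked in the text; everything else is routine linear algebra over $Z(\mathfrak{g}_C)$.
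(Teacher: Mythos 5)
Your proof follows the same route as the paper: pass to the artinian quotient $Z(\mathfrak g_C)/\Ann_{Z(\mathfrak g_C)}(\pi)$, decompose $\pi$ according to its maximal ideals, and use that $Z(\mathfrak g_C)=U(\mathfrak g_C)^{\Ad(G_C)}$ commutes with the $G$-action to see each summand is $G$-stable and closed (being the kernel of a continuous central operator).

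There is one genuine (though easily repaired) gap at the end. You conclude that "the finitely many summands then give a topological direct sum decomposition," but in a general locally convex space a decomposition into finitely many closed subspaces need not be a topological direct sum; closedness alone does not supply continuity of the projections. What actually makes the decomposition topological is that the projection onto each summand \emph{is} a continuous operator: it is the action of any lift to $Z(\mathfrak g_C)\subset U(\mathfrak g_C)$ of the corresponding central idempotent of the artinian quotient, and $U(\mathfrak g_C)$ acts by continuous endomorphisms of $\pi$. The paper handles this point by observing that the continuous bijections $\pr_\mu(\pi)\oplus\pr_\mu'(\pi)\to\pi\to\pr_\mu(\pi)\times\pr_\mu'(\pi)$ compose to the identity, the continuity of the second map being exactly the continuity of the projections. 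A further, smaller imprecision: over $C$ (not algebraically closed) the maximal ideals of $Z(\mathfrak g_C)/\Ann$ need not all have the form $\ker\chi_\nu$ for $\nu\in(\mathfrak t')^*$, since their residue fields may be proper extensions of $C$; the paper avoids this by indexing the decomposition by maximal ideals rather than by dot-orbits in $(\mathfrak t')^*$. This does not affect your final statement, as everything other than $\ker\chi_\mu$ is absorbed into $\pr_\mu'(\pi)$, but it is worth stating the decomposition in the ideal-theoretic form.
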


\begin{proof}
  By assumption there exists an ideal $I$ of $Z(\mathfrak{g}_{C})$ of finite codimension such that $I\pi = 0$.
  Then $Z(\mathfrak{g}_{C})/I$ is an artinian ring and we have a (finite) decomposition $\pi = \bigoplus_{\mathfrak m} \pi_{\mathfrak m}$, where $\mathfrak m$ runs through the maximal ideals of $Z(\mathfrak{g}_{C})$.
  We define $\pr_\mu'(\pi) := \bigoplus_{\mathfrak m \ne \ker \chi_\mu} \pi_{\mathfrak m}$.
  As above, each $\pi_{\mathfrak m}$ is a closed $G$-equivariant subspace of $\pi$ and the projection $\pi \onto \pi_{\mathfrak m}$ is continuous.
  We obtain continuous bijections $\pr_{\mu}(\pi) \oplus \pr_\mu'(\pi) \to \pi \to \pr_{\mu}(\pi) \times \pr_\mu'(\pi)$, which are topological isomorphisms since the composition is one.
\end{proof}

\begin{prop}\label{prop:translation functors(analytic)}
Let $\lambda,\mu\in (\mathfrak{t}')^{*}$ such that $\mu-\lambda$ is integral.
\begin{enumerate}
\item The functor $T_{\lambda}^{\mu}(G,V)$ preserves strict exact sequences.
\item The pair $(T_{\lambda}^{\mu}(G,V),T_{\mu}^{\lambda}(G,V'))$ is an adjoint pair.
\item Assume that for any absolute root $\alpha$ we have $\langle \lambda,\alpha^{\vee}\rangle \in\Z_{>0}$ if and only if $\langle\mu,\alpha^{\vee}\rangle \in\Z_{>0}$. Then $T_{\lambda}^{\mu}(G,V)\colon \mathcal{M}_{\lambda}^{\an}\to \mathcal{M}_{\mu}^{\an}$ gives an equivalence of categories.
\end{enumerate}
\end{prop}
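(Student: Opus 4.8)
The plan is to deduce parts (i)--(iii) from the corresponding algebraic statements of Proposition~\ref{prop:translation functors(algebraic)}, exploiting two elementary features of the functor $T_{\lambda}^{\mu}(G,V) = \pr_{\mu}(V\otimes -)$. First, tensoring with the finite-dimensional $V$ is, after a choice of basis, the functor $(-)^{\oplus\dim V}$ at the level of topological vector spaces; in particular it is exact, preserves strictness, and (by Kostant's theorem $V\otimes\mathcal{M}_{\mathrm{Zf}}\subset\mathcal{M}_{\mathrm{Zf}}$) lands in $Z(\mathfrak{g}_{C})$-finite modules, so that $\pr_{\mu}$ applies. Second, $\pr_{\mu}$ is a \emph{functorial topological direct summand} by Lemma~\ref{lem:V-tensor-decomposition}. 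Throughout I use that a continuous $G$-equivariant map between locally analytic representations is automatically $\mathfrak{g}_{C}$-equivariant, hence $Z(\mathfrak{g}_{C})$-equivariant. Part (i) is then immediate: a strict exact sequence in $\mathcal{M}_{\lambda}^{\an}$ stays strict exact after $V\otimes -$, and then after the direct-summand functor $\pr_{\mu}$.

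For (ii) I would assemble the chain of natural isomorphisms
\[
\Hom_{G}^{\cts}\bigl(T_{\lambda}^{\mu}(G,V)X,\,Y\bigr)
\;\cong\;\Hom_{G}^{\cts}\bigl(V\otimes X,\,Y\bigr)
\;\cong\;\Hom_{G}^{\cts}\bigl(X,\,V'\otimes Y\bigr)
\;\cong\;\Hom_{G}^{\cts}\bigl(X,\,T_{\mu}^{\lambda}(G,V')Y\bigr)
\]
for $X\in\mathcal{M}_{\lambda}^{\an}$, $Y\in\mathcal{M}_{\mu}^{\an}$. The middle isomorphism is the tensor--hom adjunction for the finite-dimensional $V$, compatible with the locally analytic topologies and the diagonal $G$-actions. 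The outer two hold because $\pr_{\mu}(V\otimes X)$ (resp.\ $\pr_{\lambda}(V'\otimes Y)$) is a topological direct summand, and a continuous $Z(\mathfrak{g}_{C})$-equivariant map out of, resp.\ into, the complementary summand must vanish: that complement is annihilated by an ideal of $Z(\mathfrak{g}_{C})$ coprime to a power of $\ker\chi_{\mu}$, resp.\ $\ker\chi_{\lambda}$, whereas $Y$, resp.\ $X$, is killed by such a power. Tracing through, the composite is the canonical adjunction isomorphism.

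For (iii), applying (ii) once with $(\lambda,\mu,V)$ and once with $(\mu,\lambda,V')$ shows that $T_{\lambda}^{\mu}(G,V)$ and $T_{\mu}^{\lambda}(G,V')$ are biadjoint. This yields continuous $G$-equivariant natural transformations $\eta\colon\mathrm{id}\to T_{\mu}^{\lambda}(G,V')\,T_{\lambda}^{\mu}(G,V)$ (the unit of one adjunction) and $\varepsilon\colon T_{\mu}^{\lambda}(G,V')\,T_{\lambda}^{\mu}(G,V)\to\mathrm{id}$ (the counit of the other) on $\mathcal{M}_{\lambda}^{\an}$, and symmetrically on $\mathcal{M}_{\mu}^{\an}$. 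Applying the forgetful functor to $\mathfrak{g}_{C}$-modules --- with which all of the above is visibly compatible --- turns $\eta$ and $\varepsilon$ into the analogous natural transformations for the algebraic translation functors, which by Proposition~\ref{prop:translation functors(algebraic)}(iii) are isomorphisms (units and counits of adjoint equivalences). So every $\eta_{X}$, $\varepsilon_{X}$ is a continuous $G$-equivariant linear \emph{bijection}.

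The remaining --- and main --- point is to upgrade these bijections to topological isomorphisms, since a continuous $G$-equivariant bijection of locally analytic representations need not be a topological isomorphism a priori. To handle this I would observe that $\varepsilon\circ\eta$ is a $G$-equivariant natural endomorphism of $\mathrm{id}_{\mathcal{M}_{\lambda}^{\an}}$, and that on any $X$ killed by $(\ker\chi_{\lambda})^{n}$ it is given by the action of an element of the (commutative, finite-dimensional over $C$) endomorphism ring of the identity functor on $\mathfrak{g}_{C}$-modules killed by $(\ker\chi_{\lambda})^{n}$; since $\varepsilon\circ\eta$ acts invertibly on simple modules (being an isomorphism in the algebraic category), this element is invertible, and its inverse is a $C$-polynomial in it, hence a continuous $G$-equivariant operator on $X$. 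Thus $\eta_{X}$ has a continuous $G$-equivariant left inverse; being also bijective, it is a topological isomorphism, and the same for $\varepsilon_{X}$. Therefore $\eta$ and $\varepsilon$ are isomorphisms of functors, so $T_{\lambda}^{\mu}(G,V)$ is an equivalence of categories with quasi-inverse $T_{\mu}^{\lambda}(G,V')$. I expect the delicate part to be exactly this last step: producing the continuous inverse, for which both the biadjunction and the nilpotency bound coming from $Z(\mathfrak{g}_{C})$-finiteness are used; parts (i) and (ii) are essentially formal.
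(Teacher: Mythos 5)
Your parts (i) and (ii) match the paper's argument exactly: (i) uses the finite-direct-sum observation, and (ii) constructs the adjunction via the maps $C\to V'\otimes V$, $V'\otimes V\to C$, presented by you as a chain of $\Hom$ isomorphisms and by the paper as explicit unit/counit. These are the same underlying adjunction.

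For (iii), the paper's proof is a single sentence (``follows from (ii) and Proposition~\ref{prop:translation functors(algebraic)}(iii)'') that does not address the point you single out: the unit $\eta_X$ and counit $\varepsilon_Y$ are continuous $G$-equivariant maps that are bijective by the algebraic result, but in $\mathcal M^{\an}_\lambda$ a morphism is an isomorphism only if its inverse is continuous, and the objects of $\mathcal M^{\an}_\lambda$ are merely barrelled locally convex spaces, so no open mapping theorem applies. You are right that this needs an argument, and your strategy --- exhibiting a continuous left inverse of $\eta_X$ of the form $P(\varepsilon_X\circ\eta_X)\circ\varepsilon_X$ with $P$ a polynomial over $C$, and then concluding by bijectivity --- is the natural way to proceed. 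However, one step is asserted without justification: you claim the endomorphism ring of the identity functor on $\mathfrak g_C$-modules killed by $(\ker\chi_\lambda)^n$, i.e.\ the center of $U(\mathfrak g_C)/(\ker\chi_\lambda)^n U(\mathfrak g_C)$, is finite-dimensional over $C$. That center contains $Z(\mathfrak g_C)/(\ker\chi_\lambda)^n$ (which is finite-dimensional) but is not obviously equal to it, and if the element representing $\varepsilon\circ\eta$ fails to be algebraic over $C$ the Cayley--Hamilton step breaks down. To close the gap, one should identify $\varepsilon\circ\eta$ with multiplication by an explicit element of $Z(\mathfrak g_C)$ itself (or, what amounts to the same, show that on each block the operator is a nonzero scalar plus a nilpotent continuous operator), at which point its $\chi_\lambda$-value is nonzero and the inverse is a $C$-polynomial in it since $Z(\mathfrak g_C)/(\ker\chi_\lambda)^n$ is finite-dimensional local. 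Modulo this refinement, your (iii) supplies a genuine argument where the paper only gestures.
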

\begin{proof}
For (i), by Proposition~\ref{prop:translation functors(algebraic)}(i) it suffices to show that $T_{\lambda}^{\mu}$ sends strict morphisms to strict morphisms.
In fact, the same is true for $X \mapsto V \otimes X$ and $X \mapsto \pr_{\mu}(X)$.
This follows from the basic fact that if $f_i \colon \pi_i \to \pi'_i$ ($i = 1,\dots,n$) in the category of locally convex spaces, then
$f_i$ is strict for all $i$ if and only if $\oplus f_i \colon \bigoplus_{i=1}^n \pi_i \to \bigoplus_{i=1}^n \pi'_i$ is strict.
For (ii), recall that units and counits of $(T_{\lambda}^{\mu}(G,V),T_{\mu}^{\lambda}(G,V'))$ on $\mathcal{M}_{\lambda},\mathcal{M}_{\mu}$ are induced by $C\to V'\otimes V$ and $V'\otimes V\to C$.
Both are $G$-equivariant linear morphisms, so the unit and the counit are morphisms of locally analytic representations.
Part (iii) follows from (ii) and Proposition~\ref{prop:translation functors(algebraic)} (iii).
\end{proof}

For the following result, recall that in \S\ref{sec:some-decomp} we fixed a system of positive roots for $(\mathfrak{g}_{C},\mathfrak{t}')$.
Let $\rho\in (\mathfrak{t}')^{*}$ (resp.\ $\rho_{L}\in (\mathfrak{t}')^{*}$) be the half sum of positive roots in $\mathfrak{g}_{C}$ (resp.\ in $\mathfrak{l}_{C}$).

\begin{prop}\label{prop:translation functors(parabolic induction)}
Let $\alggrp{P} = \alggrp{L}\alggrp{N}$ be a parabolic subgroup of $\alggrp{G}$.
Suppose $V \in \mathcal O^G$ is $\mathfrak{g}_{C}$-simple with lowest weight $\lambda$.
Let $V_N$ denote the $N$-coinvariants in $V$, which is a locally analytic representation of $L$ (in $\mathcal O^L$).
\begin{enumerate}
\item If $\sigma\in \mathcal{M}_{\mu}^{\an}(L)$ for some $\mu\in (\mathfrak{t}')^{*}$, then $(\Ind_{P}^{G}\sigma)^{\an}\in \mathcal{M}_{\mu - \rho + \rho_{L}}^{\an}(G)$.
\item For any smooth representation $\tau$ of $L$, we have
\[
T_{-\rho}^{\lambda - \rho}(G,V)((\Ind_{P}^{G}\tau)^{\an})\simeq (\Ind_{P}^{G}V_{N}\otimes \tau)^{\an}.
\]
\end{enumerate}
\end{prop}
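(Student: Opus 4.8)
Here is the plan. I would prove (i) first and then deduce (ii).

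\emph{Part (i).} Since the assertion only concerns the underlying $\mathfrak{g}_{C}$-module, the plan is to compute the action of $Z(\mathfrak{g}_{C})$ on $(\Ind_{P}^{G}\sigma)^{\an}$. The key input is the description of the strong dual $((\Ind_{P}^{G}\sigma)^{\an})'\cong D(G)\otimes_{D(P)}\sigma'$ over the locally analytic distribution algebra (with $\sigma'$ inflated to $P$, so that $\mathfrak{n}_{C}$ acts on it by $0$), together with the fact that $Z(\mathfrak{g}_{C})=U(\mathfrak{g}_{C})^{\Ad(G_{C})}$ is central in $D(G)$. Writing $z=\zeta(z)+(z-\zeta(z))$ with $\zeta(z)\in Z(\mathfrak{l}_{C})\subset U(\mathfrak{p}_{C})$ and $z-\zeta(z)\in U(\mathfrak{g}_{C})\mathfrak{n}_{C}$ — where $\zeta\colon Z(\mathfrak{g}_{C})\to Z(\mathfrak{l}_{C})$ is the parabolic Harish--Chandra homomorphism, whose existence follows from the PBW decomposition $U(\mathfrak{g}_{C})=U(\overline{\mathfrak{n}}_{C})U(\mathfrak{l}_{C})\oplus U(\mathfrak{g}_{C})\mathfrak{n}_{C}$ and a $\mathfrak{t}'$-weight argument — one gets $z(1\otimes\phi)=1\otimes\zeta(z)\phi$, so $Z(\mathfrak{g}_{C})$ acts through $\zeta$ and the $Z(\mathfrak{l}_{C})$-action on $\sigma'$. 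Evaluating $\zeta$ on Verma modules via the identification $\underline{M}(M_{\mathfrak{l}}(\lambda))\cong M_{\mathfrak{g}}(\lambda)$ (transitivity of tensor products) and the known infinitesimal characters of Verma modules gives $\chi^{\mathfrak{l}}_{\nu}\circ\zeta=\chi^{\mathfrak{g}}_{\nu+\rho-\rho_{L}}$; combining this with the contragredient twists on $\sigma'$ (an $\mathfrak{l}_{C}$-module) and on $D(G)\otimes_{D(P)}\sigma'$ (a $\mathfrak{g}_{C}$-module) and simplifying with $w_{0}\rho=-\rho$ and $w_{0,L}\rho_{L}=-\rho_{L}$ yields that $(\Ind_{P}^{G}\sigma)^{\an}$ admits generalized infinitesimal character $\mu-\rho+\rho_{L}$. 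Concretely: if $(\zeta(z)-\chi^{\mathfrak{l}}_{\mu}(\zeta(z)))^{n}$ kills $\sigma$ for a uniform $n$ (as it does since $\sigma\in\mathcal{M}^{\an}_{\mu}(L)$), then, $(\Ind_{P}^{G}-)^{\an}$ being exact and additive, the same power of $z-\chi^{\mathfrak{g}}_{\mu-\rho+\rho_{L}}(z)$ kills $(\Ind_{P}^{G}\sigma)^{\an}$. The main care here is the $\rho$-bookkeeping; the distribution-algebra input may alternatively be replaced by a direct computation on the big cell, or — for the $\sigma$ that occur in (ii) — by the Orlik--Strauch identity $(\Ind_{P}^{G}\sigma)^{\an}\cong\mathcal{F}_{P}^{G}(\underline{M}(\sigma'),\mathbf{1})$ together with the fact that $Z(\mathfrak{g}_{C})$ acts on $\mathcal{F}_{P}^{G}(M,\pi)$ through $M$.

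\emph{Part (ii).} By part (i), $(\Ind_{P}^{G}\tau)^{\an}$ has generalized infinitesimal character $-\rho$ (the trivial $\mathfrak{l}_{C}$-module is indexed by $\rho_{L}$, which is sent to the index $2\rho_{L}-\rho$, indexing the same character as $-\rho$); hence $T_{-\rho}^{\lambda-\rho}(G,V)$ is defined on it, and $V\otimes(\Ind_{P}^{G}\tau)^{\an}$ remains $Z(\mathfrak{g}_{C})$-finite by Kostant's lemma (\cite[2.6 Corollary]{MR581584}), so $\pr_{\lambda-\rho}$ makes sense (Lemma~\ref{lem:V-tensor-decomposition}). By Lemma~\ref{lem:induction-tensor}(i), $T_{-\rho}^{\lambda-\rho}(G,V)((\Ind_{P}^{G}\tau)^{\an})=\pr_{\lambda-\rho}\!\big((\Ind_{P}^{G}(V|_{P}\otimes\tau))^{\an}\big)$. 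Decompose the finite-dimensional $L$-representation $V|_{P}$ into its $\mathfrak{l}_{C}$-blocks $V|_{P}=\bigoplus_{\beta}(V|_{P})_{\beta}$ (each an $L$-summand, as $Z(\mathfrak{l}_{C})$ commutes with $L$); by part (i) each $(\Ind_{P}^{G}(V|_{P})_{\beta}\otimes\tau)^{\an}$ lies in a single $\mathfrak{g}_{C}$-block, so $\pr_{\lambda-\rho}$ extracts exactly $(\Ind_{P}^{G}(V|_{P})^{\{\lambda-\rho\}}\otimes\tau)^{\an}$, where $(V|_{P})^{\{\lambda-\rho\}}$ is the sum of those $(V|_{P})_{\beta}$ whose block maps (under part (i)) to $\chi_{\lambda-\rho}$. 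It then remains to show $(V|_{P})^{\{\lambda-\rho\}}\cong V_{N}$ as $L$-representations. The crucial point is that a constituent $L_{\mathfrak{l}}(\eta)$ of $V|_{P}$ lies in a contributing block iff $\eta+2\rho_{L}-\rho\in W_{\mathfrak{g}}(\lambda-\rho)$; applying $w_{0,L}$ and using $w_{0,L}(2\rho_{L}-\rho)=-\rho$ (which holds since $w_{0,L}\rho_{L}=-\rho_{L}$ and $W_{L}$ permutes the roots of $\alggrp N$, fixing $\rho-\rho_{L}$), this becomes $w_{0,L}\eta-\rho\in W_{\mathfrak{g}}(\lambda-\rho)$. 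Now $w_{0,L}\eta$ is the $\mathfrak{l}$-lowest weight of $L_{\mathfrak{l}}(\eta)$, hence a weight of $V$, and $\lambda-\rho$ is regular antidominant; so by the standard fact that the only weight $\mu$ of a finite-dimensional simple $\mathfrak{g}_{C}$-module with lowest weight $\lambda$ satisfying $\mu-\rho\in W_{\mathfrak{g}}(\lambda-\rho)$ is $\mu=\lambda$, with $\dim V_{\lambda}=1$ (the lowest-weight form of the usual highest-weight statement, proved by the norm/convexity argument), we conclude $w_{0,L}\eta=\lambda$, i.e.\ $\eta=w_{0,L}\lambda$.

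Consequently $(V|_{P})^{\{\lambda-\rho\}}$ is the single $\mathfrak{l}_{C}$-block of $V|_{P}$ containing $L_{\mathfrak{l}}(w_{0,L}\lambda)$, and since $\dim V_{\lambda}=1$ it is exactly one copy of $L_{\mathfrak{l}}(w_{0,L}\lambda)$. On the other hand, $N$ acts unipotently on the finite-dimensional $V$, so $V_{N}=V/\mathfrak{n}_{C}V=H_{0}(\mathfrak{n}_{C},V)$, which by Kostant's theorem (applied to $V^{*}$) is the irreducible module $L_{\mathfrak{l}}(w_{0,L}\lambda)$; hence $(V|_{P})^{\{\lambda-\rho\}}\cong V_{N}$. (Equivalently, since $\mathfrak{n}_{C}$ raises $\mathfrak{t}'$-weights and $\lambda$ is the lowest weight of $V$, $\lambda$ is not a weight of $\mathfrak{n}_{C}V$, so $L_{\mathfrak{l}}(w_{0,L}\lambda)$ is not a constituent of $\mathfrak{n}_{C}V$, whence $(V|_{P})^{\{\lambda-\rho\}}\cap\mathfrak{n}_{C}V=0$ and the natural surjection $V|_{P}\onto V_{N}$ restricts to the isomorphism $(V|_{P})^{\{\lambda-\rho\}}\congto V_{N}$.) Applying $(\Ind_{P}^{G}(-\otimes\tau))^{\an}$ completes the proof. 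The main obstacles are: in (i), the duality/distribution-algebra input and keeping all the $\rho$-shifts and contragredient twists straight; in (ii), translating the block condition into the weight condition $\mu-\rho\in W_{\mathfrak{g}}(\lambda-\rho)$ (the $w_{0,L}$-bookkeeping) and making sure the identification with $V_{N}$ holds at the level of representations, not merely of infinitesimal characters.
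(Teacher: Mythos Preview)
Your Part (i) is correct in spirit but takes a detour. Working on the dual $D(G)\otimes_{D(P)}\sigma'$ forces you to track contragredient twists at both the $\mathfrak{l}$- and $\mathfrak{g}$-levels and then undo them with $w_{0}$, $w_{0,L}$. The paper avoids all of this by computing directly on functions: for $f\in(\Ind_{P}^{G}\sigma)^{\an}$ one has $(zf)(g)=(L_{z}f)(g)$ since $z\in Z(\mathfrak{g}_{C})=U(\mathfrak{g}_{C})^{\Ad(G_{C})}$, and left $N$-invariance of $f$ kills the piece of $z$ in $\mathfrak{n}_{C}U(\mathfrak{g}_{C})\overline{\mathfrak{n}}_{C}$ (note the placement of $\mathfrak{n}_{C}$ on the left, matching left translation), so $z$ acts through $\gamma'(z)\in Z(\mathfrak{l}_{C})$ and the $\rho$-shift $\gamma=t_{\rho-\rho_{L}}\circ\gamma_{L}\circ\gamma'$ gives the result with no duality needed.

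Your Part (ii) has a genuine gap. The $\mathfrak{l}_{C}$-block decomposition $V|_{P}=\bigoplus_{\beta}(V|_{P})_{\beta}$ is only $L$-stable, not $P$-stable: $\mathfrak{n}_{C}$ shifts the $\mathfrak{z}_{L,C}$-weight, so it does not preserve the blocks. Hence the expression $(\Ind_{P}^{G}(V|_{P})_{\beta}\otimes\tau)^{\an}$ does not make sense as a summand of $(\Ind_{P}^{G}V\otimes\tau)^{\an}$, and you cannot apply part (i) blockwise as written. The paper fixes this by using a $P$-\emph{filtration} of $V|_{P}$ whose successive quotients $V_{0}$ are $\mathfrak{l}_{C}$-simple with $N$ acting trivially (this exists by Lemma~\ref{lem:subobject-verma} applied to $W=V^{N}$). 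Exactness of $(\Ind_{P}^{G}-)^{\an}$ and of $\pr_{\lambda-\rho}$ then reduces the claim to showing $\pr_{\lambda-\rho}((\Ind_{P}^{G}V_{0}\otimes\tau)^{\an})=0$ unless the lowest weight $\nu$ of $V_{0}$ equals $\lambda$; for this the paper argues exactly as you do (if $\lambda-\rho=w(\nu-\rho)$ then $\lambda-w\nu=\rho-w\rho$, with left side $\le0$ since $w\nu$ is a weight and $\lambda$ is lowest, right side $\ge0$, forcing $w=1$, $\nu=\lambda$). Your weight computation and the identification with $V_{N}$ are fine; what is missing is replacing the direct-sum step by the filtration step so that part (i) legitimately applies to each piece. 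Note also that $V_{N}$ arises naturally as the top \emph{quotient} of this filtration, not as your $L$-stable subspace $(V|_{P})^{\{\lambda-\rho\}}$; the two are isomorphic as $L$-representations, but only the quotient is a $P$-representation.
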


Note that $\lambda$ is automatically integral.
Moreover, the proof shows that the isomorphism in (ii) is obtained by applying $\pr_{\lambda-\rho}$ to the natural surjection $(\Ind_{P}^{G}V \otimes \tau)^{\an}\onto (\Ind_{P}^{G}V_{N}\otimes \tau)^{\an}$.

\begin{proof}
Let $\gamma\colon Z(\mathfrak{g}_C)\into \Sym\mathfrak{t}'$ and $\gamma_{L}\colon Z(\mathfrak{l}_C)\into \Sym\mathfrak{t}'$ be the Harish-Chandra isomorphisms.
We also define $\gamma'\colon Z(\mathfrak{g}_{C})\to Z(\mathfrak{l}_{C})$ as follows.
By considering the adjoint action of the center of $\mathfrak{l}_{C}$, we have $Z(\mathfrak{g}_{C})\subset U(\mathfrak{l}_{C})\oplus \mathfrak{n}_{C}U(\mathfrak{g}_{C})\overline{\mathfrak{n}}_{C}$ and let $\gamma'$ be the first projection along this decomposition.
It is easy to see that $\gamma'$ is an $\mathfrak{l}_{C}$-bimodule homomorphism, hence $\gamma'(Z(\mathfrak{g}_{C}))\subset Z(\mathfrak{l}_{C})$.
For each $\lambda\in (\mathfrak{t}')^{*}$, define $t_{\lambda}\colon \Sym\mathfrak{t}'\to \Sym\mathfrak{t}'$ by $t_{\lambda}(H) = H + \lambda(H)$ for $H\in \mathfrak{t}'$.
Then we have $\gamma = t_{\rho - \rho_{L}}\circ\gamma_{L}\circ\gamma'$ by the definitions.
(Note that we work here with opposite Borels because the usual Harish-Chandra map is obtained by projecting from $Z(\mathfrak{g}_{C})\subset U(\mathfrak{t}')\oplus \overline{\mathfrak{u}}'U(\mathfrak{g}_{C})\mathfrak{u}'$, where we recall that $\mathfrak{u}'$ is the unipotent radical of the Borel subalgebra $\mathfrak{b}'$, and we let $\overline{\mathfrak{u}}'$ denote its opposite.)

We prove (i).
For each $u\in U(\mathfrak{g}_C)$, let $R_{u}$ (resp.\ $L_{u}$) be the right translation (resp.\ left translation) of $u$ on locally analytic functions on $G$.
Let $f\in (\Ind_{P}^{G}\sigma)^{\an}$ and $z\in Z(\mathfrak{g}_{C})$.
Then for $g\in G$, we have $(zf)(g) = (R_{z}f)(g) = (L_{\Ad(g)z}f)(g)$ and this is equal to $(L_{z}f)(g)$ since $z\in Z(\mathfrak{g}_{C}) = U(\mathfrak{g}_{C})^{\Ad(G_{C})}$.
As $f$ is left $N$-invariant, $L_{u}f = 0$ for $u\in \mathfrak{n}_{C}U(\mathfrak{g}_{C})$.
Hence $zf = L_{z}f = L_{\gamma'(z)}f$.
We also have $(L_{u}f)(g) = \sigma(u)f(g)$ for $u\in U(\mathfrak{l}_{C})$.
Therefore $(L_{\gamma'(z)}f)(g) = \gamma'(z)f(g)$.
As $\gamma = t_{\rho - \rho_{L}}\circ\gamma_{L}\circ\gamma'$, if $\sigma$ is killed by $(\ker\chi_{\mu})^{n}$ for $n\in\Z_{>0}$, then $(\Ind_{P}^{G}\sigma)^{\an}$ is killed by $(t_{\rho - \rho_{L}}(\ker\chi_{\mu}))^{n} = (\ker\chi_{\mu - \rho + \rho_{L}})^{n}$.
Part (i) follows.

By Lemma~\ref{lem:induction-tensor}, we have $V\otimes (\Ind_{P}^{G}\tau)^{\an}\simeq (\Ind_{P}^{G} V\otimes \tau)^{\an}$.
The $P$-representation $V|_P$ has a filtration such that the successive quotients are $\mathfrak l_C$-simple and $N$ acts trivially (apply Lemma~\ref{lem:subobject-verma} with $W = V^N$).
Let $V_0$ be such a subquotient and let $\nu\in (\mathfrak{t}')^{*}$ be the lowest weight of $V_0$.
As $V_N$ has the same lowest weight as $V$, it is sufficient to prove that if $\pr_{\lambda-\rho}((\Ind_{P}^{G} V_0\otimes \tau)^{\an})\ne 0$ then $\nu = \lambda$.

The representation $V_0$ has infinitesimal character $\nu - \rho_{L}$.
By (i), $(\Ind_{P}^{G} V_0\otimes \tau)^{\an}$ has infinitesimal character $\nu - \rho$.
Therefore if it has infinitesimal character $\lambda - \rho$ then $\lambda - \rho = w(\nu - \rho)$ for an element $w$ in the absolute Weyl group.
Then we have $\lambda - w(\nu) = \rho - w(\rho)$.
The weight $w(\nu)$ is a weight in $V$ and since $\lambda$ is lowest weight of $V$, $\lambda - w(\nu)$ is a non-positive linear combination of positive roots.
On the other hand $\rho - w(\rho)$ is a non-negative linear combination of positive roots.
Hence both sides are zero.
From $\rho = w(\rho)$ we have $w = 1$ and we get $\lambda = w(\nu) = \nu$, as desired.
\end{proof}

\subsection{A criterion}
\label{subsec:A criterion}
\emph{In this section, we assume $F = \Q_{p}$.}
Let $\alggrp{P} = \alggrp{L}\alggrp{N}$ be a parabolic subgroup of $\alggrp{G}$ and $\sigma$ a finite-dimensional absolutely irreducible continuous representation of $L$.

\begin{lem}\label{lem:cts-rep}
We have $\sigma \in \mathcal{O}^{L}$, after perhaps replacing $C$ by a finite extension.
\end{lem}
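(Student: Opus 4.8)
The plan is to show that a finite-dimensional absolutely irreducible continuous representation $\sigma$ of $L = \alggrp L(\Q_p)$ lies in $\mathcal O^L$, i.e.\ (since $\sigma$ is already finite-dimensional and locally analytic) that the derived action of $\mathfrak l_C$ on $\sigma$ is a direct sum of absolutely simple $\mathfrak l_C$-modules, after perhaps enlarging $C$. First I would use that a continuous (hence, since $F=\Q_p$, locally analytic) finite-dimensional representation of $L$ gives, by differentiation, an action of $\mathfrak l_C$ on $\sigma$; by Lemma~\ref{lm:loc-an-char}(ii) applied to $\alggrp L$ in place of $\alggrp G$ (using a Borel of $\alggrp L$), or directly by finite-dimensional highest-weight theory for $\mathfrak l_C$, the space $\sigma^{\mathfrak u_{L,C}} \ne 0$, where $\mathfrak u_{L,C}$ is the nilradical of a Borel of $\mathfrak l_C$. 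The point is to promote this to semisimplicity of the $\mathfrak l_C$-action.

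The cleanest route is to reduce to $\mathfrak l_C^{\der}$ and the center separately: write $\mathfrak l_C = \mathfrak l_C^{\der} \oplus \mathfrak z_{L,C}$. For the derived part, the key input is that a finite-dimensional representation of the group $L^{\der}$ (or rather the subgroup generated by unipotents) forces semisimplicity of the $\mathfrak l_C^{\der}$-action: indeed, by integrating the action of the nilpotent subalgebras over the unipotent subgroups $U \cap L$ and $\overline U \cap L$ (as in the proof of Lemma~\ref{lem:cop} and Lemma~\ref{lem:semisimple-lift}), any $\mathfrak l_C^{\der}$-submodule that is a $C$-subspace is automatically stable under these unipotent subgroups, hence under the subgroup they generate; but more to the point, Weyl's complete reducibility theorem already tells us that \emph{every} finite-dimensional $\mathfrak l_C^{\der}$-module is semisimple since $\mathfrak l_C^{\der}$ is a semisimple Lie algebra. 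So $\sigma|_{\mathfrak l_C^{\der}}$ is a direct sum of simple modules. For the central part $\mathfrak z_{L,C}$: the center $\alggrp Z_{\alggrp L}$ is a torus over $\Q_p$, and the action of $\alggrp Z_{\alggrp L}(\Q_p)$ on $\sigma$ is continuous; I want $\mathfrak z_{L,C}$ to act diagonalizably (i.e.\ $\sigma$ a sum of $1$-dimensional $\mathfrak z_{L,C}$-modules), at which point condition (ii$''$) of Lemma~\ref{lm:cat-O-p-equivalence} applies and gives $\sigma \in \mathcal O^{\mathfrak l}$, hence $\sigma \in \mathcal O^L$.

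The main obstacle — and the reason for the phrase ``after perhaps replacing $C$ by a finite extension'' — is precisely the diagonalizability of the central torus action: a priori $\mathfrak z_{L,C}$ might act through Jordan blocks, or $\sigma$ might not have a $Z_{\alggrp L}$-central character. Here I would invoke Schur's lemma for admissible (a fortiori finite-dimensional) representations: since $\sigma$ is absolutely irreducible, $\End_L^{\cts}(\sigma) = C$ after enlarging $C$ (cf.\ the discussion in \S\ref{sec:banach-repr} and Lemma~\ref{lem:roche}), so $\sigma$ has a central character $\omega_\sigma \colon Z_{\alggrp L}(\Q_p) \to C^\times$, and $\alggrp Z_{\alggrp L}$ being a torus, $d\omega_\sigma$ is a character of $\mathfrak z_{L,C}$; thus $\mathfrak z_{L,C}$ acts on $\sigma$ by the scalar $d\omega_\sigma$, which is in particular diagonalizable. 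Combining: $\sigma|_{\mathfrak l_C^{\der}}$ is semisimple by Weyl, $\mathfrak z_{L,C}$ acts by a scalar, and the $\mathfrak z_{L,C}$-isotypic decomposition (which is trivial here) is $\mathfrak l_C^{\der}$-stable, so $\sigma|_{\mathfrak l_C}$ is a direct sum of simple modules, each of which is absolutely simple over $C$ by highest-weight theory (any simple $\mathfrak l_C^{\der} \otimes \overline C$-module is defined over $C$, exactly as in the last line of the proof of Lemma~\ref{lm:cat-O-p-equivalence}). Finally, to make this argument literally go through one should note absolute irreducibility of $\sigma$ is only used to get a central character; one could alternatively phrase the whole thing for $\sigma$ merely admissible with a central character and then extend scalars and pass to an irreducible constituent, as is done elsewhere in the paper.
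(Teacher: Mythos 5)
Your proof is correct, but it takes a genuinely different route from the paper's. The paper's argument is shorter and more self-contained: since $\sigma$ is locally analytic (by the same reference you cite), the $L$-action and the derived $\mathfrak l_C$-action are compatible in the sense that $X(xw)=x(\Ad(x^{-1})X)w$ for $x\in L$, $X\in\mathfrak l_C$. So if $W\subset\sigma|_{\mathfrak l_C}$ is any simple $\mathfrak l_C$-submodule, each translate $xW$ is again a simple $\mathfrak l_C$-submodule, and $\sum_{x\in L}xW$ is a nonzero $L$-subrepresentation, hence equals $\sigma$ by irreducibility. Thus $\sigma|_{\mathfrak l_C}$ is a sum of simple modules, i.e.\ semisimple, and one extends scalars at the end to make the constituents absolutely simple. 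This requires neither Weyl's complete reducibility nor Schur's lemma/the central character.

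Your proposal instead verifies criterion (ii$''$) of Lemma~\ref{lm:cat-O-p-equivalence}: decompose $\mathfrak l_C=\mathfrak l_C^{\der}\oplus\mathfrak z_{L,C}$, invoke Weyl for the derived part, and use the central character (via Schur's lemma for absolutely irreducible admissible representations) to see that $\mathfrak z_{L,C}$ acts by a scalar and hence diagonalizably. This is valid, and it has the virtue of making explicit where each hypothesis is used (finite-dimensionality for Weyl, absolute irreducibility for the central character). Two small remarks: the observation $\sigma^{\mathfrak u_{L,C}}\ne 0$ in your first paragraph is not actually used in the rest of the argument; and since $\sigma$ is assumed absolutely irreducible from the start, $\End^{\cts}_L(\sigma)=C$ already holds without enlarging $C$, so the only scalar extension needed is at the very end to promote simple $\mathfrak l_C$-constituents to absolutely simple ones (exactly as in the paper). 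In effect your proof re-derives the implication (ii$''$)$\Rightarrow$(ii) from Lemma~\ref{lm:cat-O-p-equivalence}, whereas the paper sidesteps the decomposition entirely by leveraging the $L$-irreducibility directly.
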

\begin{proof}
  By \cite[\S V.9]{MR1176100} 
  (and as $F = \Q_{p}$)
      $\sigma$ is in fact a locally analytic representation. If $W \subset \sigma|_{\mathfrak{l}_{C}}$ is a simple submodule, then
  by irreducibility, $\sigma = \sum_{x \in L} xW$ and each $xW$ is $\mathfrak{l}_{C}$-stable, so $\sigma|_{\mathfrak{l}_{C}}$ is semisimple. After
  a finite scalar extension, $\sigma|_{\mathfrak{l}_{C}}$ is a direct sum of absolutely simple $\mathfrak{l}_{C}$-modules, i.e.\ $\sigma \in \mathcal{O}^{L}$.
\end{proof}

Therefore we may assume that $\sigma$ is an absolutely simple object of $\mathcal O^L$.

We now confirm an expectation of \cite[\S2]{MR2275644} for Banach representations.

\begin{prop}\label{prop:adm-fin-length}
  Assume Assumption~\ref{assump:on p}.
  Suppose that $\sigma$ is a finite-dimensional continuous representation of $L$.
  Then the Banach representation $(\Ind_P^G \sigma)^{\cts}$ is admissible and topologically of finite length.  
\end{prop}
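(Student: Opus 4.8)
The plan is to dispose of admissibility cheaply and to reduce the finite-length statement, via the locally analytic vectors functor, to the corresponding statement for Orlik--Strauch representations in Corollary~\ref{cor:O-S-fin-length}.

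Admissibility is immediate: being finite-dimensional, $\sigma$ is an admissible Banach representation of $L$, and continuous parabolic induction preserves admissibility by \cite[Lemma 3.3]{fu} (recalled in \S\ref{sec:banach-repr}). For topological finite length I would first reduce to $F = \Q_p$ by replacing $\alggrp G$ (resp.\ $\alggrp P, \alggrp L, \alggrp N$) by its Weil restriction $\Res_{F/\Q_p}\alggrp G$, as in \S\ref{sec:criterion}; this changes neither the topological group $G$ nor the Banach representation $(\Ind_P^G\sigma)^\cts$, and Assumption~\ref{assump:on p} persists since the absolute root system merely gets replaced by a disjoint union of copies of itself. I would then reduce to the case where $\sigma$ is irreducible: choosing a composition series $0 = \sigma_0 \subset \cdots \subset \sigma_r = \sigma$ of $\sigma$ as a $C[L]$-module, left-exactness of continuous parabolic induction produces closed subrepresentations $(\Ind_P^G\sigma_i)^\cts \subseteq (\Ind_P^G\sigma)^\cts$ whose successive quotients embed into the admissible Banach representations $(\Ind_P^G(\sigma_i/\sigma_{i-1}))^\cts$, and since admissible Banach representations form an abelian category, knowing that each of these is of finite length gives the same for $(\Ind_P^G\sigma)^\cts$.

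For irreducible $\sigma$ the crucial observation is that $\sigma \in \mathcal O^L$. Indeed, over $F = \Q_p$ the representation $\sigma$ is automatically locally analytic \cite[\S V.9]{MR1176100}; taking a simple $\mathfrak l_C$-submodule $W \subset \sigma|_{\mathfrak l_C}$ and writing $\sigma = \sum_{x \in L} xW$ with each $xW$ an $\mathfrak l_C$-submodule shows that $\sigma|_{\mathfrak l_C}$ is semisimple (cf.\ the proof of Lemma~\ref{lem:cts-rep}), and as $\alggrp G$, hence $\alggrp L$, splits over $C$, its finite-dimensional simple constituents are absolutely simple. Then $\sigma' \in \mathcal O^L$ as well, so $\underline M(\sigma') \in \mathcal O^P$. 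Writing $\pi := (\Ind_P^G\sigma)^\cts$, Theorem~\ref{thm:property of an} gives that $\pi^\an$ is admissible locally analytic, and Lemma~\ref{lem:induction-tensor}(iii) (using $\sigma^\an = \sigma$) together with Proposition~\ref{prop:OS-ind}(i) identifies
\[
  \pi^\an \;\cong\; (\Ind_P^G\sigma)^\an \;\cong\; \mathcal F_P^G(\underline M(\sigma'), \mathbf 1),
\]
where $\mathbf 1$ denotes the trivial smooth representation. Corollary~\ref{cor:O-S-fin-length}, applied to $\underline M(\sigma') \in \mathcal O^P$ and the finite-length smooth representation $\mathbf 1$, then shows that $\pi^\an$ is topologically of finite length.

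It then remains to transfer finite length from $\pi^\an$ back to $\pi$, using the density of locally analytic vectors. Given a strictly increasing chain $W_1 \subsetneq W_2 \subsetneq \cdots$ of closed $G$-subrepresentations of $\pi$, each $W_i$ is again admissible Banach, so $W_i^\an$ is dense in $W_i$ (Theorem~\ref{thm:property of an}) and, by exactness of $(\cdot)^\an$, is a subobject of $\pi^\an$; from $W_i^\an = W_{i+1}^\an$ we would get $W_i = \overline{W_i^\an} = \overline{W_{i+1}^\an} = W_{i+1}$, so $(W_i^\an)$ is strictly increasing and hence bounded, and likewise for descending chains. I do not anticipate a deep obstacle: the points to handle with care are the reduction to irreducible $\sigma$ (which needs only left-exactness of continuous parabolic induction) and the verification that $\sigma \in \mathcal O^L$ in the irreducible case, which rests on $\alggrp G$ splitting over $C$.
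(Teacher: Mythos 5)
Your proof runs along the same lines as the paper's (quite terse) one: admissibility is immediate, and finite length is deduced by passing to locally analytic vectors, identifying $(\Ind_P^G\sigma)^\an \cong \mathcal F_P^G(\underline M(\sigma'),1)$, and invoking Corollary~\ref{cor:O-S-fin-length}. Your explicit reduction to irreducible $\sigma$ (via a composition series and exactness of $\Ind_P^G$) is a step the paper leaves implicit but which is genuinely necessary: for non-irreducible $\sigma$, $\sigma|_{\mathfrak l_C}$ need not be semisimple, so $\underline M(\sigma')$ need not lie in $\mathcal O^P$. Your chain argument for transferring finite length from $\pi^\an$ back to $\pi$ is exactly the content of $(\cdot)^\an$ being exact with dense image, which is what the paper's citation of \cite[Theorem~7.1]{ST} is shorthand for.

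There is one slip: the claim that because $\alggrp L$ splits over $C$, the simple constituents of $\sigma|_{\mathfrak l_C}$ are automatically absolutely simple. This is false. The centre $\mathfrak z_{L,C}$ of $\mathfrak l_C$ can act on a simple $\mathfrak l_C$-module through a nontrivial finite extension of $C$; already for $L = \Q_p^\times$ there are irreducible two-dimensional continuous $C$-representations on which $\mathfrak l_C \cong C$ acts by a matrix with irreducible quadratic characteristic polynomial, and such a module is simple but not absolutely simple. This is precisely why Lemma~\ref{lem:cts-rep}, which you cite, is stated \emph{after perhaps replacing $C$ by a finite extension}. You should carry that caveat along: establish the result for $\sigma \otimes_C C'$ over a suitable finite extension $C'$, and then descend, noting that a strictly increasing chain of closed subrepresentations of $(\Ind_P^G\sigma)^\cts$ yields one in $(\Ind_P^G\sigma)^\cts \otimes_C C' \cong (\Ind_P^G(\sigma\otimes_C C'))^\cts$, so finite length over $C'$ implies finite length over $C$. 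With this adjustment the argument is complete.
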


\begin{proof}
  The admissibility follows as in \cite[Proposition~2.4]{MR2275644} (where $\dim_C \sigma = 1$).
  For the finite length statement,   by \cite[Theorem~7.1]{ST} (as $F = \Q_{p}$) it suffices to show that the admissible locally analytic representation $(\Ind_P^G \sigma)^{\an} \cong \mathcal{F}_P^G(\underline{M}(\sigma'),1)$ is topologically of finite length, but this is a consequence of Corollary~\ref{cor:O-S-fin-length}.
\end{proof}

\begin{lem}\label{lem:sigma0-tau-decomp}
  Suppose that $\alggrp G^\der$ is simply connected.
  Then, after perhaps replacing $C$ by a finite extension, we can write $\sigma \cong \sigma_0 \otimes \tau$, where $\sigma_0 \in \mathcal{O}^{L}$ is $\mathfrak{l}_{C}$-simple
  and $\tau$ is an (absolutely irreducible) smooth $L$-representation such that moreover $\underline{L}(\sigma_0') \in \mathcal{O}^{P}$ is equimaximal.
\end{lem}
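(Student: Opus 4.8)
The plan is to produce the decomposition in two stages: first obtain \emph{some} decomposition $\sigma \cong \sigma_0 \otimes \tau$ with $\sigma_0 \in \mathcal{O}^L$ being $\mathfrak{l}_C$-simple and $\tau$ an absolutely irreducible smooth $L$-representation, and then twist this decomposition by a smooth character of $L$ so as to force $\underline{L}(\sigma_0')$ to be equimaximal.

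For the first stage I would use Lemma~\ref{lem:cts-rep} to view $\sigma$, after a finite extension of $C$, as an absolutely simple object of $\mathcal{O}^L$. Since $\alggrp{G}^\der$ is simply connected, so is $\alggrp{L}^\der$, so Proposition~\ref{prop:simple-O-P}(iii) applies with the ambient group taken to be $L$ and the parabolic taken to be $L$ itself (so that the hypothesis ``$\mathfrak{g}_C$-simple'' there reads ``$\mathfrak{l}_C$-simple''); it yields, after a further finite extension of $C$, an isomorphism $\sigma \cong \sigma_0 \otimes \tau$ in $\mathcal{O}^L$ with $\sigma_0$ being $\mathfrak{l}_C$-simple and $\tau$ absolutely irreducible smooth.

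For the second stage I would note that $\sigma_0'$ is again $\mathfrak{l}_C$-simple, so $\underline{L}(\sigma_0') \in \mathcal{O}^P$ is $\mathfrak{g}_C$-simple by Lemma~\ref{lem:subobject-verma}. Proposition~\ref{prop:twist-to-O-P}(ii) then provides, after another finite extension of $C$, a smooth character $\eta$ of $L$ such that $\underline{L}(\sigma_0') \otimes \eta$ is equimaximal. Applying Lemma~\ref{lem:verma-tensor} with $W = \sigma_0'$ and $X = \eta$ --- a smooth character of $L$, hence an object of $\mathcal{O}^P$ equal to its own $N$-invariants, with $W \otimes X$ being $\mathfrak{l}_C$-simple and $\underline{L}(W) \otimes X$ being $\mathfrak{g}_C$-simple --- identifies $\underline{L}(\sigma_0' \otimes \eta)$ with $\underline{L}(\sigma_0') \otimes \eta$, which is equimaximal. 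Since $\sigma_0' \otimes \eta = (\sigma_0 \otimes \eta^{-1})'$, the pair $(\sigma_0 \otimes \eta^{-1},\, \tau \otimes \eta)$ then does the job: its tensor product is still isomorphic to $\sigma$, the first factor is still $\mathfrak{l}_C$-simple, the second is still an absolutely irreducible smooth $L$-representation, and $\underline{L}$ of the dual of the first factor is equimaximal.

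Since Propositions~\ref{prop:simple-O-P}(iii) and \ref{prop:twist-to-O-P}(ii) are already available, the remaining argument is essentially bookkeeping with twists, and I do not anticipate a genuine obstacle. The two things to be careful about are that the several ``after perhaps replacing $C$ by a finite extension'' clauses should be amalgamated into a single finite extension chosen at the start, and that the twisting character supplied by Proposition~\ref{prop:twist-to-O-P}(ii) is a smooth character of the full Levi $L$ --- precisely what that statement asserts, and exactly what is needed for $\tau \otimes \eta$ to remain a representation of $L$.
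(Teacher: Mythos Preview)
Your proposal is correct and follows essentially the same route as the paper's proof: apply Proposition~\ref{prop:simple-O-P}(iii) to obtain a decomposition $\sigma \cong \sigma_0 \otimes \tau$, then invoke Proposition~\ref{prop:twist-to-O-P}(ii) to find a smooth character $\eta$ of $L$ making $\underline{L}(\sigma_0') \otimes \eta$ equimaximal, and finally absorb $\eta$ into the decomposition. Your write-up is slightly more explicit than the paper's (you spell out the use of Lemma~\ref{lem:cts-rep}, the $\mathfrak{g}_C$-simplicity of $\underline{L}(\sigma_0')$ via Lemma~\ref{lem:subobject-verma}, and the identification $\underline{L}(\sigma_0' \otimes \eta) \cong \underline{L}(\sigma_0') \otimes \eta$ via Lemma~\ref{lem:verma-tensor}), but the structure is identical.
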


\begin{proof}
  By Proposition~\ref{prop:simple-O-P}(iii) we can write $\sigma \cong \sigma_0 \otimes \tau$, where $\sigma_0 \in \mathcal{O}^{L}$ is $\mathfrak{l}_{C}$-simple
  and $\tau$ is an absolutely irreducible smooth $L$-representation.
  By Proposition~\ref{prop:twist-to-O-P} there exists a smooth character $\eta$ of $L$ such that $\underline{L}(\sigma_0') \otimes \eta \in \mathcal{O}^{P}$ is equimaximal.
  (Both steps may require a finite scalar extension.)
  By replacing $(\sigma_0,\tau)$ by $(\sigma_0\eta,\tau\eta^{-1})$ we may assume that $\eta = 1$, i.e.\ $\underline{L}(\sigma_0')$ is equimaximal.
\end{proof}

\emph{For the remainder of this subsection we will assume that $\sigma \cong \sigma_0 \otimes \tau$, where $\sigma_0 \in \mathcal{O}^{L}$ is $\mathfrak{l}_{C}$-simple
  and $\tau$ is an absolutely irreducible smooth $L$-representation such that moreover $\underline{L}(\sigma_0') \in \mathcal{O}^{P}$ is equimaximal with maximal parabolic $Q = L_QN_Q$ (containing $P$).}
This decomposition of $\sigma$ always exists if $\alggrp G^\der$ is simply connected, by Lemma~\ref{lem:sigma0-tau-decomp}, but not in general.
By Lemma~\ref{lem:semisimple-lift}\ref{lem:semisimple-lift-2} the decomposition is unique up to a smooth character of $L_Q$.

Note that
\begin{equation*}
  \mathcal{F}_{P \cap L_Q}^{L_Q} (\underline{L}_{L_Q}(\sigma_0'),\tau) \subset \mathcal{F}_{P \cap L_Q}^{L_Q} (\underline{M}_{L_Q}(\sigma_0'),\tau) = (\Ind_{P \cap L_Q}^{L_Q} \sigma)^{\an} \subset (\Ind_{P \cap L_Q}^{L_Q} \sigma)^{\cts}.
\end{equation*}
We say that $\sigma$ \emph{satisfies condition $(\ast)$} if 
\begin{equation*}
\fbox{\text{any irreducible subrepresentation of $\mathcal{F}_{P \cap L_Q}^{L_Q} (\underline{L}_{L_Q}(\sigma_0'),\tau)$ is dense in $(\Ind_{P \cap L_Q}^{L_Q} \sigma)^{\cts}$.}}
\phantomsection
\label{eq:star} \end{equation*}
Note that this condition does not depend on the choice of factorization $\sigma \cong \sigma_0\otimes \tau$.
Note also that $\underline{L}_{L_Q}(\sigma_0')$ lies in $\mathcal{O}^{L_Q}$ by Corollary~\ref{cor:N-invts} and that
tensoring with $\underline{L}_{L_Q}(\sigma_0')'$ gives a correspondence between (closed) subrepresentations of $(\Ind_{P \cap L_Q}^{L_Q} \tau)^{\sm}$
and (closed) subrepresentations of 
\begin{equation*}
\mathcal{F}_{P \cap L_Q}^{L_Q} (\underline{L}_{L_Q}(\sigma_0'),\tau) = \mathcal{F}_{L_Q}^{L_Q} (\underline{L}_{L_Q}(\sigma_0'),(\Ind_{P \cap L_Q}^{L_Q} \tau)^{\sm}) = \underline{L}_{L_Q}(\sigma_0')' \otimes (\Ind_{P \cap L_Q}^{L_Q} \tau)^{\sm}.
\end{equation*}
(The point is that $\underline{L}_{L_Q}(\sigma_0')'$ is absolutely simple as $\mathfrak{l}_{Q,C}$-module and that all representations here carry the finest locally convex topology.)

\begin{lem}\label{lem:reduction to smooth}
Assume Assumption~\ref{assump:on p}.
If any irreducible subrepresentation of $\mathcal{F}_{P\cap L_{Q}}^{L_{Q}}(1,\tau) = (\Ind_{P\cap L_{Q}}^{L_{Q}}\tau)^{\sm}$ is dense in $(\Ind_{P\cap L_{Q}}^{L_{Q}}\tau)^{\cts}$, then \upshape{(\hyperref[eq:star]{$*$})} holds.
\end{lem}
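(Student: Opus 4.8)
The plan is to reduce to the case $Q=G$ and then play the smooth–continuous and analytic–continuous comparisons off against each other. Since the lemma involves only $L_Q$, its parabolic $P\cap L_Q$ and the Levi $L$, I would first replace $G$ by $L_Q$ and $P$ by $P\cap L_Q$, so that $Q=G$; then $\underline{L}(\sigma_0')\in\mathcal{O}^G$ by Corollary~\ref{cor:N-invts}, in particular it is finite-dimensional, and it is $\mathfrak{g}_C$-simple because $\sigma_0$ (hence $\sigma_0'$) is $\mathfrak{l}_C$-simple (cf.\ Lemma~\ref{lm:N-invts}). Thus $V:=\underline{L}(\sigma_0')'$ is a finite-dimensional locally analytic representation of $G$ which is absolutely simple as $\mathfrak{g}_C$-module, and the goal becomes: if every irreducible subrepresentation of $(\Ind_{P}^{G}\tau)^{\sm}$ is dense in $(\Ind_{P}^{G}\tau)^{\cts}$, then every irreducible subrepresentation of $\mathcal{F}_{P}^{G}(\underline{L}(\sigma_0'),\tau)$ is dense in $(\Ind_{P}^{G}\sigma)^{\cts}$. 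Using Theorem~\ref{thm:OS-main-in-main}(ii) and Proposition~\ref{prop:OS-ind}(i) I identify $\mathcal{F}_{P}^{G}(\underline{L}(\sigma_0'),\tau)=\mathcal{F}_{G}^{G}(\underline{L}(\sigma_0'),(\Ind_{P}^{G}\tau)^{\sm})=V\otimes(\Ind_{P}^{G}\tau)^{\sm}$, all carrying the finest locally convex topology; since $V$ is absolutely $\mathfrak{g}_C$-simple, restriction to $\mathfrak{g}_C$ shows (as in the paragraph preceding the lemma) that every irreducible subrepresentation of this space is of the form $V\otimes\rho$ for some irreducible subrepresentation $\rho\subseteq(\Ind_{P}^{G}\tau)^{\sm}$.

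Next I would bring in the commutative diagram attached to the canonical $P$-equivariant surjection $V\twoheadrightarrow\sigma_0$ (dual to the inclusion $\sigma_0'=\underline{L}(\sigma_0')^N\hookrightarrow\underline{L}(\sigma_0')$ in $\mathcal{O}^P$), which induces $V\otimes\tau\twoheadrightarrow\sigma_0\otimes\tau=\sigma$: using Lemma~\ref{lem:induction-tensor}(i),(ii) and exactness of parabolic induction one obtains
\begin{equation*}
  \xymatrix{
    V\otimes (\Ind_{P}^{G}\tau)^{\sm}\ar@{^{(}->}[r]\ar@{^{(}->}[rd] & V\otimes (\Ind_{P}^{G}\tau)^{\an}\ar[r]^\sim\ar@{^{(}->}[d] & (\Ind_{P}^{G}V \otimes \tau)^{\an}\ar@{->>}[r]\ar@{^{(}->}[d] & (\Ind_{P}^{G} \sigma)^{\an}\ar@{^{(}->}[d] \\
    & V\otimes (\Ind_{P}^{G}\tau)^{\cts}\ar[r]^\sim & (\Ind_{P}^{G}V\otimes \tau)^{\cts}\ar@{->>}[r] & (\Ind_{P}^{G} \sigma)^{\cts}
  }
\end{equation*}
The key point — which I expect to be the main obstacle — is to verify that the composition of the top row is injective with image exactly $\mathcal{F}_{P}^{G}(\underline{L}(\sigma_0'),\tau)$; equivalently, that it coincides with the canonical embedding $\mathcal{F}_{P}^{G}(\underline{L}(\sigma_0'),\tau)\hookrightarrow\mathcal{F}_{P}^{G}(\underline{M}(\sigma_0'),\tau)=(\Ind_{P}^{G}\sigma)^{\an}$ produced from the surjection $\underline{M}(\sigma_0')\twoheadrightarrow\underline{L}(\sigma_0')$ by exactness of $\mathcal{F}_{P}^{G}$ (Theorem~\ref{thm:OS-main-in-main}(i)). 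This is a matter of unwinding the pairing definition of the Orlik--Strauch functor on generalized Verma modules and their simple quotients, in the spirit of \cite[Lemma~3.1]{socle1} and the discussion preceding it.

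Granting this, the conclusion is short. Let $q\colon V\otimes(\Ind_{P}^{G}\tau)^{\cts}\twoheadrightarrow(\Ind_{P}^{G}\sigma)^{\cts}$ be the bottom-row composition; since $V\otimes\tau\twoheadrightarrow\sigma$ is a surjection of finite-dimensional representations, $q$ is a continuous surjection of admissible Banach representations, and the diagram shows that on $V\otimes(\Ind_{P}^{G}\tau)^{\sm}$ it agrees with the top-row composition followed by $(\Ind_{P}^{G}\sigma)^{\an}\hookrightarrow(\Ind_{P}^{G}\sigma)^{\cts}$. Now take an irreducible subrepresentation $\pi=V\otimes\rho$ of $\mathcal{F}_{P}^{G}(\underline{L}(\sigma_0'),\tau)$ with $\rho\subseteq(\Ind_{P}^{G}\tau)^{\sm}$ irreducible. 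By hypothesis $\rho$ is dense in $(\Ind_{P}^{G}\tau)^{\cts}$, hence $V\otimes\rho$ is dense in $V\otimes(\Ind_{P}^{G}\tau)^{\cts}$ (as $V$ is finite-dimensional), and so $q(V\otimes\rho)$ is dense in $(\Ind_{P}^{G}\sigma)^{\cts}$. But by the preceding sentence $q(V\otimes\rho)$ is precisely the image of $\pi$ under $\mathcal{F}_{P}^{G}(\underline{L}(\sigma_0'),\tau)\hookrightarrow(\Ind_{P}^{G}\sigma)^{\an}\hookrightarrow(\Ind_{P}^{G}\sigma)^{\cts}$; therefore $\pi$ is dense in $(\Ind_{P}^{G}\sigma)^{\cts}$, which is condition $(\ast)$. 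Apart from the identification in the previous paragraph, everything here is formal: exactness of the induction functors in play, density passing through a finite-dimensional tensor factor, and the dictionary between subrepresentations of $(\Ind_{P}^{G}\tau)^{\sm}$ and of $V\otimes(\Ind_{P}^{G}\tau)^{\sm}$.
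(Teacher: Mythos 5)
Your reduction to $Q=G$, the identification $\mathcal{F}_{P}^{G}(\underline{L}(\sigma_0'),\tau)\cong V\otimes(\Ind_P^G\tau)^{\sm}$, the classification of its irreducible subrepresentations as $V\otimes\rho$, the commutative diagram, and the final density argument all match the paper's proof exactly. The gap is precisely the step you flag as "the main obstacle" and then grant: showing that the top-row composition $V\otimes(\Ind_P^G\tau)^{\sm}\into(\Ind_P^G V\otimes\tau)^{\an}\onto(\Ind_P^G\sigma)^{\an}$ is injective with image exactly $\mathcal{F}_P^G(\underline L(\sigma_0'),\tau)$. Your suggestion that this follows by "unwinding the pairing definition" in the spirit of Lemma~\ref{lm:breuil-3-1} is not what the paper does, and it is not clear how to make such a computation work here: the pairing arguments of Lemma~\ref{lm:breuil-3-1} and Proposition~\ref{prop:loc-const-fns} control how an element of $\mathcal{F}_P^G(M,\pi)$ pairs against Verma modules, but they do not directly give the injectivity/image statement.

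The paper's actual argument is a constituent-counting one. It first observes that every irreducible constituent of $\mathcal{F}_P^G(\underline L(\sigma_0'),\tau)$ has the form $\mathcal{F}_G^G(\underline L(\sigma_0'),\pi')$ for some irreducible smooth $\pi'$, and then shows (using Corollary~\ref{cor:F-P-G-intertwiner2}) that no constituent of this form can occur in $\ker\bigl((\Ind_P^G V\otimes\tau)^{\an}\onto(\Ind_P^G\sigma)^{\an}\bigr)$ or in $\coker\bigl(\mathcal{F}_P^G(\underline L(\sigma_0'),\tau)\into(\Ind_P^G\sigma)^{\an}\bigr)$. The cokernel case uses that $L(\sigma_0')$ occurs with multiplicity one in the Verma module $M(\sigma_0')$; the kernel case rewrites $(\Ind_P^G V\otimes\tau)^{\an}\cong\mathcal{F}_P^G(\underline M(V'|_P),\tau)$ via Proposition~\ref{prop:OS-ind} and proves the multiplicity statement $[\underline M(V'/\sigma_0'):L(\sigma_0')]_{\mathcal O^{\mathfrak p}}=0$ by a weight comparison (every weight of $V'/\sigma_0'$ is strictly below the highest weight $\lambda$ of $\sigma_0'\cong L_L(\lambda)$). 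Without this argument, or a genuine substitute for it, the injectivity-and-image claim is unproved, and the rest of your proof — which depends on it to identify $q(V\otimes\rho)$ with the image of $\pi$ — does not go through.
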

\begin{proof}
We may simplify notation by relabeling $Q$ as $G$, i.e.\ assume that $\underline{L}(\sigma_0') \in \mathcal{O}^{G}$.

Let $V := \underline{L}(\sigma_{0}')'$, which is by assumption a finite-dimensional locally analytic representation of $G$.
Note that $\sigma_0' \into V'$ as $P$-representations, i.e.\ $V \onto \sigma_0$, giving by Lemma~\ref{lem:induction-tensor} a commutative diagram (where all maps are continuous):
\begin{equation*}
  \xymatrix{
    V\otimes (\Ind_{P}^{G}\tau)^{\sm}\ar@{^{(}->}[r]\ar@{^{(}->}[rd] & V\otimes (\Ind_{P}^{G}\tau)^{\an}\ar[r]^\sim\ar@{^{(}->}[d] & (\Ind_{P}^{G}V \otimes \tau)^{\an}\ar@{->>}[r]\ar@{^{(}->}[d] & (\Ind_{P}^{G} \sigma)^{\an}\ar@{^{(}->}[d] \\ 
    & V\otimes (\Ind_{P}^{G}\tau)^{\cts}\ar[r]^\sim & (\Ind_{P}^{G}V\otimes \tau)^{\cts}\ar@{->>}[r] & (\Ind_{P}^{G} \sigma)^{\cts}
  }
\end{equation*}

We first show that the composition of the top horizontal arrows is injective with image $\mathcal{F}_{P}^{G} (\underline{L}(\sigma_0'),\tau)$.
By the discussion before this lemma we know that $V\otimes (\Ind_{P}^{G}\tau)^{\sm} \cong \mathcal{F}_{P}^{G} (\underline{L}(\sigma_0'),\tau)$ and that its irreducible constituents are of the form $\mathcal{F}_{G}^{G} (\underline{L}(\sigma_0'),\pi')$ for some irreducible smooth representations $\pi'$ of $G$.
It thus suffices to show that every irreducible constituent of $\ker((\Ind_{P}^{G}V \otimes \tau)^{\an}\onto (\Ind_{P}^{G} \sigma)^{\an})$ and of $\coker(\mathcal{F}_{P}^{G} (\underline{L}(\sigma_0'),\tau)\into (\Ind_{P}^{G} \sigma)^{\an})$ is not of this form.
This is clear for the cokernel (by Corollary~\ref{cor:F-P-G-intertwiner2}) because $L(\sigma_0')$ occurs with multiplicity one in $M(\sigma_0')$.
For the kernel, note first that $V' \in \mathcal{O}^{G}$ implies that it is a finite-dimensional locally analytic representation of $P$ on which $\mathfrak{l}_{C}$ acts as a direct sum of absolutely simple $\mathfrak{l}_{C}$-modules.
Hence $(\Ind_{P}^{G}V \otimes \tau)^{\an} \cong \mathcal{F}_{P}^{G} (\underline M(V'|_P),\tau)$ by Proposition~\ref{prop:OS-ind}, so by Corollary~\ref{cor:F-P-G-intertwiner2} it suffices to show that 
\begin{equation}\label{eq:mult}
  [\underline M(V'/\sigma_0') : L(\sigma_0')]_{\mathcal{O}^{\mathfrak p}} = 0.
\end{equation}
Write $\sigma_0' \cong L_L(\lambda)$ as $\mathfrak l_C$-module for some $\lambda \in (\mathfrak t')^*$, so $V' \cong L(\lambda)$ as $\mathfrak g_C$-module.
Thus only weights $< \lambda$ occur in $V'/\sigma_0'$, which implies~\eqref{eq:mult}.

Let $\pi\subset \mathcal{F}_{P}^{G}(\underline{L}(\sigma_{0}'),\tau) \subset (\Ind_{P}^{G} \sigma)^{\an}$
be an irreducible subrepresentation and take an irreducible subrepresentation $\pi_{0}$ of $(\Ind_{P}^{G} \tau)^{\sm}$ such that $\pi$ is the image of $V\otimes\pi_{0} \subset V\otimes (\Ind_{P}^{G}\tau)^{\sm}$.
Then $\pi_{0}$ is dense in $(\Ind_{P}^{G}\tau)^{\cts}$ by assumption, so also $V \otimes \pi_0$ is dense in $V \otimes (\Ind_{P}^{G}\tau)^{\cts}$.
The above diagram then shows that $\pi$ is dense in $(\Ind_{P}^{G} \sigma)^{\cts}$.
\end{proof}

\begin{thm}\label{thm:main2}
Assume Assumption~\ref{assump:on p}.
    The Banach space representation $(\Ind_P^G \sigma)^{\cts}$ is irreducible if and only if condition~\upshape{(\hyperref[eq:star]{$*$})} above holds.
\end{thm}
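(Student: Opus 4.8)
The plan is to prove both implications by relating closed subrepresentations of the Banach induction $(\Ind_P^G\sigma)^\cts$ to its locally analytic vectors, and then to the Orlik--Strauch picture. The key tool is Theorem~\ref{thm:property of an}: since $(\Ind_P^G\sigma)^\cts$ is admissible (Proposition~\ref{prop:adm-fin-length}) and $F = \Q_p$, the functor $(\cdot)^\an$ is exact and faithful on admissible Banach representations, and $(\Ind_P^G\sigma)^\an$ is dense in $(\Ind_P^G\sigma)^\cts$. Thus a closed subrepresentation $\pi \subsetneq (\Ind_P^G\sigma)^\cts$ is proper if and only if $\pi^\an \subsetneq (\Ind_P^G\sigma)^\an$ is proper (density of locally analytic vectors plus exactness), so irreducibility of the Banach representation is equivalent to: every nonzero closed subrepresentation of $(\Ind_P^G\sigma)^\an$ is dense in $(\Ind_P^G\sigma)^\cts$. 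Since $(\Ind_P^G\sigma)^\an$ has finite length as an admissible locally analytic representation (it is $\mathcal F_P^G(\underline M(\sigma_0'),\tau)$ up to the identification $(\Ind_P^G\sigma)^\an\cong(\Ind_P^G V\otimes\tau)^\an/\cdots$ — more precisely $\mathcal F_P^G(\underline M(\sigma_0'),\tau)\cong(\Ind_P^G\sigma)^\an$ by Proposition~\ref{prop:OS-ind}(i)), it suffices to check this on \emph{irreducible} closed subrepresentations.

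For the implication ``$(\ast) \Rightarrow$ irreducible'' I would argue exactly as indicated in the introduction. First reduce to $Q = G$: using Theorem~\ref{thm:OS-main-in-main}(ii) one has $\mathcal F_P^G(\underline M(\sigma_0'),\tau)\cong\mathcal F_Q^G(\underline M(\sigma_0'),(\Ind_{P\cap L_Q}^{L_Q}\tau)^\sm)$ and similarly for the Banach inductions, so the question for $P\subset G$ with parabolic $Q$ reduces to the question for $P\cap L_Q\subset L_Q$ with maximal parabolic $L_Q$ itself; by relabeling assume $\underline L(\sigma_0')\in\mathcal O^G$, i.e.\ $Q=G$. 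Now set $V := \underline L(\sigma_0')'\in\mathcal O^G$, a finite-dimensional locally analytic $G$-representation, and consider the commutative diagram from Lemma~\ref{lem:reduction to smooth}. By that lemma (or rather the first part of its proof, which does not use the hypothesis there) the composition of the top row is injective with image $\mathcal F_P^G(\underline L(\sigma_0'),\tau)\cong V\otimes(\Ind_P^G\tau)^\sm$. By Corollary~\ref{cor:socle-orlik-strauch}, $\soc_G(\Ind_P^G\sigma)^\an = \soc_G\mathcal F_P^G(\underline M(\sigma_0'),\tau)$ equals $\soc_G\mathcal F_P^G(\underline L(\sigma_0'),\tau)$, so any irreducible closed subrepresentation $\pi$ of $(\Ind_P^G\sigma)^\an$ already lies in $\mathcal F_P^G(\underline L(\sigma_0'),\tau)= V\otimes(\Ind_P^G\tau)^\sm$; restricting to $\mathfrak g_C$ and using that $V$ is $\mathfrak g_C$-simple (so $V\otimes(\Ind_P^G\tau)^\sm$ has all subobjects of the form $V\otimes\rho$) we get $\pi = V\otimes\rho$ for some irreducible smooth $\rho\subset(\Ind_P^G\tau)^\sm$. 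Under the identification $\mathcal F_P^G(\underline L(\sigma_0'),\tau)=\mathcal F_{P\cap L_Q}^{L_Q}(\underline L_{L_Q}(\sigma_0'),\tau)$ (here $L_Q=G$), condition $(\ast)$ says precisely that $\pi$ is dense in $(\Ind_P^G\sigma)^\cts$. Running the diagram: $\rho$ dense in $(\Ind_P^G\tau)^\cts$ (this is what $(\ast)$ gives after untwisting by $\underline L_{L_Q}(\sigma_0')'$, using the correspondence of subrepresentations recalled before Lemma~\ref{lem:reduction to smooth}) forces $V\otimes\rho$ dense in $V\otimes(\Ind_P^G\tau)^\cts\cong(\Ind_P^G V\otimes\tau)^\cts$, hence $\pi$ dense in $(\Ind_P^G\sigma)^\cts$. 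This gives irreducibility.

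For the converse ``irreducible $\Rightarrow (\ast)$'' I would run the argument in reverse. Assume $(\Ind_P^G\sigma)^\cts$ is irreducible; I must show any irreducible subrepresentation $\pi$ of $\mathcal F_{P\cap L_Q}^{L_Q}(\underline L_{L_Q}(\sigma_0'),\tau)$ is dense in $(\Ind_{P\cap L_Q}^{L_Q}\sigma)^\cts$. Again reduce to $Q=G$ via Theorem~\ref{thm:OS-main-in-main}(ii) (note $(\Ind_P^G\sigma)^\cts$ irreducible forces, by Proposition~\ref{prop:intertwiners} and $\mathcal F_Q^G$-theory, that $(\Ind_{P\cap L_Q}^{L_Q}\sigma)^\cts$ is irreducible — indeed $(\Ind_P^G-)^\cts$ of a reducible representation is reducible by exactness, which handles this). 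With $Q=G$: $\pi\subset\mathcal F_P^G(\underline L(\sigma_0'),\tau)= V\otimes(\Ind_P^G\tau)^\sm\subset(\Ind_P^G\sigma)^\an$, so $\pi$ is a nonzero closed subrepresentation of the irreducible Banach representation's dense locally analytic subspace; by the density/exactness reduction at the start of this proof, $\pi$ is dense in $(\Ind_P^G\sigma)^\cts$. That is exactly condition $(\ast)$ in the case $Q=G$; the general case follows by the relabeling.

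The main obstacle I anticipate is the reduction from general $Q$ to $Q=G$ and keeping the topologies straight throughout: one must know that $\mathcal F_Q^G(\underline L(\sigma_0'),(\Ind_{P\cap L_Q}^{L_Q}\tau)^\sm)\cong(\Ind_P^G\sigma)^\an$ \emph{compatibly} with the Banach-level identification $(\Ind_P^G\sigma)^\cts\cong(\Ind_{P\cap L_Q}^{L_Q}(\Ind_P^{L_Q}\sigma)^\cts)^\cts$ — transitivity of continuous induction — so that a subrepresentation dense at the $L_Q$-level pushes forward to one dense at the $G$-level, and conversely; here one uses that continuous parabolic induction is exact and preserves dense inclusions (the latter because $(\Ind_{P'}^G-)^\cts\cong(\Ind_{P'\cap K}^K-)^\cts$ is just continuous functions on a compact space, so a dense subspace of the coefficients induces a dense subspace). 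The other delicate point, already isolated in Lemma~\ref{lem:reduction to smooth} and Corollary~\ref{cor:socle-orlik-strauch}, is that the locally analytic socle of $(\Ind_P^G\sigma)^\an$ is contained in the ``smooth part'' $\mathcal F_P^G(\underline L(\sigma_0'),\tau)$; but that is precisely what Corollary~\ref{cor:socle-orlik-strauch} (our generalization of Breuil's socle theorem) supplies, so I would simply invoke it.
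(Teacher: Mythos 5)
Your proof of the converse implication (irreducible $\Rightarrow (\ast)$) is correct and matches the paper's argument: exactness of $\Ind_Q^G$ yields irreducibility of $(\Ind_{P\cap L_Q}^{L_Q}\sigma)^\cts$, and then any nonzero subrepresentation of that irreducible Banach space is automatically dense, which is $(\ast)$.

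The forward direction, however, has a genuine gap in the "reduction to $Q=G$." You are correct that the $Q=G$ case (for the Levi $L_Q$) follows from Corollary~\ref{cor:socle-orlik-strauch} together with the density of the locally analytic socle; but this only gives that $(\Ind_{P\cap L_Q}^{L_Q}\sigma)^\cts$ is irreducible. To pass from there to irreducibility of $(\Ind_P^G\sigma)^\cts \cong (\Ind_Q^G(\Ind_{P\cap L_Q}^{L_Q}\sigma)^\cts)^\cts$, you would need to know that $\Ind_Q^G$ preserves irreducibility in this situation — and that implication (precisely (ii)$\Rightarrow$(i) of Theorem~\ref{thm:equivalence-irred}) is one of the things the paper \emph{derives from} Theorem~\ref{thm:main2}, so invoking it would be circular. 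Your appeal to "continuous parabolic induction preserves dense inclusions" only gives that the subspace of functions with values in a dense $L_Q$-subspace $\widetilde\rho$ is dense in $(\Ind_P^G\sigma)^\cts$; it does not say that the specific $G$-subrepresentation $\mathcal F_Q^G(\underline L(\sigma_0'),\rho)\subset\pi^\an$ found via Corollary~\ref{cor:socle-orlik-strauch} contains enough of those functions. That is precisely where Proposition~\ref{prop:loc-const-fns} enters the paper's proof: it shows that $\mathcal F_Q^G(\underline L(\sigma_0'),\rho)$, sitting inside $(\Ind_Q^G\widetilde\rho)^\an$, actually contains all functions supported on $\overline N_{Q,0}Q/Q$ and locally constant on $\overline N_{Q,0}$, i.e.\ a copy of $C^\infty(\overline N_{Q,0},\widetilde\rho)$. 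Combining this with $(\ast)$ (density of $\widetilde\rho$) and the fact that $C^0(\overline N_{Q,0},(\Ind_{P\cap L_Q}^{L_Q}\sigma)^\cts)$ generates $(\Ind_P^G\sigma)^\cts$ as a $G$-representation completes the argument. In other words, the paper never "relabels $L_Q$ as $G$" in the forward direction; it works entirely with $G$-representations, and Proposition~\ref{prop:loc-const-fns} is the step your proposal is missing.
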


\begin{proof}
  Suppose that \upshape{(\hyperref[eq:star]{$*$})} holds and that $\pi \subset (\Ind_{P}^G\sigma)^{\cts}$ is a nonzero closed subrepresentation.
  Then $\pi^{\an} \ne 0$ by Theorem~\ref{thm:property of an}.
  We have $(\Ind_{P}^G\sigma)^{\an} \cong \mathcal{F}_P^G(\underline{M}(\sigma_0'),\tau)$.
  By Corollary \ref{cor:socle-orlik-strauch} we have
  \begin{equation*}
    \soc_G ((\Ind_{P}^G\sigma)^{\an}) \cong \mathcal{F}_Q^G(\underline{L}(\sigma_0'),\soc_{L_Q}(\Ind_{P \cap L_Q}^{L_Q} \tau)^{\sm}),
  \end{equation*}
  and hence $\pi^{\an}$ contains $\mathcal{F}_Q^G(\underline{L}(\sigma_0'),\rho)$ for some irreducible subrepresentation $\rho$ of $(\Ind_{P \cap L_Q}^{L_Q} \tau)^{\sm}$.
  Let $\widetilde{\rho} := \underline{L}_{L_Q}(\sigma_0')' \otimes \rho$ denote the corresponding (topologically) irreducible subrepresentation of $\mathcal{F}_{P \cap L_Q}^{L_Q} (\underline{L}_{L_Q}(\sigma_0'),\tau)$.
  Note that   $\underline{L}(\underline{L}_{L_Q}(\sigma_0')) \cong \underline{L}(\sigma_0')$ by Corollary~\ref{cor:N-invts}.
  Therefore $\mathcal{F}_Q^G(\underline{L}(\sigma_0'),\rho)$ is contained in
  \begin{align}
    \mathcal{F}_Q^G(\underline M(\underline{L}_{L_Q}(\sigma_0')),\rho) &\cong (\Ind_Q^G \underline{L}_{L_Q}(\sigma_0')' \otimes \rho)^{\an}\label{eq:4b} 
    = (\Ind_Q^G \widetilde{\rho})^{\an} \\
    &\subset (\Ind_Q^G (\Ind_{P\cap L_Q}^{L_Q} \sigma)^{\an})^{\an} \cong (\Ind_P^G \sigma)^{\an}\notag.
  \end{align}
  By Proposition~\ref{prop:loc-const-fns} (and Lemma~\ref{lm:N-invts}) we deduce that $\pi^{\an}$ contains all functions in the right-hand side of
  \eqref{eq:4b} that are supported on $\overline{N}_{Q,0}Q/Q$ and are locally constant on $\overline{N}_{Q,0}$ (where $\overline{N}_{Q,0}$ is a 
  fixed compact open subgroup of $\overline{N}_Q$).  This space is isomorphic to
  $C^\infty(\overline{N}_{Q,0}, \widetilde{\rho})$. By condition~\upshape{(\hyperref[eq:star]{$*$})} we know that $\widetilde{\rho}$ is dense in $(\Ind_{P \cap L_Q}^{L_Q} \sigma)^{\cts}$, and hence
  $C^\infty(\overline{N}_{Q,0}, \widetilde{\rho})$ is dense in $C^{0}(\overline{N}_{Q,0}, (\Ind_{P \cap L_Q}^{L_Q}\sigma)^{\cts})$.  But $C^{0}(\overline{N}_{Q,0}, (\Ind_{P \cap L_Q}^{L_Q}\sigma)^{\cts})$
  generates $(\Ind_P^G\sigma)^{\cts} \cong (\Ind_Q^G(\Ind_{P \cap L_Q}^{L_Q} \sigma)^{\cts})^{\cts}$ as $G$-representation.

  Conversely, if $(\Ind_P^G \sigma)^{\cts}$ is irreducible, then $(\Ind_{P \cap L_Q}^{L_Q} \sigma)^{\cts}$ is irreducible (as follows from the exactness of
  $\Ind_{Q}^{G}(-)^{\cts} \cong C^0(G/Q,-)$, cf.\ \cite[(2.1.3)]{locallyanalytic-memoir} and \cite[Corollary~2.2]{MR4190408}), hence condition~\upshape{(\hyperref[eq:star]{$*$})} holds.
    \end{proof}

\begin{cor}\label{cor:irreduciblity}
Assume Assumption~\ref{assump:on p}.
If every irreducible subrepresentation of $(\Ind_{P\cap L_{Q}}^{L_{Q}}\tau)^{\sm}$ is dense in $(\Ind_{P\cap L_{Q}}^{L_{Q}}\tau)^{\cts}$, then $(\Ind_{P}^{G}\sigma)^{\cts}$ is irreducible.
In particular, if $(\Ind_{P\cap L_{Q}}^{L_{Q}}\tau)^{\sm}$ is irreducible, then $(\Ind_{P}^{G}\sigma)^{\cts}$ is irreducible.
\end{cor}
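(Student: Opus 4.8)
The plan is to deduce this from Lemma~\ref{lem:reduction to smooth} and Theorem~\ref{thm:main2}, with essentially no new input needed for the first assertion. First I would observe that the displayed hypothesis — that every irreducible subrepresentation of $(\Ind_{P\cap L_Q}^{L_Q}\tau)^\sm$ is dense in $(\Ind_{P\cap L_Q}^{L_Q}\tau)^\cts$ — is exactly the hypothesis of Lemma~\ref{lem:reduction to smooth}, so that lemma yields that $\sigma$ satisfies condition $(\ast)$. Then Theorem~\ref{thm:main2} states that condition $(\ast)$ is equivalent to the topological irreducibility of $(\Ind_P^G\sigma)^\cts$, which gives the first assertion at once.

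For the ``in particular'' clause I would reduce to the first assertion. If $(\Ind_{P\cap L_Q}^{L_Q}\tau)^\sm$ is irreducible, then its only nonzero subrepresentation — hence its only irreducible subrepresentation — is the whole space, so the hypothesis of the first assertion comes down to the density of $(\Ind_{P\cap L_Q}^{L_Q}\tau)^\sm$ in $(\Ind_{P\cap L_Q}^{L_Q}\tau)^\cts$. To establish this density I would argue as follows: $\tau$ is a finite-dimensional smooth representation, so it coincides with its space of locally analytic vectors, and hence Lemma~\ref{lem:induction-tensor}(iii) gives $\bigl((\Ind_{P\cap L_Q}^{L_Q}\tau)^\cts\bigr)^\an \cong (\Ind_{P\cap L_Q}^{L_Q}\tau)^\an$, which is dense in $(\Ind_{P\cap L_Q}^{L_Q}\tau)^\cts$ by Theorem~\ref{thm:property of an}; moreover $(\Ind_{P\cap L_Q}^{L_Q}\tau)^\sm$ is dense in $(\Ind_{P\cap L_Q}^{L_Q}\tau)^\an$ because locally constant functions are dense in locally analytic functions on the compact locally analytic manifold $(P\cap L_Q)\backslash L_Q$. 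Applying the first assertion then concludes the proof.

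Since the corollary merely repackages Lemma~\ref{lem:reduction to smooth} and Theorem~\ref{thm:main2}, I do not expect a genuine obstacle. The only step that is not purely formal is the density of smooth parabolic induction inside continuous parabolic induction invoked in the ``in particular'' clause, and that too is standard.
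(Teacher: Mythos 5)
The first assertion of your proof is correct and follows the paper exactly: both observe that $\mathcal F_{P\cap L_Q}^{L_Q}(1,\tau) = (\Ind_{P\cap L_Q}^{L_Q}\tau)^\sm$, so that the hypothesis is precisely that of Lemma~\ref{lem:reduction to smooth}, which yields $(\ast)$, and Theorem~\ref{thm:main2} then gives irreducibility.

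For the ``in particular'' clause, however, your density argument contains a genuine error. You assert that $(\Ind_{P\cap L_Q}^{L_Q}\tau)^\sm$ is dense in $(\Ind_{P\cap L_Q}^{L_Q}\tau)^\an$ because ``locally constant functions are dense in locally analytic functions on the compact locally analytic manifold $(P\cap L_Q)\backslash L_Q$.'' This is false for the compact-type (inductive limit) topology carried by $(\Ind_{P\cap L_Q}^{L_Q}\tau)^\an$: locally constant functions are \emph{not} dense in $C^\an(X)$ for a compact $p$-adic manifold $X$ with its usual compact-type topology. Indeed, dually, the distribution $f \mapsto (df)(e)$ for any nonzero $e \in \Lie L_Q$ is a nonzero element of $D(H_0)$ (for a compact open $H_0$) that annihilates every locally constant function. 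So the chain ``smooth dense in analytic dense in continuous'' does not get off the ground. The statement you actually need --- density of $(\Ind_{P\cap L_Q}^{L_Q}\tau)^\sm$ in the \emph{Banach} space $(\Ind_{P\cap L_Q}^{L_Q}\tau)^\cts$ --- is true, but should be proved directly: passing through the finer locally analytic topology only makes the claim harder, not easier. The cleanest fix is the one the paper uses: observe that the hypotheses of Lemma~\ref{lem:supported on big cell} hold for any $0 \ne f \in (\Ind_{P\cap L_Q}^{L_Q}\tau)^\sm$ supported on the big cell with values in a line (for instance one obtained by averaging against a smooth character, as in the proof of Proposition~\ref{prop:whittaker criterion}), and that lemma directly delivers a topological generator. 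Alternatively, one can argue elementarily that locally constant functions are sup-norm dense in $C^0(X,\tau)$ for $X$ compact and totally disconnected; this is how the paper's Lemma~\ref{lem:supported on big cell} is established internally, but the ambient continuous induction has its Banach (sup-norm) topology, not a compact-type one, which is the key point your argument misses.
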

\begin{proof}
Note that $\mathcal{F}_{P\cap L_{Q}}^{L_{Q}}(1,\tau) = (\Ind_{P\cap L_{Q}}^{L_{Q}}\tau)^{\sm}$.
Hence the corollary follows from Lemma~\ref{lem:reduction to smooth} and Theorem~\ref{thm:main2}.
The last statement follows from the fact that $(\Ind_{P\cap L_{Q}}^{L_{Q}}\tau)^{\sm}$ is dense in $(\Ind_{P\cap L_{Q}}^{L_{Q}}\tau)^{\cts}$.
This follows, for example, from Lemma~\ref{lem:supported on big cell}.
\end{proof}

\begin{thm}\label{thm:equivalence-irred}
Assume Assumption~\ref{assump:on p}.
  The following are equivalent:
  \begin{enumerate}
  \item $(\Ind_P^G \sigma)^\cts$ is irreducible;
  \item $(\Ind_{P\cap L_Q}^{L_Q} \sigma)^\cts$ is irreducible;
  \item $(\Ind_{P\cap L_Q}^{L_Q} \tau)^\cts$ is irreducible.
  \end{enumerate}
\end{thm}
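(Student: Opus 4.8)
The plan is to prove the cycle (i) $\Rightarrow$ (ii) $\Rightarrow$ (iii) $\Rightarrow$ (i), in which the equivalence of (i) and (ii) is essentially formal, the implication (iii) $\Rightarrow$ (i) is a one-line consequence of Corollary~\ref{cor:irreduciblity}, and the content is in (ii) $\Rightarrow$ (iii). For (i) $\Leftrightarrow$ (ii) I would apply Theorem~\ref{thm:main2} twice, once to $G$ with parabolic $P$ and once to $L_Q$ with parabolic $P\cap L_Q$. To justify the second application, observe that $\underline L_{L_Q}(\sigma_0')\cong\underline L(\sigma_0')^{N_Q}$ lies in $\mathcal O^{L_Q}$ by Corollary~\ref{cor:N-invts}, hence (Lemma~\ref{lem:cop}) is equimaximal as an object of $\mathcal O^{P\cap L_Q}$, with maximal parabolic $L_Q$ itself; thus the standing hypotheses of the subsection persist when $G,P$ are replaced by $L_Q,P\cap L_Q$ (and Assumption~\ref{assump:on p} is inherited, the absolute root system of $L_Q$ being a sub-diagram of that of $G$). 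The key point is that for this choice condition $(\ast)$ is \emph{verbatim} the condition $(\ast)$ attached to $(G,P)$ (both assert that every irreducible subrepresentation of $\mathcal F_{P\cap L_Q}^{L_Q}(\underline L_{L_Q}(\sigma_0'),\tau)$ is dense in $(\Ind_{P\cap L_Q}^{L_Q}\sigma)^\cts$). Theorem~\ref{thm:main2} then gives (i) $\Leftrightarrow (\ast) \Leftrightarrow$ (ii).

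For (iii) $\Rightarrow$ (i): if $(\Ind_{P\cap L_Q}^{L_Q}\tau)^\cts$ is irreducible, then the closure in it of any irreducible subrepresentation of $(\Ind_{P\cap L_Q}^{L_Q}\tau)^\sm$ is a nonzero closed subrepresentation, hence all of $(\Ind_{P\cap L_Q}^{L_Q}\tau)^\cts$; so every such subrepresentation is dense and Corollary~\ref{cor:irreduciblity} yields (i). The substance is (ii) $\Rightarrow$ (iii), which I would establish via translation functors. Both statements concern $L_Q$, so, arguing as above, I may assume $Q=G$, i.e.\ $V:=\underline L(\sigma_0')'\in\mathcal O^G$, and that $(\Ind_P^G\sigma)^\cts$ is irreducible; the goal is to show that every nonzero closed subrepresentation $\pi\subset(\Ind_P^G\tau)^\cts$ is everything. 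Here $\pi^\an\ne0$ by Theorem~\ref{thm:property of an}, and $V\otimes\pi$ is again an admissible Banach representation (Lemma~\ref{lem:induction-tensor}(v)). Using the $P$-equivariant surjection $V\onto V_N\cong\sigma_0$ (so $V\otimes\tau\onto\sigma$) together with Lemma~\ref{lem:induction-tensor}, one obtains a chain of maps of admissible Banach representations
\begin{equation*}
  V\otimes\pi\into V\otimes(\Ind_P^G\tau)^\cts\congto(\Ind_P^G V\otimes\tau)^\cts\onto(\Ind_P^G\sigma)^\cts,
\end{equation*}
whose composite has closed image, nonzero because its kernel is $\ker(V\onto\sigma_0)\otimes\pi$ with quotient $\sigma_0\otimes\pi\ne0$; by irreducibility of $(\Ind_P^G\sigma)^\cts$ the composite is surjective. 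Applying the exact functor $(\cdot)^\an$ (Theorem~\ref{thm:property of an}) and Lemma~\ref{lem:induction-tensor}(iii),(iv) yields a surjection of locally analytic representations $V\otimes\pi^\an\into V\otimes(\Ind_P^G\tau)^\an\congto(\Ind_P^G V\otimes\tau)^\an\onto(\Ind_P^G\sigma)^\an$. By Proposition~\ref{prop:translation functors(parabolic induction)}(ii), with $V_N\cong\sigma_0$ (Lemma~\ref{lm:N-invts}), one has $T\bigl((\Ind_P^G\tau)^\an\bigr)\cong(\Ind_P^G\sigma)^\an$, where $T:=T_{-\rho}^{\lambda-\rho}(G,V)$ and $\lambda$ is the lowest weight of $V$; moreover $(\Ind_P^G\sigma)^\an$ has infinitesimal character $\lambda-\rho$, and this isomorphism is $\pr_{\lambda-\rho}$ applied to the last arrow above. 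Since $\pr_{\lambda-\rho}$ is exact and fixes $(\Ind_P^G\sigma)^\an$ (Lemma~\ref{lem:V-tensor-decomposition}), applying it to the chain turns the composite into a surjection $T(\pi^\an)\into T\bigl((\Ind_P^G\tau)^\an\bigr)\congto(\Ind_P^G\sigma)^\an$ whose second arrow is an isomorphism; hence the inclusion $T(\pi^\an)\into T\bigl((\Ind_P^G\tau)^\an\bigr)$, which is $T$ applied to $\pi^\an\into(\Ind_P^G\tau)^\an$, is an isomorphism. Finally $\lambda$ is antidominant ($V$ being the dual of a finite-dimensional $\mathfrak g_C$-simple module), so $\langle-\rho,\alpha^{\vee}\rangle$ and $\langle\lambda-\rho,\alpha^{\vee}\rangle$ lie in $\Z_{>0}$ for exactly the same roots $\alpha$ (the negative ones); thus Proposition~\ref{prop:translation functors(analytic)}(iii) applies and $T$ is an equivalence of categories. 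Therefore $\pi^\an\into(\Ind_P^G\tau)^\an$ is an isomorphism, and density of locally analytic vectors forces $\pi=(\Ind_P^G\tau)^\cts$; as $\pi$ was arbitrary, $(\Ind_P^G\tau)^\cts$ is irreducible.

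The hard part will be this last implication (ii) $\Rightarrow$ (iii): one must be careful in promoting the Banach-level surjectivity to the locally analytic level via exactness of $(\cdot)^\an$, in identifying $\pr_{\lambda-\rho}\bigl(V\otimes(\Ind_P^G\tau)^\an\bigr)$ with the translation functor $T$ applied to $(\Ind_P^G\tau)^\an$ through the precise statement of Proposition~\ref{prop:translation functors(parabolic induction)}, and in checking the regularity hypothesis of Proposition~\ref{prop:translation functors(analytic)}(iii) that makes $T$ an equivalence. The remaining steps are routine manipulations with the Orlik--Strauch functor and its compatibility with change of parabolic.
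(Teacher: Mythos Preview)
Your overall architecture matches the paper's proof exactly: (i)$\Leftrightarrow$(ii) via the observation that condition $(\ast)$ is literally the same (using Corollary~\ref{cor:N-invts}), (iii)$\Rightarrow$(i) via Corollary~\ref{cor:irreduciblity}, and (ii)$\Rightarrow$(iii) via translation functors. Your verification that $T_{-\rho}^{\lambda-\rho}$ is an equivalence is correct and makes explicit a step the paper leaves implicit.

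There is, however, one genuine gap. In showing that the Banach composite $V\otimes\pi\to(\Ind_P^G\sigma)^\cts$ is nonzero, you assert that its kernel is $\ker(V\onto\sigma_0)\otimes\pi$. This is false: the subspace $K:=\ker(V\onto V_N)$ is only $P$-stable, and the isomorphism $\Theta\colon v\otimes f\mapsto(g\mapsto gv\otimes f(g))$ twists by the $G$-action on $V$, so $K\otimes\pi$ is not even contained in the kernel (take $v\in K$ and $g$ with $gv\notin K$). The paper handles this differently: it invokes Corollary~\ref{cor:socle-orlik-strauch} (with $M=1$) to find smooth vectors inside $\pi^\an$, then uses the injectivity of the composite on $V\otimes(\Ind_P^G\tau)^\sm$ established in Lemma~\ref{lem:reduction to smooth}.

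That said, the conclusion you need admits a more elementary fix than either your argument or the paper's: pick $0\ne f\in\pi$ and $g_0\in G$ with $f(g_0)\ne 0$, then choose $v\in V$ with $g_0v\notin K$ (possible since $K\ne V$); the image of $v\otimes f$, evaluated at $g_0$, is $[g_0v]_N\otimes f(g_0)\ne 0$ in $\sigma_0\otimes\tau$. This bypasses the socle result entirely. With this correction your proof goes through.
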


In particular, to understand when $(\Ind_P^G \sigma)^\cts$ is irreducible
it suffices to restrict to the case where $\sigma$ is smooth!

\begin{proof}
  Recall that $\underline{L}_{L_Q}(W) \in \mathcal{O}^{L_Q}$ (i.e.\ is equimaximal) by Corollary~\ref{cor:N-invts}, so parts (i) and (ii) are equivalent because condition~\upshape{(\hyperref[eq:star]{$*$})} is literally the same in both cases.
  We know that (iii) implies (i) by Lemma~\ref{lem:reduction to smooth} and Theorem~\ref{thm:main2}.
  To prove that (ii) implies (iii), we relabel $L_Q$ as $G$ and may therefore assume that 
  $\underline{L}(\sigma'_0) \in \mathcal O^G$ is a finite-dimensional locally analytic representation of $G$.
  Let $V := \underline{L}(\sigma'_0)'$.
  Take any nonzero closed subrepresentation $\pi \subset (\Ind_P^G \tau)^\cts$.
  We first claim that the composition (already considered in the proof of Lemma~\ref{lem:reduction to smooth})
  \begin{equation}\label{eq:map}
    V \otimes \pi^\an \into V \otimes (\Ind_P^G \tau)^\an \congto (\Ind_P^G V \otimes \tau)^\an \onto (\Ind_P^G \sigma)^\an
  \end{equation}
  is surjective.

  To prove the claim, by Lemma~\ref{lem:induction-tensor} the maps~\eqref{eq:map} are obtained by applying the functor of locally $\Q_p$-analytic vectors to the maps
  \begin{equation}\label{eq:map2}
    V \otimes \pi \into V \otimes (\Ind_P^G \tau)^\cts \congto (\Ind_P^G V \otimes \tau)^\cts \onto (\Ind_P^G \sigma)^\cts
  \end{equation}
  of admissible Banach space representations of $G$.
  The surjectivity of the composition~\eqref{eq:map} is equivalent to the surjectivity of the composition~\eqref{eq:map2}. 
  (Note that the image of the composition~\eqref{eq:map2} is closed by admissibility and that the functor $(\cdot)^\an$ is exact.)
  By assumption, $(\Ind_P^G \sigma)^\cts$ is irreducible, so it suffices to show that the composition~\eqref{eq:map2} is nonzero or equivalently that the composition~\eqref{eq:map} is nonzero.
  By Corollary~\ref{cor:socle-orlik-strauch} applied with $M = 1$ (the trivial representation), $Q = G$ and $\pi = \tau$ we see that any nonzero closed subrepresentation of $(\Ind_P^G \tau)^\an$ intersects $(\Ind_P^G \tau)^\sm$ non-trivially.
  But the composition
  \begin{equation*}
    V \otimes (\Ind_P^G \tau)^\sm \into V \otimes (\Ind_P^G \tau)^\an \congto (\Ind_P^G V \otimes \tau)^\an \onto (\Ind_P^G \sigma)^\an
  \end{equation*}
  is injective by the proof of Lemma~\ref{lem:reduction to smooth}, hence indeed the claim holds.
  
  Recall that we have fixed a system of positive roots for $(\mathfrak{g}_{C},\mathfrak{t}')$, and let $\lambda$ be the lowest weight of $V$.
  Applying the projection functor $\pr_{\lambda - \rho}$ to \eqref{eq:map}, we get
  \begin{equation}\label{eq:map3}
    \pr_{\lambda - \rho}(V \otimes \pi^\an) \into \pr_{\lambda - \rho}(V \otimes (\Ind_P^G \tau)^\an) \congto \pr_{\lambda - \rho}((\Ind_P^G V \otimes \tau)^\an) \onto (\Ind_P^G \sigma)^\an.
  \end{equation}
  By the definition of translation functors we have $\pr_{\lambda - \rho}(V \otimes \pi^\an) = T_{-\rho}^{\lambda - \rho}(G,V)(\pi^{\an})$ and $\pr_{\lambda - \rho}(V \otimes (\Ind_P^G \tau)^\an) = T_{-\rho}^{\lambda - \rho}(G,V)((\Ind_P^G \tau)^{\an})$.
  By Proposition~\ref{prop:translation functors(parabolic induction)}(ii), the last map in \eqref{eq:map3} is an isomorphism.
  Hence the map
  \[
    T_{-\rho}^{\lambda - \rho}(G,V)(\pi^{\an})\to T_{-\rho}^{\lambda - \rho}((\Ind_P^G \tau)^{\an})
  \]
  is surjective.
  Apply $T_{\lambda - \rho}^{-\rho}(G,V')$.
  By Proposition~\ref{prop:translation functors(analytic)}, the inclusion $\pi^{\an}\to (\Ind_P^G \tau)^{\an}$ is surjective.
  Therefore $\pi = (\Ind_{P}^{G}\tau)^{\cts}$ (after taking closure), hence we get (iii).
\end{proof}

\begin{cor}\label{cor:criterion-easy-cases}
  Assume Assumption~\ref{assump:on p}.
  We continue to write $\sigma = \sigma_0 \otimes \tau$ as above, with $Q \supset P$ denoting the maximal parabolic of $\un L(\sigma_0')$.
  Each of the following conditions implies the next:
  \begin{enumerate}
  \item $M(\sigma_0') = U(\mathfrak g_C) \otimes_{U(\mathfrak p_C)} \sigma_0'$ is irreducible as $U(\mathfrak g_C)$-module;
  \item $Q = P$;
  \item $(\Ind_P^G \sigma)^\cts$ is irreducible.
  \end{enumerate}
\end{cor}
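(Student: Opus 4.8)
The plan is to establish the two implications separately. The step (ii)$\Rightarrow$(iii) will be immediate from Theorem~\ref{thm:main2}, so the only real work is (i)$\Rightarrow$(ii).

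For (i)$\Rightarrow$(ii), I would start from the observation that if $M(\sigma_0') = U(\mathfrak g_C)\otimes_{U(\mathfrak p_C)}\sigma_0'$ is simple as a $U(\mathfrak g_C)$-module, then $\underline M(\sigma_0')\in\mathcal O^P$ has no proper subobject (a subobject in $\mathcal O^P$ is in particular a $\mathfrak g_C$-submodule), whence $\underline M(\sigma_0') = \underline L(\sigma_0')$ and, restricting to $\mathfrak g_C$, $L(\sigma_0') = M(\sigma_0')$. Next I would use that, by the Poincar\'e--Birkhoff--Witt theorem, $M(\sigma_0')\cong U(\o{\mathfrak{n}}_{C})\otimes_C\sigma_0'$ is free of positive rank as a $U(\o{\mathfrak{n}}_{C})$-module, where $\o{\mathfrak{n}}$ is the nilradical of the parabolic opposite to $P$; since $U(\o{\mathfrak{n}}_{C})$ is a domain, any root vector $X_{-\beta}\in\o{\mathfrak{n}}_{C}$ acts injectively, hence not locally nilpotently, on $M(\sigma_0')$. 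Now for any parabolic subgroup $Q\supsetneq P$, the reductive algebra $\mathfrak l_{Q,C}$ strictly contains $\mathfrak l_{P,C}$, so it contains $\mathfrak g_{-\beta}$ for some positive root $\beta$ occurring in the nilradical $\mathfrak n$ of $P$ (equivalently $-\beta$ a root of $\o{\mathfrak{n}}$); since $X_{-\beta}$ lies in the semisimple subalgebra $\mathfrak l_{Q,C}^{\der}$, it acts nilpotently in every finite-dimensional $\mathfrak l_{Q,C}$-module, hence locally nilpotently on every object of $\mathcal O^{\mathfrak q}$. Therefore $M(\sigma_0') = L(\sigma_0')\notin\mathcal O^{\mathfrak q}$ for every such $Q$; since $\underline L(\sigma_0')$ is equimaximal (so that $\underline L(\sigma_0')\in\mathcal O^{Q}$ iff $L(\sigma_0')\in\mathcal O^{\mathfrak q}$), the maximal parabolic $Q$ of $\underline L(\sigma_0')$ must equal $P$. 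As a sanity check I would also note the alternative route via Corollary~\ref{cor:N-invts}: under (i) one has $\underline L_{L_Q}(\sigma_0') = \underline M_{L_Q}(\sigma_0')$, which lies in $\mathcal O^{L_Q}$, i.e.\ is finite-dimensional, only when $P\cap L_Q = L_Q$, which again forces $L_Q = L$.

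For (ii)$\Rightarrow$(iii), I would simply unwind condition $(\ast)$ in the case $Q = P$: then $L_Q = L$, $N_Q = N$, $P\cap L_Q = L$, so $\underline L_{L_Q}(\sigma_0') = \sigma_0'$, $\mathcal F_{P\cap L_Q}^{L_Q}(\underline L_{L_Q}(\sigma_0'),\tau)\cong\sigma_0''\otimes\tau\cong\sigma$, and $(\Ind_{P\cap L_Q}^{L_Q}\sigma)^\cts = \sigma$. Hence $(\ast)$ reduces to the statement that every irreducible subrepresentation of $\sigma$ is dense in $\sigma$, which holds trivially because $\sigma$ is absolutely irreducible; Theorem~\ref{thm:main2} then yields the irreducibility of $(\Ind_P^G\sigma)^\cts$.

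The main obstacle, such as it is, lies in the (i)$\Rightarrow$(ii) step: one must rule out every parabolic strictly larger than $P$, and the crucial input making this work is the freeness of $M(\sigma_0')$ over $U(\o{\mathfrak{n}}_{C})$, which prevents the relevant lowering operators from acting locally nilpotently. Beyond this, the argument is routine bookkeeping with the categories $\mathcal O^{\mathfrak q}$, $\mathcal O^{Q}$ and with the definition of condition $(\ast)$.
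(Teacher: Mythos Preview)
Your argument is correct. For (ii)$\Rightarrow$(iii) you unwind condition~$(\ast)$ explicitly in the degenerate case $Q=P$ and invoke Theorem~\ref{thm:main2}; the paper instead cites Corollary~\ref{cor:irreduciblity}, which amounts to the same thing. For (i)$\Rightarrow$(ii) your approach is genuinely different from the paper's. You argue at the level of root-vector actions: by PBW, $M(\sigma_0')$ is free over the domain $U(\overline{\mathfrak n}_C)$, so every $X_{-\beta}\in\overline{\mathfrak n}_C$ acts injectively and hence not locally nilpotently, whereas membership in $\mathcal O^{\mathfrak q}$ for $Q\supsetneq P$ would force local nilpotence of some such $X_{-\beta}\in\mathfrak l_{Q,C}^{\der}$. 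The paper instead observes that if $Q\supsetneq P$ is the maximal parabolic for $\underline L(\sigma_0')$, then $L(\sigma_0')^{\mathfrak n_{Q,C}}\in\mathcal O^{\mathfrak l_Q}$ is finite-dimensional, so the surjection $U(\mathfrak l_{Q,C})\otimes_{U(\mathfrak p_C\cap\mathfrak l_{Q,C})}\sigma_0'\twoheadrightarrow L(\sigma_0')^{\mathfrak n_{Q,C}}$ has a kernel, and extending scalars to $U(\mathfrak g_C)$ exhibits $M(\sigma_0')\twoheadrightarrow L(\sigma_0')$ as a non-isomorphism. Your argument is more elementary and self-contained; the paper's is closer to the ``induction in stages'' picture underlying Corollary~\ref{cor:N-invts} (and is essentially your ``sanity check'' route, fleshed out). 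Two minor remarks: first, your appeal to equimaximality at the end is unnecessary---you only need the automatic implication that $\underline L(\sigma_0')\in\mathcal O^Q$ forces $L(\sigma_0')\in\mathcal O^{\mathfrak q}$; second, in your alternative route the claim $\underline L_{L_Q}(\sigma_0')=\underline M_{L_Q}(\sigma_0')$ is not immediate from (i) and needs the observation that a proper submodule of $M_{L_Q}(\sigma_0')$ induces (again via PBW for $U(\overline{\mathfrak n}_{Q,C})$) a proper submodule of $M_G(\sigma_0')$.
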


\begin{proof}
  If (i) holds, and $Q$ is strictly bigger than $P$, then $L(\sigma_0')^{\mathfrak n_{Q,C}} \in \mathcal O^{\mathfrak l_{Q}}$ is finite-dimensional, so the natural map
  $U(\mathfrak l_{Q,C}) \otimes_{U(\mathfrak p_C \cap \mathfrak l_{Q,C})} \sigma_0' \onto L(\sigma_0')^{\mathfrak n_{Q,C}}$
  is not an isomorphism, so after extending scalars to $U(\mathfrak g_C)$ we obtain
  \[M(\sigma_0') \onto U(\mathfrak g_C) \otimes_{U(\mathfrak q_C)} L(\sigma_0')^{\mathfrak n_{Q,C}} \onto L(\sigma_0'),\]
  where the first map is not an isomorphism, contradicting (i).

  If (ii) holds, then we deduce (iii) from Corollary~\ref{cor:irreduciblity}.
\end{proof}

\begin{remark}
  Note that part (i) (when $\dim_C \tau = 1$) is Orlik--Strauch's sufficient condition for the irreducibility of the locally analytic representation $(\Ind_P^G \sigma)^\an$ \cite{OS}, which
  in turn implies (iii).
    This criterion also follows from Theorem~\ref{thm:OS-main-in-main}, see Proposition~\ref{prop:irred-loc-an-criterion} below.
  The application of this criterion to the irreducibility of Banach representation was noted in \cite[Proposition 2.6(ii)]{MR2810332}.
\end{remark}

We note in passing that the converse of Orlik--Strauch's criterion is true as well, since we do not know a reference in the literature.

\begin{prop}\label{prop:irred-loc-an-criterion}
  Assume Assumption~\ref{assump:on p}.
  We continue to write $\sigma = \sigma_0 \otimes \tau$ as above.
  The following are equivalent:
  \begin{enumerate}
  \item $M(\sigma_0') = U(\mathfrak g_C) \otimes_{U(\mathfrak p_C)} \sigma_0'$ is irreducible as $U(\mathfrak g_C)$-module;
  \item $(\Ind_P^G \sigma)^\an$ is irreducible.  
  \end{enumerate}
\end{prop}

\begin{proof}
  Let $Q \supset P$ denote the maximal parabolic of $\un L(\sigma_0')$.

  If (i) holds, then $Q = P$ by Corollary~\ref{cor:criterion-easy-cases}.
  The assumption implies that $\un M(\sigma_0')$ is simple in $\mathcal{O}^{P}$, so $\un M(\sigma_0') = \un L(\sigma_0')$.
  Then $(\Ind_P^G \sigma)^\an = \mathcal{F}_{P}^{G}(\underline M(\sigma_0'),\tau) = \mathcal{F}_{P}^{G}(\underline L(\sigma_0'),\tau)$, which is irreducible by Theorem~\ref{thm:OS-main-in-main}(iii).

  Conversely, if (ii) holds, then $\mathcal{F}_{P}^{G}(\underline M(\sigma_0'),\tau)$ is irreducible, so by Lemma~\ref{lem:OS-functor-nonzero} we deduce that $\underline M(\sigma_0')$ is simple in $\mathcal{O}^{P}$, so irreducible as $U(\mathfrak g_C)$-module (as $\sigma_0'$ is $\mathfrak{l}_{C}$-simple).
\end{proof}

\begin{lem}\label{lem:OS-functor-nonzero}
  If $M \in \mathcal{O}^{P}$, $\pi \in \Rep^{\adm}(L)$ are both nonzero, then $\mathcal{F}_{P}^{G}(M,\pi) \ne 0$.
\end{lem}

\begin{proof}
  For example, this follows from Proposition~\ref{prop:N-invariants}, as $M \ne 0$ implies $M^N \ne 0$.
\end{proof}

\begin{prop}\label{prop:irreducibility for product group}
Assume Assumption~\ref{assump:on p}.
Suppose that $\alggrp{G}_{1},\alggrp{G}_{2}$ are connected reductive groups, $\alggrp{P}_{1} = \alggrp{L}_{1}\alggrp{N}_{1}\subset \alggrp{G}_{1}$, $\alggrp{P}_{2} = \alggrp{L}_{2}\alggrp{N}_{2}\subset \alggrp{G}_{2}$ parabolic subgroups and $\sigma_{1}$ (resp.\ $\sigma_{2}$) a finite-dimensional absolutely irreducible continuous representation of $L_{1}$ (resp.\ $L_{2}$).
Then $(\Ind_{P_{1}\times P_{2}}^{G_{1}\times G_{2}}\sigma_{1}\boxtimes\sigma_{2})^{\cts}$ is absolutely irreducible if and only if $(\Ind_{P_{1}}^{G_{1}}\sigma_{1})^{\cts}$ and $(\Ind_{P_{2}}^{G_{2}}\sigma_{2})^{\cts}$ are both absolutely irreducible.
\end{prop}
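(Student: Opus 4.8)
The plan is to reduce the statement to the smooth case via Theorem~\ref{thm:equivalence-irred}, and then to handle exterior tensor products of smooth, resp.\ Banach, inductions directly. Since each $\sigma_i$ is absolutely irreducible, so is $\sigma_i\otimes_C C'$ for every finite extension $C'/C$, and continuous parabolic induction commutes with $-\otimes_C C'$; it therefore suffices to prove, for $C$ sufficiently large (as we assume throughout), that $(\Ind_{P_1\times P_2}^{G_1\times G_2}\sigma_1\boxtimes\sigma_2)^{\cts}$ is irreducible if and only if both $(\Ind_{P_i}^{G_i}\sigma_i)^{\cts}$ are --- the absolute statement then follows by applying this over all finite extensions of $C$. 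Call the displayed equivalence $(\star)$. Next I would pass to $z$-extensions $\wt{\alggrp G}_i\onto\alggrp G_i$: the product $\wt{\alggrp G}_1\times\wt{\alggrp G}_2\onto\alggrp G_1\times\alggrp G_2$ satisfies the hypotheses of Proposition~\ref{prop:isogenies}, has simply connected derived group, and the pullback of $\sigma_1\boxtimes\sigma_2$ is the exterior tensor product of the (still absolutely irreducible) pullbacks of the $\sigma_i$. Applying Proposition~\ref{prop:isogenies}(ii),(iii) once to $\alggrp G_1\times\alggrp G_2$ and once to each $\alggrp G_i$ (their hypotheses hold because the pullbacks are already irreducible) reduces $(\star)$ to the case $\alggrp G_i^{\der}$ simply connected, and then to $F=\Q_p$ after replacing $\alggrp G_i$ by $\Res_{F/\Q_p}\alggrp G_i$.

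Now, using Lemma~\ref{lem:sigma0-tau-decomp}, write $\sigma_i\cong\sigma_{0,i}\otimes\tau_i$ with $\sigma_{0,i}\in\mathcal O^{L_i}$ an $\mathfrak l_{i,C}$-simple object, $\tau_i$ smooth absolutely irreducible, and $\underline L(\sigma_{0,i}')\in\mathcal O^{P_i}$ equimaximal with maximal parabolic $Q_i=L_{Q_i}N_{Q_i}$. Because the enveloping algebra, the categories $\mathcal O^{\mathfrak p}$ and $\mathcal O^{P}$, and the parabolic subgroups of a product all decompose factorwise, I would record the following compatibilities: an exterior tensor product of absolutely simple modules is absolutely simple; $\underline M((\sigma_{0,1}\boxtimes\sigma_{0,2})')\cong\underline M(\sigma_{0,1}')\boxtimes\underline M(\sigma_{0,2}')$ with unique simple quotient $\underline L(\sigma_{0,1}')\boxtimes\underline L(\sigma_{0,2}')$; and this last object is equimaximal with maximal parabolic $Q_1\times Q_2$ (a parabolic of $\alggrp G_1\times\alggrp G_2$ containing $\alggrp P_1\times\alggrp P_2$ is a product, and membership of an exterior tensor product in $\mathcal O^{\mathfrak q_1\oplus\mathfrak q_2}$, resp.\ in $\mathcal O^{Q_1'\times Q_2'}$, is detected factorwise). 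Hence $\sigma_1\boxtimes\sigma_2\cong(\sigma_{0,1}\boxtimes\sigma_{0,2})\otimes(\tau_1\boxtimes\tau_2)$ is a decomposition of the type used in \S\ref{subsec:A criterion}, with associated parabolic $Q_1\times Q_2$. Applying Theorem~\ref{thm:equivalence-irred} to $\alggrp G_1\times\alggrp G_2$ and to each $\alggrp G_i$, and using $(P_1\times P_2)\cap(L_{Q_1}\times L_{Q_2})=(P_1\cap L_{Q_1})\times(P_2\cap L_{Q_2})$, the whole of $(\star)$ is reduced to the case where the inducing representations are smooth; after relabelling $L_{Q_i}$ as $G_i$, it remains to show that for $\tau_i$ smooth absolutely irreducible, $(\Ind_{P_1\times P_2}^{G_1\times G_2}\tau_1\boxtimes\tau_2)^{\cts}$ is irreducible if and only if both $(\Ind_{P_i}^{G_i}\tau_i)^{\cts}$ are.

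For the smooth case set $B_i:=(\Ind_{P_i}^{G_i}\tau_i)^{\cts}$ and $\pi_i:=(\Ind_{P_i}^{G_i}\tau_i)^{\sm}$, which is the dense space of smooth vectors of $B_i$. One has $G_1\times G_2$-equivariant identifications $(\Ind_{P_1\times P_2}^{G_1\times G_2}\tau_1\boxtimes\tau_2)^{\cts}\cong B_1\widehat{\otimes}_C B_2$ and $(\Ind_{P_1\times P_2}^{G_1\times G_2}\tau_1\boxtimes\tau_2)^{\sm}\cong\pi_1\otimes_C\pi_2$ (restrict to maximal compacts and use $C^0(X_1\times X_2,C)\cong C^0(X_1,C)\widehat{\otimes}C^0(X_2,C)$ for compact $X_i$). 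If $B_1$ is reducible, say $0\ne W_1\subsetneq B_1$ closed, then $W_1\widehat{\otimes}B_2$ is a proper nonzero closed $G_1\times G_2$-subrepresentation of $B_1\widehat{\otimes}B_2$: since $B_2$ admits a Schauder basis of locally constant functions, $A\widehat{\otimes}B_2$ is a completed direct sum of copies of $A$, functorially in the Banach space $A$, so $W_1\widehat{\otimes}B_2\into B_1\widehat{\otimes}B_2$ is a closed embedding. Hence $B_1\widehat{\otimes}B_2$ is reducible, and similarly if $B_2$ is reducible. Conversely, if both $B_i$ are irreducible, then by Theorem~\ref{thm:main2} (applied to $G_1\times G_2$ with smooth inducing representation) it suffices to show that every irreducible subrepresentation $\rho$ of $\pi_1\otimes_C\pi_2$ is dense in $B_1\widehat{\otimes}B_2$. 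Such a $\rho$ lies in $\soc_{G_1\times G_2}(\pi_1\otimes\pi_2)=\soc_{G_1}(\pi_1)\otimes_C\soc_{G_2}(\pi_2)$; since each $B_i$ is irreducible, every irreducible subrepresentation of $\pi_i$ is dense in $B_i$, and I would conclude using the elementary density fact that for subspaces $D_i$ of Banach spaces $B_i$ over $C$, the subspace $D_1\otimes_C D_2$ is dense in $B_1\widehat{\otimes}_C B_2$ if and only if each $D_i$ is dense in $B_i$ (the non-formal direction uses Hahn--Banach for the spherically complete local field $C$, tested against a continuous functional of the form $\ell_1\otimes\ell_2$), combined with an analysis of the possible diagonal embeddings of $\rho$ into the semisimple module $\soc\pi_1\otimes\soc\pi_2$ exhibiting inside the closure of $\rho$ an honest exterior tensor product of dense irreducible subrepresentations.

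I expect this final density step to be the main obstacle: one must show that \emph{every} irreducible subrepresentation of $\pi_1\otimes_C\pi_2$ is dense in $B_1\widehat{\otimes}B_2$, including the diagonal copies that can arise when the smooth socles $\soc_{G_i}(\pi_i)$ are not multiplicity-free, rather than only the ``box'' summands of $\soc\pi_1\otimes\soc\pi_2$. Once the appropriate lemma is established the preceding reductions are formal, the only genuinely new bookkeeping being the factorwise compatibilities in category $\mathcal O^{\mathfrak p}$ used in the reduction to the smooth case.
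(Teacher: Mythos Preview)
Your approach is essentially the paper's: pass to a $z$-extension so that the derived subgroups are simply connected, take the decompositions $\sigma_i=\sigma_{0,i}\otimes\tau_i$, observe that $\sigma_1\boxtimes\sigma_2=(\sigma_{0,1}\boxtimes\sigma_{0,2})\otimes(\tau_1\boxtimes\tau_2)$ is again a decomposition of the required type with maximal parabolic $Q_1\times Q_2$, and then invoke the density criterion. The paper applies Corollary~\ref{cor:irreduciblity} directly to $G_1\times G_2$ rather than first reducing to smooth $\tau_i$ via Theorem~\ref{thm:equivalence-irred} as you do, but the two routes are equivalent; for the ``only if'' direction the paper simply cites \cite[Lemma~2.1(ii)]{MR4190408} in place of your $W_1\widehat{\otimes}B_2$ argument.

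The point you single out as the main obstacle is exactly where the paper is briefer than you: it asserts that an irreducible subrepresentation of $(\Ind\tau_1)^{\sm}\boxtimes(\Ind\tau_2)^{\sm}$ \emph{is} of the form $\pi_1\boxtimes\pi_2$ with $\pi_i\subset(\Ind\tau_i)^{\sm}$ irreducible, then uses Theorem~\ref{thm:equivalence-irred} on each factor to get density of $\pi_i$ and hence of $\pi_1\boxtimes\pi_2$. As a statement about the underlying subspace this box form is not automatic when both $\soc(\Ind\tau_i)^{\sm}$ carry a common constituent with multiplicity $\ge 2$: writing the embedding as an element of $\Hom_{G_1}(W_1,\pi_1)\otimes\Hom_{G_2}(W_2,\pi_2)$, a tensor of rank $m\ge 2$ gives an image that is not $\pi_1'\otimes\pi_2'$ for any $\pi_i'\subset\pi_i$. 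So your caution is justified; the paper does not spell out why such diagonal copies are still dense, and your sketch (produce a genuine box inside the closure) is a reasonable way to attempt to fill this in.
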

\begin{proof}
We first note that $(\Ind_{P_{1}}^{G_{1}}\sigma_{1})^{\cts}\widehat{\boxtimes}(\Ind_{P_{2}}^{G_{2}}\sigma_{2})^{\cts}\simeq (\Ind_{P_{1}\times P_{2}}^{G_{1}\times G_{2}}\sigma_{1}\boxtimes\sigma_{2})^{\cts}$ as admissible Banach representations of $G_1 \times G_2$.
(This follows exactly as in the proof of \cite[Lemma 2.8]{MR4190408}, using the isomorphism $C^0(P_1\backslash G_1,\sigma_1) \wh\otimes C^0(P_2\backslash G_2,\sigma_2) \cong C^0(P_1\backslash G_1 \times P_2\backslash G_2,\sigma_1 \otimes \sigma_2)$, cf.\ the end of \cite[\S17]{MR1869547}, instead of \cite[(2.6)]{MR4190408}.)
The ``only if'' direction follows (using, for example, \cite[Lemma 2.1(ii)]{MR4190408}).

For the ``if'' direction, by a $z$-extension we may assume $\alggrp{G}_{1}^{\der},\alggrp{G}_{2}^{\der}$ are simply connected.
Then by Lemma~\ref{lem:sigma0-tau-decomp}, after perhaps replacing $C$ by a finite extension, we can take a decomposition $\sigma_{i} = \sigma_{i,0}\otimes \tau_{i}$ as in this subsection, for $i = 1,2$.
Then $\sigma_1 \boxtimes \sigma_2 \cong (\sigma_{1,0} \boxtimes \sigma_{2,0}) \otimes (\tau_1 \boxtimes \tau_2)$, where $\sigma_{1,0} \boxtimes \sigma_{2,0}$ is $\mathfrak l_{1,C} \times \mathfrak l_{2,C}$-simple and $\underline L(\sigma_{1,0}' \boxtimes \sigma_{2,0}') \cong \underline L(\sigma_{1,0}') \boxtimes \underline L(\sigma_{2,0}')$ equimaximal with maximal parabolic $Q_1 \times Q_2$.
Let $P(Q_i) := P_i \cap L_{Q_i}$ for $i = 1,2$.
By Corollary~\ref{cor:irreduciblity}, it suffices to show that every irreducible subrepresentation of $(\Ind_{P(Q_1) \times P(Q_2)}^{L_{Q_1}\times L_{Q_2}}\tau_{1}\boxtimes\tau_{2})^{\sm}$ is dense in $(\Ind_{P(Q_1) \times P(Q_2)}^{L_{Q_1}\times L_{Q_2}}\tau_{1}\boxtimes\tau_{2})^{\cts}$.
An irreducible subrepresentation of $(\Ind_{P(Q_1) \times P(Q_2)}^{L_{Q_1}\times L_{Q_2}}\tau_{1}\boxtimes\tau_{2})^{\sm}\simeq (\Ind_{P(Q_1)}^{L_{Q_1}}\tau_{1})^{\sm}\boxtimes (\Ind_{P(Q_2)}^{L_{Q_2}}\tau_{2})^{\sm}$ is of the form $\pi_{1}\boxtimes \pi_{2}$ where $\pi_{i}\subset (\Ind_{P(Q_i)}^{L_{Q_i}}\tau_{i})^{\sm}$ is an irreducible subrepresentation for $i = 1,2$.
By assumption and Theorem~\ref{thm:equivalence-irred}, $\pi_{i}$ is dense in $(\Ind_{P(Q_i)}^{L_{Q_i}}\tau_{i})^{\cts}$ for $i = 1,2$.
Hence $\pi_{1}\boxtimes\pi_{2}\subset (\Ind_{P(Q_1)}^{L_{Q_1}}\tau_{1})^{\cts}\widehat{\boxtimes}(\Ind_{P(Q_2)}^{L_{Q_2}}\tau_{2})^{\cts}\simeq (\Ind_{P(Q_1) \times P(Q_2)}^{L_{Q_1}\times L_{Q_2}}\tau_{1}\boxtimes\tau_{2})^{\cts}$ is dense, as required.
\end{proof}

\begin{remark}\label{rk:reduce-abs-simple-simply-conn}
  The problem of understanding when a parabolic induction $(\Ind_{P}^{G}\sigma)^{\cts}$ (with $\sigma$ finite-dimensional) is absolutely irreducible can be reduced to the case where $\alggrp G$ is an absolutely almost simple simply-connected group.
  (First apply Remark~\ref{rk:reduce-simply-conn} to reduce to the simply-connected cover of $\alggrp G^\der$.
  Then use Proposition~\ref{prop:irreducibility for product group} to reduce to an almost simple simply-connected group.
  Then observe that $\alggrp G = \Res_{E/F} \alggrp H$ with $\alggrp H$ absolutely almost simple simply-connected.)
  We may moreover assume that $\alggrp G$ is isotropic, as otherwise $\alggrp G$ is the only parabolic subgroup.
\end{remark}

\subsection{Genericity}
\label{sec:generic}
Recall that $\alggrp{B} = \alggrp{Z}\alggrp{U}$ is a minimal parabolic subgroup.
We assume that we are in the setting of \S\ref{subsec:A criterion} with $\alggrp P = \alggrp B$, i.e.\ $F = \Q_p$ and $\sigma = \sigma_0 \otimes \tau$, where $\sigma_0 \in \mathcal{O}^{Z}$ is $\mathfrak{z}_{C}$-simple and $\tau$ is an absolutely irreducible smooth $Z$-representation such that moreover $\underline{L}(\sigma_0') \in \mathcal{O}^{B}$ is equimaximal with maximal parabolic $Q = L_{Q}N_{Q}$.

Fix an algebraic closure $\overline{C}$ of $C$.
A smooth character $\theta\colon U\to \overline{C}^{\times}$ is called \emph{non-degenerate} if the restriction of $\theta$ to each simple root subgroup is non-trivial.
For a smooth representation $\pi$ of $U$, let $\pi_{\overline{C},U,\theta} := (\pi\otimes_{C}\overline{C})/\langle \pi(u)v - \theta(u)v\mid u\in U\rangle$ be the space of twisted coinvariants.
We say that a representation $\pi$ of $G$ is \emph{generic} if $\pi_{\overline{C},U,\theta}\ne 0$ for some non-degenerate $\theta$.
By a slight abuse we also say that a smooth representation $\pi'$ of $U$ over $\o C$ is \emph{generic} if $\pi'_{U,\theta}\ne 0$ for some non-degenerate $\theta$.
When $\alggrp{G}$ is quasisplit, this is the familiar notion.

\begin{prop}\label{prop:whittaker criterion}
Assume Assumption~\ref{assump:on p}.
If any irreducible subrepresentation of $(\Ind_{B\cap L_{Q}}^{L_{Q}}\tau)^{\sm}$ is generic, then $(\Ind_{B}^{G}\sigma)^{\cts}$ is irreducible.
\end{prop}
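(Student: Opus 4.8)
The plan is to invoke Corollary~\ref{cor:irreduciblity} to reduce to a density statement for smooth principal series, and then to construct inside a generic subrepresentation an element to which Lemma~\ref{lem:supported on big cell} applies. By Corollary~\ref{cor:irreduciblity} it suffices to show that every irreducible subrepresentation of $(\Ind_{B\cap L_Q}^{L_Q}\tau)^{\sm}$ is dense in $(\Ind_{B\cap L_Q}^{L_Q}\tau)^{\cts}$; by hypothesis every such subrepresentation is generic, so it is enough to prove that a \emph{generic} smooth subrepresentation is dense. Relabelling $L_Q$ as $\alggrp G$ and $B\cap L_Q$ as $B$ (so that $\tau$ is now a finite-dimensional absolutely irreducible smooth representation of $Z=L$), I must show: if $\pi\subset(\Ind_B^G\tau)^{\sm}$ is generic, then $\pi$ is dense in $(\Ind_B^G\tau)^{\cts}$. (If $\alggrp G$ is anisotropic this is vacuous, so I may assume the maximal split torus $\alggrp S$ is nontrivial.)

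Set $I:=(\Ind_B^G\tau)^{\sm}$ and let $F\subset I$ be the subspace of functions vanishing on the complement of the open Bruhat cell $B\backslash B\dot w_0 B$. As this cell is open and right $B$-stable, $F$ is a $B$-subrepresentation of $I$, hence in particular a smooth $U$-subrepresentation. The key step is the geometric lemma: in the Bruhat filtration of $I/F$ by cell support, the graded piece attached to a cell $w\ne w_0$ is, as a smooth $U$-representation, compactly induced from a closed subgroup $U_w\le U$ containing $U_\alpha$ for some relative simple root $\alpha$ with $w\alpha>0$ (such an $\alpha$ exists because $w_0$ is the only Weyl element sending every relative simple root to a negative one), via a representation on which $U_\alpha$ acts trivially; since $\theta$ is non-degenerate, $\theta|_{U_\alpha}\ne 1$, so the $\theta$-twisted coinvariants of that piece vanish. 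Hence $(I/F)_{U,\theta}=0$ for every non-degenerate $\theta$. The twisted Jacquet functor $(\cdot)_{U,\theta}$ is exact on smooth representations, so applying it to $0\to\pi\cap F\to\pi\to\overline\pi\to 0$, where $\overline\pi$ is the image of $\pi$ in $I/F$, and using $(\overline\pi)_{U,\theta}\hookrightarrow(I/F)_{U,\theta}=0$, yields a surjection $(\pi\cap F)_{U,\theta}\twoheadrightarrow\pi_{U,\theta}$. Since $\pi$ is generic, $\pi_{U,\theta}\ne 0$ for some non-degenerate $\theta$, so $\pi\cap F\ne 0$: there is a nonzero $f\in\pi$ supported on $B\backslash B\dot w_0 B$. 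Translating by $\dot w_0$, the function $\dot w_0\cdot f\in\pi$ is supported on $B\backslash B\overline N$, and I rename it $f$.

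It remains to massage $f$ into the shape required by Lemma~\ref{lem:supported on big cell}. Identifying $f$ with a nonzero locally constant compactly supported function $\overline N\to\tau$ and translating by a suitable $\overline n_0\in\overline N$ (which lies in $G$ and preserves $\pi$), I may assume $f(1)\ne 0$ and that $f$ is constant on a neighbourhood of $1$. Choose $z\in\alggrp S(F)\subset Z$ such that conjugation $\overline n\mapsto z\overline n z^{-1}$ contracts $\overline N$ towards $1$ (a strictly dominant cocharacter value); from $(zf)(\overline n)=\tau(z)\,f(z^{-1}\overline n z)$ the support of $zf$ in $\overline N$ equals $z\,(\supp_{\overline N}f)\,z^{-1}$, which for $z$ deep enough is a neighbourhood of $1$ on which $f$ is constant. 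Then $zf\in\pi$ is supported on $B\backslash B\overline N$ and takes the single value $v:=\tau(z)f(1)\ne 0$ on $\overline N$. As $\tau$ is a finite-dimensional absolutely irreducible (hence central-character-admitting) Banach representation of $L$, Lemma~\ref{lem:supported on big cell} applies to $zf\in\pi\subset(\Ind_B^G\tau)^{\cts}$ and shows it topologically generates $(\Ind_B^G\tau)^{\cts}$; since $\pi$ is $G$-stable, the $G$-span of $zf$ lies in $\pi$, so $\pi$ is dense. Undoing the relabelling and applying Corollary~\ref{cor:irreduciblity} proves the proposition.

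I expect the main obstacle to be the geometric-lemma computation of the second paragraph: for a possibly non-quasisplit $\alggrp G$ one must set up the Bruhat filtration of $(\Ind_B^G\tau)^{\sm}$ and the vanishing of twisted coinvariants purely in terms of the relative root system and the notion of non-degenerate character used here, checking in particular that for every $w\ne w_0$ a relative simple root subgroup sits inside the subgroup of $U$ stabilizing (a point of) the corresponding cell. The remaining ingredients — exactness of twisted Jacquet functors, the existence of a strictly dominant element of $\alggrp S(F)$, and the bookkeeping around Corollary~\ref{cor:irreduciblity} and Lemma~\ref{lem:supported on big cell} — should be routine.
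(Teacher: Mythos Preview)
Your overall strategy matches the paper's: reduce via Corollary~\ref{cor:irreduciblity}, use the Bruhat filtration together with the vanishing of $(U,\theta)$-coinvariants on the non-open cells to produce a nonzero element of $\pi$ supported on the big cell, and then apply Lemma~\ref{lem:supported on big cell}. The geometric-lemma step and the reduction to $L_Q=G$ are fine.

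There is, however, a genuine gap in your contraction step. You argue that for strictly dominant $z\in S$ the support $\supp_{\overline N}(zf)=z(\supp_{\overline N}f)z^{-1}$ lies in the neighborhood $V$ of $1$ on which $f$ is constant, and conclude that $zf$ takes the single value $\tau(z)f(1)$. But from $(zf)(\overline n)=\tau(z)f(z^{-1}\overline n z)$ one sees that as $\overline n$ ranges over $\supp_{\overline N}(zf)$, the point $z^{-1}\overline n z$ ranges over all of $\supp_{\overline N} f$, not over $V$. Thus $zf$ takes (up to the scalar $\tau(z)$) \emph{all} the values of $f$ on its support, and there is no reason these lie in a single line $Cv$. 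Shrinking the support does not shrink the value set.

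The paper repairs this with a $\theta$-average rather than a contraction. Having established $(\pi\cap F)_{U,\theta}\ne 0$ (which you already have), choose $f\in\pi\cap F$ with $\supp(f)\subset B\backslash Bw_0U_0$ for some compact open $U_0\subset U$ and with $v:=\int_{U_0} f(w_0u)\theta^{-1}(u)\,du\ne 0$. Then
\[
f':=\int_{U_0}(u_0\cdot f)\,\theta^{-1}(u_0)\,du_0\in\pi
\]
is supported on $B\backslash Bw_0U_0$ and satisfies $f'(w_0u_0)=\theta(u_0)v\in Cv$ for all $u_0\in U_0$. After right-translating by $w_0$ (as you already do) this gives exactly an element to which Lemma~\ref{lem:supported on big cell} applies. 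The rest of your argument then goes through unchanged.
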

\begin{proof}
To simplify notation, we relabel $L_Q$ as $G$. Let $W$ be the Weyl group of $G$ and $w_{0}\in W$ the longest element.
Then we have the Bruhat decomposition $G = \coprod_{w\in W}BwB$ and $(\Ind_{B}^{G}\tau)^{\sm}$ has a $B$-stable filtration $F_{w}$ with graded pieces $(\cInd_{B}^{B w B} \tau)^{\sm}$ for $w \in W$, and $F_{w_{1}}\subset F_{w_{2}}$ if $w_{1}\ge w_{2}$ with respect to the Bruhat order.

Let $\pi\subset (\Ind_{B}^{G}\tau)^{\sm}$ be an irreducible subrepresentation and take a non-degenerate character $\theta\colon U\to \overline{C}^{\times}$ such that $\pi_{\overline{C},U,\theta}\ne 0$.
From an argument in \cite[p.~211]{MR581582} or the proof of the geometric lemma~\cite[5.2 Theorem]{MR0579172} we have $((\cInd_{B}^{BwB} \tau)^{\sm})_{\overline{C},U,\theta} = 0$ if $w \ne w_0$.
  
  By above, and as $(-)_{\o C,U,\theta}$ is an exact functor on the category of smooth $U$-representations, we have an exact sequence of smooth $B$-representations
  \[ 0 \to (\cInd_{B}^{B w_0 B} \tau)^{\sm} \to (\Ind_{B}^{G}\tau)^{\sm} \to \pi_{0} \to 0 \]
  with $(\pi_{0})_{\o C,U,\theta} = 0$, which induces an exact sequence
  \begin{equation}\label{eq:3}
    0 \to \pi \cap (\cInd_{B}^{Bw_0B} \tau)^{\sm} \to \pi \to \pi_{1} \to 0
  \end{equation}
  with $(\pi_1)_{\overline{C},U,\theta} = 0$.
    By assumption we have $\pi_{\overline{C},U,\theta} \ne 0$ and hence by \eqref{eq:3} we have $(\pi \cap (\cInd_{B}^{Bw_0B} \tau)^{\sm})_{\overline{C},U,\theta} \ne 0$.
  Note that the map $(\cInd_{B}^{Bw_0B} \tau)^{\sm}\otimes_{C}\overline{C} \onto \tau\otimes_{C}\overline{C}$, $f \mapsto \int_U f(w_0 u) \theta^{-1}(u) du$ identifies $\tau\otimes_{C}\overline{C}$ with the twisted coinvariants of $(\cInd_{B}^{Bw_0B} \tau)^{\sm}\otimes_{C}\overline{C}$.
  We can thus take $f \in (\pi \cap (\cInd_{B}^{Bw_0B} \tau)^{\sm})\otimes_{C}\overline{C}$ such that $v := \int_U f(w_0 u) \theta^{-1}(u) du \ne 0$.
  Suppose that $\supp(f) \subset B\backslash Bw_0 U_0$ for some compact open subgroup $U_0$ of $U$.
  Then $f' := \int_{U_0} (u_0f) \cdot \theta^{-1}(u_0)du_{0} \in (\pi \cap (\cInd_{B}^{Bw_0B} \tau)^{\sm})\otimes_{C}\overline{C}$ is supported on $B\backslash Bw_0 U_0$ and $f'(w_0 u_0) = \theta(u_0)v \in \overline{C}v$.
  Take a finite extension $C'/C$ such that $f'\in (\pi \cap (\cInd_{B}^{Bw_0B} \tau)^{\sm})\otimes_{C}C'$.
  By Lemma~\ref{lem:supported on big cell} we see that $\pi\otimes_{C}C'$ is dense in $(\Ind_{B}^{G}\tau\otimes_{C}C')^{\cts} = (\Ind_{B}^{G}\tau)^{\cts}\otimes_{C}C'$ and therefore, from Lemma~\ref{lm:density-and-ext-of-scalars}, $\pi$ is dense in $(\Ind_{B}^{G}\tau)^{\cts}$.
  Now we get the proposition by Corollary~\ref{cor:irreduciblity}.
\end{proof}

\begin{cor}\label{cor:whittaker criterion}
  Assume Assumption~\ref{assump:on p}.
  If any irreducible subrepresentation of $(\Ind_{B\cap L_{Q}}^{L_{Q}}\tau)^{\sm}\otimes_{C}\overline{C}$ is generic, then $(\Ind_{B}^{G}\sigma)^{\cts}$ is irreducible.  
\end{cor}

\begin{proof}
  For any nonzero subrepresentation $\pi$ of $(\Ind_{B\cap L_{Q}}^{L_{Q}}\tau)^{\sm}$, the representation $\pi\otimes_{C}\overline{C}$ is of finite length, hence contains an irreducible subrepresentation $\pi'$, which is generic by assumption.
  Hence $\pi$ is generic, and we conclude by Proposition~\ref{prop:whittaker criterion}.
\end{proof}

\section{Applications}
\label{sec:applications}
In this section, we give applications of our irreducibility criterion.

Recall that we have fixed a maximal split torus $\alggrp{S}$ of $\alggrp{G}$ and a minimal parabolic subgroup $\alggrp{B} = \alggrp{Z}\alggrp{U}$ such that $\alggrp{S}\subset \alggrp{Z}$.
Let $\alggrp{P} = \alggrp{L}\alggrp{N}$ be a parabolic subgroup, $\overline{\alggrp{P}} = \alggrp{L}\overline{\alggrp{N}}$ the opposite parabolic subgroup.
Let $\alggrp{A}_{\alggrp{L}}\subset \alggrp{L}$ be the maximal split torus in the center of $\alggrp L$ and $\Phi(\alggrp{G},\alggrp{A}_{\alggrp{L}})$ the set of roots of $\alggrp A_{\alggrp L}$.
We have $\alggrp{A}_{\alggrp{L}}\subset \alggrp{A}_{\alggrp{Z}}$ and $\Phi(\alggrp{G},\alggrp{A}_{\alggrp{L}}) = \{\alpha|_{\alggrp A_{\alggrp L}}\mid \alpha\in \Phi(\alggrp{G},\alggrp{A}_{\alggrp{Z}})\}\setminus\{0\}$.
Let $\Phi(\alggrp{P},\alggrp{A}_{\alggrp{L}})$ be the set of $\alpha\in\Phi(\alggrp{G},\alggrp{A}_{\alggrp{L}})$ that appear in $\Lie(\alggrp N)$.
For $\alpha\in\Phi(\alggrp{G},\alggrp{A}_{\alggrp{L}})$, let $\alggrp{L}_{\alpha}$ be the centralizer of the connected component of $\ker\alpha\subset \alggrp{A}_{\alggrp{L}}$ in $\alggrp{G}$.
It is a Levi subgroup containing $\alggrp{L}$ and $\alggrp{P}\cap \alggrp{L}_{\alpha}$ is a maximal parabolic subgroup of $\alggrp{L}_{\alpha}$.
Let $\Phi_{\red}(\alggrp{P},\alggrp{A}_{\alggrp{L}})$ (resp.\ $\Phi_{\red}(\alggrp{G},\alggrp{A}_{\alggrp{L}})$) be the set of reduced elements in $\Phi(\alggrp{P},\alggrp{A}_{\alggrp{L}})$ (resp.\ $\Phi(\alggrp{G},\alggrp{A}_{\alggrp{L}})$) and $\Delta(\alggrp{P},\alggrp{A}_{\alggrp{L}})$ the set of simple roots in $\Phi(\alggrp{P},\alggrp{A}_{\alggrp{L}})$.
When $\alggrp{P}$ is a minimal parabolic subgroup $\alggrp{B} = \alggrp{Z}\alggrp{U}$, we put $\Phi := \Phi(\alggrp{G},\alggrp{A}_{\alggrp{Z}})$, $\Phi_{\red} := \Phi_{\red}(\alggrp{G},\alggrp{A}_{\alggrp{Z}})$, $\Phi^+_{\red} := \Phi_{\red}(\alggrp{B},\alggrp{A}_{\alggrp{Z}})$, $\Delta := \Delta(\alggrp{B},\alggrp{A}_{\alggrp{Z}})$.
If $\alggrp{P}$ is standard, then $\Delta(\alggrp{P},\alggrp{A}_{\alggrp{L}}) = \{\alpha|_{\alggrp{A}_{\alggrp{L}}}\mid \alpha\in \Delta\}\setminus\{0\}$.
In general, let $X^{*}(\alggrp{H})$ (resp.\ $X_{*}(\alggrp{H})$) be the rational character (resp.\ cocharacter) group of an algebraic group $\alggrp{H}$.
For $K \in \{\Q,\R,\C\}$ we let $\mathfrak{a}_{L,K}^* := X^*(\alggrp{A}_{\alggrp{L}}) \otimes K = X^*(\alggrp{L}) \otimes K$ and denote by $\mathfrak{a}_{L,K}$ its dual vector space.

If $\alggrp{L}_1 \subset \alggrp{L}_2$ are semistandard Levi subgroups we have $\alggrp{A}_{\alggrp{L}_2} \subset \alggrp{A}_{\alggrp{L}_1}$ and hence get canonical maps
$\mathfrak{a}_{L_1,\Q}^* \twoheadrightarrow \mathfrak{a}_{L_2,\Q}^*$ and $\mathfrak{a}_{L_2,\Q}^* \hookrightarrow \mathfrak{a}_{L_1,\Q}^*$ giving a splitting $\mathfrak{a}_{L_1,\Q}^* = \mathfrak{a}_{L_2,\Q}^* \oplus (\mathfrak{a}^{L_2}_{L_1,\Q})^*$
(and similarly over $\R$ and $\C$).
We note that if $\alggrp{P} = \alggrp{L}\alggrp{N}$ is semistandard, then $\Delta(\alggrp{P},\alggrp{A}_{\alggrp{L}})$ is a $\Q$-basis of $(\mathfrak{a}_{L,\Q}^G)^*$.

Given $\alpha \in \Phi_{\mathrm{red}}(\alggrp G, \alggrp A_{\alggrp L})$ we define the coroot $\alpha^{\vee} \in \mathfrak{a}_{L,\Q}$ as follows.
The subspace $(\mathfrak{a}_{L,\Q}^{L_\alpha})^*$ is one-dimensional with basis $\alpha$ and we let $\alpha^\vee$
be the unique element of $\mathfrak{a}_{L,\Q}^{L_\alpha}$ such that $\ang{\alpha,\alpha^{\vee}} = 2$.
Note that if $L = Z$, then this coincides with the usual notion of relative coroot ($\alpha^{\vee} \in X_*(\alggrp A_{\alggrp Z}) \subset \mathfrak{a}_{Z,\Q}$).

It is convenient to fix a $W$-invariant positive definite inner product on $\mathfrak{a}_{Z,\R}^*$. Then any decomposition $\mathfrak{a}_{L,\R}^* = \mathfrak{a}_{G,\R}^* \oplus (\mathfrak{a}_{L,\R}^G)^*$ is orthogonal (checking first for the pairs $(Z,G)$ and $(Z,L)$). Hence the isomorphism $\mathfrak{a}_{Z,\R} \congto \mathfrak{a}_{Z,\R}^*$ defined by the inner product identifies $\mathfrak{a}_{L,\R}$ with $\mathfrak{a}_{L,\R}^*$ and $\mathfrak{a}_{L,\R}^G$ with $(\mathfrak{a}_{L,\R}^G)^*$. In particular, taking $G = L_\alpha$ we see that $\alpha^{\vee} \in \mathfrak{a}_{L,\R}$ is identified with $2\alpha/(\alpha,\alpha) \in \mathfrak{a}_{L,\R}^*$.

\begin{lem}\label{lm:coroots}
  Suppose $\alggrp L_{1} \subset \alggrp L$ are semistandard Levi subgroups. 

  \begin{enumerate}
  \item Take $\alpha \in \Phi_{\mathrm{red}}(\alggrp G, \alggrp A_{\alggrp L_{1}})$ such that
    $\alpha_{L} = \alpha|_{\alggrp A_{\alggrp L}} \ne 0$.  Then the image of $\alpha^{\vee}$ under $\mathfrak{a}_{L_{1},\Q} \twoheadrightarrow \mathfrak{a}_{L,\Q}$ lies in
    $\Q_{>0}\alpha_{L}^{\vee}$.
  \item If $\alpha_1,\dots,\alpha_\ell \in \Phi_{\mathrm{red}}(\alggrp G, \alggrp A_{\alggrp L})$ are linearly independent in $\mathfrak{a}_{L_{1},\R}^*$, then $\alpha_1^{\vee},\dots,\alpha_\ell^{\vee}$
    are linearly independent in $\mathfrak{a}_{L_{1},\R}$.
  \end{enumerate}  
\end{lem}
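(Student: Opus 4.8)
The plan is to reduce everything to linear algebra in the Euclidean space $\mathfrak{a}_{Z,\R}^*$, using the $W$-invariant inner product fixed just before the lemma. Recall that this inner product gives an isomorphism $\mathfrak{a}_{Z,\R}\congto\mathfrak{a}_{Z,\R}^*$ restricting to identifications $\mathfrak{a}_{L',\R}\congto\mathfrak{a}_{L',\R}^*$ for every semistandard Levi $\alggrp{L}'$, compatibly with the chains $\mathfrak{a}_{L,\R}^*\subseteq\mathfrak{a}_{L_1,\R}^*\subseteq\mathfrak{a}_{Z,\R}^*$ and $\mathfrak{a}_{L,\R}\subseteq\mathfrak{a}_{L_1,\R}\subseteq\mathfrak{a}_{Z,\R}$, and that under it $\gamma^\vee$ corresponds to $2\gamma/(\gamma,\gamma)$ for any $\gamma\in\Phi_{\mathrm{red}}(\alggrp{G},\alggrp{A}_{\alggrp{L}'})$ (this is precisely the computation with ``$G=L_\alpha$'' recorded before the lemma, and the same recipe makes sense for any nonzero element of $\Phi(\alggrp{G},\alggrp{A}_{\alggrp{L}'})$). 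The first step is therefore to record the dictionary: the surjection $\mathfrak{a}_{L_1,\Q}\twoheadrightarrow\mathfrak{a}_{L,\Q}$ in the statement is by construction the transpose of the splitting inclusion $\mathfrak{a}_{L,\Q}^*\hookrightarrow\mathfrak{a}_{L_1,\Q}^*$, so by the standard fact that the transpose of a subspace inclusion becomes, under the inner product identifications, the orthogonal projection onto that subspace, this surjection corresponds to the orthogonal projection $\mathfrak{a}_{L_1,\R}^*\twoheadrightarrow\mathfrak{a}_{L,\R}^*$; and since the decomposition $\mathfrak{a}_{L_1,\R}^*=\mathfrak{a}_{L,\R}^*\oplus(\mathfrak{a}^{L}_{L_1,\R})^*$ is orthogonal, that orthogonal projection is exactly the restriction-of-characters map $\gamma\mapsto\gamma|_{\alggrp{A}_{\alggrp{L}}}$.

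For (i) I would then just compute. The image of $\alpha^\vee$ under $\mathfrak{a}_{L_1,\Q}\twoheadrightarrow\mathfrak{a}_{L,\Q}$ corresponds, via the dictionary above, to the orthogonal projection of $2\alpha/(\alpha,\alpha)$ onto $\mathfrak{a}_{L,\R}^*$, which equals $\tfrac{2}{(\alpha,\alpha)}\,\alpha_L$. Since $\alpha_L\neq 0$ by hypothesis and $\alpha_L^\vee$ corresponds to $\tfrac{2}{(\alpha_L,\alpha_L)}\,\alpha_L$, the image of $\alpha^\vee$ equals $\tfrac{(\alpha_L,\alpha_L)}{(\alpha,\alpha)}\,\alpha_L^\vee$, a strictly positive real multiple of $\alpha_L^\vee$, in particular nonzero. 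It then remains to upgrade ``positive real'' to ``positive rational'': the image of $\alpha^\vee$ lies in $\mathfrak{a}_{L,\Q}$ (the projection is defined over $\Q$ and $\alpha^\vee\in\mathfrak{a}_{L_1,\Q}$) and $\alpha_L^\vee\in\mathfrak{a}_{L,\Q}$, so both are nonzero vectors on the rational line $\Q\alpha_L^\vee$, which forces their ratio to be a positive rational number; hence the image lies in $\Q_{>0}\alpha_L^\vee$.

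For (ii) I would first observe that we may work inside $\mathfrak{a}_{L,\R}^*$ and $\mathfrak{a}_{L,\R}$: linear independence of $\alpha_1,\dots,\alpha_\ell$ in $\mathfrak{a}_{L_1,\R}^*$ is equivalent to their linear independence in $\mathfrak{a}_{L,\R}^*$ because $\mathfrak{a}_{L,\R}^*\hookrightarrow\mathfrak{a}_{L_1,\R}^*$ is injective, and the same applies to $\alpha_1^\vee,\dots,\alpha_\ell^\vee$ via the injection $\mathfrak{a}_{L,\R}\hookrightarrow\mathfrak{a}_{L_1,\R}$. Under the inner product identification each $\alpha_i^\vee$ corresponds to the nonzero scalar multiple $\tfrac{2}{(\alpha_i,\alpha_i)}\alpha_i$ of $\alpha_i$, and rescaling the members of a finite family by nonzero scalars preserves linear independence; hence $\alpha_1^\vee,\dots,\alpha_\ell^\vee$ are linearly independent in $\mathfrak{a}_{L,\R}$ exactly because $\alpha_1,\dots,\alpha_\ell$ are linearly independent in $\mathfrak{a}_{L,\R}^*$, which is the hypothesis.

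This lemma is essentially bookkeeping with the various identifications, so I do not expect a serious obstacle. The only points needing a little care are the first step --- correctly matching the abstractly defined surjection $\mathfrak{a}_{L_1,\Q}\twoheadrightarrow\mathfrak{a}_{L,\Q}$ with the restriction/orthogonal-projection map --- and the rationality upgrade at the end of (i); both are routine once the inner product dictionary is in place.
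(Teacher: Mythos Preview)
Your proof is correct and follows essentially the same approach as the paper: both use the $W$-invariant inner product to identify $\mathfrak{a}_{L_1,\R}$ with $\mathfrak{a}_{L_1,\R}^*$, compute the projection of $\alpha^\vee$ as $\tfrac{(\alpha_L,\alpha_L)}{(\alpha,\alpha)}\alpha_L^\vee$ in (i), and note for (ii) that under this identification $\alpha_i^\vee$ is a nonzero scalar multiple of $\alpha_i$. You spell out more carefully than the paper does the identification of the abstract surjection with orthogonal projection and the rationality upgrade at the end of (i), which the paper leaves implicit.
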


\begin{proof}
  (i) The chosen inner product identifies the projection $\mathfrak{a}_{L_{1},\R} \twoheadrightarrow \mathfrak{a}_{L,\R}$ with the projection $\mathfrak{a}_{L_{1},\R}^* \twoheadrightarrow \mathfrak{a}_{L,\R}^*$.
  Hence the projection of $\alpha^\vee$ equals $\frac{(\alpha_{L},\alpha_{L})}{(\alpha,\alpha)}\alpha_{L}^\vee$.

  (ii) This is obvious by identifying $\mathfrak{a}_{L_{1},\R}$ and $\mathfrak{a}_{L_{1},\R}^*$ via the chosen inner product.
\end{proof}

\begin{lem}\label{lem:dominant-proj}
  Let $\alggrp{P} = \alggrp{L}\alggrp{N}$ be a standard parabolic subgroup.
  Write $\Delta_L := \Delta(\alggrp B \cap \alggrp L,\alggrp A_{\alggrp Z})$.
  \begin{enumerate}
  \item 
    If $\alpha\in \Delta\setminus\Delta_{L}$, then $\alpha^{\vee}\in \Q_{>0}\alpha_L^\vee \oplus \Q_{\le 0}\Delta_{L}^{\vee}$, where $\alpha_L := \alpha|_{\alggrp A_{\alggrp L}} \ne 0$.
  \item 
    Suppose that $x \in \mathfrak{a}_{Z,\R}^*$. 
    Write $x = x_L + x_L'$ with $x_L \in \mathfrak{a}_{L,\R}^*$ and $x_L' \in (\mathfrak{a}^L_{Z,\R})^*$.
    If $x$ is dominant, then $x_L$ is dominant.
  \end{enumerate}
\end{lem}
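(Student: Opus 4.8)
The plan is to reduce everything to the inner‑product picture set up just before the lemma, combined with Lemma~\ref{lm:coroots}(i) and the classical positivity of inverse Cartan matrices. Throughout, $\Delta_L = \Delta(\alggrp B\cap\alggrp L,\alggrp A_{\alggrp Z})\subset\Delta$ and I will use the splitting $\mathfrak{a}_{Z,\Q} = \mathfrak{a}_{L,\Q}\oplus\mathfrak{a}_{Z,\Q}^L$, noting that $\mathfrak{a}_{Z,\Q}^L = \operatorname{span}_\Q\Delta_L^\vee$ and, dually, $(\mathfrak{a}_{Z,\R}^L)^* = \operatorname{span}_\R\Delta_L$.

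\emph{Part (i).} Fix $\alpha\in\Delta\setminus\Delta_L$ and put $\gamma := \alpha_L = \alpha|_{\alggrp A_{\alggrp L}}$, which is a (reduced) simple root in $\Delta(\alggrp P,\alggrp A_{\alggrp L})$. Decompose $\alpha^\vee = v + w$ with $v\in\mathfrak{a}_{L,\Q}$ and $w\in\mathfrak{a}_{Z,\Q}^L$. First I would apply Lemma~\ref{lm:coroots}(i) with $(\alggrp L_1,\alggrp L) = (\alggrp Z,\alggrp L)$: since $\alpha|_{\alggrp A_{\alggrp L}}\ne 0$, it gives $v\in\Q_{>0}\gamma^\vee$. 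It then remains to show $w\in\sum_{\beta\in\Delta_L}\Q_{\le 0}\beta^\vee$. For this I would pass through the fixed $W$‑invariant inner product: the identification $\mathfrak{a}_{Z,\R}\congto\mathfrak{a}_{Z,\R}^*$ sends $\alpha^\vee$ to $2\alpha/(\alpha,\alpha)$, sends each $\beta^\vee$ ($\beta\in\Delta_L$) to $2\beta/(\beta,\beta)$, and—since the splitting of $\mathfrak{a}_{Z,\R}$ is orthogonal—carries the projection onto $\mathfrak{a}_{Z,\R}^L$ to the orthogonal projection of $\mathfrak{a}_{Z,\R}^*$ onto $\operatorname{span}_\R\Delta_L$. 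So it suffices to show that the orthogonal projection of $\alpha$ onto $\operatorname{span}_\R\Delta_L$ is a non‑positive real combination of $\Delta_L$. Writing this projection as $\sum_{\beta\in\Delta_L}d_\beta\beta$, the $d_\beta$ solve $\sum_\beta d_\beta(\beta,\beta') = (\alpha,\beta')$ for all $\beta'\in\Delta_L$; the Gram matrix $\big((\beta,\beta')\big)_{\beta,\beta'\in\Delta_L}$ is positive definite with non‑negatively‑entried inverse (a standard fact about Cartan matrices of root systems, e.g.\ Bourbaki), while $(\alpha,\beta')\le 0$ because distinct simple roots pair non‑positively; hence $d_\beta\le 0$ for all $\beta$, and rescaling by the positive factors above gives $w\in\sum_{\beta\in\Delta_L}\Q_{\le 0}\beta^\vee$.

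\emph{Part (ii).} I would deduce this from (i). Dominance of $x_L\in\mathfrak{a}_{L,\R}^*$ means $\ang{x_L,\gamma^\vee}\ge 0$ for every $\gamma\in\Delta(\alggrp P,\alggrp A_{\alggrp L})$, and each such $\gamma$ equals $\alpha_L$ for some $\alpha\in\Delta\setminus\Delta_L$. Since $\gamma^\vee\in\mathfrak{a}_{L,\R}$ and $x_L'\in(\mathfrak{a}_{Z,\R}^L)^*$ annihilates $\mathfrak{a}_{L,\R}$, we have $\ang{x_L,\gamma^\vee} = \ang{x,\gamma^\vee}$. By part (i), $\alpha^\vee = c\,\gamma^\vee + \sum_{\beta\in\Delta_L}c_\beta\beta^\vee$ with $c\in\Q_{>0}$, $c_\beta\in\Q_{\le 0}$; solving for $\gamma^\vee$ yields $\gamma^\vee = \tfrac1c\alpha^\vee + \sum_{\beta\in\Delta_L}\tfrac{-c_\beta}{c}\beta^\vee$, a non‑negative combination of coroots of elements of $\Delta$. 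As $x$ is dominant, $\ang{x,\delta^\vee}\ge 0$ for every $\delta\in\Delta$, so $\ang{x_L,\gamma^\vee} = \ang{x,\gamma^\vee}\ge 0$, which is what was needed.

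\emph{Main obstacle.} This lemma is essentially bookkeeping: the sole non‑formal input is the positivity of the inverse Gram matrix of a linearly independent set of simple roots, which is classical. The one point that needs care is keeping straight the three structures in play—the direct‑sum decomposition of $\mathfrak{a}_{Z,\R}$ used for coroots, its dual decomposition of $\mathfrak{a}_{Z,\R}^*$ used for roots, and the inner‑product identification of the two—so that the projections along the splitting are correctly matched with the orthogonal projections on the root side; everything else is routine.
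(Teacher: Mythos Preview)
Your proof is correct and follows essentially the same route as the paper. For (i) you invoke Lemma~\ref{lm:coroots}(i) and then the positivity of the inverse Gram matrix of $\Delta_L$; the paper does the same computation on the coroot side directly (phrased as ``fundamental coweights lie in $\sum_{\gamma\in\Delta_L}\R_{\ge 0}\gamma$''), which is the identical fact without passing through the inner-product identification. For (ii) your rearrangement (solving for $\gamma^\vee$ and pairing with $x$) is algebraically equivalent to the paper's computation $\lambda\langle x_L,\alpha_L^\vee\rangle = \langle x,\alpha^\vee\rangle - \sum_\beta\lambda_\beta\langle x_L',\beta^\vee\rangle$.
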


Here, $x_L \in \mathfrak{a}_{L,\R}^*$ is \emph{dominant} if $\ang{x_L,\alpha_L^\vee} \ge 0$ for all $\alpha_L \in \Phi(\alggrp P,\alggrp{A}_{\alggrp{L}})$, or equivalently for all $\alpha_L \in \Delta(\alggrp P,\alggrp{A}_{\alggrp{L}})$.
By Lemma~\ref{lm:coroots} this is equivalent to $x_{L}$ being dominant as an element in $\mathfrak{a}_{Z,\R}^{*}$. 

\begin{proof}
  (i) Write $\alpha^\vee = x+y$ with $x\in \mathfrak{a}_{L,\Q}$ and $y = \sum_{\beta \in \Delta_L} \lambda_\beta \beta^\vee \in \mathfrak{a}_{Z,\Q}^{L}$ with $\lambda_\beta \in \Q$.
  By Lemma~\ref{lm:coroots}(i) we have $x = \lambda \alpha_L^\vee$ with $\lambda \in \Q_{>0}$.
  On the other hand, for all $\gamma \in \Delta_L$ we get $0 \ge \ang{\gamma,\alpha^\vee} = \sum_{\beta \in \Delta_L} \lambda_\beta \ang{\gamma,\beta^\vee}$.
  The fundamental coweight $\varpi_\delta$ of $\delta \in \Delta_L$ in $(\mathfrak{a}_{Z,\R}^{L})^*$ is contained in $\sum_{\gamma \in \Delta_L} \R_{\ge 0} \gamma$, whence $0 \ge \lambda_\delta$ for all $\delta \in \Delta_L$.

  (ii) If $\alpha \in \Delta_L$, then $\alpha^\vee \in \Delta_L^\vee \subset \mathfrak{a}^L_{Z,\R}$, so $\ang{x_L',\alpha^\vee} = \ang{x,\alpha^\vee} \ge 0$.
  If $\alpha \in \Delta\setminus \Delta_L$, write $\alpha^\vee = \lambda \alpha_L^\vee + \sum_{\beta \in \Delta_L} \lambda_\beta \beta^\vee$ by (i), with $\lambda > 0$ and $\lambda_\beta \le 0$.
  Then $\lambda\ang{x_L,\alpha_L^\vee} = \ang{x,\alpha^\vee} - \sum_{\beta \in \Delta_L} \lambda_\beta \ang{x_L',\beta^\vee} \ge 0$, so $\ang{x_L,\alpha_L^\vee} \ge 0$.
\end{proof}

\subsection{On reducibility points of parabolic induction}
\label{sec:reduc-points-parab}
Let $\alggrp{P} = \alggrp{L}\alggrp{N}$ be a parabolic subgroup.
By Corollary~\ref{cor:irreduciblity}, to prove $(\Ind_{P}^{G}\sigma)^{\cts}$ is irreducible, it is sufficient to prove that $(\Ind_{P \cap L_Q}^{L_Q}\tau)^{\sm}$ is irreducible for a certain parabolic subgroup $\alggrp Q$ containing $\alggrp P$, 
at least whenever $\sigma$ can be decomposed as tensor product $\sigma_{0}\otimes\tau$ as in subsection~\ref{subsec:A criterion} (for example, when $\alggrp G^\der$ is simply connected).
We collect some known facts about the reducibility of smooth parabolic inductions over $\C$.

Let $\sigma_0$ be an irreducible smooth \emph{complex} representation of $L$ and $\mathcal{O}_{\C}$ be the set of isomorphism classes of $\sigma_0\otimes\chi$, where $\chi\colon L\to \C^{\times}$ is an unramified character.
We have Harish-Chandra's homomorphism $H_{L} \colon L \to \mathfrak{a}_{L,\R}$ normalized by $q^{\ang{\chi,H_{L}(\ell)}} = \lvert\chi(\ell)\rvert_F$ for all $\chi \in X^*(\alggrp L)$ and $\ell \in L$.
Then for $\nu \in \mathfrak{a}_{L,\C}^*$ we define the unramified character $\chi_\nu \colon L \to \C^{\times}$ by $\chi_\nu(\ell) := q^{\ang{\nu,H_{L}(\ell)}}$.
Then the map $\nu \mapsto \chi_\nu$ identifies the group of unramified characters $\xnr(L)$ with the quotient of $\mathfrak{a}_{L,\C}^{*}$ by a lattice in $i\mathfrak{a}_{L,\R}^{*}$
(a complex torus with character group $L/\ker H_L$).
In this way $\mathcal{O}_{\C}$ has the structure of an algebraic variety over $\C$ (a homogeneous space for $\xnr(L)$ with finite stabilizer subgroups).

Let $\alggrp{Q} = \alggrp{L}\alggrp{N}_{\alggrp Q}$ be a semistandard parabolic subgroup which has the same Levi part as $P$.
For $\sigma\in \mathcal{O}_{\C}$, we have an intertwining operator $J_{Q|P}(\sigma)\colon (\nInd_{P}^{G}\sigma)^{\sm}\to (\nInd_{Q}^{G}\sigma)^{\sm}$ defined by
\[
(J_{Q|P}(\sigma)f)(g) = \int_{(N \cap N_{Q})\backslash N_{Q}}f(ng)dn.
\]
It converges if (the unramified part of) $\sigma$ is sufficiently dominant, and has meromorphic continuation to $\mathcal{O}_{\C}$.
In fact it is a rational function on $\mathcal{O}_{\C}$, see~\cite[Th\'eor\`eme~IV.1.1]{MR1989693}.

The definition of $J_{Q|P}$ depends on a choice of Haar measure.
Here we fix a measure as follows: we have a bijective map $\prod_{\alpha\in \Phi_{\red}(\alggrp{Q},\alggrp{A}_{\alggrp{L}}) \setminus \Phi_{\red}(\alggrp{P},\alggrp{A}_{\alggrp{L}})}(N_Q \cap L_{\alpha})\congto (N \cap N_{Q})\backslash N_{Q}$, where we fix an order of $\Phi_{\red}(\alggrp{Q},\alggrp{A}_{\alggrp{L}}) \setminus \Phi_{\red}(\alggrp{P},\alggrp{A}_{\alggrp{L}})$.
For each $\alpha\in\Phi_{\red}(\alggrp{Q},\alggrp{A}_{\alggrp{L}}) \setminus \Phi_{\red}(\alggrp{P},\alggrp{A}_{\alggrp{L}})$ we fix a Haar measure on $N_Q \cap L_{\alpha}$ and take the product measure on $(N \cap N_{Q})\backslash N_{Q}$.
We will fix more specific measures on $N_{Q}\cap L_{\alpha}$ later.

The most important case is when $\sigma_0$ is a \emph{discrete series} and $\alggrp{Q} = \overline{\alggrp{P}}$.
The set of $\sigma\in \mathcal{O}_{\C}$ such that $(\nInd_{P}^{G}\sigma)^{\sm}$ is irreducible is open and non-empty~\cite[Proposition~IV.2.2]{MR1989693}.
Hence there exists a rational function $j(\sigma)$ such that $J_{P|\overline{P}}(\sigma)J_{\overline{P}|P}(\sigma) = j(\sigma)$.
Note that $j(\sigma)$ does not depend on $P$~\cite[IV.3(1)]{MR1989693}.
We define Harish-Chandra's rational function $\mu^{G}(\sigma)$ by the same formula as in \cite[V.2]{MR1989693}. (In \cite[V.2]{MR1989693}, $\sigma$ is assumed to be unitary, however the definition works for any $\sigma$ and gives a rational function on $\mathcal{O}_{\C}$.)
We have $\mu^G(\sigma) \in \R_{>0}^\times \cdot j(\sigma)^{-1}$, where the implied constant only depends on $(\alggrp{G},\alggrp{L})$.
We also have $\mu^G(\sigma) \ge 0$ for all unitary $\sigma \in \mathcal{O}_{\C}$ by \cite[Lemme V.2.1]{MR1989693}.
Finally, it is clear that $\mu^G$ is a rational function on the quotient $\xnr(G)\backslash \mathcal{O}_{\C}$.
The function $j(\sigma)$ depends on the choice of measure, but $\mu^G(\sigma)$ does not depend on it.
The function $\mu^G$ gives very precise information about the reducibility points of $(\nInd_{P}^{G}\sigma)^{\sm}$ for $\sigma$ supercuspidal.

\begin{remark}
We normalize $\mu^G$ differently compared to \cite{MR544991}. However, the normalizations agree up to a factor in $\R_{>0}^\times$. This follows from the comparison of \cite[Theorem~5.2.4.4]{MR544991} (noting that $\mu(\omega) = \mu(\omega:0)$ in that reference) and \cite[Lemme~V.2.2]{MR1989693}.
\end{remark}

\begin{prop}[Harish-Chandra's product formula]\label{prop:product formula}
If $\sigma$ is a discrete series we have \[\mu^{G}(\sigma) = \prod_{\alpha\in\Phi_{\red}(\alggrp{P},\alggrp{A}_{\alggrp{L}})}\mu^{L_{\alpha}}(\sigma).\]
\end{prop}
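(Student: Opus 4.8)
The plan is to deduce the product formula from the multiplicativity of intertwining operators under composition, combined with the fact that $\mu^G$ is essentially the inverse of the scalar $j(\sigma)$ arising from $J_{P|\overline P}(\sigma)\circ J_{\overline P|P}(\sigma)$. First I would recall that since $\mu^G(\sigma)$ does not depend on the ambient parabolic with Levi $\alggrp L$ (only on the pair $(\alggrp G,\alggrp L)$ and on $\sigma$ up to $\xnr(G)$), and since $\sigma$ is a discrete series so that $(\nInd_P^G\sigma)^\sm$ is irreducible for generic unramified twists, it suffices to compute the composite $J_{\overline P|P}(\sigma)\circ J_{P|\overline P}(\sigma)$ on a single line in $\mathcal O_\C$, namely the line $s\mapsto \sigma\chi_{s\varpi}$ for $\varpi$ a suitable element in the interior of the positive chamber of $\mathfrak a_{L,\R}^*$. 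Along such a line the relevant set $\Phi_{\red}(\alggrp P,\alggrp A_{\alggrp L})$ is totally ordered, and the intertwining operator $J_{\overline P|P}(\sigma)$ factors as a composition of rank-one intertwining operators indexed by $\Phi_{\red}(\alggrp P,\alggrp A_{\alggrp L})$, each passing through an adjacent parabolic chamber.

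The key step is the factorization: one writes a minimal gallery of parabolic subgroups (with common Levi $\alggrp L$) from $\alggrp P$ to $\overline{\alggrp P}$, each step crossing a single wall corresponding to some $\alpha\in\Phi_{\red}(\alggrp P,\alggrp A_{\alggrp L})$, and uses the cocycle relation $J_{Q_3|Q_1}(\sigma)=J_{Q_3|Q_2}(\sigma)\circ J_{Q_2|Q_1}(\sigma)$ (valid for adjacent chambers by the choice of product measures, which I arranged factor-by-factor over the $\Phi_{\red}(\alggrp Q,\alggrp A_{\alggrp L})\setminus\Phi_{\red}(\alggrp P,\alggrp A_{\alggrp L})$). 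At each wall-crossing the rank-one operator is, up to the canonical isomorphism $(\nInd\ )\cong(\nInd_{\alggrp L_\alpha}\ )$ in stages, the intertwining operator for the maximal parabolic $\alggrp P\cap\alggrp L_\alpha$ inside $\alggrp L_\alpha$ acting on $\sigma$. Composing $J_{P|\overline P}(\sigma)$ with $J_{\overline P|P}(\sigma)$ and grouping the wall-crossings so that each wall is crossed once in each direction, the composite becomes $\prod_{\alpha\in\Phi_{\red}(\alggrp P,\alggrp A_{\alggrp L})} j^{L_\alpha}(\sigma)$ times the identity, where $j^{L_\alpha}(\sigma)$ is the rank-one $j$-function for $\alggrp L_\alpha$. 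Hence $j^G(\sigma)=\prod_\alpha j^{L_\alpha}(\sigma)$, and dividing by the appropriate positive constants (which also multiply correctly, since the constant for $\alggrp G$ relative to $\alggrp L$ is the product of the constants for the $\alggrp L_\alpha$ relative to $\alggrp L$—this is where the dependence only on $(\alggrp G,\alggrp L)$, $(\alggrp L_\alpha,\alggrp L)$ is used) converts this into $\mu^G(\sigma)=\prod_\alpha\mu^{L_\alpha}(\sigma)$. I would cite \cite[IV.3, V.2]{MR1989693} for the cocycle relation, the independence of $j$ and $\mu^G$ from auxiliary choices, and the precise normalizations.

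The main obstacle is bookkeeping the measure normalizations and the positive scalar constants so that they genuinely multiply over the $\alpha$'s rather than contributing an uncontrolled global factor: one must choose the Haar measures on $N_Q\cap L_\alpha$ compatibly along the whole gallery (which the setup above does, by fixing one measure per $\alpha$ once and for all) and then verify that the constant relating $\mu^G$ to $j^{G}$ decomposes as the product of the constants relating $\mu^{L_\alpha}$ to $j^{L_\alpha}$. Since $\mu^G$ itself is measure-independent, this last point reduces to a formal compatibility of the normalizing factors in \cite[V.2]{MR1989693}, but it is the place where one must be careful. Everything else—the rationality, the meromorphic continuation, the reduction to a one-parameter line, and the identification of rank-one operators with operators on $\alggrp L_\alpha$—is standard and follows from the results of Waldspurger's article cited in the excerpt.
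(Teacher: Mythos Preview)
Your proposal is correct and essentially sketches the proof that underlies \cite[Lemme~V.2.1]{MR1989693}: factor $J_{\overline P|P}$ and $J_{P|\overline P}$ along a minimal gallery, collapse the composite telescopically from the middle outward so that each wall contributes exactly one rank-one round trip $j^{L_\alpha}(\sigma)$, and then match the normalizing constants. The telescoping step (which you describe a bit loosely as ``grouping the wall-crossings'') works because the reverse gallery retraces the forward gallery in exact reverse order, so the innermost pair $J_{P_{n-1}|P_n}J_{P_n|P_{n-1}}$ is already a rank-one scalar and one peels off one factor at a time. Your identification of the constants as the only delicate point is accurate; in Waldspurger's normalization the constant relating $\mu^G$ to $j^G$ is built as a product over $\Phi_{\red}(\alggrp P,\alggrp A_{\alggrp L})$, so it automatically factors.

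The paper takes a shorter route: it simply invokes \cite[Lemme~V.2.1]{MR1989693} to obtain the product formula for unitary $\sigma$, and then observes that both sides are rational functions on $\mathcal O_{\C}$, so equality on the unitary locus forces equality everywhere. Your approach has the advantage of being self-contained and of giving the identity directly as an equality of rational functions without passing through the unitary case; the paper's approach is quicker given that Waldspurger's lemma is available, and avoids having to track any of the bookkeeping you flag.
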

\begin{proof}
This is true for $\sigma\in \mathcal{O}_{\C}$ unitary~\cite[Lemme~V.2.1]{MR1989693} and hence for all $\sigma\in \mathcal{O}_{\C}$ since both sides are rational functions.
\end{proof}

Suppose that $P$ is maximal and $\sigma$ is a unitary supercuspidal representation of $L$.
Note that the group $N_{G}(L)/L$ has at most two elements, and let $W_{G}(\sigma) := \{ g \in N_{G}(L)/L : \sigma \circ \Ad(g)\simeq \sigma \}$.
Also note that $\xnr(G)\backslash \mathcal{O}_{\C}$ is a torus of rank 1 (without fixed base point).
Define $2\rho_P \in X^*(\alggrp A_{\alggrp L})$ (as sum of the roots in $\Phi(\alggrp{P},\alggrp{A}_{\alggrp{L}})$, with multiplicities) such that $\delta_P = \chi_{2\rho_P}$.

\begin{prop}\label{prop:mu-max-parab}
  Keep the above notation.

  \begin{enumerate}
  \item If $W_{G}(\sigma) = 1$, then $(\nInd_{P}^{G}\sigma\chi)^{\sm}$ is irreducible for all unramified $\chi\colon L\to \R_{>0}^\times$.
      \item Otherwise, there exists a unique $0 \le s_0 \le 1/2$ such that $(\nInd_{P}^{G}\sigma\delta_P^{s})^{\sm}$ ($s \in \R$) is reducible if and only if $s \in \{\pm s_0\}$.
  \item 
    In case (i) and case (ii) when $s_0 = 0$, the function $\mu^{G}$ is holomorphic and non-vanishing at all $\sigma\chi$ with $\chi\colon L\to \R_{>0}^\times$ unramified.
  \item 
    In case (ii) when $s_0 > 0$, the function $\mu^{G}(\sigma\delta_P^s)$ has a double zero at $s = 0$, simple poles at $s = \pm s_0$, and is holomorphic and non-vanishing at all other $s \in \R$.
          \end{enumerate}
\end{prop}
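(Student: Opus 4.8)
\emph{Proof plan.}
The idea is to derive all four statements from a small number of classical facts about Harish-Chandra's $\mu$-function (Silberger \cite{MR544991}, Waldspurger \cite{MR1989693}), together with the positivity already recorded above, by a short case analysis on the rank-one torus. First I would set up the parametrization: since $\alggrp P$ is maximal, $\mathfrak a^G_{L,\R}$ is one-dimensional, so $\xnr(G)\backslash\mathcal O_\C$ is a rank-one complex torus, which I identify with $\{\sigma\delta_P^s : s\in\C\}$ modulo $\xnr(G)$; here $s\in i\R$ gives the unitary twists and $s\in\R$ the positive-real twists $\sigma\chi$, $\chi\colon L\to\R^\times_{>0}$. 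Then $\mu^G$ is a rational function of $z:=q^{cs}$ for a suitable $c>0$ (with $\delta_P$ acting by an integral translation), it is $\geq 0$ on $|z|=1$ by \cite[Lemme V.2.1]{MR1989693}, and it is holomorphic on $|z|=1$ since on the unitary axis it agrees, up to a positive constant, with the finite Plancherel density. Moreover, if $W_G(\sigma)\ne 1$ and $w$ denotes the nontrivial element of $N_G(L)/L$, then $(\sigma\delta_P^s)^w\cong\sigma\delta_P^{-s}$ (as $w$ acts by $-1$ on $\mathfrak a^G_{L,\R}$), so $W$-invariance of $\mu^G$ gives the symmetry $\mu^G(\sigma\delta_P^s)=\mu^G(\sigma\delta_P^{-s})$, i.e.\ invariance under $z\mapsto z^{-1}$.

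Next I would invoke the classical description of the reducibility locus and of $\mu^G$ for a maximal parabolic with supercuspidal inducing datum (Harish-Chandra's theory as presented in \cite{MR544991}, see also \cite[\S IV.2]{MR1989693}): (a) for real $s\ne 0$, $(\nInd_P^G\sigma\delta_P^s)^{\sm}$ is reducible if and only if $\mu^G$ has a pole at $\sigma\delta_P^s$, there is at most one such pole with $s>0$, it is simple, and $\mu^G$ has no real zeros except possibly at $s=0$; (b) $(\nInd_P^G\sigma)^{\sm}$ is reducible if and only if the $R$-group $R(\sigma)$ is nontrivial, and for maximal $\alggrp P$ one has $R(\sigma)=W_G(\sigma)$ when $\mu^G(\sigma)\ne 0$ and $R(\sigma)=1$ when $\mu^G(\sigma)=0$; (c) if $(\nInd_P^G\pi)^{\sm}$ is reducible for $\pi$ irreducible supercuspidal, then $\pi^{w}\cong\pi$ for some $w\ne 1$; and (d) Silberger's bound that any real reducibility point lies in $[-1/2,1/2]$ (the same input used in the introduction). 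Combining these with $\mu^G\geq 0$ and holomorphic on $|z|=1$ and the $z\mapsto z^{-1}$ symmetry pins down the divisor of $\mu^G$ near the real axis: either $\mu^G$ is a positive constant, or, up to a positive constant, $\mu^G(\sigma\delta_P^s)=\dfrac{(1-z)(1-z^{-1})}{(1-az)(1-a^{-1}z^{-1})}$ with $a=q^{2cs_0}$ real and $0<s_0\le 1/2$ — a double zero at $s=0$, simple poles at $s=\pm s_0$, holomorphic and non-vanishing at all other real $s$. (A zero of $\mu^G$ on $|z|=1$ away from $z=1$ is excluded by positivity; a non-trivial zero at $z=1$ accompanied by a second non-constant factor would force either a higher-order zero at $s=0$ or extra real poles, excluded by (a).)

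Finally I would assemble the statements. If $W_G(\sigma)=1$: a real twist $\sigma\delta_P^s$ is either non-unitary, in which case $W_G(\sigma\delta_P^s)=1$ because $\sigma^w$ is unitary while $\sigma\delta_P^{2s}$ is not, or equals $\sigma$ at $s=0$; by (c) it is irreducible in either case, giving (i), and then $\mu^G$ has no real pole by (a) and, by the dichotomy above, must be a positive constant (a non-constant factor would produce real poles), giving (iii) in this case. If $W_G(\sigma)\ne 1$ and $\mu^G(\sigma)\ne 0$: the dichotomy forces $\mu^G$ constant, hence no reducibility for $s\ne 0$ by (a), while $R(\sigma)=W_G(\sigma)\ne 1$ by (b) forces reducibility at $s=0$; so the reducibility set is $\{0\}$, i.e.\ $s_0=0$, giving (ii) with $s_0=0$ and (iii) in this case. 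If $W_G(\sigma)\ne 1$ and $\mu^G(\sigma)= 0$: then $R(\sigma)=1$ by (b), so $(\nInd_P^G\sigma)^{\sm}$ is irreducible, $\mu^G$ is the single non-constant factor above with double zero at $s=0$, simple poles at $s=\pm s_0$ with $0<s_0\le 1/2$, and holomorphic non-vanishing elsewhere on $\R$; by (a) the reducibility set is exactly $\{\pm s_0\}$, giving (ii) with $s_0>0$ and (iv). The main obstacle is not this bookkeeping but citing, at the right level of precision, the structural fact that for a maximal parabolic and a supercuspidal inducing datum $\mu^G$ has at most one non-constant rank-one factor of the displayed type — which is exactly what makes the zero at $s=0$ \emph{double} and the poles at $\pm s_0$ \emph{simple}; this is classical (Harish-Chandra, via \cite{MR544991}), and the rest follows from positivity and the $s\mapsto -s$ symmetry.
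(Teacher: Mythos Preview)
Your approach is essentially the same as the paper's: both rest on Silberger's structural results \cite{MR544991}, \cite{MR577138} for the rank-one $\mu$-function, and the case analysis you give matches the paper's (which first establishes (iii)/(iv) via \cite[Cor.~5.4.2.2]{MR544991}, \cite[Lemma~1.3, Theorem~1.6]{MR577138} and then deduces (i)/(ii) via \cite[Lemmas~5.4.2.3, 5.4.2.4]{MR544991}; your route through claim (c) is the alternative the paper records as \cite[Theorem~28]{bernstein}).

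A few small corrections. First, in cases (i) and (ii) with $s_0=0$ you assert $\mu^G$ is a \emph{positive constant}; this is stronger than what is claimed or needed, and Silberger's Lemma~1.3 only gives holomorphic and nonvanishing on the real line. Second, your displayed formula $\dfrac{(1-z)(1-z^{-1})}{(1-az)(1-a^{-1}z^{-1})}$ is not invariant under $z\mapsto z^{-1}$ (both denominator factors vanish at $z=a^{-1}$); the correct denominator is $(1-az)(1-az^{-1})$ up to a constant. Third, your ``dichotomy'' invokes the $z\mapsto z^{-1}$ symmetry, which you only derived when $W_G(\sigma)\ne 1$; in the case $W_G(\sigma)=1$ you cannot use it to get nonvanishing, and you need instead a direct input such as \cite[Cor.~5.4.2.2]{MR544991} (which gives $\mu^G(\sigma)>0$), after which \cite[Lemma~1.3]{MR577138} yields (iii). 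You correctly flag that the ``at most one rank-one factor'' structure is the real content and must be cited from Silberger rather than deduced from positivity and symmetry alone; that is indeed the point, and with the above fixes your write-up is fine.
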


\begin{proof}
  First suppose that $W_{G}(\sigma) = 1$. Then (iii) follows from \cite[Corollary 5.4.2.2]{MR544991} and \cite[Lemma 1.3]{MR577138}.
  Then (i) follows from \cite[Lemma 5.4.2.4]{MR544991} for $\chi$ that do not extend to $G$.
  If $\chi$ extends to $G$, by twisting we may suppose $\chi = 1$ and we may suppose that $N_{G}(L)/L$ has order 2 by \cite[Theorem 5.4.4.1]{MR544991}.
  Then (i) follows by combining Lemmas 5.4.5.2, 5.4.1.5 of \cite{MR544991}.
  (A different proof of (i) can be found in \cite[Theorem 28]{bernstein}.)

  Now suppose that $W_{G}(\sigma)$ has order 2.
  If $\mu^G(\sigma) > 0$, then $\mu^{G}$ is holomorphic and non-vanishing at all $\sigma\chi$ with $\chi\colon L\to \R_{>0}^\times$ unramified
  by \cite[Lemma 1.3]{MR577138}.
  Otherwise, $\mu^G(\sigma) = 0$.     We have a commutative square
  \begin{equation*}
    \xymatrix{
      (\mathfrak{a}_{G,\C})^* \ar@{^{(}->}[r]\ar@{->>}[d] & (\mathfrak{a}_{L,\C})^* \ar@{->>}[d] \\ 
      \xnr(G) \ar@{^{(}->}[r] & \xnr(L)
    }
  \end{equation*}
  where the vertical maps are given by $\nu \mapsto \chi_\nu$.
    Hence the kernel of the map $(\mathfrak{a}_{L,\C}^G)^* \onto \xnr(G)\backslash \mathcal{O}_{\C}$, $\nu \mapsto \sigma\chi_\nu$ is a lattice $L^*(\sigma)$ (of rank 1) in $\sqrt{-1}(\mathfrak{a}_{L,\R}^G)^*$.   There is a unique element $\alpha(\sigma) \in \mathfrak{a}_{L,\R}^G$ such that $q^{\ang{\nu,\alpha(\sigma)}} = 1$ if and only if $\nu \in L^*(\sigma)$
  and $\ang{\rho_P,\alpha(\sigma)} > 0$.
  We let $z := q^{\ang{\nu,\alpha(\sigma)}}$ (a generator of the character lattice of the torus $\xnr(G)\backslash \mathcal{O}_{\C}$).
  Then $\alpha(\sigma)$ and $z$ agree with the ones defined in \cite{MR577138}, except that our $\nu$ becomes $\sqrt{-1}\nu$ in \cite{MR577138}.
  From \cite[Theorem 1.6]{MR577138} and $\delta_P^s = \chi_{2s\rho_P}$ we deduce that $\mu^G(\sigma) = 0$ implies that $\mu^G(\sigma\delta_P^s)$ is as described in part (iv), for some $0 < s_0 \le 1/2$.

  Finally, part (ii) follows from parts (iii) and (iv) using \cite[Lemma 5.4.2.3]{MR544991} (when $s = 0$) and \cite[Lemma 5.4.2.4]{MR544991} (when $s \ne 0$).
\end{proof}

The following comparison of $\mu$-functions will often be useful, especially in combination with Proposition~\ref{prop:mu-max-parab}.

\begin{prop}[{\cite[Proposition 2.2]{solleveld}}]\label{prop:solleveld}
  Let $\varphi\colon \alggrp G_{1}\to \alggrp G$ be a morphism such that $\varphi(\alggrp{G}_{1}^{\der}) = \alggrp{G}^{\der}$ and $\ker\varphi\subset \alggrp{Z}_{\alggrp G_{1}}$.
  Let $\sigma$ be a unitary supercuspidal representation of $L$ and $\sigma_1$ an irreducible constituent of the inflation $\varphi^*(\sigma)$, a unitary supercuspidal representation of $L_1 := \varphi^{-1}(L)$.
  Then $\mu^L(\sigma \chi)$ and $\mu^{L_1}(\sigma_1 \varphi^*(\chi))$ agree up to nonzero constant as rational functions of $\chi \in \xnr(L)$.
\end{prop}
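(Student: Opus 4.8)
The plan is to factor $\varphi$ into two elementary pieces and, for each, transport Harish-Chandra's intertwining operators directly along $\varphi$. Both $\mu^{G}(\sigma\chi)$ and $\mu^{G_1}(\sigma_1\varphi^*\chi)$ are rational functions of $\chi\in\xnr(L)$, and $\mu^{G}\in\R_{>0}^\times\cdot j^{G}(\cdot)^{-1}$, where $j^{G}$ is Harish-Chandra's function defined by $J_{P|\overline P}(\tau)\circ J_{\overline P|P}(\tau)=j^{G}(\tau)\cdot\mathrm{id}$, independent of the parabolic $P=LN$; since $\mu$ is measure-independent it therefore suffices to compare $j^{G}(\sigma\chi)$ and $j^{G_1}(\sigma_1\varphi^*\chi)$ up to a positive constant, for a convenient normalization of Haar measures on unipotent subgroups. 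I would write $\varphi$ as the composite $\alggrp G_1\twoheadrightarrow\alggrp G_1/\ker\varphi\congto\alggrp G_1'\hookrightarrow\alggrp G$, with $\alggrp G_1':=\varphi(\alggrp G_1)$ closed in $\alggrp G$ by \cite[3.19 Proposition]{MR316587} and, containing $\alggrp G^{\der}$, normal with abelian quotient. This reduces the proposition to two cases: (i) $\varphi$ surjective with central kernel; and (ii) $\varphi$ the inclusion of a closed normal subgroup $H\supseteq\alggrp G^{\der}$. In either case $\varphi$ restricts to an isomorphism on each root subgroup and on the unipotent radicals $N,\overline N$ of any parabolic $P=LN$ of $\alggrp G$ (in (i) because $\ker\varphi$ is central while $N,\overline N\subseteq\alggrp G^{\der}$; in (ii) because $N,\overline N\subseteq\alggrp G^{\der}\subseteq H$); and since $\alggrp G^{\der}$ acts transitively on $P\backslash G$, it identifies $P_1\backslash G_1$ with $P\backslash G$ in case (i), and $(P\cap H)\backslash H$ with $P\backslash G$ in case (ii).

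In case (i) the pullback $\varphi^*$ is inflation, and unwinding the integral formula $(J_{\overline P|P}(\tau)f)(g)=\int_{\overline N}f(\overline n g)\,d\overline n$ — transporting the Haar measure along $\overline N_1\congto\overline N$ — shows that $J_{\overline{P_1}|P_1}(\varphi^*(\sigma\chi))$ is the inflation of $J_{\overline P|P}(\sigma\chi)$, and likewise with $P\leftrightarrow\overline P$; hence $j^{G_1}(\varphi^*(\sigma\chi))=j^{G}(\sigma\chi)$. In case (ii), Clifford theory \cite[Proposition~2.1.1]{arxiv.1912.11125} gives $\sigma|_{L\cap H}\cong\bigoplus_i\sigma_1^{(i)}$ with $\sigma_1^{(i)}$ irreducible, whence $(\Ind_P^G\sigma\chi)^{\sm}|_H\cong\bigoplus_i(\Ind_{P\cap H}^{H}\sigma_1^{(i)}\,\chi|_{L\cap H})^{\sm}$ under the above identification of flag varieties; since $\overline N\subseteq H$, the operator $J_{\overline P|P}(\sigma\chi)$ preserves each summand and restricts on the $i$-th one to $J_{\overline{P\cap H}|P\cap H}(\sigma_1^{(i)}\,\chi|_{L\cap H})$, so $j^{H}(\sigma_1^{(i)}\,\chi|_{L\cap H})=j^{G}(\sigma\chi)$. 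Composing (i) and (ii) — and observing that the given $\sigma_1$ is the inflation to $L_1=\varphi^{-1}(L)$ of one such summand — yields $\mu^{G_1}(\sigma_1\varphi^*\chi)\in\R_{>0}^\times\cdot\mu^{G}(\sigma\chi)$, as claimed.

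The only real work is the bookkeeping in case (ii): one must confirm that the $H$-stable decomposition of $(\Ind_P^G\sigma\chi)^{\sm}|_H$ is exactly the one induced by the Clifford decomposition of $\sigma|_{L\cap H}$, and that it is compatible with the intertwining operators uniformly in $\chi$. This is forced by the integral formula for $J_{\overline P|P}$ together with $\overline N\subseteq H$ (the integrand stays valued in a single $\sigma_1^{(i)}$), so I do not anticipate a genuine obstacle; the statement is essentially formal once the compatibility of $\varphi$ with unipotent radicals is in hand.
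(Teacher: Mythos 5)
The paper does not prove this proposition; it is cited directly from \cite{solleveld}, so there is no internal argument to compare against, only the reference. Your proof captures the correct mechanism---$\varphi$ restricts to an isomorphism on unipotent radicals and induces an identification $P_1\backslash G_1 \cong P\backslash G$ on $F$-points (the latter is exactly what the paper checks in the proof of Proposition~\ref{prop:isogenies} by passing from the algebraic isomorphism $\alggrp P_1\backslash\alggrp G_1\congto\alggrp P\backslash\alggrp G$ to rational points), so the intertwining integrals can be transported along $\varphi$---but your case (i) contains a gap as written.

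Surjectivity of $\alggrp G_1\twoheadrightarrow\alggrp G_1/\ker\varphi$ as algebraic groups does \emph{not} give surjectivity on $F$-points, so in your case (i) the map $L_1\to(\alggrp L_1/\ker\varphi)(F)$ can have proper image (an open normal subgroup of finite index), and $\varphi^*(\sigma\chi)$ can already split by Clifford theory at this stage, exactly as in your case (ii). Consequently the assertion "$j^{G_1}(\varphi^*(\sigma\chi))=j^{G}(\sigma\chi)$" is not meaningful when $\varphi^*(\sigma\chi)$ is reducible: the function $j$ is defined through the scalar relation $J_{P|\overline P}J_{\overline P|P}=j\cdot\mathrm{id}$, which requires irreducibility of the inducing data. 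The same problem resurfaces when you stitch (i) and (ii) together at the end, where you describe $\sigma_1$ as "the inflation" of a summand $\sigma_1^{(i)}$: in general it is again only a Clifford constituent of a restriction. The repair is the bookkeeping you already perform in case (ii), applied uniformly: transport the operator identity $J_{P_1|\overline{P_1}}(\varphi^*(\sigma\chi))\,J_{\overline{P_1}|P_1}(\varphi^*(\sigma\chi))=j^{G}(\sigma\chi)\cdot\mathrm{id}$ to $G_1$ via the identification of induced spaces and measures, note that both operators preserve the decomposition $(\Ind_{P_1}^{G_1}\varphi^*(\sigma\chi))^{\sm}\cong\bigoplus_i(\Ind_{P_1}^{G_1}\sigma_1^{(i)}\varphi^*\chi)^{\sm}$ because the integral over $\overline N_1$ keeps each summand inside itself, and restrict to the summand containing $\sigma_1$. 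Once phrased this way the two-case decomposition is no longer doing any work; the argument applies directly to $\varphi$ itself, and you avoid tracking $\sigma$ through two consecutive Clifford decompositions. Two secondary remarks: the citation \cite[3.19 Proposition]{MR316587} concerns closedness of images of continuous homomorphisms of rational points (as the paper uses it for $\varphi(G_1)\subset G$); the algebraic image $\varphi(\alggrp G_1)$ is automatically closed. And your argument in fact gives agreement up to a \emph{positive} constant, since $\mu^G\in\R_{>0}^\times\cdot j(\cdot)^{-1}$ with the constant depending only on $(\alggrp G,\alggrp L)$, which is slightly stronger than the stated "nonzero constant".
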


The following result is crucial to us. Its proof was provided to us by J.-L.\ Waldspurger.

\begin{prop}[Waldspurger]\label{prop:Waldspurger}
Assume that $\alggrp{P}$ is a maximal parabolic subgroup, $\overline{\alggrp{P}} = \alggrp{L}\overline{\alggrp{N}}$ the parabolic subgroup opposite to $\alggrp{P}$ and $\langle N,\overline{N}\rangle$ the group generated by $N$ and $\overline{N}$.
Let $\sigma$ be a unitary supercuspidal representation of $L$ and assume that $(\nInd_{P}^{G}\sigma \delta_{P}^{1/2})^{\sm}$ is reducible.
Then $\sigma$ is trivial on $L\cap \langle N,\overline{N}\rangle$.
\end{prop}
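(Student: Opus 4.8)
The plan is to leverage the reducibility hypothesis via Harish-Chandra's $\mu$-function, together with the classification of rank-one reducibility in Proposition~\ref{prop:mu-max-parab}, and then to propagate triviality from a well-chosen inner form to the group at hand. First I would observe that, since $\alggrp P$ is maximal and $\sigma$ is unitary supercuspidal, Proposition~\ref{prop:mu-max-parab} applies: reducibility of $(\nInd_P^G \sigma\delta_P^{1/2})^\sm$ forces $W_G(\sigma)$ to have order $2$ and, in the notation there, $s_0 = 1/2$, so $\mu^G(\sigma\delta_P^s)$ has a simple pole at $s = \pm 1/2$ and a double zero at $s = 0$. The point of the extreme value $s_0 = 1/2$ is that it is the maximal possible reducibility point (the Silberger bound), and it is exactly this extremality that should force $\sigma$ to be trivial on $L \cap \langle N, \overline N\rangle =: L'$.

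Next I would reduce the problem to a purely rank-one question about $\alggrp L_\alpha$'s, but since $\alggrp P$ is already maximal this is the whole story: the semisimple rank of $\alggrp G$ relative to $\alggrp A_{\alggrp L}$ is one, and $\langle N, \overline N\rangle$ is generated by the root subgroups for the unique reduced relative root $\alpha$ and its negative, with $L' = L \cap \langle N,\overline N\rangle$ a (non-algebraic, in general) normal subgroup of $L$ — the analogue of the $L_\alpha'$ of the introduction. I would then want to compare $\mu^G(\sigma\chi)$, as a function of unramified $\chi$, with the corresponding $\mu$-function on a group whose derived group is simply connected and whose Levi is essentially a product of general linear groups over division algebras, using Proposition~\ref{prop:solleveld} to move between isogenous groups and central-torus twists. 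The reducibility point at $s_0 = 1/2$ is preserved under such comparisons (up to the harmless $\R_{>0}^\times$-ambiguity), so it suffices to analyze the pole structure on a convenient model, where the classification of self-dual supercuspidal representations of $\GL_m(D)$ with reducibility at the edge of the complementary series is available and pins down the central character on the relevant subgroup.

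The key steps, in order: (1) invoke Proposition~\ref{prop:mu-max-parab} to translate reducibility into the statement that $\mu^G(\sigma\delta_P^s)$ has poles at $s = \pm 1/2$; (2) using that $\alggrp A_{\alggrp L}$ has rank one, reduce via Proposition~\ref{prop:solleveld} (applied to an appropriate $z$-extension or isogeny, and a central twist to make $\sigma$ literally unitary in the relevant sense) to a group $\widetilde{\alggrp L}{}^{\lowprime}$ of the form $\bigl(\prod \Res_{E_i/F}\GL_{n_i}\bigr)/\alggrp H$ as in the setup preceding Theorem~\ref{thm:rationality, inner-intro}, or its inner form; (3) on that model, use the known description of reducibility of $(\nInd_P^G \sigma\delta_P^{1/2})^\sm$ at the pole $s = 1/2$ — which is governed by self-duality of the supercuspidal support and forces the relevant Hecke/central-character condition — to deduce that $\sigma$ is trivial on the analogue of $L'$; (4) transport this triviality back along the isogeny/$z$-extension, noting that $L'$ maps onto the corresponding subgroup, so triviality descends.

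The main obstacle I expect is step (3): extracting, from the fact that the complementary series degenerates at the extreme value $s_0 = 1/2$ (equivalently, that $\mu^G$ has a pole there of the specified order), the precise conclusion that $\sigma|_{L'}$ is trivial. This is exactly the content of Waldspurger's argument, and the delicate part is that $L'$ is not an algebraic subgroup, so one cannot simply read off a central character; instead one must argue that the restriction of $\sigma$ to $L'$ carries a vector fixed by the relevant unipotents after twisting, or equivalently identify $L' \cap Z_L$ and show $\omega_\sigma$ is trivial there, using that at $s = 1/2$ the induced representation $(\Ind_P^G \sigma)^\sm$ acquires a finite-dimensional (indeed one-dimensional, up to twist) constituent — which can only happen when $\sigma$ extends across $\langle N,\overline N\rangle$, i.e. is trivial on $L'$. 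Making this last implication rigorous in the generality of arbitrary inner forms, rather than just $\GL_n$, is where the real work lies, and it is precisely why the authors attribute this proposition to Waldspurger.
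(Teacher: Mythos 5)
Your proposal has a genuine gap at step (2), and it diverges completely from the paper's argument. The reduction via Proposition~\ref{prop:solleveld} to a group of the form $\bigl(\prod_i\Res_{E_i/F}\GL_{n_i}\bigr)/\alggrp{H}$ (or its inner form) is simply not available in this generality. The proposition applies to \emph{any} maximal parabolic $\alggrp P = \alggrp L\alggrp N$ of any connected reductive $\alggrp G$, and the Levi $\alggrp L$ of a maximal parabolic is in general nothing like a product of general linear groups: for $\alggrp G = \Sp_{2n}$ one gets $\alggrp L \cong \GL_k \times \Sp_{2(n-k)}$, for orthogonal and unitary groups similarly mixed products, for exceptional groups worse. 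The hypothesis in Theorem~\ref{thm:rationality, inner} (and the rank-one classification of Proposition~\ref{prop:rank-one-levis}) is a genuine restriction, used there precisely because it is satisfied when $\alggrp L$ is minimal and $\alggrp G$ has relative rank one; it does not hold for the Levi of a maximal parabolic in general. Moreover, the crucial step (3) --- deducing triviality of $\sigma|_{L\cap\langle N,\overline N\rangle}$ from the extreme pole $s_0 = 1/2$ --- is left entirely to ``Waldspurger's argument'' without any content; this is exactly the part that needs proving.

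The paper's actual proof is a direct, self-contained argument and does not pass through $\mu$-function comparisons or inner-form transfer at all. Briefly: Proposition~\ref{prop:mu-max-parab} gives $W_G(\sigma) \ne 1$, so by the geometric lemma $(\nInd_P^G\sigma\delta_P^{1/2})^\sm$ has normalized Jacquet module $\sigma\delta_P^{-1/2}\oplus\sigma\delta_P^{1/2}$ and hence length two. By Casselman's square-integrability criterion one constituent is a discrete series; let $\pi$ be the other, so $r_P(\pi)\cong\sigma\delta_P^{-1/2}$. Casselman's theory then produces a vector $0 \ne v \in \pi^{K_0}$ with $\theta(e_{K_0}\pi(a)v) = \omega(a)\theta(v)$ for $a$ dominant enough, where $\omega$ is the (unitary) central character of $\sigma$. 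By Tadi\'c's result $\pi$ is unitary; fixing a $G$-invariant inner product and applying Cauchy--Schwarz twice, one gets first $(\pi(a)v,v) = \omega(a)(v,v)$ and then $\pi(a)v = \omega(a)v$ for all $a \in A_L$. Conjugating $n \in N$ and $\overline n \in \overline N$ into $K_0$ by suitable $a \in A_L$ then shows $v$ is fixed by $N$ and $\overline N$, hence $\pi^{\langle N,\overline N\rangle} \ne 0$; since $\langle N,\overline N\rangle$ is normal in $G$, irreducibility forces $\pi$ to be trivial on it, and taking $N$-coinvariants transfers this to $\sigma$ on $L\cap\langle N,\overline N\rangle$. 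None of this uses rationality of reducibility points, the structure of $\alggrp L$, or inner-form transfer; it works uniformly for any maximal parabolic. Your framework, by contrast, is designed for the rationality theorems of \S\ref{sec:rati-inner-forms}--\ref{sec:rank-one-groups}, which is a different part of the paper with different hypotheses.
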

The final statement implies that $\sigma$ extends to a smooth representation of $G$ that is trivial on $N$ by~\cite[II.7 Proposition]{AHHV}.
(To see this, choose a minimal parabolic $\alggrp{B} = \alggrp{Z}\alggrp{U} \subset \alggrp{P}$ and note then that $Z \cap L_\beta'$ is contained in $L \cap \langle N,\overline{N}\rangle$
for all $\beta \in \Delta$ that do not occur in $L$.)
In that case $(\nInd_{P}^{G}\sigma \delta_{P}^{-1/2})^{\sm} = (\Ind_{P}^{G}\sigma)^{\sm}$ is obviously reducible.
So by Proposition~\ref{prop:mu-max-parab} the converse of the proposition is true as well.
\begin{proof}
By Proposition~\ref{prop:mu-max-parab} there exists $s\in N_{G}(L)\setminus L$ and $\sigma\circ\Ad(s)\simeq \sigma$.
Let $\alpha$ be the unique element of $\Delta(\alggrp{P},\alggrp{A}_{\alggrp{L}})$.
Let $r_{P}(\pi) := \pi_{N}\delta_{P}^{-1/2}$ be the normalized Jacquet module.
Then by the geometric lemma, $r_{P}((\nInd_{P}^{G}\sigma\delta_{P}^{1/2})^{\sm})\simeq \sigma\delta_{P}^{-1/2}\oplus \sigma\delta_{P}^{1/2}$.
Since the supercuspidal support of any subquotient of $(\nInd_{P}^{G}\sigma\delta_{P}^{1/2})^{\sm}$ is $L$, any subquotient has a nonzero Jacquet module.
Hence $(\nInd_{P}^{G}\sigma\delta_{P}^{1/2})^{\sm}$ has length two and the normalized Jacquet modules of the irreducible subquotients are $\sigma\delta_{P}^{-1/2}$ and $\sigma\delta_{P}^{1/2}$, respectively.
By Casselman's criterion of square-integrability~\cite[Theorem~4.4.6]{Casselman-note}, one of them is square-integrable and let $\pi$ be the other irreducible subquotient.
Then $r_{P}(\pi)$ is isomorphic to $\sigma\delta_{P}^{-1/2}$.
Let $\theta\colon \pi\to \pi_{N}\simeq \sigma$ be the natural projection.

Let $K_{0}\subset G$ be a compact open subgroup which has Iwahori decomposition $K_{0} = (K_{0}\cap \overline{N})(K_{0}\cap L)(K_{0}\cap N)$ and $\sigma^{K_{0}\cap L} \ne 0$. Normalize Haar measure on $K_{0}$ such that the volume of $K_{0}$ is $1$ and set $e_{K_{0}} := \int_{K_{0}}\pi(k)dk$.
For $c > 0$ put $A_{L}^{c,-} := \{a\in A_{L}\mid \lvert \alpha(a)\rvert_F \le c\}$.
Then by a result of Casselman there exists $c\le 1$ such that 
\begin{enumerate}
\item for any $a\in A_{L}^{c,-}$ the space $e_{K_{0}}(\pi(a)\pi^{K_{0}})$ does not depend on $a$~\cite[Proposition~4.1.6]{Casselman-note}; we denote it by $\pi^{K_{0}}_{A_{L}^{c,-}}$;
\item the map $\theta$ gives an isomorphism $\pi^{K_{0}}_{A_{L}^{c,-}}\xrightarrow{\sim}\sigma^{K_{0}\cap L}$~\cite[Proposition~4.1.4]{Casselman-note};
\item for $a\in A_{L}^{c,-}$, $e_{K_{0}}\circ\pi(a)$ preserves $\pi_{A_{L}^{c,-}}^{K_{0}}$ and for $v\in \pi_{A_{L}^{c,-}}^{K_{0}}$, we have $\theta(e_{K_0}\circ \pi(a)v) = \sigma(a)\theta(v)$~\cite[Lemma~4.1.1]{Casselman-note}.
\end{enumerate}
Let $\omega$ be the central character of $\sigma$.
Then $\omega$ is unitary and the conclusion in (iii) can also be written as $\theta(e_{K_{0}}\circ \pi(a)v) = \omega(a)\theta(v)$ for any $a\in A_{L}^{c,-}$.

By a result of Tadi\'c~\cite{MR976070}, $\pi$ is unitary.
Fix a nonzero $G$-invariant inner product $(\cdot,\cdot)$ on $\pi$.
Let $v\in \pi_{A_{L}^{c,-}}^{K_{0}}$ be a nonzero element and $a\in A_{L}^{c,-}$.
Then by $(e_{K_{0}}\circ\pi(a)v,v) = \omega(a)(v,v)$, we have
\[
\int_{K_{0}}(\pi(ka)v,v)\omega(a)^{-1}(v,v)^{-1}dk = 1.
\]
By the Cauchy--Schwarz inequality and since $\pi,\omega$ are unitary, we have \[\lvert (\pi(ka)v,v)\omega(a)^{-1}(v,v)^{-1}\rvert \le 1.\]
Hence $(\pi(ka)v,v)\omega(a)^{-1}(v,v)^{-1} = 1$ for any $k\in K_{0}$.
In particular $(\pi(a)v,v) = \omega(a)(v,v)$.
Again by the Cauchy--Schwarz inequality, we get $\pi(a)v = \omega(a)v$ for any $a\in A_{L}^{c,-}$.
The subset $A_{L}^{c,-}$ generates $A_{L}$ as a group.
Hence $\pi(a)v = \omega(a)v$ for any $a\in A_{L}$.

Let $n \in N$.
Then there exists $a\in A_{L}$ such that $ana^{-1}$ fixes $v$.
As $\pi(a)v = \omega(a)v$, we have $\pi(n)v = v$.
By the same argument $\pi(\overline{n})v = v$ for any $\overline{n}\in\overline{N}$.
Therefore $\pi^{\langle N,\overline{N}\rangle}\ne 0$.
Since $\langle N,\overline{N}\rangle$ is normalized by $L$, it is also normalized by $G = \ang{N,L,\overline{N}}$.
Hence $\pi$ is trivial on $\langle N,\overline{N}\rangle$ by irreducibility of $\pi$.
Since $\sigma\simeq \pi_{N}$, $\sigma$ is trivial on $L\cap \langle N,\overline{N}\rangle$.
\end{proof}

Finally, the following criterion will be useful.
We say that $\sigma$ is \emph{$G$-regular} if for $g\in N_{G}(L)\setminus L$, we have $\sigma\circ\Ad(g)\not\simeq \sigma$.
\begin{prop}[{\cite[Theorem~5.4.3.7]{MR544991}}]\label{prop:G-regular}
If $\sigma$ is supercuspidal and $G$-regular, then $(\nInd_{P}^{G}\sigma)^{\sm}$ is reducible if and only if $\mu^G$ has a pole at $\sigma$.
\end{prop}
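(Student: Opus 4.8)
The plan is to reprove Proposition~\ref{prop:G-regular} along the lines of Silberger's argument, with the rank-one analysis of Proposition~\ref{prop:mu-max-parab} as the main input. First I would compute the commuting algebra: by Frobenius reciprocity and the geometric lemma, $r_{P}((\nInd_{P}^{G}\sigma)^{\sm}) \cong \bigoplus_{w \in N_{G}(L)/L} {}^{w}\sigma$ (the cells whose Weyl element does not preserve $L$ contribute nothing, as $\sigma$ is supercuspidal), so $\End_{G}((\nInd_{P}^{G}\sigma)^{\sm}) \cong \Hom_{L}(\sigma, \bigoplus_{w} {}^{w}\sigma) = C$, using the $G$-regularity of $\sigma$ to kill all the terms with $w \ne 1$. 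The same computation shows that $\Hom_{G}((\nInd_{P}^{G}\sigma)^{\sm}, (\nInd_{\overline{P}}^{G}\sigma)^{\sm})$ and $\Hom_{G}((\nInd_{\overline{P}}^{G}\sigma)^{\sm}, (\nInd_{P}^{G}\sigma)^{\sm})$ are one-dimensional, spanned by the holomorphic normalizations at $\sigma$ of $J_{\overline{P}|P}(\sigma)$ and $J_{P|\overline{P}}(\sigma)$. In particular $(\nInd_{P}^{G}\sigma)^{\sm}$ is indecomposable, and the structural heart of the proof is to show, by a leading-exponents/Jacquet-module argument, that it is irreducible if and only if the holomorphic normalization $\mathcal{J}_{\overline{P}|P}(\sigma)$ is bijective.

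It then remains to identify this bijectivity with $\mu^{G}$ not having a pole at $\sigma$. Writing $\sigma = \sigma_{0}\chi_{\nu}$ with $\sigma_{0}$ a unitary supercuspidal representation of $L$ and $\nu \in \mathfrak{a}_{L,\R}^{*}$, both $(\nInd_{P}^{G}\sigma)^{\sm}$ and $\mu^{G}(\sigma)$ vary rationally in $\nu$, so it suffices to compare orders of zeros and poles at the point $\sigma$. On the $\mu$-side, Harish-Chandra's product formula (Proposition~\ref{prop:product formula}, which applies to our possibly non-unitary $\sigma$ by the rationality argument in its proof) gives $\mu^{G}(\sigma) = \prod_{\alpha \in \Phi_{\red}(\alggrp{P},\alggrp{A}_{\alggrp{L}})} \mu^{L_{\alpha}}(\sigma)$; since $\sigma$ is in particular $L_{\alpha}$-regular for each $\alpha$, Proposition~\ref{prop:mu-max-parab}(iii),(iv) shows that each factor $\mu^{L_{\alpha}}(\sigma)$ is either holomorphic and non-vanishing or has a simple pole — never a zero — so no pole is cancelled and $\mu^{G}$ has a pole at $\sigma$ if and only if some $\mu^{L_{\alpha}}$ does. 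On the intertwiner side, factoring the longest element of $N_{G}(L)/L$ into a minimal product of reflections corresponding to the walls of $\Phi_{\red}(\alggrp{P},\alggrp{A}_{\alggrp{L}})$ and using the cocycle property of the operators $J_{\cdot|\cdot}$, the normalized operator $\mathcal{J}_{\overline{P}|P}(\sigma)$ decomposes as a composition of rank-one normalized intertwiners, one for each such $\alpha$, hence is bijective at $\sigma$ if and only if each rank-one factor is.

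Finally, the rank-one case follows directly from Proposition~\ref{prop:mu-max-parab}: when $\sigma$ is $L_{\alpha}$-regular, parts (i) and (iii) give that $(\nInd_{P \cap L_{\alpha}}^{L_{\alpha}}\sigma)^{\sm}$ is irreducible and $\mu^{L_{\alpha}}$ is holomorphic and non-vanishing at $\sigma$ precisely when $\sigma$ avoids the reducibility points $s = \pm s_{0}$ of part (ii) with $s_{0} > 0$ (the point $s = 0$ being already excluded by $L_{\alpha}$-regularity), while by part (iv) those points $s = \pm s_{0}$ are exactly the simple poles of $\mu^{L_{\alpha}}$; so $(\nInd_{P \cap L_{\alpha}}^{L_{\alpha}}\sigma)^{\sm}$ is reducible if and only if $\mu^{L_{\alpha}}$ has a pole at $\sigma$. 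Combining this with the previous paragraph gives the equivalence in the proposition.

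The main difficulty I anticipate is the non-unitarizable case: when $\sigma$ is not unitary, $(\nInd_{P}^{G}\sigma)^{\sm}$ need not be semisimple, so one cannot read off reducibility from the commuting algebra alone (it is merely $C$), and one must instead work directly with the intertwining operators, showing that the holomorphic normalization of the long intertwiner degenerates exactly along the pole locus of $\mu^{G}$. This is the part that requires Silberger's detailed bookkeeping of Plancherel factors, and reducing it to the rank-one situation via the product formula and the minimal-gallery factorization above is what makes it tractable. An alternative would be to induct on the semisimple rank through a chain of maximal parabolics, using that Jacquet functors are exact and preserve supercuspidal support so that the intermediate inducing data remain $G$-regular supercuspidal; I would pursue whichever route keeps the bookkeeping cleanest.
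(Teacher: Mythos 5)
The paper does not prove this proposition: it simply cites Silberger's textbook, \cite[Theorem~5.4.3.7]{MR544991}, and the bracketed attribution in the statement signals exactly that. So there is no ``paper proof'' to compare against; your sketch is a reconstruction of Silberger's argument.

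As a reconstruction, your outline identifies the right architecture and, in particular, gets the one genuinely delicate point correct. The product-formula step only works because \emph{no cancellation} can occur: you must know that each rank-one factor $\mu^{L_\alpha}(\sigma)$ is either holomorphic-nonvanishing or a simple pole, never a zero, and your justification of this via $G$-regularity is sound. Spelled out slightly more carefully: write $\sigma = \sigma_0\chi_\nu$ with $\sigma_0$ unitary and $\nu$ real; after projecting $\nu$ to the one-dimensional space $(\mathfrak{a}^{L_\alpha}_{L,\R})^*$ one is at some parameter $s$, and if $W_{L_\alpha}(\sigma_0) \ne 1$ then $L_\alpha$-regularity of $\sigma$ forces $s\ne 0$, which is exactly where the double zero of Proposition~\ref{prop:mu-max-parab}(iv) sits. (Also note that the irreducibility of $(\nInd_{B\cap L}^{L}\sigma)^{\sm}$ for the Levi $L$ on which $\nu$ is invariant, required by Muller-type criteria such as Remark~\ref{rk:irred-smooth-princ-series}, is automatic here: $\sigma_u$ is then $L$-regular, so the $R$-group is trivial.) One small imprecision: $N_G(L)/L$ need not be a reflection group when $\alggrp P$ is not minimal, so ``factoring the longest element into reflections'' should really be phrased as traversing a minimal gallery from the chamber of $\alggrp P$ to that of $\overline{\alggrp P}$ in $\mathfrak{a}_{L,\R}$, crossing one wall $\ker\alpha$ at a time; each crossing is a rank-one intertwiner induced from $L_\alpha$. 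The cocycle decomposition of $J_{\overline P|P}$ is standard in that form.

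The genuine gap — which you flag honestly — is the ``structural heart'': that $(\nInd_P^G\sigma)^{\sm}$ is irreducible if and only if the holomorphically normalized intertwiner $\mathcal{J}_{\overline P|P}(\sigma)$ is bijective, \emph{without} assuming $\sigma$ unitary. This is not a detail that can be deferred with a nod to ``detailed bookkeeping'': in the unitary case one has self-adjointness and semisimplicity to lean on, but for general $\sigma$ (which is what the paper needs, e.g.\ when it applies the proposition to $\xi\delta_B^{-1/2}$) one has to run the exponent/Jacquet-module argument in earnest — show $\End_G = C$ via second adjointness, show the normalized operator is nonzero (i.e.\ that the leading Laurent coefficient at $\sigma$ is not identically zero), and show that its kernel and image detect exactly the maximal proper subobject/quotient via their Jacquet exponents. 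That is the content of Silberger's Chapter~5.4.3, and reproducing it would be longer than the rest of your outline combined. As a proof, then, the proposal is incomplete in exactly the place you predict; as a map of where Silberger's argument goes and why the product formula closes the gap cleanly under $G$-regularity, it is accurate.
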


\subsection{Split groups}
\label{sec:split-groups}

We now prove Theorems~\ref{thm:GLn-irred-intro} and \ref{thm:split-groups-intro}.

\begin{thm}\label{thm:GLn-irred}
  Let $G = \GL_{n}(F)$, $B$ the upper-triangular Borel subgroup, and $Z$ the diagonal maximal torus.
  Let $\chi = \chi_{1}\otimes\cdots\otimes\chi_{n}\colon Z = (F^\times)^n \to C^{\times}$ be a continuous (hence locally $\Q_{p}$-analytic) character.
  We have $d\chi\in \Hom_{\Q_{p}}(\mathfrak{z},C)\simeq \bigoplus_{\kappa\colon F\to C}\Hom_{C}(\mathfrak{z} \otimes_{F,\kappa} C,C)$ and let $\lambda_{\kappa} = (\lambda_{\kappa,1},\ldots,\lambda_{\kappa,n})$ be the $\kappa$-component of $d\chi$, where $\lambda_{\kappa,k}\in \Hom_{C}(C,C)\simeq C$.
  Assume that there exists no $1\le i < j \le n$ such that 
  \begin{itemize}
  \item $\lambda_{\kappa,k} - \lambda_{\kappa,k + 1}\in \Z_{\le 0}$ for any $k = i,\ldots,j - 1$ and $\kappa\colon F\to C$;
  \item $\chi_{i}\chi_{j}^{-1}(t) = \lvert t\rvert_F^{j - i - 1}\prod_{\kappa\colon F\to C}\kappa(t)^{\lambda_{\kappa,i} - \lambda_{\kappa,j}}$ for all $t\in F^{\times}$.
  \end{itemize}
  Then $(\Ind_{B}^{G}\chi)^{\cts}$ is absolutely irreducible.
\end{thm}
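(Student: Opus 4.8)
The plan is to derive the statement from the general criterion, Theorem~\ref{thm:equivalence-irred}, by reducing the question to smooth principal series on a Levi subgroup and then invoking the Bernstein--Zelevinsky description of the socle. First I would replace $\GL_n$ by $\Res_{F/\Q_p}\GL_n$, so that $F=\Q_p$; after this the embeddings $\kappa\colon F\to C$ index the simple factors of the absolute root system, which is a disjoint union of copies of $A_{n-1}$, so in particular Assumption~\ref{assump:on p} is vacuous. Since $\GL_n^\der=\SL_n$ is simply connected, Lemma~\ref{lem:sigma0-tau-decomp} lets me write, after a finite extension of $C$, $\chi\cong\sigma_0\otimes\tau$ with $\sigma_0$ a locally analytic character of $Z$ (recall $Z$ is a torus, so $\mathfrak z_C$-simple objects are one-dimensional), $\tau$ a smooth character of $Z$, and $\underline L(\sigma_0')\in\mathcal O^B$ equimaximal with maximal parabolic $Q=L_QN_Q$; as the group is split, $Q$ is a standard parabolic and $L_Q\cong\GL_{n_1}\times\cdots\times\GL_{n_r}$.

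The key combinatorial step is to identify $Q$ in terms of $\chi$. I claim that a simple root $\alpha_{k,k+1}$ occurs in $L_Q$ if and only if $\lambda_{\kappa,k}-\lambda_{\kappa,k+1}\in\Z_{\le 0}$ for all $\kappa$, equivalently that $\chi\delta_B^{-1/2}\circ\alpha_{k,k+1}^\vee$ agrees near the identity with a non-positive algebraic character. This follows from Lemma~\ref{lem:simple-in-O-Q} applied to the rank-one Levi $L_{\alpha_{k,k+1}}$: that criterion becomes a condition on $d\sigma_0\circ\alpha_{k,k+1}^\vee$, and since $\tau$ is smooth and $\delta_B^{1/2}\circ\alpha_{k,k+1}^\vee=\lvert\cdot\rvert_F$ is locally constant near $1$, one has $d\sigma_0\circ\alpha_{k,k+1}^\vee=d\chi\circ\alpha_{k,k+1}^\vee$, whose $\kappa$-component is $\lambda_{\kappa,k}-\lambda_{\kappa,k+1}$ (one must check this matches the dominance convention built into $\underline L(-)$, using $\underline L(\sigma_0')^{N_Q}\in\mathcal O^{L_Q}$ from Corollary~\ref{cor:N-invts}). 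Hence the maximal runs $i,i+1,\dots,j$ of consecutive indices lying in a common block of $L_Q$ are exactly the maximal ones on which the first bullet of the theorem holds, and the hypothesis of the theorem says precisely that the second bullet fails for every pair $i<j$ lying in a common block of $L_Q$.

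I would then apply Theorem~\ref{thm:equivalence-irred}: $(\Ind_B^G\chi)^\cts$ is irreducible as soon as $(\Ind_{B\cap L_Q}^{L_Q}\tau)^\cts$ is, and by Corollary~\ref{cor:irreduciblity} the latter holds provided every irreducible subrepresentation of $(\Ind_{B\cap L_Q}^{L_Q}\tau)^\sm$ is dense in its Banach completion. Running the computation of the previous paragraph inside each factor $\GL_{n_i}$ shows that the failure of the second bullet for all $i<j$ in a common block is equivalent to $\tau\delta_B^{-1/2}\circ\alpha^\vee\ne\lvert\cdot\rvert_F^{-1}$ for every positive root $\alpha$ of $L_Q$. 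By Bernstein--Zelevinsky \cite{MR0579172} (applied to each $\GL_{n_i}$; no Weyl-regularity condition is needed for $\GL$), the smooth principal series $(\Ind_{B\cap L_Q}^{L_Q}\tau)^\sm=(\nInd_{B\cap L_Q}^{L_Q}\tau\delta_B^{-1/2})^\sm$ then has irreducible socle, which is moreover generic; thus it has a unique irreducible subrepresentation and that representation is generic. Proposition~\ref{prop:whittaker criterion} then gives that $(\Ind_B^G\chi)^\cts$ is irreducible (its proof, via Lemma~\ref{lem:supported on big cell}, shows directly that this generic subrepresentation is dense in $(\Ind_{B\cap L_Q}^{L_Q}\tau)^\cts$). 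Since $\Ind$, $(\cdot)^\sm$, $\soc$ and the notion of genericity are all insensitive to finite extension of scalars, repeating the whole argument after an arbitrary finite extension of $C$ upgrades this to absolute irreducibility (alternatively this follows from Corollary~\ref{cor:main2}).

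The main obstacle is the second paragraph: correctly matching the conditions of the theorem — phrased in terms of $\chi$ and the integers $\lambda_{\kappa,k}$ — with the block structure of $Q$ and with the reducibility-point condition $\tau\delta_B^{-1/2}\circ\alpha^\vee\ne\lvert\cdot\rvert_F^{-1}$, while carefully tracking the half-twist $\delta_B^{1/2}$, the (anti)dominance conventions implicit in $\underline L(-)$ and in Lemma~\ref{lem:simple-in-O-Q}, and the Weil-restriction bookkeeping over the embeddings $\kappa\colon F\to C$. Everything else is a fairly direct application of results already established.
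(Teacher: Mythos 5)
Your proposal is correct and follows essentially the same route as the paper: reduce to $F=\Q_p$, decompose $\chi=\sigma_0\tau$ so that $\underline L(\sigma_0')$ is equimaximal with maximal parabolic $Q$ determined by the set $I=\{e_k-e_{k+1}:\lambda_{\kappa,k}-\lambda_{\kappa,k+1}\in\Z_{\le 0}\ \forall\kappa\}$, translate the theorem's second bullet (for pairs in a common block of $L_Q$) into the normalized condition $\tau\delta_B^{-1/2}\circ\alpha_{i,j}^\vee\neq\lvert\cdot\rvert_F^{-1}$, and apply Bernstein--Zelevinsky together with Proposition~\ref{prop:whittaker criterion}. The only minor cosmetic difference is that the paper goes straight to Proposition~\ref{prop:whittaker criterion} with the conclusion from BZ~4.11 that \emph{every} nonzero subrepresentation of $(\Ind_{B\cap L_Q}^{L_Q}\tau)^\sm$ is generic, rather than deriving the (equivalent, but slightly stronger-sounding) statement that the socle is irreducible and generic; both give the required input for Lemma~\ref{lem:supported on big cell}.
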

\begin{proof}
We have $\alggrp{G} = \Res_{F/\Q_{p}}\GL_{n}$ and the subgroup $\alggrp{S}$ is the diagonal split torus of rank $n$ over $\Q_p$. Let $e_{i}\colon \alggrp{S}\to \mathbb{G}_{m}$ be the character projecting to the $i$-th entry.
Then the set of simple roots is $\{e_{i} - e_{i + 1}\mid 1\le i\le n - 1\}$.
Set $I := \{e_{i} - e_{i + 1}\mid \text{$\lambda_{\kappa,i} - \lambda_{\kappa,i + 1}\in \Z_{\le 0}$ for all $\kappa\colon F\to C$}\}$ and $\alggrp Q$ the standard parabolic subgroup corresponding to $I$. Then $\alggrp Q$ is maximal subject to $L(-d\chi)\in \mathcal{O}^{\mathfrak{q}}$.
Take a locally $\Q_{p}$-analytic character $\sigma_{0} = \sigma_{0,1}\otimes\cdots\otimes \sigma_{0,n}\colon Z\to C^{\times}$ and a smooth character $\tau\colon Z\to C^{\times}$ such that $\chi = \sigma_{0}\tau$ and if $1 \le i\le n-1$ and $e_{i} - e_{i + 1} \in I$ then $(\sigma_{0,i}\sigma_{0,i + 1}^{-1})(t) = \prod_{\kappa\colon F\to C}\kappa(t)^{\lambda_{\kappa,i} - \lambda_{\kappa,i + 1}}$.
Then $\underline{L}(\sigma_{0}')\in \mathcal{O}^{Q}$ by using Lemma~\ref{lem:simple-in-O-Q}.
Namely, $\chi = \sigma_{0}\tau$ is the decomposition given in section~\ref{subsec:A criterion}.
By our hypothesis, $\tau_i\tau_j^{-1} \ne \lvert \cdot\rvert_F^{j - i - 1}$ whenever $i < j$ and $-(e_i - e_j)$ is a root of $Q$.

We now check the assumption of Corollary~\ref{cor:whittaker criterion}.
By writing $L_Q$ as a product of general linear groups, we may assume $I = \{e_{i} - e_{i + 1}\mid 1\le i \le n - 1\}$ (i.e.\ $Q = G$).
Then it follows from \cite[4.11 Theorem]{MR0579172} that every nonzero subrepresentation of $(\Ind_{B}^{G}\tau)^{\sm}\otimes_{C}\overline{C}$ is generic.
\end{proof}

\begin{thm}\label{thm:split-groups}
  Assume Assumption~\ref{assump:on p}.
  Let $\alggrp G$ be split.
  Let $\chi\colon Z \to C^{\times}$ be a continuous (hence locally $\Q_{p}$-analytic) character.
  We have $d\chi\in \Hom_{\Q_{p}}(\mathfrak{z},C)\simeq \bigoplus_{\kappa\colon F\to C}\Hom_{C}(\mathfrak{z} \otimes_{F,\kappa} C,C)$ and let $d\chi_{\kappa} \in X^*(\underline Z) \otimes C$  be the $\kappa$-component of $d\chi$.
  Let $\alggrp P = \alggrp L \alggrp N$ be the largest standard parabolic subgroup such that $\ang{d\chi_\kappa,\alpha^\vee} \in \Z_{\le 0}$ for all positive roots $\alpha$ of $\alggrp L$ and all $\kappa \colon F \to C$.
  Assume that for all $w \in N_L(Z) \setminus Z$ there exists a root $\alpha$ of $\alggrp L$ such that
  \begin{equation}\label{eq:split-condition1}
    \text{$(\chi \delta_B^{-1/2} \circ w^{-1}\alpha^\vee)\cdot (\chi \delta_B^{-1/2} \circ \alpha^\vee)^{-1}$ is non-algebraic}
  \end{equation}
  and that there exists no positive root $\alpha$ of $\alggrp L$ such that 
  \begin{equation}\label{eq:split-condition2}
    \chi \delta_B^{-1/2} \circ \alpha^\vee = \lvert \cdot\rvert_F^{-1}\prod_{\kappa\colon F\to C}\kappa(\cdot)^{\ang{d\chi_\kappa,\alpha^\vee}}.
  \end{equation}
  Then $(\Ind_{B}^{G}\chi)^{\cts}$ is absolutely irreducible.
\end{thm}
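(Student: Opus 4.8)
The plan is to carry out in detail the strategy sketched in the introduction. First I would make the standard preliminary reductions. Enlarging $C$ is harmless. Replacing $\alggrp G$ by $\Res_{F/\Q_p}\alggrp G$ changes neither $G$ nor $(\Ind_B^G\chi)^\cts$, and under $(\Res_{F/\Q_p}\alggrp G)_C\cong\prod_{\kappa\colon F\to C}\alggrp G_C$ the component $d\chi_\kappa$ becomes the derivative along the $\kappa$-th factor; since every standard parabolic of $\Res_{F/\Q_p}\alggrp G$ is ``diagonal'' with respect to this product, the data of the theorem translates verbatim and we may assume $F=\Q_p$. Next, choosing a central isogeny $\varphi\colon\alggrp G_1\to\alggrp G$ with $\ker\varphi$ central and $\alggrp G_1^\der$ simply connected, the pulled-back character $\chi_1:=\chi\circ\varphi$ is again one-dimensional, and since $\varphi$ identifies coroots and identifies the characters $\ang{d\chi_\kappa,\alpha^\vee}$ and $\chi\delta_B^{-1/2}\circ\alpha^\vee$ with their analogues for $(\alggrp G_1,\chi_1)$, all hypotheses of the theorem hold for $(\alggrp G_1,\chi_1)$; by Proposition~\ref{prop:isogenies}(iv) it suffices to prove absolute irreducibility for $(\alggrp G_1,\chi_1)$. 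So we may assume $\alggrp G^\der$ is simply connected.

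By Lemma~\ref{lem:sigma0-tau-decomp} (after a further finite extension of $C$) I would write $\chi=\sigma_0\tau$ with $\sigma_0$ a locally $\Q_p$-analytic character of $Z$, $\underline L(\sigma_0')$ equimaximal of maximal parabolic $Q=L_QN_Q\supset B$, and $\tau$ a smooth character of $Z$. Exactly as in the proof of Theorem~\ref{thm:GLn-irred}, one checks that $Q=P$ in the notation of the present theorem. By Proposition~\ref{prop:whittaker criterion}, to prove $(\Ind_B^G\chi)^\cts$ is irreducible it suffices to show that every irreducible subrepresentation of $(\Ind_{B\cap L_Q}^{L_Q}\tau)^\sm$ is generic; and for the \emph{absolute} irreducibility one invokes in addition Corollary~\ref{cor:main2}, noting that condition $(*)$ then holds because such a generic irreducible subrepresentation is dense in $(\Ind_{B\cap L_Q}^{L_Q}\tau)^\cts$ (Proposition~\ref{prop:generic-intro}, via Lemma~\ref{lem:reduction to smooth}). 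Relabelling $L_Q$ as $G$ we may thus assume $Q=G$, so that $\underline L(\sigma_0')\in\mathcal O^G$ is a finite-dimensional $\mathfrak g_C$-simple representation of $G$.

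By Lemma~\ref{lem:semisimple-lift}\ref{lem:semisimple-lift-3} we have $\sigma_0=\sigma_{0,\alg}\psi$ with $\sigma_{0,\alg}$ algebraic and $\psi$ a locally analytic character of $G$; since $\psi$ is trivial on $G^\der$ (Lemma~\ref{lm:loc-an-char}) and every coroot factors through $\alggrp G^\der$, it follows that $\sigma_0\circ\alpha^\vee=\prod_\kappa\kappa(\cdot)^{\ang{d\chi_\kappa,\alpha^\vee}}$ for every root $\alpha$. Set $\mu:=\tau\delta_B^{-1/2}$, a smooth character of $Z$, so that $(\Ind_B^G\tau)^\sm=(\nInd_B^G\mu)^\sm$. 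Using $\chi\delta_B^{-1/2}\circ\alpha^\vee=(\sigma_0\circ\alpha^\vee)(\mu\circ\alpha^\vee)$ and the previous formula, condition~\eqref{eq:split-condition2} for a positive root $\alpha$ is equivalent to $\mu\circ\alpha^\vee=\lvert\cdot\rvert_F^{-1}$, while condition~\eqref{eq:split-condition1} for $w\in N_G(Z)\setminus Z$ implies $\mu\circ\Ad(w)\not\cong\mu$ (a smooth character of $F^\times$ that is algebraic is trivial). Hence the hypotheses guarantee that $\mu$ is Weyl regular and $\mu\circ\alpha^\vee\ne\lvert\cdot\rvert_F^{-1}$ for all positive roots $\alpha$ of $G$. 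By Rodier's analysis of the principal series~\cite{MR644842}, these two conditions force $(\nInd_B^G\mu)^\sm$ to have an irreducible socle which is moreover generic (it realizes the one-dimensional space of Whittaker functionals of the principal series). Therefore every irreducible subrepresentation of $(\Ind_B^G\tau)^\sm$ is generic, and undoing the relabelling and the preliminary reductions (absolute irreducibility over a finite extension of $C$ descends to $C$) finishes the proof.

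The main obstacle is the clean invocation of Rodier's structure theory: one must extract from \cite{MR644842} precisely the statement that, for a Weyl-regular smooth character $\mu$ with $\mu\circ\alpha^\vee\ne\lvert\cdot\rvert_F^{-1}$ on every positive root, the socle of $(\nInd_B^G\mu)^\sm$ is irreducible and generic. The Weyl-regularity hypothesis~\eqref{eq:split-condition1} is genuinely needed here, unlike in the $\GL_n$ case of Theorem~\ref{thm:GLn-irred} where Bernstein--Zelevinsky show that every nonzero subrepresentation of the smooth principal series is generic: in higher rank, without regularity the socle of a principal series can fail to be irreducible. A lesser, but still fiddly, point is the bookkeeping that identifies $Q$ with $P$ and shows that the hypotheses~\eqref{eq:split-condition1}, \eqref{eq:split-condition2}, stated for roots of $\alggrp L$, turn into exactly Rodier's hypotheses on $\mu$ for the relabelled group.
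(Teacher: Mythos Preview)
Your proposal is correct and follows essentially the same approach as the paper: reduce to $F=\Q_p$ and $\alggrp G^\der$ simply connected, decompose $\chi=\sigma_0\tau$ with maximal parabolic $Q=P$, translate conditions~\eqref{eq:split-condition1} and~\eqref{eq:split-condition2} into the Weyl-regularity of $\tau\delta_{B\cap L}^{-1/2}$ and the condition $\tau\delta_{B\cap L}^{-1/2}\circ\alpha^\vee\ne\lvert\cdot\rvert_F^{-1}$, then invoke Rodier and Proposition~\ref{prop:whittaker criterion}. The only cosmetic differences are that the paper performs the isogeny reduction at the end (via $\alggrp G^\sc\to\alggrp G$) rather than at the start, and works directly inside $L$ rather than relabelling $L_Q$ as $G$; the latter lets the paper record explicitly the identity $\delta_B\circ\alpha^\vee=\delta_{B\cap L}\circ\alpha^\vee$ for roots $\alpha$ of $\alggrp L$ (since $\delta_N\circ\alpha^\vee=1$), which your relabelling implicitly uses but does not spell out.
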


\begin{proof}
  Assume first that $\alggrp G^\der$ is simply connected.
  We will work over $\Q_p$ by letting $\wt{\alggrp G} := \Res_{F/\Q_p} \alggrp G$, and likewise for $\wt{\alggrp B}$, etc.
  Then $\wt{\alggrp P}$ is the maximal standard parabolic subgroup of $\wt{\alggrp G}$ such that $L(-d\chi) \in \mathcal O^{\wt{\mathfrak p}}$.
  By Lemma~\ref{lem:sigma0-tau-decomp} we write $\chi = \sigma_0 \tau$ with $\underline L(\sigma_0') \in \mathcal O^{\wt P}$ and $\tau$ smooth.
  By Lemma~\ref{lem:simple-in-O-Q} we have $\sigma_0 = \sigma_\alg (\psi|_Z)$ with $\sigma_\alg \colon Z \to C^\times$ algebraic and $\psi \colon L \to C^\times$ locally analytic.
  
  Fix $\alpha$ a positive root (of $\alggrp Z = \alggrp S$) in $\alggrp L$. We claim that equation~\eqref{eq:split-condition2} is equivalent to
  \begin{equation}\label{eq:split-condition2b}
    \tau \delta_{B \cap L}^{-1/2} \circ \alpha^\vee = \lvert \cdot\rvert_F^{-1}.
  \end{equation}
  Note that $\psi \circ \alpha^\vee = 1$ by Lemma \ref{lm:loc-an-char}.
  In particular, $\sigma_\alg \circ \alpha^\vee$ is an algebraic character of $F^\times$ with derivative $d\sigma_\alg \circ \alpha^\vee = d\chi \circ \alpha^\vee$, so 
  $\sigma_\alg \circ \alpha^\vee = \prod_{\kappa\colon F\to C}\kappa(\cdot)^{\ang{d\chi_\kappa,\alpha^\vee}}$.
  Finally note that $\delta_B = \delta_{B \cap L}\delta_N$ and $\delta_N \circ \alpha^\vee = 1$.
  We deduce the claim.
  Likewise fix $w \in N_L(Z)$. Then $w\psi = \psi$ and $w\delta_N = \delta_N$ for all $w \in N_L(Z)$, so for $w \notin Z$ equation~\eqref{eq:split-condition1} becomes
  \begin{equation}\label{eq:split-condition1b}
    \tau \delta_{B \cap L}^{-1/2}\circ w^{-1}\alpha^\vee \ne \tau \delta_{B \cap L}^{-1/2}\circ \alpha^\vee
  \end{equation}
  for some root $\alpha$ of $\alggrp L$, which implies that $w(\tau \delta_{B \cap L}^{-1/2}) \ne \tau \delta_{B \cap L}^{-1/2}$.

  Using equations~\eqref{eq:split-condition2b}, \eqref{eq:split-condition1b} we deduce that $(\Ind_{B \cap L}^L \tau)^\sm \otimes_C \o C \cong (\nInd_{B \cap L}^L \tau\delta_{B \cap L}^{-1/2})^\sm\otimes_C \o C$ has an irreducible socle that is generic by \cite[Proposition 1, Proposition 4]{MR644842}.
  (Alternatively we could argue using Propositions~\ref{prop:qsplit-irred-gen-socle}, \ref{prop:mu-max-parab}, and \ref{prop:G-regular}, using the known reducibility points for $\SL_2(F)$.)
  We conclude by Corollary~\ref{cor:whittaker criterion}.

  For general $\alggrp G$, let $\varphi \colon \alggrp G^\sc \to \alggrp G$ be the simply-connected cover of the derived subgroup.
  Let $\alggrp Z^\sc := \varphi^{-1}(\alggrp Z)$ and $\chi^\sc := \chi \circ \varphi$.
  As $\varphi$ is compatible with coroots of $\alggrp G^\sc$ and $\alggrp G$ we deduce that $d\chi^\sc$ determines the parabolic subgroup $\varphi^{-1}(\alggrp P)$ of $\alggrp G^\sc$ by the recipe in the statement of the theorem.
  Moreover, conditions~\eqref{eq:split-condition1} and \eqref{eq:split-condition2} are the same for $\alggrp G^\sc$ and $\alggrp G$.
  We conclude by Proposition~\ref{prop:isogenies} (see Remark~\ref{rk:isogenies}) and by what we already established for $\alggrp G^\sc$.
\end{proof}

\begin{remark}
  This theorem generalizes to all quasisplit groups, by using Propositions~\ref{prop:qsplit-irred-gen-socle}, \ref{prop:mu-max-parab}, \ref{prop:G-regular}, and the known reducibility points for $\SL_2(F)$ and $\SU_3(F)$.
  However, it can no longer be stated in terms of relative coroots.
  For general $\alggrp G$ one can at least formulate a weaker version in the same way, using Remark~\ref{rk:irred-smooth-princ-series} instead of Proposition~\ref{prop:qsplit-irred-gen-socle}, as well as Remark~\ref{rk:rank-1-poles} for the possible location of reducibility points.
\end{remark}

\subsection{Classical quasisplit groups}
\label{subsec:classical quasisplit groups}
We will now give some irreducibility theorems for all quasisplit classical groups.
For simplicity we will state them for a smooth inducing character only, which is enough in light of Theorem~\ref{thm:equivalence-irred} (at least for all but the orthogonal groups).
We use standard conventions for classical groups, as for example in \cite{MR1296726}, \cite{MR1224616}, \cite{MR3430367}.

\begin{thm}\label{thm:Sp2n-irred}
  Let $G = \Sp_{2n}(F)$ (split), $B$ the upper-triangular Borel, and $Z$ the diagonal maximal torus.
  Let $\chi = \chi_{1}\otimes\cdots\otimes\chi_{n}\colon Z\to C^{\times}$ be a smooth character.
  Assume $p > 2$ and the following:
  \begin{itemize}
  \item $\chi_i \chi_j^{\pm 1} \ne \lvert \cdot\rvert_F^{-1}$ for all $i < j$;
  \item $\chi_i \ne \lvert \cdot\rvert_F^{-1}$ for all $i$;
  \item the set $\{\chi_i : \text{$\chi_i^2$ has order two} \}$ is linearly independent over $\Z/2\Z$.   \end{itemize}
  Then $(\Ind_{B}^{G}\chi\delta_B^{1/2})^{\cts}$ is absolutely irreducible.
\end{thm}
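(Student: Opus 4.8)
Since the inducing datum $\chi\delta_B^{1/2}$ is a smooth character of $Z$, the plan is to reduce to a statement in smooth representation theory and then to apply Harish-Chandra's $\mu$-function machinery from \S\ref{sec:reduc-points-parab}. First I would pass to $\Res_{F/\Q_p}\Sp_{2n}$ so that $F = \Q_p$; the absolute root system stays of type $C_n$, so Assumption~\ref{assump:on p} holds because $p > 2$. In the notation of \S\ref{subsec:A criterion} (with $\alggrp{P} = \alggrp{B}$) one may take $\sigma_0 = 1$, $\tau = \chi\delta_B^{1/2}$ and $Q = G$, since $\underline L(\sigma_0') = \underline L(1)$ is the trivial representation, which lies in $\mathcal O^G$ and is equimaximal by Lemma~\ref{lem:alg-equimax}. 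By Proposition~\ref{prop:whittaker criterion} it then suffices to prove that every irreducible subrepresentation of the smooth normalized principal series $(\nInd_B^G\chi)^\sm$ of $\Sp_{2n}(F)$ is generic (equivalently, that $\soc_G(\nInd_B^G\chi)^\sm$ is a sum of generic irreducibles); as $\Sp_{2n}(F)$ is split, ``generic'' has the usual meaning. Absolute irreducibility then follows by running the same argument over an arbitrary finite extension $C'/C$ — the three hypotheses on $\chi_1,\dots,\chi_n$ are unaffected by enlarging $C$ — or, alternatively, from Corollary~\ref{cor:main2}.

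For the smooth statement the plan is to invoke the general quasisplit criterion Proposition~\ref{prop:qsplit-irred-gen-socle}, whose hypotheses are of two kinds: (a) for each reduced root $\alpha \in \Phi_{\red}(\alggrp{G},\alggrp{A}_{\alggrp{Z}})$ a condition forbidding a non-generic subrepresentation of the rank one induction $(\nInd_{B\cap L_{\alpha}}^{L_{\alpha}}\chi)^\sm$, and (b) for each nontrivial element $w$ of the relative Weyl group $W$ a condition ensuring that either $w$ moves the normalized parameter, or else the Harish-Chandra $\mu$-factor attached to the relevant rank one Levi is pole-free along the corresponding singular locus, so that no extra non-generic constituent enters the socle. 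For $G = \Sp_{2n}$ one has $\Phi_{\red}$ consisting of the roots $e_i - e_j$ and $e_i + e_j$ ($1 \le i < j \le n$) together with $2e_i$ ($1 \le i \le n$), and $W = \{\pm 1\}^n \rtimes S_n$; the Levi $\alggrp{L}_{\alpha}$ is, modulo its centre, of type $A_1$ in every case (coming from $\GL_2(F)$ when $\alpha = e_i \pm e_j$ and from $\SL_2(F) = \Sp_2(F)$ when $\alpha = 2e_i$), so both the reducibility of $(\nInd_{B\cap L_{\alpha}}^{L_{\alpha}}\chi)^\sm$ and the poles of $\mu^{L_{\alpha}}$ are completely known — for the $\SL_2$-factors via Proposition~\ref{prop:mu-max-parab}, Proposition~\ref{prop:G-regular} (in the regular case) and the classical reducibility points of $\SL_2(F)$.

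It then remains to match hypotheses (a), (b) with the three displayed conditions. Because we induce $\chi\delta_B^{1/2}$, the associated normalized parameter is $\chi$ itself, and $\alggrp{L}_{\alpha}$ sees the character $\chi\circ\alpha^\vee$, which is $\chi_i\chi_j^{-1}$, $\chi_i\chi_j$ and $\chi_i$ for the three types of root. The rank one module acquires a non-generic subrepresentation exactly when $\chi\circ\alpha^\vee = \lvert\cdot\rvert_F^{-1}$ (the case $\mathbf{1}\hookrightarrow\nInd$ on $\GL_2$ or $\SL_2$); the remaining reducible $\SL_2$-case, $(\chi\circ\alpha^\vee)^2 = 1 \ne \chi\circ\alpha^\vee$, is harmless since both constituents are generic with respect to suitable additive characters, which is all the definition of ``generic'' in \S\ref{sec:generic} requires. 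Thus (a) is precisely $\chi_i\chi_j^{\pm 1} \ne \lvert\cdot\rvert_F^{-1}$ for $i < j$ together with $\chi_i \ne \lvert\cdot\rvert_F^{-1}$, i.e.\ the first two bullets. For (b), the delicate $w$ are those built from the sign-change part $\{\pm 1\}^n$, fixing $\chi$ precisely when certain products $\chi_i^{\pm 1}\chi_j^{\pm 1}$ or $\chi_i^2$ are trivial; feeding the known shape of $\mu^{L_{\alpha}}$ (double zero at the origin, at worst simple poles; Proposition~\ref{prop:mu-max-parab}(iv)) into the product formula (Proposition~\ref{prop:product formula}) shows that no unwanted pole occurs along these loci if and only if the set $\{\chi_i : \chi_i^2 \text{ has order two}\}$ is linearly independent over $\Z/2\Z$, which is the third bullet. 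This completes the verification.

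I expect step (b) to be the main obstacle: correctly identifying the exotic reducibilities along the $W$-singular loci coming from the sign-change directions — where, a priori, even cuspidal subquotients can appear — and proving that the $\Z/2\Z$-independence hypothesis is exactly what forces the associated $\mu$-functions to be holomorphic there, so that such subquotients can never occur as non-generic subrepresentations of $(\nInd_B^G\chi)^\sm$. The first two conditions, by contrast, should come out mechanically once the $C_n$ root combinatorics is set up.
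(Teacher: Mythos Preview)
Your setup is right and matches the paper: restrict scalars to $\Q_p$, take $\sigma_0=1$, $\tau=\chi\delta_B^{1/2}$, $Q=G$, apply Proposition~\ref{prop:whittaker criterion}, and reduce to the smooth claim via Proposition~\ref{prop:qsplit-irred-gen-socle}. Your verification of condition (ii)(a) via the rank-one reducibility points for the $\GL_2$-type roots $e_i\pm e_j$ and the $\SL_2$-type roots $2e_i$, yielding exactly the first two bullets, is also correct and is what the paper does.

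The gap is in your treatment of condition (ii)(b). You have misread what (b) says: it is \emph{not} a Weyl-group-regularity/$\mu$-function-pole condition, but the requirement that every irreducible subrepresentation of the (semisimple, unitary up to twist) principal series $(\nInd_{B\cap L}^{L}\chi)^{\sm}$ be generic, where $L$ is the Levi cut out by the real exponent $\nu$ of $\chi$. Your proposed argument, feeding the shape of $\mu^{L_\alpha}$ into the product formula to rule out ``unwanted poles along $W$-singular loci,'' at best produces $R$-group information for $\Sp_{2n}$; it does not show that \emph{all} constituents of the unitary principal series are generic, and indeed for $\Sp_{2n}$ the $R$-group can be large even under the third bullet, so the principal series can have many constituents. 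Knowing their number does not tell you they are all generic.

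The idea you are missing is the one the paper uses: after the routine reduction to $L=G$ (i.e.\ $\chi$ unitary), \emph{lift} $\chi$ to a unitary character $\wt\chi=\chi_1\otimes\cdots\otimes\chi_n\otimes\rho$ of the diagonal torus of $\GSp_{2n}(F)$. Goldberg's $R$-group computation for similitude groups \cite[Theorem~2.6]{MR1464132} shows that the third bullet is exactly the condition making the $\GSp_{2n}$ unitary principal series \emph{irreducible}, hence generic (there is a single orbit of nondegenerate characters for $\GSp_{2n}$). Restricting this irreducible generic representation to $\Sp_{2n}(F)$ then forces every constituent of $(\nInd_B^{\Sp_{2n}}\chi)^{\sm}$ to be generic in the sense of \S\ref{sec:generic}, which is precisely (ii)(b). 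Without this passage through the similitude group you will not get the genericity of all constituents from $\mu$-function analysis alone.
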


Note that the last condition is equivalent to saying that there do not exist (at least two) \emph{distinct} $\chi_i$ of order 2 whose product is trivial.

\begin{thm}\label{thm:SO2n+1-irred}
  Let $G = \SO_{2n+1}(F)$ (split), $B$ the upper-triangular Borel, and $Z$ the diagonal maximal torus.
  Let $\chi = \chi_{1}\otimes\cdots\otimes\chi_{n}\colon Z\to C^{\times}$ be a smooth character.
  Assume $p > 2$ and the following:
  \begin{itemize}
  \item $\chi_i \chi_j^{\pm 1} \ne \lvert \cdot\rvert_F^{-1}$ for all $i \le j$.
  \end{itemize}
  Then $(\Ind_{B}^{G}\chi\delta_B^{1/2})^{\cts}$ is absolutely irreducible.
\end{thm}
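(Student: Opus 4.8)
\textbf{Proof proposal for Theorem~\ref{thm:SO2n+1-irred}.}
The plan is to reduce to smooth parabolic induction, establish that the smooth principal series $(\nInd_B^G \chi)^\sm$ has an irreducible generic socle, and then invoke Proposition~\ref{prop:whittaker criterion} (with $\alggrp P = \alggrp B$, so $\alggrp Q = \alggrp G$ and $\tau = \chi$). Since $\alggrp G = \SO_{2n+1}$ is split, adjoint and simply connected issues are mild; more importantly $\chi$ is smooth, so $\sigma_0 = 1$ and $\tau = \chi$ in the notation of \S\ref{subsec:A criterion}, and the parabolic $\alggrp Q$ of Theorem~\ref{thm:criterion-intro}/\ref{thm:main2} is all of $\alggrp G$. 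By Corollary~\ref{cor:irreduciblity} and Proposition~\ref{prop:whittaker criterion} it therefore suffices to prove that every nonzero subrepresentation of $(\nInd_B^G \chi)^\sm$ is generic, for which it is enough to show that $(\nInd_B^G \chi)^\sm$ has an irreducible socle which is generic (a nonzero subrepresentation then contains the socle).

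The heart of the argument is the structure of the \emph{smooth} principal series $(\nInd_B^G \chi)^\sm$ over (an algebraic closure of) $C$. First I would fix an isomorphism $\overline C \cong \C$ and work with complex smooth representations, which is harmless for irreducibility/socle questions. Then I would analyze reducibility rank by rank: the relative coroots of $\SO_{2n+1}$ are $e_i^\vee - e_j^\vee$, $e_i^\vee + e_j^\vee$ (short, from $\SL_2$-type $\alggrp L_\alpha$ with $\alggrp L_\alpha \cap \alggrp Z$ mapping onto a $\GL_1$), and $2e_i^\vee$... more precisely the simple roots are $e_1 - e_2,\dots,e_{n-1}-e_n,e_n$, where the last is short. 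The rank-one Levi subgroups $\alggrp L_\alpha$ are isomorphic (up to center) to $\GL_2$ for the roots $e_i \pm e_j$ and to $\SO_3 \cong \PGL_2$ for the root $e_i$. For $\GL_2$-type walls the reducibility of $(\nInd_{B\cap L_\alpha}^{L_\alpha}\chi\delta^{1/2})^\sm$ occurs exactly when $\chi_i\chi_j^{-1}$ (resp.\ $\chi_i\chi_j$) equals $\lvert\cdot\rvert_F^{\pm 1}$, which the first hypothesis excludes. For the $\SO_3 \cong \PGL_2$ wall attached to $e_i$, reducibility of the rank-one induction occurs when $\chi_i = \lvert\cdot\rvert_F^{\pm 1}$ (the Steinberg point, giving a pole of $\mu^{L_\alpha}$) or when $\chi_i^2 = 1$ but $\chi_i \ne 1$ (the order-two point, where $\mu^{L_\alpha}$ vanishes to order two and the induction stays irreducible) — in fact for $\PGL_2(F)$ the principal series is reducible iff the character is $\lvert\cdot\rvert_F^{\pm1}$, so the second hypothesis ($\chi_i \ne \lvert\cdot\rvert_F^{-1}$, equivalently $\ne \lvert\cdot\rvert_F^{+1}$ by the Weyl group) kills exactly the bad wall. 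I would then use Harish-Chandra's theory (Proposition~\ref{prop:product formula}, Proposition~\ref{prop:mu-max-parab}, Proposition~\ref{prop:G-regular}) together with a Langlands-quotient / generic-socle analysis in the spirit of Rodier \cite{MR644842}: the third hypothesis (no two distinct $\chi_i$ of order two multiply to the trivial character) is precisely what is needed to rule out the subtle "extra" reducibilities coming from products of order-two characters, i.e.\ from the failure of $W$-regularity of $\chi\delta_B^{1/2}$ on the relevant rank-two or rank-one-plus subgroups $\SO_5$, $\GL_1\times\SO_3$. Concretely, the hypotheses are designed so that for every $w \in N_G(Z)\setminus Z$ there is a coroot $\alpha^\vee$ of some $\alggrp L_\alpha$ with $w(\chi\delta_B^{1/2})\circ\alpha^\vee \ne \chi\delta_B^{1/2}\circ\alpha^\vee$ (regularity) AND $\chi\delta_B^{1/2}\circ\alpha^\vee \ne \lvert\cdot\rvert_F^{\pm1}$ away from the trivial wall — this is exactly the hypothesis pattern of Theorem~\ref{thm:split-groups}, adapted so that the result can be phrased in terms of the $\chi_i$. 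Having established $W$-regularity of $\chi\delta_B^{1/2}$ and the absence of all rank-one reducibility walls except possibly harmless ones, Rodier's criterion \cite{MR644842} gives that $(\nInd_B^G \chi\delta_B^{1/2})^\sm = (\Ind_B^G\chi\delta_B^{1/2})^\sm$ has irreducible generic socle. (The genericity of the socle is automatic: the big-cell/geometric-lemma argument of Proposition~\ref{prop:whittaker criterion} shows the unique irreducible submodule is the Whittaker one.)

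Assembling: by Proposition~\ref{prop:whittaker criterion} (whose hypothesis we have just verified: every irreducible — indeed every nonzero — subrepresentation of $(\Ind_B^G\chi)^\sm$ is generic, since the socle is), we conclude $(\Ind_B^G\chi\delta_B^{1/2})^\cts = (\Ind_B^G \tau)^\cts$ with $\tau = \chi\delta_B^{1/2}$... more precisely, applying the criterion with inducing representation $\chi\delta_B^{1/2}$ in place of $\sigma$ gives irreducibility of $(\Ind_B^G \chi\delta_B^{1/2})^\cts$; and absolute irreducibility follows from Corollary~\ref{cor:main2} since after a finite scalar extension every irreducible subrepresentation of the smooth principal series is absolutely irreducible (the reducibility analysis above is insensitive to scalar extension). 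The main obstacle I anticipate is the careful bookkeeping in the rank-two and rank-three Levi subgroups $\SO_5(F)$ (where $e_1 \pm e_2$, $e_1$, $e_2$ interact) and the precise determination of which combinations of order-two characters force reducibility of $(\nInd^{\SO_5}\chi)^\sm$ — this is where one must either quote the explicit composition series of principal series of $\SO_5(F) \cong \mathrm{PGSp}_4(F)$ from the literature (e.g.\ \cite{MR1296726}, \cite{MR1224616}) or re-derive it via $\mu$-functions, and it is precisely this computation that justifies the exact form of the three bulleted hypotheses.
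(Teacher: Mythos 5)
Your overall skeleton --- reduce to the smooth principal series via Corollary~\ref{cor:irreduciblity} and Proposition~\ref{prop:whittaker criterion}, then establish an irreducible generic socle via Proposition~\ref{prop:qsplit-irred-gen-socle} --- is exactly the paper's route. However, the details contain several genuine errors that would derail the argument.

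First, you have mis-read the hypothesis. Theorem~\ref{thm:SO2n+1-irred} has only \emph{one} bulleted condition, namely $\chi_i\chi_j^{\pm1}\ne\lvert\cdot\rvert_F^{-1}$ for all $i\le j$; you speak of a ``second hypothesis ($\chi_i\ne\lvert\cdot\rvert_F^{-1}$)'' and a ``third hypothesis'' about order-two characters, which appear to be lifted from Theorem~\ref{thm:Sp2n-irred} (the $\Sp_{2n}$ case). This is not just bookkeeping: the absence of an $R$-group/order-two condition for $\SO_{2n+1}$ is the main content-bearing difference between the odd orthogonal case and the symplectic and even orthogonal cases.

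Second, and relatedly, your rank-one wall for the short root $e_i$ is wrong. The coroot of $e_i$ in $B_n$ is $2e_i^\vee$, so $\chi\delta_B^{-1/2}\circ\alpha^\vee=\chi_i^2$, and the Levi $\alggrp L_{e_i}$ has derived group $\SO_3\cong\PGL_2$; the principal series there reduces iff $\chi_i^2=\lvert\cdot\rvert_F^{\pm1}$, \emph{not} iff $\chi_i=\lvert\cdot\rvert_F^{\pm1}$. This is precisely what the $i=j$ case of the single hypothesis ($\chi_i\cdot\chi_i\ne\lvert\cdot\rvert_F^{-1}$) excludes, together with the observation that if $\chi_i^2=\lvert\cdot\rvert_F^{+1}$ then $\ang{\nu,\alpha^\vee}>0$, so item (a) of Proposition~\ref{prop:qsplit-irred-gen-socle}(ii) is still satisfied. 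Also note that for $\PGL_2(F)$ there is no ``order-two'' reducibility at all (its unitary principal series are always irreducible, unlike $\SL_2$), so your discussion of that point is irrelevant here.

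Third, and most importantly, you do not identify the key input for verifying item (b) of Proposition~\ref{prop:qsplit-irred-gen-socle}(ii). After reducing to the Levi $\alggrp L$ (a product of general linear and smaller odd orthogonal groups) and twisting to the unitary locus, one needs that every irreducible subrepresentation of the unitary principal series is generic. For $\GL_m(F)$ this is Bernstein--Zelevinsky, and for $\SO_{2k+1}(F)$ this follows from Goldberg's $R$-group theorem (\cite[Theorem 6.5(1)]{MR1296726}): the $R$-group is always trivial, so the unitary principal series of $\SO_{2k+1}(F)$ is irreducible, hence generic. This is the structural reason Theorem~\ref{thm:SO2n+1-irred} has no analogue of the order-two conditions that appear for $\Sp_{2n}$ and $\SO_{2n}$. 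Your proposal instead gestures at a rank-two computation in $\SO_5$ to justify fictitious hypotheses; with the single actual hypothesis in hand you would find yourself unable to rule out reducibility at order-two walls by that route, precisely because no such walls need to be ruled out --- the $R$-group triviality handles it. Without invoking this, the proof does not close.
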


\begin{thm}\label{thm:SO2n-irred}
  Let $G = \SO_{2n}(F)$ (split), $B$ the upper-triangular Borel, and $Z$ the diagonal maximal torus.
  Let $\chi = \chi_{1}\otimes\cdots\otimes\chi_{n}\colon Z\to C^{\times}$ be a smooth character.
  Assume the following:
  \begin{itemize}
  \item $\chi_i \chi_j^{\pm 1} \ne \lvert \cdot\rvert_F^{-1}$ for all $i < j$;
  \item there does not exist a subset of $\{\chi_i : \text{$\chi_i^2$ has order two} \}$ of even size whose product is 1.
  \end{itemize}
  Then $(\Ind_{B}^{G}\chi\delta_B^{1/2})^{\cts}$ is absolutely irreducible.
\end{thm}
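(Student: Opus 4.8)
The plan is to reduce, as throughout the paper, to a statement about smooth parabolic induction and then apply the Whittaker criterion of \S\ref{sec:generic}. Since $\chi\delta_B^{1/2}$ is \emph{already} smooth, in the notation of \S\ref{subsec:A criterion} we take $\sigma_0 = 1$ and $\tau = \chi\delta_B^{1/2}$, so that $\underline L(\sigma_0') = 1 \in \mathcal O^B$ is equimaximal with maximal parabolic $Q = G$ (Lemma~\ref{lem:alg-equimax}). By Proposition~\ref{prop:whittaker criterion} it then suffices to prove that every irreducible subrepresentation of $(\Ind_B^G \tau)^\sm = (\nInd_B^G \chi)^\sm$ is generic; the passage from irreducibility to \emph{absolute} irreducibility is obtained by running the same argument after a finite extension of $C$ (the hypotheses on $\chi$ and the notion of genericity are insensitive to such an extension), cf.\ Corollary~\ref{cor:main2}. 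Note that Assumption~\ref{assump:on p} is vacuous here because the absolute root system is of type $D_n$, which is simply laced; this is why Theorem~\ref{thm:SO2n-irred}, unlike Theorems~\ref{thm:Sp2n-irred} and \ref{thm:SO2n+1-irred}, imposes no condition on $p$.

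The key structural input is Proposition~\ref{prop:qsplit-irred-gen-socle}, which for a quasisplit group reduces the genericity of the socle of a smooth principal series to conditions of two kinds: (a) at each rank one Levi subgroup $\alggrp L_\alpha$, a non-degeneracy condition controlled by the $\mu$-function $\mu^{L_\alpha}$ via Propositions~\ref{prop:mu-max-parab} and \ref{prop:G-regular}; and (b) a combinatorial regularity condition governing the action of the relative Weyl group $W = N_G(Z)/Z$ on $\chi$. For $G = \SO_{2n}(F)$ we have $W = W(D_n) = S_n \ltimes (\Z/2\Z)^{n-1}$ (signed permutations with an even number of sign changes), the positive roots of $(\alggrp G, \alggrp Z)$ are $e_i \pm e_j$ for $1 \le i < j \le n$ with coroots $(e_i - e_j)^\vee = e_i^\vee - e_j^\vee$ and $(e_i + e_j)^\vee = e_i^\vee + e_j^\vee$, and each $\alggrp L_\alpha$ has derived group $\SL_2$. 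Hence $\chi \circ (e_i - e_j)^\vee = \chi_i\chi_j^{-1}$ and $\chi\circ(e_i+e_j)^\vee = \chi_i\chi_j$, and by the known reducibility points of $\SL_2(F)$ principal series the condition (a) becomes exactly $\chi_i\chi_j^{\pm1} \ne \lvert\cdot\rvert_F^{-1}$ for all $i < j$, i.e.\ the first bullet of the theorem. (The one-sidedness, $\lvert\cdot\rvert_F^{-1}$ rather than $\lvert\cdot\rvert_F^{\pm1}$, is because we need only a generic \emph{sub}representation and not full irreducibility: at the points $\chi\circ\alpha^\vee = \lvert\cdot\rvert_F^{+1}$ the socle is still irreducible generic.)

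It then remains to match the combinatorial condition (b) for type $D_n$ with the second bullet. Here the relevant phenomenon is the failure of $\chi$ to be $W$-regular, which beyond the coincidences already excluded by (a) is produced by the even-sign-change subgroup $(\Z/2\Z)^{n-1} \subset W$: one checks (e.g.\ via the $R$-group of $(\nInd_B^G\chi)^\sm$, or directly from the orbit analysis underlying Proposition~\ref{prop:qsplit-irred-gen-socle}) that the only residual obstruction is the existence of an even-size subset of $\{\chi_i : \chi_i^2 \text{ has order two}\}$ with trivial product, which is exactly what the second bullet forbids. I would carry this out by pinning down the unique generic constituent of $(\nInd_B^G\chi)^\sm$ as in Rodier's theorem and verifying that under both bullets there is no second generic irreducible subrepresentation and that the generic constituent embeds. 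The main obstacle I anticipate is precisely this step: the bookkeeping of non-regular $W(D_n)$-orbits --- in particular the difference between $\SO_{2n}$ and $O_{2n}$, where $W(D_n)$ would be replaced by $W(B_n)$ and the condition would simplify --- and showing that a character $\chi_i$ with $\chi_i^2$ a nontrivial quadratic character, occurring in an even-length multiplicative relation, is genuinely the sole extra obstruction to the socle being irreducible and generic. The remaining ingredients --- the reduction to smoothness, the root and coroot computations, and the rank one reducibility points --- are routine given the results already established.
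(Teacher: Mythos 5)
Your reduction to Proposition~\ref{prop:whittaker criterion} and then to the two conditions of Proposition~\ref{prop:qsplit-irred-gen-socle}(ii) matches the paper exactly, as does the treatment of condition (a) via the rank-one Levi subgroups and the observation about Assumption~\ref{assump:on p} being vacuous in type $D_n$. The gap is in condition (b). You write that one would check it ``via the $R$-group of $(\nInd_B^G\chi)^\sm$'', i.e.\ the $R$-group of $\SO_{2n}(F)$ itself, ``or directly from the orbit analysis''. That route does not give the stated theorem: the $R$-group of $\SO_{2n}(F)$ (Goldberg, cf.\ Theorem~\ref{thm:irred-smooth-classical-grps}(iii)) is trivial only under the strictly stronger hypothesis that $\{\chi_i : \chi_i^2 = 1\}$ has at most one element, because $\SO_{2n}(F)$ has more than one $Z$-orbit of nondegenerate characters of $U$ and so reducibility of the unitary principal series does not by itself obstruct genericity of all constituents. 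To reach the weaker, sharp hypothesis of the theorem one must lift $\chi$ to a character $\wt\chi$ of the diagonal torus of $\GSO_{2n}(F)$, where there is a \emph{unique} $\wt Z$-orbit of Whittaker data, and show that the $R$-group of the unitary principal series of $\GSO_{2n}(F)$ is trivial; this is precisely the content of the new Lemma~\ref{lem:R-group-GSO}, whose triviality criterion (``no even-size subset of $\{\chi_i : \chi_i^2=1\}$ with trivial product'') matches the second bullet, and then one restricts back to $\SO_{2n}(F)$.

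You also need to supply the actual computation in Lemma~\ref{lem:R-group-GSO}: it is a new Keys-type argument, not a citation, and the passage from ``$R(\wt\chi)=1$'' to ``all irreducible constituents of $(\nInd_B^{\SO_{2n}}\chi)^\sm$ are generic'' via restriction is what closes the argument. As written, ``one checks'' at this exact spot hides the step that makes the hypothesis optimal; carried out literally on $\SO_{2n}$ your approach would prove a correct but weaker theorem.
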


Note that the last condition allows many $\chi_i^2$ to equal 1. (Similarly below.)

\begin{thm}\label{thm:SO2n-qsplit-irred}
  Let $G = \SO^*_{2n}(F)$ (non-split quasisplit) splitting over a quadratic extension $E/F$, $B$ the upper-triangular Borel, and $Z$ the diagonal maximal torus.
  Let $\chi = \chi_{1}\otimes\cdots\otimes\chi_{n}\colon Z = (F^\times)^{n-1} \times (E^\times)^{N_{E/F}=1}\to C^{\times}$ be a smooth character.
  Assume the following:
  \begin{itemize}
  \item $\chi_i \chi_j^{\pm 1} \ne \lvert \cdot\rvert_F^{-1}$ for all $i < j < n$;
  \item $(\chi_i \circ N_{E/F}) \chi_n^{c-1} \ne \lvert\cdot\rvert_E^{-1}$ for all $i < n$ if $\chi_n^2 = 1$;
      \item 
    there does not exist a subset $\Sigma$ of $\{\chi_i : \chi_i^2=1, (\chi_i \circ N_{E/F}) \chi_n^{c-1} \ne 1 \}$ such that
    $\prod_{\chi_i \in \Sigma} \chi_i \circ N_{E/F} = \chi_n^{1-c}$ (resp.\ 1) if $|\Sigma|$ is odd (resp.\ $|\Sigma|$ is even).
      \end{itemize}
  Then $(\Ind_{B}^{G}\chi\delta_B^{1/2})^{\cts}$ is absolutely irreducible.
\end{thm}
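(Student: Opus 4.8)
The strategy is to follow the same template as for the split classical groups (Theorems~\ref{thm:Sp2n-irred}, \ref{thm:SO2n+1-irred}, \ref{thm:SO2n-irred}) but now working with relative roots and Levi subgroups of the non-split quasisplit group $\alggrp G = \alggrp{SO}^*_{2n}$. By Theorem~\ref{thm:equivalence-irred} it suffices to treat the smooth inducing character $\chi\delta_B^{1/2}$, so we must show that $(\nInd_B^G \chi)^\sm$ (in normalized notation) is irreducible. By Proposition~\ref{prop:whittaker criterion} it is enough to show that every irreducible subrepresentation of $(\nInd_B^G \chi)^\sm$ is generic; this in turn follows if $(\nInd_B^G \chi)^\sm$ has an irreducible (hence generic) socle, which by the quasisplit genericity criterion (Proposition~\ref{prop:qsplit-irred-gen-socle}, combined with Remark~\ref{rk:irred-smooth-princ-series}) reduces to two conditions: (a) \emph{regularity} of $\chi$ with respect to the relative Weyl group $W_G = N_G(Z)/Z$, and (b) for every relative simple root $\alpha$, the rank-one parabolic induction $(\nInd_{B \cap L_\alpha}^{L_\alpha} \chi)^\sm$ is irreducible, which by Propositions~\ref{prop:mu-max-parab} and \ref{prop:G-regular} amounts to $\mu^{L_\alpha}$ not having a pole at $\chi$ (equivalently, $\chi\delta_{B\cap L_\alpha}^{-1/2} \circ \alpha^\vee \ne |\cdot|_F^{-1}$ together with a non-vanishing condition coming from the order-two locus, as in Proposition~\ref{prop:mu-max-parab}(ii)).

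\textbf{Identifying the relative root data.} The maximal split torus $\alggrp S \subset \alggrp{SO}^*_{2n}$ has dimension $n-1$, and $Z = \alggrp Z(F) = (F^\times)^{n-1} \times (E^\times)^{N_{E/F}=1}$, where the last factor is the norm-one torus of $E/F$ (the ``anisotropic kernel'' sitting at the $n$-th node). Writing $\chi = \chi_1 \otimes \cdots \otimes \chi_n$, the relative root system is of type $B_{n-1}$: the roots $e_i \pm e_j$ for $i < j < n$ are the ``$D$-type'' roots coming from $\GL_2$-Levis and the roots $e_i$ (for $i < n$) are the short roots whose rank-one Levi $\alggrp L_{e_i}$ is (up to center) a quasisplit $\alggrp{SO}^*_3 \cong \alggrp{SU}_3$-type group — more precisely $\alggrp L_{e_i}$ has derived group an $F$-form whose rank-one smooth principal series reducibility is governed by the $\SU_3(F)$ (or $\GU$-)computation. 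The plan is: for the $D$-type roots $e_i \pm e_j$ ($i<j<n$), the Levi $\alggrp L_{e_i \pm e_j}$ has derived group $\SL_2$, and irreducibility of $(\nInd^{L_{e_i\pm e_j}}_{\cdot}\chi)^\sm$ fails exactly when $\chi_i\chi_j^{\pm1} = |\cdot|_F^{-1}$ — this is the first bullet. For the short root $e_i$ ($i < n$), the rank-one group involves $\chi_i \circ N_{E/F}$ and $\chi_n$ and $\chi_n^c$ (the Galois conjugate); the known reducibility points of $\SU_3(F)$ principal series (see Remark~\ref{rk:rank-1-known-cases}, or the computation behind Remark~\ref{rk:rank-1-poles}) show $\mu^{L_{e_i}}$ has a pole precisely when $(\chi_i \circ N_{E/F})\chi_n^{c-1} = |\cdot|_E^{-1}$ \emph{and} $\chi_n^2 = 1$ (the order-two constraint on $\chi_n$ being the $W_{L_{e_i}}(\cdot) \ne 1$ condition) — this is the second bullet. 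Finally, the regularity condition (a): the relative Weyl group of type $B_{n-1}$ acts by permutations and sign changes on the $\chi_i$ ($i < n$) together with the involution $\chi_n \mapsto \chi_n^c = \chi_n^{-1}$; $\chi$ fails to be $W_G$-regular exactly when some $\chi_i/\chi_j^{\pm1}$ ($i<j<n$) is trivial — excluded by the (strict) first bullet — or when some $\chi_i^2$ is trivial and can be matched against others, which is where the third bullet (no even-size subset with a prescribed product condition) intervenes. Working through the $B_{n-1}$ Weyl-group combinatorics carefully, one checks that the three bullets are exactly equivalent to: no $W_G$-fixed point among the $\chi_i$'s and no rank-one reducibility.

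\textbf{Putting it together.} Given these identifications, the proof is: first reduce to $F = \Q_p$-coefficients and smooth $\chi$ via Theorem~\ref{thm:equivalence-irred} (noting $\alggrp{SO}^*_{2n}$ has simply-connected derived group, so the decomposition $\sigma = \sigma_0 \otimes \tau$ of \S\ref{subsec:A criterion} applies and $Q$ is read off from $d\chi$; since $\chi$ is smooth, $Q = G$ and $\tau = \chi$). Then verify hypotheses (a) and (b) of Proposition~\ref{prop:qsplit-irred-gen-socle} as above: condition (b) reduces to checking each relative simple root, using Proposition~\ref{prop:mu-max-parab} for the $\GL_2$-Levis (elementary) and the $\SU_3$/quasisplit-$\SO^*_4$-type Levis (using the classification of rank-one reducibility points and Proposition~\ref{prop:G-regular}), together with Proposition~\ref{prop:solleveld} to pass between $\alggrp L_\alpha$ and its simply-connected cover; condition (a) is the $B_{n-1}$ Weyl combinatorics. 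Finally Proposition~\ref{prop:whittaker criterion} gives absolute irreducibility of $(\Ind_B^G \chi\delta_B^{1/2})^\cts$.

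\textbf{Main obstacle.} The hard part will be the precise rank-one computation at the short root $e_i$: one must pin down exactly when $\mu^{L_{e_i}}$ has a pole for the quasisplit non-split rank-one group attached to the anisotropic torus $(E^\times)^{N_{E/F}=1}$, and translate this into the stated condition involving $(\chi_i \circ N_{E/F})\chi_n^{c-1}$, $|\cdot|_E$, and the order of $\chi_n^2$. This is where the $E/F$, $N_{E/F}$, and Galois-conjugation structure really enters, and it requires matching our normalizations against the known $\SU_3$ (and unitary-group) Plancherel-measure computations, as well as correctly handling the case distinction $\chi_n^2 = 1$ versus $\chi_n^2 \ne 1$ (only in the former case can $W_{L_{e_i}}(\cdot)$ be nontrivial, so only then can reducibility occur). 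A secondary technical point is verifying that the combinatorial conditions on subsets $\Sigma$ in the third bullet are genuinely equivalent to $W_G$-regularity plified with absence of rank-one reducibility — this needs a careful case analysis of which sign-change-and-permutation elements of the $B_{n-1}$ Weyl group can fix $\chi$.
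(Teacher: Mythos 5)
Your high-level skeleton agrees with the paper's: reduce via Proposition~\ref{prop:whittaker criterion} to showing every irreducible subrepresentation of $(\nInd_B^G\chi)^\sm$ is generic, and verify the two conditions of Proposition~\ref{prop:qsplit-irred-gen-socle}(ii). But both of your substantive verifications are misidentified, and as written the argument would not prove the theorem under the stated hypotheses.

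First, the rank-one Levi at the short relative root $e_i$ ($i<n$) is \emph{not} an $\SU_3$-type group. There is no ``$\SO^*_3$''; the semistandard Levi of semisimple rank one containing the anisotropic kernel $(E^\times)^{N_{E/F}=1}$ is (up to a split central torus) $\SO^*_4(F)$, whose simply-connected cover is $\Res_{E/F}\SL_2$, so the reducibility computation is an $\SL_2(E)$ computation (this is Lemma~\ref{lem:SO4} in the paper, proved via Proposition~\ref{prop:solleveld} and Proposition~\ref{prop:mu-max-parab}). Matching against the $\SU_3(F)$ Plancherel measure, as you propose, would give the wrong answer: $\SU_3$ has reducibility at half-integral exponents and a different Weyl-fixing condition, whereas the correct statement is that reducibility occurs iff $\chi_i^2=\chi_n^2=1$ and either $(\chi_i\circ N_{E/F})\chi_n^{c-1}=|\cdot|_E^{\pm1}$, or $\chi_i$ is unitary with $(\chi_i\circ N_{E/F})\chi_n^{c-1}\ne 1$. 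This is exactly what produces the second bullet and the set appearing in the third bullet.

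Second, your reduction of condition (b) of Proposition~\ref{prop:qsplit-irred-gen-socle} to ``$W_G$-regularity of $\chi$ plus rank-one irreducibility'' is false, and your claim that the three bullets are equivalent to this is also false. The hypotheses of the theorem deliberately permit $\chi$ to be irregular (e.g.\ $\chi_i=\chi_j$ for $i<j<n$, or several $\chi_i$ of order $2$), and the smooth principal series may then be reducible; compare the strictly stronger conditions in Theorem~\ref{thm:irred-smooth-classical-grps}(iv) needed for irreducibility of $(\nInd_B^G\chi)^\sm$ itself. What condition (b) actually requires is that every irreducible subrepresentation of the (semisimple, after unitary twist) principal series of the Levi $L$ be generic; for $\SO^*_{2n}$ this is verified by lifting $\chi$ to the similitude group $\GSO^*_{2n}(F)$ (which has a single orbit of non-degenerate characters) and showing the $R$-group of the lift is trivial — this is Lemma~\ref{lem:R-group-GSO-star}, and triviality of that $R$-group is precisely the third bullet. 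Your proposal contains no $R$-group computation, and replacing it by a regularity requirement would prove only a weaker theorem.
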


Here, $\chi^{c-1}(x) := \chi(\o x x^{-1})$ for $x \in E^\times$.

\begin{thm}\label{thm:U2n-irred}
  Let $G = \U_{2n}(F)$ (quasisplit) splitting over a quadratic extension $E/F$, $B$ the upper-triangular Borel subgroup, and $Z$ the diagonal maximal torus.
  Let $\chi = \chi_{1}\otimes\cdots\otimes\chi_{n}\colon Z = (E^\times)^n \to C^{\times}$ be a smooth character.
  Assume the following:
  \begin{itemize}
  \item $\chi_i \chi_j^{-1} \ne \lvert \cdot\rvert_E^{-1}$ for all $i < j$;
  \item $\chi_i \chi_j^{c} \ne \lvert \cdot\rvert_E^{-1}$ for all $i < j$;
  \item $\chi_i \ne \eta \lvert \cdot\rvert_E^{-1/2}$ with $\eta|_{F^\times} = 1$ for all $i$;
  \item the set $\{\chi_i : \chi_i|_{F^\times} = \omega_{E/F} \}$ has at most one element.
  \end{itemize}
  Then $(\Ind_{B}^{G}\chi\delta_B^{1/2})^{\cts}$ is absolutely irreducible.
\end{thm}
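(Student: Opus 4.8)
The plan is to reduce to smooth representation theory and then to apply the Rodier-type criterion of Proposition~\ref{prop:qsplit-irred-gen-socle}. First I would replace $\alggrp G$ by $\Res_{F/\Q_p}\alggrp G$ to reduce to the situation $F = \Q_p$ of \S\ref{subsec:A criterion}; note that $\alggrp G^{\der}$ is a special unitary group, hence simply connected, and that the absolute root system is of type $A$, so Assumption~\ref{assump:on p} is vacuous and no hypothesis on $p$ is needed. Since $\chi\delta_B^{1/2}$ is smooth, the factorization of \S\ref{subsec:A criterion} is $\sigma_0 = 1$, $\tau = \chi\delta_B^{1/2}$, and $Q = G$. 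Thus by Proposition~\ref{prop:whittaker criterion} (applied with $\alggrp P = \alggrp B$), Corollary~\ref{cor:main2}, and the fact that every step below is stable under finite extension of $C$, it suffices to prove that $(\nInd_B^G\chi)^\sm$ has irreducible generic socle.

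By Proposition~\ref{prop:qsplit-irred-gen-socle} this reduces to checking (a) the regularity hypothesis on $\chi$ relative to the relative Weyl group, and (b) that for each reduced positive root $\alpha$ of the relative root system of $\alggrp G$ — which for quasisplit $\U_{2n}$ is of type $C_n$, with reduced positive roots $e_i \pm e_j$ ($1\le i<j\le n$) and $2e_i$ — the rank-one induction $(\nInd_{B\cap L_\alpha}^{L_\alpha}\chi)^\sm$ has irreducible generic socle. For $\alpha = e_i-e_j$, resp.\ $\alpha = e_i+e_j$, the Levi $\alggrp L_\alpha$ contains a $\GL_2(E)$-factor carrying the datum $\chi_i\otimes\chi_j$, resp.\ $\chi_i\otimes\chi_j^{-c}$, up to an unramified twist by the central torus; by Bernstein--Zelevinsky~\cite{MR0579172} — equivalently by Propositions~\ref{prop:mu-max-parab} and \ref{prop:G-regular} for $\GL_2(E)$ — the socle of such a $\GL_2(E)$ principal series is non-generic at exactly one of its two reducibility points, and excluding it gives the first two bulleted conditions (for $e_i+e_j$ one uses that $\chi_i\chi_j^c = \lvert\cdot\rvert_E^{-1}$ is symmetric in $i,j$, so the restriction $i<j$ loses nothing). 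For $\alpha = 2e_i$, the Levi $\alggrp L_\alpha$ contains $\U_2(F)$, whose derived group is $\SL_2(F)$; via Proposition~\ref{prop:isogenies} (cf.\ Remark~\ref{rk:isogenies}) the reducibility is governed by the $\SL_2(F)$ principal series with parameter $\chi_i|_{F^\times}$, and its socle is non-generic only for $\chi_i|_{F^\times} = \lvert\cdot\rvert_F^{-1}$, which — since $\lvert\cdot\rvert_E$ restricts to $\lvert\cdot\rvert_F^2$ on $F^\times$ — is the third bulleted condition. When $\chi_i|_{F^\times}$ is instead a nontrivial quadratic character, $(\nInd^{\U_2(F)}\chi_i)^\sm$ stays irreducible, since the two constituents of the associated $\SL_2(F)$ principal series are swapped by the outer element of $\U_2(F)/(\SL_2(F)\cdot Z(\U_2(F)))$ (Clifford theory, as in Proposition~\ref{prop:isogenies}).

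It then remains to identify the regularity hypothesis (a) with the fourth bulleted condition. The sign-change elements of the relative Weyl group act on $\chi$ by $\chi_i \mapsto \chi_i^{-c}$, and $\chi_i^{-c} = \chi_i$ precisely when $\chi_i|_{F^\times} \in \{1,\omega_{E/F}\}$; granted the first three conditions, the coincidences among Weyl conjugates of $\chi$ that would obstruct Proposition~\ref{prop:qsplit-irred-gen-socle} are then exactly those coming from a pair $i\ne j$ with $\chi_i|_{F^\times} = \chi_j|_{F^\times} = \omega_{E/F}$ (the value $1$ being harmless by the explicit location of rank-one reducibility points, Remark~\ref{rk:rank-1-poles}), so the last condition is what is needed. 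With (a) and (b) verified, Proposition~\ref{prop:qsplit-irred-gen-socle} gives that $(\nInd_B^G\chi)^\sm$ has irreducible generic socle, and the first paragraph concludes the proof.

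I expect the main obstacle to be the bookkeeping behind steps (b) and (a) in the quasisplit setting: pinning down, root by root, the rank-one Levi $\alggrp L_\alpha$ of $\U_{2n}$ together with the exact character on its $\GL_2(E)$- or $\U_2(F)$-part, deciding which of the two reducibility points of each rank-one induction makes the socle non-generic, and — most delicately — verifying that the regularity hypothesis of Proposition~\ref{prop:qsplit-irred-gen-socle} collapses, in the presence of the other three conditions, to the single global statement that $\{\chi_i : \chi_i|_{F^\times}=\omega_{E/F}\}$ has at most one element, rather than to a long list of pairwise non-coincidences. The underlying smooth input (reducibility of $\GL_2(E)$ and $\SL_2(F)$ principal series, and Proposition~\ref{prop:qsplit-irred-gen-socle} itself) is in hand; the labour is in the combinatorics of the relative root datum.
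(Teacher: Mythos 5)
Your first paragraph is correct and matches the paper: restriction of scalars to $\Q_p$, $\sigma_0=1$, $\tau=\chi\delta_B^{1/2}$, $Q=G$, Assumption~\ref{assump:on p} vacuous for type $A$, and Proposition~\ref{prop:whittaker criterion} reduces everything to showing every irreducible subrepresentation of $(\nInd_B^G\chi)^{\sm}$ is generic. But from there your account of Proposition~\ref{prop:qsplit-irred-gen-socle} is not what the proposition says, and the gap this opens is not cosmetic. Proposition~\ref{prop:qsplit-irred-gen-socle}(ii) has two conditions: (a) whenever the rank-one induction at a reduced positive root $\alpha$ is reducible, $\ang{\nu,\alpha^\vee}\ge 0$; and (b) every irreducible subrepresentation of $(\nInd_{B\cap L}^L\chi)^{\sm}$ is generic, where $L$ is the Levi on which $\nu$ vanishes. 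There is no ``regularity hypothesis on $\chi$ relative to the relative Weyl group'' in the proposition, and the condition you label (b) --- that every rank-one induction has irreducible \emph{and} generic socle --- is strictly stronger than (a) and would be false exactly in the case the theorem permits, namely when exactly one $\chi_i$ has $\chi_i|_{F^\times}=\omega_{E/F}$: at that root the rank-one induction is unitary, possibly semisimple of length $2$, and the socle is not irreducible.

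The deeper problem is your treatment of genuine condition (b). Since $\nu=0$ on $L$ and unitary $\GL_m(E)$ principal series are irreducible and generic, one may reduce to $L=G$ and $\chi$ unitary, and then one must show every irreducible subrepresentation of a possibly reducible unitary principal series of $\U_{2n}(F)$ is generic. Your proposal addresses this with a Weyl-group ``coincidence'' heuristic and asserts that the fourth bullet collapses it, but the fourth bullet does \emph{not} imply that $\chi$ is Weyl-regular (take $\chi_1=\chi_2$ with $\chi_1|_{F^\times}\notin\{1,\omega_{E/F}\}$: the Weyl group fixes $\chi$, the R-group is nevertheless trivial, and the theorem applies), and Remark~\ref{rk:rank-1-poles}, which you cite for the ``harmless'' cases, is about denominators of $\mu$-function poles, not about R-groups or Whittaker genericity. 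What the paper actually does is lift $\chi$ to the similitude group $\GU_{2n}(F)$ (which, unlike $\U_{2n}(F)$, has a unique $Z$-orbit of non-degenerate characters, cf.\ Remark~\ref{rk:optimal}), show via Goldberg's R-group computation \cite[Theorem~2.6]{MR1464132} and the last bullet that the $\GU_{2n}(F)$ unitary principal series is irreducible (here the key auxiliary fact, proved via Proposition~\ref{prop:solleveld} and comparison with $\SL_2(F)$, is that all unitary principal series of $\GU_2(F)$ are irreducible), and then restrict to $\U_{2n}(F)$ to conclude every constituent is generic. That similitude-group lift and R-group input is the missing engine in your argument; the ad hoc claim that $(\nInd^{\U_2(F)}\chi_i)^{\sm}$ stays irreducible for nontrivial quadratic $\chi_i|_{F^\times}$ by a single ``outer element'' is neither proved nor, in general, correct (the quotient $\U_2(F)/\big(\SL_2(F)Z(\U_2(F))\big)$ can have order $>2$, and whether the $\SL_2(F)$ constituents are permuted depends on which outer element acts and on the character).
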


Here, $\chi^c(x) := \chi(\o x)$ for $x \in E^\times$, and $\omega_{E/F}$ denotes the non-trivial character of $F^\times/N_{E/F}(E^\times)$.

\begin{thm}\label{thm:U2n+1-irred}
  Let $G = \U_{2n+1}(F)$ (quasisplit) splitting over a quadratic extension $E/F$, $B$ the upper-triangular Borel subgroup, and $Z$ the diagonal maximal torus.
  Let $\chi = \chi_{1}\otimes\cdots\otimes\chi_{n+1}\colon Z = (E^\times)^n \times (E^\times)^{N_{E/F}=1}\to C^{\times}$ be a smooth character.
  Assume the following:
  \begin{itemize}
  \item $\chi_i \chi_j^{-1} \ne \lvert \cdot\rvert_E^{-1}$ for all $i < j \le n$;
  \item $\chi_i \chi_j^{c} \ne \lvert \cdot\rvert_E^{-1}$ for all $i < j \le n$;
  \item $\chi_i \chi_{n+1}^{c-1} \ne \lvert \cdot\rvert_E^{-1}$ for all $i \le n$;
  \item $\chi_i \chi_{n+1}^{c-1} \ne \eta \lvert \cdot\rvert_E^{-1/2}$ with $\eta|_{F^\times} = \omega_{E/F}$ for all $i \le n$;
  \item $\chi_i \chi_{n+1}^{c-1} = 1$ or $\chi_i \chi_{n+1}^{c-1}|_{F^\times} \ne 1$ for all $i \le n$.
  \end{itemize}
  Then $(\Ind_{B}^{G}\chi\delta_B^{1/2})^{\cts}$ is absolutely irreducible.
\end{thm}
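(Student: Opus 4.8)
The plan is to follow the same template used for Theorems~\ref{thm:Sp2n-irred} through \ref{thm:U2n-irred}, namely reduce to the smooth case and to the assertion that every nonzero subrepresentation of a suitable smooth principal series of $\U_{2n+1}(F)$ is generic, and then invoke Proposition~\ref{prop:whittaker criterion}. First I would note that $\alggrp G = \U_{2n+1}$ is quasisplit with simply-connected derived group, so by Theorem~\ref{thm:equivalence-irred} (together with Lemma~\ref{lem:sigma0-tau-decomp} applied over $\Q_p$ after restriction of scalars) the continuous inducing character $\chi\delta_B^{1/2}$ may be replaced by its smooth part; since the inducing character here is already declared smooth, we directly have $\sigma = \tau$ smooth and $Q = G$ in the notation of \S\ref{subsec:A criterion}. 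It then suffices, by Corollary~\ref{cor:irreduciblity} and Proposition~\ref{prop:whittaker criterion}, to show that $(\nInd_B^G \chi)^\sm$ (normalized induction) has the property that every irreducible subrepresentation is generic; equivalently, that it has an irreducible generic socle, or — even better — that it is irreducible.

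The core of the argument is therefore a purely smooth statement about odd unitary groups, and I would establish it via the $\mu$-function machinery of \S\ref{sec:reduc-points-parab}. Using Harish-Chandra's product formula (Proposition~\ref{prop:product formula}) and the criterion of Proposition~\ref{prop:G-regular} (for the $G$-regular directions) one reduces the reducibility of $(\nInd_B^G \chi)^\sm$ to the poles of $\mu^{L_\alpha}$ for the rank-one and rank-two Levi subgroups $\alggrp L_\alpha$ attached to the restricted roots of $\U_{2n+1}$. The relevant rank-one Levis are of three kinds: $\GL_1(E) \times \U_1$-type pieces inside $\GL_1(E)$ (contributing the Tadi\'c/Zelevinsky reducibility $\chi_i\chi_j^{-1} = |\cdot|_E^{\pm 1}$ and $\chi_i\chi_j^c = |\cdot|_E^{\pm 1}$ from $\GL_2(E)$-principal series), the Siegel-type $\GL_1(E)$ sitting over the short root with $\U_1$ base (contributing $\chi_i\chi_{n+1}^{c-1} = |\cdot|_E^{\pm 1}$), and the genuinely rank-one unitary group $\U_3(F)$ (whose principal series $(\nInd\,\chi_i\chi_{n+1}^{c-1})^\sm$ is reducible exactly at the two classical points, recorded in the hypotheses as $\chi_i\chi_{n+1}^{c-1} \ne \eta|\cdot|_E^{-1/2}$ with $\eta|_{F^\times} = \omega_{E/F}$, and $\chi_i\chi_{n+1}^{c-1} = 1$ or $\chi_i\chi_{n+1}^{c-1}|_{F^\times} \ne 1$). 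The five bulleted hypotheses of the theorem are precisely the negations of all these pole/reducibility conditions together with the regularity needed for Proposition~\ref{prop:G-regular} to apply; so under them $\mu^G$ has no pole at $\chi$ and $(\nInd_B^G\chi)^\sm$ is irreducible. One should also handle the non-$G$-regular directions directly: when some $\chi_i$ has $\chi_i\chi_{n+1}^{c-1}$ fixed by a reflection, the last two bullets ensure we are at the irreducibility point of $\U_3$, and for the $\GL$-type reflections the first two bullets ensure we are at the irreducibility point of $\GL_2(E)$; here one uses Proposition~\ref{prop:mu-max-parab}(i),(iii) rather than Proposition~\ref{prop:G-regular}. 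As in the split case, one may alternatively cite Rodier/Keys-style results on socles of smooth principal series of quasisplit groups (cf.\ the $\SU_3$ and $\SL_2$ inputs mentioned after Theorem~\ref{thm:split-groups}) to get a generic socle even in some of the reducible cases, but the cleanest route is full irreducibility under the stated hypotheses.

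A few technical points need care. I would first identify the restricted root system of $\U_{2n+1}$ (type $BC_n$) and the explicit isomorphisms of the rank-one and rank-two standard Levis with products of $\GL_k(E)$'s and a copy of $\U_3(F)$ or $\U_1(F)$; this bookkeeping, following the conventions of \cite{MR3430367} or \cite{MR1296726}, translates the intrinsic coroot conditions of the $\mu$-function calculus into the concrete conditions on the $\chi_i$ in the statement. Second, I must check that Assumption~\ref{assump:on p} is automatically satisfied for $\U_{2n+1}$ (its absolute root system is of type $A_{2n}$, so no restriction on $p$ is needed), so no hypothesis on $p$ appears. Third, the passage from ``$\mu^G$ has no pole at $\chi$'' to ``$(\nInd_B^G\chi)^\sm$ is irreducible'' for a smooth \emph{non-supercuspidal} inducing representation requires iterating the rank-one reducibility analysis along a chain of parabolics (as in Bernstein's book or Silberger); alternatively one proves genericity of the socle stepwise and feeds it into Proposition~\ref{prop:whittaker criterion}. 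The main obstacle I anticipate is precisely this last point — making the multi-step reduction rigorous and verifying that the $2n+1$ listed conditions (including the ``linear-independence over $\Z/2\Z$''-flavoured last bullet, which rules out the non-$G$-regular reducibilities coming from quadratic-character collisions) exactly exhaust all the reducibility loci of all relevant Levis — rather than any single computation, each of which is standard.
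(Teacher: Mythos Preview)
Your overall strategy (reduce to Proposition~\ref{prop:whittaker criterion} and show that every irreducible subrepresentation of $(\nInd_B^G\chi)^{\sm}$ is generic) is exactly the paper's, but the smooth input you propose is wrong. You claim that the five bullets force $(\nInd_B^G\chi)^{\sm}$ to be irreducible, via $G$-regularity of $\chi$ and absence of poles of $\mu^G$. Neither holds. The hypotheses are one-sided: they only exclude $\chi_i\chi_j^{-1}=\lvert\cdot\rvert_E^{-1}$ etc., not $\lvert\cdot\rvert_E^{+1}$, so poles of $\mu^{L_\alpha}$ at $\chi$ (and hence reducibility) are perfectly possible; and nothing forbids, say, $\chi_1=\chi_2$, so $\chi$ need not be $G$-regular and Proposition~\ref{prop:G-regular} does not apply. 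Compare with Theorem~\ref{thm:irred-smooth-classical-grps}(vi), where the genuinely stronger two-sided conditions $\ne\lvert\cdot\rvert_E^{\pm1}$, $\ne\eta\lvert\cdot\rvert_E^{\pm1/2}$ are what characterize smooth irreducibility. Your ``cleanest route'' therefore does not exist under the stated hypotheses.

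What the paper actually does is invoke Proposition~\ref{prop:qsplit-irred-gen-socle}, which is precisely designed for this asymmetric situation. Writing $\chi=\psi\chi_\nu$ with $\psi$ unitary, the first four bullets say exactly that whenever $(\nInd_{B\cap L_\alpha}^{L_\alpha}\chi)^{\sm}$ is reducible one has $\langle\nu,\alpha^\vee\rangle\ge0$ (this is condition (a) there; e.g.\ $\chi_i\chi_j^{-1}=\lvert\cdot\rvert_E^{-1}$ would give $\langle\nu,\alpha^\vee\rangle<0$, which is excluded, while $\chi_i\chi_j^{-1}=\lvert\cdot\rvert_E$ gives $\langle\nu,\alpha^\vee\rangle>0$, which is allowed). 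The last bullet is not a ``linear independence over $\Z/2\Z$'' condition (that is the $\Sp_{2n}$ case) but the R-group triviality condition for $\U_{2n+1}$: by Goldberg's \cite[Theorem~3.4]{MR1224616} it forces the unitary principal series $(\nInd_{B\cap L}^L\psi)^{\sm}$ to be irreducible, hence generic, giving condition (b). Proposition~\ref{prop:qsplit-irred-gen-socle} then yields that every irreducible subrepresentation of $(\nInd_B^G\chi)^{\sm}$ is generic, and Proposition~\ref{prop:whittaker criterion} finishes. Your proposal gestures at this ``Rodier/Keys'' alternative but dismisses it; in fact it is the only route, and the paper packages it as Proposition~\ref{prop:qsplit-irred-gen-socle}.
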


To prepare for the proofs, we first prove some lemmas about even orthogonal groups.

\begin{lem}\label{lem:SO4}
  Suppose $G = \SO^*_4(F)$ with diagonal maximal torus $Z = \GL_1(F) \times \SO^*_2(F) \cong F^\times \times (E^\times)^{N_{E/F}=1}$ and $\chi= \chi_1 \otimes \chi_2: Z \to \C^\times$.
  Write $\chi_1 = \psi_1 \lvert\cdot\rvert_F^s$ with $\psi_1$ unitary and $s \in \R$.
  Then the principal series $(\nInd_{B}^{G}\chi)^{\sm}$ is reducible if and only if $\psi_1^2 = \chi_2^2 = 1$ and either
  \begin{itemize}
  \item $\psi_1 \circ N_{E/F} = \chi_2^{1-c}$ and $s = \pm 1$, or
  \item $\psi_1 \circ N_{E/F} \ne \chi_2^{1-c}$ and $s = 0$.
  \end{itemize}\end{lem}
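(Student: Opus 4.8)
The plan is to exploit the $F$-rank one structure of $\SO^*_4$ together with Harish-Chandra's analysis of reducibility at a maximal parabolic (Proposition~\ref{prop:mu-max-parab}) and the exceptional isogeny $\varphi\colon \Res_{E/F}\SL_2\to \SO^*_4$ (the spin covering, an isogeny with central kernel $\mu_2$). First I would set up normalizations: $\chi_2$ is automatically unitary, as $(E^\times)^{N_{E/F}=1}$ is compact, and $\delta_B$ restricts to $\lvert\cdot\rvert_F^2\otimes 1$ on $Z=\GL_1(F)\times\SO^*_2(F)=F^\times\times(E^\times)^{N_{E/F}=1}$, so $\chi=(\psi_1\otimes\chi_2)\,\delta_B^{s/2}$ with $\psi_1\otimes\chi_2$ unitary. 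Since $B$ is a maximal parabolic of $\SO^*_4$, Proposition~\ref{prop:mu-max-parab} applies with $L=Z$ and $\sigma=\psi_1\otimes\chi_2$. The relative Weyl group $N_G(Z)/Z$ has order two, its nontrivial element acting on $Z$ by $(t,u)\mapsto(t^{-1},u^{-1})$ (the inversion action of the Weyl group of split $\SO_4$, which is Galois-stable for the quasisplit form); hence $W_G(\psi_1\otimes\chi_2)=1$ unless $\psi_1^2=\chi_2^2=1$, and in the former case Proposition~\ref{prop:mu-max-parab}(i) already gives irreducibility of $(\nInd_B^G\chi)^\sm$ for all $s\in\R$. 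So I may assume $\psi_1^2=\chi_2^2=1$; Proposition~\ref{prop:mu-max-parab}(ii) then produces a unique $s_0\in[0,1/2]$ with $(\nInd_B^G(\psi_1\otimes\chi_2)\delta_B^t)^\sm$ reducible iff $t\in\{\pm s_0\}$, and, unwinding $\chi=(\psi_1\otimes\chi_2)\delta_B^{s/2}$, reducibility of $(\nInd_B^G\chi)^\sm$ holds iff $s\in\{\pm 2s_0\}$.

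It then remains to compute $s_0$, and this is where I would bring in $\varphi$. The group $\ang{N,\overline N}$ is generated by unipotents, which lift along $\varphi$, and $\SL_2(E)$ is generated by its unipotents (Kneser--Tits), so $\ang{N,\overline N}=\varphi(\SL_2(E))$; taking intersection with the torus, $Z\cap\ang{N,\overline N}$ is the image of the diagonal $E^\times$, namely $\{(N_{E/F}(t),\,t/\o t):t\in E^\times\}\subset Z$, an index-two subgroup. Set $\eta:=(\psi_1\otimes\chi_2)\circ\varphi$, a unitary character of $E^\times$ with $\eta(t)=\psi_1(N_{E/F}(t))\,\chi_2(t/\o t)$ and $\eta^2=1$. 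By Proposition~\ref{prop:solleveld}, $\mu^{\SO^*_4}\bigl((\psi_1\otimes\chi_2)\chi'\bigr)$ and $\mu^{\SL_2(E)}(\eta\,(\chi'\circ\varphi))$ agree up to a nonzero constant as rational functions of the unramified twist $\chi'$ (using $\delta_B\circ\varphi=\delta_{B'}$, valid since $\varphi$ is an isogeny). The reducibility points of $\SL_2(E)$ are classical: $\mu^{\SL_2(E)}$ vanishes at $\eta$ exactly when $\eta=1$, in which case it has simple poles at $\eta\lvert\cdot\rvert_E^{\pm1}=\delta_{B'}^{\pm1/2}$; and if $\eta$ is a nontrivial quadratic character, $\mu^{\SL_2(E)}$ is holomorphic and nonvanishing along the real line. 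Transporting this via Proposition~\ref{prop:solleveld} and reading off through Proposition~\ref{prop:mu-max-parab}(iii)--(iv) (or Remark~\ref{rk:rank-1-poles}, which already limits the possible poles) yields $s_0=1/2$ if $\eta=1$ and $s_0=0$ if $\eta\ne 1$; alternatively, the value $s_0=1/2$ when $\eta=1$ follows directly from Proposition~\ref{prop:Waldspurger} and its converse, since $\eta=1$ is equivalent to $\psi_1\otimes\chi_2$ being trivial on $Z\cap\ang{N,\overline N}$.

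Finally I would unwind the two cases. Using $\chi_2^2=1$, the condition $\eta=1$ says $\psi_1(N_{E/F}(t))=\chi_2(t/\o t)$ for all $t\in E^\times$, i.e.\ $\psi_1\circ N_{E/F}=\chi_2^{1-c}$ as characters of $E^\times$, and then $(\nInd_B^G\chi)^\sm$ is reducible iff $s=\pm 2s_0=\pm1$; whereas $\eta\ne 1$ is exactly $\psi_1\circ N_{E/F}\ne\chi_2^{1-c}$, and then reducibility holds iff $s=\pm 2s_0=0$. Together with the case $\psi_1^2\ne1$ or $\chi_2^2\ne1$ (always irreducible), this is precisely the asserted dichotomy. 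The bulk of the work is the normalization chase ($\delta_B$ on $Z$, the passage from $s$ to $s_0$, compatibility of modulus characters and of the Weyl action under $\varphi$) and the translation of ``$\eta=1$'' into ``$\psi_1\circ N_{E/F}=\chi_2^{1-c}$''; the one genuinely delicate point is the index-two gap between $Z\cap\ang{N,\overline N}$ and $Z$ (equivalently, the non-surjectivity of $\varphi$ on $F$-points), which is what forces ``$\eta^2=1$'' to collapse to ``$\psi_1^2=\chi_2^2=1$'' rather than the a priori weaker ``$\psi_1^4=\chi_2^4=1$'', and which geometrically reflects the outer element of $\SO^*_4(F)/\varphi(\SL_2(E))$ permuting constituents of the $\SL_2(E)$-induced representation away from $s\in\{\pm s_0\}$ — a phenomenon that Proposition~\ref{prop:mu-max-parab} packages automatically, so that it never has to be analysed by hand.
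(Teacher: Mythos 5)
Your proof is correct and follows essentially the same route as the paper's: pass to the spin cover $\Spin^*_4 \cong \Res_{E/F}\SL_2$, compute the pullback of $\chi$ to the diagonal torus of $\SL_2(E)$, and transfer the reducibility points via Propositions~\ref{prop:solleveld} and \ref{prop:mu-max-parab} using the known answer for $\SL_2(E)$. The paper's argument is much terser (it records only the Weyl-fixed-point criterion and the pullback formula), but your added detail — the $\delta_B$ normalization, the $W_G$ dichotomy, the identification of $s_0$, the remark about the index-two image of $\Res_{E/F}\SL_2(F)$ in $\SO^*_4(F)$ — simply spells out the same computation.
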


\begin{proof}
  The simply-connected cover of $\alggrp G$ is $\wt{\alggrp{G}} = \Spin^*_4 \cong \Res_{E/F} \SL_2$.
  Note that the non-trivial Weyl group element fixes $\psi_1 \otimes \chi_2$ if and only if $\psi_1^2 = \chi_2^2 = 1$.
  Using a root datum calculation we verify that $\chi$ pulls back to the character $\diag(x,x^{-1}) \mapsto \chi_1(x \o x) \chi_2(x^{-1}\o x)$ of the diagonal maximal torus of $\SL_2(E)$.
  We conclude by comparison with $\SL_2(E)$ using Propositions~\ref{prop:solleveld}, \ref{prop:mu-max-parab}.
\end{proof}

\begin{lem}\label{lem:GSO4}
  Suppose $G = \GSO^*_4(F)$ with diagonal maximal torus $Z = \GL_1(F) \times \GSO^*_2(F) \cong F^\times \times E^\times$ and $\chi= \chi_1 \otimes \chi_2: Z \to \C^\times$.
  Write $\chi_1 = \psi_1 \lvert\cdot\rvert_F^{s_1}$, $\chi_2 = \psi_2 \lvert\cdot\rvert_E^{s_2}$ with $\psi_i$ unitary and $s_i \in \R$.
  Then the principal series $(\nInd_{B}^{G}\chi)^{\sm}$ is reducible if and only if $\psi_1^2 = 1$, $\psi_1 \circ N_{E/F} = \psi_2^{1-c}$, and $s_1 = \pm 1$.
  \end{lem}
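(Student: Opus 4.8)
The plan is to mimic the proof of Lemma~\ref{lem:SO4}, replacing the spin cover of $\SO_4^*$ by a central isogeny onto $\GSO_4^*$. Since $\GSO_4^*$ is quasisplit of relative semisimple rank one with derived group $\SO_4^*$, the Borel $\alggrp B$ is its unique proper parabolic subgroup, $\alggrp L=\alggrp Z$, and the reducibility of $(\nInd_B^G\chi)^\sm$ is governed by Harish--Chandra's $\mu$-function via Propositions~\ref{prop:mu-max-parab} and \ref{prop:G-regular}. First I would record the exceptional isomorphism $\GSO^*_{4,E}\cong(\GL_{2,E}\times\GL_{2,E})/\{(tI,t^{-1}I)\}$ with $\Gal(E/F)$ acting by swapping-and-conjugating the two factors; descending to $F$ this gives a central isogeny $\varphi\colon\Res_{E/F}\GL_2\twoheadrightarrow\GSO_4^*$ whose kernel is the norm-one torus $\{t\in\Res_{E/F}\GL_1:N_{E/F}(t)=1\}$ and which satisfies $\varphi(\Res_{E/F}\SL_2)=\SO_4^*=\GSO_4^{*,\der}$, with $\varphi^{-1}(\alggrp B)$ the upper-triangular Borel of $\Res_{E/F}\GL_2$. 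As $\varphi$ is an isogeny we have $\varphi^*\delta_{\alggrp B}=\delta_{\varphi^{-1}(\alggrp B)}$, so normalized inductions pull back to normalized inductions, and $\varphi^*$ is injective on characters of $\alggrp Z$ since $\varphi$ is surjective.

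The main computational step is a root-datum calculation identifying $\varphi^*\chi$ as a character $\mu_1\otimes\mu_2$ of the diagonal torus $(E^\times)^2$ of $\GL_2(E)$ with $\mu_1\mu_2^{-1}=(\chi_1\circ N_{E/F})\cdot\chi_2^{c-1}$, where $\chi_2^{c-1}(x)=\chi_2(\bar x x^{-1})$. Writing $\chi_1=\psi_1\lvert\cdot\rvert_F^{s_1}$ one has $\chi_1\circ N_{E/F}=(\psi_1\circ N_{E/F})\lvert\cdot\rvert_E^{s_1}$, and since $\lvert\cdot\rvert_E$ is Galois-invariant the exponent $s_2$ drops out of $\mu_1\mu_2^{-1}$; this is the $\GSO_4^*$-analogue of the pullback computed in Lemma~\ref{lem:SO4}. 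Under the swap Weyl element of $\GL_2(E)$, which is the image of the non-trivial relative Weyl element of $\GSO_4^*$, the character $\mu_1\otimes\mu_2$ goes to $\mu_2\otimes\mu_1$; hence, after stripping the non-unitary part, the unitary part $\psi_1\otimes\psi_2$ of $\chi$ is Weyl-fixed if and only if $(\psi_1\circ N_{E/F})\psi_2^{c-1}=1$, which (evaluating at $F^\times$, where $\lvert\cdot\rvert_E$ restricts to $\lvert\cdot\rvert_F^2$) is equivalent to $\psi_1^2=1$ together with $\psi_1\circ N_{E/F}=\psi_2^{c-1}=\psi_2^{1-c}$ --- precisely the first two conditions of the lemma.

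To finish, I would apply Proposition~\ref{prop:solleveld} to $\varphi$ so that the relevant $\mu$-functions of $\GSO_4^*$ and $\GL_2(E)$ agree up to a nonzero constant as rational functions of the (rank-one) unramified twist, and then invoke the classical description of the principal series of $\GL_2(E)$: $(\nInd_B^{\GL_2(E)}\mu_1\otimes\mu_2)^\sm$ is reducible exactly when $\mu_1\mu_2^{-1}=\lvert\cdot\rvert_E^{\pm1}$, where this $\mu$-function has simple poles, and it has a double zero at $\mu_1=\mu_2$. So if $\psi_1\otimes\psi_2$ is not Weyl-fixed, Proposition~\ref{prop:mu-max-parab}(i) gives irreducibility for every unramified positive-real twist; and if it is Weyl-fixed, then $\mu_1\mu_2^{-1}=\lvert\cdot\rvert_E^{s_1}$ and, since the $\GL_2(E)$-side has its Steinberg point (so the parameter $s_0$ of Proposition~\ref{prop:mu-max-parab} is positive, namely $\tfrac12$ in the $\delta_{\alggrp B}$-parametrization), $(\nInd_B^{\GSO_4^*}\chi)^\sm$ is reducible exactly when $s_1=\pm1$. (Equivalently one may use Proposition~\ref{prop:G-regular} once $G$-regularity fails. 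The norm $N_{E/F}$ is what produces the reducibility exponent $\pm1$ rather than $\pm\tfrac12$, as in Lemma~\ref{lem:SO4}; and because $\GL_2(E)$, unlike $\SL_2(E)$, has no extra reducibility at order-two characters, there is no reducibility at $s_1=0$, in contrast with Lemma~\ref{lem:SO4}.)

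The one real obstacle I expect is the bookkeeping in the root-datum step: fixing the isomorphism $(E^\times)^2/\{(t,t):N_{E/F}(t)=1\}\cong\GL_1(F)\times\GSO^*_2(F)$ compatibly with the normalization implicit in the statement of the lemma, so that one genuinely obtains $\mu_1\mu_2^{-1}=(\chi_1\circ N_{E/F})\chi_2^{c-1}$ with the coroot of $\GSO_4^*$ correctly scaled relative to that of $\GL_2(E)$. The representation-theoretic content is then routine, but it is essential to route everything through the $\mu$-function: $\varphi$ is not surjective on $F$-points (its cokernel on points is $F^\times/N_{E/F}(E^\times)$, of order two), so one cannot transfer reducibility statements directly between $\GSO_4^*$ and $\GL_2(E)$.
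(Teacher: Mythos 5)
Your proposal is correct but takes a genuinely different route from the paper. The paper's proof determines the Weyl-fixed condition directly on $\GSO_4^*$ and then, for the reducibility point, restricts $\chi$ to $\SO_4^*(F)$ and invokes Lemma~\ref{lem:SO4} (which itself is established by comparing $\SO_4^*$ with $\Res_{E/F}\SL_2$ via Propositions~\ref{prop:solleveld} and \ref{prop:mu-max-parab}). You instead construct a central isogeny $\varphi\colon\Res_{E/F}\GL_2\twoheadrightarrow\GSO_4^*$ and compare $\mu$-functions directly with $\GL_2(E)$, never passing through $\SO_4^*$ or Lemma~\ref{lem:SO4}. Both are legitimate; the $\GL_2(E)$ route is a little cleaner because it makes transparent why there is no reducibility at $s_1=0$ (the $\GL_2$ principal series, unlike $\SL_2$, has no order-two reducibility), whereas the paper's restriction to $\SO_4^*$ has to implicitly rule out the $s=0$ reducibility that Lemma~\ref{lem:SO4} exhibits. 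Your final-paragraph caveat is exactly right: since $\varphi$ is not surjective on $F$-points (cokernel $F^\times/N_{E/F}(E^\times)$), one cannot transfer reducibility of parabolic inductions directly between $\GL_2(E)$ and $\GSO_4^*(F)$; the argument must go through the $\mu$-function comparison of Proposition~\ref{prop:solleveld}, which is what both you and the paper do. Two small remarks: (1) your Weyl-fixed condition $(\psi_1\circ N_{E/F})\psi_2^{c-1}=1$ already implies $\psi_1^2=1$ upon restriction to $F^\times$, so the paper's explicit $\psi_1^2=1$ is redundant, matching what you observe; (2) the existence of $\varphi$ and the identification of its kernel as the norm-one torus (anti-diagonal over $\overline F$, scalars $\{t:N_{E/F}(t)=1\}$ on $F$-points) should be stated as a standalone fact analogous to Lemma~\ref{lm:gspin4} --- the root-datum bookkeeping you flag as a pitfall is indeed the one place where care is needed, but it is consistent with the pullback computed in Lemma~\ref{lem:SO4}.
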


\begin{proof}
  The non-trivial Weyl group element fixes $\psi_1 \otimes \psi_2$ if and only if $\psi_1^2 = 1$ and $\psi_1 \circ N_{E/F} = \psi_2^{1-c}$.
  In his case we determine the reducibility point by restriction to $\SO^*_4(F)$ (using Propositions~\ref{prop:solleveld}, \ref{prop:mu-max-parab}, and Lemma~\ref{lem:SO4}).
\end{proof}

\begin{lem}\label{lem:R-group-GSO}
  Suppose $G = \GSO_{2n}(F)$.
  Let $\chi = \chi_{1}\otimes\cdots\otimes\chi_{n} \otimes \rho\colon Z\to \C^{\times}$ be a unitary smooth character.
  Then the $R$-group $R(\chi)$ is isomorphic to $(\Z/2\Z)^r$, where $2^r$ is the number of subsets $\Sigma \subset \{ \chi_i : \chi_i^2 = 1 \}$ such that $|\Sigma|$ is even
  and $\prod_{\chi_i \in \Sigma} \chi_i = 1$.
\end{lem}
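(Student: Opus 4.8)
The plan is to compute $R(\chi)$ by the standard reduction to rank one. The Weyl group $W := W(\GSO_{2n})$ is of type $D_n$, realized as the group of signed permutations $(\pi,\varepsilon)$ with $\pi \in S_n$, $\varepsilon \in \{\pm 1\}^n$ and $\prod_i \varepsilon_i = 1$. A root-datum computation (the $n=2$ case being implicit in Lemmas~\ref{lem:SO4} and \ref{lem:GSO4}) shows that $(\pi,\varepsilon)$ acts on $\chi = \chi_1 \otimes \cdots \otimes \chi_n \otimes \rho$ by sending $\chi_i \mapsto \chi_{\pi^{-1}(i)}^{\varepsilon_i}$ and $\rho \mapsto \rho \cdot \prod_{i : \varepsilon_i = -1}\chi_i$, the last factor coming from the similitude coordinate. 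By the theory of the $R$-group (for the unitary principal series $(\nInd_B^G \chi)^\sm$) we have $R(\chi) \cong W_\chi / W_\chi^0$, where $W_\chi := \{w : w\chi = \chi\}$ and $W_\chi^0 := W(\Delta_\chi)$ is the Weyl group of the subsystem $\Delta_\chi := \{\alpha \in \Phi_\red : \mu^{L_\alpha}(\chi) = 0\}$.

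First I would identify $\Delta_\chi$. Each rank-one Levi $\alggrp L_\alpha$ has derived group a form of $\SL_2$ or $\PGL_2$, and by Propositions~\ref{prop:product formula}, \ref{prop:solleveld} and \ref{prop:mu-max-parab} (the ``$s_0 = 1/2$'' double-zero case for $\SL_2$) one has $\mu^{L_\alpha}(\chi) = 0$ exactly when the rank-one character $\chi \circ \alpha^\vee$ of $F^\times$ is trivial; for $\alpha = e_i - e_j$ this means $\chi_i = \chi_j$, and for $\alpha = e_i + e_j$ it means $\chi_i\chi_j = 1$. Thus $\Delta_\chi = \{\pm(e_i-e_j) : \chi_i = \chi_j\} \cup \{\pm(e_i+e_j) : \chi_i\chi_j = 1\}$. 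Since the reflection $s_\alpha$ (as an element of $W(D_n)$: a transposition for $e_i - e_j$, a twisted transposition for $e_i + e_j$) fixes $\chi$ under exactly the same conditions, $W_\chi^0 = W(\Delta_\chi)$ is precisely the subgroup of $W_\chi$ generated by the reflections it contains, so $R(\chi) \cong W_\chi / \langle\, s_\alpha \in W_\chi \,\rangle$.

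The remaining work is combinatorial. Partition $\{1,\dots,n\}$ by the value of $\chi_i$, grouping the value-blocks into singletons for the quadratic characters (the set $Q := \{\chi_i : \chi_i^2 = 1\}$) and into dual pairs $\{c,c^{-1}\}$ for the non-quadratic ones. One checks that $W_\chi^0$ contains all permutations within a value-block, all swaps of a dual pair, and, for each quadratic block, the full type-$D$ factor $W(D_{m_c})$ (inversions come from $s_{e_i+e_j}$ with $\chi_i = \chi_j$ quadratic), while the entire non-quadratic part of $W_\chi$ already lies in $W_\chi^0$. Modulo $W_\chi^0$, an element of $W_\chi$ is then represented by a simultaneous inversion $w_\Sigma$ of one index in each quadratic block indexed by a subset $\Sigma \subseteq Q$, and the two defining constraints $\prod_i \varepsilon_i = 1$ and $\prod_{i:\varepsilon_i=-1}\chi_i = 1$ translate precisely into $|\Sigma|$ even and $\prod_{\chi_i \in \Sigma}\chi_i = 1$. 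Since $w_\Sigma w_{\Sigma'} \equiv w_{\Sigma \triangle \Sigma'}$ modulo $W_\chi^0$ — the overlap $\Sigma \cap \Sigma'$ contributes only squares of quadratic characters, hence $1$ — the set of admissible $\Sigma$ forms an elementary abelian $2$-group of order $2^r$, giving $R(\chi) \cong (\Z/2\Z)^r$. The delicate point is exactly this last reduction: verifying that $W_\chi = W_\chi^0 \cdot \{w_\Sigma : \Sigma \text{ admissible}\}$ and that $w_\Sigma \in W_\chi^0$ forces $\Sigma = \emptyset$; this is the type-$D_n$ analogue of Keys's $R$-group computation for $\SO_{2n}$, carried out with the similitude twist, and is elementary but requires care with the parity constraint distinguishing $D_n$ inside $B_n$ and its interaction with the $\rho$-coordinate. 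Alternatively, one can bypass the $R$-group formalism and run the same partition argument directly from Harish-Chandra's product formula (Proposition~\ref{prop:product formula}) together with Propositions~\ref{prop:mu-max-parab}, \ref{prop:solleveld} and the rank-two Lemmas~\ref{lem:SO4}, \ref{lem:GSO4}.
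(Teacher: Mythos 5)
Your proof is correct and takes essentially the same route as the paper: both compute $R(\chi)\cong W_\chi/W_\chi^0$ via the Knapp--Stein formalism, reduce to products of sign changes on the quadratic-character positions, and then read off the two constraints (parity from the $D_n$ condition, $\prod\chi_i=1$ from the similitude twist) to get an elementary abelian $2$-group of order $2^r$. The paper simply cites Keys' argument in Goldberg's Lemma 6.7 and uses a Weyl-group normal form instead of your value-block partition, but the combinatorics is the same.
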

\begin{proof}
  Keys' argument still applies as in \cite[Lemma 6.7]{MR1296726} to show that any element of $R(\chi)$ is a product of (an even number of) sign changes.
  We may use the Weyl group to assume that there exist $r_1 < r_2 < \cdots < r_k$ such that $\chi_i^2 = 1$ if and only if $i \le r_k$,
  $\chi_i = \chi_{i+1}$ for all $i < r_k$ such that $i \notin \{r_1,\dots,r_k\}$, and $\chi_{r_j}$ ($1 \le j \le k$) are pairwise distinct.
  Let $c_{i}$ be the element in the Weyl group which changes the $i$-th sign.
  Then it is straightforward to show that $R(\chi)$ consists of all elements $\prod_{i \in \Sigma'} c_i$, where $\Sigma' \subset \{r_1,\dots,r_k\}$, $|\Sigma'|$ is even, and $\prod_{i \in \Sigma'} \chi_i = 1$.
  This implies the result.
\end{proof}

\begin{lem}\label{lem:R-group-GSO-star}
  Suppose $G = \GSO_{2n}^*(F)$.
  Let $\chi = \chi_{1}\otimes\cdots\otimes\chi_{n} \colon Z = (F^\times)^{n-1} \times E^\times \to \C^{\times}$ be a unitary smooth character.
  Then the $R$-group $R(\chi)$ is isomorphic to $(\Z/2\Z)^r$, where $2^r$ is the number of subsets $\Sigma$ of $\{\chi_i : \chi_i^2=1, (\chi_i \circ N_{E/F}) \chi_n^{c-1} \ne 1 \}$ such that
      $\prod_{\chi_i \in \Sigma} \chi_i \circ N_{E/F} = \chi_n^{1-c}$ (resp.\ $\prod_{\chi_i \in \Sigma} \chi_i \circ N_{E/F} = 1$) if $|\Sigma|$ is odd (resp.\ $|\Sigma|$ is even).
\end{lem}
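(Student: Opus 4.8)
The plan is to follow Keys' method, exactly as in the proof of Lemma~\ref{lem:R-group-GSO}, substituting the quasisplit rank-one computations of Lemmas~\ref{lem:SO4} and \ref{lem:GSO4} for their split counterparts. First I would recall that the relative Weyl group $W = N_G(Z)/Z$ of $G = \GSO_{2n}^*(F)$ has type $B_{n-1}$: it is generated by the permutations of $\chi_1,\dots,\chi_{n-1}$ and by sign changes $c_1,\dots,c_{n-1}$, where $c_i$ acts on $\chi = \chi_1\otimes\cdots\otimes\chi_n$ by $\chi_i\mapsto\chi_i^{-1}$, $\chi_n\mapsto(\chi_i\circ N_{E/F})\chi_n^c$ and fixes the remaining coordinates; this recipe can be read off from the action of the non-trivial Weyl element of $\GSO_4^*(F)$ underlying Lemma~\ref{lem:GSO4} (I suppress the similitude coordinate, which drops out after normalization). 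The rank-one Levi subgroups are of two types: those isomorphic to $\GL_2(F)$ times a torus, attached to the long roots $\chi_i\leftrightarrow\chi_j^{\pm1}$, and those isomorphic to $\GSO_4^*(F)$ times a torus, attached to the short roots, i.e.\ to the sign changes $c_i$.

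Next I would compute $W(\chi) := \{w\in W : w\chi\cong\chi\}$. After a permutation I may assume the $\chi_i$ with $\chi_i^2 = 1$ are grouped so that equal ones are adjacent and a fixed family of representatives is pairwise distinct. Then $W(\chi)$ is generated by the evident permutations and by those products $\prod_{i\in\Sigma}c_i$ that fix $\chi$; writing out the action of such a product on the $E^\times$-coordinate, $\prod_{i\in\Sigma}c_i$ fixes $\chi$ exactly when $\chi_i^2 = 1$ for all $i\in\Sigma$ and $\prod_{i\in\Sigma}\chi_i\circ N_{E/F}$ equals $\chi_n^{1-c}$ if $|\Sigma|$ is odd and $1$ if $|\Sigma|$ is even. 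By Knapp--Stein / Harish-Chandra theory, $W(\chi) = W^0(\chi)\rtimes R(\chi)$, where $W^0(\chi)$ is generated by the reflections $s_\alpha\in W(\chi)$ with $\mu^G_\alpha(\chi) = 0$; and Keys' argument (verbatim as in \cite[Lemma~6.7]{MR1296726}) shows that every class in $R(\chi)$ is represented by a product of sign changes.

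It would then remain to decide, for each relevant reflection $s_\alpha\in W(\chi)$, whether $\mu^G_\alpha(\chi)$ vanishes --- equivalently, by Harish-Chandra together with Propositions~\ref{prop:mu-max-parab} and \ref{prop:G-regular}, whether the rank-one reducibility point lies away from the unitary point. For a long-root reflection this is the familiar $\GL_2$ situation: $\nInd(\chi_i,\chi_i)$ (or $\nInd(\chi_i,\chi_i^{-1})$) is irreducible with reducibility shifted to $s = \pm1$, so $\mu_\alpha(\chi) = 0$ and $s_\alpha\in W^0(\chi)$. For a short-root reflection $c_i\in W(\chi)$ (so $\chi_i^2 = 1$ and $(\chi_i\circ N_{E/F})\chi_n^{c-1} = 1$), Lemma~\ref{lem:GSO4} shows the associated $\GSO_4^*(F)$-induction is reducible only at $s_1 = \pm1$, again away from the unitary point, so by Proposition~\ref{prop:mu-max-parab}(iv) the $\mu$-function has a double zero at the unitary point, whence $\mu_\alpha(\chi) = 0$ and $c_i\in W^0(\chi)$. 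Thus the short-root reflections lying in $W(\chi)$ contribute nothing new to $R(\chi)$, and the sign changes that survive in $R(\chi)$ (through products) are exactly the $c_i$ with $\chi_i^2 = 1$ and $(\chi_i\circ N_{E/F})\chi_n^{c-1}\ne1$, i.e.\ with $c_i\notin W(\chi)$. Combining this with the description of $W(\chi)$, $R(\chi)$ is identified with the group (under symmetric difference) of subsets $\Sigma$ of $\{\chi_i : \chi_i^2 = 1,\ (\chi_i\circ N_{E/F})\chi_n^{c-1}\ne1\}$ satisfying the stated parity/product condition, which is $(\Z/2\Z)^r$ with $2^r$ the number of such $\Sigma$.

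The main obstacle will be the careful bookkeeping of the $W$-action on the anisotropic factor $E^\times$: one must pin down precisely how a single sign change conjugates $\chi_n$ (this is what permits odd-size $\Sigma$, in contrast with the split case of Lemma~\ref{lem:R-group-GSO}), match normalizations of the $\mu$-functions so that the $\GSO_4^*$-reducibility point at $s_1 = \pm1$ genuinely forces a double zero of $\mu$ at the unitary point, and verify that the passage from ``products of sign changes'' to the stated combinatorial count is compatible with the earlier permutation normalization (so that no $\Sigma$ is over- or under-counted). Once Lemmas~\ref{lem:SO4} and \ref{lem:GSO4} are granted, the remaining combinatorics is identical to that of \cite[Lemma~6.7]{MR1296726}.
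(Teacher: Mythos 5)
Your proposal follows the same route as the paper's proof: use Lemma~\ref{lem:GSO4} to pin down the set $\Delta'$ of reflections in $W^0(\chi)$ (i.e.\ the zeros of the rank-one $\mu$-functions), invoke Keys' argument to reduce $R(\chi)$ to products of sign changes, then do the combinatorial count after a Weyl group normalization. The only cosmetic difference is that the paper cites \cite[Lemma A.2]{MR3430367} rather than \cite[Lemma 6.7]{MR1296726} for the Keys step; your explicit write-out of the sign-change action on the $E^\times$ coordinate is a fleshed-out version of the ``straightforward'' verification left implicit in the paper.
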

\begin{proof}
  By Lemma~\ref{lem:GSO4} we see that the set $\Delta'$ in the definition of the $R$-group \cite[\S1]{MR1296726} consists of
  all positive roots $e_i-e_j$ if $\chi_i=\chi_j$ ($i < j < n$), 
  $e_i+e_j$ if $\chi_i=\chi_j^{-1}$ ($i < j < n$), 
  $e_i$ if $\chi_i^2 = 1$ and $(\chi_i \circ N_{E/F})\chi_n^{c-1} = 1$ ($i < n$).
    Keys' argument still applies as in \cite[Lemma A.2]{MR3430367} to show that any element of $R(\chi)$ is a product of sign changes.
  We may use the Weyl group to assume that there exist $r_1 < r_2 < \cdots < r_k$ such that $\chi_i^2 = 1$ and $(\chi_i \circ N_{E/F})\chi_n^{c-1} \ne 1$ if and only if $i \le r_k$,
  $\chi_i = \chi_{i+1}$ for all $i < r_k$ such that $i \notin \{r_1,\dots,r_k\}$, and $\chi_{r_j}$ ($1 \le j \le k$) are pairwise distinct.
  Then it is straightforward to show that $R(\chi)$ consists of all elements $\prod_{i \in \Sigma'} c_i$, where $\Sigma' \subset \{r_1,\dots,r_k\}$, and 
  $\prod_{i \in \Sigma'} \chi_i \circ N_{E/F} = 1$ if $|\Sigma'|$ is even (resp.\ $\prod_{i \in \Sigma'} \chi_i \circ N_{E/F} = \chi_n^{1-c}$ if $|\Sigma'|$ is odd).
  This implies the result.
\end{proof}

\begin{prop}\label{prop:qsplit-irred-gen-socle}
  Suppose $\alggrp G$ is quasisplit and $\chi \colon Z \to \C^{\times}$ a smooth character.
  Write $\chi = \psi \chi_\nu$ with $\psi$ unitary and $\nu \in \mathfrak{a}^{*}_{Z,\R}$.
  Let $L$ be the maximal semistandard Levi subgroup such that $\nu \in \mathfrak{a}^{*}_{L,\R}$.
  Then the following two conditions are equivalent:
  \begin{enumerate}
  \item
    the socle of $(\nInd_{B}^{G} \chi)^{\sm}$ is of the same length as (the semisimple representation) $(\nInd_{B \cap L}^{L} \chi)^{\sm}$,
    and every irreducible subrepresentation of $(\nInd_{B}^{G} \chi)^{\sm}$ is generic;
  \item 
    \begin{enumerate}
    \item for all $\alpha \in \Phi^+_{\red}$ such that $(\nInd_{B \cap L_\alpha}^{L_\alpha} \chi)^{\sm}$ is reducible we have
      $\ang{\nu,\alpha^\vee} \ge 0$;
    \item every irreducible subrepresentation of $(\nInd_{B \cap L}^{L} \chi)^{\sm}$ is generic.
    \end{enumerate}
  \end{enumerate}
\end{prop}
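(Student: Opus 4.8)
The plan is to prove the equivalence (i) $\Leftrightarrow$ (ii) by analyzing the structure of $(\nInd_B^G\chi)^\sm$ via the Bernstein component containing $\chi$, reducing everything to the group $L$ and its relative rank-one Levi subgroups $L_\alpha$. First I would recall the key facts: $\chi = \psi\chi_\nu$ with $\psi$ unitary and $\nu$ real, $L$ is the maximal semistandard Levi with $\nu\in\mathfrak a^*_{L,\R}$, and by the theory of the $R$-group (for the \emph{unitary} character $\psi$, using that the quasisplit principal series $(\nInd_B^L\psi)^\sm$ is unitary and completely reducible), the representation $(\nInd_{B\cap L}^L\chi)^\sm \cong (\nInd_{B\cap L}^L\psi\chi_\nu)^\sm$ is semisimple, its length equalling $|R(\psi)|$ (or the number of Whittaker functionals, by Rodier \cite{MR644842}, since $\psi$ is unitary regular in the directions orthogonal to $\mathfrak a^*_{L,\R}$). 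Each irreducible constituent of $(\nInd_{B\cap L}^L\chi)^\sm$ is generic because unitary principal series of quasisplit groups are so (Rodier), so condition (ii)(b) is automatic in the semisimple case — but I would keep it in the statement for uniformity since the Proposition is used to \emph{deduce} genericity for $G$, which is the substantive direction.

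Next I would set up the comparison between $(\nInd_B^G\chi)^\sm$ and $(\nInd_{B\cap L}^L\chi)^\sm$ using induction in stages: letting $\alggrp P = \alggrp L\alggrp N$ be the standard parabolic with Levi $L$, we have $(\nInd_B^G\chi)^\sm \cong (\nInd_P^G (\nInd_{B\cap L}^L\chi)^\sm)^\sm$. Since $(\nInd_{B\cap L}^L\chi)^\sm = \bigoplus_i \sigma_i$ with $\sigma_i$ irreducible generic (hence essentially square-integrable-modulo-center only in degenerate cases — more precisely the $\sigma_i$ have real infinitesimal character in $\mathfrak a^*_{L,\R}$), the heart of the matter is to understand when $(\nInd_P^G\sigma_i)^\sm$ has irreducible (generic) socle and when the socles for different $i$ are disjoint. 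By the geometric lemma / Rodier's argument (as in the proof of Proposition~\ref{prop:whittaker criterion}), $(\nInd_P^G\sigma_i)^\sm$ has a unique generic subquotient, with multiplicity one, and it appears in the socle exactly when no intertwining operator $J_{\o P|P}(\sigma_i\chi_\mu)$ has a pole as $\mu \to \nu$ along $\mathfrak a^*_{L,\R}$. By Harish-Chandra's product formula (Proposition~\ref{prop:product formula}) the relevant $\mu$-function factors over $\alpha\in\Phi_{\red}(\alggrp P,\alggrp A_{\alggrp L})$, and the pole/holomorphy at $\nu$ is governed rank-one-at-a-time: by Proposition~\ref{prop:mu-max-parab}(iii)–(iv), $\mu^{L_\alpha}$ is holomorphic nonvanishing along the positive-real ray unless $(\nInd_{B\cap L_\alpha}^{L_\alpha}\chi)^\sm$ is reducible, in which case there is a pole exactly when we approach the reducibility point, i.e. precisely the condition $\ang{\nu,\alpha^\vee}\ge 0$ fails. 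This is where condition (ii)(a) enters: the socle of $(\nInd_B^G\chi)^\sm$ is ``as big as possible'' (length $= \sum_i$ (length of socle of $(\nInd_P^G\sigma_i)^\sm$) and each $(\nInd_P^G\sigma_i)^\sm$ has irreducible socle) precisely when for every reducibility-relevant $\alpha$ we have $\ang{\nu,\alpha^\vee}\ge 0$.

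The argument then runs as follows. For (ii) $\Rightarrow$ (i): assuming (ii)(a), I would show that for each $i$ the long intertwining operator $J_{\o P|P}(\sigma_i)$ is holomorphic, hence $(\nInd_P^G\sigma_i)^\sm$ embeds in $(\nInd_{\o P}^G\sigma_i)^\sm$ and its image is the generic irreducible subquotient; this uses the product formula to reduce holomorphy to the rank-one factors $L_\alpha$, where Proposition~\ref{prop:mu-max-parab} together with $\ang{\nu,\alpha^\vee}\ge 0$ gives holomorphy (poles of $\mu^{L_\alpha}$ occur at $s=\pm s_0$ with $s_0>0$, and $\nu$ dominant avoids the pole at $-s_0$ and lands at most at the zero at $s=0$ or the pole at $+s_0$; one checks that at $+s_0$ it is actually the submodule, not a pole of the relevant operator — this requires care and is where I'd invoke the precise statement of \cite[Proposition 1, Proposition 4]{MR644842} on generic socles). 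Then every irreducible subrepresentation of $(\nInd_B^G\chi)^\sm$ is one of these generic pieces, so (i) holds, with length of socle equal to $\sum_i 1 = $ length of $(\nInd_{B\cap L}^L\chi)^\sm$. For (i) $\Rightarrow$ (ii): if some $\alpha$ with $(\nInd_{B\cap L_\alpha}^{L_\alpha}\chi)^\sm$ reducible has $\ang{\nu,\alpha^\vee}<0$, then applying $J_{\o P|P}$ picks up a pole from that factor, and a standard argument (using that a nongeneric subquotient appears, or that the socle becomes strictly smaller / some $(\nInd_P^G\sigma_i)^\sm$ fails to have irreducible socle) contradicts (i); here I'd use the Langlands quotient theorem to see that $(\nInd_{\o B}^G\chi)^\sm$ has the Langlands quotient as its \emph{unique} irreducible quotient, dually the socle of $(\nInd_B^G\chi)^\sm$ is controlled, and the pole forces a drop in socle length below that of $(\nInd_{B\cap L}^L\chi)^\sm$, and also a nongeneric constituent to enter the socle. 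The main obstacle I expect is the bookkeeping in the $(i)\Rightarrow(ii)$ direction: translating ``socle has maximal length and is generic'' into the precise statement ``$\ang{\nu,\alpha^\vee}\ge 0$ for all reducibility-relevant $\alpha$'', since one must rule out cancellations between poles and zeros of $\mu$-functions coming from distinct roots, and handle the case where several $\sigma_i$'s parabolic inductions share constituents — this is exactly the point where the $R$-group of $\psi$ and the product formula (Proposition~\ref{prop:product formula}) must be combined carefully, together with the fact (Proposition~\ref{prop:mu-max-parab}) that the rank-one reducibility points are at $s_0 \le 1/2$ so that dominance cleanly separates the ``good'' and ``bad'' rays.
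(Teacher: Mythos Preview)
Your overall framework (induction in stages through $P$, rank-one analysis via the product formula, intertwining operators) is the right one, but there are genuine gaps in both directions, and one outright error.

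\medskip

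\textbf{The claim that (ii)(b) is automatic is false.} You assert that every irreducible constituent of the unitary principal series $(\nInd_{B\cap L}^L\psi)^\sm$ is generic, citing Rodier. Rodier's multiplicity-one result says the \emph{full} induced representation has a unique Whittaker functional for each nondegenerate $\theta$; it does not say every irreducible constituent supports one. When the $R$-group is nontrivial this can fail, and this is exactly the content of the ``last hypothesis'' in each of Theorems~\ref{thm:Sp2n-irred}--\ref{thm:U2n+1-irred}: those hypotheses are precisely what one needs (via lifting to a similitude group and invoking the relevant $R$-group computation) to force all constituents to be generic. So (ii)(b) is a genuine condition, not a formality.

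\medskip

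\textbf{In the direction (i) $\Rightarrow$ (ii) you are missing the key observation.} The paper's argument is short and does not go through pole bookkeeping: if every irreducible subrepresentation of $(\nInd_B^G\chi)^\sm$ is generic, then for \emph{any} standard parabolic $P' = L'N'$, every irreducible subrepresentation of $(\nInd_{B\cap L'}^{L'}\chi)^\sm$ must also be generic --- otherwise a nongeneric $\tau_1$ in the socle would give $(\nInd_{P'}^G\tau_1)^\sm \hookrightarrow (\nInd_B^G\chi)^\sm$ with nongeneric socle (the generic constituents live in the quotient by $\tau_1$). Taking $L' = L$ gives (ii)(b). Taking $L' = L_\alpha$ for a reduced positive $\alpha$ with $\langle\nu,\alpha^\vee\rangle < 0$ and reducible rank-one induction, one checks directly (reducing to $\SL_2(E)$ or $\U_3(E)$) that the socle of $(\nInd_{B\cap L_\alpha}^{L_\alpha}\chi)^\sm$ is nongeneric, giving (ii)(a). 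Your proposed route via ``the pole forces a drop in socle length'' does not isolate this, and the worry you flag about cancellations between poles and zeros is a symptom of not having found this cleaner mechanism.

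\medskip

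\textbf{In the direction (ii) $\Rightarrow$ (i) two ingredients are missing.} First, the paper does not assume $\nu$ dominant from the start: it \emph{reduces} to that case by repeatedly applying $s_\alpha$ for simple $\alpha$ with $\langle\nu,\alpha^\vee\rangle < 0$, using that (ii)(a) forces $(\nInd_{B\cap L_\alpha}^{L_\alpha}\chi)^\sm$ to be irreducible for such $\alpha$, whence $(\nInd_B^G\chi)^\sm \cong (\nInd_B^G s_\alpha\chi)^\sm$ and (ii)(a) is preserved. Second, once $\nu$ is dominant, the irreducible-socle statement for $(\nInd_P^G\sigma_i)^\sm$ comes from the geometric lemma (multiplicity one of $\sigma_i$ in the Jacquet module), not from holomorphy of $J_{\overline P|P}$. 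Genericity of the socle is then obtained by constructing $T' \colon (\nInd_{\overline B}^G\chi)^\sm \to (\nInd_B^G\chi)^\sm$ as a composition of rank-one operators $T'_\alpha$ along a reduced expression for $w_0$, choosing each $T'_\alpha$ so that its kernel contains no generic constituent (an isomorphism when the rank-one induction is irreducible or $\langle\nu,\alpha^\vee\rangle = 0$; otherwise the unique nonzero map, whose kernel is the nongeneric piece since $\langle\nu,\alpha^\vee\rangle > 0$). Your proposal conflates poles of $\mu^{L_\alpha}$ with poles of the intertwining operators (recall $\mu \sim j^{-1}$), and the assertion that holomorphy of $J_{\overline P|P}(\sigma_i)$ alone yields a generic socle is not justified.
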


\begin{proof}
  (We thank Alberto M\'inguez for providing the key ideas for this argument.)
  First assume that (i) holds.
  Note first that by~\cite[Theorem~2]{MR0354942} in the split case and by the geometric lemma \cite[5.2 Theorem]{MR0579172} in general, for each non-degenerate character $\theta$ of $U$ there is a unique $\theta$-generic irreducible constituent of $(\nInd_{B}^{G}\chi)^{\sm}$.
  Suppose first that there is a parabolic subgroup $\alggrp{P}' = \alggrp{L}'\alggrp{N}'$ such that $0 \to \tau_1 \to (\nInd_{B \cap L'}^{L'}\chi)^{\sm} \to \tau_2 \to 0$ with $\tau_1 \ne 0$ non-generic.
  Then every irreducible submodule of $(\nInd_{P'}^{G}\tau_1)^{\sm}$ is non-generic (as the generic constituents have to lie in $(\nInd_{P'}^{G}\tau_2)^{\sm}$), contradiction.
  Therefore, every irreducible subrepresentation of $(\nInd_{B \cap L'}^{L'}\chi)^{\sm}$ is generic.
  In particular this applies to $L' = L$.
  Consider now all $L' = L_\alpha$ for $\alpha \in \Phi^+_{\red}$ such that $(\nInd_{B \cap L_\alpha}^{L_\alpha} \chi)^{\sm}$ is reducible and $\ang{\nu,\alpha^\vee} < 0$.
  Then $(\nInd_{B \cap L_\alpha}^{L_\alpha} \chi)^{\sm}$ is of length 2 and has an irreducible socle (by considering the Jacquet module).
  We claim that the socle is non-generic.
  We get the same length 2 reducibility after restriction to the simply-connected cover of the derived subgroup of $L_\alpha$, which is one of $\SL_2(E)$, $\SU_3(E)$ for some finite extension $E/F$.
  In case of $\SL_2(E)$ we are done because the subrepresentation is trivial by the condition $\ang{\nu,\alpha^\vee} < 0$.
  In the other case we lift to $\U_3(E)$ and still obtain reducibility (e.g.\ by Proposition~\ref{prop:solleveld}).
  Relabeling $E$ as $F$, we reduce to $G = \U_3(F)$ and $\ang{\nu,\alpha^\vee} < 0$ for the unique simple root $\alpha$.
  Then the socle is non-generic, e.g.\ by (the contragredient of) \cite[Theorem 1]{MR1634020}.   (Note that there is only one orbit of non-degenerate characters of $\U_3$.)
  Thus we deduce (ii).
 
  Conversely, suppose that (ii) holds.
  We first show that the socle of $(\nInd_{B}^{G} \chi)^{\sm}$ is of the same length as $(\nInd_{B \cap L}^{L} \chi)^{\sm}$.
  If $\nu$ is not dominant, then there is an $\alpha \in \Delta$ such that $\ang{\nu,\alpha^\vee} < 0$.
  By the first assumption, $(\nInd_{B \cap L_\alpha}^{L_\alpha} \chi)^{\sm}$ is irreducible, so we deduce that $(\nInd_{B}^{G} \chi)^{\sm} \cong (\nInd_{B}^{G} s_\alpha(\chi))^{\sm}$ by transitivity of parabolic induction.
  Since $s_\alpha$ permutes the set $\Phi^+_{\red}\setminus\{\alpha\}$, our condition on $\chi$ above also holds for $s_\alpha(\chi) = s_\alpha(\psi)\chi_{s_\alpha(\nu)}$.
  But in this way we reduce the number of $\alpha \in \Phi^+_{\red}$ such that $\ang{\nu,\alpha^\vee} < 0$ by one, and we can reduce to $\nu$ dominant in finitely many steps.

  Suppose now that $\nu$ is dominant, as we may.
  Then $\alggrp{L}$ is standard and the standard parabolic subgroup $\alggrp{P}$ containing $\alggrp{L}$ as a Levi part is associated to the subset $\{\alpha \in \Delta : \ang{\nu,\alpha^\vee} = 0\}$ of $\Delta$.
  As $\nu$ lies in $\mathfrak a^*_{L,\R}$, $\chi_{\nu}$ extends to an unramified character of $L$.
  Let $\sigma$ be any irreducible constituent of $(\nInd_{B \cap L}^{L} \chi)^{\sm} \cong (\nInd_{B \cap L}^{L} \psi)^{\sm} \otimes \chi_\nu$, which is a semisimple representation (as it is unitary up to twist).
  Then $\sigma$ is generic by our second assumption, and we now show that $(\nInd_{P}^{G} \sigma)^{\sm}$ has an irreducible socle.
  By the geometric lemma, $r_P((\nInd_{P}^{G} \sigma)^{\sm})$ has a filtration with graded pieces $\sigma_w := (\nInd_{L \cap w P w^{-1}}^{L} w r_{L \cap w^{-1} P w}\sigma)^{\sm}$, where $r_P$ denotes the normalized Jacquet module and $w$ runs through Kostant representatives of $W_L\backslash W/W_L$.
  Note that the supercuspidal support of $\sigma$ (resp.\ of any irreducible constituent of $\sigma_w$) is $(Z,\chi)$ (resp.\ $(Z,ww'\chi)$ for some $w' \in W_L$), up to $L$-conjugacy.
  But $w\chi = \chi$ implies that $w\chi_\nu = \chi_\nu$, hence $w\nu = \nu$, i.e.\ $w \in W_L$ by definition of $L$.
  We see that $\sigma$ occurs with multiplicity one in $r_P((\nInd_{P}^{G} \sigma)^{\sm})$ and hence by exactness of $r_P$ deduce that $(\nInd_{P}^{G} \sigma)^{\sm}$ has an irreducible socle.
  (If $\pi \subset (\nInd_{P}^{G} \sigma)^{\sm}$ is irreducible, then $r_P \pi \onto \sigma$.)

  It remains to show that every irreducible subrepresentation of $(\nInd_{B}^{G} \chi)^{\sm}$ is generic.   Continue to assume, as we may, that $\nu$ is dominant.
  Taking $\sigma$ to be any irreducible constituent of the semisimple representation $(\nInd_{B \cap L}^{L} \chi)^{\sm} \cong (\nInd_{B \cap L}^{L} \psi)^{\sm} \otimes \chi_\nu$ it suffices to show that the irreducible socle $\pi_\sigma$ of $(\nInd_{P}^{G} \sigma)^{\sm}$ is generic.
  Fix a non-degenerate character $\theta$ such that $(\nInd_{P}^{G} \sigma)^{\sm}$ contains a $\theta$-generic constituent $\pi_\theta$.
  We claim that there exists a (nonzero) intertwining morphism $T \colon (\nInd_{\o P}^{G} \sigma)^{\sm} \to (\nInd_{P}^{G} \sigma)^{\sm}$ whose kernel does not contain $\pi_\theta$ as constituent.
  Assuming the claim we are done: by second adjointness and the previous paragraph we know that $(\nInd_{\o P}^{G} \sigma)^{\sm}$ has irreducible cosocle $\pi_\sigma$.
  (If $(\nInd_{\o P}^{G} \sigma)^{\sm} \onto \pi'$ irreducible, then $\sigma \into r_P \pi'$, so $\pi'$ has to be the unique constituent with $\sigma$ contributing to $r_P \pi'$.
  In fact, we see that $\sigma$ is a direct summand of $r_P \pi_\sigma$.)
  Then the image of $T$ has $\pi_\sigma$ both as its socle and its cosocle, so the image of $T$ equals $\pi_\sigma$, since $\pi_\sigma$ occurs only once in $(\nInd_{P}^{G} \sigma)^{\sm}$.
  The claim implies that $\pi_\sigma \cong \pi_\theta$ is generic.

  To prove the final claim, it suffices to construct a morphism $T' \colon (\nInd_{\o B}^{G} \chi)^{\sm} \to (\nInd_{B}^{G} \chi)^{\sm}$ that does not kill any generic constituents, because we can decompose $(\nInd_{\o B}^{G} \chi)^{\sm} \cong \bigoplus_\sigma (\nInd_{\o P}^{G} \sigma)^{\sm}$ and $(\nInd_{B}^{G} \chi)^{\sm} \cong \bigoplus_\sigma (\nInd_{P}^{G} \sigma)^{\sm}$ (by semisimplicity) and consider where $\pi_\sigma$ is sent.
  By writing $w_0$ as a reduced product of simple reflections, it suffices to construct $(\nInd_{w^{-1} s_\beta^{-1} B s_\beta w}^{G} \chi)^{\sm} \to (\nInd_{w^{-1} B w}^{G} \chi)^{\sm}$ that does not kill any generic constituents, for any $w \in W$ and simple reflection $s_\beta$ with $\ell(s_\beta w) > \ell(w)$ (i.e.\ $w^{-1}(\beta) > 0$) where $\ell$ is the length function on $W$.
  By parabolic induction, it suffices to construct $(\nInd_{w^{-1} s_\beta^{-1} B s_\beta w}^{w^{-1} L_\beta w} \chi)^{\sm} \to (\nInd_{w^{-1} B w}^{w^{-1} L_\beta w} \chi)^{\sm}$ that does not kill any generic constituents.
  We can write this morphism as $T'_\alpha \colon (\nInd_{\o B \cap L_\alpha}^{L_\alpha} \chi)^{\sm} \to (\nInd_{B \cap L_\alpha}^{L_\alpha} \chi)^{\sm}$, where $\alpha := w^{-1}(\beta) \in \Phi^+_{\red}$.
  If $\ang{\nu,\alpha^\vee} = 0$ or if $(\nInd_{B \cap L_\alpha}^{L_\alpha} \chi)^{\sm}$ is irreducible, these representations are semisimple and we take $T'_\alpha$ to be any isomorphism.
  Otherwise, by (a) we know that $\ang{\nu,\alpha^\vee} > 0$.
  We also know that $(\nInd_{\o B \cap L_\alpha}^{L_\alpha} \chi)^{\sm}$ has an irreducible cosocle which is the same as the socle of $(\nInd_{B \cap L_\alpha}^{L_\alpha} \chi)^{\sm}$, and we take $T'_\alpha$ to be the unique (up to scalar) nonzero map possible.
  By the condition that $\ang{\nu,\alpha^\vee} > 0$ we know that the kernel of $T_\alpha$ is the unique non-generic constituent of $(\nInd_{\o B \cap L_\alpha}^{L_\alpha} \chi)^{\sm}$ (see the first paragraph of this proof).
  This completes the proof.
\end{proof}

\begin{remark}\label{rk:irred-smooth-princ-series}
  We have a similar criterion for irreducibility, for any connected reductive group $\alggrp{G}$.
  Suppose that $\sigma$ is a (finite-dimensional) irreducible smooth representation of the minimal Levi subgroup $\alggrp{Z}$.
  Write $\sigma \cong \sigma_u \chi_\nu$ with $\sigma_u$ unitary and $\nu \in \mathfrak a_{Z,\R}^*$, and let $\alggrp{L}$ be the maximal semistandard Levi subgroup such that $\nu \in \mathfrak{a}^{*}_{L,\R}$. 
  \emph{Then $(\nInd_{B}^{G} \sigma)^{\sm}$ is irreducible if and only if $(\nInd_{B \cap L_\alpha}^{L_\alpha} \sigma)^{\sm}$
  is irreducible for all $\alpha \in \Phi^+_{\red}$ and $(\nInd_{B \cap L}^{L} \sigma)^{\sm}$ is irreducible.}
  For the ``if'' direction, the first condition shows that $(\nInd_{B}^{G} \sigma)^{\sm} \cong (\nInd_{B}^{G} w(\sigma))^{\sm}$ for any $w \in W$ (by induction on $\ell(w)$).
  In particular, $(\nInd_{\o B}^{G} \sigma)^{\sm} \cong (\nInd_{B}^{G} \sigma)^{\sm}$ and we may also assume that $\nu$ is dominant. 
  Let $\tau := (\nInd_{B \cap L}^{L} \sigma)^{\sm}$.   Then $(\nInd_{\o P}^{G} \tau)^{\sm} \cong (\nInd_{P}^{G} \tau)^{\sm}$, where $\alggrp{P} = \alggrp{L}\alggrp{N}$ is the standard parabolic with Levi subgroup $\alggrp{L}$.
  By the same argument as in the proof of Proposition~\ref{prop:qsplit-irred-gen-socle} we see that $(\nInd_{P}^{G} \tau)^{\sm}$ has an irreducible socle which is also the cosocle of $(\nInd_{\o P}^{G} \tau)^{\sm}$ and occurs in these representations with multiplicity one.
  This implies irreducibility.  
  (If $\alggrp G$ is split, this can also be deduced from \cite[Proposition 4.2]{MR560847}, and in general from the main theorem of \cite{luo}.)
\end{remark}

\begin{proof}[Proof of Theorems~\ref{thm:Sp2n-irred}--\ref{thm:U2n+1-irred}]
  By Corollary~\ref{cor:whittaker criterion} we may extend scalars to $\o C \cong \C$, and it suffices to verify that both assumptions in Proposition~\ref{prop:qsplit-irred-gen-socle}(ii) hold.
  (To apply the corollary we work with $\alggrp G$ obtained by restriction of scalars from $F$ to $\Q_p$, and take $\sigma_0 = 1$, $\tau = \chi\delta_B^{1/2}$, $\alggrp{Q} = \alggrp{G}$.)
    Note that $(\Ind_{B}^{G}\chi\delta_B^{1/2})^{\sm} = (\nInd_{B}^{G}\chi)^{\sm}$.
  The first assumption is easy to verify by the irreducibility criteria for smooth principal series of $\Sp_2(F) = \SL_2(F)$, $\SO_3(F) \cong \PGL_2(F)$, $\GL_2(E)$, $\SO^*_4(F)$, $\U_2(F)$ and $\U_3(F)$,
  using all but the last hypothesis in each theorem.
  (In this proof we consider Theorem~\ref{thm:SO2n+1-irred} to have a vacuous last hypothesis.
  For the last three irreducibility criteria, see Lemma~\ref{lem:SO4}, respectively \cite[\S11.1, \S12.2]{MR1081540}.)
  
  We now verify the second assumption of Proposition~\ref{prop:qsplit-irred-gen-socle}(ii), using the last hypothesis in each theorem.
  Write $L \cong \prod_i \GL_{m_i}(E) \times G'$, where $E = F$ in all but the unitary cases and $G'$ is a classical group of the same type as $G$.
  (Note that $G'$ may be trivial, except when the type is $\SO^*$.)
  As unitary principal series of $\GL_m(E)$ are irreducible and generic, we may reduce to the case where $G' = G$, i.e.\ $L = G$.
  By a twist we may assume that $\chi$ is unitary, i.e.\ $\nu = 0$.
  In case $G = \SO_{2n+1}(F)$   (resp.\ $G = \U_{2n+1}(F)$) we apply \cite[Theorem 6.5(1)]{MR1296726}   (resp.\ \cite[Theorem 3.4]{MR1224616}) to see that $(\nInd_{B}^{G}\chi)^{\sm}$ is irreducible and hence generic.

  In case $G = \Sp_{2n}(F)$ we lift $\chi$ to the character $\wt\chi := \chi_1 \otimes \cdots \otimes \chi_n \otimes \rho$ of the diagonal maximal torus $\wt Z$ of $\GSp_{2n}(F)$, where $\rho : F^\times \to \C^\times$ is an arbitrary unitary character. 
  The unitary principal series of $\GSp_{2n}(F)$ obtained from $\wt\chi$ is irreducible and hence generic by \cite[Theorem 2.6]{MR1464132}.
  (Note that $d_1 = 0$ and $d_\chi \le 1$ for all $\chi \ne 1$ in the notation of that paper.)
  By restriction to $\Sp_{2n}(F)$ it follows that all irreducible constituents of $(\nInd_{B}^{G}\chi)^{\sm}$ are generic.

  In case $G = \U_{2n}(F)$ we lift $\chi$ to the character $\wt\chi := \chi_1 \otimes \cdots \otimes \chi_n \otimes \rho$ of the diagonal maximal torus $\wt Z$ of $\GU_{2n}(F)$, where $\rho : F^\times \to \C^\times$ is an arbitrary unitary character. 
  The unitary principal series of $\GSp_{2n}(F)$ obtained from $\wt\chi$ is irreducible and hence generic by \cite[Theorem 2.6]{MR1464132}.
  (Note that $d_1 = 0$ and $\Lambda(\sigma) = \Lambda(\sigma)' = \varnothing$ in the notation of that paper.
  Here we use that by comparison with $\SL_2(F)$, using Proposition~\ref{prop:solleveld}, all unitary principal series of $\GU_2(F)$ are irreducible.)
  By restriction to $\U_{2n}(F)$ it follows that all irreducible constituents of $(\nInd_{B}^{G}\chi)^{\sm}$ are generic.

  In case $G = \SO_{2n}(F)$ (resp.\ $G = \SO^*_{2n}(F)$) we likewise lift to a character of the diagonal maximal torus $\wt Z$ of $\GSO_{2n}(F)$ and apply Lemma~\ref{lem:R-group-GSO}
  (resp.\ Lemma~\ref{lem:R-group-GSO-star}).
\end{proof}

\begin{remark}\label{rk:optimal}
  The conditions in Theorems~\ref{thm:Sp2n-irred}--\ref{thm:U2n+1-irred} are optimal in the sense that if every irreducible subrepresentation
  of $(\Ind_{B}^{G}\chi\delta_B^{1/2})^\sm$ (over $\o C \cong \C$) is generic, then the conditions in the theorems hold.
  (The analogue of course holds for $\GL_n(F)$ as well, by the same argument.)

  To justify this, suppose that every irreducible subrepresentation of $(\nInd_{B}^{G}\chi)^\sm$ is generic.
  Then by Proposition~\ref{prop:qsplit-irred-gen-socle} (item (a)) we get all but the last condition in each theorem.
  
  It remains to discuss the symplectic, even orthogonal, and unitary groups.
  Observe that there is a unique orbit of non-degenerate characters of $U$ for the groups $\GSp_{2n}(F)$, $\GSO_{2n}(F)$, $\GSO_{2n}^*(F)$, $\GU_{2n}(F)$, and $\U_{2n+1}(F)$ under the action of the diagonal maximal torus $\alggrp{Z}$.
  If $G = \U_{2n+1}(F)$ we let $\alggrp{L}$ be the maximal semistandard Levi subgroup such that $\nu \in \mathfrak a^*_{L,\R}$.
  Then the semisimple representation $(\nInd_{B \cap L}^{L} \chi)^{\sm}$ has to be irreducible by (b) (as it contains a unique generic constituent), which implies the last condition in this case by the $R$-group result we already used.
  If $G$ is one of the groups $\Sp_{2n}(F)$, $\SO_{2n}(F)$, $\SO^*_{2n}(F)$, $\U_{2n}(F)$ we lift to the similitude group $\GSp_{2n}(F)$, $\GSO_{2n}(F)$, $\GSO^*_{2n}(F)$, $\GU_{2n}(F)$ as in the proof above and then apply the same reasoning.
  (Note that the lifted principal series still has the property that every irreducible subrepresentation is generic.)
\end{remark}

For completeness, we also state the irreducibility criteria we get from Remark~\ref{rk:irred-smooth-princ-series} for classical groups, using the
same notation as in Theorems~\ref{thm:Sp2n-irred}--\ref{thm:U2n+1-irred}.
For the group $\Sp_{2n}(F)$, see also \cite[Theorem 7.1]{MR1266251}.

\begin{thm}\label{thm:irred-smooth-classical-grps}\ 

  \begin{enumerate}
  \item If $G = \Sp_{2n}(F)$ (split), $\chi = \chi_{1}\otimes\cdots\otimes\chi_{n}\colon Z\to \C^{\times}$ smooth, then
    $(\Ind_{B}^{G}\chi\delta_B^{1/2})^{\sm}$ is irreducible if and only if
    \begin{itemize}
    \item $\chi_i \chi_j^{\pm 1} \ne \lvert \cdot\rvert_F^{\pm 1}$ for all $i < j$;
    \item $\chi_i \ne \lvert \cdot\rvert_F^{\pm 1}$ for all $i$;
    \item $\chi_i$ is not of order 2 for all $i$.     \end{itemize}
  \item If $G = \SO_{2n+1}(F)$ (split), $\chi = \chi_{1}\otimes\cdots\otimes\chi_{n}\colon Z\to \C^{\times}$ smooth, then
    $(\Ind_{B}^{G}\chi\delta_B^{1/2})^{\sm}$ is irreducible if and only if
    \begin{itemize}
    \item $\chi_i \chi_j^{\pm 1} \ne \lvert \cdot\rvert_F^{\pm 1}$ for all $i \le j$.
    \end{itemize}
  \item If $G = \SO_{2n}(F)$ (split), $\chi = \chi_{1}\otimes\cdots\otimes\chi_{n}\colon Z\to \C^{\times}$ smooth, then
    $(\Ind_{B}^{G}\chi\delta_B^{1/2})^{\sm}$ is irreducible if and only if
    \begin{itemize}
    \item $\chi_i \chi_j^{\pm 1} \ne \lvert \cdot\rvert_F^{\pm 1}$ for all $i < j$;
    \item the set $\{\chi_i : \chi_i^2 = 1\}$ has at most one element.      \end{itemize}
  \item If $G = \SO^*_{2n}(F)$ (non-split quasisplit) splitting over a quadratic extension $E/F$, $\chi = \chi_{1}\otimes\cdots\otimes\chi_{n}\colon Z = (F^\times)^{n-1} \times (E^\times)^{N_{E/F}=1}\to \C^{\times}$ smooth, then
    $(\Ind_{B}^{G}\chi\delta_B^{1/2})^{\sm}$ is irreducible if and only if
    \begin{itemize}
    \item $\chi_i \chi_j^{\pm 1} \ne \lvert \cdot\rvert_F^{\pm 1}$ for all $i < j < n$;
    \item $(\chi_i \circ N_{E/F}) \chi_n^{c-1} \ne \lvert\cdot\rvert_E^{\pm 1}$ for all $i < n$ if $\chi_n^2 = 1$;
    \item the set $\{\chi_i : \text{$i < n$ and $\chi_i^2 = 1$}\}$ has at most one element if $\chi_n^2 \ne 1$; \\
      ($\chi_i^2 \ne 1$ or $(\chi_i \circ N_{E/F})\chi_n^{c-1} = 1$) for all $i < n$ if $\chi_n^2 = 1$. 
    \end{itemize}
  \item If $G = \U_{2n}(F)$ (quasisplit) splitting over a quadratic extension $E/F$, $\chi = \chi_{1}\otimes\cdots\otimes\chi_{n}\colon Z = (E^\times)^n \to \C^{\times}$ smooth, then
    $(\Ind_{B}^{G}\chi\delta_B^{1/2})^{\sm}$ is irreducible if and only if
    \begin{itemize}
    \item $\chi_i \chi_j^{-1} \ne \lvert \cdot\rvert_E^{\pm 1}$ for all $i < j$;
    \item $\chi_i \chi_j^{c} \ne \lvert \cdot\rvert_E^{\pm 1}$ for all $i < j$;
    \item $\chi_i \ne \eta \lvert \cdot\rvert_E^{\pm 1/2}$ with $\eta|_{F^\times} = 1$ for all $i$;
    \item $\chi_i|_{F^\times} \ne \omega_{E/F}$ for all $i$.     \end{itemize}
  \item If $G = \U_{2n+1}(F)$ (quasisplit) splitting over a quadratic extension $E/F$, $\chi = \chi_{1}\otimes\cdots\otimes\chi_{n+1}\colon Z = (E^\times)^n \times (E^\times)^{N_{E/F}=1}\to \C^{\times}$ smooth, then
    $(\Ind_{B}^{G}\chi\delta_B^{1/2})^{\sm}$ is irreducible if and only if
    \begin{itemize}
    \item $\chi_i \chi_j^{-1} \ne \lvert \cdot\rvert_E^{\pm 1}$ for all $i < j \le n$;
    \item $\chi_i \chi_j^{c} \ne \lvert \cdot\rvert_E^{\pm 1}$ for all $i < j \le n$;
    \item $\chi_i \chi_{n+1}^{c-1} \ne \lvert \cdot\rvert_E^{\pm 1}$ for all $i \le n$;
    \item $\chi_i \chi_{n+1}^{c-1} \ne \eta \lvert \cdot\rvert_E^{\pm 1/2}$ with $\eta|_{F^\times} = \omega_{E/F}$ for all $i \le n$;
    \item $\chi_i \chi_{n+1}^{c-1} = 1$ or $\chi_i \chi_{n+1}^{c-1}|_{F^\times} \ne 1$ for all $i \le n$.
    \end{itemize}
  \end{enumerate}
\end{thm}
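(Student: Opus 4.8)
The plan is to deduce all six statements from the irreducibility criterion of Remark~\ref{rk:irred-smooth-princ-series}, applied to $\alggrp P = \alggrp B$ with inducing representation the smooth character $\chi$ of $Z$, so that $(\Ind_{B}^{G}\chi\delta_B^{1/2})^{\sm} = (\nInd_{B}^{G}\chi)^{\sm}$. Write $\chi = \chi_u\chi_\nu$ with $\chi_u$ unitary and $\nu \in \mathfrak{a}^*_{Z,\R}$, and let $\alggrp L$ be the maximal semistandard Levi subgroup with $\nu \in \mathfrak{a}^*_{L,\R}$. By Remark~\ref{rk:irred-smooth-princ-series} the representation $(\nInd_{B}^{G}\chi)^{\sm}$ is irreducible if and only if (a) $(\nInd_{B\cap L_\alpha}^{L_\alpha}\chi)^{\sm}$ is irreducible for every $\alpha \in \Phi^+_{\red}$, and (b) $(\nInd_{B\cap L}^{L}\chi)^{\sm}$ is irreducible. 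Since this is an equivalence, translating (a) and (b) into the stated bullet conditions will give both directions of each theorem at once. Throughout I would freely replace a group by an isogenous one, using Proposition~\ref{prop:isogenies} (for the effect on reducibility) and Proposition~\ref{prop:solleveld} (for the effect on $\mu$-functions), as well as pass between a group and its similitude group exactly as in the proofs of Theorems~\ref{thm:Sp2n-irred}--\ref{thm:U2n+1-irred}.

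For step (a), I would run through the semisimple-rank-one Levi subgroups $\alggrp L_\alpha$ for each of the six families. Up to central isogeny, $\alggrp L_\alpha^{\der}$ is, according to the root $\alpha$, one of $\SL_2(F)$, $\PGL_2(F) \cong \SO_3(F)$, $\Res_{E/F}\SL_2$ (whose group of $F$-points sits inside $\GL_2(E)$), $\SU_3(F)$ (splitting over $E$), or, for the root of $\SO^*_{2n}$ involving the last coordinate, the group $\SO^*_4(F)$ handled in Lemma~\ref{lem:SO4}. Reading off the root datum as in the computations in Lemmas~\ref{lem:SO4}--\ref{lem:GSO4}, the restriction of $\chi$ to the relevant torus is the appropriate monomial in the $\chi_i$ (such as $\chi_i\chi_j^{\pm1}$, $\chi_i$, $\chi_i \circ N_{E/F}$, $\chi_i\chi_{n+1}^{c-1}$); I would then invoke the classical reducibility criteria: Bernstein--Zelevinsky/Tadi\'c for $\SL_2(F)$ and $\GL_2(E)$, Lemma~\ref{lem:SO4} for $\SO^*_4(F)$, and \cite[\S11.1, \S12.2]{MR1081540} together with \cite{MR1634020} for $\U_2(F)$ and $\U_3(F)$. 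Ranging over all reduced roots produces precisely the bullets of each theorem except the last one in cases (i), (iii), (iv), (v).

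For step (b), I would use that $\alggrp L \cong \prod_i \Res_{E/F}\GL_{m_i} \times \alggrp G'$, where $\alggrp G'$ is a quasisplit classical group of the same type as $\alggrp G$ but of smaller rank, and that by the definition of $\alggrp L$ the cocharacter $\nu$ is central in $\alggrp L$. Hence $\chi$ restricted to each $\GL_{m_i}(E)$ is unitary up to an (irrelevant) unramified twist of the whole factor, and its restriction to $G'$ is unitary up to a central twist. As unitary principal series of $\GL_{m}(E)$ are irreducible, condition (b) is equivalent to irreducibility of the unitary principal series of $G'$, i.e.\ to triviality of its $R$-group. For $\SO_{2n+1}(F)$ and $\U_{2n+1}(F)$ this $R$-group is always trivial (Keys \cite{MR1296726}, Goldberg \cite{MR1224616}), so (b) imposes nothing beyond (a) --- the remaining conditions in (ii) and (vi) already come from step (a), in the $\U_{2n+1}$ case from the rank-one group $\SU_3$. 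For $\Sp_{2n}(F)$, $\SO_{2n}(F)$, $\SO^*_{2n}(F)$ and $\U_{2n}(F)$ I would compute the $R$-group by lifting to the similitude group and applying Lemmas~\ref{lem:R-group-GSO}, \ref{lem:R-group-GSO-star} (and their analogues for $\GSp$, $\GU$ from \cite{MR1464132}, \cite{MR1296726}, \cite{MR1224616}); triviality of the $R$-group then reads off as the last bullet of the corresponding theorem.

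The main obstacle I anticipate is entirely bookkeeping: identifying, in the genuinely quasisplit cases $\SO^*_{2n}$, $\U_{2n}$, $\U_{2n+1}$, the exact rank-one data $\chi|_{L_\alpha^{\der}}$ from the root datum (keeping $N_{E/F}$ and the involution $c$ straight), and matching the $R$-group computation for each of the four ``even'' families to the precise combinatorial statement given (non-existence of subsets $\Sigma$ of $\{\chi_i : \chi_i^2 = 1\}$ of a prescribed parity with a prescribed product). No ideas beyond those already present in the proofs of Theorems~\ref{thm:Sp2n-irred}--\ref{thm:U2n+1-irred} and in the preparatory Lemmas~\ref{lem:SO4}--\ref{lem:R-group-GSO-star} should be needed.
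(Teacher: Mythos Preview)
Your overall plan is exactly the paper's: apply the criterion of Remark~\ref{rk:irred-smooth-princ-series} and translate conditions (a) and (b) into the listed bullets, with (a) handled by the rank-one reducibility criteria for $\SL_2$, $\PGL_2$, $\GL_2(E)$, $\SO_4^*$, $\U_2$, $\U_3$ just as in the proofs of Theorems~\ref{thm:Sp2n-irred}--\ref{thm:U2n+1-irred}. That part is fine.

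The issue is in your step (b). You propose to ``compute the $R$-group by lifting to the similitude group and applying Lemmas~\ref{lem:R-group-GSO}, \ref{lem:R-group-GSO-star}''. But those lemmas compute the $R$-group of $\GSO_{2n}$ and $\GSO_{2n}^*$, not of $\SO_{2n}$ and $\SO_{2n}^*$; likewise the results you cite from \cite{MR1464132} are for the similitude groups. In Theorems~\ref{thm:Sp2n-irred}--\ref{thm:U2n+1-irred} the similitude lift was used for a \emph{different} purpose (uniqueness of the generic constituent, hence genericity of all constituents after restriction), not to determine irreducibility. The $R$-group of $G$ is genuinely larger than that of its similitude group in general, and this is exactly why the final bullets differ between the two families of theorems: compare ``$\chi_i$ not of order $2$ for all $i$'' in Theorem~\ref{thm:irred-smooth-classical-grps}(i) with the linear-independence condition in Theorem~\ref{thm:Sp2n-irred}, or ``$\chi_i|_{F^\times} \ne \omega_{E/F}$ for all $i$'' in (v) with the ``at most one element'' condition in Theorem~\ref{thm:U2n-irred}. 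Following your proposal literally would reproduce the similitude-group conditions, not the correct ones.

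The fix is simply to cite the $R$-group results for the groups themselves: \cite[Theorem~6.4]{MR1296726} for $\Sp_{2n}$, \cite[Theorem~6.8]{MR1296726} for $\SO_{2n}$, \cite[Theorem~A.4]{MR3430367} for $\SO_{2n}^*$, and \cite[Theorem~3.4]{MR1224616} for $\U_{2n}$. (For $\SO_{2n+1}$ and $\U_{2n+1}$ the same references used before already apply to the group itself, so nothing changes there.) With this correction your argument goes through and coincides with the paper's.
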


\begin{proof}
  This follows as in the proof of Theorems~\ref{thm:Sp2n-irred}--\ref{thm:U2n+1-irred}.
  If $G = \Sp_{2n}(F)$ (resp.\ $\SO_{2n}(F)$, resp.\ $\SO^*_{2n}(F)$, resp.\ $G = \U_{2n}(F)$), the relevant $R$-group result
  can be found in \cite[Theorem 6.4]{MR1296726} (resp.\ \cite[Theorem 6.8]{MR1296726}, resp.\ \cite[Theorem A.4]{MR3430367}, resp.\ \cite[Theorem 3.4]{MR1224616}).
\end{proof}

\subsection{The group \texorpdfstring{$\GL_{n}(D)$}{GL\_n(D)}}
\label{sec:GL(n,D)}

Let $D$ be a central division $F$-algebra of dimension $d^{2}$ and $G_{n} = \GL_{n}(D)$, $B = B_{n}$ the minimal parabolic subgroup of upper-triangular matrices, $U = U_{n}$ the unipotent radical of $B_{n}$, and $Z = Z_{n}$ the diagonal minimal Levi subgroup.
The mirabolic subgroup $P_{n}$ of $\GL_{n}(D)$ is defined by $P_{n} := \{(g_{ij})\in G\mid g_{nn} = 1, g_{ni} = 0\ (1\le i\le n  - 1)\}$.
We say that a representation $\pi$ of $P_{n}$ is \emph{generic} if $\pi_{\overline{C},U_{n},\theta} \ne 0$ for one (equivalently any) non-degenerate character $\theta\colon U_{n}\to \overline{C}^{\times}$.
Again, by a slight abuse we also say that a smooth representation $\pi'$ of $P_n$ over $\o C$ is \emph{generic} if $\pi'_{U_n,\theta}\ne 0$ for some non-degenerate $\theta$.
Let $\Nrd\colon D^{\times}\to F^{\times}$, $\det\colon G_n\to F^{\times}$ be the reduced norm.
For an absolutely irreducible smooth representation $\sigma$ of $D^{\times}$, let $\nu_{\sigma} = \lvert\Nrd\rvert_{F}^{s(\sigma)}$ be the character of $D^{\times}$ of \cite[Section 2]{MR1040995}.
Here, $s(\sigma)$ is a positive integer dividing $d$.
It is characterized by the fact that for $\sigma'$ another absolutely irreducible smooth representation of $D^\times$ the induction $(\nInd_{B_{2}}^{G_{2}}\sigma\boxtimes \sigma')^\sm$ is reducible if and only if $\sigma' \cong \sigma\nu_{\sigma}^{\pm 1}$ \cite[2.5 Lemma, 4.2 Lemma]{MR1040995}.
(Note that $\sigma'$ does not denote the strong dual of $\sigma$ in this subsection.)

\begin{thm}\label{thm:GL(n,D),dim>1}
Let $\sigma = \sigma_{1}\boxtimes\cdots\boxtimes\sigma_{n}$ be an absolutely irreducible smooth representation of $Z\simeq (D^{\times})^{n}$ over $C$.
Assume the following condition.
\begin{equation}\label{eq:seq}
  \begin{gathered}
    \text{There exists no sequence $1 \le i_0 < i_1 < \dots < i_e \le n$ such that}\\
    \text{$\sigma_{i_j} \cong \sigma_{i_0} \nu_{\sigma_{i_0}}^j$ for all $0 \le j \le e$ and
      $\nu_{\sigma_{i_0}}^e = \lvert\Nrd\rvert_{F}^{d}$}.
  \end{gathered}
\end{equation}
Then any nonzero $P_{n}$-subrepresentation of $(\nInd_{B}^{G}\sigma)^{\sm}$ is generic.
In particular, $(\Ind_{B}^{G}\sigma\delta_{B}^{1/2})^{\cts}$ is irreducible.
\end{thm}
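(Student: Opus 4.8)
The plan is to combine the two assertions of the theorem with the general machinery developed earlier in the paper. The first assertion --- that every nonzero $P_n$-subrepresentation of $(\nInd_B^G\sigma)^{\sm}$ is generic --- is the substantive part, and I would prove it by generalizing Bernstein--Zelevinsky's theory of derivatives from $\GL_n(F)$ to $\GL_n(D)$. First I would set up the functors $\Phi^-,\Psi^-$ (and their adjoints) attached to the mirabolic subgroup $P_n\subset G_n$, establish the filtration of the restriction to $P_n$ of a representation induced from a maximal parabolic (the analogue of \cite[4.13]{MR0579172}), and define the derivatives $\pi^{(k)}$. The key computation is the derivative of $(\nInd_{B_2}^{G_2}\sigma_i\boxtimes\sigma_j)^{\sm}$, which is where the characters $\nu_\sigma$ and the reducibility criterion of \cite{MR1040995} enter: the ``highest'' derivative of a segment representation is the one that can fail to be the full induced representation, and the combinatorial hypothesis~\eqref{eq:seq} is precisely designed to rule out the only obstruction to all derivatives being ``as large as possible''. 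One then argues, by induction on $n$ using the exactness of the derivative functors and an analysis of which Jacquet-type subquotients can be non-generic (mimicking \cite[4.11~Theorem]{MR0579172}, which was already invoked in the proof of Theorem~\ref{thm:GLn-irred} in the split case $d=1$), that no nonzero $P_n$-subrepresentation of $(\nInd_B^G\sigma)^{\sm}$ can have vanishing top derivative, i.e.\ every such subrepresentation is generic.

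Granting the first assertion, the second --- irreducibility of the Banach principal series $(\Ind_B^G\sigma\delta_B^{1/2})^{\cts}$ --- follows quickly. Since any subrepresentation of a generic representation of $P_n$ is generic, and in particular nonzero, every nonzero $G$-subrepresentation $\pi\subset(\nInd_B^G\sigma)^{\sm} = (\Ind_B^G\sigma\delta_B^{1/2})^{\sm}$ has $\pi_{U,\theta}\neq 0$ for a non-degenerate $\theta$, i.e.\ is generic. I would then invoke Proposition~\ref{prop:whittaker criterion} with $\alggrp P = \alggrp B$, $\sigma_0 = 1$, $\tau = \sigma\delta_B^{1/2}$, and $\alggrp Q = \alggrp G$: its hypothesis is exactly that every irreducible subrepresentation of $(\Ind_B^G\tau)^{\sm}$ is generic, which we have just verified (note that by Theorem~\ref{thm:criterion-intro}/Corollary~\ref{cor:irreduciblity} we are free to take the inducing representation $\sigma$ smooth, as in the theorem statement). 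The conclusion of Proposition~\ref{prop:whittaker criterion} is precisely that $(\Ind_B^G\sigma\delta_B^{1/2})^{\cts} = (\Ind_B^G\tau)^{\cts}$ is irreducible.

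The main obstacle will be the derivatives computation over the division algebra $D$. In the split case, the reducibility of $(\nInd_{B_2}^{G_2}\chi_i\boxtimes\chi_j)^{\sm}$ happens exactly when $\chi_i/\chi_j = \lvert\cdot\rvert_F^{\pm1}$, a ``distance one'' condition, and the $e+1$-term arithmetic progression in~\eqref{eq:seq} reduces to a segment $\{\chi_{i_0},\chi_{i_0}\lvert\cdot\rvert_F,\dots\}$ of $p$-adic absolute value ratio $p^{-e}$ with $e$ playing the role of $d=1$. Over $D$ the elementary reducibility is governed by the twist $\nu_\sigma = \lvert\Nrd\rvert_F^{s(\sigma)}$ with $s(\sigma)\mid d$, so the correct notion of ``segment'' and the correct normalization of derivatives both involve $s(\sigma)$ rather than $1$, and the condition $\nu_{\sigma_{i_0}}^e = \lvert\Nrd\rvert_F^d$ (i.e.\ $e\cdot s(\sigma_{i_0}) = d$) is what makes the top derivative of the corresponding segment representation smaller than expected. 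Getting the bookkeeping of these twists right --- and verifying that the derivative functors for $\GL_n(D)$ retain the formal properties (exactness, Leibniz rule, the filtration on restriction to the mirabolic) used in the $\GL_n(F)$ argument --- is the technical heart; the reduction of the final statement to Proposition~\ref{prop:whittaker criterion} is then essentially immediate, as is the passage from smooth to continuous (Banach) induction via Corollary~\ref{cor:irreduciblity}.
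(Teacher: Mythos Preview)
Your overall strategy is right: generalize Bernstein--Zelevinsky derivatives to $\GL_n(D)$ and then invoke Proposition~\ref{prop:whittaker criterion}. The deduction of irreducibility from genericity is exactly what the paper does (with $\sigma_0=1$, $Q=G$).

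However, there is a structural gap in your plan for the genericity claim. The paper does \emph{not} feed condition~\eqref{eq:seq} directly into the derivatives machinery. Instead there is an essential preliminary reduction: one first uses Weyl-group transpositions to replace $\sigma$ by a permuted $\tau = \tau_1\boxtimes\cdots\boxtimes\tau_n$ in the same $W$-orbit with $(\nInd_B^G\sigma)^{\sm}\cong(\nInd_B^G\tau)^{\sm}$ and satisfying the much simpler condition $\tau_i\not\simeq\tau_j\otimes\lvert\Nrd\rvert_F^{-d}$ for $i<j$. The characters $\nu_\sigma$ enter \emph{here}, not in the derivatives: the transposition $\sigma_i\leftrightarrow\sigma_{i+1}$ preserves the induced representation exactly when $(\nInd_{B_2}^{G_2}\sigma_i\boxtimes\sigma_{i+1})^{\sm}$ is irreducible, i.e.\ when $\sigma_{i+1}\not\simeq\sigma_i\nu_{\sigma_i}^{\pm1}$. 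Verifying that such transpositions can always achieve the simpler condition, while preserving~\eqref{eq:seq}, is a nontrivial combinatorial argument occupying about a page. Without this reduction your plan does not go through: the condition the derivatives argument actually needs is the one involving $\lvert\Nrd\rvert_F^{-d}$ (a fixed twist), whereas~\eqref{eq:seq} involves $\nu_{\sigma_{i_0}}$ (depending on $s(\sigma_{i_0})\mid d$), and these genuinely differ when $d>1$.

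After the reduction, the paper's argument also differs from your sketch. Rather than computing derivatives of segment representations and checking they are ``as large as possible'', the key is a dual trick: for the highest nonzero derivative $\pi_0^{(k)}$ of a putative non-generic $P_n$-submodule, one obtains \emph{two} embeddings of $\pi_0^{(k)}$ (Lemma~\ref{lem:embedding of derivative}(ii)), one via $r(\sigma')$ and one via $r((\sigma')^\vee)^\vee$ with an extra twist by $\lvert\det\rvert_F^{-d}$; comparing cuspidal supports forces $\sigma_1\simeq\sigma_j\otimes\lvert\Nrd\rvert_F^{-d}$ for some $j$, contradicting the reduced hypothesis. The duality identity $\Phi^-(\pi^\vee)\simeq\Phi^-(\pi)^\vee$ (Lemma~\ref{lem:dual and Phi^-}) is the engine here, and your plan does not mention it.
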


The last part follows from Proposition~\ref{prop:whittaker criterion} for the group $\Res_{F/\Q_p} \GL_n(D)$ (and $\sigma_0 = 1$, $\tau = \sigma$, $Q = G$).
For the first part, exactly as in the proof of Corollary~\ref{cor:whittaker criterion} it suffices to prove the claim after extending scalars to $\o C \cong \C$.
\emph{Therefore, for the remainder of this subsection we may and will work over $\C$.}

We fix an irreducible smooth $Z$-representation $\sigma = \sigma_{1}\boxtimes\cdots\boxtimes\sigma_{n}$ satisfying~(\ref{eq:seq}).
We first claim that $(\nInd_{B}^{G}\sigma)^{\sm} \cong (\nInd_{B}^{G}\tau)^{\sm}$ for some $\tau = \tau_{1}\boxtimes\cdots\boxtimes\tau_{n}$
in the Weyl group orbit of $\sigma$ such that $\tau_{i}\not\simeq\tau_{j}\otimes\lvert\Nrd\rvert_{F}^{-d}$ for any $i < j$.
Letting $\sim$ be the equivalence relation on irreducible smooth representations of $D^\times$ induced by $\zeta \sim \zeta \nu_{\zeta}$.
As $(\nInd_{B_{2}}^{G_{2}}\sigma_i\boxtimes \sigma_{i+1})^\sm \cong (\nInd_{B_{2}}^{G_{2}}\sigma_{i+1}\boxtimes \sigma_i)^\sm$ whenever $\sigma_i \not\sim \sigma_{i+1}$,
by repeated transposition of consecutive, inequivalent representations we may assume that whenever $\sigma_i \sim \sigma_j$ for some $i < j$, then $\sigma_i \sim \sigma_{i+1} \sim \dots \sim \sigma_j$, while preserving condition \eqref{eq:seq}.
In this way we reduce to the case where all $\sigma_i$ ($1 \le i \le n$) lie in the same equivalence class, i.e.\ $\sigma_i \cong \zeta \nu_{\zeta}^{k(i)}$ for some $k(i) \in \Z$.
Let $e$ be the divisor of $d$ such that $\nu_\zeta^e = \lvert\Nrd\rvert_{F}^{d}$.
Condition \eqref{eq:seq} says that there exists no sequence $1 \le i_0 < i_1 < \dots < i_e \le n$ such that $k(i_j) - k(i_0) = j$ for all $0 \le j \le e$.
If there exists any $0 \le \ell < n$ such that $k(\ell+1)-k(\ell) \ge 2$, then we can transpose $\sigma_{\ell}$ and $\sigma_{\ell+1}$, while keeping condition \eqref{eq:seq} satisfied.
(Subsequences $i_0 < \cdots < i_e$ as in \eqref{eq:seq} containing at most one of $\ell$, $\ell+1$ are unaffected by the transposition. Subsequences containing both will no longer satisfy \eqref{eq:seq} after transposition because the function $k(i_j)$ has to be increasing.)
Each such transposition decreases the sum $\sum_{\ell=1}^n \ell k(\ell)$, so after finitely many steps we may assume that $k(\ell+1)-k(\ell) \le 1$ for all $0 \le \ell < n$ and that condition \eqref{eq:seq} still holds.
Suppose there exist $i < j$ such that $\sigma_i \cong \sigma_j \otimes\lvert\Nrd\rvert_{F}^{-d}$, i.e.\ $k(j)-k(i) = e$.
We have $e > 1$ by condition \eqref{eq:seq}.
More generally, suppose that for some $i < j$ we have $k(j)-k(i) = e' > 1$.
Pick $i \le i' < j$ maximal such that $k(i')-k(i) \le 1$, so $k(i'+1)-k(i) \ge 2$.
Then $1 \ge k(i'+1)-k(i') \ge 2-1 = 1$, whence equality holds and in particular $k(i')-k(i) = 1$.
Hence, taking again $e'=e$, then by induction there exists a sequence $i = i_0 < i_1 < \dots < i_e = j$ such that $k(i_j)-k(i_{j-1}) = 1$ for all $1 \le j \le e$, contradiction.

Therefore, we may assume that $\sigma_{i}\not\simeq\sigma_{j}\otimes\lvert\Nrd\rvert_{F}^{-d}$ for any $i < j$.
If $D = F$ then the theorem is proved in \cite[4.11 Theorem]{MR0579172} and in general we follow their argument.

For a locally profinite topological group $H$, let $\Rep^{\infty}(H)$ be the category of smooth $H$-representations over $\C$.
If $H'$ is a closed subgroup of $H$ and $\tau\in\Rep^{\infty}(H)$, we put $(\nInd_{H'}^{H}\tau)^{\sm} := (\Ind_{H'}^{H}\tau\delta_{H}^{-1/2}\delta_{H'}^{1/2})^{\sm}$.
Let $N_{n}$ be the unipotent radical of $P_{n}$.
We fix a non-degenerate character $\theta$ of $U_{n}$ and let $\eta_{n}$ be the restriction of $\theta$ to $N_{n}$.
We define exact functors as follows:
\begin{align*}
& \Phi^{-}\colon \Rep^{\infty}(P_{n})\to \Rep^{\infty}(P_{n - 1})\quad \pi\mapsto \lvert\det\rvert_{F}^{-d/2}\otimes \pi_{N_{n},\eta_{n}},\\
& \widehat{\Phi}^{+}\colon \Rep^{\infty}(P_{n - 1})\to \Rep^{\infty}(P_{n})\quad  \tau\mapsto (\nInd_{P_{n - 1}N_{n}}^{P_{n}}\tau)^\sm,\\
& \Phi^{+}\colon \Rep^{\infty}(P_{n - 1})\to \Rep^{\infty}(P_{n})\quad  \tau\mapsto (\ncInd_{P_{n - 1}N_{n}}^{P_{n}}\tau)^\sm,
\end{align*}
where $\ncInd$ denotes normalized compact induction.
(We keep the notation of \cite{MR0579172}, but caution that $\Phi^{+}$ should not be confused with the set of positive roots. 
Also note that \cite{MR0579172} uses a different definition of normalized induction.)
In the definition of $\widehat{\Phi}^{+}$ and $\Phi^+$ we extend $\pi$ to $P_{n - 1}N_{n}$ by letting $N_{n}$ act via $\eta_n$.
We remark that $\delta_{P_{n - 1}N_{n}} = \lvert\det\rvert_{F}^{2d}$ and $\delta_{P_{n}} = \lvert\det\rvert_{F}^{d}$, so $(\nInd_{P_{n - 1}N_{n}}^{P_{n}}\tau)^{\sm} = (\Ind_{P_{n - 1}N_{n}}^{P_{n}}\tau\lvert\det\rvert_{F}^{d/2})^{\sm}$.
In particular, the functors $\Phi^{-}$, $\widehat{\Phi}^{+}$, $\Phi^{+}$ coincide with the ones in \cite{MR0579172} when $D = F$.

\begin{lem}\label{lem:from BZ}
The functor $\Phi^{-}$ is right adjoint to $\Phi^{+}$ and left adjoint to $\widehat{\Phi}^{+}$.
We have $\Phi^{-}(\pi) = 0$ if and only if $N_{n}$ acts trivially on $\pi$.
\end{lem}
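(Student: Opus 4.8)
The plan is to transcribe the argument of Bernstein--Zelevinsky \cite[\S2, \S3]{MR0579172}, which treats the case $D = F$, the division algebra entering only through the modulus characters. The structural facts I would use are: $N_{n}$ is abelian, isomorphic to $D^{n-1}$ as a topological group; $P_{n} = \GL_{n-1}(D)\ltimes N_{n}$, with $\GL_{n-1}(D)$ the block-diagonal subgroup; $\eta_{n}$ is a \emph{nontrivial} character of $N_{n}$ that is invariant under conjugation by $P_{n-1}\subset\GL_{n-1}(D)$; the group $\GL_{n-1}(D)$ permutes the nontrivial characters of $N_{n}$ transitively; and $P_{n-1}N_{n}$ is a closed (but not open) subgroup of $P_{n}$ with $P_{n-1}N_{n}\backslash P_{n}\cong P_{n-1}\backslash\GL_{n-1}(D)$.

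For the adjunction $\Phi^{-}\dashv\widehat{\Phi}^{+}$ I would factor $\Phi^{-}$ as restriction $\Rep^{\infty}(P_{n})\to\Rep^{\infty}(P_{n-1}N_{n})$, followed by the twisted coinvariants functor $(-)_{N_{n},\eta_{n}}$ (which lands in $\Rep^{\infty}(P_{n-1})$ since $\eta_{n}$ is $P_{n-1}$-invariant) and the twist by $\lvert\det\rvert_{F}^{-d/2}$. Each factor has an evident right adjoint: ordinary Frobenius reciprocity for the closed subgroup $P_{n-1}N_{n}\subset P_{n}$ gives $\Ind_{P_{n-1}N_{n}}^{P_{n}}$; the twisted coinvariants functor is left adjoint to ``inflate through $\eta_{n}$''; and twisting by a character is self-inverse up to inversion of the character. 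Composing, the right adjoint of $\Phi^{-}$ is $\widehat{\Phi}^{+}$, provided the normalizing twists agree; this comes down to the identity $(\delta_{P_{n-1}N_{n}}\delta_{P_{n}}^{-1})\vert_{P_{n-1}N_{n}} = \lvert\det\rvert_{F}^{d}$, which is immediate from the recorded values $\delta_{P_{n-1}N_{n}} = \lvert\det\rvert_{F}^{2d}$ and $\delta_{P_{n}} = \lvert\det\rvert_{F}^{d}$ and is exactly what forces the twist $\lvert\det\rvert_{F}^{-d/2}$ in the definition of $\Phi^{-}$. The adjunction $\Phi^{+}\dashv\Phi^{-}$ is obtained by the same reasoning, now invoking the form of Frobenius reciprocity for compact induction along the closed subgroup $P_{n-1}N_{n}$ established in \cite{MR0579172}; the normalization check is identical.

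For the last assertion, note that $\GL_{n-1}(D)\subset P_{n}$ normalizes $N_{n}$, so for $\pi\in\Rep^{\infty}(P_{n})$ and $g\in\GL_{n-1}(D)$ the operator $\pi(g)$ descends to an isomorphism $\pi_{N_{n},\eta_{n}}\congto\pi_{N_{n},g\cdot\eta_{n}}$, where $(g\cdot\eta_{n})(v) := \eta_{n}(g^{-1}vg)$. Since the $g\cdot\eta_{n}$ exhaust the nontrivial characters of $N_{n}$, the vanishing $\Phi^{-}(\pi) = 0$, i.e.\ $\pi_{N_{n},\eta_{n}} = 0$, forces $\pi_{N_{n},\psi} = 0$ for every nontrivial character $\psi$ of $N_{n}$. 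A smooth representation of $N_{n}$ is a direct sum of characters (each vector is fixed by a compact open subgroup, modulo which $N_{n}$ is torsion, so the action is locally semisimple), and its $\psi$-coinvariants coincide with its $\psi$-eigenspace; hence $\pi_{N_{n},\psi} = 0$ for all nontrivial $\psi$ means precisely that $N_{n}$ acts trivially on $\pi$. Conversely, if $N_{n}$ acts trivially then $\pi_{N_{n},\eta_{n}}$ is the quotient of $\pi$ by the vectors $(1-\eta_{n}(v))x$, which is all of $\pi$ because $\eta_{n}$ is nontrivial.

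I expect the main obstacle to be the compact-induction adjunction $\Phi^{+}\dashv\Phi^{-}$: since $P_{n-1}N_{n}$ is closed but not open in $P_{n}$, naive Frobenius reciprocity for compact induction does not apply verbatim, and one must use its mirabolic version from \cite{MR0579172}, whose proof exploits the transitivity of the $\GL_{n-1}(D)$-action on the nontrivial characters of $N_{n}$, equivalently the geometry of the orbit $P_{n-1}N_{n}\backslash P_{n}\cong D^{n-1}\setminus\{0\}$. Everything else is routine bookkeeping of modulus characters, into which $D$ enters only through the exponent $d$, where $\dim_{F}D = d^{2}$.
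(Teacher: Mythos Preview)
Your treatment of the two adjunctions is correct and matches the paper: ordinary Frobenius reciprocity yields $\Phi^{-}\dashv\widehat\Phi^{+}$, and for $\Phi^{+}\dashv\Phi^{-}$ both you and the paper defer to the argument of \cite[3.2~Proposition~(b)]{MR0579172}; your modulus bookkeeping is right.

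For the last assertion there is a gap. The reduction step---$\pi_{N_n,\eta_n}=0$ forces $\pi_{N_n,\psi}=0$ for every nontrivial $\psi$, by the transitive $\GL_{n-1}(D)$-action on $\widehat{N_n}\setminus\{1\}$---is fine, as is the converse direction. But the claim that a smooth $N_n$-representation is a direct sum of characters is false, even over an algebraically closed coefficient field. Take $\pi=C[N_n/U]$ for a compact open $U\subset N_n$ (the regular representation of the discrete torsion group $N_n/U$): for any character $\psi$ the $\psi$-eigenspace consists of vectors $\sum_x c_x[x]$ with $c_x=\psi(-x)c_0$, which forces $c_x\neq 0$ for all $x$ once $c_0\neq 0$---impossible for a finite sum. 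So every eigenspace is zero while every coinvariant space is one-dimensional; in particular ``locally semisimple'' fails and coinvariants do not coincide with eigenspaces. The passage from ``all nontrivial coinvariants vanish'' to ``$N_n$ acts trivially'' is precisely the content the paper imports from \cite[3.2~Proposition~(e)]{MR0579172}, and the proof there rests on the $\ell$-sheaf formalism of \cite[\S1--2]{MR0579172} (smooth $N_n$-modules $\leftrightarrow$ sheaves on $\widehat{N_n}$, under which vanishing of all nontrivial stalks forces support at the closed point $\{1\}$), not on any semisimplicity statement. Once you invoke that input your argument is complete.
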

\begin{proof}
By Frobenius reciprocity, $\widehat{\Phi}^{+}$ is the right adjoint of $\Phi^{-}$.
For $\Phi^{+}$, the proof of \cite[3.2 Proposition (b)]{MR0579172} applies.
The last claim follows from \cite[3.2 Proposition (e)]{MR0579172} when $D = F$ and the same proof applies.
\end{proof}

For a smooth representation $\tau$, let $\tau^{\vee}$ be the smooth dual of $\tau$.

\begin{lem}\label{lem:dual and Phi^-}
We have $\Phi^{-}(\pi^{\vee})\simeq \Phi^{-}(\pi)^{\vee}$ for $\pi\in\Rep^{\infty}(P_{n})$.
\end{lem}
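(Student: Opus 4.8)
The statement is the compatibility of the functor $\Phi^-$ with smooth contragredients, i.e.\ $\Phi^-(\pi^\vee) \cong \Phi^-(\pi)^\vee$ for $\pi \in \Rep^\infty(P_n)$. Since $\Phi^-(\pi) = \lvert\det\rvert_F^{-d/2} \otimes \pi_{N_n,\eta_n}$ and $(\lvert\det\rvert_F^{-d/2})^\vee = \lvert\det\rvert_F^{d/2}$, the claim amounts to an isomorphism $(\pi^\vee)_{N_n,\eta_n^{-1}} \cong (\pi_{N_n,\eta_n})^\vee \otimes \lvert\det\rvert_F^{d}$ of $P_{n-1}$-representations, up to a harmless twist coming from the fact that passing to the contragredient of $\pi$ replaces the character $\eta_n$ of $N_n$ by $\eta_n^{-1}$; note that $\eta_n$ and $\eta_n^{-1}$ are conjugate under $P_{n-1} \subset P_n$ (the diagonal torus acts transitively on non-degenerate characters up to the relevant scaling), so this conjugation can be absorbed. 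The plan is to follow the proof of the corresponding statement in the split case, \cite[3.5 Proposition]{MR0579172} (where $D = F$), and check that every step goes through verbatim for a general central division algebra $D$.

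The key point is a duality statement for (twisted) Jacquet modules of $N_n$, which is an instance of a general lemma: for a smooth representation $\pi$ of a group of the form $H \ltimes N$ with $N$ a union of compact open subgroups (here $N = N_n$, which is a union of its compact open subgroups since $D$ is locally compact) and $\theta$ a smooth character of $N$ normalized by $H$, one has a natural $H$-equivariant isomorphism $(\pi^\vee)_{N,\theta^{-1}} \cong (\pi_{N,\theta})^\vee$. This is proved by the standard averaging/co-invariants argument: the contragredient functor is exact, the $\theta$-coinvariants functor is exact, and one checks on the level of the canonical pairing that the natural map $(\pi^\vee)_{N,\theta^{-1}} \times \pi_{N,\theta} \to C$ induced by the $H \ltimes N$-invariant pairing between $\pi^\vee$ and $\pi$ is well-defined and perfect, using that $N$ is a union of compact open subgroups so that the relevant integrals/averages make sense. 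Then I would apply this with $H = P_{n-1} \cdot (\text{the }\GL_1(D)\text{-block})$ acting on $\pi \in \Rep^\infty(P_n)$, restricting along $P_{n-1} \hookrightarrow P_n$, and track the normalizing twist $\lvert\det\rvert_F^{\pm d/2}$ through. Finally one records that $\Phi^-(\pi)^\vee = (\lvert\det\rvert_F^{-d/2} \otimes \pi_{N_n,\eta_n})^\vee = \lvert\det\rvert_F^{d/2} \otimes (\pi_{N_n,\eta_n})^\vee$ matches $\Phi^-(\pi^\vee) = \lvert\det\rvert_F^{-d/2} \otimes (\pi^\vee)_{N_n,\eta_n}$ after using the isomorphism above (noting $\eta_n$ on $\pi^\vee$ plays the role of $\eta_n^{-1}$, conjugate to $\eta_n$, and that $\delta$-factors work out since $\delta_{P_{n-1}N_n} = \lvert\det\rvert_F^{2d}$).

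The main obstacle is not conceptual but bookkeeping: one must verify that the character $\eta_n$ (the restriction of the fixed non-degenerate $\theta$ to $N_n$) behaves under contragredient exactly as in the split case, i.e.\ that $\eta_n$ and $\eta_n^{-1}$ lie in the same $P_{n-1}$-orbit, and that this orbit change introduces no spurious twist at the level of $P_{n-1}$-representations; this is where the non-degeneracy of $\theta$ and the structure of $N_n$ as an abelianized-by-$D$ group enter. One should also make sure that the genuinely $D$-linear features --- that $\Nrd$ and $\det$ take values in $F^\times$, and that the modulus characters satisfy $\delta_{P_{n-1}N_n} = \lvert\det\rvert_F^{2d}$, $\delta_{P_n} = \lvert\det\rvert_F^d$ (computed from the action of the diagonal torus on $\Lie$ of the unipotent radicals, each root space being $d^2$-dimensional over $F$) --- are used consistently, so that the twist by $\lvert\det\rvert_F^{-d/2}$ in the definition of $\Phi^-$ and the self-duality/twist of this character match up. Once these normalizations are pinned down, the argument of \cite[3.5 Proposition]{MR0579172} applies with only the obvious notational changes, and I would simply say so rather than reproduce it.
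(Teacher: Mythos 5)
Your proposal takes a genuinely different route from the paper, and unfortunately it contains two gaps that together make the direct argument fail as sketched.

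The paper's proof is a short Yoneda-style argument using Lemma~\ref{lem:from BZ}: since $\Phi^-$ is simultaneously right adjoint to $\Phi^+$ (compact induction) and left adjoint to $\widehat{\Phi}^+$ (full induction), and since smooth contragredient swaps compact and full induction, namely $\Phi^+(\tau)^\vee \cong \widehat{\Phi}^+(\tau^\vee)$, one obtains natural isomorphisms
\[
\Hom_{P_{n-1}}(\tau,\Phi^-(\pi^\vee)) \cong \Hom_{P_n}(\Phi^+(\tau),\pi^\vee) \cong \Hom_{P_n}(\pi,\widehat{\Phi}^+(\tau^\vee)) \cong \Hom_{P_{n-1}}(\Phi^-(\pi),\tau^\vee) \cong \Hom_{P_{n-1}}(\tau,\Phi^-(\pi)^\vee)
\]
(also using $\Hom_G(A,B^\vee)\cong\Hom_G(B,A^\vee)$ for smooth representations), and concludes. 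No delicate analysis of the pairing on twisted coinvariants is ever needed; the modulus twists are absorbed into the normalizations built into $\Phi^\pm$ and $\widehat{\Phi}^+$.

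Your proposed direct approach has two genuine problems. First, the claimed isomorphism $(\pi^\vee)_{N,\theta^{-1}}\cong(\pi_{N,\theta})^\vee$ via the canonical pairing is not correct: the pairing identifies $(\pi_{N,\theta})^\vee$ with the \emph{$\theta^{-1}$-invariants} $(\pi^\vee)^{N,\theta^{-1}}$, not the coinvariants, and since $N_n\cong D^{n-1}$ is non-compact these do not coincide. (The pairing $\pi^\vee\times\pi\to C$ simply does not descend to $(\pi^\vee)_{N,\theta^{-1}}\times\pi_{N,\theta}$; a generator $\pi^\vee(n_0)f-\theta^{-1}(n_0)f$ of the relevant subspace of $\pi^\vee$ does not pair to zero with arbitrary $v$.) Passing from invariants to coinvariants with a modulus twist is a second-adjointness-type statement, not the elementary averaging argument you invoke. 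Second, $\eta_n$ and $\eta_n^{-1}$ are \emph{not} conjugate under $P_{n-1}$: the group $P_{n-1}$ stabilizes $\eta_n$ (this is precisely what makes $\Phi^-$ land in $\Rep^\infty(P_{n-1})$), so the $P_{n-1}$-orbit of $\eta_n$ is $\{\eta_n\}$; the element that conjugates $\eta_n$ to $\eta_n^{-1}$ (a sign change in the $(n-1)$st diagonal entry) lies in $P_n$ but not in $P_{n-1}$. The correct statement, as the paper's proof shows, keeps the \emph{same} $\eta_n$ on both sides of the isomorphism, with the character and modulus bookkeeping handled formally by the adjunctions. I would recommend replacing your plan with the adjunction argument.
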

\begin{proof}
Let $\tau\in\Rep^{\infty}(P_{n - 1})$.
We have $\Phi^{+}(\tau)^{\vee}\simeq \widehat{\Phi}^{+}(\tau^{\vee})$, by our normalization of induction.
Then we have
\begin{align*}
\Hom_{P_{n - 1}}(\tau,\Phi^{-}(\pi^{\vee}))& \simeq \Hom_{P_{n}}(\Phi^{+}(\tau),\pi^{\vee})\\
& \simeq \Hom_{P_{n}}(\pi,\Phi^{+}(\tau)^{\vee})\\
& \simeq \Hom_{P_{n}}(\pi,\widehat{\Phi}^{+}(\tau^{\vee}))\\
& \simeq \Hom_{P_{n - 1}}(\Phi^{-}(\pi),\tau^{\vee})\\
& \simeq \Hom_{P_{n - 1}}(\tau,\Phi^{-}(\pi)^{\vee}).
\end{align*}
The lemma follows.
\end{proof}

Let $Q_{n}$ be the standard parabolic subgroup of $G_{n} = \GL_{n}(D)$ corresponding to the partition $n = 1 + (n - 1)$.
Then $Q_{n-1}N_n = (G_1 \times P_{n-1})N_n'$, where the normal subgroup $N_n'$ denotes the unipotent radical of $Q_n$.
To keep notation short we write $Q_{n-1}N_n$ below, even though we think of it as $(G_1 \times P_{n-1})N_n'$.

\begin{lem}\label{lem:embedding of derivative}
Let $\sigma_1$ be a smooth representation of $G_1$ and $\sigma'$ a smooth representation of $P_{n - 1}$.
Suppose that $\pi_{0}\subset (\nInd_{Q_{n - 1}N_{n}}^{P_{n}}\sigma_{1}\boxtimes\sigma')^{\sm}$ is a $P_{n}$-subrepresentation, where we let $N_n'$ act trivially on $\sigma_{1}\boxtimes\sigma'$.
\begin{enumerate}
\item If $n \ge 3$ we have an embedding $\Phi^{-}(\pi_{0})\hookrightarrow (\nInd_{Q_{n - 2}N_{n-1}}^{P_{n-1}}\sigma_{1}\boxtimes\Phi^{-}(\sigma'))^{\sm}$.
\item If $(\pi_{0})_{N_{n},\eta_{n}} = 0$ and $n \ge 2$, then $\pi_{0}|_{G_{n - 1}}$ is embedded into the following two representations:
\begin{gather*}
(\nInd_{Q_{n - 1}}^{G_{n - 1}}\sigma_{1}\boxtimes r(\sigma'))^{\sm}\otimes \lvert \det\rvert_{F}^{d/2},\\
(\nInd_{Q_{n - 1}}^{G_{n - 1}}\sigma_{1}\boxtimes r((\sigma')^{\vee})^{\vee})^{\sm}\otimes \lvert\det\rvert_{F}^{-d/2}.
\end{gather*}
Here $r(\sigma') := (\sigma')_{N_{n - 1}}\otimes\lvert \det\rvert^{-d/2}_{F}$ is the normalized Jacquet module of $\sigma'$ with respect to the parabolic subgroup corresponding to $(n - 1) = (n - 2) + 1$.
\end{enumerate}
\end{lem}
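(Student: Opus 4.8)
The plan is to carry out the $\GL_{n}(D)$-analogue of the Bernstein--Zelevinsky computation of derivatives of induced representations, using exactness of the relevant functors together with Mackey theory. Besides $\Phi^{-}$ I will use the untwisted analogues $\Psi^{-}\colon\Rep^{\infty}(P_{n})\to\Rep^{\infty}(G_{n-1})$, $\pi\mapsto\lvert\det\rvert_{F}^{-d/2}\otimes\pi_{N_{n}}$, and $\Psi^{+}$, defined exactly as in \cite{MR0579172}; like the functors $\pi\mapsto\pi_{N_{n}}$ and $\pi\mapsto\pi^{N_{n}}$, all of these are exact on smooth representations, since homology and cohomology of the unipotent group $N_{n}$ vanish. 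Writing $I:=(\nInd_{Q_{n-1}N_{n}}^{P_{n}}\sigma_{1}\boxtimes\sigma')^{\sm}$, the inclusion $\pi_{0}\hookrightarrow I$ therefore induces embeddings $\Phi^{-}(\pi_{0})\hookrightarrow\Phi^{-}(I)$, $(\pi_{0})_{N_{n}}\hookrightarrow I_{N_{n}}$ and $(\pi_{0})^{N_{n}}\hookrightarrow I^{N_{n}}$, and in each part it will be enough to compute the appropriate functor on the \emph{induced} representation $I$ and then relate it back to $\pi_{0}$.

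For part (i), I would compute $\Phi^{-}(I)$ by restricting $I$ to $N_{n}$ and decomposing along the $N_{n}$-orbits on $Q_{n-1}N_{n}\backslash P_{n}$; this is the Bernstein--Zelevinsky ``Leibniz rule'' for the action of $\Phi^{-}$ on an induced representation of a mirabolic group, transcribed to $\GL_{n}(D)$ (the reduced-norm normalizations affect only the twisting characters, not the underlying combinatorics). This gives $\Phi^{-}(I)$ a finite filtration; the subquotient attached to the closed orbit will be $(\nInd_{Q_{n-2}N_{n-1}}^{P_{n-1}}\sigma_{1}\boxtimes\Phi^{-}(\sigma'))^{\sm}$ --- the $\Phi^{-}$ lands on $\sigma'$ because $\eta_{n}$ restricts to the analogous character on the $N_{n-1}$-block of $N_{n}$ --- while every remaining subquotient will be built from a Jacquet module of $\sigma_{1}$ with respect to a \emph{proper} parabolic subgroup of $G_{1}=\GL_{1}(D)$. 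As $G_{1}$ is anisotropic modulo its centre there is no such parabolic, so these subquotients vanish, and $\Phi^{-}(I)\cong(\nInd_{Q_{n-2}N_{n-1}}^{P_{n-1}}\sigma_{1}\boxtimes\Phi^{-}(\sigma'))^{\sm}$. (Here $n\ge 3$ is what makes $\Phi^{-}(\sigma')$ defined and the orbit geometry the expected one.) Composing with $\Phi^{-}(\pi_{0})\hookrightarrow\Phi^{-}(I)$ gives (i).

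For part (ii), the hypothesis $(\pi_{0})_{N_{n},\eta_{n}}=0$ reads $\Phi^{-}(\pi_{0})=0$, so the Bernstein--Zelevinsky fundamental exact sequence $0\to\Phi^{+}\Phi^{-}(\pi_{0})\to\pi_{0}\to\Psi^{+}\Psi^{-}(\pi_{0})\to 0$ forces $\pi_{0}\cong\Psi^{+}\Psi^{-}(\pi_{0})$, i.e.\ $N_{n}$ acts trivially on $\pi_{0}$; hence $(\pi_{0})_{N_{n}}=\pi_{0}=(\pi_{0})^{N_{n}}$ as $G_{n-1}$-representations (the subgroup $G_{n-1}\subset P_{n}$ maps isomorphically onto $P_{n}/N_{n}$), and the two embeddings $\pi_{0}\hookrightarrow I_{N_{n}}$ and $\pi_{0}\hookrightarrow I^{N_{n}}$ already give two embeddings of $\pi_{0}|_{G_{n-1}}$. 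Since $N_{n}$ acts on $\sigma_{1}\boxtimes\sigma'$ through its quotient $N_{n-1}\subseteq P_{n-1}$ acting on $\sigma'$, one checks $I_{N_{n}}\cong(\nInd_{Q_{n-1}}^{G_{n-1}}\sigma_{1}\boxtimes(\sigma')_{N_{n-1}})^{\sm}$ and $I^{N_{n}}\cong(\nInd_{Q_{n-1}}^{G_{n-1}}\sigma_{1}\boxtimes(\sigma')^{N_{n-1}})^{\sm}$ up to normalizing characters; rewriting the inner Jacquet modules in normalized form ($(\sigma')_{N_{n-1}}$ in terms of $r(\sigma')$, and $(\sigma')^{N_{n-1}}$ in terms of $r((\sigma')^{\vee})^{\vee}$ via the duality of Jacquet modules used to prove Lemma~\ref{lem:dual and Phi^-}), and tracking the remaining powers of $\lvert\det\rvert_{F}$ --- which have opposite effect on coinvariants versus invariants --- yields exactly the two representations in the statement.

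The main obstacle will be the Mackey/geometric-lemma bookkeeping underlying the three computations $\Phi^{-}(I)$, $I_{N_{n}}$, $I^{N_{n}}$: enumerating the $N_{n}$-orbits precisely, carrying every normalizing character (powers of $\lvert\det\rvert_{F}$ and $\lvert\Nrd\rvert_{F}$, and the relevant modulus characters) correctly through the reduced-norm-weighted conventions of this subsection, and verifying the vanishing of the unwanted orbit contributions to $\Phi^{-}(I)$ --- which is exactly where the absence of proper parabolics in $G_{1}$ and the bound $n\ge 3$ are used. The formal skeleton (exactness of $\Phi^{-}$, $(\cdot)_{N_{n}}$, $(\cdot)^{N_{n}}$; the fundamental exact sequence; the duality of Jacquet modules) is routine.
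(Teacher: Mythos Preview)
Your overall plan is correct and matches the paper's: for (i) apply the geometric lemma to compute $\Phi^{-}(I)$ and use exactness; for (ii) use that $N_n$ acts trivially on $\pi_0$ (Lemma~\ref{lem:from BZ}) to embed $\pi_0$ into both $I_{N_n}$ and, via duality, into $((\Pi^{\vee})_{N_n})^{\vee}$, then compute these by the geometric lemma. Your route for the second embedding in (ii) --- computing $I^{N_n}$ directly as $\nInd_{Q_{n-1}}^{G_{n-1}}\sigma_1\boxtimes(\sigma')^{N_{n-1}}$ (which is correct here, since $H\backslash P_n\cong Q_{n-1}\backslash G_{n-1}$ is compact and $N_n$ acts through $N_{n-1}$ on $\sigma'$) and then invoking $(\sigma')^{N_{n-1}}\hookrightarrow(((\sigma')^{\vee})_{N_{n-1}})^{\vee}$ --- is a legitimate variant of the paper's argument, which instead dualizes $\Pi$ first and then takes coinvariants.

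However, your stated reason for the vanishing of the second orbit in (i) is wrong. The double coset space $Q_{n-1}N_n\backslash P_n/P_{n-1}N_n\cong Q_{n-1}\backslash G_{n-1}/P_{n-1}$ has exactly two elements, represented by $e$ and $w=(1\ n-1)$; the identity lies in the closed orbit and gives the surviving term, as you say. But the $w$-piece does \emph{not} vanish because of a Jacquet module of $\sigma_1$ with respect to a proper parabolic of $G_1$. Rather, conjugation by $w$ fixes $N_n$ setwise but swaps the $(1,n)$ and $(n-1,n)$ coordinates; thus for $n\in N_n$ supported in the $(n-1,n)$ entry, $wnw^{-1}$ lies in $N_n'$, on which $\rho=\sigma_1\boxtimes\sigma'$ is trivial by hypothesis, while $\eta_n(n)\ne 1$. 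Hence $(\rho^{w})_{N_n,\eta_n}=0$ by the usual character argument. No parabolic of $G_1$ enters at all (and indeed the ``derivative'' of $\sigma_1\in\Rep(G_1)$ would not vanish anyway). If you carry out the Mackey bookkeeping you flag as the main obstacle, this is what you will find; just discard the ``proper parabolic of $G_1$'' heuristic.
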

\begin{proof}
For simplicity we put $\Pi := (\nInd_{Q_{n - 1}N_{n}}^{P_{n}}\sigma_{1}\boxtimes\sigma')^{\sm}$.

For (i), we apply the geometric lemma to calculate the $P_{n - 1}$-representation $\Pi_{N_{n},\eta_{n}}$.
We have
\begin{align*}
Q_{n - 1}N_{n}\backslash P_{n}/P_{n - 1}N_{n} & \simeq Q_{n - 1}\backslash G_{n - 1}/P_{n - 1}\\
& \simeq \Aut(\{2,\ldots,n - 1\})\backslash S_{n - 1}/\Aut(\{1,\ldots,n - 2\})
\end{align*}
and it has two elements, represented by the identity element and $(1\; n - 1)$ (transposition).
The orbit corresponding to $(1\; n - 1)$ does not contribute.
Hence we have
\[
\Phi^{-}(\Pi)\simeq (\nInd_{Q_{n - 2}N_{n - 1}}^{P_{n - 1}}\sigma_{1}\otimes \Phi^{-}(\sigma'))^{\sm}.
\]
We get (i).

Assume that $(\pi_{0})_{N_{n},\eta_{n}}  = 0$.
Then $N_{n}$ acts trivially on $\pi_{0}$ by Lemma~\ref{lem:from BZ}.
Hence $\pi_{0}|_{G_{n - 1}}\hookrightarrow \Pi_{N_{n}}$.
Applying the geometric lemma, we have $\Pi_{N_{n}}\simeq (\nInd_{Q_{n - 1}}^{G_{n - 1}}\sigma_1\boxtimes r(\sigma'))^{\sm}\otimes \lvert \det\rvert_{F}^{d/2}$ as $G_{n-1}$-representations.
(The character $\lvert \det\rvert_{F}^{d/2}$ comes from the normalization of coinvariants.)

We also have $\pi_{0}|_{G_{n - 1}}\hookrightarrow ((\Pi^{\vee})^{\vee})^{N_{n}} \simeq ((\Pi^{\vee})_{N_{n}})^{\vee}$.
We calculate
\begin{align*}
((\Pi^{\vee})_{N_{n}})^{\vee} & \simeq (((\nInd_{Q_{n - 1}N_{n}}^{P_{n}}\sigma_{1}^{\vee}\boxtimes(\sigma')^{\vee})^{\sm})_{N_{n}})^{\vee}\\
& \simeq ((\nInd_{Q_{n - 1}}^{G_{n - 1}}\sigma_{1}^{\vee}\boxtimes r((\sigma')^{\vee}))^{\sm}\otimes \lvert\det\rvert_{F}^{d/2})^{\vee}\\
& \simeq (\nInd_{Q_{n - 1}}^{G_{n - 1}}\sigma_{1}\boxtimes r((\sigma')^{\vee})^{\vee})^{\sm}\otimes \lvert\det\rvert_{F}^{-d/2}.
\end{align*}
Part (ii) follows.
\end{proof}

For $\pi\in \Rep^{\infty}(P_{n})$, we define the $k$-th derivative $\pi^{(k)} \in \Rep^{\infty}(G_{n - k})$ of $\pi$ by $\pi^{(k)} := \lvert\det\rvert_{F}^{-d/2}\otimes (\Phi^{-})^{k - 1}(\pi)_{N_{n - k + 1}}$ for $k = 1,\ldots,n$.
We put $\pi^{(k)} := 0$ for $k > n$ and $\pi^{(0)} := \pi$.
For $n_{1},\ldots,n_{r}\ge 1$, $\pi_{n_{i}}\in \Rep^{\infty}(G_{n_{i}})$ ($i = 1,\ldots,r$), we put $\pi_{1}\times\cdots\times\pi_{r} := (\nInd_{P}^{G_{n_{1} + \cdots + n_{r}}}\pi_{1}\boxtimes\cdots\boxtimes\pi_{r})^{\sm}$, where $P$ is the standard parabolic subgroup corresponding to $n_{1} + \cdots + n_{r}$.
\begin{lem}\label{lem:derivative of product}
Let $\pi_{1}\in\Rep^{\infty}(G_{n_{1}})$ and $\pi_{2}\in \Rep^{\infty}(G_{n_{2}})$.
Then $(\pi_{1}\times\pi_{2})^{(k)}$ has a filtration whose successive quotients are $\pi_{1}^{(l)}\times\pi_{2}^{(k - l)}$ with $l = 0,\ldots,k$.
\end{lem}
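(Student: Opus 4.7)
The plan is to follow the strategy of Bernstein--Zelevinsky \cite[4.5 Lemma]{MR0579172}, who prove the analogous Leibniz-type formula when $D=F$. The whole argument is formal once one has the geometric lemma and the adjointness properties of the functors $\Phi^{\pm},\widehat{\Phi}^{+}$ recorded in Lemmas~\ref{lem:from BZ}, \ref{lem:dual and Phi^-}, \ref{lem:embedding of derivative}; the passage to general $D$ only affects the normalization of modular characters (powers of $|\det|_F^{d/2}$), which is already absorbed into the definition of $\Phi^{\pm}$ given above.

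First I would introduce the complementary pair of exact functors $\Psi^{-}\colon \Rep^{\infty}(P_{n})\to \Rep^{\infty}(G_{n-1})$, $\pi\mapsto |\det|_F^{-d/2}\otimes \pi_{N_{n}}$, and $\Psi^{+}\colon \Rep^{\infty}(G_{n-1})\to \Rep^{\infty}(P_{n})$ given by normalized inflation along $P_{n}\twoheadrightarrow G_{n-1}$, and verify the standard identities (analogues of \cite[3.2, 3.5 Proposition]{MR0579172}): $\Psi^{-}\Phi^{+}=0$, $\Phi^{-}\Psi^{+}=0$, $(\Psi^{+},\Psi^{-})$ is an adjoint pair, and every $\pi\in \Rep^{\infty}(P_{n})$ sits in a natural short exact sequence
\[
0 \to \Phi^{+}\Phi^{-}(\pi) \to \pi \to \Psi^{+}\Psi^{-}(\pi) \to 0.
\]
Iterating this sequence produces a canonical filtration of $\pi$ whose graded pieces are of the form $(\Phi^{+})^{j-1}\Psi^{+}\Psi^{-}(\Phi^{-})^{j-1}(\pi)$, and by construction $\pi^{(k)}=\Psi^{-}(\Phi^{-})^{k-1}(\pi)$.

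Next, I would compute $(\pi_{1}\times\pi_{2})|_{P_{n}}$ via the geometric lemma applied to the double coset space $P\backslash G_{n}/P_{n}$ (where $P$ is the standard parabolic of type $(n_{1},n_{2})$). This double coset space has exactly two elements, according to whether the last standard basis vector $e_{n}$ lies in the $n_{1}$-dimensional subspace stabilized by $P$ or not; this yields a two-step $P_{n}$-stable filtration of $\pi_{1}\times\pi_{2}$ whose graded pieces, after chasing modular characters, are identified with $\Phi^{+}(\pi_{1}\times(\pi_{2}|_{P_{n_{2}}}))$ and $\Psi^{+}(\pi_{1}|_{P_{n_{1}}}\times \pi_{2})$. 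Applying the (exact) functors $\Psi^{-}$ and $(\Phi^{-})^{k-1}$, and using $\Psi^{-}\Phi^{+}=\Phi^{-}\Psi^{+}=0$ together with the identity $\Phi^{-}(\pi_{1}\times (\pi_{2}|_{P_{n_{2}}}))$ obtained by an entirely parallel geometric lemma computation on $P_{n-1}$, I obtain by induction on $k$ (or on $n_{1}+n_{2}$) a filtration of $(\pi_{1}\times\pi_{2})^{(k)}$ with the $k+1$ advertised graded pieces $\pi_{1}^{(l)}\times \pi_{2}^{(k-l)}$, $0\le l\le k$, arranged in the natural order dictated by which side the $\Phi^{-}$ is consumed on at each step.

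The only genuine obstacle is a careful orbit-by-orbit computation of the graded pieces of the two-step filtration above, especially the exact placement of the modular characters $|\det|_F^{\pm d/2}$: the answer depends on the fact that the open orbit stabilizer $P\cap gP_{n}g^{-1}$ produces a copy of $G_{n_{1}}\times P_{n_{2}}$-type subgroup times a unipotent contribution, with a precise $\delta$-factor that must exactly match the one built into the normalization of $\Phi^{+}$ (and similarly for the closed orbit and $\Psi^{+}$). This matching is routine but has to be done with care; once in place, the iteration and the inductive identification of successive quotients go through mechanically, exactly as in \cite{MR0579172}.
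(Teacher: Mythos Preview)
Your approach is essentially the same as the paper's, which simply invokes the geometric lemma and refers to the proof of \cite[4.5 Lemma]{MR0579172}. One small slip: the outer $\Phi^{+}$ and $\Psi^{+}$ in your description of the two graded pieces of $(\pi_{1}\times\pi_{2})|_{P_{n}}$ do not typecheck (in Bernstein--Zelevinsky's mixed product notation \cite[4.12]{MR0579172}, both $\pi_{1}\times(\pi_{2}|_{P_{n_{2}}})$ and $(\pi_{1}|_{P_{n_{1}}})\times\pi_{2}$ already lie in $\Rep^{\infty}(P_{n})$, so there is nothing for $\Phi^{+}$ or $\Psi^{+}$ to act on), but once this is corrected the iteration via the identities of \cite[4.13]{MR0579172} goes through exactly as you describe.
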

\begin{proof}
The geometric lemma implies the lemma, see the proof of \cite[4.5 Lemma]{MR0579172}.
\end{proof}

\begin{lem}\label{lem:derivative}
Let $\pi_{0}\in \Rep^{\infty}(P_{n})$ and $k\in\Z_{\ge 0}$ the maximal integer such that $\pi_{0}^{(k)}\ne 0$.
Assume that $k < n$.
Then $(\Phi^{-})^{i}(\pi_{0}) = 0$ for $i\ge k$.
\end{lem}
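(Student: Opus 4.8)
The plan is to reduce the statement to the single assertion $(\Phi^{-})^{k}(\pi_{0}) = 0$: granting this, $(\Phi^{-})^{i}(\pi_{0}) = 0$ for all $i \ge k$ follows immediately since $\Phi^{-}$ is exact. We may assume $\pi_{0} \ne 0$, so that $k \ge 0$ is well-defined (note $\pi_{0}^{(0)} = \pi_{0}$). For $0 \le j \le n-1$ set $\rho_{j} := (\Phi^{-})^{j}(\pi_{0}) \in \Rep^{\infty}(P_{n-j})$. Unwinding the definition of the derivative gives the bookkeeping identity
\[
\rho_{j}^{(1)} = \lvert\det\rvert_{F}^{-d/2} \otimes (\rho_{j})_{N_{n-j}} = \pi_{0}^{(j+1)} \qquad (0 \le j \le n-1),
\]
where $N_{n-j}$ is the unipotent radical of $P_{n-j}$. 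By maximality of $k$ we have $\pi_{0}^{(m)} = 0$ for all $m > k$, and hence $\rho_{j}^{(1)} = 0$ whenever $k \le j \le n-1$; the hypothesis $k < n$ is used precisely to guarantee that $\pi_{0}^{(n)} = 0$, i.e.\ $\rho_{n-1}^{(1)} = 0$.

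I would then show $\rho_{j} = 0$ for $k \le j \le n-1$ by downward induction on $j$. The base case $j = n-1$ is clear: since $P_{1}$ is the trivial group, both $N_{1}$ and $G_{0}$ are trivial, so $\rho_{n-1}^{(1)} = \rho_{n-1}$, and $\rho_{n-1}^{(1)} = 0$ forces $\rho_{n-1} = 0$. For the inductive step, take $k \le j \le n-2$ and assume $\rho_{j+1} = \Phi^{-}(\rho_{j}) = 0$; by Lemma~\ref{lem:from BZ}, $N_{n-j}$ then acts trivially on $\rho_{j}$, so $(\rho_{j})_{N_{n-j}} = \rho_{j}$ and $\rho_{j}^{(1)} = \lvert\det\rvert_{F}^{-d/2} \otimes \rho_{j}$. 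Combined with $\rho_{j}^{(1)} = 0$ (and the fact that $\otimes\,\lvert\det\rvert_{F}^{-d/2}$ is invertible), this gives $\rho_{j} = 0$. Taking $j = k$ yields $(\Phi^{-})^{k}(\pi_{0}) = 0$, hence the lemma.

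I do not expect a serious obstacle: the whole argument is formal once Lemma~\ref{lem:from BZ} is available. The main points requiring care are keeping track of which mirabolic subgroup each $\rho_{j}$ lives in, and recognising that the hypothesis $k < n$ is genuinely needed --- if $k = n$ one has $(\Phi^{-})^{n-1}(\pi_{0}) = \rho_{n-1} \cong \pi_{0}^{(n)} \ne 0$, so the conclusion would be false.
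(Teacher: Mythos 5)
Your proof is correct and uses essentially the same argument as the paper: a downward induction combining the vanishing of $\pi_{0}^{(j+1)}$ with Lemma~\ref{lem:from BZ} to force $(\Phi^{-})^{j}(\pi_0)=0$. Your version is slightly more carefully stated --- in particular you make the base case $j=n-1$ explicit rather than invoking $(\Phi^{-})^{n}(\pi_0)=0$ as a convention --- but the underlying idea is identical.
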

\begin{proof}
We prove the lemma by backward induction on $i$.
We have $(\Phi^{-})^{n}(\pi_{0}) = 0$. Assume $k \le i < n$.
By the inductive hypothesis we have $\Phi^{-}((\Phi^{-})^{i}(\pi_{0})) = 0$ and by $\pi_{0}^{(i + 1)} = 0$ we have $(\Phi^{-})^{i}(\pi_{0})_{N_{n - i}} = 0$.
Hence $(\Phi^{-})^{i}(\pi_{0}) = 0$ by Lemma~\ref{lem:from BZ}.
\end{proof}

\begin{lem}\label{lem:genericity for GLnD, lemma}
Assume that $\sigma_{1}\not\simeq\sigma_{i}\otimes\lvert\Nrd\rvert^{-d}_{F}$ for $i = 2,\ldots,n$.
Then any nonzero $P_{n}$-subrepresentation $\pi_{0}$ of $(\nInd_{Q_{n - 1}N_{n}}^{P_{n}}\sigma_{1}\boxtimes(\sigma_{2}\times\cdots\times\sigma_{n}))^{\sm}$ is generic.
\end{lem}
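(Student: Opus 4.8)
The plan is to mimic the inductive strategy of Bernstein--Zelevinsky \cite[4.11 Theorem]{MR0579172}, working via derivatives. Let $\pi := \sigma_2 \times \cdots \times \sigma_n$, a smooth $G_{n-1}$-representation, and let $\pi_0 \subset (\nInd_{Q_{n-1}N_n}^{P_n} \sigma_1 \boxtimes \pi)^{\sm}$ be a nonzero $P_n$-subrepresentation. Recall that $\pi_0$ is generic precisely when $(\pi_0)_{N_n,\eta_n} = (\Phi^-(\pi_0))$ is nonzero, i.e.\ the top derivative $\pi_0^{(n)} \ne 0$, or equivalently (using the tower of derivatives) when the bottom of the derivative filtration survives. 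I would argue by contradiction: suppose $(\pi_0)_{N_n,\eta_n} = 0$, so by Lemma~\ref{lem:from BZ} the group $N_n$ acts trivially on $\pi_0$ and $\pi_0|_{G_{n-1}}$ embeds into $\Pi_{N_n}$ where $\Pi := (\nInd_{Q_{n-1}N_n}^{P_n}\sigma_1\boxtimes \pi)^{\sm}$.

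The key computational input is Lemma~\ref{lem:embedding of derivative}(ii): under the assumption $(\pi_0)_{N_n,\eta_n}=0$, we get embeddings
\[
\pi_0|_{G_{n-1}} \hookrightarrow (\nInd_{Q_{n-1}}^{G_{n-1}} \sigma_1 \boxtimes r(\pi))^{\sm} \otimes \lvert\det\rvert_F^{d/2}
\]
and the dual version with $r((\pi)^\vee)^\vee$ and $\lvert\det\rvert_F^{-d/2}$. Now $r(\pi)$, the normalized Jacquet module of $\sigma_2 \times \cdots \times \sigma_n$ along the $(n-2,1)$ parabolic, can be computed by the geometric lemma: its irreducible subquotients are of the form $(\sigma_2 \times \cdots \times \widehat{\sigma_j} \times \cdots \times \sigma_n) \otimes (\text{twist of }\sigma_j)$, and a careful bookkeeping of the modulus characters shows that the $G_1$-factor appearing in such a subquotient is $\sigma_j \otimes \lvert\Nrd\rvert_F^{\pm \text{something}}$; the point is that the relevant exponent that would allow $\sigma_1$ to be linked on the ``wrong side'' to this factor forces a relation $\sigma_1 \cong \sigma_j \otimes \lvert\Nrd\rvert_F^{-d}$, contradicting the hypothesis. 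More precisely, I would analyze when $\sigma_1$ (appearing with its shift $\lvert\det\rvert_F^{\pm d/2}$) can occur as the leftmost factor of a constituent of $(\nInd_{Q_{n-1}}^{G_{n-1}}\sigma_1 \boxtimes r(\pi))^{\sm}$ in a way compatible with $\pi_0$ being a $P_n$-subrepresentation (not just $G_{n-1}$): the $P_n$-structure, via the top-derivative vanishing, is what pins down the shift, and the Bernstein--Zelevinsky linkage principle (or direct reducibility computations via $\nu_\sigma$ for $\GL_2(D)$, \cite[2.5 Lemma, 4.2 Lemma]{MR1040995}) then yields the forbidden relation. I expect to need both embeddings (the one from $r(\pi)$ and the one from $r(\pi^\vee)^\vee$) to rule out both signs of the shift.

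The inductive structure: I would set up the induction on $n$, the base case $n=1$ being trivial (or $n=2$ reducing directly to the $\GL_2(D)$ reducibility criterion via $\nu_{\sigma_1}$). For the inductive step, $\Phi^-(\pi_0)$ is a $P_{n-1}$-subrepresentation of $(\nInd_{Q_{n-2}N_{n-1}}^{P_{n-1}}\sigma_1 \boxtimes \Phi^-(\pi))^{\sm}$ by Lemma~\ref{lem:embedding of derivative}(i). Using Lemma~\ref{lem:derivative of product} and the fact that the derivatives of $\sigma_j$ (as a $G_1 = D^\times$-representation) are just $\sigma_j$ itself in degree $0$ and the trivial representation of $G_0$ in degree $1$, one identifies $\Phi^-(\pi)$ with $\pi_{N_{n-1}}\otimes\lvert\det\rvert_F^{-d/2}$, whose constituents are (up to twist) $\sigma_2 \times \cdots \times \widehat{\sigma_j}\times \cdots\times\sigma_n$. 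The hypothesis \eqref{eq:seq} is inherited by each such sub-tuple (removing an entry only shortens potential chains), so by the inductive hypothesis every nonzero $P_{n-1}$-subrepresentation of $(\nInd_{Q_{n-2}N_{n-1}}^{P_{n-1}}\sigma_1\boxtimes(\text{constituent}))^{\sm}$ is generic. Hence $\Phi^-(\pi_0)$ is generic, i.e.\ $(\Phi^-)^{n-1}(\pi_0)_{N_?} \ne 0$, so $\pi_0^{(k)} \ne 0$ for some $k \ge$ the ``height'' forcing non-triviality, and then Lemma~\ref{lem:derivative} together with the contradiction from the previous paragraph (which handled exactly the case where the very top derivative $\pi_0^{(n)}$ could vanish) forces $\pi_0^{(n)} \ne 0$, i.e.\ $\pi_0$ is generic.

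The main obstacle will be the precise modulus-character and twist bookkeeping in the geometric lemma computation of $r(\pi)$ and $r(\pi^\vee)^\vee$, making sure that the shifts $\lvert\det\rvert_F^{\pm d/2}$ and the normalization factors in the definition of the derivatives combine to give exactly the relation $\sigma_1 \cong \sigma_j\otimes\lvert\Nrd\rvert_F^{-d}$ rather than some other shift. Over $\GL_n(F)$ this is the heart of \cite{MR0579172}, and the only genuinely new point for $\GL_n(D)$ is that the ``fundamental'' reducibility shift is $\nu_{\sigma_j}$ with $\nu_{\sigma_j}^{e} = \lvert\Nrd\rvert_F^d$ for a divisor $e$ of $d$ depending on $\sigma_j$, rather than $\lvert\cdot\rvert_F$ itself; this is precisely why the hypothesis is phrased in terms of chains of length $e$ in \eqref{eq:seq} rather than fixed length $d$. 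I would isolate the $\GL_2(D)$ computation as the one place where $\nu_\sigma$ enters, and feed it into the otherwise formal Bernstein--Zelevinsky machinery. A secondary subtlety is verifying that after the Weyl-group reshuffling done before the statement of Lemma~\ref{lem:genericity for GLnD, lemma} (reducing to $\sigma_1 \not\simeq \sigma_i \otimes \lvert\Nrd\rvert_F^{-d}$ for $i \ge 2$), the specific ordering hypothesis used here is genuinely weaker than \eqref{eq:seq} and suffices; this should follow since \eqref{eq:seq} with $i_0 = 1$ and $e$ as above would produce such a chain.
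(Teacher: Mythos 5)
Your proposal is in the same circle of ideas as the paper's proof --- Bernstein--Zelevinsky derivatives, the two embeddings of Lemma~\ref{lem:embedding of derivative}(ii), Lemmas~\ref{lem:derivative of product} and \ref{lem:derivative}, and the hypothesis on $\sigma_1$ supplying the contradiction --- but the way you organize the argument (an outer induction on $n$ with the cuspidal-support comparison done ``at the top'') has a genuine gap that the paper's argument avoids.

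The gap is in your inductive step. You want to apply the inductive hypothesis to $\Phi^-(\pi_0)$, which by Lemma~\ref{lem:embedding of derivative}(i) sits inside $(\nInd_{Q_{n-2}N_{n-1}}^{P_{n-1}}\sigma_1\boxtimes\Phi^-(\sigma'))^{\sm}$. But the inductive hypothesis (i.e.\ the statement of the lemma with $n$ replaced by $n-1$) requires the second tensor factor to be a $G_{n-2}$-representation of the form $\tau_2\times\cdots\times\tau_{n-1}$ with cuspidal $\tau_i$. What actually appears is $\Phi^-(\sigma')$, a $P_{n-2}$-representation whose Bernstein--Zelevinsky filtration has graded pieces involving $\Phi^+$ and $\Psi^+$ applied to derivatives; these are not products of cuspidals and in general are not even $G_{n-2}$-representations. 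Replacing $\Phi^-(\sigma')$ by ``a constituent $\sigma_2\times\cdots\times\widehat{\sigma_j}\times\cdots\times\sigma_n$'' is not literally available, and making it so would require a separate argument (passing to the appropriate graded piece of the induced filtration, checking that a nonzero subrepresentation of a filtered object projects to a nonzero subrepresentation of some subquotient, and keeping track of the $\Psi^+/\Phi^+$ structure). You also assert at the outset that ``$\pi_0$ is generic precisely when $(\pi_0)_{N_n,\eta_n}\ne 0$'': this is not correct --- genericity is $\pi_0^{(n)}=(\Phi^-)^{n-1}(\pi_0)\ne 0$, which is strictly stronger than $\Phi^-(\pi_0)\ne 0$ --- and although your inductive scheme implicitly patches this, the confusion propagates into the case analysis.

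The paper sidesteps all of this by \emph{not} setting up an induction on $n$. It takes $k$ maximal with $\pi_0^{(k)}\ne 0$; if $\pi_0$ were not generic then $k<n$, and Lemma~\ref{lem:derivative} gives $(\Phi^-)^k(\pi_0)=0$. One then iterates Lemma~\ref{lem:embedding of derivative}(i) $k-1$ times (which is type-correct since part~(i) tolerates an arbitrary $P_{n-1}$-representation, not a product of cuspidals) and applies part~(ii) once at level $k$. This yields two embeddings of $\pi_0^{(k)}$: into $(\nInd_{Q_{n-k}}^{G_{n-k}}\sigma_1\boxtimes(\sigma')^{(k)})^{\sm}$ and into $(\nInd_{Q_{n-k}}^{G_{n-k}}\sigma_1\boxtimes(((\sigma')^\vee)^{(k)})^\vee)^{\sm}\otimes\lvert\det\rvert_F^{-d}$, using Lemma~\ref{lem:dual and Phi^-}. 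Because $(\sigma')^{(k)}$ is of finite length (Lemma~\ref{lem:derivative of product}), $\pi_0^{(k)}$ has an irreducible subrepresentation $\omega$, and the two embeddings compute its cuspidal support in two ways: once as $(\sigma_1,\sigma_{i_1},\ldots,\sigma_{i_{n-k-1}})$ and once as $(\sigma_1\otimes\lvert\Nrd\rvert_F^{-d},\sigma_{j_1}\otimes\lvert\Nrd\rvert_F^{-d},\ldots)$. Equality of these multisets forces $\sigma_1\simeq\sigma_j\otimes\lvert\Nrd\rvert_F^{-d}$ for some $j$, contradicting the hypothesis (the case $j=1$ is impossible for trivial reasons). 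If you want to retain your inductive scheme you would need to supply the missing reduction to graded pieces; otherwise, working at the maximal $k$ as in the paper is both shorter and avoids the need to identify $\Phi^-(\sigma')$ with a product.
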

Here, inside the induction, $\sigma_{2}\times\cdots\times\sigma_{n}$ is restricted from $G_{n-1}$ to $P_{n-1}$.
\begin{proof}
Set $\sigma' := \sigma_{2}\times\cdots\times\sigma_{n}$.
Assume that $\pi_{0}$ is not generic.
Take the maximal $k$ such that $\pi_{0}^{(k)}\ne 0$.
Since $\pi_{0}$ is not generic, we have $k < n$.
Since $(\Phi^{-})^{k}(\pi_{0}) = 0$ by Lemma~\ref{lem:derivative}, $\pi_{0}^{(k)}$ is a subrepresentation of $(\nInd_{Q_{n - k}}^{G_{n - k}}\sigma_{1}\boxtimes (\sigma')^{(k)})^{\sm}$ by Lemma~\ref{lem:embedding of derivative}.
Note that $(\sigma')^{(k)}$ is of finite length by Lemma~\ref{lem:derivative of product}.
Hence $\pi_{0}^{(k)}$ is also of finite length.
Take an irreducible subrepresentation $\omega$ of $\pi_0^{(k)}$.
By Lemma~\ref{lem:derivative of product}, the cuspidal support of $\omega$ is $(Z_{n-k},\sigma_{1}\boxtimes \sigma_{i_{1}}\boxtimes\cdots\boxtimes\sigma_{i_{n - k - 1}})$ for some $2\le i_{1} < \cdots < i_{n - k - 1}\le n$.

On the other hand, again by Lemma~\ref{lem:embedding of derivative}, $\pi_{0}^{(k)}$ is also embedded into 
\begin{multline*}
(\nInd_{Q_{n - k}}^{G_{n - k}}\sigma_{1}\boxtimes r(((\Phi^{-})^{(k - 1)}(\sigma'))^{\vee})^{\vee})^{\sm}\otimes \lvert\det\rvert_{F}^{-d}\\
\simeq
(\nInd_{Q_{n - k}}^{G_{n - k}}\sigma_{1}\boxtimes (((\sigma')^{\vee})^{(k)})^{\vee})^{\sm}\otimes \lvert\det\rvert_{F}^{-d},
\end{multline*}
where we use Lemma~\ref{lem:dual and Phi^-}.
Therefore the cuspidal support of $\omega$ is $(Z_{n-k},(\sigma_{1}\otimes\lvert\Nrd\rvert_{F}^{-d})\boxtimes(\sigma_{j_{1}}\otimes\lvert\Nrd\rvert_{F}^{-d})\boxtimes\cdots\boxtimes(\sigma_{j_{n - k - 1}}\otimes\lvert\Nrd\rvert_{F}^{-d}))$, where $2\le j_{1} < \cdots < j_{n - k - 1}\le n$.
Hence $\{\sigma_{1},\sigma_{i_{1}},\ldots,\sigma_{i_{n - k - 1}}\} = \{\sigma_{1}\otimes\lvert\Nrd\rvert_{F}^{-d},\sigma_{j_{1}}\otimes\lvert\Nrd\rvert_{F}^{-d},\ldots,\sigma_{j_{n - k - 1}}\otimes\lvert\Nrd\rvert_{F}^{-d}\}$.
By our assumption, $\sigma_{1}$ is not contained in the right-hand side.
Hence we have a contradiction.
\end{proof}

\begin{proof}[Proof of Theorem~\ref{thm:GL(n,D),dim>1}]
We prove the theorem by induction on $n$ (over $\C$, as noted above).
For $n = 1$ there is nothing to prove, so we assume that $n \ge 2$.
Set $\sigma' := \sigma_{2}\times\cdots\times\sigma_{n}$.
We apply the geometric lemma to $(\nInd_{Q_{n}}^{G_{n}}\sigma_{1}\boxtimes\sigma')^{\sm}|_{P_{n}}$.
We have $Q_{n}\backslash G_{n}/P_{n} = \Aut(\{2,\ldots,n\})\backslash S_{n}/\Aut(\{1,\ldots,n - 1\}) = \{1,(1\; 2\; \cdots\; n)\}$ and therefore we have an exact sequence
\[
0\to (\ncInd_{G_{n - 1}}^{P_{n}}\sigma'\boxtimes\sigma_{1})^{\sm}\to (\nInd_{Q_{n}}^{G_{n}}\sigma_1\boxtimes\sigma')^{\sm}|_{P_n}
\to (\nInd_{Q_{n - 1}N_{n}}^{P_{n}}\sigma_1\boxtimes\sigma')^{\sm}\to 0.
\]
We now prove that any nonzero subrepresentation $\pi_{0}\subset (\nInd_{Q_n}^{G_n}\sigma_1\boxtimes\sigma')^{\sm}|_{P_n}$ is generic.
Assume that $\pi_{0}\cap (\ncInd_{G_{n - 1}}^{P_{n}}\sigma'\boxtimes\sigma_{1})^{\sm}\ne\{0\}$.
By replacing $\pi_{0}$ with this intersection, we may assume $\pi_{0}\subset (\ncInd_{G_{n - 1}}^{P_{n}}\sigma'\boxtimes\sigma_{1})^{\sm}$.
As an $N_{n}$-representation, we have $(\ncInd_{G_{n - 1}}^{P_{n}}\sigma'\boxtimes\sigma_{1})^{\sm}\simeq C_{c}^{\infty}(N_{n})\otimes (\sigma'\otimes\sigma_{1})$.
Hence there are no $N_{n}$-invariants.
Therefore $\pi_{0}^{N_{n}} = \{0\}$ and this implies $(\pi_{0})_{N_{n},\eta_{n}}\ne 0$ by Lemma~\ref{lem:from BZ}.
We also have a $P_{n - 1}$-isomorphism $((\ncInd_{G_{n - 1}}^{P_{n}}\sigma'\boxtimes\sigma_{1})^{\sm})_{N_{n},\eta_{n}} \simeq (\sigma'\otimes\sigma_{1})\otimes\lvert\det\rvert^{d/2}_{F}$, where $P_{n-1}$ acts trivially on $\sigma_1$. Hence by the inductive hypothesis, $(\pi_{0})_{U_{n},\theta} = ((\pi_{0})_{N_{n},\eta_{n}})_{U_{n - 1},\theta|_{U_{n - 1}}}\ne 0$.
Therefore $\pi_{0}$ is generic.

Next assume that $\pi_{0}\cap (\ncInd_{G_{n - 1}}^{P_{n}}\sigma'\boxtimes\sigma_{1})^{\sm} = \{0\}$.
In this case we have $\pi_{0}\subset (\nInd_{Q_{n - 1}N_{n}}^{P_{n}}\sigma_{1}\otimes\sigma')^{\sm}$, and the theorem follows from Lemma~\ref{lem:genericity for GLnD, lemma}.
\end{proof}

\subsection{Unitary case}
\label{sec:unitary-case}
We now state one of our main results.
We need some preparations for the proof, which will be completed in subsection \ref{subsec:proof of theorem for unitary}.
For $\alpha\in\Phi_{\red}^{+}$, let $L'_{\alpha}$ be the (closed) subgroup generated by $U \cap L_{\alpha}$ and $\overline{U} \cap L_{\alpha}$, where $\overline{\alggrp{U}}$ is the unipotent radical of the parabolic subgroup opposite to $\alggrp{B}$.
It has a concrete description, cf.\ \cite[II.4]{AHHV}.

For a character $\omega\colon G\to C^{\times}$ we define $e(\omega) = e_{\alggrp{G}}(\omega)\in \mathfrak{a}_{G,\R}^{*}$ by $\lvert\omega\rvert_{C} = \chi_{e(\omega)}$, where the absolute value $\lvert\cdot\rvert_C$ on $C$ is normalized such that $\lvert p\rvert_C = p^{-1}$.
(Recall that the unramified character $\chi_\nu$ for $\nu \in \mathfrak{a}_{G,\C}^{*}$ was defined in \S\ref{sec:reduc-points-parab}.)
In particular, it follows that $e(\omega) = [F:\Q_p]^{-1} \omega$ if $\omega \in X^*(\alggrp G) \subset \mathfrak{a}_{G,\R}^{*}$, as $\lvert\cdot\rvert_C^{[F:\Q_p]}$ restricts to $\lvert\cdot\rvert_F$ on $F \subset C$.

\begin{thm}\label{thm:unitary case}
  Assume Assumption~\ref{assump:on p}.
  Let $\sigma$ be a finite-dimensional absolutely irreducible continuous representation of $L$ and $\omega_{\sigma}$ the central character of $\sigma$.
  \begin{enumerate}
  \item The action of $Z$ on the coinvariants $\sigma_{U\cap L}$ is absolutely irreducible, so it has a central character $\omega_{\sigma_{U\cap L}}$. 
  \item If $e(\omega_{\sigma}|_{A_{L}})$ is dominant, then $(\Ind_{P}^{G}\sigma)^{\cts}$ is reducible if and only if, after perhaps replacing $C$ by a finite extension, $\underline L(\sigma')\in \mathcal{O}^{\wt{P}_{1}}$ for a parabolic subgroup $\alggrp{P}_{1}\supsetneq \alggrp{P}$.
    Here, $\wt{\alggrp P_1} := \Res_{F/\Q_p} \alggrp P_1$.
    \item     If $e(\omega_{\sigma_{U \cap L}}|_{S})$ is dominant, then $(\Ind_{P}^{G}\sigma)^{\cts}$ is reducible if and only if there exists $\alpha \in \Delta \setminus \Delta_L$ such that $\sigma|_{Z \cap L'_{\alpha}}$ is trivial.
  \end{enumerate}
\end{thm}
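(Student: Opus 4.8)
The plan is to establish the three parts in turn, the substantive content lying in~(iii). Part~(i) is a statement about category~$\mathcal{O}$: by Lemma~\ref{lem:cts-rep} we may regard $\sigma$ as an absolutely simple object of $\mathcal{O}^{L}$, and a $z$-extension (inflation commutes with $(U\cap L)$-coinvariants) reduces us to $\alggrp{G}^{\der}$ simply connected, where $\sigma\cong\sigma_{0}\otimes\tau$ with $\sigma_{0}$ an $\mathfrak{l}_{C}$-simple object of $\mathcal{O}^{L}$ and $\tau$ an absolutely irreducible smooth $L$-representation (Proposition~\ref{prop:simple-O-P}, Lemma~\ref{lem:sigma0-tau-decomp}); since $U\cap L$ acts trivially on $\tau$ and $(\sigma_{0})_{U\cap L}$ is the one-dimensional lowest-weight line of $\sigma_{0}$, we get $\sigma_{U\cap L}\cong(\sigma_{0})_{U\cap L}\otimes\tau$ as $Z$-representations, which is absolutely irreducible and so has a central character. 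The ``if'' directions of~(ii) and~(iii) are the easy implications recalled after Theorem~\ref{thm:unitary case-parab-intro}: if $\sigma|_{Z\cap L'_{\alpha}}$ is trivial for some $\alpha\in\Delta\setminus\Delta_{L}$, then $\sigma$ extends to the Levi $\alggrp{L}_{1}$ with $\Delta_{L_{1}}=\Delta_{L}\sqcup\{\alpha\}$ trivially on $N\cap L_{1}$ by~\cite[II.7 Proposition]{AHHV}, which both gives a proper closed subrepresentation $(\Ind_{P_{1}}^{G}\sigma)^{\cts}\hookrightarrow(\Ind_{P}^{G}\sigma)^{\cts}$ and shows $\underline{L}(\sigma')\in\mathcal{O}^{\wt P_{1}}$.

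For the ``only if'' direction of~(iii) I would first reduce to $\alggrp{P}=\alggrp{B}$, using part~(i) and transitivity of parabolic induction so that the inducing datum is a representation of the minimal Levi $\alggrp{Z}$ (automatically supercuspidal, as $\alggrp{Z}$ has no proper parabolic), then to $F=\Q_{p}$ via $\Res_{F/\Q_{p}}$ and to $\alggrp{G}^{\der}$ simply connected via Proposition~\ref{prop:isogenies}, checking throughout that the dominance hypothesis (via Lemma~\ref{lem:dominant-proj}) and the conclusion about $Z\cap L'_{\alpha}$ are preserved. Writing $\sigma\cong\sigma_{0}\otimes\tau$ with $\underline{L}(\sigma_{0}')$ equimaximal with maximal parabolic $Q$ (Lemma~\ref{lem:sigma0-tau-decomp}), Theorem~\ref{thm:equivalence-irred} together with Corollary~\ref{cor:irreduciblity} turns reducibility of $(\Ind_{B}^{G}\sigma)^{\cts}$ into reducibility of the \emph{smooth} principal series $(\Ind_{B\cap L_{Q}}^{L_{Q}}\tau)^{\sm}$; relabeling $L_{Q}$ as $G$ and twisting $\sigma$ by a locally analytic character of $G$ (Lemma~\ref{lem:semisimple-lift}\ref{lem:semisimple-lift-3} and Lemma~\ref{lm:loc-an-char}) I may assume $\sigma_{0}$ algebraic with antidominant central character $\lambda$, so that $(\nInd_{B}^{G}\tau\delta_{B}^{-1/2})^{\sm}$ is reducible. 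Since $\sigma$ is $p$-adic unitary, $|\omega_{\sigma}|_{C}\equiv1$, which forces $e(|\tau\delta_{B}^{-1/2}|_{C})=-\lambda-\rho$; together with the dominance hypothesis (which here just records $\lambda$ antidominant) this also makes $\tau\delta_{B}^{-1/2}$ Weyl-group regular.

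The heart of the argument is then a tension at rank one. Fixing $\o C\cong\C$ and working over $\C$, Harish-Chandra's product formula (Proposition~\ref{prop:product formula}) together with the reducibility analysis for maximal parabolics (Propositions~\ref{prop:G-regular}, \ref{prop:mu-max-parab}) yields a reduced positive root $\alpha$ with $(\nInd_{B\cap L_{\alpha}}^{L_{\alpha}}\tau\delta_{B}^{-1/2})^{\sm}$ reducible; writing $\tau\delta_{B}^{-1/2}\cong\tau_{u}\delta_{B\cap L_{\alpha}}^{s}\eta$ with $\tau_{u}$ complex-unitary, $s\in\R$ and $\eta$ a positive real unramified character of $L_{\alpha}$, Silberger's bound gives $|s|\le\tfrac12$ and Corollary~\ref{cor:rationality, rank one} gives $s\in\Q$. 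It is precisely the rationality of $s$ that makes the comparison of complex and $p$-adic absolute values along $\alpha^{\vee}$ legitimate: the $p$-adic side contributes the exponent $-\langle\lambda,\alpha^{\vee}\rangle-\langle\rho,\alpha^{\vee}\rangle$ (with $\langle\rho,\alpha^{\vee}\rangle\ge1$, equality iff $\alpha$ is simple, and $\langle\lambda,\alpha^{\vee}\rangle\le0$) while the $L_{\alpha}$-decomposition contributes $2\langle\rho_{L_{\alpha}},\alpha^{\vee}\rangle s$, and feeding in $|s|\le\tfrac12$ the only consistent possibility is $s=-\tfrac12$, with $\alpha$ simple and $\sigma_{0}$ trivial on $Z\cap L'_{\alpha}$. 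At the extreme value $s=-\tfrac12$ the reducible rank-one induction satisfies the hypothesis of Waldspurger's Proposition~\ref{prop:Waldspurger}, so $\tau_{u}$ is trivial on $Z\cap L'_{\alpha}$; the remaining factor $\delta_{B\cap L_{\alpha}}^{s+1/2}$ is now trivial and $\eta$ is trivial on $Z\cap L'_{\alpha}$, so $\tau$, and hence $\sigma=\sigma_{0}\otimes\tau$, is trivial on $Z\cap L'_{\alpha}$ with $\alpha\in\Delta\setminus\Delta_{L}$, giving~(iii). Part~(ii) follows from the same scheme under the weaker hypothesis $e(\omega_{\sigma}|_{A_{L}})$ dominant, where one stops at ``$\underline{L}(\sigma')\in\mathcal{O}^{\wt P_{1}}$ for some $\wt P_{1}\supsetneq\wt P$'' without pinning down a simple root.

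I expect the main obstacle to be the rationality input Corollary~\ref{cor:rationality, rank one}: without it the complex/$p$-adic comparison above is meaningless, and it in turn rests on Theorem~\ref{thm:rationality, inner} (global intertwining operators and the Jacquet--Langlands correspondence, reducing to the quasisplit case and Shahidi's work on genericity of the relevant supercuspidal supports). A secondary difficulty is the bookkeeping in the reduction of general $\alggrp{P}$ to $\alggrp{P}=\alggrp{B}$ --- one must confirm that the inducing datum stays supercuspidal, that the hypothesis transports correctly via Lemma~\ref{lem:dominant-proj}, and that triviality on $Z\cap L'_{\alpha}$ is indeed the right reformulation of the Levi-theoretic conditions produced by the argument.
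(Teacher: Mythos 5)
Your plan matches the paper's strategy once the Banach problem has been converted to a smooth one (decompose $\sigma\cong\sigma_0\otimes\tau$, use Theorem~\ref{thm:equivalence-irred}/Corollary~\ref{cor:irreduciblity}, then Harish-Chandra's product formula, Silberger's bound, the rationality from Corollary~\ref{cor:rationality, rank one}, and Waldspurger's Proposition~\ref{prop:Waldspurger}). Your proof of part~(i) and the ``if'' implications are correct and coincide with the paper's.

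However, the opening reduction to $\alggrp P=\alggrp B$ is a genuine gap, not merely ``bookkeeping''. You propose to replace $(\Ind_P^G\sigma)^\cts$ by $(\Ind_B^G\sigma_{U\cap L})^\cts$ via transitivity. The direction one would need for the ``only if'' statement ($(\Ind_P^G\sigma)^\cts$ reducible $\Rightarrow$ $(\Ind_B^G\sigma_{U\cap L})^\cts$ reducible) is indeed automatic: the $B\cap L$-linear surjection $\sigma\twoheadrightarrow\sigma_{U\cap L}$ gives by Frobenius reciprocity a closed embedding $\sigma\hookrightarrow(\Ind_{B\cap L}^L\sigma_{U\cap L})^\cts$, and hence $(\Ind_P^G\sigma)^\cts\hookrightarrow(\Ind_B^G\sigma_{U\cap L})^\cts$. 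But precisely because of this embedding, whenever $L\neq Z$ the space $(\Ind_{B\cap L}^L\sigma_{U\cap L})^\cts$ is infinite-dimensional and $(\Ind_B^G\sigma_{U\cap L})^\cts$ is \emph{always} reducible, regardless of whether $(\Ind_P^G\sigma)^\cts$ is. So applying the rank-$B$ statement gives you only that $\sigma_{U\cap L}|_{Z\cap L'_\alpha}$ is trivial for \emph{some} $\alpha\in\Delta$ --- which, as one checks using $\eqref{eq:cc}$ and the vanishing $\ang{e(\omega_\xi),\beta^\vee}=0$ for $\beta\in\Delta_L$, is automatically satisfied for every $\alpha\in\Delta_L$ under the dominance hypothesis. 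The information you need, namely that $\alpha$ can be taken in $\Delta\setminus\Delta_L$ and that the full representation $\sigma$ (not just the coinvariants) is trivial on $Z\cap L'_\alpha$, is destroyed by the reduction. The paper avoids this by keeping $\alggrp P$ general throughout: after reducing to smooth, it records that $\tau$ is trivial on $L'$ (so that $\xi:=\tau|_Z$ determines $\tau$), embeds $(\Ind_P^G\tau)^\sm$ into $(\nInd_B^G\xi\delta_B^{-1/2})^\sm$ at the \emph{smooth} level only, and then carries out a case distinction on whether some simple $\alpha$ with $\mu^{L_\alpha}$ having a pole lies outside $\Delta_L$; when all such $\alpha$ lie in $\Delta_L$ (``Case~2''), an intertwining-operator argument shows $(\nInd_P^G\tau\delta_P^{-1/2})^\sm$ is irreducible, yielding a contradiction and forcing $\alpha\in\Delta\setminus\Delta_L$. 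You would need to supply something equivalent to this case analysis; the dominance hypothesis does not make it automatic, and the $\alggrp B$-reduction cannot see it.
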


Recall that $\sigma' \in \mathcal O^{\wt L}$ after perhaps replacing $C$ by a finite extension (Lemma~\ref{lem:cts-rep}), where $\wt{\alggrp L} := \Res_{F/\Q_p} \alggrp L$.
Note that $e(\omega_{\sigma}|_{A_{L}}) = e(\omega_{\sigma_{U\cap L}}|_{A_{L}})$ is dominant if $e(\omega_{\sigma_{U\cap L}}|_S)$ is dominant by Lemma~\ref{lem:dominant-proj}(ii).
By \cite[II.7 Proposition]{AHHV} and Lemma~\ref{lm:loc-an-char}, $\sigma|_{Z \cap L'_{\alpha}}$ is trivial for some $\alpha \in \Delta \setminus \Delta_L$ if and only if $\sigma$ extends to a continuous representation of a larger Levi subgroup.
The condition $\underline L(\sigma')\in \mathcal{O}^{\wt P_{1}}$ in (ii) is made more explicit in Lemma \ref{lem:simple-in-O-Q}.

\begin{remark}
We say that a Banach representation $\sigma$ of $L$ is \emph{unitarizable} if it admits an $L$-invariant defining norm or equivalently an $L$-invariant open and bounded $\mathcal{O}_C$-lattice.
Suppose now that $\sigma$ has a central character $\omega_{\sigma}$ (for example, if it is admissible and absolutely irreducible).
Then $\sigma$ unitarizable implies that $\omega_\sigma$ is unitary, or equivalently $e(\omega_{\sigma}|_{A_L}) = 0$. If $L = Z$, then the converse is true too.
(To see that if $\omega_\sigma$ is unitary then $\sigma$ is unitarizable, note that $Z_0 S$ is of finite index in $Z$, where $Z_0$ is the unique maximal compact subgroup of $Z$.
By \cite[Lemma 6.5.5]{locallyanalytic-memoir} there exists a bounded open $Z_0$-invariant lattice $\Lambda$ in $\sigma$, so $\sum_{z \in Z/Z_0S} z\Lambda$ is a bounded open $Z$-invariant lattice.)
If $\sigma$ is unitarizable then $\sigma_{U\cap L}$ is also unitarizable.
Hence the hypotheses on central characters in parts (ii) and (iii) of Theorem~\ref{thm:unitary case} hold for any unitarizable $\sigma$.
\end{remark}

\begin{remark}
  Schneider \cite[Conjecture~2.5]{MR2275644} stated a conjecture concerning the irreducibility of Banach principal series of a split simply-connected group
  over a $p$-adic field, under an antidominance condition on the inducing character, as discussed in the introduction.
  Theorem~\ref{thm:unitary case} resolves this completely for unitarizable inducing representations, for an essentially arbitrary connected reductive group.
\end{remark}

\begin{remark}
When $\alggrp{G}$ is split, $\alggrp{P} = \alggrp{B}$ and $e(\omega_\sigma|_S)$ is strictly dominant, i.e.\ $\langle e(\omega_\sigma|_S),\alpha^{\vee}\rangle > 0$ for any positive root $\alpha$, Theorem~\ref{thm:unitary case} was proved by Ban--Hundley~\cite{MR3537231} (at least when the derived subgroup is simply connected).
Note that in this case there are no reducibilities, as $\sigma|_{Z \cap L'_{\alpha}}$ trivial implies $\ang{e(\omega_\sigma|_S),\alpha^\vee} = 0$.
\end{remark}

\begin{remark}
  We note that irreducible admissible Banach representations of $Z$ do not have to be finite-dimensional, even though $\alggrp Z$ is anisotropic modulo its center.
  For example, when $Z = D^\times$ with $D$ a central division algebra, \cite{MR4404133} shows the existence of an irreducible admissible representation of $\delta$-dimension 1 (in particular, of infinite dimension).  
\end{remark}

\subsection{Rationality: the quasisplit case}
\label{sec:rati-quasisplit}
To prove Theorem~\ref{thm:unitary case}, we require the rationality of reducibility points of certain smooth parabolic inductions over $\C$.
The reason is that we are given $p$-adic properties about our representation, for example that it is unitarizable over $C$, but our information about reducibility of smooth representations uses classical methods over $\C$.
Concretely, in our proof we use that if $s \in \Q$ then we know how to compare $|p^s|_C$ with $|p^s|_{\C}$, where we fixed some isomorphism $C \cong \C$ (but we lose control if $s \notin \Q$).

We will finish the proof of the necessary property in subsection~\ref{sec:rank-one-groups}.
\emph{In subsections~\ref{sec:rati-quasisplit}--~\ref{sec:rank-one-groups} all representations are smooth and over $\C$.}

Suppose now that $\alggrp G$ is quasisplit, so that $\alggrp Z$ is a maximal torus.
For $\nu \in \mathfrak{a}_{Z,\C}^*$ and $\alpha \in \Phi(\alggrp G,\alggrp A_{\alggrp Z})$ we can define $\ang{\nu,\alpha^\vee}_{\abs} := \ang{i(\nu),\widetilde{\alpha}^\vee}$, where $i \colon \mathfrak{a}_{Z,\C}^* = X^*(\alggrp Z) \otimes \C \into X^*(\alggrp Z \times \overline{F}) \otimes \C$ is the natural map and $\widetilde{\alpha} \in X^*(\alggrp Z \times \overline{F})$ is any absolute root lifting $\alpha$.
Note that this is well defined because the lifts $\widetilde{\alpha}$ form a $\Gal(\overline{F}/F)$-orbit. 
Note that $\ang{\alpha,\alpha^\vee}_{\abs} \in \Q_{>0}$.
(In fact, if $d$ denotes the number of lifts $\widetilde{\alpha}$ of $\alpha$ we have $\ang{\nu,\alpha^\vee}_{\abs} = \frac 1{2d}\ang{\nu,\alpha^\vee}$ if $2\alpha$ is a root and $\ang{\nu,\alpha^\vee}_{\abs} = \frac 1{d}\ang{\nu,\alpha^\vee}$ otherwise, cf.\ \cite[Lemma A.9]{MR4397148}.)

\begin{lem}\label{lm:rationality-supercusp}
  Suppose that $\alggrp{G}$ is quasisplit, $\alggrp{P} = \alggrp{L}\alggrp{N}$ maximal and semistandard, $\sigma$ a generic supercuspidal representation of $L$.
  If $\mu^G(\sigma \chi_\nu)$ has a pole at $\nu = \nu_0 \in \mathfrak{a}_{L,\R}^*$, then $\nu_0 \in (\mathfrak{a}^G_{L,\Q})^* \oplus \mathfrak{a}_{G,\R}^*$.
  Equivalently, if $\mu^G(\sigma \delta_P^s)$ has a pole at $s = s_0 \in \R$, then $s_0 \in \Q$.
  More precisely, there exists $\gamma \in \Phi(\alggrp G,\alggrp A_{\alggrp Z})$ occurring in $\alggrp N$ such that $\ang{\nu,\gamma^\vee}_{\abs} \in \{\pm \frac 12,\pm 1\}$.
\end{lem}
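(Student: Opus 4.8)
The plan is to reduce to the rank-one case by Harish-Chandra's product formula and then invoke known results of Shahidi on local coefficients. First I would observe that since $\alggrp P = \alggrp L \alggrp N$ is maximal, the group $\xnr(G)\backslash\mathcal O_\C$ is a one-dimensional torus, so $\mu^G(\sigma\chi_\nu)$ is genuinely a rational function of a single variable $s$ via $\delta_P^s = \chi_{2s\rho_P}$, and proving $s_0 \in \Q$ is the content of the first two sentences; the third sentence is a sharpening that will come out of the same analysis. Since $\alggrp G$ is quasisplit and $\sigma$ is generic, its supercuspidal support on the maximal torus $\alggrp Z$ is automatically generic, so $(\alggrp G,\alggrp L,\sigma)$ is exactly the setting in which Shahidi's theory of local coefficients applies.

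The key input is Shahidi's result \cite{MR1070599} expressing $\mu^G(\sigma\chi_\nu)$ (up to a nonzero constant and an exponential factor that is holomorphic and non-vanishing) as a product $\prod_i C_{\psi}(\nu, r_i)^{-1} C_{\psi}(-\nu, r_i)^{-1}$-type expression, where the $r_i$ are the irreducible constituents of the adjoint action of ${}^L\alggrp L$ on ${}^L\mathfrak n$, and each factor is built from abelian $L$-functions and $\varepsilon$-factors attached to $\sigma$ composed with the relevant cocharacters; crucially the poles of such factors occur only at points where $\ang{\nu,\gamma^\vee}_{\abs}$ is a half-integer or integer for some $\gamma$ occurring in $\alggrp N$. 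Concretely, I would:
\begin{enumerate}
\item recall from \cite[\S7]{MR1070599} (or \cite{MR544991}) the explicit form of $\gamma(s,\sigma,r_i,\psi)$ as a ratio of Artin $L$-functions $L(1-s,\widetilde\sigma,\widetilde r_i)/L(s,\sigma,r_i)$ times an $\varepsilon$-factor, together with the relation $\mu^G(\sigma\delta_P^s) \doteq \prod_i \gamma(s,\sigma,r_i,\psi)\gamma(-s,\widetilde\sigma,\widetilde r_i,\overline\psi)$ up to nonzero scalar;
\item use that each $L(s,\sigma,r_i)$ is of the form $\prod_j (1 - \beta_j q^{-s})^{-1}$ with $|\beta_j|=1$ (Bruhat-Tits / abelian class field theory after restricting to the split part), so its zeros and poles in $s$ lie on vertical lines $\Re(s) = 0$, hence the only \emph{real} poles of $\mu^G$ come from the $q^{-s}$-polynomial structure and occur at $s \in \Q$ (indeed at $s$ with $q_0^{s}\in\{\pm1\}$ for a suitable power $q_0$ of $q$);
\item translate the location of these poles through $\delta_P^s = \chi_{2s\rho_P}$ and the normalization of $\ang{\cdot,\cdot}_{\abs}$ given in the excerpt to conclude that a real pole forces $\ang{\nu,\gamma^\vee}_{\abs}\in\{\pm\frac12,\pm1\}$ for some root $\gamma$ occurring in $\alggrp N$, refining the bare rationality statement.
\end{enumerate}

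I would package the single-variable rationality first (this is really just: a rational function of $q^{-s}$ with unimodular "roots and poles in $q^{-s}$" has all real poles at rational $s$), and only afterwards extract the $\{\pm\frac12,\pm1\}$ refinement, which needs the precise shape of the inducing data of the $L$-functions appearing — namely that $\sigma$ composed with the coroots $\gamma^\vee$ gives characters whose $L$-functions have poles exactly when the character is $\lvert\cdot\rvert^{\pm1}$ and the relevant $\varepsilon$/$L$-factor combination can only vanish to first order. The main obstacle I anticipate is bookkeeping: correctly matching Shahidi's normalization of the $r_i$ and of $\gamma(s,\sigma,r_i,\psi)$ with the normalization of $\mu^G$ used in this paper (which differs from \cite{MR544991} by a positive constant, as noted in the excerpt), and in particular verifying that the $\varepsilon$-factors and the "correction" exponential $q^{as+b}$ are holomorphic and non-vanishing on the real line so that they do not create or cancel real poles. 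Once the normalizations are pinned down, the argument is essentially a one-variable complex-analysis statement about products of geometric-series $L$-functions, and the genericity hypothesis is used exactly once, to guarantee that Shahidi's local coefficient machinery is available for $(\alggrp G,\alggrp L,\sigma)$.
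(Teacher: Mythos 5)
Your plan lands on the same underlying theory — Shahidi's local coefficient machinery for generic supercuspidals on quasisplit groups — but the paper's proof is much shorter: after projecting $\nu$ onto $(\mathfrak{a}^G_{L,\R})^*$ and identifying the unique simple root $\gamma$ of a Borel $\alggrp B\subset\alggrp P$ that occurs in $\alggrp N$, it simply cites \cite[Theorem~8.1]{MR1070599}, which \emph{already} packages the precise conclusion $\ang{\nu,\gamma^\vee}_\abs\in\{\pm\frac12,\pm1\}$, and then uses $\ang{\alpha,\alpha^\vee}_\abs\in\Q_{>0}$ to translate this into $\nu\in(\mathfrak{a}^G_{L,\Q})^*$. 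What you propose is, in effect, to reprove Theorem~8.1 by unwinding the $\gamma$-factor/$L$-function decomposition of $\mu^G$. That is legitimate in principle, but it moves the difficulty rather than removing it.

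There are two genuine issues in the sketch. First, the assertion that $L(s,\sigma,r_i)=\prod_j(1-\beta_jq^{-s})^{-1}$ with $|\beta_j|=1$ is not ``Bruhat--Tits / abelian class field theory'': the $r_i$ are non-abelian constituents of the adjoint action on $\widehat{\mathfrak n}$, and the unimodularity (equivalently, holomorphy of $L(s,\sigma,r_i)$ on $\Re s>0$) is precisely Shahidi's tempered $L$-function theorem, one of the main results of the very paper you would be citing — you cannot take it as free input. Second, knowing $|\beta_j|=1$ only places real poles of each $L(s,\sigma,r_i)$ at $s=0$; the appearance of $\pm\frac12$ and $\pm1$ comes from the shifted factors $L(1-is,\widetilde\sigma,\widetilde r_i)$ inside the $\gamma$-factors together with the grading $i\in\{1,2\}$ of $\widehat{\mathfrak n}$, and from the positivity $\mu^G(\sigma)\ge 0$ at $s=0$. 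Your step (3) gestures at this (``can only vanish to first order'') but does not carry out the bookkeeping — this is exactly the content of Shahidi's Theorem~8.1, so the cleanest fix is to cite it directly, as the paper does, rather than attempt an in-line reproof.
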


\begin{proof}
  First note that $\chi_\nu$ extends to an unramified character of $G$ if and only if $\nu \in \mathfrak{a}_{G,\R}^*$, so we may assume $\nu \in (\mathfrak{a}^{G}_{L,\R})^*$.
  As $P$ is maximal we can write $\Delta(\alggrp{P},\alggrp{A}_{\alggrp{L}}) = \{\alpha\}$, so $2\rho_P \in c \alpha$ for some $c \in \Z_{>0}$, i.e.\ $\delta_P = \chi_{c\alpha}$.
  Moreover, $\alpha$ is a basis of $(\mathfrak{a}^G_{L,\Q})^*$, which explains the equivalence of the two statements.
  Choose a Borel subgroup $\alggrp B$ contained in $\alggrp P$.
  Then there is a unique simple root $\gamma \in \Delta(\alggrp B,\alggrp A_{\alggrp Z})$ that occurs in $\alggrp N$,
  and \cite[Theorem~8.1]{MR1070599} implies that $\ang{\nu,\gamma^\vee}_{\abs} \in \{\pm \frac 12,\pm 1\}$.
  As $\ang{\alpha,\alpha^\vee}_{\abs} \in \Q_{>0}$, we deduce the second statement.
\end{proof}

\begin{prop}\label{prop:rationality-supercusp}
  Suppose that $\alggrp{G}$ is quasisplit, $\tau$ a discrete series representation of $G$ whose supercuspidal support is generic.
  Then there exists a standard parabolic subgroup $\alggrp{P} = \alggrp{L}\alggrp{N}$, a unitary (generic) supercuspidal representation $\sigma$ of $L$
  and $\nu \in (\mathfrak{a}_{L,\Q}^G)^* \subset \mathfrak{a}_{L,\R}^*$ such that $\tau$ is a quotient of $(\nInd_P^G \sigma \chi_\nu)^\sm$.
  More precisely, $\nu = \sum_{\alpha\in\Delta(\alggrp{P},\alggrp{A}_{\alggrp{L}})} c_\alpha \alpha$, where $c_\alpha \in \Q_{<0}$ for all $\alpha$.
\end{prop}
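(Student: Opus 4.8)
\textbf{Proof strategy for Proposition~\ref{prop:rationality-supercusp}.}

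The plan is to exploit the classification of the discrete series by Jacquet modules, in the spirit of Casselman's square-integrability criterion. First I would recall that, since $\tau$ is a discrete series of $G$, it embeds into $(\nInd_P^G \sigma')^\sm$ for some standard parabolic $\alggrp P = \alggrp L\alggrp N$ and some \emph{irreducible} supercuspidal representation $\sigma'$ of $L$, uniquely determined up to association; dually, $\tau$ is a quotient of $(\nInd_{\o P}^G \sigma')^\sm$. Writing $\sigma' \cong \sigma\chi_\nu$ with $\sigma$ unitary and $\nu\in(\mathfrak a_{L,\R}^G)^*$ (after twisting to arrange that the central character of $\tau$ restricted to $A_G$ is unitary, which we may since a discrete series has unitary central character), the supercuspidal support of $\sigma$ is generic because it is contained in the generic supercuspidal support of $\tau$ (every constituent of a parabolic induction shares the supercuspidal support, and genericity of the support is a property of the cuspidal datum). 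Replacing $\alggrp P$ by $\o{\alggrp P}$ (i.e.\ relabelling so $\o{\alggrp P}$ becomes standard), $\tau$ is a quotient of $(\nInd_P^G\sigma\chi_\nu)^\sm$ with $\sigma$ unitary generic supercuspidal, so the first assertion is reduced to showing the precise form $\nu = \sum_{\alpha\in\Delta(\alggrp P,\alggrp A_{\alggrp L})} c_\alpha\alpha$ with all $c_\alpha \in \Q_{<0}$.

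For the rationality and negativity of the $c_\alpha$, I would apply Casselman's criterion of square-integrability to $\tau$: the real parts of the exponents of $\tau$ along $\alggrp P$ must lie strictly inside the obtuse cone spanned by $\{-\alpha : \alpha\in\Delta(\alggrp P,\alggrp A_{\alggrp L})\}$. The geometric lemma computes $r_P((\nInd_P^G\sigma\chi_\nu)^\sm)$ as a sum over Kostant representatives $w$ of $W_L\backslash W/W_L$ of terms with cuspidal support $w(\sigma\chi_\nu)$; since $\sigma$ is supercuspidal, the exponent of $\tau$ along $\alggrp P$ that survives in $r_P\tau$ corresponds to $w=1$, i.e.\ is $\chi_\nu\delta_P^{1/2}$ up to the relevant normalization, with real part $\nu + \rho_P$ (suitably interpreted in $\mathfrak a_{L,\R}^*$). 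Casselman's criterion then forces $\nu + \rho_P$, hence $\nu$, to be a $\R_{<0}$-combination of $\Delta(\alggrp P,\alggrp A_{\alggrp L})$; one has to be slightly careful and instead argue with \emph{all} exponents and the leading one to extract that $\langle\nu+\rho_P,\varpi_\alpha^\vee\rangle < 0$ for each fundamental coweight $\varpi_\alpha^\vee$ dual to $\Delta(\alggrp P,\alggrp A_{\alggrp L})$, which gives $c_\alpha < 0$. To upgrade from $c_\alpha\in\R$ to $c_\alpha\in\Q$ I would invoke the rationality input at the level of each rank-one Levi: by Harish-Chandra's product formula (Proposition~\ref{prop:product formula}) and Proposition~\ref{prop:mu-max-parab}, the location of $\nu$ is governed by poles of $\mu^{L_\beta}$-functions for $\beta\in\Phi_\red(\alggrp P,\alggrp A_{\alggrp L})$, and Lemma~\ref{lm:rationality-supercusp} (applied to each such maximal parabolic $\alggrp P\cap\alggrp L_\beta$ of $\alggrp L_\beta$, whose inducing data $\sigma$ is generic supercuspidal) shows those poles occur at rational values of the relevant pairing; since $\tau$ being a \emph{nontempered} quotient on the wall/interior is detected by these $\mu$-functions vanishing/having poles, $\nu$ lies at a rational point.

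The main obstacle I expect is the bookkeeping that translates Casselman's criterion (a statement about real parts of exponents, living in $\mathfrak a_{L,\R}^*$ with its partial order from $\Delta(\alggrp P,\alggrp A_{\alggrp L})$) into the simultaneous conclusion that $\nu$ is both rational and strictly negative in \emph{every} coordinate $c_\alpha$, rather than merely dominant-looking. In particular one must rule out the possibility that some $c_\alpha = 0$: if $c_\alpha = 0$ for some $\alpha$, then $\sigma\chi_\nu$ would already extend to the larger Levi $\alggrp L_\alpha$-factor and $\tau$ could not be properly square-integrable modulo center — this needs the known reducibility structure, via $\mu^{L_\alpha}$ and Proposition~\ref{prop:mu-max-parab}, to exclude. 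Once these two ingredients (Casselman for the cone, and the $\mu$-function rationality of Lemma~\ref{lm:rationality-supercusp} plus Proposition~\ref{prop:product formula} for the denominator) are combined, the precise statement follows; the genericity hypothesis on the supercuspidal support of $\tau$ is used precisely to feed Lemma~\ref{lm:rationality-supercusp}, which requires generic inducing data.
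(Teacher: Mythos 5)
Your high-level route matches the paper's: the existence of a standard $\alggrp P$, a unitary generic supercuspidal $\sigma$ of $L$, and a $\nu$ with real parts $c_\alpha<0$ is exactly Casselman's criterion / the classification of discrete series (the paper simply cites \cite[p.\ 582]{MR577138} or \cite[Proposition III.1.1]{MR1989693} for this), and the paper also reduces rationality to Lemma~\ref{lm:rationality-supercusp}. Your worry at the end about ruling out $c_\alpha = 0$ is unnecessary: strict negativity is already built into Casselman's criterion and into those references.

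However, your rationality argument has a genuine gap. Applying Lemma~\ref{lm:rationality-supercusp} to a single maximal Levi $\alggrp L_\beta$ only shows that the projection of $\nu$ to the one-dimensional space $(\mathfrak a_{L,\R}^{L_\beta})^*$ is rational; it says nothing about the other $\ell-1$ coordinates of $\nu$, where $\ell = |\Delta(\alggrp P,\alggrp A_{\alggrp L})| = \dim(\mathfrak a_{L,\R}^G)^*$. You write that "the location of $\nu$ is governed by poles of $\mu^{L_\beta}$-functions... since $\tau$ being a nontempered quotient is detected by these $\mu$-functions," but a single pole of $\mu^G$ (or even of some $\mu^{L_\beta}$) does not pin down $\nu$ in all coordinates. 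The missing input is the key assertion from Silberger \cite[p.\ 582]{MR577138}: there exist \emph{linearly independent} reduced roots $\alpha_1,\dots,\alpha_\ell \in \Phi_{\mathrm{red}}(\alggrp P,\alggrp A_{\alggrp L})$, as many as $|\Delta(\alggrp P,\alggrp A_{\alggrp L})|$, such that $\mu^{L_{\alpha_i}}(\sigma\chi_{\nu'})$ has a pole at $\nu' = \nu$ for every $i$. Once you have this full-rank set, Lemma~\ref{lm:rationality-supercusp} gives rationality of each of the $\ell$ projections, and then a linear-algebra step is still needed to conclude that these projections determine $\nu$ uniquely: one identifies $\mathfrak a_{L_{\alpha_i},\R}^*$ with the annihilator of $\alpha_i^\vee$ and invokes Lemma~\ref{lm:coroots}(ii) to see that $\alpha_1^\vee,\dots,\alpha_\ell^\vee$ are linearly independent in $\mathfrak a_{L,\R}$, hence a basis of $\mathfrak a_{L,\R}^G$, so that $\bigcap_i \mathfrak a_{L_{\alpha_i},\R}^* = \mathfrak a_{G,\R}^*$. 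Without Silberger's full-rank pole statement and this dimension count, the rationality of $\nu$ does not follow. Note also that Proposition~\ref{prop:mu-max-parab}, which you invoke, is stated for maximal parabolics and is not the right tool here; the product formula (Proposition~\ref{prop:product formula}) plus Silberger's residue-theoretic result is what is actually needed.
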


\begin{proof}
  All except the rationality of $\nu$, or equivalently of the $c_\alpha$, follows from \cite[p.\ 582]{MR577138} or \cite[Proposition III.1.1]{MR1989693}.
  (The parabolic subgroup there may be conjugated to be standard.)
  In particular, $\nu \in (\mathfrak{a}_{L,\R}^G)^{*}$.
  By \cite[p.\ 582]{MR577138} there exists a linearly independent set of reduced roots $\alpha_1,\dots,\alpha_\ell \in \Phi_{\mathrm{red}}(\alggrp{P},\alggrp{A}_{\alggrp{L}})$ with $\ell = |\Delta(\alggrp{P},\alggrp{A}_{\alggrp{L}})|$
  such that $\mu^{L_{\alpha_i}}(\sigma \chi_{\nu'})$ has a pole at $\nu' = \nu$ for $1 \le i \le \ell$.
  By Lemma~\ref{lm:rationality-supercusp} we deduce that the projection of $\nu$ under the projection $(\mathfrak{a}^G_{L,\R})^* \to (\mathfrak{a}^{L_{\alpha_i}}_{L,\R})^*$
  lies in $(\mathfrak{a}^{L_{\alpha_i}}_{L,\Q})^*$ for all $i$.
  It therefore suffices to show that the natural map $(\mathfrak{a}^G_{L,\R})^* \to \bigoplus_{i=1}^\ell (\mathfrak{a}^{L_{\alpha_i}}_{L,\R})^*$ is an isomorphism,
  or equivalently injective, or equivalently that $\bigcap_i \mathfrak{a}_{L_{\alpha_i},\R}^* = \mathfrak{a}_{G,\R}^*$.

  By dimension reasons we see that $\mathfrak{a}_{L_{\alpha_i},\R}^*$ is the annihilator of $\alpha_i^{\vee}$ in $\mathfrak{a}_{L,\R}^*$.
  By Lemma~\ref{lm:coroots}, the coroots $\alpha_1^{\vee},\dots,\alpha_\ell^{\vee}$ are linearly independent in $\mathfrak{a}_{L,\R}$, hence a basis of $\mathfrak{a}_{L,\R}^G$, so indeed $\bigcap_i \mathfrak{a}_{L_{\alpha_i},\R}^* \subset \mathfrak{a}_{G,\R}^*$.
\end{proof}

\begin{remark}\label{rk:generic-gln}
  Continue to suppose the hypotheses in Proposition~\ref{prop:rationality-supercusp}.
  If $\alggrp G$ is a subgroup of $\alggrp G' := (\prod_{i=1}^r \Res_{E_i/F}\GL_{n_i})/\alggrp{H}$ containing $(\alggrp G')^{\der}$,
  for some finite extensions $E_i/F$, integers $n_i \ge 1$ and a central subtorus $\alggrp{H}$ of $\prod_{i=1}^r \Res_{E_i/F}\GL_{n_i}$ that is moreover induced,
  then $\ang{\nu,\alpha^\vee}_{\abs} \in \Z$ for all $\alpha \in \Phi(\alggrp G, \alggrp A_{\alggrp Z})$.
  (First reduce to $\alggrp G'$; then reduce to $\alggrp H = 1$; finally check it in the case of $\Res_{E/F}\GL_{n}$ using \cite[Theorem 9.3]{MR584084}.)
  \end{remark}

\begin{prop}\label{prop:rationality-discrete-series}
    Suppose that $\alggrp{G}$ is quasisplit, $\alggrp{Q} = \alggrp{L}_{\alggrp Q}\alggrp{N}_{\alggrp Q}$ a standard parabolic subgroup, and $\tau$ a discrete series representation of $L_Q$ whose supercuspidal support is generic.
  If $\mu^G(\tau \delta_Q^s)$ has a pole at $s = s_0 \in \R$, then $s_0 \in \Q$.
\end{prop}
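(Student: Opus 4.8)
The plan is to deduce Proposition~\ref{prop:rationality-discrete-series} from Proposition~\ref{prop:rationality-supercusp} together with Harish-Chandra's product formula (Proposition~\ref{prop:product formula}). First I would apply Proposition~\ref{prop:rationality-supercusp} to the discrete series $\tau$ of $\alggrp{L}_{\alggrp Q}$: this produces a standard parabolic subgroup $\alggrp P' = \alggrp L' \alggrp N'$ of $\alggrp L_{\alggrp Q}$, a unitary generic supercuspidal representation $\sigma$ of $L'$, and an element $\nu_0 \in (\mathfrak a_{L',\Q}^{L_Q})^* \subset \mathfrak a_{L',\R}^{L_Q}$ of the form $\nu_0 = \sum_{\alpha \in \Delta(\alggrp P',\alggrp A_{\alggrp L'})} c_\alpha \alpha$ with $c_\alpha \in \Q_{<0}$, such that $\tau$ is a quotient of $(\nInd_{P'}^{L_Q} \sigma\chi_{\nu_0})^\sm$. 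Here $\alggrp P'$ may be taken standard inside $\alggrp L_{\alggrp Q}$, hence $\alggrp P := \alggrp P' \alggrp N_{\alggrp Q}$ (with Levi $\alggrp L'$) is a standard parabolic of $\alggrp G$. Since $\sigma$ is supercuspidal, $\sigma\chi_{\nu_0}\delta_Q^s$ is a supercuspidal representation of $L'$, and its $\mu$-function governs the relevant reducibilities.

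The key point is the relation between $\mu^G$ for the discrete series $\tau$ of $L_Q$ and $\mu^G$ for the supercuspidal $\sigma\chi_{\nu_0}$ of $L'$. I expect to invoke the standard compatibility of the $\mu$-function under induction in stages: since $\tau \hookrightarrow$ (or is a quotient of) a supercuspidal induction, $\mu^G(\tau\delta_Q^s)$ divides $\mu^G(\sigma\chi_{\nu_0}\delta_Q^s)$ as a product of factors $\mu^{L_\alpha}$ indexed by roots $\alpha \in \Phi_{\red}(\alggrp P, \alggrp A_{\alggrp L'})$ whose restriction to $\alggrp A_{\alggrp L_{\alggrp Q}}$ is nonzero --- more precisely, by Proposition~\ref{prop:product formula} applied to $\sigma\chi_{\nu_0}\delta_Q^s$ (viewed as a discrete series of $L'$, after checking this), $\mu^G(\sigma\chi_{\nu_0}\delta_Q^s) = \prod_{\alpha\in\Phi_{\red}(\alggrp P,\alggrp A_{\alggrp L'})}\mu^{L_\alpha}(\sigma\chi_{\nu_0}\delta_Q^s)$, and the factors with $\alpha|_{\alggrp A_{\alggrp L_{\alggrp Q}}} = 0$ (i.e.\ $\alpha$ a root of $\alggrp L_{\alggrp Q}$) contribute the reducibilities internal to $L_Q$ and are independent of $s$, while those with $\alpha|_{\alggrp A_{\alggrp L_{\alggrp Q}}} \neq 0$ produce the poles of $\mu^G(\tau\delta_Q^s)$. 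A pole at $s = s_0$ thus forces some $\mu^{L_\alpha}(\sigma\chi_{\nu_0}\delta_Q^{s_0})$ to have a pole, and by Lemma~\ref{lm:rationality-supercusp} applied to the maximal parabolic $\alggrp L' \cap \alggrp P_{\alpha}$ of $\alggrp L_\alpha$ (with inducing supercuspidal $\sigma$), this happens only when $\ang{\nu_0 + s\cdot(\text{contribution of }\delta_Q),\gamma^\vee}_{\abs} \in \{\pm\tfrac12,\pm1\}$ for a suitable root $\gamma$. Since $\nu_0$ is rational and $\delta_Q = \chi_{2\rho_Q}$ with $2\rho_Q \in X^*(\alggrp A_{\alggrp L_{\alggrp Q}})$ rational, this is a rational affine-linear condition on $s$ with nonzero linear coefficient (because $\gamma$ restricts nontrivially to $\alggrp A_{\alggrp L_{\alggrp Q}}$), so $s_0 \in \Q$.

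The main obstacle I anticipate is making the induction-in-stages bookkeeping for $\mu^G$ precise: one must verify that $\sigma\chi_{\nu_0}\delta_Q^s$ really is a discrete series of $L'$ (or at least that the product formula and the rationality lemma apply to it as an unramified twist of the unitary supercuspidal $\sigma$), and that the poles of $\mu^G(\tau\delta_Q^s)$ are indeed captured by those factors $\mu^{L_\alpha}(\sigma\chi_{\nu_0}\delta_Q^s)$ with $\alpha$ not a root of $\alggrp L_{\alggrp Q}$ --- equivalently, that passing from the supercuspidal induction to its discrete-series quotient $\tau$ only removes the $s$-independent factors. This can be handled by the compatibility of $\mu$-functions with Jacquet modules / the geometric lemma, or by citing \cite[Proposition III.1.1]{MR1989693} and \cite[Lemma V.2.1]{MR1989693}; one then combines it with the computation that for $\alpha \in \Phi_{\red}(\alggrp P,\alggrp A_{\alggrp L'})$ restricting nontrivially to $\alggrp A_{\alggrp L_{\alggrp Q}}$, the pairing $\ang{\nu_0 + s\rho_Q|_{\mathfrak a_{L'}},\gamma^\vee}_{\abs}$ is genuinely $s$-dependent. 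The rest is a routine unwinding, using that $\Q$-linear conditions with rational data have rational solutions.
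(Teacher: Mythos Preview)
Your approach is essentially the same as the paper's: reduce to the supercuspidal case via Proposition~\ref{prop:rationality-supercusp}, apply the product formula, and then invoke Lemma~\ref{lm:rationality-supercusp} on the factor that produces the pole. Two points deserve comment.

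First, the ``main obstacle'' you identify is exactly the content of Silberger's theorem \cite[Theorem~1]{MR570781}, which gives the clean identity
\[
\mu^G(\tau\delta_Q^s) \;=\; \frac{\mu^G(\sigma\chi_{\nu_0+s\cdot 2\rho_Q})}{\mu^{L_Q}(\sigma\chi_{\nu_0+s\cdot 2\rho_Q})}
\;=\; \prod_{\beta \in \Phi_{\red}(\alggrp P,\alggrp A_{\alggrp L'})\setminus \Phi_{\red}(\alggrp P\cap \alggrp L_{\alggrp Q},\alggrp A_{\alggrp L'})} \mu^{L_\beta}(\sigma\chi_{\nu_0+s\cdot 2\rho_Q}).
\]
This is what the paper cites, and it is the right reference; the Waldspurger references you propose (\cite[III.1.1, V.2.1]{MR1989693}) do not give this directly. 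Your heuristic that the $\beta$ internal to $L_Q$ give $s$-independent factors is correct (since $\delta_Q^s$ extends to $L_Q \supset L_\beta$), but you still need Silberger's result to know that the remaining product \emph{equals} $\mu^G(\tau\delta_Q^s)$ rather than merely divides it. Also, your worry about whether $\sigma\chi_{\nu_0}\delta_Q^s$ is discrete series is a red herring: the product formula is an identity of rational functions on the unramified orbit of the supercuspidal $\sigma$, so it applies to every twist.

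Second, you assert that the linear coefficient in $s$ is nonzero ``because $\gamma$ restricts nontrivially to $\alggrp A_{\alggrp L_{\alggrp Q}}$,'' but this requires a short argument. What one actually needs is $\langle 2\rho_Q,\beta^\vee\rangle \ne 0$ for the relevant $\beta \in \Phi_{\red}(\alggrp P,\alggrp A_{\alggrp L'})$ with $\beta|_{\alggrp A_{\alggrp L_{\alggrp Q}}} \ne 0$; the paper checks this via Lemma~\ref{lm:coroots}(i), which says that the image of $\beta^\vee$ under $\mathfrak a_{L',\Q} \twoheadrightarrow \mathfrak a_{L_Q,\Q}$ is a positive multiple of $(\beta|_{\alggrp A_{\alggrp L_{\alggrp Q}}})^\vee$, so the pairing with $2\rho_Q$ is nonzero. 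Without this, the implication ``rational affine equation $\Rightarrow$ rational solution'' could fail.
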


\begin{proof}
  By Proposition~\ref{prop:rationality-supercusp} we can write $\tau$ as quotient of $(\nInd_{P \cap L_Q}^{L_Q} \sigma \chi_\nu)^\sm$ for some standard parabolic $\alggrp{P} = \alggrp{L}\alggrp{N}$
  contained in $\alggrp{Q}$, some unitary supercuspidal representation $\sigma$ of $L$, and some $\nu \in (\mathfrak{a}_{L,\Q}^{L_Q})^*$.
  Write $\delta_Q = \chi_{\nu'}$ for $\nu' = 2\rho_Q \in (\mathfrak{a}^G_{L_Q,\Q})^*$.   Hence $(\nInd_{P \cap L_Q}^{L_Q} \sigma \chi_{\nu+s\nu'})^\sm$ surjects onto $\tau\delta_Q^s$.
  By \cite[Theorem~1]{MR570781} and Proposition~\ref{prop:product formula} we have
  \begin{equation*}
    \mu^G(\tau \delta_Q^s) = \frac{\mu^G(\sigma \chi_{\nu+s\nu'})}{\mu^{L_Q}(\sigma \chi_{\nu+s\nu'})}
    = \prod_{\beta \in \Phi_{\mathrm{red}}(\alggrp P,\alggrp A_{\alggrp L})\setminus\Phi_{\mathrm{red}}(\alggrp P \cap \alggrp L_{\alggrp Q},\alggrp A_{\alggrp L})} \mu^{L_\beta}(\sigma \chi_{\nu+s\nu'}).
  \end{equation*}
  If $\mu^G(\tau \delta_Q^s)$ has a pole at $s = s_0 \in \R$, then by Lemma~\ref{lm:rationality-supercusp} we deduce that
  there exists $\beta \in \Phi_{\mathrm{red}}(\alggrp{P},\alggrp{A}_{\alggrp{L}})\setminus\Phi_{\mathrm{red}}(\alggrp{P} \cap \alggrp{L_{\alggrp Q}},\alggrp{A}_{\alggrp{L}})$ such that
  $\nu+s_0\nu' \in (\mathfrak{a}^{L_\beta}_{L,\Q})^* \oplus \mathfrak{a}_{L_\beta,\R}^*$ or equivalently $s_0\nu' \in (\mathfrak{a}^{L_\beta}_{L,\Q})^* \oplus \mathfrak{a}_{L_\beta,\R}^*$
  (by rationality of $\nu$). 

  We claim that $\nu' \notin \mathfrak{a}_{L_\beta,\R}^*$, i.e.\ $\langle\nu',\beta^\vee\rangle_L \ne 0$, where $\langle\cdot,\cdot\rangle_{L}$ denotes the pairing $\mathfrak{a}_{L,\R}^*\times \mathfrak{a}_{L,\R}\to \R$.
  Note that $\beta_Q := \beta|_{\alggrp A_{\alggrp L_{\alggrp Q}}} \ne 1$, i.e.\ $\beta_Q \in \Phi(\alggrp G,\alggrp A_{\alggrp L_{\alggrp Q}})$.
  (If not, then $\alggrp A_{\alggrp L_{\alggrp Q}}$ is contained in $\ker(\beta)$ and so $\alggrp A_{\alggrp L_{\alggrp Q}}$ centralizes the unipotent subgroup
  $\alggrp U_{(\beta)} \subset \alggrp P$ whose Lie algebra consists of positive integer multiples of $\beta$, i.e.\
  $\alggrp U_{(\beta)} \subset \alggrp L_{\alggrp Q}$, which is a contradiction.)
  Hence $\langle \nu',\beta^\vee\rangle_L = \langle \nu', \beta_Q^\vee\rangle_{L_Q} \ne 0$ by Lemma~\ref{lm:coroots}(i), as $\nu' = 2\rho_Q$ in $(\mathfrak{a}^G_{L_Q,\Q})^*$,
  proving the claim.
  Therefore, the image of $\nu'$ under the projection $\mathfrak{a}_{L,\Q}^* \twoheadrightarrow (\mathfrak{a}^{L_\beta}_{L,\Q})^*$ has to be nonzero, and hence $s_0 \in \Q$.
\end{proof}

\begin{remark}\label{rk:pole-qsplit}
  More precisely, the above proof shows that (in the notation of the proof) there exists a root $\gamma \in \Phi(\alggrp G,\alggrp A_{\alggrp Z})$ that occurs in $\alggrp N_{\alggrp Q}$
  such that $\ang{\nu+2s\rho_Q,\gamma^\vee}_\abs \in \{\pm \frac 12,\pm1\}$.
  In particular, in case the Levi subgroup $\alggrp L_{\alggrp Q}$ satisfies the assumption of Remark~\ref{rk:generic-gln} we deduce that $s\ang{2\rho_Q,\gamma^\vee}_\abs \in \frac 12\Z$.
\end{remark}

\subsection{Rationality: inner forms}
\label{sec:rati-inner-forms}
In this subsection we prove the rationality of the reducibility points of $(\nInd_{P}^{G}\sigma\delta_{P}^{s})^\sm$ for some inner forms $\alggrp{G}$, which include all simply-connected almost simple groups of rank one (cf.\ subsection~\ref{sec:rank-one-groups} below).
For hermitian quaternionic groups of maximal Witt rank Mui\'{c}--Savin~\cite[\S2]{MR1749954} proved that the $\mu$-function of a discrete series representation on a Siegel Levi coincides with the $\mu$-function of its Jacquet--Langlands transfer by using a global argument.
This method was used in some further cases in \cite{Konno} and \cite{MR3194161}.
We will prove a similar result in greater generality, though we do not determine the $\mu$-function completely.
We remind the reader that in this subsection all representations are smooth and complex.

Our setting is the following: $\alggrp{G}$ is any connected reductive group, $\alggrp{P} = \alggrp{L}\alggrp{N}$ is a maximal parabolic subgroup such that $\alggrp{L}$ is an inner form of a group $\alggrp{L}'$ satisfying 
\[(\widetilde{\alggrp{L}}^{\lowprime})^{\der} \subset \alggrp{L}' \subset \widetilde{\alggrp{L}}^{\lowprime},\]
where 
\[\widetilde{\alggrp{L}}^{\lowprime} := \left(\prod_{i=1}^r \Res_{E_i/F}\GL_{n_i}\right)/\alggrp{H}\]
for some finite extensions $E_i/F$, integers $n_i \ge 1$ and a central subtorus $\alggrp{H}$ of $\prod_{i=1}^r \Res_{E_i/F}\GL_{n_i}$.
We moreover assume that $\alggrp{H}$ is an induced torus.
Then we prove the following.

\begin{thm}\label{thm:rationality, inner}
Let $\sigma$ be a discrete series representation of $L$.
If $\mu^G(\sigma\delta_{P}^{s})$ has a pole at $s = s_{0}\in \R$, then $s_{0}\in \Q$.
\end{thm}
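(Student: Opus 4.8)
The plan is to reduce the problem to the quasisplit case, where the rationality was already established in Proposition~\ref{prop:rationality-discrete-series} (combined with Remark~\ref{rk:generic-gln}), by means of a global argument using the Jacquet--Langlands correspondence. First I would globalize the situation: realize $F$ as the completion at a place $v_0$ of a number field $\glob F$, choose a connected reductive group $\glob{\alggrp G}$ over $\glob F$ with $\glob{\alggrp G}(\glob F_{v_0}) \cong G$, equipped with a maximal parabolic $\glob{\alggrp P} = \glob{\alggrp L}\glob{\alggrp N}$ localizing to $\alggrp P = \alggrp L \alggrp N$ at $v_0$, and such that $\glob{\alggrp L}$ is an inner form of a group $\glob{\alggrp L}'$ of the same type $(\widetilde{\glob{\alggrp L}}^{\lowprime})^{\der} \subset \glob{\alggrp L}' \subset \widetilde{\glob{\alggrp L}}^{\lowprime}$ with $\widetilde{\glob{\alggrp L}}^{\lowprime}$ a quotient of $\prod_i \Res_{\glob E_i/\glob F}\GL_{n_i}$ by a central induced torus, and which is moreover quasisplit at all but finitely many places (in particular one can arrange that $\glob{\alggrp L}$ is quasisplit, hence equal to $\glob{\alggrp L}'$, away from $v_0$ and possibly one auxiliary place). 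The existence of such a globalization is standard (one globalizes the local inner twist datum, using that inner forms are classified by Galois cohomology and the Hasse principle for adjoint groups); the induced-torus hypothesis on $\alggrp H$ ensures $H^1$ of the center behaves well and that the relevant $L$-parameters/central characters globalize.

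Next I would use the global Jacquet--Langlands correspondence of Badulescu \cite{MR2390289} (extended as needed to the groups $\widetilde{\glob{\alggrp L}}^{\lowprime}$, which are products of restrictions of scalars of $\GL_n$ modulo an induced central torus, and to their inner forms) to transfer a global discrete automorphic representation $\glob\sigma$ of $\glob{\alggrp L}(\mathbb A_{\glob F})$, whose local component at $v_0$ is $\sigma$ (or a near-equivalent discrete series realizing the same $\mu$-function), to a global discrete automorphic representation $\glob\sigma^{\mathrm{JL}}$ of the quasisplit form $\widetilde{\glob{\alggrp L}}^{\lowprime}(\mathbb A_{\glob F})$ (restricted appropriately to $\glob{\alggrp L}'$). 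The key point is that the local $\mu$-function $\mu^G(\sigma\delta_P^s)$ is, up to a harmless nonvanishing factor, the local component at $v_0$ of the global normalizing factor governing the Eisenstein series attached to $\glob\sigma$ on $\glob{\alggrp G}$; this global factor is a ratio of completed automorphic $L$-functions (by the theory of Langlands--Shahidi, or simply by Langlands' general constant-term formula) that depends only on the near-equivalence class, hence only on $\glob\sigma^{\mathrm{JL}}$. Comparing the Euler product expansions at $v_0$, and using that at the finitely many bad places the local factors are rational functions of $s$ with rational poles (a soft statement for $\GL_n$ and its inner forms, or avoidable by choosing the globalization so these places are unramified), one deduces that the poles of $\mu^G(\sigma\delta_P^s)$ coincide (up to the explicit combinatorial shift by the unramified places, which contributes only rational numbers) with poles of $\mu^{\glob G^{\mathrm{qs}}}$-type factors for a quasisplit group, which are rational by Proposition~\ref{prop:rationality-discrete-series} together with the genericity of the supercuspidal support of $\glob\sigma^{\mathrm{JL}}$ (automatic, since discrete series of $\GL_n$ are generic). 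This mirrors and generalizes the Mui\'c--Savin argument \cite{MR1749954}.

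The main obstacle I expect is the bookkeeping at the ramified auxiliary places and making the comparison of global normalizing factors precise enough to pin down the $s_0$ at $v_0$: one must ensure that the contributions of all places $\ne v_0$ to the global intertwining/normalizing factor are themselves rational functions in $s$ with rational poles and zeros (so they cannot conspire to cancel an irrational pole at $v_0$), and that the transfer $\glob\sigma \mapsto \glob\sigma^{\mathrm{JL}}$ is compatible place-by-place with the local Jacquet--Langlands correspondence so that $\mu^{L_{v_0}}(\sigma\delta_P^s)$ really is controlled by the quasisplit side. A clean way around most of this is to choose the globalization so that $\glob\sigma$ is unramified (and $\glob{\alggrp G}$, $\glob{\alggrp L}$ quasisplit) at every finite place except $v_0$ and one further place $v_1$ where one has complete control (e.g.\ $\glob\sigma_{v_1}$ a fixed supercuspidal with known $\mu$-function), reducing the identity to a two-place comparison. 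Granting the existence of such a controlled globalization and Badulescu's correspondence, the rest is, as the excerpt says, a reduction to the quasisplit case handled in \S\ref{sec:rati-quasisplit}, and in particular $s_0 \in \Q$ follows; one can even read off the denominator from Remark~\ref{rk:pole-qsplit}.
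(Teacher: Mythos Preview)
Your overall strategy---globalize, apply Jacquet--Langlands, compare with the quasisplit side where Proposition~\ref{prop:rationality-discrete-series} applies---is exactly the paper's, but several of the mechanisms you invoke do not work as stated, and the paper's substitutes are the real content of the proof.

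First, you cannot use Langlands--Shahidi theory or ``completed automorphic $L$-functions'' on the inner-form side: $\glob{\alggrp G}$ is not quasisplit at $v_0$, so there is no such theory there. The paper instead uses the global functional equation $J_{\glob P|\overline{\glob P}}\,J_{\overline{\glob P}|\glob P}=\mathrm{id}$ (valid for any cuspidal datum) to obtain $c^V(s,\glob P,\Pi)\,c^V(s,\overline{\glob P},\Pi)\prod_{v\in V}j(\Pi_v\delta_P^s)=1$, and the \emph{same} identity for $\Pi'$ on $\glob{\alggrp G}'$. Since the unramified factors $c^V$ depend only on the Satake parameters, arranging $\Pi_v\simeq\Pi'_v$ for all $v\notin V_0$ (this takes work: Lemma~\ref{lem:comparison of mu for unramified} and Lemma~\ref{lem:density argument}) gives $\prod_{v\in V_0}\mu^{\glob G(\glob F_v)}(\Pi_v\delta_P^s)\sim\prod_{v\in V_0}\mu^{\glob G'(\glob F_v)}(\Pi'_v\delta_{P'}^s)$ directly, without any $L$-function interpretation on the non-quasisplit side.

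Second, Badulescu's correspondence is for $\GL_n$ and its inner forms, not for the subgroup $\alggrp L$ sitting between $(\widetilde{\alggrp L}^{\lowprime})^{\der}$ and $\widetilde{\alggrp L}^{\lowprime}$. The paper lifts $\sigma$ to a discrete series $\widetilde\sigma$ of $\widetilde L$, transfers \emph{that}, globalizes on $\widetilde{\glob L}$ and $\widetilde{\glob L}^{\lowprime}$, and only then descends to $\glob L$ and $\glob L'$ via a Clifford-theoretic density argument (Lemma~\ref{lem:density argument}) to force $\Pi_v\simeq\Pi'_v$ at the intermediate finite places. Your ``restricted appropriately to $\glob{\alggrp L}'$'' hides this step.

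Third, your proposed control at the auxiliary bad places---choose $\glob\sigma_{v_1}$ with ``known $\mu$-function''---begs the question: computing $\mu$-functions for inner forms is precisely what is unknown. The paper's device is much better: at each $v\in V_0\setminus\{v_0\}$ the local component $\Pi_v$ is a discrete series, so by Proposition~\ref{prop:mu-max-parab} the function $\mu^{\glob G(\glob F_v)}(\Pi_v\delta_P^s)$ has its only real zero at $s=0$. Hence a real pole of $\mu^G(\sigma\delta_P^s)$ at $v_0$ cannot be cancelled by the other factors on the inner-form side, and must therefore be a pole of the quasisplit product, which is rational by Proposition~\ref{prop:rationality-discrete-series} (using Lemma~\ref{lem:levi-generic} for genericity of the supercuspidal support).
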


We obtain the following corollary from our argument.
The inner form $\alggrp L$ of $\alggrp{L}'$ gives rise to an inner form $\widetilde{\alggrp{L}}$ of $\widetilde{\alggrp{L}}^{\lowprime}$ such that $\widetilde{\alggrp{L}}^{\lowprime[\der]} \subset \alggrp{L} \subset \widetilde{\alggrp{L}}$, as we now explain.
We have $\alggrp{Z}_{\alggrp{L}'} = \alggrp{L}'\cap \alggrp{Z}_{\widetilde{\alggrp{L}}^{\lowprime}}$ and hence a map $\alggrp{L}'/\alggrp{Z}_{\alggrp{L}'}\to \widetilde{\alggrp{L}}^{\lowprime}/\alggrp{Z}_{\widetilde{\alggrp{L}}^{\lowprime}}$ that induces $H^{1}(F,\alggrp{L}'/\alggrp{Z}_{\alggrp{L}'})\to H^{1}(F,\widetilde{\alggrp{L}}^{\lowprime}/\alggrp{Z}_{\widetilde{\alggrp{L}}^{\lowprime}})$.
The inner form $\alggrp{L}$ of $\alggrp{L}'$ corresponds to an element of $H^{1}(F,\alggrp{L}'/\alggrp{Z}_{\alggrp{L}'})$ and by the above map we get the desired 
inner form $\widetilde{\alggrp{L}}$ of $\widetilde{\alggrp{L}}^{\lowprime}$.

\begin{cor}\label{cor:rationality, inner}
  Suppose that $\sigma_1$, $\sigma_2$ are discrete series representations of $L$ that are conjugate under the action of $\widetilde{L}$.
  Then $\mu^G(\sigma_1\delta_{P}^{s}) = \mu^G(\sigma_2\delta_{P}^{s})$.
\end{cor}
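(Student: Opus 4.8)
The plan is to extract Corollary~\ref{cor:rationality, inner-intro} from the proof of Theorem~\ref{thm:rationality, inner-intro} (= Theorem~\ref{thm:rationality, inner}), whose argument — as outlined in the introduction — passes through a global construction and the global Jacquet--Langlands correspondence~\cite{MR2390289} to compare the $\mu$-function of $\alggrp L$ with that of its quasisplit inner form $\widetilde{\alggrp L}'$. The key point is that the $\mu$-function of a discrete series $\sigma$ of $L$, as a rational function of the unramified twist, is expressed via global intertwining operators in terms of the $\mu$-function on the quasisplit side, and the latter depends only on the transfer of $\sigma$ under (local) Jacquet--Langlands. Since $\widetilde L$-conjugate discrete series representations $\sigma_1, \sigma_2$ of $L$ have, by construction of $\widetilde{\alggrp L}$ (and the fact that $\widetilde{\alggrp L}^{\lowprime[\der]} \subset \alggrp L \subset \widetilde{\alggrp L}$), the same local Jacquet--Langlands transfer up to an unramified twist coming from $\widetilde L/L$, their $\mu^{L_\alpha}$-functions (for the unique reduced root $\alpha$, using that $\alggrp P$ is maximal) agree, and hence so do $\mu^G(\sigma_1 \delta_P^s)$ and $\mu^G(\sigma_2 \delta_P^s)$.

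First I would recall precisely how the proof of Theorem~\ref{thm:rationality, inner} proceeds: one embeds $\alggrp L$ (or rather the relevant factor of it) as a local component of a global inner form, chooses a global discrete series (cuspidal) automorphic representation with the prescribed local component at the place of interest, and uses the identity of partial $\mu$-functions / Plancherel measures coming from the analysis of global intertwining operators (Langlands--Shahidi theory on the quasisplit side, via the global JL correspondence). The upshot used there is a relation of the form $\mu^L(\sigma \chi) \doteq \mu^{L'}(\mathrm{JL}(\sigma) \chi')$ (up to nonzero constants and the identification of unramified character groups), where $\mathrm{JL}$ denotes the local Jacquet--Langlands transfer and $\alggrp L'$ is the quasisplit inner form $\widetilde{\alggrp L}'$ intersected back down. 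Once this is in hand, Corollary~\ref{cor:rationality, inner-intro} is immediate: if $\sigma_2 = \sigma_1 \circ \Ad(\widetilde\ell)$ for some $\widetilde\ell \in \widetilde L$, then $\mathrm{JL}(\sigma_1)$ and $\mathrm{JL}(\sigma_2)$ differ by conjugation on the quasisplit side, under which the $\mu$-function on $\widetilde L'$ is manifestly invariant (the $\mu$-function is conjugation-invariant, being defined through intertwining operators $J_{\o P|P}$, $J_{P|\o P}$ that transform equivariantly under conjugation). Combined with Harish-Chandra's product formula (Proposition~\ref{prop:product formula}) applied to the maximal parabolic $\alggrp P \cap \alggrp L_\alpha$, one concludes $\mu^G(\sigma_1\delta_P^s) = \mu^G(\sigma_2 \delta_P^s)$ exactly, not merely up to constant, since both sides are normalized the same way and agree on a Zariski-dense set of twists.

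In more detail, the chain of equalities I would write down is: $\mu^G(\sigma_i \delta_P^s)$ equals, up to a positive real constant independent of $i$ and $s$, the product $\prod_\beta \mu^{L_\beta}(\sigma_i \delta_P^s)$ over reduced roots $\beta$ occurring in $\alggrp N$ (Proposition~\ref{prop:product formula}); for the maximal parabolic this is just the single factor $\mu^{L_\alpha}(\sigma_i \delta_P^s)$ up to the same kind of constant; this in turn is computed on the quasisplit inner form via the global argument of Theorem~\ref{thm:rationality, inner}, giving a rational function depending only on $\mathrm{JL}(\sigma_i)$ and the twist; and finally $\mathrm{JL}(\sigma_1)$, $\mathrm{JL}(\sigma_2)$ being $\widetilde L'$-conjugate, these rational functions coincide. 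To upgrade ``up to a constant'' to genuine equality, I would invoke that $\mu^G$ is intrinsically normalized (it does not depend on the choice of Haar measure, as noted after Proposition~\ref{prop:product formula}), so once $\mu^G(\sigma_1 \delta_P^s)$ and $\mu^G(\sigma_2 \delta_P^s)$ are proportional rational functions of $s$ with the same normalization at one point — e.g.\ both satisfy $\mu^G(\sigma_i) \ge 0$ with matching behaviour, or one compares leading coefficients using the explicit formula — they must be equal.

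The main obstacle I anticipate is purely bookkeeping: making precise the statement that $\widetilde L$-conjugation of $\sigma$ corresponds, under local Jacquet--Langlands, to conjugation (by the corresponding element of $\widetilde L'$) of $\mathrm{JL}(\sigma)$, and tracking the identification of the rank-one unramified character tori $\xnr(L)\backslash \mathcal O_\C$ on the two sides. This requires knowing that $\mathrm{JL}$ is compatible with the action of $\widetilde L/L \cong \widetilde L'/L'$ (a component group of the centre, essentially $F^\times$-factors), which follows from the characterizing properties of the Jacquet--Langlands correspondence (character identities, or compatibility with unramified twists and central characters) since $\widetilde{\alggrp L}^{\lowprime}$ is a product of (restrictions of scalars of) general linear groups modulo an induced central torus. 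A secondary subtlety is ensuring the global discrete series used in the proof of Theorem~\ref{thm:rationality, inner} can be chosen with local component $\sigma_1$ and, simultaneously (or by a parallel choice), $\sigma_2$; but since $\sigma_1$ and $\sigma_2$ are in the same $\widetilde L$-orbit this is not a real constraint, and in any case the final statement only concerns the purely local rational functions $\mu^G(\sigma_i\delta_P^s)$, so I would phrase the argument so that the global input is invoked once, abstractly, to establish the comparison formula, after which Corollary~\ref{cor:rationality, inner-intro} is a formal consequence.
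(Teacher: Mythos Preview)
Your proposal rests on a local identity of the form $\mu^G(\sigma\,\chi) \doteq \mu^{G'}(\mathrm{JL}(\sigma)\,\chi')$, which you describe as ``the upshot'' of the proof of Theorem~\ref{thm:rationality, inner}. This identity is \emph{not} what that proof establishes; the remark immediately following the proof says explicitly that showing $\mu^G(\sigma\delta_P^s) = \mu^{G'}(\sigma'\delta_{P'}^s)$ ``might be possible with more work''. What the global argument actually yields is only a product identity
\[
  \prod_{v\in V_0}\mu^{\glob{G}(\glob{F}_{v})}(\Pi_{v}\delta_{\glob{P}(\glob{F}_{v})}^{s})
  \;\sim\;
  \prod_{v\in V_0}\mu^{\glob{G}'(\glob{F}_{v})}(\Pi'_{v}\delta_{\glob{P}'(\glob{F}_{v})}^{s}),
\]
with auxiliary places $v\in V_0\setminus\{v_0\}$ that do not cancel. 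So the first displayed chain in your argument does not go through, and the rest (conjugation-invariance of $\mu$ on the quasisplit side, compatibility of JL with $\widetilde L/L$) becomes moot. There is also a category error: the Jacquet--Langlands correspondence used is between $\widetilde L$ and $\widetilde L'$, not between $L$ and $L'$, so ``$\mathrm{JL}(\sigma_i)$'' for $\sigma_i$ a representation of $L$ is not defined in this setup.

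The paper's proof avoids the missing local comparison altogether. Since $\sigma_1,\sigma_2$ are $\widetilde L$-conjugate, they are both constituents of $\widetilde\sigma|_L$ for a single discrete series $\widetilde\sigma$ of $\widetilde L$, and one can use the \emph{same} global $\widetilde\Pi$ (with $\widetilde\Pi_{v_0}\cong\widetilde\sigma$) for both. The density argument of Lemma~\ref{lem:density argument} is then run twice to produce cuspidal $\Pi_1,\Pi_2$ on $\glob L(\mathbb A)$ with $\Pi_{i,v_0}\cong\sigma_i$ and with $\Pi_{1,v}\cong\Pi_{2,v}$ at every other place of $V_1$; at places of $V\setminus V_1$ Lemma~\ref{lem:comparison of mu for unramified} matches the $j$-functions since both $\Pi_{i,v}$ are quotients of the same unramified $\widetilde\Pi_v|_{\glob L(\glob F_v)}$. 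Feeding $\Pi_1$ and $\Pi_2$ into the same global identity then cancels every place except $v_0$ and gives $\mu^G(\sigma_1\delta_P^s)=\mu^G(\sigma_2\delta_P^s)$ exactly. The ``secondary subtlety'' you dismissed --- arranging the global lifts to match away from $v_0$ --- is precisely the substance of the proof.
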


This verifies \cite[Working Hypothesis 1.1]{MR3194161} (in our more general setting).
See also Remark~\ref{rk:choiy}.

We give an outline of the proof of Theorem~\ref{thm:rationality, inner}, which uses global methods.
Let $\alggrp{G}'$ be the quasisplit inner form of $\alggrp{G}$.
We will take a suitable globalization $\glob{G}$ of $\alggrp{G}$ (resp.\ $\glob{G'}$ of $\alggrp{G}'$) and a nice automorphic representation $\Pi$ (resp.\ $\Pi'$) of $\glob{G}$ (resp. $\glob{G'}$).
By using a result of Moeglin--Waldspurger~\cite{MR1361168}, one can compare the products of $\mu$-functions for $\Pi$ and $\Pi'$.
By Proposition~\ref{prop:rationality-discrete-series}, the $\mu$-functions for $\Pi'$ have rational poles.
(Note that the genericity assumption is automatic in this case, see Lemma~\ref{lem:levi-generic} below.)
We also know that the zeros of the $\mu$-function are rational by Proposition~\ref{prop:mu-max-parab}.
Therefore, the $\mu$-function for $\alggrp{G}(F)$ also has rational poles.

We start by giving some notation for real and global groups.
Bold letters will be used for global objects: for example, $\glob{F}$ will be a number field and $\glob{G}$ will be a connected reductive group over $\glob{F}$.
For each place $v$ of $\glob{F}$, $\glob{F}_{v}$ denotes the completion of $\glob{F}$ at $v$.
If $v$ is an infinite place of $\glob{F}$, a representation of $\glob{G}(\glob{F}_{v})$ means a $(\Lie(\glob{G}(\glob{F}_{v}))\otimes_{\R}\C,K_{v})$-module, where $K_{v}$ is a fixed maximal compact subgroup of $\glob{G}(\glob{F}_{v})$.
If $\glob{P} = \glob{L}\glob{N}$ is a parabolic subgroup of $\glob{G}$ and $\sigma$ a representation of $\glob{G}(\glob{F}_{v})$, let $(\nInd_{\glob{P}(\glob{F}_{v})}^{\glob{G}(\glob{F}_{v})}\sigma)^{\sm}$ be the normalized parabolic induction in the context of $(\Lie(\glob{G}(\glob{F}_{v}))\otimes_{\R}\C,K_{v})$-modules.
Let $\mathbb{A} = \mathbb{A}_{\glob{F}}$ be the adele ring of $\glob{F}$.
Let $(\nInd_{\glob{P}(\mathbb{A})}^{\glob{G}(\mathbb{A})}\sigma)^{\sm}$ be the normalized parabolic induction for a cuspidal automorphic representation $\sigma$ of $\glob{L}(\mathbb{A})$ (cf.\ \cite[II.1]{MR1361168}).

We now construct nice globalizations.

\begin{lem}\label{lem:global-fields}
There exists a number field $\glob{F}$, a place $v_{0}$ of $\glob{F}$, and finite extensions $\glob{E}/\glob{F},\glob{E}_{1}/\glob{F},\ldots,\glob{E}_{r}/\glob{F}$ such that $\glob{E}/\glob{F}$ is Galois, $\glob{F}_{v_{0}}\simeq F$, $v_{0}$ does not decompose in $\glob{E}$, $\alggrp{G}$ splits over $\glob{E}_{v_{0}}$, $\glob{E}\supset \glob{E}_{i}$, and $(\glob{E}_{i})_{v_{0}}\simeq E_{i}$.
In particular we have $\Gal(\glob{E}_{v_{0}}/F)\simeq\Gal(\glob{E}/\glob{F})$.
\end{lem}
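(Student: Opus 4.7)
The plan is first to globalize the local data into a single finite Galois extension $\wt{E}/F$, then to produce a global Galois extension of number fields whose completion at one place $v_0$ realizes $\wt{E}/F$ with $v_0$ inert. The intermediate fields $\glob{E}_i$ will arise as fixed fields of the corresponding subgroups of the local Galois group.

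First I would choose a finite Galois extension $\wt{E}/F$ containing every $E_i$ and a splitting field of $\alggrp{G}$, so that $\alggrp{G}$ splits over $\wt{E}$ and each $\Gal(\wt{E}/E_i)$ is a subgroup of $\Gamma := \Gal(\wt{E}/F)$. Next, pick a primitive element $\alpha$ of $\wt{E}/\Q_p$ with minimal polynomial $h(x)\in\Q_p[x]$, and use Krasner's lemma to approximate $h$ by some $\tilde h(x)\in\Q[x]$ possessing a root $\tilde\alpha$ that generates the same field $\wt{E}$ over $\Q_p$; taking the approximation close enough, one may moreover arrange that every root of $\tilde h$ in an algebraic closure of $\Q_p$ already lies in $\wt{E}$. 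Setting $\glob{E}_0 := \Q(\tilde\alpha)$ and letting $\glob{E}$ be its Galois closure over $\Q$, the chosen $p$-adic embedding of $\glob{E}_0$ extends to an embedding of $\glob{E}$ with image in $\wt{E}$, which determines a place $w_0$ of $\glob{E}$ above $p$ with completion $\glob{E}_{w_0}=\wt{E}$.

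Let $D\subset\Gal(\glob{E}/\Q)$ denote the decomposition group at $w_0$; via its action on the completion it is identified with $\Gal(\wt{E}/\Q_p)\supset\Gamma$. I would then define
$$\glob{F} := \glob{E}^\Gamma, \qquad \glob{E}_i := \glob{E}^{\Gal(\wt{E}/E_i)} \quad (1\le i\le r),$$
and let $v_0$ be the restriction of $w_0$ to $\glob{F}$. Since $\Gamma\subset D$, the decomposition group at $w_0$ over $\glob{F}$ is $\Gamma\cap D = \Gamma = \Gal(\glob{E}/\glob{F})$, so $v_0$ has $w_0$ as its unique place above it in $\glob{E}$, i.e.\ does not decompose. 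Taking fixed fields yields $\glob{F}_{v_0}=\wt{E}^\Gamma=F$, $\glob{E}_{w_0}=\wt{E}$, and $(\glob{E}_i)_{v_0}=\wt{E}^{\Gal(\wt{E}/E_i)}=E_i$, and $\alggrp{G}$ splits over $\glob{E}_{v_0}=\wt{E}$ by construction.

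The main obstacle is to guarantee that $v_0$ does not decompose in $\glob{E}/\glob{F}$, equivalently that the decomposition group at $w_0$ coincides with $\Gal(\glob{E}/\glob{F})$. The elegant workaround is not to try to globalize $\Gamma$ as a Galois group from the start, but instead to produce a (possibly larger) global Galois extension $\glob{E}/\Q$ via Krasner's lemma and only afterwards carve out $\glob{F}$ as the fixed field of $\Gamma$ realized inside the local decomposition group $D$; this forces the decomposition group at $w_0$ over $\glob{F}$ to coincide with $\Gal(\glob{E}/\glob{F})$ on the nose, yielding the desired inertness.
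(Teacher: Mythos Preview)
Your approach mirrors the paper's: both first enlarge the local data to a single finite Galois extension over $F$ containing all the $E_i$ and splitting $\alggrp{G}$, then globalize this extension with a non-decomposed place, then recover the $\glob{E}_i$ via the Galois correspondence. The paper simply cites \cite[Lemma~3.1]{MR3861809} for the globalization step, whereas you supply a direct Krasner-type construction, so your proof is more self-contained but otherwise follows the same line.

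There is one slip. Your claim that ``every root of $\tilde h$ in an algebraic closure of $\Q_p$ already lies in $\wt{E}$'' requires $\wt{E}/\Q_p$ to be Galois, not merely $\wt{E}/F$: the roots of $h$ are the $\Gal(\overline{\Q_p}/\Q_p)$-conjugates of $\alpha$, and these lie in $\wt{E}$ only when $\wt{E}$ is normal over $\Q_p$; Krasner then transports this to the roots of $\tilde h$. Without it, the completion $\glob{E}_{w_0}$ is the Galois closure of $\wt{E}$ over $\Q_p$ rather than $\wt{E}$ itself, and your identification of $\Gamma=\Gal(\wt{E}/F)$ with a subgroup of the decomposition group $D$ no longer makes sense. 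The fix is immediate: at the first step, replace $\wt{E}$ by its Galois closure over $\Q_p$, which is still Galois over $F$, still contains each $E_i$, and still splits $\alggrp{G}$.
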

\begin{proof}
Take a finite Galois extension $E/F$ such that $\alggrp{G}$ splits over $E$ and $E\supset E_{i}$ for each $i = 1,\ldots,r$.
Then by \cite[Lemma~3.1]{MR3861809} there exists a Galois extension $\glob{E}/\glob{F}$ of number fields and a place $v_{0}$ of $\glob{F}$ such that $v_{0}$ does not decompose in $\glob{E}$, $\glob{E}_{v_{0}}\simeq E$, $\glob{F}_{v_{0}}\simeq F$.
The subfield $E_{i}\subset E$ corresponds to a subgroup $H_{i}\subset \Gal(E/F)\simeq \Gal(\glob{E}/\glob{F})$ and let $\glob{E}_{i}$ be the subfield of $\glob{E}$ corresponding to $H_{i}$.
Then this gives the desired properties.
\end{proof}
As above, let $\alggrp{G}'$ be the quasisplit inner form of $\alggrp{G}$, and fix an inner twist $\xi \colon \alggrp G_{\o F} \congto \alggrp G'_{\o F}$ over $\o F$.
After conjugating by $\alggrp G'(\o F)$ we may assume that the parabolic subgroup $\xi(\alggrp P_{\o F})$ and its Levi $\xi(\alggrp L_{\o F})$ are defined over $F$ (\cite[Lemma 3]{MR4048514}), and we let $\alggrp{P}' = \alggrp{L}'\alggrp{N}'$ the parabolic subgroup of $\alggrp G'$ obtained in this way, i.e.\ $\xi(\alggrp P_{\o F}) = \alggrp{P}'_{\o F}$ and $\xi(\alggrp L_{\o F}) = \alggrp{L}'_{\o F}$.
Then $\xi$ induces an inner twist $\alggrp L_{\o F} \congto \alggrp L'_{\o F}$ (\cite[Lemma 3]{MR4048514}) and so $\alggrp L'$ is the unique quasisplit inner form, consistent with our notation above.
In particular, $\xi$ induces $F$-isomorphisms $\alggrp Z_{\alggrp L} \cong \alggrp Z_{\alggrp L'}$ and $\alggrp A_{\alggrp L} \cong \alggrp A_{\alggrp L'}$.
We note that $\alggrp{G}'$ splits over $\glob{E}_{v_{0}}$, because $\alggrp{L}'$ splits over $\glob{E}_{v_{0}}$.
(Note that $(\alggrp{L}')^\der = (\widetilde{\alggrp{L}}^{\lowprime})^{\der}$ splits over $\glob{E}_{v_{0}}$ and similarly for the radical $\alggrp{Z}_{\alggrp{L}'}^0$, noting that $\alggrp{Z}_{\alggrp{L}'} = \alggrp{L}'\cap \alggrp{Z}_{\widetilde{\alggrp{L}}^{\lowprime}}$.)

The quasisplit group $\alggrp{G}'$ is classified by the action of $\Gal(\glob{E}_{v_{0}}/F)$ on the root datum of $\alggrp{G}'$.
Since $\Gal(\glob{E}_{v_{0}}/F)\simeq \Gal(\glob{E}/\glob{F})$, it also determines a quasisplit group $\glob{G}'$ over $\glob{F}$.
The parabolic subgroup $\alggrp{P}'$ corresponds to a set of simple roots and let $\glob{P}' = \glob{L}'\glob{N}'\subset\glob{G}'$ be the corresponding parabolic subgroup. Then we have $(\glob{G}'_{v_{0}},\glob{L}'_{v_{0}},\glob{N}'_{v_{0}})\simeq (\alggrp{G}',\alggrp{L}',\alggrp{N}')$, where $\glob{G}'_{v_{0}} := \glob{G'}\times_{\glob{F}}\glob{F}_{v_{0}}$ (and this notation is used for other groups as well).

\begin{lem}
There exists an inner form $(\glob{G},\glob{P},\glob{L})$ of $(\glob{G}',\glob{P}',\glob{L}')$ such that $(\glob{G}_{v_{0}},\glob{P}_{v_{0}},\glob{L}_{v_{0}})\simeq (\alggrp{G},\alggrp{P},\alggrp{L})$ and $(\glob{G}_{v},\glob{P}_{v},\glob{L}_{v})\simeq (\glob{G}'_{v},\glob{P}'_{v},\glob{L}'_{v})$ for any infinite place $v$.
\end{lem}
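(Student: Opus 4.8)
The plan is to use Galois cohomology to descend the local inner form to a global inner form with prescribed behavior at the places that matter. Recall that the inner forms of $\glob{G}'$ over $\glob{F}$ are classified by $H^1(\glob{F},\glob{G}'^{\mathrm{ad}})$, where $\glob{G}'^{\mathrm{ad}} = \glob{G}'/\glob{Z}_{\glob{G}'}$ is the adjoint group, and similarly for $\glob{G}'_{v}$ over $\glob{F}_v$. The local inner form $\alggrp{G}$ of $\alggrp{G}' \simeq \glob{G}'_{v_0}$ corresponds to a class $c_{v_0} \in H^1(\glob{F}_{v_0},\glob{G}'^{\mathrm{ad}})$. First I would invoke the Hasse principle for adjoint groups (or more precisely, the surjectivity of $H^1(\glob{F},\glob{G}'^{\mathrm{ad}}) \to \bigoplus_v H^1(\glob{F}_v,\glob{G}'^{\mathrm{ad}})$, which holds because $\glob{G}'^{\mathrm{ad}}$ is adjoint — this is a standard consequence of Kneser's theorem and the structure of the cohomology of adjoint groups, see e.g.\ results of Borovoi or Prasad--Rapinchuk) to produce a global class $\glob{c} \in H^1(\glob{F},\glob{G}'^{\mathrm{ad}})$ whose localization at $v_0$ is $c_{v_0}$ and whose localization at every \emph{infinite} place $v$ is trivial. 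The latter is possible precisely because we are free to prescribe the local components subject only to them being almost all trivial; we prescribe $c_{v_0}$ at $v_0$ and $0$ at all infinite places (and $0$, say, at all other places as well). This global class defines an inner form $\glob{G}$ of $\glob{G}'$ with $\glob{G}_{v_0} \simeq \alggrp{G}$ and $\glob{G}_v \simeq \glob{G}'_v$ for all infinite $v$.

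Next I would transport the parabolic subgroup. The parabolic $\alggrp{P}' \subset \alggrp{G}'$ (and its Levi $\alggrp{L}'$) is standard, i.e.\ corresponds to a $\Gal$-stable subset of simple roots, and the same subset defines a standard parabolic $\glob{P}' = \glob{L}'\glob{N}' \subset \glob{G}'$ by the identification $\Gal(\glob{E}_{v_0}/F) \simeq \Gal(\glob{E}/\glob{F})$ already fixed in the preceding lemma. Inner twisting preserves conjugacy classes of parabolic subgroups, so the inner form $\glob{G}$ carries a corresponding parabolic $\glob{P} = \glob{L}\glob{N}$ defined over $\glob{F}$ (here one uses that the relevant cocycle, after suitable conjugation, normalizes the standard parabolic; cf.\ \cite[Lemma 3]{MR4048514} as used just above for the local statement). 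I would then check that $(\glob{G}_{v_0},\glob{P}_{v_0},\glob{L}_{v_0}) \simeq (\alggrp{G},\alggrp{P},\alggrp{L})$: this follows since the localization at $v_0$ of the global cocycle defining $(\glob{G},\glob{P},\glob{L})$ is (cohomologous to) the one defining $(\alggrp{G},\alggrp{P},\alggrp{L})$ — they have the same class $c_{v_0}$ — and the parabolic subgroup is determined up to conjugacy by this class together with the root-theoretic data, which agree by construction. Similarly, at each infinite place $v$ the localization is trivial, so $(\glob{G}_v,\glob{P}_v,\glob{L}_v) \simeq (\glob{G}'_v,\glob{P}'_v,\glob{L}'_v)$.

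One subtlety I would address carefully: to get not merely $\glob{G}_{v_0} \simeq \alggrp{G}$ but compatibly $\glob{L}_{v_0} \simeq \alggrp{L}$ and $\glob{P}_{v_0} \simeq \alggrp{P}$, it is cleanest to work with the inner twist as a cocycle valued in $\glob{L}'^{\mathrm{ad}}$ rather than $\glob{G}'^{\mathrm{ad}}$ when possible, or equivalently to note that the local inner form $\alggrp{L}$ of $\alggrp{L}'$ (coming from restricting the inner twist $\xi$, as recalled in the text) determines a class in $H^1(\glob{F}_{v_0},\glob{L}'^{\mathrm{ad}})$ that we lift globally, and then push forward along $\glob{L}'^{\mathrm{ad}} \to \glob{G}'^{\mathrm{ad}}$; the induced inner form of $\glob{G}'$ then automatically carries a Levi subgroup that is the corresponding inner form of $\glob{L}'$. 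The Hasse principle is again what allows the global lift with trivial components at infinite places. The main obstacle, and the step requiring the most care, is verifying that all these identifications are simultaneously compatible — i.e.\ that a single global object $(\glob{G},\glob{P},\glob{L})$ realizes the prescribed local isomorphisms at $v_0$ and at all infinite places at once — which amounts to chasing the diagram of cohomology sets $H^1(\glob{F},-) \to \prod_v H^1(\glob{F}_v,-)$ for the groups $\glob{L}'^{\mathrm{ad}}$ and $\glob{G}'^{\mathrm{ad}}$ and their interrelation, and checking that the parabolic transports correctly under localization. The remaining assertions of the lemma (comparison of Levi subgroups, the isomorphisms $\alggrp Z_{\alggrp L} \cong \alggrp Z_{\alggrp L'}$ etc.) are then formal consequences of the construction.
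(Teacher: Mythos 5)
Your general strategy---realize the local inner-twist class by a global one with prescribed trivial components at the infinite places---is the right one and matches the paper in spirit, but there are two genuine gaps.

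First, you need an inner form of the \emph{triple} $(\glob{G}',\glob{P}',\glob{L}')$, not merely of the group $\glob{G}'$. A class in $H^1(\glob{F},\glob{G}'^{\ad})$ by itself produces only a form of the group, and the resulting inner form need not carry a parabolic subgroup over $\glob{F}$ in the relevant conjugacy class at all (compare a split group with an anisotropic inner form). The reference to \cite[Lemma 3]{MR4048514} is a statement over $\overline{F}/F$ for a fixed local field; it does not arrange a rational parabolic in a globally constructed inner form. The correct classifying object is $H^1(\glob F, I(\glob G',\glob P',\glob L'))$, where $I(\glob G',\glob P',\glob L')$ is the group of inner automorphisms preserving the triple; since $N_{\glob G'}(\glob P') = \glob P'$ and $N_{\glob P'}(\glob L') = \glob L'$, this group is $\glob L'/\glob Z_{\glob G'}$. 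Your proposed fix---working in $\glob L'^{\ad} = \glob L'/\glob Z_{\glob L'}$ and ``pushing forward along $\glob L'^{\ad} \to \glob G'^{\ad}$''---does not help: $\glob L'/\glob Z_{\glob G'}$ and $\glob L'/\glob Z_{\glob L'}$ are genuinely different quotients (as $\glob Z_{\glob G'} \subsetneq \glob Z_{\glob L'}$ for a proper Levi), and there is in fact no homomorphism $\glob L'^{\ad} \to \glob G'^{\ad}$ induced by $\glob L' \into \glob G'$, since $\glob Z_{\glob L'}\not\subset \glob Z_{\glob G'}$.

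Second, and more seriously, once one identifies the relevant cohomology as $H^1(\glob F, \glob L'/\glob Z_{\glob G'})$, the surjectivity of
\[
H^1(\glob F, \glob L'/\glob Z_{\glob G'}) \longrightarrow \prod_{v \in V_{\infty}\cup\{v_0\}} H^1(\glob F_v, \glob L'/\glob Z_{\glob G'})
\]
is \emph{not} a consequence of the Hasse-principle surjectivity for adjoint groups, because $\glob L'/\glob Z_{\glob G'}$ is not adjoint: it has a nontrivial central torus $\glob Z_{\glob L'}/\glob Z_{\glob G'}$. This surjectivity is the real content of the lemma. The paper first reduces to $\glob Z_{\glob G'}$ trivial (so the question becomes surjectivity for $H^1(\glob F, \glob L')$), and then crucially observes that $\glob Z_{\glob L'}$ is an \emph{induced} torus (this uses $\glob Z_{\glob G'}=1$, which forces the fundamental coweights to be a $\Gal$-permuted basis of the cocharacter lattice). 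With that in hand, Hilbert 90 gives local injectivity $H^1(\glob F_v,\glob L') \into H^1(\glob F_v,\glob L'^{\ad})$, one applies the Prasad--Rapinchuk surjectivity for $\glob L'^{\ad}$ away from an auxiliary finite place $v'$ \cite[Proposition 1]{MR2271021}, and controls the $H^2$ boundary using the local--global principle for Brauer groups and the fact that $\glob Z_{\glob L'}$ is induced and chosen nonsplit at $v'$. Your argument replaces all of this with an invocation of the adjoint-group Hasse principle that does not apply to $\glob L'/\glob Z_{\glob G'}$; the paper's remark immediately after the lemma, pointing out a gap in \cite[Lemma 3.2]{MR3861809} concerning a torus that need not be induced, is about precisely this type of subtlety.
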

\begin{proof}
This follows like in the proof of \cite[Lemma~3.2]{MR3861809}.Let $V_{\infty}$ be the set of infinite places.
Inner forms of $(\glob{G}',\glob{P}',\glob{L}')$ are classified by $H^{1}(\glob{F},I(\glob{G}',\glob{P}',\glob{L}'))$, where $I(\glob{G}',\glob{P}',\glob{L}')$ is the group of inner automorphisms of the triple $(\glob{G}',\glob{P}',\glob{L}')$.
Since the normalizer of $\glob{P}'$ in $\glob{G}'$ is $\glob{P}'$ and the normalizer of $\glob{L}'$ in $\glob{P}'$ is $\glob{L}'$, we have $I(\glob{G}',\glob{P}',\glob{L}')\simeq \glob{L}'/\glob{Z}_{\glob{G}'}$, where $\glob{Z}_{\glob{G}'}$ is the center of $\glob{G}'$.
Therefore it is sufficient to prove that the map
\begin{equation}\label{eq:Galois cohomology lemma, want to prove}
H^{1}(\glob{F},\glob{L}'/\glob{Z}_{\glob{G}'})\to \prod_{v\in V_{\infty}\cup\{v_{0}\}}H^{1}(\glob{F}_{v},\glob{L}'/\glob{Z}_{\glob{G}'})
\end{equation}
is surjective.
By replacing $\glob{G}'$ with $\glob{G}'/\glob{Z}_{\glob{G}'}$, we assume that $\glob{Z}_{\glob{G}'}$ is trivial.
Hence it is sufficient to prove that $H^{1}(\glob{F},\glob{L}')\to \prod_{v\in V_{\infty}\cup\{v_{0}\}}H^{1}(\glob{F}_{v},\glob{L}')$ is surjective.

First we observe that $\glob{Z}_{\glob{L}'}$ is an induced torus.
Indeed, let $\overline{\glob{F}}$ be an algebraic closure of $\glob{F}$.
Since $\glob{Z}_{\glob{G}'}$ is trivial, $\glob{Z}_{\glob{L}'}$ is a torus and the fundamental coweights form a basis of the cocharacter lattice $X_{*}(\glob{T}'\times_{\glob{F}}\overline{\glob{F}})$, where $\glob{T}'\subset \glob{L}'$ is a minimal Levi subgroup (a torus).
Then a Galois-stable subset of fundamental coweights is a basis of $X_{*}(\glob{Z}_{\glob{L}'}\times_{\glob{F}}\overline{\glob{F}})$,
i.e.\ $\glob{Z}_{\glob{L}'}$ is an induced torus.
Hence there exist finite extensions $\glob{E}_{1},\ldots,\glob{E}_{r}$ such that $\glob{Z}_{\glob{L}'}\simeq \prod_i \Res_{\glob{E}_{i}/\glob{F}}\mathbb{G}_{\mathrm{m}}$.
Let $v'\ne v_{0}$ be a finite place of $\glob{F}$ which does not decompose in $\glob{E}_{i}$ for $i = 1,\ldots,r$ and we regard $v'$ also as a place of $\glob{E}_{i}$.
Let $V(\glob{F})$ (resp.\ $V(\glob{E}_{i})$) be the set of places of $\glob{F}$ (resp.\ $\glob{E}_{i}$).
Then 
\[
\begin{tikzcd}
H^{2}(\glob{F},\glob{Z}_{\glob{L}'})\arrow[r]\arrow[d,"\wr"] & \bigoplus_{v\in V(\glob{F})\setminus\{v'\}}H^{2}(\glob{F}_{v},\glob{Z}_{\glob{L}'})\arrow[d,"\wr"]\\
\bigoplus_{i = 1}^{r}H^{2}(\glob{E}_{i},\mathbb{G}_{\mathrm{m}}) \arrow[r]\arrow[r] 
& \bigoplus_{i = 1}^{r}\bigoplus_{v\in V(\glob{E}_{i})\setminus\{v'\}}H^{2}((\glob{E}_{i})_{v},\mathbb{G}_{\mathrm{m}}).
\end{tikzcd}
\]
By the local-global principle for Brauer groups we see as in \cite[Lemma~3]{MR2271021} that the bottom horizontal map is an isomorphism.

Set $V' := V(\glob{F})\setminus\{v'\}$.
As $\glob{Z}_{\glob{L}',v}$ is an induced torus, for any $v\in V(\glob{F})$,
$H^{1}(\glob{F}_{v},\glob{Z}_{\glob{L}'})$ is trivial by Hilbert 90.
Hence $H^{1}(\glob{F}_{v},\glob{L}')\to H^{1}(\glob{F}_{v},\glob{L}'/\glob{Z}_{\glob{L}'})$ is injective~\cite[I.5.7~Proposition~42]{MR1867431}.
We have the commutative diagram of pointed sets with exact rows:
\[
\begin{tikzcd}
H^{1}(\glob{F},\glob{L}')\arrow[r]\arrow[d] &  H^{1}(\glob{F},\glob{L}'/\glob{Z}_{\glob{L}'})\arrow[d]\arrow[r] & H^{2}(\glob{F},\glob{Z}_{\glob{L}'})\arrow[d]\\
\bigoplus_{v\in V'}H^{1}(\glob{F}_{v},\glob{L}')\arrow[r,hook] & \bigoplus_{v\in V'} H^{1}(\glob{F}_{v},\glob{L}'/\glob{Z}_{\glob{L}'})\arrow[r] & \bigoplus_{v\in V'}H^{2}(\glob{F}_{v},\glob{Z}_{\glob{L}'}).
\end{tikzcd}
\]
By the previous paragraph, the right vertical map is an isomorphism.
We also have that the middle map is surjective~\cite[Proposition~1]{MR2271021}.
Therefore the left vertical map is surjective and hence \eqref{eq:Galois cohomology lemma, want to prove} is surjective.
\end{proof}

\begin{remark}
  We point out that the proof of \cite[Lemma~3.2]{MR3861809} contains a mistake, namely it only works if the torus $A = M'_{\ad}/M'_{\ad,\der}$ considered there is an induced torus (for example, if $M'_{\ad}$ is split).
  Our proof works with the center instead of the cocenter of the Levi, which has the advantage that it is always induced.
\end{remark}

Take $\glob{G}$ and $\glob{P} = \glob{L}\glob{N}$ as in the lemma.
We fix an inner twist $\glob{\xi} \colon \glob{G}_{\o{\glob F}} \congto \glob{G}'_{\o{\glob F}}$ sending $(\glob{P}_{\o{\glob F}},\glob{L}_{\o{\glob F}})$ to $(\glob{P}'_{\o{\glob F}},\glob{L}'_{\o{\glob F}})$ such that $\glob{\xi}_{v_0}$ is equivalent to $\xi$ and $\glob{\xi}_v$ is trivial for all infinite places $v$.
As above, $\glob\xi$ induces an isomorphism $\glob A_{\glob L} \cong \glob A_{\glob L'}$ which becomes the above isomorphism $\alggrp A_{\alggrp L} \cong \alggrp A_{\alggrp L'}$ after base change to $\glob F_{v_0} = F$.

Recall that we have $\alggrp{H}\subset \prod_{i = 1}^{r}\Res_{E_{i}/F}\GL_{1}$.
Such a subtorus is classified by a $\Gal(\glob{E}_{v_{0}}/F)$-stable saturated subgroup of $X^{*}(\prod_{i = 1}^{r}\Res_{E_{i}/F}\GL_{1} \times_{F} \glob{E}_{v_{0}})$ and therefore by Lemma \ref{lem:global-fields} we have a subtorus $\glob{H}\subset \prod_{i = 1}^{r}\Res_{\glob{E}_{i}/\glob{F}}\GL_{1}$ such that $\glob{H}_{v_{0}}\simeq \alggrp{H}$.
As $\alggrp{H}$ is an induced torus, so is $\glob{H}$.
We put $\widetilde{\glob{L}}^{\lowprime} := (\prod_{i = 1}^{r}\Res_{\glob{E}_{i}/\glob{F}}\GL_{n_{i}})/\glob{H}$.
The subgroups $(\widetilde{\glob{L}}^{\lowprime})^\der \subset \glob{L}'' \subset \widetilde{\glob{L}}^{\lowprime}$ are in bijection with subgroups $(\widetilde{\alggrp{L}}^{\lowprime})^\der \subset \alggrp{L}'' \subset \widetilde{\alggrp{L}}^{\lowprime}$ upon completion at $v_0$ (since $\widetilde{\glob{L}}^{\lowprime}/(\widetilde{\glob{L}}^{\lowprime})^\der$ is a torus that splits over $\glob{E}$), and any such $\glob{L}''$ is quasisplit and splits over $\glob{E}$.
In particular, we get $(\widetilde{\glob{L}}^{\lowprime})^\der \subset \glob{L}' \subset \widetilde{\glob{L}}^{\lowprime}$.

Let $\glob{Z}_{\glob{L}'}$ (resp.\ $\glob{Z}_{\widetilde{\glob{L}}^{\lowprime}}$) be the center of $\glob{L}'$ (resp.\ $\widetilde{\glob{L}}^{\lowprime}$).
As explained before Corollary~\ref{cor:rationality, inner}, the inner form $\glob{L}$ of $\glob{L}'$ gives rise to an inner form $\widetilde{\glob{L}}$ of $\widetilde{\glob{L}}^{\lowprime}$ such that $\widetilde{\glob{L}}^{\lowprime[\der]} \subset \glob{L} \subset \widetilde{\glob{L}}$.
We remark that $\widetilde{\glob{L}}_{v_0} \cong \widetilde{\alggrp L}$ and that $\glob{Z}_{\widetilde{\glob{L}}} = \glob{Z}_{\widetilde{\glob{L}}^{\lowprime}}$ (a torus).

Recall that we have a discrete series representation $\sigma$ of $L \simeq \glob{L}(\glob{F}_{v_{0}})$.
Take an irreducible discrete representation $\widetilde{\sigma}$ of $\widetilde{\glob{L}}(\glob{F}_{v_{0}})$ whose restriction to $L$ contains $\sigma$.
(See for example \cite[\S2]{MR1141803}.)

Recall that $\mathbb{A} = \mathbb{A}_{\glob{F}}$ is the adele ring of $\glob{F}$.

\begin{lem}\label{lem:globalise-char}
There exists a unitary character $\glob{Z}_{\widetilde{\glob{L}}^{\lowprime}}(\glob{F})\backslash \glob{Z}_{\widetilde{\glob{L}}^{\lowprime}}(\mathbb{A})\to \C^{\times}$ whose $v_{0}$-component is the central character of $\widetilde{\sigma}$.
\end{lem}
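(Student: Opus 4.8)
The claim is that a prescribed unitary central character on $\glob{Z}_{\widetilde{\glob{L}}^{\lowprime}}(\glob{F}_{v_0})$ — namely the central character $\omega_{\widetilde\sigma}$ of $\widetilde\sigma$ — can be extended to an automorphic (i.e.\ $\glob{F}$-rational on points, unitary) character of the idele class group $\glob{Z}_{\widetilde{\glob{L}}^{\lowprime}}(\glob{F})\backslash \glob{Z}_{\widetilde{\glob{L}}^{\lowprime}}(\mathbb{A})$. The key structural input is that $\glob{Z}_{\widetilde{\glob{L}}^{\lowprime}} = \glob{Z}_{\widetilde{\glob{L}}}$ is an \emph{induced} torus: indeed $\widetilde{\glob{L}}^{\lowprime} = (\prod_i \Res_{\glob{E}_i/\glob{F}}\GL_{n_i})/\glob{H}$ with $\glob{H}$ induced, so $\glob{Z}_{\widetilde{\glob{L}}^{\lowprime}} = (\prod_i \Res_{\glob{E}_i/\glob{F}}\GL_1)/\glob{H}$, which is a quotient of an induced torus by an induced subtorus, hence (by the saturatedness of the character lattices involved, exactly as argued for $\glob{Z}_{\glob{L}'}$ earlier in the text) itself an induced torus. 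Thus $\glob{Z}_{\widetilde{\glob{L}}^{\lowprime}} \cong \prod_{j} \Res_{\glob{K}_j/\glob{F}} \GL_1$ for suitable finite extensions $\glob{K}_j/\glob{F}$, so the problem reduces to the corresponding statement for each factor $\Res_{\glob{K}_j/\glob{F}}\GL_1$, i.e.\ to Hecke characters.

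\textbf{Key steps.} First I would record the decomposition $\glob{Z}_{\widetilde{\glob{L}}^{\lowprime}} \cong \prod_j \Res_{\glob{K}_j/\glob{F}} \GL_1$ and the induced identification $\glob{Z}_{\widetilde{\glob{L}}^{\lowprime}}(\mathbb{A}) \cong \prod_j \mathbb{A}_{\glob{K}_j}^\times$, $\glob{Z}_{\widetilde{\glob{L}}^{\lowprime}}(\glob{F}) \cong \prod_j \glob{K}_j^\times$, and similarly for $\glob{F}_{v_0}$-points: $\glob{Z}_{\widetilde{\glob{L}}^{\lowprime}}(\glob{F}_{v_0}) \cong \prod_j (\glob{K}_j \otimes_{\glob{F}} \glob{F}_{v_0})^\times$. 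So $\omega_{\widetilde\sigma}$ decomposes as a tuple of unitary characters, one on $(\glob{K}_j \otimes_{\glob{F}} \glob{F}_{v_0})^\times = \prod_{w|v_0} \glob{K}_{j,w}^\times$, i.e.\ a tuple of unitary characters of local fields. Second, for a single local unitary character $\chi_w \colon \glob{K}_w^\times \to \C^\times$ at a single place $w$ of a number field $\glob{K}$, I would invoke the standard existence result for Hecke characters with prescribed local component: such $\chi_w$ always extends to a unitary automorphic character $\glob{K}^\times \backslash \mathbb{A}_{\glob{K}}^\times \to \C^\times$. This is classical — one extends $\chi_w|_{\mathcal{O}_w^\times}$ to a character trivial on the units of all other finite places and on a finite-index subgroup of the infinite part, using that $\glob{K}^\times$ is discrete in $\mathbb{A}_{\glob{K}}^{\times}$ and divisibility/injectivity of $\C^\times$ (equivalently of $\mathbb{R}/\mathbb{Z} \times \mathbb{R}$) to extend across the finite quotient; the unramified-at-infinity normalization on the $|\cdot|$-direction handles the archimedean component. (One may cite, e.g., the standard reference on Hecke characters, or argue directly via Pontryagin duality of the compact group $\glob{K}^\times\backslash \mathbb{A}_{\glob K}^1$ together with a choice of splitting of $|\cdot|_{\mathbb{A}}$.) Third, assembling the $\glob{K}_j$-component extensions over all $j$ and over all $w \mid v_0$ within each $j$ gives the desired unitary character of $\glob{Z}_{\widetilde{\glob{L}}^{\lowprime}}(\glob{F})\backslash \glob{Z}_{\widetilde{\glob{L}}^{\lowprime}}(\mathbb{A})$ with $v_0$-component equal to $\omega_{\widetilde\sigma}$.

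\textbf{Main obstacle.} There is no serious obstacle — this lemma is a soft globalization statement, and the only point requiring care is the reduction of the abstract torus $\glob{Z}_{\widetilde{\glob{L}}^{\lowprime}}$ to a product of $\Res_{\glob{K}_j/\glob{F}}\GL_1$'s, which I would do by re-running the "Galois-stable basis of the character lattice" argument that the paper already uses (just above) to show $\glob{Z}_{\glob{L}'}$ is induced — here it is even easier since $\widetilde{\glob{L}}^{\lowprime}$ is literally a quotient of a product of Weil restrictions of $\GL_1$'s by an induced torus. The mild bookkeeping is matching local components across the places of each $\glob{K}_j$ lying above $v_0$, i.e.\ unwinding $(\glob{K}_j \otimes_{\glob{F}} \glob{F}_{v_0})^\times = \prod_{w \mid v_0} \glob{K}_{j,w}^\times$; but since $\widetilde\sigma$'s central character is already given on this product, no compatibility needs to be imposed. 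I would present the argument in two short paragraphs: one establishing the induced-torus reduction, one citing the existence of Hecke characters with a prescribed local component and assembling the pieces.
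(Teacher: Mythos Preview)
Your reduction has a genuine gap: the claim that $\glob{Z}_{\widetilde{\glob{L}}^{\lowprime}} = (\prod_i \Res_{\glob{E}_i/\glob{F}}\GL_1)/\glob{H}$ is an induced torus is false in general. A quotient of an induced torus by an induced subtorus need not be induced --- the norm-one torus $\Res_{E/F}\GL_1/\GL_1$ for $E/F$ quadratic is the standard counterexample (its character lattice is $\Z$ with the sign action of $\Gal(E/F)$, not a permutation module). The argument you cite for $\glob{Z}_{\glob{L}'}$ being induced is specific to the center of a Levi subgroup in an \emph{adjoint} group (after passing to $\glob{G}'/\glob{Z}_{\glob{G}'}$), where a Galois-stable subset of fundamental coweights furnishes a permutation basis; it does not transfer to arbitrary quotients of induced tori by induced subtori. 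Even if you try to salvage the approach by pulling back along the surjection $\prod_i \Res_{\glob{E}_i/\glob{F}}\GL_1 \onto \glob{Z}_{\widetilde{\glob{L}}^{\lowprime}}$ (which is surjective on points by Hilbert~90 for the induced $\glob{H}$), you then face the non-trivial problem of choosing the globalized Hecke characters so that the product is trivial on $\glob{H}(\mathbb{A})$.

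The paper's proof proceeds differently and does not decompose the torus. One works directly with $\glob{Z}_{\widetilde{\glob{L}}^{\lowprime}}$: the compact subgroup $K = \prod_{v\text{ finite}} \glob{Z}_{\widetilde{\glob{L}}^{\lowprime}}(\glob{F}_v)^0$ meets $\glob{Z}_{\widetilde{\glob{L}}^{\lowprime}}(\glob{F})$ trivially (there is an infinite place), hence embeds as a closed subgroup of the idele class group; one puts $\omega_{\widetilde\sigma}$ on the $v_0$-factor of $K$ and the trivial character elsewhere, extends to a unitary character $\omega'$ of the idele class group, and then corrects the remaining unramified discrepancy $\omega_{\widetilde\sigma}(\omega'_{v_0})^{-1} = \chi_\nu$ by a \emph{global} unramified twist. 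The crucial input --- which your proposal does not mention --- is that this last step requires $\nu \in \sqrt{-1}\,\mathfrak{a}^*_{\glob{Z}_{\widetilde{\glob{L}}^{\lowprime},v_0},\R}$ to lie in the global $\sqrt{-1}\,\mathfrak{a}^*_{\glob{Z}_{\widetilde{\glob{L}}^{\lowprime}},\R}$; this holds because the $\glob{F}$-split rank of $\glob{Z}_{\widetilde{\glob{L}}^{\lowprime}}$ equals its $\glob{F}_{v_0}$-split rank, which in turn follows from the careful choice of $\glob{F}$ in Lemma~\ref{lem:global-fields} (the place $v_0$ does not decompose in $\glob{E}$, so $\Gal(\glob{E}/\glob{F}) \cong \Gal(\glob{E}_{v_0}/\glob{F}_{v_0})$).
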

\begin{proof}
First we remark that the $\glob{F}$-rational rank of $\glob{Z}_{\widetilde{\glob{L}}^{\lowprime}}$ is equal to the $F$-rational rank of $\glob{Z}_{\widetilde{\glob{L}}^{\lowprime},v_{0}}$ by Lemma~\ref{lem:global-fields}. 
Let $\omega_{\widetilde{\sigma}}$ be the central character of $\widetilde{\sigma}$ and $V^{\infty}$ be the set of finite places of $\glob{F}$.
For each $v\in V^{\infty}$, let $\glob{Z}_{\widetilde{\glob{L}}^{\lowprime},v}^{0}$ be the maximal compact subgroup of $\glob{Z}_{\widetilde{\glob{L}}^{\lowprime},v}$ (which is the unique parahoric subgroup for almost all $v$).
Then $K := \prod_{v\in V^{\infty}}\glob{Z}_{\widetilde{\glob{L}}^{\lowprime},v}^{0}$ is a compact subgroup of $\glob{Z}_{\widetilde{\glob{L}}^{\lowprime}}(\mathbb{A})$. Note that $\glob{Z}_{\widetilde{\glob{L}}^{\lowprime}}(\glob{F})\cap K$ is trivial since there is an infinite place.
Hence $K\hookrightarrow \glob{Z}_{\widetilde{\glob{L}}^{\lowprime}}(\glob{F})\backslash \glob{Z}_{\widetilde{\glob{L}}^{\lowprime}}(\mathbb{A})$ is a closed subgroup.
Consider the character of $K$ defined by $(k_{v})_{v\in V^{\infty}}\mapsto \omega_{\widetilde{\sigma}}(k_{v_{0}})$ and extend it to a unitary character $\omega'$ of the locally compact abelian group $\glob{Z}_{\widetilde{\glob{L}}^{\lowprime}}(\glob{F})\backslash \glob{Z}_{\widetilde{\glob{L}}^{\lowprime}}(\mathbb{A})$.
Let $\omega'_{v_{0}}$ be the $v_{0}$-component of $\omega'$.
Then $\omega_{\widetilde{\sigma}}(\omega'_{v_{0}})^{-1}$ is a unitary unramified character.
Hence $\omega_{\widetilde{\sigma}}(\omega'_{v_{0}})^{-1} = \chi_{\nu}$ for some $\nu \in \sqrt{-1}\mathfrak{a}_{\glob{Z}_{\widetilde{\glob{L}}^{\lowprime},v_{0}},\R}^*$.
Since the $\glob{F}$-rational rank of $\glob{Z}_{\widetilde{\glob{L}}^{\lowprime}}$ is equal to the $F$-rational rank of $\glob{Z}_{\widetilde{\glob{L}}^{\lowprime},v_{0}}$, $\nu$ can be seen in $\sqrt{-1}\mathfrak{a}_{\glob{Z}_{\widetilde{\glob{L}}^{\lowprime}},\R}^*$, where $\mathfrak{a}^*_{\glob{Z}_{\widetilde{\glob{L}}^{\lowprime}},\R} := X^{*}(\glob{Z}_{\widetilde{\glob{L}}^{\lowprime}})\otimes_{\Z}\R$.
Then $\omega\colon \glob{Z}_{\widetilde{\glob{L}}^{\lowprime}}(\glob{F})\backslash \glob{Z}_{\widetilde{\glob{L}}^{\lowprime}}(\mathbb{A})\to \C^{\times}$ defined by $\omega(a) := \omega'(a)q^{H(\nu(a))}$ gives the desired character, where $H\colon \glob{Z}_{\widetilde{\glob{L}}^{\lowprime}}(\glob{F})\backslash \glob{Z}_{\widetilde{\glob{L}}^{\lowprime}}(\mathbb{A})\to \mathfrak{a}_{\glob{Z}_{\widetilde{\glob{L}}^{\lowprime}},\R}$ is the global Harish-Chandra homomorphism.
\end{proof}

As $\glob{H}$ is an induced torus, we have
\[\widetilde{\glob{L}}^{\lowprime}(\glob{F})\simeq \Big(\prod_{i = 1}^{r}(\Res_{\glob{E}_{i}/\glob{F}}\GL_{n_{i}})(\glob{F})\Big)/\glob{H}(\glob{F})\] and 
similarly for $\widetilde{\glob{L}}^{\lowprime}(\glob{F}_v)$ and $\widetilde{\glob{L}}^{\lowprime}(\mathbb{A})$. (In the adelic case, this follows for example from the argument in \cite[\S III.2.3]{MR762353}.)
The representation $\widetilde{\sigma}$ is a discrete series of an inner form of $\prod_{i = 1}^{r}(\Res_{\glob{E}_{i}/\glob{F}}\GL_{n_{i}})(\glob{F}_{v_{0}})$.
Let $\widetilde{\sigma}'$ be the Jacquet--Langlands transfer of $\widetilde{\sigma}$ to the quasisplit form.
Since the Jacquet--Langlands correspondence preserves central characters, $\widetilde{\sigma}'$ is a representation of $\widetilde{\glob{L}}^{\lowprime}(\glob{F}_{v_{0}})$.

Let $V_{0}$ be a finite set of finite places containing $v_0$ and all (finite) places $v$ such that $\glob{G}_{v}$ is not quasisplit.
Let $v_s \notin V_{0}$ be any finite place.

\begin{lem}
There exists a unitary cuspidal automorphic representation $\widetilde{\Pi}'$ of $\widetilde{\glob{L}}^{\lowprime}(\mathbb{A})$ such that $\widetilde{\Pi}'_{v_{0}}\simeq \widetilde{\sigma}'$, $\widetilde{\Pi}'_{v}$ is a discrete series for each $v\in V_{0}$, and $\widetilde{\Pi}'_{v_s}$ is supercuspidal.
\end{lem}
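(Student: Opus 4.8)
The plan is to produce $\widetilde{\Pi}'$ by the standard ``globalisation of local representations'' technique, using Poincar\'e series / the simple trace formula (or, equivalently, Arthur's invariant trace formula applied to a well-chosen test function), together with the rigidity/multiplicity-one properties of the general linear groups. Since $\widetilde{\glob{L}}^{\lowprime} = \left(\prod_{i=1}^{r}\Res_{\glob{E}_{i}/\glob{F}}\GL_{n_{i}}\right)/\glob{H}$ with $\glob{H}$ an induced central torus, a cuspidal automorphic representation of $\widetilde{\glob{L}}^{\lowprime}(\mathbb{A})$ amounts to a tuple of cuspidal automorphic representations $\widetilde{\Pi}'^{(i)}$ of $\GL_{n_{i}}(\mathbb{A}_{\glob{E}_{i}})$ whose central characters agree on $\glob{H}$; so it suffices to globalise each factor compatibly. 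We have already fixed, in Lemma~\ref{lem:globalise-char}, a unitary Hecke character of $\glob{Z}_{\widetilde{\glob{L}}^{\lowprime}}(\glob{F})\backslash\glob{Z}_{\widetilde{\glob{L}}^{\lowprime}}(\mathbb{A})$ with the correct $v_{0}$-component, which pins down the central character of $\widetilde{\Pi}'$; the task is to build the rest of $\widetilde{\Pi}'$ with the prescribed local behaviour at the finite set $V_{0}\cup\{v_{s}\}$.

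First I would invoke an existence theorem for cuspidal automorphic representations of $\GL_{n}$ over a number field with prescribed local components at finitely many places: such statements go back to Henniart's appendix to \cite{MR1141803}, to Vign\'eras, and are given in a clean form (with a supercuspidal local constraint included, which is exactly what forces cuspidality globally by the simple trace formula) in work of Clozel, Shin, and others --- e.g.\ \cite[Th\'eor\`eme~A.10]{MR2551765}-type results, or \cite[\S6]{MR2800725}. Concretely: fix the target central character $\omega$ from Lemma~\ref{lem:globalise-char}; at $v_{0}$ demand $\widetilde{\Pi}'_{v_{0}}\simeq\widetilde{\sigma}'$ (a discrete series, hence in particular tempered and generic, with central character matching $\omega_{v_{0}}$ by construction); at each $v\in V_{0}\setminus\{v_{0}\}$ demand that $\widetilde{\Pi}'_{v}$ be some fixed discrete series (e.g.\ a twist of the Steinberg of $\GL_{n_{i}}(\glob E_{i,v})$) with central character $\omega_{v}$; at $v_{s}$ demand $\widetilde{\Pi}'_{v_{s}}$ supercuspidal with central character $\omega_{v_{s}}$; and at the infinite places demand any cohomological (or merely: any fixed unitary) component with central character $\omega_{v}$. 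The supercuspidal condition at $v_{s}$ guarantees, via the simple/invariant trace formula applied to a pseudo-coefficient at $v_{s}$, that the resulting automorphic representation is cuspidal; the discrete series condition at the places of $V_{0}$ is what we will later feed into the comparison of $\mu$-functions. One then assembles the $\widetilde{\Pi}'^{(i)}$ into $\widetilde{\Pi}'$ on $\widetilde{\glob L}^{\lowprime}(\mathbb A)$, using that the common value of the central characters on $\glob H$ is exactly the condition cut out by $\omega$.

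The main technical points to check, none of them deep but all needing care, are: (a) the existence theorem must be applied over each field $\glob E_{i}$ with the prescribed central character --- this is harmless since unitary Hecke characters globalise and one can twist to match --- and one must check that the prescribed local constraints are consistent with a single global central character, which holds because $\omega$ was chosen in Lemma~\ref{lem:globalise-char} and the Jacquet--Langlands transfer preserves central characters (so $\omega_{\widetilde\sigma'}=\omega_{\widetilde\sigma}=\omega_{v_0}$); (b) descending from $\prod_i\Res_{\glob E_i/\glob F}\GL_{n_i}$ to the quotient $\widetilde{\glob L}^{\lowprime}$ by the induced torus $\glob H$: since $\glob H$ is induced, $\widetilde{\glob L}^{\lowprime}(\mathbb A)$ is the quotient of $\prod_i\GL_{n_i}(\mathbb A_{\glob E_i})$ by $\glob H(\mathbb A)$ (as recorded in the excerpt, citing \cite[\S III.2.3]{MR762353}), so a representation trivial on the appropriate central subgroup descends, and cuspidality/temperedness are preserved; (c) making sure $\widetilde\sigma'$ is generic so that the existence theorem (which typically globalises generic discrete series) applies --- this is automatic since it is a discrete series of a general linear group, hence essentially square-integrable, hence generic. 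The step I expect to be the real obstacle is isolating and citing the precise form of the globalisation theorem that simultaneously (i) fixes an arbitrary discrete series at $v_0$, (ii) fixes discrete series at the auxiliary places $V_0$, (iii) forces global cuspidality via a supercuspidal place $v_s$, and (iv) respects a prescribed unitary central character; in the write-up I would most likely reduce to the case $\widetilde{\glob L}^{\lowprime}=\Res_{\glob E/\glob F}\GL_n$ and quote a combination of Henniart's globalisation lemma with the simple trace formula argument (as in \cite{MR1141803} and its descendants), handling the central character by an initial twist.
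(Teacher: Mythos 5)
Your proposal is correct and takes essentially the same approach as the paper, which is simply to apply the simple trace formula with central character $\omega$ (from Lemma~\ref{lem:globalise-char}), with the supercuspidal condition at $v_s$ forcing global cuspidality; the paper's own proof is two sentences long, citing Flicker's argument \cite[III.3~Proposition]{MR928510}. Your explicit reduction to $\GL_n$-factors over the $\glob E_i$ (using that $\glob H$ is induced so $\widetilde{\glob L}^{\lowprime}(\mathbb A)$ is the plain quotient by $\glob H(\mathbb A)$, and matching central characters so the factors descend) is precisely what that citation leaves implicit.
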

\begin{proof}
By Lemma~\ref{lem:globalise-char} there is a unitary character $\omega\colon \glob{Z}_{\widetilde{\glob{L}}^{\lowprime}}(\glob{F})\backslash\glob{Z}_{\widetilde{\glob{L}}^{\lowprime}}(\mathbb{A}) \to \C^\times$ whose $v_{0}$-component is the central character of $\widetilde{\sigma}'_{v_{0}}$.
Then the lemma follows by applying the trace formula with central character $\omega$.
See the argument in the proof of \cite[III.3~Proposition]{MR928510}.
\end{proof}

Let $\widetilde{\Pi}$ be the global Jacquet--Langlands transfer of $\widetilde{\Pi}'$ to $\widetilde{\glob{L}}(\mathbb{A})$~\cite[Theorem~5.1]{MR2390289}.
This is again unitary and exists because $\widetilde{\Pi}'_{v}$ is a discrete series for each $v\in V_{0}$.
(We use again that the Jacquet--Langlands correspondence preserves central characters.)
By construction, $\widetilde{\Pi}_{v_0} \cong \widetilde{\sigma}$, $\widetilde{\Pi}_{v}$ is a discrete series for each $v\in V_{0}$, $\widetilde{\Pi}_{v} \cong \widetilde{\Pi}'_{v}$ for all $v\notin V_0$, and $\widetilde{\Pi}$ is cuspidal, as it is supercuspidal at $v_s$.

From now on we fix compatible collections of hyperspecial subgroups $K_{v}\subset \glob{G}(\glob{F}_{v})$ and $K'_{v}\subset \glob{G}'(\glob{F}_{v})$ for all finite places $v \notin V_0$ (arising from some integral models of $\glob{G}$ and $\glob{G}'$).
Recall that we have fixed an inner twist $\glob{\xi} \colon (\glob{G}_{\o{\glob F}},\glob{P}_{\o{\glob F}},\glob{L}_{\o{\glob F}}) \congto (\glob{G}'_{\o{\glob F}},\glob{P}'_{\o{\glob F}},\glob{L}'_{\o{\glob F}})$.

\begin{lem}\label{lem:hyperspecial}
  We may increase $V_0$ such that for all finite places $v \notin V_0$ the isomorphism $\glob\xi_v$ is defined over $\glob{F}_v$, and moreover $\glob\xi_v(K_v) = K'_v$.
\end{lem}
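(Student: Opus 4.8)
The statement in question is Lemma~\ref{lem:hyperspecial}: for all but finitely many finite places $v$, the inner twist $\glob\xi_v$ can be normalized so that it is defined over $\glob F_v$ and carries our chosen hyperspecial subgroup $K_v \subset \glob G(\glob F_v)$ to $K'_v \subset \glob G'(\glob F_v)$.

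\medskip

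The plan is to work with integral models. By hypothesis $K_v$ (resp.\ $K'_v$) arises from a smooth affine group scheme $\mathcal G$ (resp.\ $\mathcal G'$) over $\mathcal O_{\glob F, S}$ — the ring of $S$-integers for some finite set $S$ of places — whose special fibers at $v \notin S$ are connected reductive over the residue field $k(v)$, and such that $K_v = \mathcal G(\mathcal O_v)$, $K'_v = \mathcal G'(\mathcal O_v)$ are hyperspecial. First I would enlarge $V_0$ to contain $S$ and all places ramified in $\glob E$. The inner twist $\glob\xi$ is an isomorphism $\glob G_{\o{\glob F}} \congto \glob G'_{\o{\glob F}}$; since $\glob G$ and $\glob G'$ both split over $\glob E$, the isomorphism $\glob\xi$ is already defined over $\glob E$ after possibly composing with an inner automorphism (an isomorphism of split reductive groups preserving a pinning is defined over the base, and inner twists differ from such by $\glob G'_{\ad}(\o{\glob F})$-conjugation; spreading out, $\glob\xi$ and the relevant conjugating element are defined over $\glob E$, or rather over $\mathcal O_{\glob E}[1/N]$ for some $N$). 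After enlarging $V_0$ to include the places dividing $N$, the isomorphism $\glob\xi$ extends to an isomorphism of group schemes $\mathcal G_{\mathcal O_{\glob E, S'}} \congto \mathcal G'_{\mathcal O_{\glob E, S'}}$ over the $S'$-integers of $\glob E$.

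\medskip

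Next, for a finite place $v \notin V_0$, the place $v$ does not decompose in $\glob E$ (this was arranged in Lemma~\ref{lem:global-fields} only for $v_0$, but for general $v$ we simply pass to a place $w$ of $\glob E$ above $v$): let $w \mid v$ and let $\glob E_w / \glob F_v$ be the corresponding unramified extension, with Galois group generated by Frobenius $\phi$. The inner twist $\glob\xi$, defined over $\glob E_w$, corresponds to a class in $H^1(\Gal(\glob E_w/\glob F_v), \mathcal G'_{\ad}(\mathcal O_w))$, since the descent datum $g_\phi := \glob\xi^{-1} \circ {}^\phi\glob\xi \in \mathcal G'_{\ad}(\mathcal O_w)$ is an integral point (using that $\glob\xi$ is an isomorphism of group schemes over $\mathcal O_w$). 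But $\mathcal G'_{\ad}$ is a connected reductive group scheme over $\mathcal O_v$, so by Lang's theorem applied to the special fiber together with smoothness and Hensel's lemma, $H^1(\Gal(\glob E_w/\glob F_v), \mathcal G'_{\ad}(\mathcal O_w)) = 1$: every cocycle is a coboundary $g_\phi = h^{-1} \cdot {}^\phi h$ with $h \in \mathcal G'_{\ad}(\mathcal O_w)$. Replacing $\glob\xi_v$ by $\mathrm{inn}(h) \circ \glob\xi_v$ makes the descent datum trivial, so the new $\glob\xi_v$ is defined over $\glob F_v$ and still extends to an isomorphism $\mathcal G_{\mathcal O_v} \congto \mathcal G'_{\mathcal O_v}$ of group schemes over $\mathcal O_v$ (the modification $h$ is integral at $w$, hence the composite is integral). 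Taking $\mathcal O_v$-points gives $\glob\xi_v(K_v) = \glob\xi_v(\mathcal G(\mathcal O_v)) = \mathcal G'(\mathcal O_v) = K'_v$, as desired. I would also note this can be done compatibly with the pinnings/parabolic data so that $\glob\xi_v$ still sends $(\glob P_v, \glob L_v)$ to $(\glob P'_v, \glob L'_v)$, by choosing the integral models to include the parabolic and Levi as closed subgroup schemes and observing that $\mathrm{inn}(h)$ with $h \in \mathcal G'_{\ad}(\mathcal O_v)$ can be further adjusted (or the cohomology computed in the subgroup stabilizing the parabolic, which is again smooth with connected reductive special fiber up to a unipotent part to which Lang still applies).

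\medskip

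The main obstacle is bookkeeping rather than mathematics: one must be careful that the single finite set $V_0$ works simultaneously for \emph{all} $v \notin V_0$, which requires that the integral models $\mathcal G, \mathcal G'$, the isomorphism $\glob\xi$ over $\mathcal O_{\glob E}[1/N]$, and the good reduction of $\mathcal G'_{\ad}$ (needed for Lang's theorem) all hold outside one fixed finite set — this is the standard ``spreading out'' principle and is where one invokes that $\glob E/\glob F$ is unramified outside a finite set. A secondary subtlety is that $\mathcal G'_{\ad}$ may only be reductive (not semisimple) so Lang's theorem should be applied to the connected reductive special fiber $\mathcal G'_{\ad} \times k(v)$, which is fine. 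I would state the lemma's proof concisely by citing the vanishing of $H^1$ of Galois over points of a smooth group scheme with connected special fiber over a finite field (Lang), combined with smoothness/Hensel to lift, and refer to the general principle as in e.g.\ the treatment of quasi-split inner forms at good places.
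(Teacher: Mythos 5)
Your proposal follows the same general strategy as the paper's proof: spread out the relevant data to group scheme models over $\mathcal O_{\glob F, V_0}$ and then trivialize at almost all $v$ using Lang's theorem combined with Galois/\'etale descent. There are, however, a few differences worth noting.

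The paper works with the torsor under $\glob L'/\glob Z_{\glob G'}$ rather than $\glob G'_{\ad}$, precisely because $\glob\xi$ was fixed as an inner twist of the \emph{triple} $(\glob G', \glob P', \glob L')$ — this is what guarantees the $V_0$-normalized $\glob\xi_v$ still sends $(\glob P_v, \glob L_v, K_v)$ to $(\glob P'_v, \glob L'_v, K'_v)$, which is needed immediately afterwards. You trivialize the cocycle in $\glob G'_{\ad}$ first and then remark at the end that the parabolic compatibility "can be further adjusted" or handled by computing cohomology in the stabilizer; that last parenthetical is in effect the paper's argument, so you have the right idea, but running the whole argument through $\glob L'/\glob Z_{\glob G'}$ from the start is cleaner and avoids the extra fix-up step. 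Two smaller imprecisions: (1) you assert $\glob\xi$ descends to $\glob E$ itself by a pinning argument — this would require $\glob\xi$ to respect a pinning, which is not given; the paper simply takes an arbitrary finite $\glob E'/\glob F$ over which $\glob\xi$ descends (automatic by finite presentation), which is all that is needed. (2) The descent datum $g_\phi$ should be $\glob\xi\circ{}^\phi\glob\xi^{-1}$ landing in automorphisms of $\glob G'$ (not $\glob\xi^{-1}\circ{}^\phi\glob\xi$, which lands in automorphisms of $\glob G$), but this is cosmetic. Also note that the paper proves the two conclusions in two separate steps — triviality of the \'etale torsor over $\mathcal O_{\glob F, v}$ gives $\glob F_v$-rationality, and then \'etale descent of the isomorphism of integral models over $\mathcal O_{\glob E', v}$ gives $\glob\xi_v(K_v) = K'_v$ — whereas you bundle both into a single computation $H^1(\Gal(\glob E_w/\glob F_v), \mathcal G'_{\ad}(\mathcal O_w)) = 1$; both packagings are correct, and yours is arguably a bit more economical once the unramifiedness of $w/v$ is in place.
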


\begin{proof}
  By spreading out and increasing $V_0$ we obtain connected reductive $\mathcal{O}_{\glob{F},V_0}$-group scheme models of $\glob{G}, \glob{P}, \glob{L}$ and $\glob{G}', \glob{P}', \glob{L}'$ (for some finite set of places $V_0$), which we denote by the same letters. 
  (Here, $\mathcal{O}_{\glob{F},V_0}$ denotes the localization of $\mathcal{O}_{\glob{F}}$ away from $V_0$.)
  By increasing $V_0$ we may assume that $\glob{G}(\mathcal{O}_{\glob{F},v}) = K_v$ and $\glob{G}'(\mathcal{O}_{\glob{F},v}) = K'_v$ for finite $v \notin V_0$.
  Increasing $V_0$ further, the class of $\glob\xi$ in $H^1(\glob{F},\glob{L}'/\glob{Z}_{\glob{G}'})$ spreads out to an \'etale $\glob{L}'/\glob{Z}_{\glob{G}'}$-torsor over $\mathcal{O}_{\glob{F},V_0}$, which has to become trivial over $\mathcal{O}_{\glob{F},v}$ for any finite $v \notin V_0$ by Lang's theorem and the smoothness of $\glob{L}'/\glob{Z}_{\glob{G}'}$.
  This proves that $\glob\xi_v$ is defined over $\glob{F}_v$. 
    On the other hand, pick a finite extension $\glob{E}'/\glob{F}$ such that $\glob\xi$ descends to $\glob{E}'$.
    We increase $V_0$ such that $\glob\xi$ spreads out to an isomorphism over $\mathcal{O}_{\glob{E}',V_0}$ and such that $\mathcal{O}_{\glob{F},V_0} \to \mathcal{O}_{\glob{E}',V_0}$ is \'etale.
  Completing at any finite place $v \notin V_0$ we obtain an isomorphism $(\glob{G},\glob{P}) \times_{\mathcal{O}_{\glob{F},V_0}} \mathcal{O}_{\glob{E}',v} \congto (\glob{G}',\glob{P}') \times_{\mathcal{O}_{\glob{F},V_0}} \mathcal{O}_{\glob{E}',v}$ that arises from an isomorphism over $\mathcal{O}_{\glob{F},v}$ by \'etale descent (since we already know the cocycle condition holds on the generic fiber).
  This completes the proof.
\end{proof}

In particular, for finite places $v \notin V_0$ we may identify $\glob{G}(\glob{F}_{v})$ with $\glob{G}'(\glob{F}_{v})$ (and likewise for $\glob{P}$, $\glob{L}$, $\glob{N}$)
and $K_v$ with $K'_v$.

\begin{lem}\label{lem:density argument}
  Suppose we are given irreducible constituents $\sigma_v$ of $\wt\Pi_v|_{\glob{L}(\glob{F}_{v})}$ for all $v \in V_0$.
  Then there exists a (unitary) cuspidal automorphic representation $\Pi$ of $\glob{L}(\mathbb{A})$ which is a quotient of $\widetilde{\Pi}|_{\glob{L}(\mathbb{A})}$, a (unitary) cuspidal automorphic representation $\Pi'$ of $\glob{L}'(\mathbb{A})$ which is a quotient of $\widetilde{\Pi}'|_{\glob{L}'(\mathbb{A})}$, and a finite set $V_{1}$ of places containing $V_{0}$ and the set of infinite places such that
\begin{itemize}
\item for $v \in V_0$, $\Pi_{v}\simeq \sigma_v$;
\item for $v\in V_{1}\setminus V_{0}$, $\Pi_{v}\simeq \Pi'_{v}$;
\item for $v\notin V_{1}$, $\widetilde{\Pi}_{v} \simeq \widetilde{\Pi}'_{v}$ is unramified.
  \end{itemize}
Moreover, $\Pi'$ and $V_1$ do not depend on the choice of the $\sigma_v$ ($v \in V_0$).
\end{lem}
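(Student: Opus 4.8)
The plan is to deduce the three bullet points of Lemma~\ref{lem:density argument} simultaneously, using a single Clifford-theoretic restriction argument and a compatibility of local Jacquet--Langlands correspondences at the places $v \notin V_0$ (where the groups are quasisplit and everything is unramified).
First I would record that $\widetilde{\Pi}|_{\widetilde{\glob L}^{\lowprime[\der]}(\mathbb{A})}$ and $\widetilde{\Pi}'|_{\widetilde{\glob L}^{\lowprime[\der]}(\mathbb{A})}$ are cuspidal automorphic (the restriction of a cuspidal automorphic representation of $\prod_i (\Res_{\glob E_i/\glob F}\GL_{n_i})(\mathbb{A})$, modulo a central induced torus, to any intermediate group containing the derived subgroup remains automorphic cuspidal, since the quotient by the derived subgroup is a torus; compare the standard argument as in \cite[\S III.2.3]{MR762353}).
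Since $\glob L$ and $\glob L'$ are both inner forms with the same derived subgroup sandwiched between $\widetilde{\glob L}^{\lowprime[\der]}$ and $\widetilde{\glob L}$ (resp.\ $\widetilde{\glob L}^{\lowprime}$), restricting further produces cuspidal automorphic representations of $\glob L(\mathbb{A})$ and $\glob L'(\mathbb{A})$.

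Next I would pin down the local components.
At $v_0$, the representation $\widetilde{\Pi}_{v_0} \cong \widetilde{\sigma}$ was chosen precisely so that $\widetilde{\sigma}|_{\glob L(\glob F_{v_0})} = \widetilde{\sigma}|_{L}$ contains $\sigma$; so some irreducible summand of $\widetilde{\Pi}|_{\glob L(\mathbb{A})}$ has $v_0$-component $\sigma$, and I would choose $\Pi$ among the automorphic constituents of $\widetilde{\Pi}|_{\glob L(\mathbb{A})}$ with this property.
For $v \notin V_0$, recall that $\widetilde{\Pi}_v \cong \widetilde{\Pi}'_v$ is unramified (this is how $\widetilde{\Pi}$ was constructed via the global Jacquet--Langlands transfer, being unramified wherever $\widetilde{\Pi}'$ is, and $\widetilde{\Pi}'$ is unramified outside a finite set); using Lemma~\ref{lem:hyperspecial} to identify $\glob L(\glob F_v) = \glob L'(\glob F_v)$ with matching hyperspecial subgroups, the restrictions to $\glob L(\glob F_v)$ agree as unramified representations.
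Then I would choose $\Pi'$ among the automorphic constituents of $\widetilde{\Pi}'|_{\glob L'(\mathbb{A})}$ so that $\Pi'_v \cong \Pi_v$ for all finite $v \notin V_0$ at which $\Pi_v$ is unramified; this is possible because, by the commutativity of restriction with the unramified correspondence and strong multiplicity one for isobaric representations of $\GL$ (or simply by a density/Galois-twisting argument for the central characters, since two automorphic constituents of the same restricted representation differ by a character of $\widetilde{\glob L}/\glob L$), the automorphic constituents with prescribed unramified data at almost all places are nonempty.
Finally I would let $V_1 \supset V_0$ be a finite set, containing all infinite places, outside of which both $\widetilde{\Pi}$ and $\widetilde{\Pi}'$ (hence $\Pi$ and $\Pi'$) are unramified and the identifications of Lemma~\ref{lem:hyperspecial} hold; this gives the three displayed bullet points.

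The main obstacle I anticipate is the matching requirement ``$\Pi_v \cong \Pi'_v$ for $v \in V_1 \setminus V_0$'': having fixed $\Pi$ by the constraint at $v_0$, one must simultaneously arrange a constituent $\Pi'$ of $\widetilde{\Pi}'|_{\glob L'(\mathbb{A})}$ agreeing with $\Pi$ at the ramified places in $V_1 \setminus V_0$ as well.
The key point that makes this work is that $\widetilde{\Pi}$ and $\widetilde{\Pi}'$ are global Jacquet--Langlands transfers of each other, so at \emph{every} place $v \notin V_0$ their local components correspond under the local (in particular, unramified) Jacquet--Langlands map; since the derived subgroups coincide, the local packets on $\glob L'(\glob F_v) = \glob L(\glob F_v)$ are literally the same set of representations, and one is free to enlarge $V_1$ so that $V_1 \setminus V_0$ contains only places where $\glob G$ is quasisplit and the groups are already identified, whence the constituents can be matched one finite place at a time using that the restriction of an irreducible to a subgroup of finite index decomposes into a single $\widetilde{L}$-orbit. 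Tracking the central characters and the finitely many ambiguities of Clifford theory, rather than any deep automorphic input, is where the real bookkeeping lies.
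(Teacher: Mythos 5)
Your outline captures the general Clifford-theoretic flavour of the argument, but the crucial step that gives the lemma its name — the \emph{density} argument — is missing, and the vague ``match one finite place at a time'' that replaces it would not work.

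The paper's proof proceeds roughly as follows. It first passes to the $L^2$-automorphic picture and invokes \cite[Theorem~1.1]{MR3996349} (Labesse--Schwermer) to produce some $L^2$-automorphic subrepresentation $\widehat\Pi_0$ of $\widehat{\widetilde\Pi}|_{\glob L(\mathbb A)}$ and similarly $\widehat\Pi'$ of $\widehat{\widetilde\Pi}'|_{\glob L'(\mathbb A)}$. At this stage nothing forces $\widehat\Pi_{0,v_0}$ to contain $\sigma$, nor $\widehat\Pi_{0,v}$ to match $\widehat\Pi'_v$ at the ramified places $v\in V_1\setminus V_0$. To fix all these finitely many local constraints \emph{simultaneously}, the paper defines for each $v\in V_1'=(V_1\setminus V_0)\cup\{v_0\}$ the set $X_v=\{g_v\in\widetilde{\glob L}(\glob F_v) : \widehat\Pi_{0,v}\circ\Ad(g_v)\simeq\widehat\sigma_v\}$ (with $\widehat\sigma_{v_0}$ the completion of $\sigma$ and $\widehat\sigma_v:=\widehat\Pi'_v$ otherwise). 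Each $X_v$ is non-empty by local Clifford theory \cite[Proposition~4.1.3]{MR3996349} and open (being a union of $\glob L(\glob F_v)\glob Z_{\widetilde{\glob L}}(\glob F_v)$-cosets). The decisive point is then that $\widetilde{\glob L}(\glob F)$ is dense in $\prod_{v\in V_1'}\widetilde{\glob L}(\glob F_v)$, so one can choose a single rational $g\in\widetilde{\glob L}(\glob F)\cap\prod_{v\in V_1'}X_v$ and set $\widehat\Pi:=\widehat\Pi_0\circ\Ad(g)$; this is still an automorphic subrepresentation (rationality of $g$ is what keeps it automorphic) with the desired local components at \emph{all} $v\in V_1'$ at once.

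Your proposal replaces this by ``the constituents can be matched one finite place at a time using that the restriction of an irreducible to a subgroup of finite index decomposes into a single $\widetilde L$-orbit.'' But a local statement at each $v$ does not by itself yield a single automorphic constituent with the correct component at every $v\in V_1'$: the local twists one would apply at different places must come from a single rational element, and that is exactly what the density argument supplies. Without this, there is no reason the constituent $\Pi$ fixed by the $v_0$-condition and the constituent $\Pi'$ fixed separately will agree at the ramified places in $V_1\setminus V_0$. You also never invoke the result ensuring the restrictions of $\widehat{\widetilde\Pi}$, $\widehat{\widetilde\Pi}'$ to $\glob L(\mathbb A)$, $\glob L'(\mathbb A)$ decompose into automorphic subrepresentations; the reference to \cite[\S III.2.3]{MR762353} addresses the adelic group decomposition for $z$-extensions, not automorphicity of restrictions, and the Labesse--Schwermer input cannot be waved away. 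Your appeal to ``commutativity of restriction with the unramified correspondence and strong multiplicity one'' is irrelevant at the places in $V_1\setminus V_0$ precisely because these are the ramified places. So the approach is on the right track in spirit but has a genuine gap at its core step.
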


\begin{proof}
We fix a maximal compact subgroup $\widetilde{K}_{v}\subset\widetilde{\glob{L}}(\glob{F}_{v})$ for infinite $v$ (the subgroup used implicitly in defining automorphic representations on $\widetilde{\glob{L}}$) and a compact open subgroup $\widetilde{K}_{v}\subset\widetilde{\glob{L}}(\glob{F}_{v})$ for finite $v$ such that $\prod_{v \nmid \infty} \widetilde{K}_{v}$ is compact open in $\widetilde{\glob{L}}(\mathbb{A}^\infty)$ where $\mathbb{A}^{\infty}$ is the finite part of $\mathbb{A}$.
Let $\widehat{\widetilde{\Pi}}$ be the cuspidal $L^{2}$-automorphic representation of $\widetilde{\glob{L}}(\mathbb{A})$ whose subspace of $\prod_{v}\widetilde{K}_{v}$-finite vectors is $\widetilde{\Pi}$.
By \cite[Theorem~1.1.1]{MR3996349} we have an $L^{2}$-automorphic subrepresentation $\widehat{\Pi}_{0}$ of $\widehat{\widetilde{\Pi}}|_{\glob{L}(\mathbb{A})}$.
For each place $v$, let $\widehat{\widetilde{\Pi}}_{v}$ (resp.\ $\widehat{\Pi}_{0,v}$) be the $v$-component of $\widehat{\widetilde{\Pi}}$ (resp.\ $\widehat{\Pi}_{0}$).
Then $\widehat{\widetilde{\Pi}}_v$ is the (unique) unitary completion of $\widetilde{\Pi}_v$ and $\widehat{\Pi}_{0,v}$ is a subrepresentation of $\widehat{\widetilde{\Pi}}_v|_{\glob{L}(\glob{F}_{v})}$.
Likewise we have a compact subgroup $\prod_v \widetilde{K}'_{v}\subset\widetilde{\glob{L}}^{\lowprime}(\mathbb{A})$ and an $L^{2}$-automorphic subrepresentation $\widehat{\Pi}'$ of $\widehat{\widetilde{\Pi}}^{\lowprime}|_{\glob{L}'(\mathbb{A})}$.

Let $V_{1}$ be a finite set of places containing $V_{0}$ and the set of infinite places such that for any $v\notin V_{1}$, $\glob{G}_{v}$ is unramified (in particular isomorphic to $\glob{G}'_{v}$) and $\widetilde{\Pi}_{v} \simeq \widetilde{\Pi}'_{v}$ is unramified.
For each $v\in V_{0}$, let $\widehat{\sigma}_{v}$ be the (unique) unitary completion of $\sigma_{v}$, so in particular $\widehat{\sigma}_{v} \subset \widehat{\widetilde{\Pi}}_{v}|_{\glob{L}(\glob{F}_{v})}$ (it is the closure of $\sigma_{v}$).
Let $\widehat{\sigma}_{v} := \widehat{\Pi}'_{v}$ for $v\in V_{1}\setminus V_{0}$, and note that $\widehat{\widetilde{\Pi}}_v \cong \widehat{\widetilde{\Pi}}^{\lowprime}_v$ for such $v$.
For each $v\in V_{1}$ define $X_{v} := \{g_v \in \widetilde{\glob{L}}(\glob{F}_{v})\mid \widehat{\Pi}_{0,v}\circ \Ad(g_v)\simeq \widehat{\sigma}_{v}\}$.
Note that $\widehat{\Pi}_{0,v}$ and $\widehat{\sigma}_{v}$ are extended to $\glob{L}(\glob{F}_{v})\glob{Z}_{\widetilde{\glob{L}}}(\glob{F}_{v})$, as subrepresentations of $\widehat{\widetilde{\Pi}}_{v}|_{\glob{L}(\glob{F}_{v})\glob{Z}_{\widetilde{\glob{L}}}(\glob{F}_{v})}$, and $\glob{L}(\glob{F}_{v})\glob{Z}_{\widetilde{\glob{L}}}(\glob{F}_{v})\subset \widetilde{\glob{L}}(\glob{F}_{v})$ is open and of finite index.
Hence by Clifford theory, $X_{v}$ is non-empty \cite[Proposition 4.1.3]{MR3996349} and also open as it is $\glob{L}(\glob{F}_{v})\glob{Z}_{\widetilde{\glob{L}}}(\glob{F}_{v})$-stable.
Since $\widetilde{\glob{L}}(\glob{F})\subset \prod_{v\in V_{1}}\widetilde{\glob{L}}(\glob{F}_{v})$ is dense, there exists $g\in \widetilde{\glob{L}}(\glob{F})\cap \prod_{v\in V_{1}}X_{v}$.
Then $\widehat{\Pi} := \widehat{\Pi}_{0}\circ \Ad(g)$ is a cuspidal $L^{2}$-automorphic representation, a subrepresentation of $\widehat{\widetilde{\Pi}}|_{\glob{L}(\mathbb{A})}$ and $\widehat{\Pi}_{v}\simeq \widehat{\sigma}_{v}$ for any $v\in V_{1}$.
As $\widehat{\widetilde{\Pi}}|_{\glob{L}(\mathbb{A})}$ is unitary, $\widehat{\Pi}$ is also a quotient of $\widehat{\widetilde{\Pi}}|_{\glob{L}(\mathbb{A})}$.
Let $\Pi$ be the subspace of $\prod_{v}(\widetilde{K}_{v}\cap \glob{L}(\glob{F}_{v}))$-finite vectors in $\widehat{\Pi}$. Then this is an irreducible automorphic representation of $\glob{L}(\mathbb{A})$.
The surjective map $\widehat{\widetilde{\Pi}}|_{\glob{L}(\mathbb{A})} \to \widehat{\Pi}$ is nonzero on $\widetilde{\Pi}$ since $\widetilde{\Pi}\subset \widehat{\widetilde{\Pi}}$ is dense and the image is contained in $\Pi$.
Therefore we get a nonzero homomorphism $\widetilde{\Pi}|_{\glob{L}(\mathbb{A})}\to \Pi$, and it is surjective by irreducibility.
Likewise we define $\Pi'$ and get a surjection $\widetilde{\Pi}'|_{\glob{L}'(\mathbb{A})}\onto \Pi'$.
\end{proof}

Take $\Pi$ and $V_{1}$ as in the lemma, taking $\sigma_{v_0} := \sigma$ and the other $\sigma_v$ arbitrary.
Let $V$ be a finite set of places containing $V_{1}$ such that for all $v\notin V$ the subgroup $K_{v}\cap \glob{L}(\glob{F}_{v})$ of $\glob{L}(\glob{F}_{v})$ is hyperspecial and $\Pi_{v}$ and $\Pi'_{v}$ have nonzero $K_{v}\cap \glob{L}(\glob{F}_{v})$-fixed vectors.
(Such a set exists by a spreading out argument.)
Then we have
\begin{itemize}
\item $\Pi_{v_{0}}\simeq \sigma$;
\item for $v\in V_{0}$, $\Pi_{v}$ and $\Pi'_{v}$ are discrete series;
\item for $v\in V_{1}\setminus V_{0}$, $\glob{G}_{v}\simeq \glob{G}'_{v}$ and $\Pi_{v}\simeq \Pi'_{v}$;
\item for $v\in V\setminus V_{1}$, $\glob{G}_{v}\simeq \glob{G}'_{v}$ and $\Pi_{v},\Pi'_{v}$ are quotients of $\widetilde{\Pi}_{v}|_{\glob{L}(\glob{F}_{v})}$ and $\widetilde{\Pi}_{v}$ is an unramified $\widetilde{\glob{L}}(\glob{F}_{v})$-representation;
\item for $v\notin V$, $\glob{G}_{v}\simeq \glob{G}'_{v}$ and $\Pi_{v}\simeq \Pi'_{v}$ have nonzero $K_{v}\cap \glob{L}(\glob{F}_{v})$-fixed vectors. \end{itemize}

By above, our inner twist $\glob{\xi}$ gives us natural identifications between $\Phi(\alggrp{P},\alggrp{A}_{\alggrp{L}})$, $\Phi(\alggrp{P}',\alggrp{A}_{\alggrp{L}'})$, $\Phi(\glob{P},\glob{A}_{\glob{L}})$, and $\Phi(\glob{P}',\glob{A}_{\glob{L}'})$.

Let $\overline{\glob{P}} = \glob{L}\overline{\glob{N}}$ be the parabolic subgroup of $\glob{G}$ opposite to $\glob{P}$.
For each $\alpha\in \Phi(\glob{P},\glob{A}_{\glob{L}})$ and each place $v$, we fix Haar measures on $\glob{N}(\glob{F}_{v}) \cap \glob{L}_{\alpha}(\glob{F}_{v})$ and $\overline{\glob{N}}(\glob{F}_{v}) \cap \glob{L}_{\alpha}(\glob{F}_{v})$ such that 
\begin{itemize}
\item if $v\notin V_{1}$, then $\glob{N}(\glob{F}_{v})\cap \glob{L}_{\alpha}(\glob{F}_{v})\cap K_{v}$ and $\overline{\glob{N}}(\glob{F}_{v})\cap \glob{L}_{\alpha}(\glob{F}_{v})\cap K_{v}$ have volume $1$;
\item for the product measure, $(\glob{N}(\glob{F}) \cap \glob{L}_{\alpha}(\glob{F}))\backslash (\glob{N}(\mathbb{A}) \cap \glob{L}_{\alpha}(\mathbb{A}))$ and $(\overline{\glob{N}}(\glob{F}) \cap \glob{L}_{\alpha}(\glob{F}))\backslash (\overline{\glob{N}}(\mathbb{A})\cap \glob{L}_{\alpha}(\mathbb{A}))$ have volume $1$.
\end{itemize}
We also choose measures on $\glob{N}'(\glob{F}_{v}) \cap \glob{L}'_{\alpha}(\glob{F}_{v})$ and $\overline{\glob{N}}'(\glob{F}_{v}) \cap \glob{L}'_{\alpha}(\glob{F}_{v})$ which satisfy the analogous conditions. 
Note that if $v\notin V_{1}$ then the measure on $\glob{N}(\glob{F}_{v})\cap \glob{L}_{\alpha}(\glob{F}_{v})$ (resp.\ $\overline{\glob{N}}(\glob{F}_{v})\cap \glob{L}_{\alpha}(\glob{F}_{v})$) is the same as that on $\glob{N}'(\glob{F}_{v}) \cap \glob{L}'_{\alpha}(\glob{F}_{v})$ (resp.\ $\overline{\glob{N}}'(\glob{F}_{v}) \cap \glob{L}'_{\alpha}(\glob{F}_{v})$) under our identification of $(\glob{G}(\glob{F}_{v}),\glob{P}(\glob{F}_{v}),\glob{L}(\glob{F}_{v}),\glob{N}(\glob{F}_{v}),K_{v})$ with $(\glob{G}'(\glob{F}_{v}),\glob{P}'(\glob{F}_{v}),\glob{L}'(\glob{F}_{v}),\glob{N}'(\glob{F}_{v}),K'_{v})$.

\begin{lem}\label{lem:comparison of mu for unramified}
For $v\in V\setminus V_{1}$, $j(\Pi_{v}\delta_{\glob{P}(\glob{F}_{v})}^{s}) = j(\Pi'_{v}\delta_{\glob{P}'(\glob{F}_{v})}^{s})$ for $s\in \C$.
\end{lem}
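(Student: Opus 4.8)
The plan is to reduce the comparison to a single group. For $v\in V\setminus V_1$ we have $v\notin V_0$, so by Lemma~\ref{lem:hyperspecial} the inner twist $\glob{\xi}_v$ identifies $(\glob{G}(\glob{F}_v),\glob{P}(\glob{F}_v),\glob{L}(\glob{F}_v),\glob{N}(\glob{F}_v))$ with $(\glob{G}'(\glob{F}_v),\glob{P}'(\glob{F}_v),\glob{L}'(\glob{F}_v),\glob{N}'(\glob{F}_v))$, and — since $v\notin V_1$ — under this identification the chosen Haar measures on $\glob{N}(\glob{F}_v)\cap\glob{L}_\alpha(\glob{F}_v)$ and $\overline{\glob{N}}(\glob{F}_v)\cap\glob{L}_\alpha(\glob{F}_v)$ go over to those on the primed groups. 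The same isomorphism identifies $\widetilde{\glob{L}}(\glob{F}_v)$ with $\widetilde{\glob{L}}^{\lowprime}(\glob{F}_v)$; as $v\notin V_0$ the global Jacquet--Langlands transfer is trivial at $v$, so $\widetilde\Pi_v\cong\widetilde\Pi'_v$ is an unramified representation of this group, and $\Pi_v$, $\Pi'_v$ are two of its irreducible constituents upon restriction to $\glob{L}(\glob{F}_v)$. With all the data thus identified, it remains to prove $j(\Pi_v\delta_{\glob{P}(\glob{F}_v)}^s)=j(\Pi'_v\delta_{\glob{P}(\glob{F}_v)}^s)$ for the same group, parabolic, Levi, and measures.

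Next I would invoke Clifford theory. Since $\glob{L}(\glob{F}_v)\lhd\widetilde{\glob{L}}(\glob{F}_v)$ with abelian quotient and $\glob{L}(\glob{F}_v)\supseteq\glob{L}^\der(\glob{F}_v)$, the restriction $\widetilde\Pi_v|_{\glob{L}(\glob{F}_v)}$ is semisimple and its isotypic components form a single orbit under $\widetilde{\glob{L}}(\glob{F}_v)$, finite because the conjugation action of $\widetilde{\glob{L}}(\glob{F}_v)$ on the irreducible smooth representations of $\glob{L}(\glob{F}_v)$ factors, modulo inner automorphisms, through the finite group $\widetilde{\glob{L}}(\glob{F}_v)/(\glob{L}(\glob{F}_v)\glob{Z}_{\widetilde{\glob{L}}}(\glob{F}_v))$. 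Hence there is $\widetilde g\in\widetilde{\glob{L}}(\glob{F}_v)$ with $\Pi'_v\cong\Pi_v\circ\Ad(\widetilde g)|_{\glob{L}(\glob{F}_v)}$.

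The heart of the argument is to realize conjugation by $\widetilde g$ inside the ambient group. Because $\widetilde{\glob{L}}$ and $\glob{L}$ have the same derived subgroup and $\widetilde g$ centralizes $\glob{Z}_{\glob{L}}\subseteq\glob{Z}_{\widetilde{\glob{L}}}$, the automorphism $\Ad(\widetilde g)|_{\glob{L}(\glob{F}_v)}$ depends only on the image $\bar g$ of $\widetilde g$ in the adjoint group $\glob{L}^{\ad}(\glob{F}_v)$. I would lift $\bar g$ — after modifying $\widetilde g$ within its coset modulo the stabilizer of $\Pi_v$ in $\widetilde{\glob{L}}(\glob{F}_v)$, which is harmless — to the Levi subgroup of $\glob{G}^{\ad}$ attached to $\glob{L}$, thereby producing an inner automorphism $\theta$ of $\glob{G}(\glob{F}_v)$ that restricts to $\Ad(\widetilde g)$ on $\glob{L}(\glob{F}_v)$, stabilizes $\glob{P}(\glob{F}_v)$, $\overline{\glob{P}}(\glob{F}_v)$, $\glob{N}(\glob{F}_v)$, $\overline{\glob{N}}(\glob{F}_v)$, and fixes $\glob{A}_{\glob{L}}$ pointwise — hence fixes each $\alpha\in\Phi(\glob{P},\glob{A}_{\glob{L}})$ and each $\glob{L}_\alpha$. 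I expect this construction to be the main obstacle: one must check that the relevant cohomological obstruction to the lifting vanishes, using if necessary that $\widetilde\Pi_v$ is unramified to restrict attention to $\widetilde g$ in a convenient subgroup.

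Once $\theta$ is available, the conclusion is a short measure computation (write $\delta:=\delta_{\glob{P}(\glob{F}_v)}$). Transporting $\nInd_{\glob{P}(\glob{F}_v)}^{\glob{G}(\glob{F}_v)}$-sections along $\theta$ identifies $(\nInd_{\glob{P}(\glob{F}_v)}^{\glob{G}(\glob{F}_v)}\Pi'_v\delta^s)^{\sm}$ with $(\nInd_{\glob{P}(\glob{F}_v)}^{\glob{G}(\glob{F}_v)}\Pi_v\delta^s)^{\sm}$, using $\Pi_v\circ\theta|_{\glob{L}(\glob{F}_v)}\cong\Pi'_v$ and $\delta^s\circ\theta|_{\glob{L}(\glob{F}_v)}=\delta^s$ (the latter since $\delta$ kills $[\widetilde g,\glob{L}(\glob{F}_v)]\subseteq\glob{L}^\der(\glob{F}_v)$). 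Under this identification the intertwining operator $J_{\overline{\glob{P}}|\glob{P}}$, an integral over $\overline{\glob{N}}(\glob{F}_v)$, is rescaled by the Jacobian $\delta_{\overline{\glob{P}}(\glob{F}_v)}(\widetilde g)=|\det(\Ad(\widetilde g)|_{\overline{\mathfrak n}(\glob{F}_v)})|_v$ of $\theta$ on $\overline{\glob{N}}(\glob{F}_v)$, while $J_{\glob{P}|\overline{\glob{P}}}$, an integral over $\glob{N}(\glob{F}_v)$, is rescaled by $\delta_{\glob{P}(\glob{F}_v)}(\widetilde g)=\delta_{\overline{\glob{P}}(\glob{F}_v)}(\widetilde g)^{-1}$; here one uses that $\theta$ preserves each factor $\overline{\glob{N}}(\glob{F}_v)\cap\glob{L}_\alpha(\glob{F}_v)$, so the Jacobians multiply consistently with the product normalization of the measures. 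Since $j=J_{\glob{P}|\overline{\glob{P}}}\circ J_{\overline{\glob{P}}|\glob{P}}$, the two rescalings cancel, giving $j(\Pi'_v\delta^s)=j(\Pi_v\delta^s)$, first for generic $s$ and then identically by meromorphic continuation. The cancellation $\delta_{\glob{P}}\delta_{\overline{\glob{P}}}=1$ is the one genuinely clean point on which the whole scheme rests.
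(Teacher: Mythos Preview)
Your approach diverges from the paper's and contains a genuine gap that you yourself flag: the lifting of $\bar g\in\glob{L}^{\ad}(\glob{F}_v)$ to the Levi $\glob{L}/\glob{Z}_{\glob{G}}$ of $\glob{G}^{\ad}$. The obstruction lives in $H^1(\glob{F}_v,\glob{Z}_{\glob{L}}/\glob{Z}_{\glob{G}})$, and there is no reason for this to vanish: at $v\notin V_0$ the group $\glob{G}_v$ is only quasisplit, so $\glob{Z}_{\glob{L}}/\glob{Z}_{\glob{G}}$ need not be a split torus or have trivial $H^1$. Your suggested workaround --- restricting $\widetilde g$ to a ``convenient subgroup'' using the unramifiedness of $\widetilde\Pi_v$ --- is not carried out and it is unclear how it would proceed. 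Without this lift the automorphism $\theta$ of $\glob{G}(\glob{F}_v)$ does not exist, and the subsequent measure-cancellation argument (which is fine on its own terms) never gets off the ground. Note also that $\widetilde{\glob L}$ is an auxiliary group not contained in $\glob G$, so there is no direct way to let $\widetilde g$ act on $\glob G(\glob F_v)$.

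The paper sidesteps this obstruction entirely by exploiting the unramifiedness of $\widetilde\Pi_v$ in a different and much simpler way. Choosing a maximal torus $\widetilde{\alggrp T}\subset\widetilde{\glob L}_v$, a Borel $\alggrp Q_{\widetilde{\glob L}_v}\supset\widetilde{\alggrp T}$, and a character $\widetilde\chi$ with $(\nInd_{Q_{\widetilde{\glob L}_v}}^{\widetilde{\glob L}(\glob{F}_v)}\widetilde\chi)^{\sm}\twoheadrightarrow\widetilde\Pi_v$, one sets $T:=\widetilde T\cap\glob L_v$, $\chi:=\widetilde\chi|_T$, and observes that
\[
(\nInd_{Q\cap\glob L(\glob{F}_v)}^{\glob L(\glob{F}_v)}\chi)^{\sm}=(\nInd_{Q_{\widetilde{\glob L}_v}}^{\widetilde{\glob L}(\glob{F}_v)}\widetilde\chi)^{\sm}\big|_{\glob L(\glob{F}_v)}\twoheadrightarrow\widetilde\Pi_v|_{\glob L(\glob{F}_v)}.
\]
Thus \emph{both} $\Pi_v$ and $\Pi'_v$ are quotients of the \emph{same} principal series of $\glob L(\glob{F}_v)$. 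A commutative square relating $J_{\overline Q|Q}(\chi\delta_{\glob P(\glob F_v)}^s)$ upstairs to $J_{\overline{\glob P}(\glob F_v)|\glob P(\glob F_v)}$ downstairs then shows that $j(\Pi_v\delta^s)$ and $j(\Pi'_v\delta^s)$ both equal the scalar $c(s)$ given by $J_{Q|\overline Q}\circ J_{\overline Q|Q}$ at the Borel level, where $\chi$ is common to both. No automorphism of $\glob G(\glob F_v)$ carrying $\Pi_v$ to $\Pi'_v$ is ever needed.
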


\begin{proof}
Let $\widetilde{\alggrp{T}}\subset \widetilde{\glob{L}}_{v}$ be a minimal Levi subgroup (a torus). Take a Borel subgroup $\alggrp{Q}_{\widetilde{\glob{L}}_{v}}$ of $\widetilde{\glob{L}}_{v}$ containing $\widetilde{\alggrp{T}}$ and a character $\widetilde{\chi}$ of $\widetilde{T}$ such that $(\nInd_{Q_{\widetilde{\glob{L}}_{v}}}^{\widetilde{\glob{L}}(\glob{F}_{v})}\widetilde{\chi})^{\sm}\twoheadrightarrow \widetilde{\Pi}_{v}$.
Set $\alggrp{T} := \widetilde{\alggrp{T}}\cap \glob{L}_{v}$ and let $\chi$ be the restriction of $\widetilde{\chi}$ to $T$.
Let $\alggrp{Q}$ (resp.\ $\overline{\alggrp{Q}}$) be the Borel subgroup of $\glob{G}_{v}$ containing $\glob{N}_{v}$ (resp.\ $\overline{\glob{N}}_{v}$) and such that $\alggrp{Q}\cap \glob{L}_{v} = \overline{\alggrp{Q}}\cap \glob{L}_{v} = \alggrp{Q}_{\widetilde{\glob{L}}_{v}}\cap \glob{L}_{v}$.
Then we have $(\nInd_{Q\cap \glob{L}(\glob{F}_{v})}^{\glob{L}(\glob{F}_{v})}\chi)^{\sm} = (\nInd_{Q_{\widetilde{\glob{L}}_{v}}}^{\widetilde{\glob{L}}(\glob{F}_{v})}\widetilde{\chi})^{\sm}|_{\glob{L}(\glob{F}_{v})}\twoheadrightarrow \widetilde{\Pi}_{v}|_{\glob{L}(\glob{F}_{v})} \twoheadrightarrow \Pi_v$ and a commutative diagram
\[
\begin{tikzcd}[column sep=3.3cm]
(\nInd_{Q}^{\glob{G}(\glob{F}_{v})}\chi\delta_{\glob{P}(\glob{F}_{v})}^{s})^{\sm}\arrow[r,"J_{\overline{Q}|Q}(\chi\delta_{\glob{P}(\glob{F}_{v})}^{s})"]\arrow[d,twoheadrightarrow] &
(\nInd_{\overline{Q}}^{\glob{G}(\glob{F}_{v})}\chi\delta_{\glob{P}(\glob{F}_{v})}^{s})^{\sm}\arrow[d,twoheadrightarrow]\\
(\nInd_{\glob{P}(\glob{F}_{v})}^{\glob{G}(\glob{F}_{v})}\Pi_{v}\delta_{\glob{P}(\glob{F}_{v})}^{s})^{\sm}\arrow[r,"J_{\overline{\glob{P}}(\glob{F}_{v})|\glob{P}(\glob{F}_{v})}(\Pi_{v}\delta_{\glob{P}(\glob{F}_{v})}^{s})"] &
(\nInd_{\overline{\glob{P}}(\glob{F}_{v})}^{\glob{G}(\glob{F}_{v})}\Pi_{v}\delta_{\glob{P}(\glob{F}_{v})}^{s})^{\sm}.
\end{tikzcd}
\]
There is a rational function $c(s)$ such that $J_{Q|\overline{Q}}(\chi\delta_{\glob{P}(\glob{F}_{v})}^{s})\circ J_{\overline{Q}|Q}(\chi\delta_{\glob{P}(\glob{F}_{v})}^{s}) = c(s)$ and from the above diagram we get $c(s) = j(\Pi_{v}\delta_{\glob{P}(\glob{F}_{v})}^{s})$.
The above diagram also holds when we replace $\Pi_{v}$ by $\Pi_{v}'$.
Hence $c(s) = j(\Pi'_{v}\delta_{\glob{P}'(\glob{F}_{v})}^{s})$ and we get the lemma.
\end{proof}

Let $s\in \C$ and for $v \notin V$ let $f_{v,\unram}\in (\nInd_{\glob{P}(\glob{F}_{v})}^{\glob{G}(\glob{F}_{v})}\Pi_{v}\delta_{\glob{P}(\glob{F}_{v})}^{s})^{\sm}$ (resp.\ $\overline{f}_{v,\unram}\in (\nInd_{\overline{\glob{P}}(\glob{F}_{v})}^{\glob{G}(\glob{F}_{v})}\Pi_{v}\delta_{\glob{P}(\glob{F}_{v})}^{s})^{\sm}$) be a $K_{v}$-fixed vector.
We assume $f_{v,\unram}(1) = \overline{f}_{v,\unram}(1)\ne 0$ and for almost all $v$, this is the vector in $\Pi_{v}$ used to define the restricted tensor product $\Pi = \bigotimes'_{v}\Pi_{v}$.
Then there exists a rational function $c_{v}(s,\glob{P},\Pi_{v})$ such that 
\[
J_{\overline{\glob{P}}(\glob{F}_{v})|\glob{P}(\glob{F}_{v})}(\Pi_{v}\delta_{\glob{P}(\glob{F}_{v})}^{s})f_{v,\unram} = c_{v}(s,\glob{P},\Pi_{v})\overline{f}_{v,\unram}.
\]
for each $s\in \C$.
An explicit formula of $c_{v}(s,\glob{P},\Pi_{v})$ can be found in \cite[3.1~Theorem]{MR571057}.
(Use the diagram in the proof of Lemma~\ref{lem:comparison of mu for unramified}; the product in the cited theorem is then taken over all roots of $\glob{N}_v$.)
From the formula, the infinite product $c^{V}(s,\glob{P},\Pi) = \prod_{v\notin V}c_{v}(s,\glob{P},\Pi_{v})$ is identified with a ratio of products of certain partial L-functions
that occur in the Langlands--Shahidi method, cf.\ \cite[(2.7)]{MR942520}.
For us, it suffices to know that $c^{V}(s,\glob{P},\Pi)$ converges if $\mathrm{Re}(s)$ is sufficiently large and gives a meromorphic function on $\C$.
This holds also for $\overline{\glob{P}}$. (In this case it converges if $-\mathrm{Re}(s)$ is sufficiently large.)

We have a global intertwining operator $J_{\overline{\glob{P}}|\glob{P}}(\Pi\delta_{\glob{P}(\mathbb{A})}^{s})\colon (\nInd_{\glob{P}(\mathbb{A})}^{\glob{G}(\mathbb{A})}\Pi\delta_{\glob{P}(\mathbb{A})}^{s})^{\sm}\to (\nInd_{\overline{\glob{P}}(\mathbb{A})}^{\glob{G}(\mathbb{A})}\Pi\delta_{\glob{P}(\mathbb{A})}^{s})^{\sm}$ that converges if $\mathrm{Re}(s)$ is sufficiently large.
Put
\begin{align*}
f^{V} &:= \bigotimes_{v\notin V}f_{v,\unram}\in \sideset{}{'}\bigotimes_{v\notin V}(\nInd_{\glob{P}(\glob{F}_{v})}^{\glob{G}(\glob{F}_{v})}\Pi_{v}\delta_{\glob{P}(\glob{F}_{v})}^{s})^{\sm}, \\
\overline{f}^{V} &:= \bigotimes_{v\notin V}\overline{f}_{v,\unram}\in \sideset{}{'}\bigotimes_{v\notin V}(\nInd_{\overline{\glob{P}}(\glob{F}_{v})}^{\glob{G}(\glob{F}_{v})}\Pi_{v}\delta_{\glob{P}(\glob{F}_{v})}^{s})^{\sm}.
\end{align*}
Take $0\ne f_{V} := \bigotimes_{v\in V}f_{v}\in \bigotimes_{v\in V}(\nInd_{\glob{P}(\glob{F}_{v})}^{\glob{G}(\glob{F}_{v})}\Pi_{v}\delta_{\glob{P}(\glob{F}_{v})}^{s})^{\sm}$.
Then if $\mathrm{Re}(s)$ is sufficiently large,
\begin{align*}
&J_{\overline{\glob{P}}|\glob{P}}(\Pi\delta_{\glob{P}(\mathbb{A})}^{s})(f^{V}\otimes f_{V})\\
& = \bigotimes_{v\notin V} J_{\overline{\glob{P}}(\glob{F}_{v})|\glob{P}(\glob{F}_{v})}(\Pi_{v}\delta_{\glob{P}(\glob{F}_{v})}^{s})f_{v,\unram}\otimes\bigotimes_{v\in V}J_{\overline{\glob{P}}(\glob{F}_{v})|\glob{P}(\glob{F}_{v})}(\Pi_{v}\delta_{\glob{P}(\glob{F}_{v})}^{s})f_{v}\\
& = \bigotimes_{v\notin V}c_{v}(s,\glob{P},\Pi_{v})\overline{f}_{v,\unram}\otimes \bigotimes_{v\in V}J_{\overline{\glob{P}}(\glob{F}_{v})|\glob{P}(\glob{F}_{v})}(\Pi_{v}\delta_{\glob{P}(\glob{F}_{v})}^{s})f_{v}\\
& = c^{V}(s,\glob{P},\Pi)\overline{f}^{V}\otimes\bigotimes_{v\in V}J_{\overline{\glob{P}}(\glob{F}_{v})|\glob{P}(\glob{F}_{v})}(\Pi_{v}\delta_{\glob{P}(\glob{F}_{v})}^{s})f_{v}.
\end{align*}
Both sides are meromorphic in $s$, hence this equality holds as meromorphic functions.
The same calculation gives
\begin{align*}
& J_{\glob{P}|\overline{\glob{P}}}(\Pi\delta_{\glob{P}(\mathbb{A})}^{s})J_{\overline{\glob{P}}|\glob{P}}(\Pi\delta_{\glob{P}(\mathbb{A})}^{s})(f^{V}\otimes f_{V})\\
& =
c^{V}(s,\glob{P},\Pi)c^{V}(s,\overline{\glob{P}},\Pi)f^{V}\otimes\bigotimes_{v\in V}J_{\glob{P}(\glob{F}_{v})|\overline{\glob{P}}(\glob{F}_{v})}(\Pi_{v}\delta_{\glob{P}(\glob{F}_{v})}^{s})J_{\overline{\glob{P}}(\glob{F}_{v})|\glob{P}(\glob{F}_{v})}(\Pi_{v}\delta_{\glob{P}(\glob{F}_{v})}^{s})f_{v}\\
& = \left(c^{V}(s,\glob{P},\Pi)c^{V}(s,\overline{\glob{P}},\Pi)\prod_{v\in V}j(\Pi_{v}\delta_{\glob{P}(\glob{F}_{v})}^{s})\right)f^{V}\otimes f_{V}.
\end{align*}
On the other hand, the composition of intertwining operators $J_{\glob{P}|\overline{\glob{P}}}(\Pi\delta_{\glob{P}(\mathbb{A})}^{s})J_{\overline{\glob{P}}|\glob{P}}(\Pi\delta_{\glob{P}(\mathbb{A})}^{s})$ is the identity~\cite[Theorem IV.1.10]{MR1361168}.
Hence we get
\[
c^{V}(s,\glob{P},\Pi)c^{V}(s,\overline{\glob{P}},\Pi)\prod_{v\in V}j(\Pi_{v}\delta_{\glob{P}(\glob{F}_{v})}^{s}) = 1
\]
and thus
\[
\prod_{v\in V}\mu^{\glob{G}(\glob{F}_{v})}(\Pi_{v}\delta_{\glob{P}(\glob{F}_{v})}^{s}) \sim c^{V}(s,\glob{P},\Pi)c^{V}(s,\overline{\glob{P}},\Pi),
\]
where $\sim$ denotes equality up to some factor in $\R_{>0}^\times$ that only depends on $(\glob{G},\glob{L})$ (or $(\glob{G}',\glob{L}')$ below).
Notice that if $v\notin V$ then $\glob{G}_{v}\simeq \glob{G}'_{v}$ and $\Pi_{v}\simeq \Pi'_{v}$.
Hence, by the same argument for $\Pi'$, we have
\[
\prod_{v\in V}\mu^{\glob{G}'(\glob{F}_{v})}(\Pi'_{v}\delta_{\glob{P}'(\glob{F}_{v})}^{s}) \sim c^{V}(s,\glob{P},\Pi)c^{V}(s,\overline{\glob{P}},\Pi).
\]
Hence we get
\[
\prod_{v\in V}\mu^{\glob{G}(\glob{F}_{v})}(\Pi_{v}\delta_{\glob{P}(\glob{F}_{v})}^{s})
\sim
\prod_{v\in V}\mu^{\glob{G}'(\glob{F}_{v})}(\Pi'_{v}\delta_{\glob{P}'(\glob{F}_{v})}^{s}).
\]
By Lemma~\ref{lem:comparison of mu for unramified} and $\Pi_{v}\simeq \Pi'_{v}$ for $v\in V_{1}\setminus V_{0}$, we have
\begin{equation}
  \label{eq:global-comparison-mu}
  \prod_{v\in V_{0}}\mu^{\glob{G}(\glob{F}_{v})}(\Pi_{v}\delta_{\glob{P}(\glob{F}_{v})}^{s})
  \sim
  \prod_{v\in V_{0}}\mu^{\glob{G}'(\glob{F}_{v})}(\Pi'_{v}\delta_{\glob{P}'(\glob{F}_{v})}^{s}).
\end{equation}
Since $\glob{G}'_{v}$ is quasisplit and $\Pi'_{v}$ is a discrete series for $v \in V_0$, if the right-hand side has a pole at $s \in \R$, then $s\in \Q$ by Proposition~\ref{prop:rationality-discrete-series}.
(Note that the supercuspidal support of $\Pi'_{v}$ is generic by Lemma~\ref{lem:levi-generic} below.) On the other hand, if $\mu^{\glob{G}(\glob{F}_{v})}(\Pi_{v}\delta_{\glob{P}(\glob{F}_{v})}^{s}) = 0$ for $v\in V_{0}\setminus\{v_{0}\}$ and $s \in \R$, then $s = 0$ by Proposition~\ref{prop:mu-max-parab}.
Hence if $s$ is a pole of $\mu^{\glob{G}(\glob{F}_{v_{0}})}(\Pi_{v_{0}}\delta_{\glob{P}(\glob{F}_{v_{0}})}^{s}) = \mu^{G}(\sigma\delta_{P}^{s})$, then $s\in \Q$.
This concludes the proof of Theorem~\ref{thm:rationality, inner}.

\begin{remark}
  With more work it might be possible to show that $\mu^G(\sigma \delta_P^s) = \mu^{G'}(\sigma' \delta_{P'}^s)$ for all constituents $\sigma'$ of $\widetilde{\sigma}'$, as in \cite{MR1749954}, \cite{MR3194161}.
\end{remark}

\begin{lem}\label{lem:levi-generic}
  Any supercuspidal representation of any Levi subgroup of $L'$ is generic.
\end{lem}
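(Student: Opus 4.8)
The plan is to reduce the statement to two classical facts about general linear groups over $p$-adic fields: every irreducible supercuspidal representation of $\GL_m$ is generic, and the restriction to $\SL_m$ of such a representation has only generic constituents.

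I would first record the group structure. A Levi subgroup $\alggrp{M}'$ of $\alggrp{L}'$ is of the form $\widetilde{\alggrp{M}} \cap \alggrp{L}'$ for a Levi subgroup $\widetilde{\alggrp{M}}$ of $\widetilde{\alggrp{L}}^{\lowprime}$, because $(\widetilde{\alggrp{L}}^{\lowprime})^{\der} \subset \alggrp{L}' \subset \widetilde{\alggrp{L}}^{\lowprime}$ and hence the parabolic (resp.\ Levi) subgroups of $\alggrp{L}'$ are precisely the intersections with $\alggrp{L}'$ of those of $\widetilde{\alggrp{L}}^{\lowprime}$. As in the case of $\alggrp{L}'$ itself, one gets $(\widetilde{\alggrp{M}})^{\der} \subset \alggrp{M}' \subset \widetilde{\alggrp{M}}$ with $\alggrp{M}'^{\der} = (\widetilde{\alggrp{M}})^{\der}$, and $\widetilde{\alggrp{M}}$ is again of the form $(\prod_j \Res_{E'_j/F}\GL_{m_j})/\alggrp{H}$ with the same induced central torus $\alggrp{H}$, since every Levi of $\prod_i \Res_{E_i/F}\GL_{n_i}$ has this shape and $\alggrp{H}$ lies in every Levi. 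Let $U$ be the unipotent radical of a minimal parabolic subgroup of $M'$; it is also the unipotent radical of a minimal parabolic of $(\widetilde{\alggrp{M}})^{\der}$, and $U \subset (\widetilde{\alggrp{M}})^{\der}(F) \subset M'$. Therefore, for an irreducible supercuspidal representation $\sigma'$ of $M'$ and a non-degenerate character $\theta$ of $U$, one has $\sigma'_{U,\theta} \cong \bigl(\sigma'|_{(\widetilde{\alggrp{M}})^{\der}(F)}\bigr)_{U,\theta}$, and by Clifford theory $\sigma'|_{(\widetilde{\alggrp{M}})^{\der}(F)}$ is a finite direct sum of irreducible supercuspidal representations (supercuspidality passes to the constituents because the proper parabolic subgroups of $(\widetilde{\alggrp{M}})^{\der}$ are restrictions of those of $M'$, and the Jacquet functors are compatible). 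Hence $\sigma'$ is generic once some irreducible supercuspidal representation of $(\widetilde{\alggrp{M}})^{\der}(F)$ is generic, and it is enough to show that all of them are.

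Next I would treat $(\widetilde{\alggrp{M}})^{\der}$. This group is a quotient of $\prod_j \Res_{E'_j/F}\SL_{m_j}$ by a finite central subgroup, and genericity is insensitive to such a central isogeny — and to the resulting finite-index inclusion on $F$-points — because $U$ lies in the relevant image; so it suffices to treat $\prod_j \SL_{m_j}(E'_j)$, and then, as $\boxtimes_j \rho_j$ is generic if and only if each $\rho_j$ is, a single $\SL_m(E')$. So let $\rho$ be an irreducible supercuspidal representation of $\SL_m(E')$. Since $\SL_m(E')Z_{\GL_m}(E')$ has finite index in $\GL_m(E')$, extending the central character of $\rho$ and inducing exhibits $\rho$ as a constituent of $\pi|_{\SL_m(E')}$ for some irreducible supercuspidal representation $\pi$ of $\GL_m(E')$. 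Now $\pi$ is generic, and in fact $\theta'$-generic for \emph{every} non-degenerate character $\theta'$ of $U$: such characters form a single orbit under the diagonal torus $T$ of $\GL_m(E')$, which normalizes $U$, and $\theta'$-genericity is equivalent to $\theta$-genericity by conjugating the Whittaker functional by a torus element carrying $\theta$ to $\theta'$. Writing $\pi|_{\SL_m(E')} = \bigoplus_i \rho_i$ with $\rho_0 = \rho$, the $\rho_i$ form a single orbit under conjugation by $\GL_m(E')$; since $T$ surjects onto $\GL_m(E')/\SL_m(E')$ and normalizes $U$, one may realize this conjugation by elements $t_i \in T$, so that $\rho_i$ is $\theta$-generic if and only if $\rho_0$ is $(\theta \circ \Ad(t_i^{-1}))$-generic, with $\theta \circ \Ad(t_i^{-1})$ again non-degenerate. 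As $\bigoplus_i (\rho_i)_{U,\theta} \cong \pi_{U,\theta} \ne 0$ for any non-degenerate $\theta$, it follows that $\rho = \rho_0$ is generic.

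The main obstacle is precisely this last step: one cannot simply quote ``supercuspidals are generic'', because not every irreducible supercuspidal representation of $\SL_m(E')$ is generic with respect to a \emph{fixed} Whittaker datum. The point is that $\pi|_{\SL_m(E')}$ is generic for \emph{all} non-degenerate characters of $U$ (as $\pi$ comes from $\GL_m(E')$), after which the conjugation argument spreads genericity over the constituents; alternatively one may invoke known results on the generic members of $L$-packets of supercuspidal representations of $\SL_m$. The bookkeeping with central tori, isogenies, and finite-index subgroups is routine and harmless, because the unipotent radical $U$ is untouched by all of these operations.
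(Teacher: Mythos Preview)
Your argument is correct, but the route differs from the paper's. The paper goes \emph{upward} in one step: given a supercuspidal $\tau$ of a Levi $L''$ of $L'$, it uses Tadi\'c's lemma to realize $\tau$ inside the restriction of a supercuspidal $\widetilde\tau$ of the corresponding Levi $\widetilde L''$ of $\widetilde L'$, thereby reducing to $L'=\widetilde L'$; then it uses the surjection $\prod_i \Res_{E_i/F}\GL_{n_i}\twoheadrightarrow \widetilde L'$ (surjective on $F$-points because $H$ is an induced torus) to reduce to $H=1$, after which genericity is immediate from Bernstein--Zelevinsky for $\GL_n$. The ``perhaps by varying the nondegenerate character'' in the paper is exactly your torus-conjugation argument, applied once at the top.

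You instead go \emph{downward} first, restricting from $M'$ to $(\widetilde M)^{\der}$ and then through the central isogeny to $\prod_j\SL_{m_j}(E'_j)$, and only at the very end go up from $\SL_m$ to $\GL_m$. This buys you a mild extra generality (you never use that $H$ is induced, only that it is a torus), at the cost of handling the finite-kernel/finite-index bookkeeping for the isogeny $\prod_j\SL_{m_j}\to(\widetilde M)^{\der}$. The paper's route avoids that bookkeeping entirely by working on the $\GL$ side throughout, where the quotient map is surjective on $F$-points. Both proofs rest on the same two inputs: supercuspidals of $\GL_m$ are generic, and the diagonal torus acts transitively on nondegenerate Whittaker characters.
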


\begin{proof}
  Suppose that $\tau$ is a supercuspidal representation of a Levi subgroup $\alggrp{L}''$ of $\alggrp{L}'$.
  Then $\alggrp{L}'' = \widetilde{\alggrp{L}}^{\lowprime[\prime\prime]} \cap \alggrp{L}'$ for some Levi subgroup $\widetilde{\alggrp{L}}^{\lowprime[\prime\prime]}$ of $\widetilde{\alggrp{L}}^{\lowprime}$ and $\tau$ occurs in the restriction
  of a supercuspidal representation $\widetilde\tau$ of $\widetilde{\alggrp{L}}^{\lowprime[\prime\prime]}$ \cite[\S2]{MR1141803}.
  In this way we may assume that $\alggrp{L}' = \widetilde{\alggrp{L}}^{\lowprime}$ (perhaps by varying the nondegenerate character).
      By a similar (but easier) argument we consider the surjective homomorphism $\prod_{i=1}^r \Res_{E_i/F}\GL_{n_i} \onto \widetilde{\alggrp{L}}^{\lowprime}$ to reduce to the case when $\alggrp{H} = 1$.
  Finally the claim for the group $\prod_{i=1}^r \Res_{E_i/F}\GL_{n_i}$ follows from \cite[4.4 Theorem]{MR0579172}.
\end{proof}

\begin{proof}[Proof of Corollary~\ref{cor:rationality, inner}]
  As the discrete series $\sigma_{1},\sigma_{2}$ are $\widetilde L$-conjugate, there exists a discrete series $\widetilde{\sigma}$ of $\widetilde{L}$ such that $\sigma_{1},\sigma_{2}$ are irreducible constituents of $\widetilde{\sigma}|_L$.
  We then keep the setup and notation of the proof of Theorem~\ref{thm:rationality, inner} above.
  In particular, $\widetilde{\Pi}$ is a cuspidal automorphic representation of $\widetilde{L}$ such that $\widetilde{\Pi}_{v_{0}} \cong \widetilde{\sigma}$.
    Applying Lemma~\ref{lem:density argument} twice we have a finite set $V_1$ containing $V_0$ and the infinite places and cuspidal automorphic representations $\Pi_{1}, \Pi_2$ of $\glob{L}(\mathbb{A})$ (resp.\ $\Pi'$ of $\glob{L}'(\mathbb{A})$) which are quotients of $\widetilde{\Pi}|_{\glob{L}(\mathbb{A})}$ (resp.\ $\widetilde{\Pi}'|_{\glob{L}'(\mathbb{A})}$) such that
  \begin{itemize}
  \item $\Pi_{i,v_{0}}\simeq \sigma_{i}$ for $i \in \{1,2\}$;
  \item $\Pi_{1,v}\simeq \Pi_{2,v}$ for any $v\in V_0\setminus \{v_{0}\}$;
  \item $\Pi_{1,v}\simeq \Pi_{2,v} \cong \Pi'_v$ for any $v\in V_1\setminus V_0$;
              \end{itemize}
  Choose any finite set $V$ of places containing $V_{1}$ such that for all $v\notin V$ the subgroup $K_{v}\cap \glob{L}(\glob{F}_{v})$ of $\glob{L}(\glob{F}_{v})$ is hyperspecial and $\Pi_{1,v}$, $\Pi_{2,v}$, $\Pi'_{v}$ have nonzero $K_{v}\cap \glob{L}(\glob{F}_{v})$-fixed vectors.
      Following the argument above, we obtain from \eqref{eq:global-comparison-mu} that
  \begin{equation*}
    \prod_{v\in V_{0}}\mu^{\glob{G}(\glob{F}_{v})}(\Pi_{1,v}\delta_{\glob{P}(\glob{F}_{v})}^{s}) = 
    \prod_{v\in V_{0}}\mu^{\glob{G}'(\glob{F}_{v})}(\Pi_{2,v}\delta_{\glob{P}'(\glob{F}_{v})}^{s})
  \end{equation*}
  and hence by the first two bullets above that $\mu^{G}(\sigma_{1}\delta_{P}^{s}) = \mu^{G}(\sigma_{2}\delta_{P}^{s})$.
  \end{proof}

\begin{remark}\label{rk:choiy}
  We can extend Corollary~\ref{cor:rationality, inner}, showing that $J_{P|Q}(\sigma_{i}\delta_{P}^{s})\circ J_{Q|P}(\sigma_{i}\delta_{P}^{s})$ is independent of $i \in \{1,2\}$, where $\alggrp Q$ is an arbitrary parabolic subgroup with Levi subgroup $\alggrp L$, not just $\alggrp Q = \overline{\alggrp P}$, as in the statement of \cite[Working Hypothesis 1.1]{MR3194161}.
  (The above argument still works because of the natural identifications between $\Phi(\alggrp{P},\alggrp{A}_{\alggrp{L}})$, $\Phi(\alggrp{P}',\alggrp{A}_{\alggrp{L}'})$, $\Phi(\glob{P},\glob{A}_{\glob{L}})$, and $\Phi(\glob{P}',\glob{A}_{\glob{L}'})$. Alternatively, we can use formulas (12) and (14) in \cite[IV.1]{MR1989693} to reduce to the case where $\alggrp P$ is a maximal parabolic and $\alggrp Q$ its opposite.)
\end{remark}

\subsection{Rationality: rank one groups}
\label{sec:rank-one-groups}

Suppose that $\alggrp{G}$ is simply connected and almost simple of rank one over $F$. 
We first recall the classification of such groups, cf.\ \cite[Table II]{MR0224710} or \cite[\S4]{BT2}. 
First, $\alggrp{G} \cong \Res_{E/F} \alggrp{G}'$ for some finite extension $E/F$ and absolutely almost simple group $\alggrp{G}'$ of rank one, 
so we may assume that $\alggrp{G}$ is absolutely almost simple.
Let $D$ denote the nonsplit quaternion algebra over $F$, considered with its canonical involution (fixing precisely $F$).
All (skew-)hermitian forms we consider are non-degenerate.

\begin{prop}[Tits]\label{prop:rank-one-groups}
  If $\alggrp{G}$ is simply connected and absolutely almost simple of rank one over $F$, then $\alggrp{G}$ is isomorphic to one of the following:
  \begin{enumerate}
  \item $\SL_2(D')$, where $D'$ is a finite-dimensional central division algebra of dimension $d^2$ over $F$;
  \item $\SU(h)$, where $E/F$ is a quadratic extension, $h$ is a hermitian form over $E$ of rank 3 or 4 and Witt index 1
    (the groups are quasisplit $\SU_3$ and non-quasisplit $\SU_4$, respectively);
  \item $\SU(h)$, where $h$ is a hermitian form over $D$ of rank 1 or 2 and Witt index 1
    (the groups are inner forms of $\Sp_4$ and $\Sp_6$, respectively);
  \item $\tSU(h)$, where $h$ is a skew-hermitian form over $D$ of rank 4 or 5 and Witt index 1
    (the groups are an inner form of quasisplit $\Spin_8$ defined by a quadratic extension $E/F$ and an inner form of split $\Spin_{10}$, respectively).
  \end{enumerate}
\end{prop}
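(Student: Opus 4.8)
The plan is to quote Tits's classification directly. The statement, Proposition~\ref{prop:rank-one-groups}, is attributed to Tits and asks for the explicit list of simply-connected absolutely almost simple groups of $F$-rank one. Since the paper has already reduced to the absolutely almost simple case in the sentence preceding the statement (using $\alggrp G \cong \Res_{E/F}\alggrp G'$ and the remark that restriction of scalars along a finite separable extension preserves simple-connectedness, almost-simplicity and the rank), the remaining content is purely a bookkeeping exercise with the Tits tables.

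First I would recall the general shape of the classification of absolutely almost simple groups over a local field $F$: by Tits \cite{MR0224710} such a group is determined by its $\overline F$-root system (an absolute type), its $F$-index (the Galois action together with the circled/distinguished orbits of simple roots giving the anisotropic kernel), and the anisotropic kernel itself, which over a $p$-adic field is forced to be of type $A$ (an inner form $\SL_1(D')$ of some $\SL_n$) since the only anisotropic absolutely almost simple groups over a nonarchimedean local field are the norm-one groups of central division algebras. Imposing $F$-rank exactly one means the $F$-index has exactly one circled orbit, and I would then go type by type through Tits's table (conveniently reproduced as \cite[Table II]{MR0224710} and in \cite[\S4]{BT2}): inner and outer forms of $A_n$, the forms of $B_n$, $C_n$, $D_n$, and the exceptional types. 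The exceptional types contribute nothing of rank one over a $p$-adic field (their possible $F$-ranks over such fields are pinned down by Tits and none of the rank-one indices occur, essentially because the anisotropic kernel would have to be of type $A$ but the relevant exceptional indices do not admit such kernels over a local field). For each surviving index I would translate the abstract datum into the concrete description in terms of (skew-)hermitian forms: $\mathrm{{}^1A}$ of relative rank one gives $\SL_2(D')$; $\mathrm{{}^2A}$ gives $\SU_3$ (quasisplit) and a non-quasisplit $\SU_4$, both unitary groups of hermitian forms over a quadratic extension $E/F$ of Witt index one; type $C$ gives inner forms of $\Sp_4$ and $\Sp_6$, realised as $\SU$ of rank-one or rank-two hermitian forms over the quaternion division algebra $D$; and type $D$ gives an inner form of quasisplit $\Spin_8$ (a triality-free outer form attached to a quadratic extension) and an inner form of split $\Spin_{10}$, realised via $\widetilde{\SU}(h)$ of skew-hermitian forms over $D$ of rank $4$ and $5$ with Witt index one. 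The identifications of the classical $\SU$/$\widetilde{\SU}$ groups with $\Sp$ and $\Spin$ up to inner twisting are standard (e.g.\ via exceptional isomorphisms of low-rank Dynkin diagrams and the theory of algebras with involution, as in \cite{MR1632779}), and I would cite rather than reprove them.

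There is essentially no obstacle here beyond care in reading the tables correctly and matching Tits's indices to the hermitian-form descriptions; this is why the statement is attributed to Tits and the ``proof'' is a pointer to the literature. The one point that deserves a sentence of justification is the exclusion of the exceptional types and of the higher-rank classical indices: here one uses that over a $p$-adic field the anisotropic kernel is of inner type $A$ (norm-one group of a division algebra), which both forces the classical anisotropic kernels to have the stated shapes and rules out all exceptional rank-one indices. I would therefore present the proof as: reduce to absolutely almost simple by restriction of scalars (already done in the text); invoke Tits's classification \cite{MR0224710}, \cite{BT2} together with the description of anisotropic absolutely almost simple $p$-adic groups as $\SL_1$ of division algebras; and read off the four families from the table. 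No lengthy computation is needed, and the write-up can be kept to a short paragraph with the appropriate citations.

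\begin{proof}
As noted above we may assume $\alggrp G$ is absolutely almost simple of $F$-rank one. By Tits's classification of semisimple groups over $p$-adic fields \cite[\S4]{MR0224710} (see also \cite[\S4]{BT2}), $\alggrp G$ is determined by its absolute type, its $F$-index, and its anisotropic kernel, and over the nonarchimedean local field $F$ the anisotropic kernel of an absolutely almost simple group is necessarily of inner type $A$, i.e.\ the norm-one group $\SL_1(D'')$ of a central division algebra $D''$ over a finite extension of $F$. Running through the rank-one entries of Tits's table under this constraint, one finds that the exceptional types and the higher classical indices do not occur, and the remaining possibilities are exactly the four families in the statement: the inner forms $\SL_2(D')$ of type $\mathrm{{}^1A}$; the unitary groups $\SU(h)$ of type $\mathrm{{}^2A}$ attached to a quadratic extension $E/F$ and a hermitian $E$-form of rank $3$ or $4$ with Witt index one; the unitary groups $\SU(h)$ of type $C$ attached to a hermitian form over the quaternion division algebra $D$ of rank $1$ or $2$ with Witt index one, which are inner forms of $\Sp_4$ and $\Sp_6$; and the groups $\tSU(h)$ of type $D$ attached to a skew-hermitian $D$-form of rank $4$ or $5$ with Witt index one, which are inner forms of quasisplit $\Spin_8$ and of split $\Spin_{10}$. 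The identification of these $\SU$ and $\tSU$ groups with the stated forms of $\Sp$ and $\Spin$ (up to inner twisting) is the standard dictionary between classical groups and algebras with involution; see \cite[\S\S23--26]{MR1632779}.
\end{proof}
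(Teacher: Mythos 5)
Your proof is correct and takes the same approach as the paper: Proposition~\ref{prop:rank-one-groups} is stated there with a pointer to Tits's classification (cf.\ \cite[Table II]{MR0224710} and \cite[\S4]{BT2}) and no further argument, exactly as in your proposal. One minor caveat: the reference you cite for the dictionary between algebras with involution and classical groups (the Book of Involutions) is not in this paper's bibliography, so that citation key would not resolve here.
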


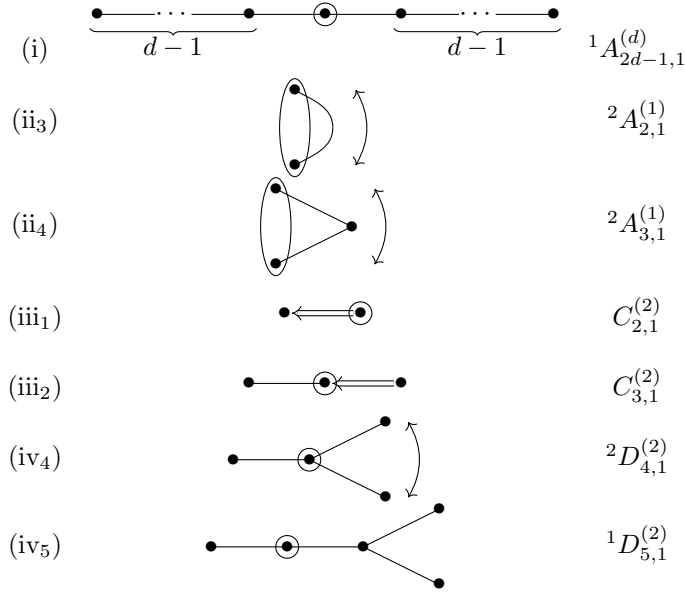
\begin{figure}[tb]
\centering
\[{\renewcommand*{\arraystretch}{2.2}
\begin{array}{ccc}
\text{(i)}&
\begin{tikzpicture}[every node/.style = {inner sep=0pt}]
\path[-] (0,0) node (ll)  {$\bullet$} -- ++ (1,0) node (lm) {$\cdots$}  -- ++ (1,0) node (lr) {$\bullet$}
  -- ++ (1,0) node (m) {$\bullet$}
  -- ++ (1,0) node (rl) {$\bullet$} -- ++ (1,0) node (rm) {$\cdots$}  -- ++ (1,0) node (rr) {$\bullet$};
\draw (ll.center) -- (lm);
\draw (lm) -- (lr.center) -- (m.center) -- (rl.center) -- (rm);
\draw (rm) -- (rr.center);
\draw [decorate,decoration={brace,raise=1mm}](lr.south east) -- (ll.south west) node[midway,below=2mm] {$d - 1$};
\draw [decorate,decoration={brace,raise=1mm}](rr.south east) -- (rl.south west) node[midway,below=2mm] {$d - 1$};
\draw (m) circle[radius=1.5mm];
\end{tikzpicture}&
{}^{1}A_{2d-1,1}^{(d)}
\\
\text{(ii$_3$)}&
\begin{tikzpicture}[every node/.style = {inner sep=0pt},baseline=(a.base)]
\path (0,0) node {$\bullet$}
(0,0.5) node (a) {}
(0,1) node {$\bullet$};
\draw (a) ellipse[x radius=2mm,y radius=6.5mm];
\draw [-] (0,0) to [out=30,in=-90] (0.5,0.5);
\draw [-] (0,1) to [out=-30,in=90] (0.5,0.5);
\draw [<->] (0.8,0) to [out=60,in=-60] (0.8,1);
\end{tikzpicture}&
{}^2 A_{2,1}^{(1)}
\\
\text{(ii$_4$)}&
\begin{tikzpicture}[every node/.style = {inner sep=0pt},baseline=(a2.base)]
\path (0,0) node (a1) {$\bullet$} -- ++ (0,0.5) node (c) {} -- ++ (0,0.5) node (a3) {$\bullet$} -- ++ (1,-0.5) node (a2) {$\bullet$};
\draw[-] (a1.center) -- (a2.center) -- (a3.center);
\draw (c) ellipse[x radius=2mm,y radius=6.5mm];
\draw [<->] (1.3,0) to [out=60,in=-60] (1.3,1);
\end{tikzpicture}&
{}^2 A_{3,1}^{(1)}
\\
\text{(iii$_1$)}&
\begin{tikzpicture}[every node/.style = {inner sep=0pt}]
\path (0,0) node (a1) {$\bullet$} -- ++ (-1,0) node (a2) {$\bullet$};
\draw[{-Implies[]},double equal sign distance] (a1.west) -- (a2);
\draw (a1) circle[radius=1.5mm];
\end{tikzpicture}&
C_{2,1}^{(2)}
\\
\text{(iii$_2$)}&
\begin{tikzpicture}[every node/.style = {inner sep=0pt}]
\path (0,0) node (a1) {$\bullet$} -- ++ (-1,0) node (a2) {$\bullet$} -- ++ (-1,0) node (a3) {$\bullet$};
\draw[{-Implies[]},double equal sign distance] (a1.west) -- (a2);
\draw[-] (a2.center) -- (a3.center);
\draw (a2) circle[radius=1.5mm];
\end{tikzpicture}&
C_{3,1}^{(2)}
\\
\text{(iv$_4$)}&
\begin{tikzpicture}[every node/.style = {inner sep=0pt},baseline=(a1.base)]
\path (0,0) node (a1) {$\bullet$} -- ++(1,0) node (a2) {$\bullet$} -- ++(1,0.5) node (a3) {$\bullet$} -- ++ (0,-1) node (a4) {$\bullet$};
\path[-,draw] (a1.center) -- (a2.center) -- (a3.center);
\path[-,draw] (a2.center) -- (a4.center);
\draw (a2) circle[radius=1.5mm];
\draw [<->] (2.3,-0.5) to [out=60,in=-60] (2.3,0.5);
\end{tikzpicture}&
{}^2 D_{4,1}^{(2)}
\\
\text{(iv$_5$)}&
\begin{tikzpicture}[every node/.style = {inner sep=0pt},baseline=(a1.base)]
\path (0,0) node (a1) {$\bullet$} -- ++(1,0) node (a2) {$\bullet$} -- ++(1,0) node (a3) {$\bullet$} -- ++(1,0.5) node (a4) {$\bullet$} -- ++ (0,-1) node (a5) {$\bullet$};
\path[-,draw] (a1.center) -- (a2.center) -- (a3.center) -- (a4.center);
\path[-,draw] (a3.center) -- (a5.center);
\draw (a2) circle[radius=1.5mm];
\end{tikzpicture}&
{}^1 D_{5,1}^{(2)}
\end{array}}\]
\caption{Tits indices for the groups in Proposition~\ref{prop:rank-one-groups}, with the case number on the left and the Tits label on the right.}
\label{fig:tits}
\end{figure}

Here, $\tSU(h)$ denotes the simply-connected cover of $\SU(h)$ (in case the Dynkin diagram has type D).

We will write case (ii$_r$), (iii$_r$), (iv$_r$) with $r \in \{1,2,3,4,5\}$ to refer to the subcase of Proposition~\ref{prop:rank-one-groups},
where the (skew-)hermitian form $h$ has rank $r$.
See Figure~\ref{fig:tits} for the Tits index in each case.
Note that case (ii$_4$) can alternatively be described by $\tSU(h)$ with $h$ skew-hermitian form over $D$ of rank 2 and Witt index 1, and
case (iii$_1$) can alternatively be described by $\Spin(h)$ with $h$ a quadratic form over $F$ of rank 5 and Witt index 1.
Note also that the isomorphism class of $\alggrp{G}$ is determined by the isomorphism class of $\{D',(D')^{\mathrm{op}}\}$ in case (i), 
by $E/F$ and $r$ in case (ii$_r$), by $r$ in case (iii$_r$), by $E/F$ in case (iv$_4$), and is unique in case (iv$_5$).

Let $\alggrp{G}'$ denote the quasisplit inner form of $\alggrp{G}$. Let $\alggrp{L}'$ be the Levi subgroup of $\alggrp{G}'$ that corresponds to the
minimal Levi $\alggrp{Z}$ of $\alggrp{G}$.

\begin{prop}\label{prop:rank-one-levis}
  Keep the above notation. According to the cases of Proposition~\ref{prop:rank-one-groups} we have:
  \begin{enumerate}
  \item[(i)] $\SL_d \times \SL_d \subset \alggrp{L}' \subset \GL_d \times \GL_d$.
  \item[(ii$_3$)] $\alggrp{L}' \cong \Res_{E/F} \GL_1$.
  \item[(ii$_4$)] $\SL_2 \subset \alggrp{L}' \subset \Res_{E/F} \GL_1 \times \U_2$.
  \item[(iii$_1$)] $\alggrp{L}' \cong \GL_2$.
  \item[(iii$_2$)] $\alggrp{L}' \cong \GL_2 \times \SL_2$.
  \item[(iv$_4$)] $\SL_2 \times \Res_{E/F} \SL_2 \subset \alggrp{L}' \subset \GL_2 \times \Res_{E/F} \GL_2$.
  \item[(iv$_5$)] $\SL_2 \times \SL_4 \subset \alggrp{L}' \subset \GL_1 \times \GL_2 \times \GL_4$.
  \end{enumerate}
\end{prop}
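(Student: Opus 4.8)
The plan is to go through the seven families in Proposition~\ref{prop:rank-one-groups} one by one, in each case identifying the quasisplit inner form $\alggrp{G}'$, writing down its Tits index (which is pinned down by the absolute Dynkin type together with the Witt index and the relevant Galois action), and then reading off the standard (minimal) Levi subgroup $\alggrp{L}'$ corresponding to the single circled node in Figure~\ref{fig:tits}. Concretely: in case (i) the quasisplit inner form of $\SL_2(D')$ with $[D':F]=d^2$ is $\SL_{2d}$ (type ${}^1A_{2d-1}$), and deleting the central node leaves the derived group $\SL_d\times\SL_d$ inside $\GL_d\times\GL_d$; in cases (ii$_3$), (ii$_4$) the quasisplit inner forms are $\SU_3$ and the quasisplit $\SU_4$ associated to $E/F$, whose minimal Levi subgroups are $\Res_{E/F}\GL_1$ and (the derived group) $\SL_2$ sitting inside $\Res_{E/F}\GL_1\times\U_2$; in cases (iii$_1$), (iii$_2$) the quasisplit inner forms are $\Sp_4$ and $\Sp_6$ with Siegel-type minimal Levi $\GL_2$, respectively $\GL_2\times\SL_2$; in cases (iv$_4$), (iv$_5$) the quasisplit inner forms are the quasisplit $\Spin_8$ attached to $E/F$ and the split $\Spin_{10}$, with minimal Levi subgroups containing $\SL_2\times\Res_{E/F}\SL_2$ and $\SL_2\times\SL_4$ respectively.

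The first step in each case is therefore to determine $\alggrp{G}'$ precisely. For the inner forms this is standard: a central division algebra $D'$ of degree $d$ gives $\SL_2(D')$ an inner form of $\SL_{2d}$; the hermitian and skew-hermitian forms over $D$ in (iii) and (iv) give rise to groups whose quasisplit inner forms are the symplectic, respectively spin, groups of the stated rank, which one can extract from the tables in \cite[\S4]{BT2} or \cite[Table II]{MR0224710}. For the unitary groups in (ii), $\SU(h)$ is already quasisplit when $h$ has rank $3$, while for rank $4$ and Witt index $1$ the quasisplit inner form is the quasisplit $\SU_4$. Once $\alggrp{G}'$ and its based root datum (with $\Gal(\o F/F)$-action) are fixed, the minimal Levi subgroup $\alggrp{L}'$ of $\alggrp{G}'$ is the centralizer of the maximal split torus, equivalently the Levi of the minimal parabolic, and its derived group and cocenter are read off from the Tits diagram by deleting the (unique) distinguished orbit of nodes. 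The containments $(\alggrp{L}')^{\der}\subset\alggrp{L}'\subset$ (the displayed overgroup) are then just the standard description of a Levi: its derived group is the semisimple part generated by the remaining root subgroups, and $\alggrp{L}'$ itself is an extension of a (possibly induced) torus by that derived group.

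The main subtlety — and the only place where a little care is needed — is the correct bookkeeping of the Galois action and the resulting restriction-of-scalars and unitary factors in the Levi: e.g.\ distinguishing in case (iv$_4$) whether the two $\SL_2$ factors are both over $F$ or one is over $E$, and in case (ii$_4$) recognizing that the non-split node contributes a $\U_2$ factor rather than an $\SL_2\times\SL_2$ or $\Res_{E/F}\GL_1$ factor. This is resolved by matching the circled node(s) in Figure~\ref{fig:tits} against the ${}^1$ versus ${}^2$ superscripts in the Tits label and using the standard description of Levi subgroups of quasisplit classical groups (general linear, symplectic, unitary). I expect this orbit-of-nodes bookkeeping to be the only real obstacle; everything else is a direct inspection of the seven Tits indices. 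No auxiliary results beyond the cited classification are required.
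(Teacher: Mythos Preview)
Your approach is sound for cases (i)--(iii$_2$), and indeed the paper dismisses these as ``clear''. The gap is in cases (iv$_4$) and (iv$_5$): your claim that the upper containments are ``just the standard description of a Levi'' and that ``no auxiliary results beyond the cited classification are required'' underestimates the work.

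Reading the Tits index gives you the derived group of $\alggrp{L}'$ (the remaining nodes yield $\SL_2 \times \Res_{E/F}\SL_2$ and $\SL_2 \times \SL_4$), and the Galois bookkeeping you mention correctly sorts out which factors are over $F$ and which over $E$. But this does not by itself produce an embedding of $\alggrp{L}'$ into a specified product of general linear groups. Levi subgroups of $\Spin_{2n}$ naturally take the form (product of $\GL$'s) $\times$ (smaller $\GSpin$), and the upper bound $\GL_1 \times \GL_2 \times \GL_4$ in case (iv$_5$) rests on the accidental isomorphism $\GSpin_6 \cong \{(g_1,g_2) \in \GL_1 \times \GL_4 : g_1^2 = \det g_2\}$. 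The paper makes this explicit: it passes to $\GSpin_{10}$ (resp.\ $\GSpin^*_8$), cites \cite[Theorem~2.7]{MR1913914} for the Levi decomposition $\GL_2 \times \GSpin_6$ (resp.\ $\GL_2 \times \GSpin^*_4$), invokes \cite[Proposition~2.1]{MR3719299} for the $\GSpin_6$ description, and proves $\GSpin^*_4 \cong \{g \in \Res_{E/F}\GL_2 : \det g \in \GL_1\}$ by a direct root-datum computation (Lemma~\ref{lm:gspin4}). You could alternatively compute the Levi directly inside the root datum of $\Spin_{10}$ and $\Spin^*_8$, but either way this is a genuine computation about spin groups, not a Tits-index lookup.
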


Here, $\U_2$ denotes the quasisplit unitary group defined by any (skew-)hermitian form of rank 2 over $E$.

\begin{proof}
  This is clear in all but the last two cases. 
  Case (iv$_5$): we consider the split group $\GSpin_{10}$ whose derived subgroup is $\alggrp{G}'$ and show that the
  corresponding Levi is contained between $\SL_2 \times \SL_4$ and $\GL_1 \times \GL_2 \times \GL_4$, which implies
  the claim by intersecting with the derived subgroup.
  The Levi subgroup in question is $\GL_2 \times \GSpin_6$ \cite[Theorem 2.7]{MR1913914} and $\GSpin_6 \cong \{ (g_1,g_2) \in \GL_1 \times \GL_4 : g_1^2 = \det(g_2)\}$
  by \cite[Proposition 2.1]{MR3719299}, so we are done.

  Case (iv$_4$): as in the previous case we consider instead the quasisplit group $\GSpin^*_8$ defined by the quadratic extension $E/F$.
  (We note that the group of automorphisms of the root datum of $\GSpin_{2n}$ is $\Z/2 \times \Z/2$, unlike some claims in the literature,
  but we use the unique non-trivial automorphism that is trivial on the center.
  Explicitly, it interchanges $e_0$ with $e_0+e_n$, $e_n$ with $-e_n$, and fixes $e_1,\dots,e_{n-1}$ in the description of \cite[Proposition 2.4]{MR1913914}.)
  It follows immediately from the description of the root datum that the Levi subgroup in question is $\GL_2 \times \GSpin^*_4$.
  We conclude by Lemma~\ref{lm:gspin4} below.
\end{proof}

\begin{lem}\label{lm:u2}
  We have
  \begin{equation*}
    \U_2 \cong \left(\frac{\GL_2 \times \Res_{E/F} \GL_1}{\GL_1}\right)^{N=1},
  \end{equation*}
  where $\GL_1$ is embedded antidiagonally and $N$ is the product of the determinant on $\GL_2$ and the norm map $\Res_{E/F} \GL_1 \to \GL_1$.
\end{lem}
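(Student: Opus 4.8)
The plan is to identify the quasisplit unitary group $\U_2$ (attached to any rank-2 (skew-)hermitian form over $E$, since all such forms of rank 2 give isomorphic quasisplit groups) with the indicated $F$-group by constructing mutually inverse maps, or more efficiently by comparing based root data over $\overline F$ together with the Galois action. First I would set up the right-hand side concretely: write $\alggrp H := (\GL_2 \times \Res_{E/F}\GL_1)/\GL_1$ with $\GL_1$ embedded antidiagonally $t \mapsto (t^{-1} I_2, t)$ (with $t$ viewed in $\Res_{E/F}\GL_1$ via $\GL_1 \subset \Res_{E/F}\GL_1$), and let $N \colon \alggrp H \to \GL_1$ be induced by $(g,z) \mapsto \det(g)\,\Nm_{E/F}(z)$, which is well defined on the quotient because $\det(t^{-1}I_2)\Nm_{E/F}(t) = t^{-2}t^{2} = 1$. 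Then $\alggrp H^{N=1}$ is the kernel of $N$, a connected reductive group over $F$ of dimension $4 + 2 - 1 - 1 = 4$, matching $\dim \U_2 = 4$.

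The cleanest route is to exhibit an explicit isomorphism. I would use the standard model $\U_2 = \{ g \in \Res_{E/F}\GL_2 : g\, \o{g}^{t} J = J\}$ for a suitable anti-diagonal $J$, whose $\overline F$-points become $\GL_2 \times \GL_2$ with the Galois twist swapping the two factors composed with $g \mapsto J(\o g^{t})^{-1} J^{-1}$. On the other side, $\alggrp H_{\overline F} \cong (\GL_2 \times \GL_1 \times \GL_1)/\GL_1$, so $(\alggrp H^{N=1})_{\overline F} \cong \{(g, z_1, z_2) : \det(g) z_1 z_2 = 1\}/\GL_1 \cong \GL_2 \times \GL_1$ (eliminating $z_1$), and the Galois action coming from $\Res_{E/F}$ swaps $z_1 \leftrightarrow z_2$, i.e.\ acts on $\GL_2 \times \GL_1$ (coordinates $(g,z_2)$) by $(g,z_2) \mapsto (g, \det(g)^{-1} z_2^{-1})$. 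I would then write down the map $\GL_2 \times \GL_1 \to \GL_2$ (over $\overline F$), $(g, z_2) \mapsto z_2 g$ or a similar normalization, and check it intertwines the two Galois actions after the identification $\U_2(\overline F) = \GL_2$; descent then gives the $F$-isomorphism. Concretely I expect the isomorphism to send the class of $(g,z) \in \alggrp H^{N=1}$ to the element of $\Res_{E/F}\GL_2$ given by $z\cdot g$ (with $z \in \Res_{E/F}\GL_1$ acting by scalar multiplication on $g \in \GL_2 \subset \Res_{E/F}\GL_2$), the relation $\det(g)\Nm_{E/F}(z) = 1$ being exactly the unitarity condition $\det(zg)\o{\det(zg)} = 1$ together with the fact that $g$ has entries in $F$.

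The main obstacle I anticipate is bookkeeping with the precise form $J$ and the scalar normalization so that the hermitian condition $g \o g^t = I$ (or the appropriate variant for the chosen $J$) translates exactly into membership in $\alggrp H^{N=1}$ — in particular making sure one does not land merely in the special unitary group or an isogenous group, and that the antidiagonal $\GL_1$ is quotiented out on the correct side. A secondary subtlety is verifying that the construction is insensitive to the choice of rank-2 (skew-)hermitian form, which follows because over a local field all non-degenerate (skew-)hermitian forms of rank 2 over $E/F$ with a given discriminant class are equivalent up to scaling, and scaling does not change the unitary group. Once $J$ is fixed, the verification that $z\cdot g \mapsto$ the pair $(g,z)$ is a well-defined mutually inverse morphism is a short direct computation, which I would carry out in the final write-up rather than here.
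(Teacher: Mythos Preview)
Your strategy---base-change to $E$, identify both sides with $\GL_2$ there, compute and match the Galois actions, then descend---is exactly what the paper does; the only organizational difference is that the paper first proves $\GU_2 \cong (\GL_2 \times \Res_{E/F}\GL_1)/\GL_1$ with the multiplier corresponding to $N$, and then passes to the kernel.

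Two points need tightening. First, your identification $(\alggrp H^{N=1})_{\overline F}\cong \GL_2\times\GL_1$ is off by a dimension: after eliminating $z_1$ you still have to quotient by the antidiagonal $\GL_1$, and the result is $\GL_2$ (via $(g,z_2)\mapsto z_2 g$, which is precisely that quotient map). Second, and more importantly, the sentence ``$\det(g)\Nm_{E/F}(z)=1$ being exactly the unitarity condition $\det(zg)\overline{\det(zg)}=1$'' is wrong: the latter is only $N(g,z)^2=1$, and in any case it is merely a consequence of membership in $\U_2$, not the defining condition. What actually makes $(g,z)\mapsto zg$ land in $\U_2$ is the specific choice $J=\bigl(\begin{smallmatrix}&1\\-1&\end{smallmatrix}\bigr)$: for this $J$ one has $gJg^t=\det(g)J$ for \emph{every} $g\in\GL_2$ (since $\GL_2=\GSp_2$), whence $(zg)J\overline{(zg)}^t=\Nm_{E/F}(z)\det(g)\,J$. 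This is the bookkeeping you flagged as the main obstacle, and it is exactly the $J$ the paper uses.
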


\begin{proof}
  It suffices to show that
  \begin{equation}\label{eq:gu2}
    \GU_2 \cong \frac{\GL_2 \times \Res_{E/F} \GL_1}{\GL_1}
  \end{equation}
  such that the multiplier on the left corresponds to $N$ on the right.
  It is convenient to work with the skew-hermitian form defined by the matrix $J := (\begin{smallmatrix}&1 \\ -1&\end{smallmatrix})$.
  Then over $E$ the group $\GU_2$ becomes identified with $\GL_2 \times \GL_1$, with the nontrivial element $\tau \in \Gal(E/F)$ acting by the 
  automorphism $(g,\lambda) \mapsto (\lambda \cdot J^{-1} \cdot {}^t g^{-1} \cdot J,\lambda)$.
  Similarly the right-hand side of (\ref{eq:gu2}) becomes identified with $\frac{\GL_2 \times \GL_1 \times \GL_1}{\GL_1}$ with $\GL_1$ embedded
  via $x \mapsto \overline{(x^{-1},x,x)}$ and $\tau$ acting by $\overline{(g,x,y)} \mapsto \overline{(g,y,x)}$.
  It is then easily verified that the maps $(g,\lambda) \mapsto \overline{(g,1,\lambda(\det g)^{-1})}$ and $\overline{(g,x,y)} \mapsto (gx,(\det g)xy)$
  are inverse isomorphisms that commute with the action of $\tau$.
\end{proof}

\begin{lem}\label{lm:gspin4}
  The quasisplit group $\GSpin_4^*$ defined by the quadratic extension $E/F$ is isomorphic to
  $\{g \in \Res_{E/F} \GL_2 : \det(g) \in \GL_1 \subset \Res_{E/F} \GL_1 \}$.
\end{lem}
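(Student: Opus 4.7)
The plan is to perform Galois descent starting from a description of the split group $\GSpin_4$.

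First, I would recall (or extract from \cite[Proposition 2.4]{MR1913914}) that the split group $\GSpin_4$ admits the description
\[
\GSpin_4 \;\cong\; \{(g_1,g_2) \in \GL_2 \times \GL_2 : \det g_1 = \det g_2\},
\]
where the two projections correspond to the two copies of $\GL_2$ attached to the two $A_1$-factors of the Dynkin diagram of type $D_2$. One quickly checks dimensions ($7$) and derived subgroup ($\SL_2 \times \SL_2 = \Spin_4$) agree.

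Next, I would identify the outer automorphism of this split group that permutes the two $\GL_2$-factors with the generator of $\mathrm{Out}(D_2)$ (acting on the Dynkin diagram by swap), observing that this automorphism fixes the center and preserves a pinning. Since the quasisplit form $\GSpin_4^*$ attached to $E/F$ is by definition the twist of split $\GSpin_4$ obtained by letting $\Gal(E/F)$ act through this outer automorphism, $\GSpin_4^*$ is identified via Galois descent with the $F$-form of $\{(g_1,g_2) \in \GL_2 \times \GL_2 : \det g_1 = \det g_2\}$ carrying the twisted Galois action $(g_1,g_2) \mapsto (\tau g_2,\tau g_1)$, where $\tau$ is the non-trivial element of $\Gal(E/F)$.

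Then I would verify that $\alggrp{H} := \{g \in \Res_{E/F}\GL_2 : \det(g) \in \GL_1\}$ realizes exactly this descent. Base changing to $E$ and using the standard identification $(\Res_{E/F}\GL_2)_E \cong \GL_2 \times \GL_2$, the Galois element $\tau$ acts on the right-hand side by $(g_1,g_2) \mapsto (\tau g_2,\tau g_1)$, and the condition $\det(g) \in \GL_1 \subset \Res_{E/F}\GL_1$ becomes, on the $E$-points, $\det g_1 = \det g_2$. Hence $\alggrp{H}_E$ is $\Gal(E/F)$-equivariantly isomorphic to the split $\GSpin_4$ equipped with the above twisted action, and descent yields $\alggrp{H} \cong \GSpin_4^*$.

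The main obstacle is bookkeeping with conventions: one must check that the swap-of-factors automorphism truly represents the non-trivial class in $\mathrm{Out}(\GSpin_4)$ used in the definition of $\GSpin_4^*$, that it is compatible with a pinning so that the resulting twist is quasisplit (not merely an inner form), and that the pinning can be chosen to be compatible with the $\Gal(E/F)$-action coming from Weil restriction. All of these checks are routine manipulations at the level of root data.
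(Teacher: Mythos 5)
Your proposal is correct and follows essentially the same route as the paper: both identify split $\GSpin_4$ over an algebraic closure with $\{(g_1,g_2)\in\GL_2^2 : \det g_1 = \det g_2\}$ and then match the swap-of-factors Galois action coming from $\Res_{E/F}\GL_2$ with the outer automorphism defining $\GSpin_4^*$, the paper carrying this out by an explicit root-datum computation on a maximal torus. The one check you flag as the main obstacle --- that the swap represents the correct outer class --- is indeed the crux, since the automorphism group of the root datum of $\GSpin_4$ is $\Z/2\times\Z/2$ and one must verify the swap induces the unique nontrivial automorphism that is trivial on the center.
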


\begin{proof}
  We consider a variant of the argument in \cite[Proposition 2.1]{MR3719299}.
  We work over an algebraic closure of $F$ that contains $E$ and identify $\GSpin_4$ with $\alggrp{H} := \{(g_1,g_2) \in \GL_2^2 : \det(g_1) = \det(g_2)\}$
  by identifying the root data as follows. 
  Consider the maximal torus $\alggrp{T} := \{t = ((\begin{smallmatrix}a \\ &b\end{smallmatrix}),(\begin{smallmatrix}c \\ &d\end{smallmatrix})) \in \GL_2^2 : ab=cd\}$ of $\alggrp{H}$
  and define the basis $e_i \in X^*(\alggrp{T})$ ($0 \le i \le 2$) by $e_0(t) := d$, $e_1(t) := ad^{-1} = cb^{-1}$, $e_2(t) := bd^{-1} = ca^{-1}$.
  It is straightforward to verify that this identifies the root datum of $(\alggrp{H},\alggrp{T})$ with that of $\GSpin_4$ as given in \cite[Proposition 2.4]{MR1913914}
  and that it identifies the action of $\Gal(E/F)$ on the former root datum (interchanging $(a,b)$ with $(c,d)$) with the action on the latter
  (exchanging $e_0$ with $e_0+e_2$, $e_2$ with $-e_2$ and fixing $e_1$).
  This completes the proof.
\end{proof}

\begin{cor}\label{cor:rationality, rank one}
  Suppose that the adjoint group $\alggrp{G}^{\ad}$ is almost simple of rank one over $F$.
  Let $\sigma$ be a unitary supercuspidal smooth representation of $Z$ over $\C$.
  If $\mu^G(\sigma\delta_{B}^{s})$ has a pole at $s = s_{0}\in \R$, then $s_{0}\in \Q$.
\end{cor}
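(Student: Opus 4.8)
The plan is to reduce Corollary~\ref{cor:rationality, rank one} to the case where $\alggrp{G}$ is simply connected and almost simple of rank one by standard descent arguments, and then to go through the Tits classification in Proposition~\ref{prop:rank-one-groups}, applying the rationality results of subsections~\ref{sec:rati-quasisplit} and~\ref{sec:rati-inner-forms} in each case. First I would observe that the $\mu$-function only depends on the adjoint group up to harmless modifications: using Proposition~\ref{prop:solleveld} (applied to the isogeny $\alggrp{G}^{\sc} \to \alggrp{G}^{\der} \hookrightarrow \alggrp{G}$, combined with a central twist to reduce to $\alggrp{G}^{\der}$), the poles of $\mu^G(\sigma \delta_B^s)$ as a function of $s \in \R$ agree up to nonzero constant with the poles of $\mu^{G^{\sc}}(\sigma^{\sc} \delta_{B^{\sc}}^s)$ for an appropriate unitary supercuspidal $\sigma^{\sc}$ of $Z^{\sc}$. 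Then since $\alggrp{G}^{\sc}$ is simply connected with almost simple adjoint group, we have $\alggrp{G}^{\sc} \cong \Res_{E/F}\alggrp{H}$ for an absolutely almost simple simply-connected $\alggrp{H}$ of rank one; parabolic induction from $Z$ in $\Res_{E/F}\alggrp{H}$ matches parabolic induction in $\alggrp{H}(E)$, reducing us to the seven families listed in Proposition~\ref{prop:rank-one-groups}.

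Next I would treat the cases one at a time, using Proposition~\ref{prop:rank-one-levis} to identify the minimal Levi $\alggrp{L}'$ of the quasisplit inner form, so that the hypotheses of Theorem~\ref{thm:rationality, inner-intro} (i.e.\ Theorem~\ref{thm:rationality, inner}) can be checked. In case (i), $\alggrp{G} = \SL_2(D')$, the parabolic is maximal with Levi $\alggrp{L}$ an inner form of $\alggrp{L}'$ sitting between $\SL_d \times \SL_d$ and $\GL_d \times \GL_d$, which has the required shape $(\widetilde{\alggrp{L}}^{\lowprime})^{\der} \subset \alggrp{L}' \subset \widetilde{\alggrp{L}}^{\lowprime}$ with $\widetilde{\alggrp{L}}^{\lowprime} = (\GL_d \times \GL_d)/\alggrp{H}$ for a suitable induced central subtorus, so Theorem~\ref{thm:rationality, inner} applies directly. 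Case (ii$_3$) ($\alggrp{G} = \SU_3$) is already quasisplit, so Proposition~\ref{prop:rationality-discrete-series} (noting that supercuspidal support of a supercuspidal $\sigma$ on the torus $\alggrp{L}' \cong \Res_{E/F}\GL_1$ is trivially generic) gives rationality. For the remaining cases (ii$_4$), (iii$_1$), (iii$_2$), (iv$_4$), (iv$_5$), Proposition~\ref{prop:rank-one-levis} shows $\alggrp{L}'$ is of the form $(\widetilde{\alggrp{L}}^{\lowprime})^{\der} \subset \alggrp{L}' \subset \widetilde{\alggrp{L}}^{\lowprime}$ with $\widetilde{\alggrp{L}}^{\lowprime}$ a product of $\Res_{E_i/F}\GL_{n_i}$'s modulo an induced central torus — this is immediate from the explicit descriptions (e.g.\ $\GL_2 \times \SL_2$ in case (iii$_2$), $\GL_2 \times \Res_{E/F}\GL_2$ up to the determinant condition in case (iv$_4$) via Lemma~\ref{lm:gspin4}, and likewise in case (iv$_5$)), and for the unitary factor $\U_2$ appearing in case (ii$_4$) I would invoke Lemma~\ref{lm:u2} to rewrite it in the required product-of-$\GL$'s-mod-induced-torus form. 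In each case Theorem~\ref{thm:rationality, inner} then yields $s_0 \in \Q$.

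The main obstacle I anticipate is bookkeeping rather than anything conceptually deep: in cases (iv$_4$) and (iv$_5$) one must check carefully that the Levi $\alggrp{L}$ of the \emph{given} (possibly non-quasisplit) inner form $\alggrp{G}$ is genuinely an inner form of the $\alggrp{L}'$ of the quasisplit form, and that the ambient $\widetilde{\alggrp{L}}^{\lowprime}$ — which involves the determinant constraint in $\GSpin_4^*$, resolved by Lemma~\ref{lm:gspin4} — really does have an \emph{induced} central subtorus $\alggrp{H}$, as required by Theorem~\ref{thm:rationality, inner}. One also has to confirm that in each non-quasisplit case the parabolic $\alggrp{P} = \alggrp{B}$ is maximal (which is automatic since the rank is one) and that $\sigma$, being unitary supercuspidal on $Z$, is indeed a discrete series of $L$ so that Theorem~\ref{thm:rationality, inner} is applicable. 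Finally I would assemble the seven cases into a short case-by-case proof, remarking (as in Remark~\ref{rk:rank-1-poles}) that \cite[Theorem~8.1]{MR1070599} even bounds the denominator of $s_0$, though that refinement is not needed for the statement here.
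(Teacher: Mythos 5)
Your proposal is correct and follows essentially the same route as the paper's proof: reduce to the simply-connected absolutely almost simple case via Proposition~\ref{prop:solleveld} and restriction of scalars, then combine Theorem~\ref{thm:rationality, inner} with Propositions~\ref{prop:rank-one-groups} and~\ref{prop:rank-one-levis}. The paper's proof is stated very tersely (four lines), whereas you have unpacked the case-by-case verification that each Levi $\alggrp{L}'$ in Proposition~\ref{prop:rank-one-levis} really has the shape required by Theorem~\ref{thm:rationality, inner}, including the role of Lemmas~\ref{lm:u2} and~\ref{lm:gspin4} and the induced-torus hypothesis on $\alggrp{H}$; this is precisely the bookkeeping the paper leaves implicit. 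One small inaccuracy: you mention a ``central twist to reduce to $\alggrp{G}^{\der}$,'' but Proposition~\ref{prop:solleveld} applies directly to the composite morphism $\alggrp{G}^{\sc}\to\alggrp{G}$ (which already satisfies $\varphi(\alggrp{G}_1^{\der})=\alggrp{G}^{\der}$ and $\ker\varphi\subset\alggrp{Z}_{\alggrp{G}_1}$), so no extra twist is needed; this does not affect the validity of the argument.
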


\begin{proof}
    Let $\alggrp G^{\sc}$ be the simply-connected cover of $\alggrp{G}^{\der}$.
  By Proposition~\ref{prop:solleveld} (and replacing $\sigma$ by an irreducible constituent of $\sigma|_{G^\sc}$) we may assume that $\alggrp G$ is semisimple and simply connected.
  As mentioned above, by replacing $F$ by a finite extension we may assume that $\alggrp{G}$ is moreover absolutely almost simple.
  Then the result follows by combining Theorem~\ref{thm:rationality, inner} with Propositions~\ref{prop:rank-one-groups} and \ref{prop:rank-one-levis}.
\end{proof}

\begin{remark}\label{rk:rank-1-known-cases}
  This result was previously known in case (i) by \cite{MR1040995} (cf.\ \S\ref{sec:GL(n,D)}), in case (ii$_3$) by \cite[\S7]{MR734788}, in case (ii$_4$) by \cite[\S3.1]{Konno}, and in case (iii$_1$) by \cite[\S3]{MR1749954}.
  As far as we know, it is new in cases (iii$_2$), (iv$_4$), and (iv$_5$).
\end{remark}

\begin{remark}\label{rk:rank-1-poles}
  Suppose that $\alggrp{G}^{\ad}$ is almost simple of rank one over $F$.
  In the setting of Corollary~\ref{cor:rationality, rank one} we can moreover bound the denominator of $s_0$, and hence narrow down the set of possible $s_0$ to an explicit finite set by Proposition~\ref{prop:mu-max-parab}(ii), as follows:
  \begin{itemize}
  \item in case (iii$_2$) and (iv$_4$), $s_0 \in \frac 1{20}\Z$;
  \item in case (iv$_5$), $s_0 \in \frac 1{28}\Z$.
  \end{itemize}
    (We recall that $s_0 \in \{ \pm \frac 1{2e} \mid e \mid d \}$ in case (i) \cite{MR1040995}, $s_0 \in \frac 14 \Z$ in case (ii$_3$) \cite{MR734788}, $s_0 \in \frac 16 \Z$ in case (ii$_4$) \cite{Konno}, and $s_0 \in \{ 0, \pm \frac 16, \pm \frac 12 \}$ in case (iii$_1$) \cite{MR1749954}.)
    To see this, we may again reduce to the case where $\alggrp G$ is simply connected and absolutely almost simple.
  By the proof of Theorem~\ref{thm:rationality, inner} it suffices to consider the quasisplit inner form $\alggrp G'$ with parabolic subgroup $\alggrp P' = \alggrp L'\alggrp N'$, where $\alggrp L'$ is described in Proposition \ref{prop:rank-one-levis}.
  By Remark~\ref{rk:pole-qsplit} it suffices to show that $\ang{2\rho_{P'},\gamma^\vee}_{\abs}$ is an integer dividing 10 (resp.\ 14) for $\gamma$ a root of $\alggrp A_{\alggrp Z'}$ in $\alggrp N'$.
  This is an explicit computation.
  (In fact, with more work we can show that $s_0 \in \frac 1{10}\Z$ in cases (iii$_2$) and (iv$_4$) and $s_0 \in \frac 1{14}\Z$ in case (iv$_5$) using Remark \ref{rk:generic-gln}.)
  \end{remark}

\subsection{Proof of Theorem~\ref{thm:unitary case}}
\label{subsec:proof of theorem for unitary}
Now we prove Theorem~\ref{thm:unitary case}.
We always assume Assumption~\ref{assump:on p} in this subsection.

Suppose $\alggrp{G}$ is an arbitrary connected reductive group, $\alggrp{B} = \alggrp{Z} \alggrp{U}$ a choice of minimal parabolic subgroup, $\alggrp{S}$ the maximal split subtorus of $\alggrp{Z}$.
We first reduce to the case where $F = \Q_p$, so we can apply the results of section~\ref{subsec:A criterion}.

\begin{lem}
  Theorem~\ref{thm:unitary case} holds over arbitrary $F$ if and only if it holds over $F = \Q_p$.
\end{lem}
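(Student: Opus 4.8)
The plan is to observe that the statement of Theorem~\ref{thm:unitary case} for the pair $(\alggrp G, F)$ is \emph{verbatim the same} as its statement for the pair $(\wt{\alggrp G}, \Q_p)$, where $\wt{\alggrp G} := \Res_{F/\Q_p}\alggrp G$ (and similarly $\wt{\alggrp B}$, $\wt{\alggrp P}$, $\wt{\alggrp L}$, $\wt{\alggrp P_1} := \Res_{F/\Q_p}\alggrp P_1$). Granting this, the lemma is immediate: the ``only if'' direction is the special case $F = \Q_p$, and the ``if'' direction is the application of the $\Q_p$-case to $\wt{\alggrp G}$. This is exactly the reduction already invoked in \S\ref{sec:criterion}, so the only work is to check that each ingredient of the theorem is compatible with Weil restriction.

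First I would fix the canonical identification of topological groups $\alggrp G(F) = \wt{\alggrp G}(\Q_p)$ and recall that Weil restriction carries parabolic (resp.\ Levi) subgroups to parabolic (resp.\ Levi) subgroups and identifies the relative root datum of $(\alggrp G, \alggrp S)$ with that of $(\wt{\alggrp G}, \wt{\alggrp S})$, where $\wt{\alggrp S}$ is the maximal split torus of $\wt{\alggrp G}$. Hence, under $G = \alggrp G(F) = \wt{\alggrp G}(\Q_p)$, the subgroups $B, P, L, N, U, Z, S$ match up with the corresponding subgroups of $\wt{\alggrp G}(\Q_p)$; the sets $\Delta$, $\Delta_L$ and the assignment $\alpha \mapsto \alggrp L_\alpha$ are the same in both settings; and, by the explicit description of $L'_\alpha$ in \cite[II.4]{AHHV}, so are the subgroups $L'_\alpha$ and $Z \cap L'_\alpha$. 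The absolute root system of $\wt{\alggrp G}$ is a disjoint union of $[F:\Q_p]$ copies of that of $\alggrp G$, so Assumption~\ref{assump:on p} holds for $\alggrp G$ over $F$ if and only if it holds for $\wt{\alggrp G}$ over $\Q_p$. The object $(\Ind_B^G\sigma)^\cts$ and its topological irreducibility depend only on the closed subgroup $B \subset G$ with its Levi decomposition and on the representation $\sigma$, hence are unchanged; likewise $U \cap L$, the coinvariant representation $\sigma_{U\cap L}$, and the $Z$-action on it are unchanged, so part (i) and the condition ``$\sigma|_{Z \cap L'_\alpha}$ is trivial'' in (iii) are identical. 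Finally, part (ii) of Theorem~\ref{thm:unitary case} is already phrased via $\Res_{F/\Q_p}$: it concerns $\sigma' \in \mathcal O^{\wt L}$ (Lemma~\ref{lem:cts-rep}) and a parabolic $\wt{\alggrp P_1} = \Res_{F/\Q_p}\alggrp P_1$ of $\wt{\alggrp G}$, and since $\alggrp P_1 \mapsto \Res_{F/\Q_p}\alggrp P_1$ is a bijection from the parabolic subgroups of $\alggrp G$ containing $\alggrp P$ onto those of $\wt{\alggrp G}$ containing $\wt{\alggrp P}$, the content of (ii) for $(\alggrp G, F)$ is by construction its content for $(\wt{\alggrp G}, \Q_p)$.

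The one point requiring a little care — and the closest thing to an obstacle — is matching the dominance hypotheses in (ii) and (iii), since these involve the Harish-Chandra parameter $e(\cdot)$, whose definition refers to the residue cardinality and to the normalized absolute value. Here I would use that $\mathfrak a_{L,\R}^* = X^*(\alggrp L) \otimes \R$ is canonically the same for $\alggrp L$ over $F$ and for $\wt{\alggrp L}$ over $\Q_p$ (a $\Q_p$-rational character of $\Res_{F/\Q_p}\alggrp L$ being $N_{F/\Q_p}\circ\chi$ for a unique $\chi \in X^*(\alggrp L)$), and that it carries the same relative root system, hence the same dominant cone. Because $\lvert N_{F/\Q_p}(x)\rvert_{\Q_p} = \lvert x\rvert_F$ for $x \in F^\times$, while $\lvert\cdot\rvert_F$ and $\lvert\cdot\rvert_{\Q_p}$ differ only by a positive power on $\Q_p^\times$, the two Harish-Chandra homomorphisms $H_L$ agree under this identification up to a positive scalar; consequently the parameters $e(\omega_\sigma|_{A_L})$ and $e(\omega_{\sigma_{U\cap L}}|_S)$ computed over $F$ and over $\Q_p$ differ by a positive scalar, which leaves dominance unaffected. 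With these compatibilities recorded, the two statements coincide, and the lemma follows.
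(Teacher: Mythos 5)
Your proposal is correct and follows essentially the same strategy as the paper: observe that the statement of Theorem~\ref{thm:unitary case} for $(\alggrp G, F)$ coincides with that for $(\Res_{F/\Q_p}\alggrp G, \Q_p)$ by matching up root data, Levi subgroups, and the subgroups $L'_\alpha$ under Weil restriction, and then verify that the dominance hypothesis is preserved because the Harish-Chandra parameter $e$ is affected only by a positive scalar. One statement should be corrected, though it does not create a gap: the split torus $S$ does \emph{not} match up, since $\wt S = \alggrp S_0(\Q_p) \subsetneq \alggrp S(F) = S$ whenever $F \ne \Q_p$ (where $\alggrp S = \alggrp S_0 \times_{\Q_p} F$). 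What is true, and what the paper actually uses, is the canonical identification $X^*(\alggrp S) \cong X^*(\alggrp S_0)$, not an equality of groups of rational points; your dominance argument happens to work directly at the level of $\mathfrak a^*_{L,\R}$, so the misstatement about $S$ is harmless. A further minor imprecision: under the norm identification $X^*(\alggrp L)_F \cong X^*(\Res \alggrp L)_{\Q_p}$, the restricted root systems are not literally equal but proportional (by the factor $[F:\Q_p]$ on the split-torus part); this suffices, as you note, because only the dominant cone matters. The paper instead uses the extension-of-scalars identification $X^*(\alggrp S) \cong X^*(\alggrp S_0)$ and records the precise scalar $e(\chi|_S) = [F:\Q_p]\,e(\chi)$, but this is a presentational difference only.
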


\begin{proof}
  It suffices to show that Theorem~\ref{thm:unitary case} holds for $(G, B, \sigma)$ if and only if it holds for $(\Res G, \Res B, \sigma)$,
  where $\Res := \Res_{F/\Q_p}$ denotes Weil restriction of scalars.
  First note that $\Res \alggrp B$ is a minimal parabolic subgroup of $\Res \alggrp G$ with Levi subgroup $\Res \alggrp Z$.
  (This can be verified by extending scalars to an algebraic closure.)
  The split torus $\alggrp S$ is obtained by base change from a unique split torus $\alggrp S_0$ over $\Q_p$, hence the adjunction gives a homomorphism $\alggrp S_0 \to \Res \alggrp G$,
  which identifies $\alggrp S_0$ with a maximal split torus of $\Res \alggrp G$ (see, e.g.\ \cite[\S4.2]{MR4120166}) that is moreover contained in the minimal Levi subgroup $\Res \alggrp Z$.
  Noting that $\Lie(\Res \alggrp G)$ equals $\Lie \alggrp G$ considered as Lie algebra over $\Q_p$, we see that $\Phi(\alggrp G,\alggrp S)$ is identified with $\Phi(\Res \alggrp G, \alggrp{S}_{0})$   via the map sending $\mu : \alggrp S \to \mathbb G_m$ to the restriction of $\Res \mu$ to $\alggrp S_0$ 
  (noting that $\Res \mu$ factors through the maximal split torus $\mathbb G_m \into \Res \mathbb G_m$);
    equivalently, roots are identified in $X^*(\alggrp S_0) \cong X^*(\alggrp S)$ (under extension of scalars).
  The same is true for positive and simple roots.
  By comparing roots, we see that we have an equality of Levi subgroups $\Res \alggrp L_\alpha = \alggrp L_{\Res\alpha|_{\alggrp S_0}}$ for any $\alpha \in \Phi(\alggrp G, \alggrp S)$.
  Therefore, the condition that $\sigma|_{Z\cap L_\alpha'}$ is trivial for some simple root $\sigma$ is the same for both $G$ and $\Res G$.
    On the other hand, using that the coroot $\alpha^\vee$ is uniquely determined by having image in $\alggrp L_\alpha^\der$ and satisfying $\ang{\alpha,\alpha^\vee} = 2$,
  we deduce that coroots are identified in $X_*(\alggrp S_0) \cong X_*(\alggrp S)$.
  We claim that $e(\chi|_S) = [F:\Q_p]e(\chi)$ in $X^*(\alggrp S)_\R \cong X^*(\alggrp S_0)_\R$ for any $\chi : S \to C^\times$ (and likewise for characters of $A_L$).
  It suffices to check this when $\lvert\chi\rvert_C = \lvert\nu\rvert_F$ for some $\nu \in X^*(\alggrp S)$, in which case it follows from $\lvert\cdot\rvert_F = \lvert\cdot\rvert_{\Q_p}^{[F:\Q_p]}$ on $\Q_p^\times$.
  Therefore, $e(\chi|_S)$ is dominant if and only if $e(\chi|_{S_0})$ is dominant.
\end{proof}

From now on we assume that $F = \Q_p$.

Let $\rho := \frac 12\sum_{\alpha \in \Phi_{\mathrm{red}}^+} n_\alpha \alpha \in X^*(\alggrp{S})$, where $n_\alpha := \dim U_{(\alpha)} + \dim U_{(2\alpha)} \in \Z_{>0}$.
Note that $n_\alpha$ only depends on the Weyl group orbit of $\alpha$.

\begin{lem}\label{lm:rho}
  For any $\alpha \in \Phi_{\mathrm{red}}^+$ we have $\ang{\rho,\alpha^{\vee}} \ge n_\alpha$ with equality if and only if $\alpha$ is simple.
\end{lem}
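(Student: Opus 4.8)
\textbf{Proof plan for Lemma~\ref{lm:rho}.}
The statement is a positivity/comparison fact about the relative root system $\Phi_{\mathrm{red}}$ of $(\alggrp G,\alggrp S)$, so the plan is to reduce it to a rank-one computation together with a standard sign argument for the remaining roots. First I would fix $\alpha\in\Phi_{\mathrm{red}}^+$ and split the sum $2\rho=\sum_{\beta\in\Phi_{\mathrm{red}}^+}n_\beta\beta$ according to the action of the reflection $s_\alpha$. Recall that $s_\alpha$ permutes $\Phi_{\mathrm{red}}^+\setminus\{\alpha\}$ unless $\beta$ is a positive multiple of $\alpha$, but since we are summing over \emph{reduced} roots and $\alpha$ is reduced, the only element of $\Phi_{\mathrm{red}}^+$ on the line $\R\alpha$ is $\alpha$ itself (even when $2\alpha\in\Phi$, that element is not reduced). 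Hence $s_\alpha$ permutes $\Phi_{\mathrm{red}}^+\setminus\{\alpha\}$, and since $n_\beta$ only depends on the Weyl group orbit of $\beta$ we get that $\sum_{\beta\ne\alpha}n_\beta\beta$ is $s_\alpha$-invariant, so pairing with $\alpha^\vee$ kills it. Therefore $\ang{2\rho,\alpha^\vee}=n_\alpha\ang{\alpha,\alpha^\vee}=2n_\alpha$, giving $\ang{\rho,\alpha^\vee}=n_\alpha$ exactly.

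Wait — this gives equality for \emph{all} reduced positive $\alpha$, which contradicts the ``if and only if simple'' clause, so the subtlety must be that $s_\alpha$ does \emph{not} send $\Phi_{\mathrm{red}}^+$ to itself when $\alpha$ is non-simple: a non-simple positive root $\beta$ can be sent by $s_\alpha$ to a negative root. The correct approach is the classical one: write $2\rho=\sum_{\beta\in\Phi^+_{\mathrm{red}}}n_\beta\beta$ and, for $\alpha$ simple, use that $s_\alpha$ permutes $\Phi^+_{\mathrm{red}}\setminus\{\alpha\}$ (this \emph{is} true only for simple $\alpha$, by the standard fact about simple reflections in the relative root system, see e.g.\ the arguments in \cite[II.4]{AHHV} or Bourbaki), whence $\ang{\rho,\alpha^\vee}=n_\alpha$ for $\alpha$ simple. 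For non-simple $\alpha\in\Phi^+_{\mathrm{red}}$, I would instead estimate directly: choose a simple root $\gamma$ with $\ang{\alpha,\gamma^\vee}>0$ (exists since $\alpha$ is a non-negative, nonzero combination of simple roots and the root system is not positive-semidefinite-degenerate in the relevant directions), and argue that the isolated contribution from those reduced positive roots $\beta$ whose $\alpha^\vee$-pairing is negative is outweighed; more cleanly, one shows $\rho-\frac12 n_\alpha\alpha$ pairs non-negatively with $\alpha^\vee$ by grouping the remaining roots into $s_\alpha$-orbits, with each orbit $\{\beta,s_\alpha\beta\}$ contributing $n_\beta(\ang{\beta,\alpha^\vee}+\ang{s_\alpha\beta,\alpha^\vee})=0$ when both are positive, and contributing strictly positively when $\alpha$ is non-simple because then at least one orbit-pair lies entirely on the positive side (the root $\gamma$ above, with $s_\alpha\gamma$ still positive since $\gamma\ne\alpha$ is simple). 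This yields $\ang{\rho,\alpha^\vee}>n_\alpha$ in the non-simple case.

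Concretely, the key steps in order are: (1) record $n_\beta=n_{s_\alpha\beta}$ for all $\beta$, which is immediate from $W$-invariance of Weyl orbits; (2) for $\alpha$ simple, invoke the standard fact that $s_\alpha$ stabilizes $\Phi^+_{\mathrm{red}}\setminus\{\alpha\}$ to conclude $2\ang{\rho,\alpha^\vee}=n_\alpha\ang{\alpha,\alpha^\vee}+\sum_{\beta\ne\alpha}n_\beta\ang{\beta,\alpha^\vee}$ and that the last sum vanishes by pairing $s_\alpha$-orbits, giving equality; (3) for $\alpha$ non-simple, pick $\gamma\in\Delta$ with $s_\alpha\gamma>0$ and $\ang{\gamma,\alpha^\vee}>0$, observe every other reduced positive $\beta$ either lies in an $s_\alpha$-orbit-pair of positive roots (contributing $0$) or is $\alpha$ itself, and conclude $\ang{\rho,\alpha^\vee}\ge n_\alpha+n_\gamma\ang{\gamma,\alpha^\vee}>n_\alpha$; (4) combine. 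The main obstacle is step (3): I need to be careful that the bookkeeping of $s_\alpha$-orbits on $\Phi^+_{\mathrm{red}}$ is correct when $2\alpha$ or $2\beta$ is a (non-reduced) root — this is exactly where the restriction to reduced roots and the orbit-dependence of $n_\beta$ matter — and to verify the existence of the auxiliary simple root $\gamma$ with $\ang{\gamma,\alpha^\vee}>0$ and $s_\alpha\gamma\in\Phi^+$, which uses that a non-simple positive root is not fixed by $s_\alpha$ and has positive pairing with some simple coroot. Everything else is routine linear algebra in $\mathfrak a_{Z,\R}^*$ with the fixed $W$-invariant inner product introduced before Lemma~\ref{lm:coroots}.
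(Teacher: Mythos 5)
Your argument for the simple case is correct and is exactly the paper's: use $s_\alpha(\Phi^+_{\mathrm{red}}) = (\Phi^+_{\mathrm{red}}\setminus\{\alpha\}) \cup \{-\alpha\}$, apply $s_\alpha$ to $\rho$, and compare. It is the non-simple case that is broken, and the error is not just cosmetic.

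You assert that for $\alpha$ non-simple there is a simple root $\gamma$ with $\ang{\gamma,\alpha^\vee}>0$ \emph{and} $s_\alpha\gamma\in\Phi^+$, and that such a $\gamma$ supplies the strict excess. Both halves of this are wrong. First, such a $\gamma$ need not exist: in the relative root system of type $A_2$ with simple roots $\gamma_1,\gamma_2$ and $\alpha=\gamma_1+\gamma_2$, one has $\ang{\gamma_i,\alpha^\vee}=1>0$ for both $i$, yet $s_\alpha\gamma_1=-\gamma_2<0$ and $s_\alpha\gamma_2=-\gamma_1<0$. (Your parenthetical ``$s_\alpha\gamma$ still positive since $\gamma\ne\alpha$ is simple'' is the exact confusion: $s_\alpha$ stabilizes $\Phi^+\setminus\{\alpha\}$ when $\alpha$ is simple, not when the root being reflected is simple.) Second, even if such a $\gamma$ existed, a pair $\{\gamma,s_\alpha\gamma\}$ with \emph{both} entries positive contributes $n_\gamma(\ang{\gamma,\alpha^\vee}+\ang{s_\alpha\gamma,\alpha^\vee})=0$, exactly as you say two lines earlier — so it cannot be the source of a strict inequality. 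The sign logic is inverted: the strictly positive contributions come precisely from those $\beta\in\Phi^+_{\mathrm{red}}\setminus\{\alpha\}$ with $s_\alpha\beta<0$ (for which $\ang{\beta,\alpha^\vee}>0$, and whose partner $-s_\alpha\beta$ is another positive root with the same pairing); the existence of such $\beta$ is equivalent to $\alpha$ being non-simple. If you want to salvage an argument along these lines, that is the statement you would need to prove — and it still requires care with the non-reduced case, which you flag but do not resolve.

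The paper takes a different and cleaner route for the general case, which avoids any $s_\alpha$-bookkeeping on $\Phi^+_{\mathrm{red}}\setminus\{\alpha\}$ for non-simple $\alpha$. It expands $\alpha^\vee=\sum_{i=1}^r\alpha_i^\vee$ as a sum of simple coroots (with repetition; this is the height of $\alpha^\vee$ in the dual root system of $\Phi_{\mathrm{red}}$), applies the already-proved equality $\ang{\rho,\alpha_i^\vee}=n_{\alpha_i}$ to each summand, and then shows that at least one $\alpha_i$ in the support lies in the same Weyl orbit as $\alpha$, so that $n_{\alpha_i}=n_\alpha$. Since every $n_{\alpha_i}\ge 1$, this immediately gives $\ang{\rho,\alpha^\vee}=\sum_i n_{\alpha_i}\ge n_\alpha$ with equality forcing $r=1$, i.e.\ $\alpha$ simple. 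The orbit statement is obtained by dualizing and using the standard fact that a positive root $\alpha=\sum_{\gamma\in\Delta}c_\gamma\gamma$ is, inside the sub-root system generated by its support $I=\{\gamma:c_\gamma\ne 0\}$, conjugate under $W(\Phi_I)\subset W$ to some $\gamma\in I$.
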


\begin{proof}
  If $\alpha$ is simple, then $s_\alpha(\Phi_{\mathrm{red}}^+) = (\Phi_{\mathrm{red}}^+ \setminus \{\alpha\}) \cup \{-\alpha\}$ and the proof follows from
  \begin{equation*}
    -\ang{\rho,\alpha^{\vee}} = \ang{\rho,s_\alpha(\alpha^{\vee})} = \ang{s_\alpha(\rho),\alpha^{\vee}} = \frac 12\sum_{\beta \in s_\alpha(\Phi_{\mathrm{red}}^+)} n_\beta \ang{\beta,\alpha^{\vee}}
    = \ang{\rho,\alpha^{\vee}} - 2n_\alpha.
  \end{equation*}

  For general $\alpha$, we will work in the reduced root system $\Phi_{\mathrm{red}}$, so we can write $\alpha^{\vee} = \sum_{i=1}^r \alpha_i^{\vee}$ with $\alpha_i$ simple
  and therefore $\ang{\rho,\alpha^{\vee}} = \sum_{i=1}^r n_{\alpha_i}$ by the previous paragraph. 
  To show $\ang{\rho,\alpha^{\vee}} \ge n_\alpha$ it thus suffices to show that at least one $\alpha_i$ is in the same Weyl group orbit as $\alpha$, as this implies that $n_{\alpha_{i}} = n_{\alpha}$.
  Note also that once we have shown this, equality can only hold if $r = 1$, i.e.\ $\alpha$ simple.

  By dualizing it suffices to show the following. Suppose that $\Phi$ is a reduced root system.
  Then for any $\alpha \in \Phi^+$ with $\alpha = \sum_{\gamma \in \Delta} c_\gamma \gamma$ there is a $\gamma \in \Delta$ with $c_\gamma > 0$ and $\gamma$ is in the same Weyl group orbit as $\alpha$.
  Let $I := \{ \gamma \in \Delta : c_\gamma \ne 0 \}$ and let $\Phi_I := \Z I \cap \Phi$ denote the sub-root system generated by $I$.
  Since $\alpha\in \Phi_I$ lies in the Weyl group orbit of a simple root $\gamma$ of $\Phi_I^+$ (i.e.\ $\gamma \in I$), we are done.
\end{proof}

Recall that $\alggrp{T}'$ denotes a maximal split torus of (the split group) $\alggrp{G}_C$ containing $\alggrp{S}_C$
and recall that $\widetilde{\Phi}$ denotes the roots of $(\alggrp{G}_C,\alggrp{T}')$.
Let $\widetilde{\Delta} \subset \widetilde{\Phi}$ denote a choice of simple roots so that
$\Delta \subset \widetilde{\Delta}|_{\alggrp{S}} \subset \Delta \cup \{0\}$.
Let $\widetilde{\Delta}_0 := \{ \widetilde{\gamma} \in \widetilde{\Delta} : \widetilde{\gamma}|_{\alggrp{S}} = 1\}$. 
For $\alpha \in \Delta$ let $\mathcal{O}(\alpha) := \{ \widetilde{\gamma} \in \widetilde{\Delta} : \widetilde{\gamma}|_{\alggrp{S}} = \alpha \}$.

\begin{lem}\label{lm:rel-coroots}
  Suppose $\alpha \in \Delta$ and write $\alpha^{\vee} = \sum_{\widetilde{\gamma} \in \widetilde{\Delta}} c_{\widetilde{\gamma}} \widetilde{\gamma}^{\vee}$ with $c_{\widetilde{\gamma}} \in \Z$.
  Then $c_{\widetilde{\gamma}} \ge 0$ and $c_{\widetilde{\gamma}} \ne 0$ if and only if $\widetilde{\gamma}$ lies in the smallest union of connected components of $\widetilde{\Delta}_0 \sqcup \mathcal{O}(\alpha)$ that contain $\mathcal{O}(\alpha)$.
\end{lem}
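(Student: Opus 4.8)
The plan is to relate the relative coroot $\alpha^\vee$, for $\alpha\in\Delta$, to the absolute coroots $\widetilde\gamma^\vee$ by using the restriction map $X^*(\alggrp T')\onto X^*(\alggrp S)$ and its dual inclusion $X_*(\alggrp S)\into X_*(\alggrp T')$. First I would recall that $\alggrp S$ is the maximal split subtorus of $\alggrp Z$ and that the root system $\Phi$ of $(\alggrp G,\alggrp S)$ is obtained from $\widetilde\Phi$ by restriction to $\alggrp S$, together with the standard fact (from the theory of relative root systems, e.g.\ Borel--Tits) that the coroot $\alpha^\vee$ is characterized by lying in $X_*(\alggrp A_{\alggrp L_\alpha})\cap\mathfrak a_{Z,\Q}^{L_\alpha}$ (equivalently $\alggrp L_\alpha^{\der,\sc}$) with $\ang{\alpha,\alpha^\vee}=2$; the key point is that $\alggrp L_\alpha^{\der}$ over $C$ is a product of absolutely simple groups whose Dynkin diagram is exactly the union of connected components of $\widetilde\Delta_0\sqcup\mathcal O(\alpha)$ containing $\mathcal O(\alpha)$, since $\alggrp L_\alpha$ is the centralizer of (the connected component of) $\ker\alpha\subset\alggrp A_{\alggrp Z}$.

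Next I would make the positivity statement $c_{\widetilde\gamma}\ge 0$ precise. Writing $\alpha^\vee=\sum_{\widetilde\gamma\in\widetilde\Delta}c_{\widetilde\gamma}\widetilde\gamma^\vee$ in $X_*(\alggrp T')_\Q$, I would argue that since $\alpha$ is a \emph{simple} relative root, its coroot is dominant with respect to $\widetilde\Delta$ in the sense that $\ang{\widetilde\gamma,\alpha^\vee}\le 0$ for every $\widetilde\gamma\in\widetilde\Delta$ except those restricting to $\alpha$. Indeed, $\alpha^\vee$ pairs to $2>0$ with roots restricting to $\alpha$ (if $2\alpha\notin\Phi$) and non-positively with all other simple relative roots $\beta\ne\alpha$, hence non-positively with every $\widetilde\gamma$ restricting to such a $\beta$; and $\ang{\widetilde\gamma,\alpha^\vee}=0$ whenever $\widetilde\gamma\in\widetilde\Delta_0$ restricts trivially, because $\alpha^\vee$ lies in the cocharacter lattice of the split subtorus and $\widetilde\gamma$ is trivial there. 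A vector in a root lattice that pairs non-positively with all simple roots outside a subset $I$ and whose support (as a linear combination of simple coroots) is connected through $I$ must have all coefficients non-negative — this is the standard positivity argument, which I would phrase via the fundamental coweights of the sub-root system generated by $\widetilde\Delta_0\sqcup\mathcal O(\alpha)$, exactly mirroring the argument already used in the proof of Lemma~\ref{lem:dominant-proj}(i) and Lemma~\ref{lm:rho}.

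Then I would pin down the support: $c_{\widetilde\gamma}\ne 0$ iff $\widetilde\gamma$ lies in the smallest union of connected components of $\widetilde\Delta_0\sqcup\mathcal O(\alpha)$ containing $\mathcal O(\alpha)$. The inclusion ``$\Longrightarrow$'': $\alpha^\vee\in X_*(\alggrp A_{\alggrp L_\alpha})\cap X_*(\alggrp L_\alpha^{\der})$ means $\alpha^\vee$ is supported on the simple coroots of $\alggrp L_\alpha^{\der}$, whose diagram is precisely that union of components (the $\widetilde\Delta_0$-part because $\alggrp L_\alpha\supset\alggrp L\supset\alggrp Z$ contains the whole anisotropic kernel direction, and the $\mathcal O(\alpha)$-part because $\ker\alpha$ does not kill those root subgroups). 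The inclusion ``$\Longleftarrow$'', i.e.\ that every such $\widetilde\gamma$ actually occurs with nonzero coefficient, is exactly the connectivity argument: if some coefficient vanished, one could detach a sub-union of components of $\widetilde\Delta_0\sqcup\mathcal O(\alpha)$ still meeting $\mathcal O(\alpha)$ from the support, contradicting either the characterization of $\alpha^\vee$ inside $\alggrp L_\alpha^{\der,\sc}$ (whose diagram is connected through $\mathcal O(\alpha)$ only via those components) or minimality; more directly, one checks that $\alpha^\vee$ generates, up to positive scalar, the unique antidominant coweight of $\alggrp L_\alpha^{\der,\sc}/(\text{factors trivial on }\alpha)$, which has full support.

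The main obstacle I anticipate is bookkeeping the precise shape of $\alggrp L_\alpha$ and its derived group over $C$: one must verify cleanly that $\alggrp L_\alpha^{\der}\times_F C$ has Dynkin diagram equal to the asserted union of components of $\widetilde\Delta_0\sqcup\mathcal O(\alpha)$ — in particular that no components of $\widetilde\Delta_0$ outside that union sneak in and that all of $\mathcal O(\alpha)$ together with the $\widetilde\Delta_0$-components linking it are included. This is the kind of statement that is ``well known'' from Tits' work on relative root systems but needs to be invoked or checked carefully; once it is in hand, the positivity and full-support claims are formal consequences of standard facts about antidominant coweights in a connected root system, which I would only sketch rather than recompute.
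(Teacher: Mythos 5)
Your overall route — pass to $L_\alpha$, identify $\alpha^\vee$ via pairings against $\widetilde\Delta$, and conclude by positivity and connectivity of fundamental coweights — is the same as the paper's, and the core identity $\alpha^\vee = 2\sum_{\widetilde\beta\in\mathcal O(\alpha)}\varpi^\vee_{\widetilde\beta}$ (in the coweight lattice of the simply-connected cover of $L_\alpha^\der$) is indeed the engine of the proof. However there are two genuine errors that leave your ``only if'' direction without a proof as written.

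First, the structural claim that ``$\alggrp L_\alpha^{\der}$ over $C$ is a product of absolutely simple groups whose Dynkin diagram is exactly the union of connected components of $\widetilde\Delta_0\sqcup\mathcal O(\alpha)$ containing $\mathcal O(\alpha)$'' is false. The Dynkin diagram of $\alggrp L_\alpha^\der\times_F C$ is \emph{all} of $\widetilde\Delta_0\sqcup\mathcal O(\alpha)$: $\alggrp L_\alpha$ contains $\alggrp Z$, whose derived subgroup is anisotropic with diagram some subset of $\widetilde\Delta_0$, including possibly components disjoint from $\mathcal O(\alpha)$. Consequently, knowing that $\alpha^\vee$ lies in the coroot lattice of $\alggrp L_\alpha^{\der}$ only bounds the support of $\alpha^\vee$ by $\widetilde\Delta_0\sqcup\mathcal O(\alpha)$, not by the claimed smaller union, so your $\Longrightarrow$ step is not established. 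The paper closes this gap by passing to the simply-connected cover and then splitting off the (almost simple, isotropic) factor $\alggrp G^\is$: since $\alggrp S\subset\alggrp G^\is$, the split cocharacter $\alpha^\vee$ is supported on the Dynkin diagram of $\alggrp G^\is$, which is precisely the union of components meeting $\mathcal O(\alpha)$. Alternatively, one can skip the factorization altogether and read both directions off the identity $\alpha^\vee = 2\sum_{\widetilde\beta\in\mathcal O(\alpha)}\varpi^\vee_{\widetilde\beta}$, which is essentially how the paper finishes; you invoke this (calling it the ``antidominant coweight,'' though it is dominant) but only for the $\Longleftarrow$ direction.

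Second, the positivity principle you state — ``a vector in a root lattice that pairs non-positively with all simple roots outside a subset $I$ and whose support is connected through $I$ must have all coefficients non-negative'' — is simply false. For example, $v=-\widetilde\gamma_1^\vee+\widetilde\gamma_2^\vee$ with $\widetilde\gamma_1,\widetilde\gamma_2$ adjacent and $I=\{\widetilde\gamma_2\}$ satisfies your hypotheses but has a negative coefficient. What you actually have is stronger and simpler: $\ang{\widetilde\gamma,\alpha^\vee}$ equals $2$ on $\mathcal O(\alpha)$ and is $0$ (not merely $\le 0$) on $\widetilde\Delta_0$ — the latter because $\alpha^\vee\in X_*(\alggrp S)$ and $\widetilde\Delta_0$ restricts trivially to $\alggrp S$ — so $\alpha^\vee$ is \emph{dominant} and is a nonnegative combination of fundamental coweights. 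Relatedly, the characterization ``$\alpha^\vee\in X_*(\alggrp A_{\alggrp L_\alpha})\cap\mathfrak a_{Z,\Q}^{L_\alpha}$'' cannot be right since those two spaces intersect in $\{0\}$; the paper's characterization is $\alpha^\vee\in\mathfrak a_{Z,\Q}^{L_\alpha}$ with $\ang{\alpha,\alpha^\vee}=2$.
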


Note that $\alpha^{\vee}$ can be expressed in the above form because we can pass to the simply-connected cover of the derived subgroup of $\alggrp{G}$ (in which case $X_*(\alggrp{T}') = \Z\widetilde{\Delta}^\vee$).

\begin{proof}
  By passing to the Levi subgroup $L_{\alpha}$ we may assume that $\alggrp{G}$ has semisimple rank 1 (i.e.\ $\widetilde{\Delta} = \widetilde{\Delta}_0 \sqcup \mathcal{O}(\alpha)$).
  By comparing with the simply-connected cover of the derived subgroup we may assume that $\alggrp{G}$ is simply connected.
  Factoring $\alggrp{G} = \alggrp{G}^\is \times \alggrp{G}^{\an}$ with $\alggrp{G}^\is$ almost simple and isotropic and $\alggrp{G}^{\an}$ anisotropic,
  we may assume that $\alggrp{G}$ is almost simple, and we need to show that $c_{\widetilde{\gamma}} \ne 0$ for all $\widetilde{\gamma} \in \widetilde{\Delta}$.

  Note that the $c_{\widetilde{\gamma}}$ are determined by the formulas
  \begin{equation*}
    \ang{\widetilde{\beta},\alpha^{\vee}} = \sum _{\widetilde{\gamma} \in \widetilde{\Delta}} c_{\widetilde{\gamma}} \ang{\widetilde{\beta},\widetilde{\gamma}^{\vee}}
    = 
    \begin{cases}
      2 & \text{if $\widetilde{\beta} \in \mathcal{O}(\alpha)$,} \\
      0 & \text{otherwise.}
    \end{cases}
  \end{equation*}
  In other words, $\alpha^{\vee} = 2\sum_{\widetilde{\beta} \in \mathcal{O}(\alpha)} \varpi^{\vee}_{\widetilde{\beta}}$, where $\varpi^{\vee}_{\widetilde{\beta}}$ denotes the fundamental coweight
  corresponding to $\widetilde{\beta}$.

  Dually, it suffices to show that if we express a fundamental weight $\varpi_{\widetilde{\beta}} = \sum_{\widetilde{\gamma} \in \widetilde{\Delta}} d_{\widetilde{\gamma}} \widetilde{\gamma}$ in terms of simple roots,
  then $d_{\widetilde{\gamma}} \ge 0$ for all $\widetilde{\gamma}$ and $d_{\widetilde{\gamma}} > 0$ if $\widetilde{\gamma}$ lies in the connected component of $\widetilde{\Delta}$ that contains $\widetilde{\beta}$.
  The first claim is true since fundamental weights form acute or right angles with each other. 
    If the second claim is false, we can pick (adjacent) $\widetilde{\gamma}_1, \widetilde{\gamma}_2 \in \widetilde{\Delta}$ such that $d_{\widetilde{\gamma}_1} = 0$, $d_{\widetilde{\gamma}_2} > 0$ and $\ang{\widetilde{\gamma}_2,\widetilde{\gamma}_1^{\vee}} < 0$.
  Then $\ang{\varpi_{\widetilde{\beta}},\widetilde{\gamma}_1^{\vee}} = \sum_{\widetilde{\gamma} \in \widetilde{\Delta}} d_{\widetilde{\gamma}} \ang{\widetilde{\gamma},\widetilde{\gamma}_1^{\vee}} < 0$, contradiction.
\end{proof}

Recall that $L'_{\alpha}$ is the group generated by $U \cap L_{\alpha}$ and $\overline{U} \cap L_{\alpha}$.

\begin{lem}\label{lm:trivial-alg-rep-parab}
  Suppose that $V$ is an irreducible algebraic representation of $G$ over $C$, so that $V^U$ is an irreducible algebraic representation of $Z$. 
  Let $\omega_{V^U} \colon S \to C^{\times}$ denote the central character of $V^U$ and let $\alggrp P = \alggrp L\alggrp N$ be a standard parabolic subgroup.
  Then the following are equivalent:
  \begin{enumerate}
  \item $V^N$ is trivial on $L'$;
  \item $V^U$ is trivial on $Z \cap L'$;
  \item $\omega_{V^U} \circ \alpha^{\vee} = 1$ for all $\alpha \in \Delta_L$.
  \end{enumerate}
\end{lem}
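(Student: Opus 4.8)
The plan is to establish the cycle of implications (i)$\Rightarrow$(ii)$\Rightarrow$(iii)$\Rightarrow$(i). The implication (i)$\Rightarrow$(ii) is immediate: since $Z \cap L' \subset L'$ (where $L' = L'_{\alggrp G}$ in the notation of the lemma, generated by $U \cap L$ and $\overline U \cap L$) and $V^U \subset V^N$ as subspaces of $V$, triviality of the $L$-action on $V^N$ restricts to triviality of the $Z$-action on $V^U$. For (ii)$\Rightarrow$(iii), I would argue that if $Z$ acts on $V^U$ trivially through $Z \cap L'$, then $\omega_{V^U}$ kills $Z \cap L'$; since each coroot $\alpha^\vee$ for $\alpha \in \Delta_L$ factors through $\alggrp L^{\der} \cap \alggrp Z$ and in fact lands in the subgroup $Z \cap L'$ (using that $\alggrp S_\alpha := \im(\alpha^\vee)$ is a split torus contained in the isotropic almost simple rank-one subgroup $\alggrp L_\alpha^{\der}$ of $\alggrp L$, which is generated by $U \cap L_\alpha$ and $\overline U \cap L_\alpha$ by Kneser--Tits, hence $\alggrp S_\alpha(F) \subset L' \cap Z$), we get $\omega_{V^U} \circ \alpha^\vee = 1$.

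The main work is (iii)$\Rightarrow$(i). Here I would proceed as follows. First, $V^N$ is an irreducible algebraic representation of $\alggrp L_C$ by highest weight theory (it is the irreducible $\alggrp L_C$-representation whose highest weight is the lowest weight of $V$, restricted to $\alggrp L$). Its restriction to the derived group $\alggrp L^{\der}_C$ is determined by a dominant weight $\mu \in X^*(\alggrp T')$ which is a non-negative combination of the fundamental weights indexed by $\widetilde\Delta_L := \{\widetilde\gamma \in \widetilde\Delta : \widetilde\gamma|_{\alggrp S} \in \Delta_L\}$ together with $\widetilde\Delta_0$. The hypothesis (iii), that $\omega_{V^U} \circ \alpha^\vee = 1$ for all $\alpha \in \Delta_L$, says that $\mu$ pairs to zero with all relative coroots $\alpha^\vee$, $\alpha \in \Delta_L$. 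Using Lemma~\ref{lm:rel-coroots} (applied inside $\alggrp L$ rather than $\alggrp G$), each such $\alpha^\vee$ is a \emph{positive} combination of the $\widetilde\gamma^\vee$ ranging over the union of connected components of $\widetilde\Delta_{L,0} \sqcup \mathcal O(\alpha)$ meeting $\mathcal O(\alpha)$. Since $\mu$ is dominant, $\langle \mu, \widetilde\gamma^\vee\rangle \ge 0$ for all $\widetilde\gamma$, so $\langle\mu,\alpha^\vee\rangle = 0$ forces $\langle\mu,\widetilde\gamma^\vee\rangle = 0$ for every $\widetilde\gamma$ occurring. As $\alpha$ ranges over $\Delta_L$, these $\widetilde\gamma$ exhaust all simple roots in the connected components of $\widetilde\Delta_L \sqcup \widetilde\Delta_{L,0}$ that have a vertex lying over some $\alpha \in \Delta_L$; in particular $\langle\mu,\widetilde\gamma^\vee\rangle = 0$ for all $\widetilde\gamma \in \widetilde\Delta_L$ (the ones lying over $\Delta_L$) and for all $\widetilde\gamma \in \widetilde\Delta_{L,0}$ connected to them. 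Highest weight theory then gives that $V^N|_{\alggrp L^{\sc}_C}$ is trivial on the simply-connected cover of every almost simple factor of $\alggrp L^{\der}$ that is isotropic (those are precisely the ones whose Dynkin sub-diagram meets $\widetilde\Delta_L$). Since $\alggrp L'$ is, by Kneser--Tits, the group of $F$-points of the product of the isotropic almost simple factors of $\alggrp L^{\der}$ (more precisely $L'$ is the image in $L$ of $(\prod \alggrp L^{\sc}_{i})(F)$ over isotropic factors $i$), the triviality of the Lie-algebra / algebraic action on these factors forces $V^N$ to be trivial on $L'$.

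The step I expect to be the main obstacle is the bookkeeping in (iii)$\Rightarrow$(i): one must carefully track which vertices of the absolute Dynkin diagram $\widetilde\Delta$ are "reached" by the relative coroots $\alpha^\vee$ for $\alpha \in \Delta_L$, and confirm that these are exactly the vertices supporting the isotropic part of $\alggrp L^{\der}$, so that vanishing of $\langle\mu,-\rangle$ there is equivalent to $V^N$ being trivial on $L'$. This requires combining Lemma~\ref{lm:rel-coroots} with the description of $L'$ via Kneser--Tits (as in \cite[II.4, II.7]{AHHV}) and checking that no isotropic factor is missed — e.g.\ using that an almost simple factor of $\alggrp L^{\der}$ is isotropic iff its diagram contains a vertex over $\Delta_L$. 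The remaining implications, and the identification of $V^N$ as an irreducible algebraic $\alggrp L_C$-module with the expected highest weight, are routine.
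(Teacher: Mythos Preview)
Your proof plan is correct and uses the same ingredients as the paper: Lemma~\ref{lm:rel-coroots}, dominance of the highest weight $\mu$, and the Kneser--Tits description of $L'$ as the image of the isotropic part of the simply-connected cover. The only difference is organizational: for (iii)$\Rightarrow$(i) the paper first replaces $(G,V)$ by $(L,V^N)$ and then passes to the isotropic factor of the simply-connected cover of $\alggrp G^{\der}$, so that every connected component of $\widetilde\Delta$ meets $\widetilde\Delta\setminus\widetilde\Delta_0$ and your bookkeeping of which Dynkin components are isotropic becomes unnecessary (one simply gets $\mu=0$ directly).
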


Here, $\Delta_L$ denotes the simple roots of $\alggrp L$.

\begin{proof}
  It is clear that (i)$\Rightarrow$(ii)$\Rightarrow$(iii).
  To show (iii)$\Rightarrow$(i), by taking $(G,V)$ to be $(L,V^{N})$, we may assume that $\alggrp L = \alggrp G$.
  As in \cite[II.4]{AHHV} let $\alggrp{G}^\sc$ denote the simply-connected cover of the derived subgroup of $\alggrp{G}$, which is a direct product of almost simple groups, precisely one of which is isotropic (of rank 1).
  Let $\alggrp{G}^\is$ denote the unique isotropic almost simple factor of $\alggrp{G}^\sc$.
  Then $G'$ is the image of $G^\is = (G^\is)'$ under the natural morphism $\iota\colon \alggrp{G}^\is\to \alggrp{G}$ (by Kneser--Tits, cf.\ \cite[II.4]{AHHV}).
  As $\iota$ is an isomorphism $\alggrp{U}^\is \to \alggrp{U}$ and $\iota^{-1}(\alggrp{Z}) = \alggrp{Z}^\is$ is a minimal Levi subgroup, we reduce to the case where $\alggrp{G}$ is a product of isotropic simply-connected groups.

  Let $\mu \in X^*(\alggrp{T}')$ denote the highest weight of $V$ (or $V^U$), so that $\omega_{V^U}$ is given by $\mu|_S$.
  In particular, $\ang{\mu,\widetilde{\gamma}^{\vee}} \ge 0$ for all $\widetilde{\gamma} \in \widetilde{\Delta}$.
  By our assumption on $\alggrp{G}$ no connected component of $\widetilde{\Delta}$ is contained in $\widetilde{\Delta}_0$, so for each $\widetilde\gamma \in \widetilde{\Delta}_0$ there exists $\widetilde\alpha \in \wt\Delta\setminus \wt\Delta_0$ such that $\wt\gamma$, $\wt\alpha$ lie in the same connected component of $\wt\Delta_0 \sqcup \mathcal O(\wt\alpha|_{\alggrp S})$.
  By Lemma~\ref{lm:rel-coroots} we deduce that $\omega_{V^U} \circ \alpha^{\vee} = 1$ for all $\alpha \in \Delta$ if and only if $\ang{\mu,\widetilde{\gamma}^{\vee}} = 0$ for all $\widetilde{\gamma} \in \widetilde{\Delta}$
  if and only if $\mu = 0$ if and only if $V$ is trivial on $G = G'$.
\end{proof}

\begin{remark}\label{rk:dominant-parab}
  Note that in the setting of Lemma~\ref{lm:trivial-alg-rep-parab} we have $\omega_{V^U} \circ \alpha^{\vee} = (\cdot)^{m_\alpha}$ for some $m_\alpha \in \Z_{\ge 0}$
  for any $\alpha \in \Delta$.
\end{remark}

\begin{proof}[Proof of Theorem~\ref{thm:unitary case}]
  We first prove that the conditions in (ii) and (iii) are necessary.
  Assume that $\underline{L}(\sigma')\in \mathcal{O}^{P_{1}}$ for a parabolic subgroup $\alggrp{P}_{1}\supsetneq \alggrp{P}$ with Levi subgroup $\alggrp{L}_{1} \supset \alggrp Z$.
  Then $\sigma_{1} := \underline{L}_{L_{1}}(\sigma')'$ is finite-dimensional by Corollary~\ref{cor:N-invts} and we have $(\sigma_{1})_{N\cap L_{1}}\simeq \sigma$.
  By Frobenius reciprocity we have $\sigma_{1}\hookrightarrow (\Ind_{P \cap L_{1}}^{L_{1}}\sigma)^{\cts}$, which is closed as $\sigma_{1}$ is finite-dimensional.
  Then by applying the functor $(\Ind_{P_{1}}^{G} \cdot )^{\cts}$, we get $(\Ind_{P_{1}}^{G}\sigma_{1})^{\cts}\hookrightarrow (\Ind_{P}^{G}\sigma)^{\cts}$, so $(\Ind_{P}^{G}\sigma)^{\cts}$ is reducible.
  The proof of the ``if'' part of (iii) is similar (see the introduction).

  In the remaining proof we may freely replace $C$ by a finite extension because this preserves the reducibility, and we will do so without further comment.

  Recall that we may assume that $F = \Q_p$.
  Recall that we have defined $e(\chi) = e_{\alggrp{S}}(\chi)\in \mathfrak{a}_{Z,\R}^{*}$ for a character $\chi\colon S\to C^{\times}$,
  and also that if $\chi\in X^{*}(\alggrp S)$ is algebraic, thought of as $\chi : S \to \Q_p^\times \subset C^\times$, then $e(\chi) = \chi$.
  It follows that $e(\lvert \chi\rvert_{\Q_p}) = -\chi$ for $\chi \in X^{*}(\alggrp S)$.
  In this proof we write $e(\cdot)$ for $e(\cdot|_{S})$.

  We may assume that $\alggrp{G}^{\der}$ is simply connected, by taking a $z$-extension.
  By Lemma~\ref{lem:sigma0-tau-decomp} we can write $\sigma \cong \sigma_0 \otimes \tau$ with $\sigma_0 \in \mathcal{O}^{L}$ and $\tau$ a smooth representation of $L$ such that $\alggrp{L}(\sigma_0') \in \mathcal{O}^{P}$ is equimaximal. 
  Let $\alggrp{Q} = \alggrp{L}_{\alggrp Q}\alggrp{N}_{\alggrp Q}$ denote the maximal parabolic subgroup for $\underline{L}(\sigma_0')$.
  By Corollary~\ref{cor:irreduciblity} we deduce that $(\Ind_{P \cap L_Q}^{L_Q} \tau)^\sm$ is reducible.
  We may relabel $L_Q$ as $G$ and assume without loss of generality that $Q = G$.
  In particular, by Lemma~\ref{lem:semisimple-lift} $\alggrp{L}(\sigma_0') \in \mathcal{O}^{G}$ is algebraic up to twist by a locally analytic character $\psi \colon G \to C^{\times}$.
  Twisting $\sigma$ by $\psi$ and using Lemma~\ref{lm:loc-an-char}, without loss of generality, $\alggrp{L}(\sigma_0') \in \mathcal{O}^{G}$ is algebraic.
  Then $(\sigma_0')^{U \cap L} \cong \alggrp{L}(\sigma_0')^U$ is an irreducible algebraic representation of $Z$.
  Let $\omega_0 \colon S \to C^{\times}$ denote its central character, which is algebraic (factoring through $\Q_p^\times \subset C^\times$) and dominant by Remark~\ref{rk:dominant-parab}.
  So if we write $\omega_0 = -\lambda \in X^*(\alggrp{S})$, then $\ang{\lambda,\alpha^{\vee}} \le 0$ for all $\alpha \in \Delta$.

  Note that $\tau$ is trivial on $L'$ by Lemma~\ref{lm:loc-an-char}.
  Let $\xi := \tau|_Z$, which is absolutely irreducible as $ZL' = L$.

  Taking coinvariants, we see that $\sigma_{U \cap L} \cong (\sigma_0)_{U \cap L} \otimes \xi$.   As $(\sigma_{0})_{U\cap L}$ is $\mathfrak{z}_{C}$-simple and $\xi$ is absolutely irreducible smooth $Z$-module, we see that $\sigma_{U\cap L}$ is absolutely irreducible by Proposition~\ref{prop:simple-O-P}.
  Let $\omega_\xi$, $\omega_{\sigma_{U\cap L}} \colon S \to C^{\times}$ denote the central characters of $\xi$, $\omega_{U\cap L}$, respectively.
  By applying $e(\cdot)$ to central characters we obtain that 
  \begin{equation}
    \lambda + e(\omega_\xi) = e(\omega_{\sigma_{U \cap L}}).\label{eq:cc}
  \end{equation}
  For $\beta \in \Delta_L$ we have $\ang{e(\omega_{\xi}),\beta^\vee} = e_{\mathbb{G}_{\mathrm{m}}}(\omega_{\xi}\circ\beta^{\vee}) = 0$ (as $\xi$ is trivial on $Z\cap L'$).
  
  We claim that $e(\omega_{\sigma_{U\cap L}}) - \lambda$ is dominant.
      Assume that $e(\omega_{\sigma}|_{A_{L}})$ is dominant (which is implied by $e(\omega_{\sigma_{U\cap L}})$ being dominant, as remarked above).
  For $\alpha\in\Delta_{L}$, as $\langle e(\omega_{\xi}),\alpha^{\vee}\rangle = 0$, we have $\langle e(\omega_{\sigma_{U\cap L}}) - \lambda,\alpha^{\vee}\rangle = 0$.
  On the other hand, let $\alpha\in\Delta\setminus\Delta_{L}$ and take $a\in \mathfrak{a}_{L,\R}$, $b\in \mathfrak{a}_{Z,\R}^{L} = \R\Delta_{L}^{\vee}$ such that $\alpha^{\vee} = a + b$.
  By Lemma~\ref{lem:dominant-proj}(i), we have $b\in\R_{\le 0}\Delta_{L}^{\vee}$.
  Therefore $\langle \lambda,a\rangle = \langle \lambda,\alpha^{\vee} - b\rangle \le 0$ since $\lambda$ is antidominant.
  Since $\langle e(\omega_{\sigma_{U\cap L}}) - \lambda,b\rangle = \langle e(\omega_{\xi}),b\rangle = 0$ and $\langle e(\omega_{\sigma_{U\cap L}}),a\rangle = \langle e(\omega_{\sigma}|_{A_{L}}),a\rangle = \langle e(\omega_{\sigma}|_{A_{L}}),\alpha^{\vee}\rangle\ge 0$, we have $\langle e(\omega_{\sigma_{U\cap L}}) - \lambda,\alpha^{\vee}\rangle = \langle e(\omega_{\sigma_{U\cap L}}),a\rangle - \langle \lambda,a\rangle\ge 0$.
  Therefore $e(\omega_{\sigma_{U\cap L}}) - \lambda$ is dominant.

  We embed $(\Ind_P^G \tau)^\sm$ as subrepresentation of $(\Ind_B^G \xi)^\sm = (\nInd_B^G \xi \delta_B^{-1/2})^\sm$.
  We can write $\delta_B(x) = \lvert(2\rho)(x)\rvert_{\Q_{p}}$ for $x \in S$ and so $\delta_{B\cap L_{\alpha}}(x) = \lvert\alpha(x)\rvert^{n_\alpha}_{\Q_{p}}$ for $\alpha \in \Phi_{\mathrm{red}}^+$.
  (Here $\rho \in X^*(\alggrp{S})$ and $n_\alpha \in \Z_{> 0}$ are as in Lemma~\ref{lm:rho}.)

    Then
  \begin{equation}\label{eq:7-parab}
    e(\omega_{\xi}\delta_B^{-1/2}) = (e(\omega_{\sigma_{U \cap L}}) - \lambda) + \rho,
  \end{equation}
  as $e(\delta_{B}) = -2\rho$.

  If $w\in N_{G}(Z)$ satisfies $\xi \delta_B^{-1/2} \cong w(\xi \delta_B^{-1/2})$, then by~\eqref{eq:7-parab} we deduce that $(e(\omega_{\sigma_{U\cap L}}) - \lambda) + \rho$   is fixed by $w$ (here we regard $w$ as an element of the Weyl group), which implies $w \in Z$, as $e(\omega_{\sigma_{U \cap L}}) - \lambda$ is dominant and $\rho$ is strictly dominant.
  Hence $\xi \delta_B^{-1/2}$ is $G$-regular, so $\mu^{G}$ has a pole at $\xi \delta_B^{-1/2}$ by Proposition~\ref{prop:G-regular}, and therefore $\mu^{L_{\alpha}}$ has a pole at $\xi \delta_B^{-1/2}$ for some $\alpha \in \Phi_{\mathrm{red}}^+$ by the product formula (Proposition~\ref{prop:product formula}).

  We will now work over $\overline{C} \cong \C$ (fixing a field isomorphism arbitrarily) and extend the absolute value $\lvert\cdot\rvert_C$ uniquely to $\overline{C}$. 
  Write $\xi \delta_B^{-1/2} \cong \xi_u \chi$ with $\xi_u$ unitary over $\C$ and $\chi$ a positive real unramified character of $Z$.
  We can write $\chi = \delta_{B \cap L_{\alpha}}^s \eta$ with $s \in \R$ and $\eta$ a positive real unramified character of $L_{\alpha}$ 
  (as $\mathfrak{a}_{Z,\R}^* = \mathfrak{a}_{L_{\alpha},\R}^* \oplus (\mathfrak{a}_{Z,\R}^{L_{\alpha}})^*$ and $\alpha$ spans $(\mathfrak{a}_{Z,\R}^{L_{\alpha}})^*$).
  By Proposition~\ref{prop:mu-max-parab} we have $\xi_u \cong s_\alpha(\xi_u)$ where $s_{\alpha}\in N_{L_\alpha}(Z)/Z$ is the non-trivial element.
  Thus $(\omega_{\xi_u} \circ \alpha^{\vee})^2 = 1$, where $\omega_{\xi_u} \colon S \to \C^{\times}$ is the central character of $\xi_u$.
  In particular, $\langle e(\omega_{\xi_u}),\alpha^{\vee}\rangle = e_{\mathbb{G}_{\mathrm{m}}}(\omega_{\xi_u}\circ\alpha^{\vee}) = 0$.
  By Proposition~\ref{prop:mu-max-parab} we know that $-1/2 \le s \le 1/2$.   By Corollary~\ref{cor:rationality, rank one} (applied to $L_\alpha$) we know that $s \in \Q$, so $\delta_{B \cap L_{\alpha}}^{s}(x) = \lvert \alpha(x)\rvert^{sn_{\alpha}}_{\Q_p}$ (taking values in $p^{\Q} \subset \R_{>0}^\times$).
  Hence $e(\delta_{B\cap L_{\alpha}}^{s}) = -sn_{\alpha}\alpha$.
  By applying $e(\cdot)$ to the central character of $\xi \delta_B^{-1/2} \cong \xi_u \chi = \xi_u \delta_{B \cap L_{\alpha}}^s \eta$, pairing with $-\alpha^\vee$, 
  and using (\ref{eq:7-parab}) we deduce
  \begin{equation*}
    \langle \lambda - \rho - e(\omega_{\sigma_{U \cap L}}),\alpha^{\vee}\rangle = 2sn_{\alpha}.
  \end{equation*}
  As $e(\omega_{\sigma_{U\cap L}}) - \lambda$ is dominant, and by Lemma~\ref{lm:rho} we get
  \begin{equation*}
    0\ge -\langle e(\omega_{\sigma_{U\cap L}}) - \lambda,\alpha^{\vee}\rangle = \langle \rho,\alpha^{\vee}\rangle + 2sn_{\alpha}\ge n_{\alpha}(2s + 1).
  \end{equation*}
  Since $s\ge -1/2$, we get $\langle e(\omega_{\sigma_{U\cap L}}) - \lambda,\alpha^{\vee}\rangle = 0$, $\langle\rho,\alpha^{\vee}\rangle = n_{\alpha}$ and $s = -1/2$.
  By Lemmas~\ref{lm:rho} and Proposition~\ref{prop:Waldspurger} we deduce that $\alpha$ is simple and $\xi_u$ is trivial on $Z \cap L_{\alpha}'$.
  Since $\delta_B = \delta_{B \cap L_{\alpha}} (\delta_{P_\alpha}|_Z)$ we get that $\xi \cong \xi_u \eta (\delta_{P_\alpha}|_Z)^{1/2}$ is trivial on $Z \cap L_{\alpha}'$.

  Case 1: $\alpha \notin \Delta_L$.
    Let $\alggrp L_{1}$ be the smallest Levi subgroup containing $\alggrp L$ and $\alggrp L_{\alpha}$, and let $\alggrp L_1 \alggrp N_1$ be the standard parabolic subgroup with Levi subgroup $\alggrp L_{1}$.
  Then since $\xi = \tau|_{Z}$ is trivial on $Z\cap L_{\alpha}'$, $\tau$ has an extension $\tau_{1}$ to $L_{1}$~\cite[II.7 Proposition]{AHHV} with $N \cap L_1$ acting trivially.
  Then $\underline L(\sigma') \cong \underline L(\sigma_0') \otimes \tau'$ in $\mathcal O^P$ by Lemma~\ref{lem:verma-tensor}.
  By Lemma~\ref{lem:cop} and as $\underline L(\sigma_0') \in \mathcal O^G$ we have $\underline L(\sigma') \cong \underline L(\sigma_0') \otimes \tau_1'$ lies in $\mathcal O^{P_1}$.
  
  Assume moreover $e(\omega_{\sigma_{U\cap L}})$ is dominant.
  Since $\lambda$ is antidominant, $\langle e(\omega_{\sigma_{U\cap L}}) - \lambda,\alpha^{\vee}\rangle = 0$ implies $\langle e(\omega_{\sigma_{U\cap L}}),\alpha^{\vee}\rangle = \langle \lambda,\alpha^{\vee}\rangle = 0$.
  By pairing~\eqref{eq:cc} with $\beta^\vee\in \Delta_{L}^{\vee}$ and using $\langle e(\omega_{\xi}),\beta^{\vee}\rangle = 0$, that $\lambda$ is antidominant and $e(\omega_{\sigma_{U \cap L}})$ is dominant we obtain that $\ang{\lambda,\beta^\vee} = 0$ for all $\beta \in \Delta_L$, and also $\ang{\lambda,\alpha^{\vee}} = 0$ by above.
  This implies that $\underline L(\sigma_0')^{N_1}$ is trivial on $L_1'$ by Lemma~\ref{lm:trivial-alg-rep-parab}.   Hence $\sigma_0' \cong \underline L(\sigma_0')^{N}$ (an $L$-stable subspace of $\underline L(\sigma_0')^{N_1}$) is trivial on $L \cap L_1'$.
  As moreover $\tau|_Z = \xi$ is trivial on $Z \cap L_\alpha'$ we deduce that $\sigma = \sigma_0 \otimes \tau$ is trivial on $Z \cap L_\alpha'$, as desired.

  Case 2: $\mu^{L_\beta}$ has a pole at $\xi \delta_B^{-1/2}$ only if $\beta \in \Delta_L$.
  From the embedding $(\nInd_P^G \tau\delta_P^{-1/2})^\sm\into(\Ind_B^G \xi\delta_B^{-1/2})^\sm$ we obtain
  (as in the proof of Lemma~\ref{lem:comparison of mu for unramified}) that $J_{P|\overline P}(\tau\delta_P^{-1/2}) J_{\overline P|P}(\tau\delta_P^{-1/2}) = J_{B|B'}(\xi\delta_B^{-1/2})J_{B'|B}(\xi\delta_B^{-1/2})$,
  where $\alggrp B'$ is the Borel subgroup of $\alggrp G$ such that $\alggrp B' \supset \overline {\alggrp N}$ and $\alggrp B' \cap \alggrp L = \alggrp B \cap \alggrp L$.
  By \cite[IV.3(4)]{MR1989693} we get $J_{B|B'}(\xi\delta_B^{-1/2})J_{B'|B}(\xi\delta_B^{-1/2})$ equals $\prod_\beta \mu^{L_\beta}(\xi\delta_B^{-1/2})^{-1}$ (up to nonzero constant), where
  the product runs through $\Phi_\red(\alggrp B,\alggrp A_{\alggrp Z}) \setminus \Phi_\red(\alggrp B \cap \alggrp L,\alggrp A_{\alggrp Z})$, which has no zeros by assumption.
  By \cite[IV.1(13)]{MR1989693} and the Weyl group regularity of $\omega_\xi\delta_B^{-1/2}$ established above we deduce that $J_{B|B'}$ and $J_{B'|B}$ are regular at $\xi\delta_B^{-1/2}$.
  Therefore, $J_{P|\overline P}$, $J_{\overline P|P}$ are regular at $\tau\delta_P^{-1/2}$ and induce an isomorphism $(\nInd_P^G \tau\delta_P^{-1/2})^\sm \cong (\nInd_{\overline P}^G \tau\delta_P^{-1/2})^\sm$.
  Analogously to Remark~\ref{rk:irred-smooth-princ-series} we deduce that $(\nInd_P^G \tau\delta_P^{-1/2})^\sm$ is irreducible.
  (The Jacquet module of $(\Ind_B^G \xi\delta_B^{-1/2})^\sm$ for $B$ is multiplicity-free, as $\xi\delta_B^{-1/2}$ is $G$-regular.
  Hence the Jacquet module of $(\nInd_P^G \tau\delta_P^{-1/2})^\sm$ for $P$ is multiplicity-free.
  It follows that $(\nInd_P^G \tau\delta_P^{-1/2})^\sm$ has an irreducible socle that is also its irreducible cosocle by the given isomorphism.)
\end{proof}

\begin{remark}\label{rk:unitary-open-cone}
We know that, by Remark~\ref{rk:rank-1-poles}, there exists an explicit $k_{\alpha} \in 2\Z_{>0}$ such that $(\nInd_{B \cap L_{\alpha}}^{L_{\alpha}}\xi_u\delta_{B\cap L_{\alpha}}^{s})^\sm$ reducible for $s \in \R$ implies that $s\in \frac{1}{k_{\alpha}}\Z$, where we use the notation of the proof.
Therefore we can strengthen Theorem~\ref{thm:unitary case}:
in part (ii) (resp.\ (iii)) it suffices to assume that $\langle e(\omega_{\sigma}|_{A_L}),\alpha^{\vee}\rangle > -2n_{\alpha}/k_{\alpha}$ for any $\alpha \in \Delta\setminus \Delta_L$ (resp.\ $\langle e(\omega_{\sigma_{U \cap L}}|_{S}),\alpha^{\vee}\rangle > -2n_{\alpha}/k_{\alpha}$ for any $\alpha \in \Delta$).
(Either of these assumptions implies that $\langle e(\omega_{\sigma_{U \cap L}}|_{S})-\lambda,\alpha^{\vee}\rangle > -2n_{\alpha}/k_{\alpha}$ for any $\alpha \in \Delta$ and hence that $e(\omega_{\sigma_{U \cap L}}|_{S})-\lambda+\rho$ is strictly dominant in the proof.
Here we assumed $F = \Q_{p}$ for the argument, but we remark that the condition $\langle e(\omega_{\sigma}|_{A_L}),\alpha^{\vee}\rangle > -2n_{\alpha}/k_{\alpha}$ is equivalent for $\alggrp{G}$ and $\Res_{F/\Q_{p}}\alggrp{G}$.)
\end{remark}

\subsection{Groups of semisimple rank one}
\label{sec:rank-1-case}

In this subsection we establish a more precise irreducibility result when $\alggrp G$ is semisimple of rank 1.
Without loss of generality (replacing $\alggrp G$ by $\Res_{F/\Q_p} \alggrp G$) we may and will assume that $F = \Q_p$.

\begin{thm}\label{thm:rank-1}
  Assume Assumption~\ref{assump:on p}.
  Suppose that $\alggrp G$ is of semisimple of rank 1 over $F = \Q_p$.
  Let $\sigma$ be a finite-dimensional absolutely irreducible continuous representation of $Z$.
  Suppose that either $\sigma$ is $\mathfrak z_C$-simple or that $U$ is abelian.
  Then $(\Ind_{B}^{G}\sigma)^{\cts}$ is absolutely reducible if and only if, after perhaps replacing $C$ with a finite extension, 
  $\underline L(\sigma') \in \mathcal O^G$.
\end{thm}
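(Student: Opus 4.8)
\textbf{Proof plan for Theorem~\ref{thm:rank-1}.}

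The plan is to reduce to the apparatus of subsection~\ref{subsec:A criterion} and Theorem~\ref{thm:equivalence-irred}, using the extra structural simplicity of semisimple rank one. First I would replace $C$ by a finite extension so that $\sigma$ becomes an absolutely simple object of $\mathcal O^Z$ (Lemma~\ref{lem:cts-rep}). Passing to a $z$-extension of $\alggrp G$ (which preserves both the reducibility of $(\Ind_B^G \sigma)^\cts$ by Proposition~\ref{prop:isogenies} and the condition $\underline L(\sigma') \in \mathcal O^G$ via inflation, cf.\ Remark~\ref{rk:cat-O-isogeny} and Corollary~\ref{cor:N-invts}), I may assume $\alggrp G^\der$ is simply connected. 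Then Lemma~\ref{lem:sigma0-tau-decomp} gives a factorization $\sigma \cong \sigma_0 \otimes \tau$ with $\sigma_0 \in \mathcal O^Z$ being $\mathfrak z_C$-simple, $\tau$ an absolutely irreducible smooth $Z$-representation, and $\underline L(\sigma_0') \in \mathcal O^B$ equimaximal with maximal parabolic $Q$. Since $\alggrp G$ has semisimple rank one, the only parabolics containing $\alggrp B$ are $\alggrp B$ and $\alggrp G$, so $Q \in \{B, G\}$. By Theorem~\ref{thm:equivalence-irred}, $(\Ind_B^G \sigma)^\cts$ is irreducible if and only if $(\Ind_{B \cap L_Q}^{L_Q} \tau)^\cts$ is irreducible.

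The case $Q = G$ is the easy one: then $(\Ind_{B\cap L_Q}^{L_Q}\tau)^\cts$ is the induction from the Levi to itself, i.e.\ just $\tau$, which is irreducible, so $(\Ind_B^G \sigma)^\cts$ is irreducible; and $\underline L(\sigma_0') \in \mathcal O^G$, so by Lemma~\ref{lem:verma-tensor} (using that $\tau$ is trivial on $L'$ by Lemma~\ref{lm:loc-an-char}, hence $\underline L(\sigma_0')\otimes \tau'$ is $\mathfrak g_C$-simple) we get $\underline L(\sigma') \cong \underline L(\sigma_0') \otimes \tau' \in \mathcal O^G$. Conversely if $Q = B$, then $(\Ind_B^G \sigma)^\cts$ is irreducible if and only if $(\Ind_B^G \tau)^\cts$ is irreducible, and I must show this is equivalent to $\underline L(\sigma') \notin \mathcal O^G$. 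For the equimaximal object $\underline L(\sigma_0')$ with maximal parabolic $B$, Lemma~\ref{lem:simple-in-O-Q}(ii) (with $\alggrp G^\der$ simply connected) tells us precisely that $\underline L(\sigma') \in \mathcal O^G$ if and only if $\sigma \cong W_\alg \otimes \psi|_Z \otimes \tau_1$ with $W_\alg$ algebraic, $\psi$ a locally analytic character of $G$, and $\tau_1$ a smooth representation of $Z$ trivial on $Z \cap G'$; equivalently, $\tau$ is trivial on $Z \cap G'$. So the theorem reduces to: when $Q = B$, the Banach principal series $(\Ind_B^G \tau)^\cts$ (for $\tau$ smooth absolutely irreducible of $Z$) is reducible if and only if $\tau|_{Z \cap G'}$ is trivial.

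This last equivalence is the heart of the matter. The "if" direction is the standard one: if $\tau$ is trivial on $Z \cap G'$ it extends to a character of $G$ (trivial on $G'$) by \cite[II.7 Proposition]{AHHV} and Lemma~\ref{lm:loc-an-char}, giving a proper closed subrepresentation $(\Ind_G^G \tau)^\cts = \tau \hookrightarrow (\Ind_B^G \tau)^\cts$. For the "only if" direction I would again invoke Corollary~\ref{cor:irreduciblity}: it suffices to show that if $\tau|_{Z \cap G'}$ is non-trivial then every irreducible subrepresentation of $(\Ind_B^G \tau)^\sm$ is dense in $(\Ind_B^G \tau)^\cts$. Here the two hypotheses enter. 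If $\sigma$ is $\mathfrak z_C$-simple then $\tau$ is $1$-dimensional, and one can appeal directly to Theorem~\ref{thm:unitary case} or, more generally, argue via the $\mu$-function machinery of subsection~\ref{sec:reduc-points-parab} together with Corollary~\ref{cor:rationality, rank one}: $(\nInd_B^G \tau\delta_B^{-1/2})^\sm$ has an irreducible socle which by Casselman's criterion (as in the proof of Proposition~\ref{prop:Waldspurger}) is non-square-integrable precisely when the inducing character is suitably dominant, and Lemma~\ref{lem:supported on big cell} then gives density provided the socle contains a function supported on the big cell — which one gets from the geometric lemma as in Proposition~\ref{prop:whittaker criterion}. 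If instead $\alggrp U$ is abelian, the relevant rank-one group is forced (it is of type $A_1$ up to restriction of scalars, or a form with abelian unipotent radical), and I can reduce to $\SL_2$ or $\SU_3$-type situations where the smooth reducibility points for principal series are completely known; the condition $\tau|_{Z\cap G'}$ non-trivial rules out exactly the reducible locus, and one concludes irreducibility of the smooth principal series, hence of the Banach one by Corollary~\ref{cor:irreduciblity}. The main obstacle I anticipate is handling the general (not $1$-dimensional, $\alggrp U$ abelian) case uniformly: one must track how the smooth representation $\tau$ interacts with the concrete structure of the finitely many rank-one groups with abelian unipotent radical and verify that the density argument of Lemma~\ref{lem:supported on big cell} applies to the socle in each case — this is where the classification of rank-one groups (Proposition~\ref{prop:rank-one-groups}) and the explicit reducibility points are indispensable.
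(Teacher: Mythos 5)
There is a labeling error that makes your argument self-contradictory as written: when $Q = G$ one has $L_Q = G$, so $(\Ind_{B\cap L_Q}^{L_Q}\tau)^\cts = (\Ind_B^G\tau)^\cts$, not $\tau$; the case where that induction collapses to $\tau$ is $Q = B$ (since then $L_Q = Z$). So the ``easy'' case, in which one gets irreducibility for free, is $Q = B$, not $Q = G$ — consistent with Corollary~\ref{cor:criterion-easy-cases}(ii)$\Rightarrow$(iii). In the text labeled ``$Q=G$'' you then simultaneously claim $(\Ind_B^G\sigma)^\cts$ is irreducible \emph{and} $\underline L(\sigma')\in\mathcal O^G$, which contradicts the theorem itself. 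Moreover, even with corrected labels, the step ``$\underline L(\sigma_0')\in\mathcal O^G$, so by Lemma~\ref{lem:verma-tensor} $\underline L(\sigma')\cong\underline L(\sigma_0')\otimes\tau'\in\mathcal O^G$'' is a gap: Lemma~\ref{lem:verma-tensor} gives the isomorphism in $\mathcal O^B$, but placing $\tau'$ (hence the tensor product) in $\mathcal O^G$ requires precisely that $\tau$ extend to a smooth representation of $G$ trivial on $U$, equivalently $\tau|_{Z\cap G'}=1$ — which is exactly what you still need to prove. By Lemma~\ref{lem:simple-in-O-Q}(i), the substantive claim in the $Q=G$ case is: $(\Ind_B^G\tau)^\cts$ is reducible if and only if $\tau$ is trivial on $Z\cap G'$; the ``if'' direction is the standard one, and the content of the theorem is the ``only if.''

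For that hard direction, the paper's proof is quite different from your proposed genericity/$\mu$-function route, and it avoids the case-by-case obstacle you correctly anticipate. Having reduced (via Theorem~\ref{thm:main2} and Lemma~\ref{lem:reduction to smooth}) to the existence of an irreducible $\pi\subset(\Ind_B^G\tau)^\sm$ that is \emph{not} dense in $(\Ind_B^G\tau)^\cts$, the paper picks any $0\ne f\in\pi$ and argues directly: if $f(1)=0$, then smoothness plus the Bruhat decomposition puts $\supp(f)$ in the big cell; if $\dim_C\tau=1$ (the $\mathfrak z_C$-simple case) $f$ already has one-dimensional image, while if $U$ is abelian one replaces $f$ by a $U_0$-eigenvector $f'\in\langle U_0\cdot f\rangle$ for a compact open $U_0$, again with one-dimensional image — either way Lemma~\ref{lem:supported on big cell} forces $\pi$ to be dense, a contradiction. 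Therefore $f(1)\ne 0$, and comparing $zf$ with $\omega_\tau(z)f$ at the identity and applying Lemma~\ref{lem:supported on big cell} again shows $f$ is an $S$-eigenvector; by smoothness $f$ is then $U$- and $\overline U$-fixed, hence fixed by $G'=\langle U,\overline U\rangle$, so $0\ne f(1)\in\tau^{Z\cap G'}$ and (by normality of $Z\cap G'$ in $Z$ and irreducibility of $\tau$) $\tau|_{Z\cap G'}$ is trivial. This is entirely elementary, requires no classification of rank-one groups, and uses the hypotheses on $\sigma$ and $U$ only at the one-dimensional-image step of Lemma~\ref{lem:supported on big cell}. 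Your proposed Whittaker/rationality route would instead require showing all irreducible subrepresentations of $(\Ind_B^G\tau)^\sm$ are generic (or otherwise dense) whenever $\tau|_{Z\cap G'}\ne 1$, which amounts to determining the socle structure across the rank-one families in Proposition~\ref{prop:rank-one-groups}; this is not carried out and is noticeably heavier than the paper's argument.
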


We remark that $\sigma \in \mathcal O^Z$, after perhaps replacing $C$ by a finite extension, by Lemma~\ref{lem:cts-rep}.
Also note that the condition $\underline L(\sigma') \in \mathcal O^G$ is made more explicit in Lemma \ref{lem:simple-in-O-Q}.

\begin{proof}
  Let $\omega_{\sigma}$ denote the central character of $\sigma$.
  Suppose that $(\Ind_{B}^{G}\sigma)^{\cts}$ is reducible.
  Assume first that $\alggrp G^\der$ is simply connected.
  Then, after perhaps replacing $C$ by a finite extension, we can write $\sigma = \sigma_0 \otimes \tau$ as in \S\ref{subsec:A criterion}.
  In particular, $\sigma_0$ is $\mathfrak z_C$-simple and $\underline L(\sigma_0')$ is equimaximal with maximal parabolic $Q$.
  Then $Q \ne B$ by Corollary~\ref{cor:criterion-easy-cases}, i.e.\ $Q = G$ and $\underline L(\sigma_0') \in \mathcal O^G$.
  By Lemma~\ref{lem:reduction to smooth} there exists an irreducible subrepresentation $\pi$ of $(\Ind_{B}^{G}\tau)^{\sm}$ that is not dense in $(\Ind_{B}^{G}\tau)^{\cts}$.
  Pick any nonzero element $f \in \pi$.
  If $f(1) = 0$ and $\sigma$ is $\mathfrak z_C$-simple, then by smoothness of $f$ and the Bruhat decomposition we get a contradiction from Lemma~\ref{lem:supported on big cell} (as $\dim_C \tau = 1$).
  If $f(1) = 0$ and $U$ is abelian, then pick a compact open subgroup $U_0$ of $U$ such that $\supp(f) \subset B \backslash Bw_0 U_0$ and any $U_0$-eigenvector $f' \in \ang{U_0 \cdot f}$ (which exists as $U_0$ is abelian, after perhaps replacing $C$ by a finite extension).
  Then $f'$ takes values in a 1-dimensional subspace of $\tau$ and we get a contradiction from Lemma~\ref{lem:supported on big cell}.
  So $f(1) \ne 0$ and then for any $z \in S$ we see that $zf - \omega_\sigma(z) f \in \pi$ vanishes at 1, so again by Lemma~\ref{lem:supported on big cell} we deduce that $zf = \omega_\sigma(z) f$ for all $z \in S$.
  By smoothness of $f$ we deduce that $f$ is fixed by $U$ and $\overline U$, hence by $G'$.
  Therefore, for $z \in Z \cap G'$ we have $zf(1) = f(z) = f(1)$, i.e.\ $f(1) \in \tau^{Z \cap G'}$.
  As $Z \cap G'$ is normal in $Z$ and $f(1) \ne 0$ we get that $Z\cap G'$ acts trivially on $\tau$.
  Therefore, $\tau$ extends to a smooth representation of $G$, so $\underline L(\sigma') \cong \underline L(\sigma_0') \otimes \tau' \in \mathcal O^G$.

  For general $\alggrp G$, as in the proof of Corollary~\ref{cor:socle-orlik-strauch}
  we take a $z$-extension $1 \to \alggrp T \to \wt {\alggrp G} \to \alggrp G \to 1$, where ${\wt{\alggrp{G}}}^{\lowprime[\der]}$ is simply connected and $1 \to T \to \wt G \to G \to 1$ on $F$-points.
  By pullback to $\wt G$ we obtain $\wt B = \wt Z \wt U$, and by inflation we obtain $\wt \sigma \in \mathcal O^{\wt Z}$.
  As $(\Ind_{\wt B}^{\wt G}\wt\sigma)^{\cts}$ is reducible, $\underline L(\wt\sigma') \in \mathcal O^{\wt G}$ by above.
  As $\underline L(\wt\sigma') \in \mathcal O^{\wt B}$ arises by inflation from $\underline L(\sigma') \in \mathcal O^{B}$ (in particular $T$ acts trivially on $\underline L(\wt\sigma')$), we deduce that $\underline L(\sigma') \in \mathcal O^G$.

  Conversely, suppose that $\underline L(\sigma') \in \mathcal O^G$ (so finite-dimensional).
  Then we obtain continuous maps as follows:
  \begin{align*}
    \underline L(\sigma')' = \mathcal F_G^G(\underline L(\sigma'),1) &\into \mathcal F_G^G(\underline L(\sigma'),(\Ind_B^G 1)^\sm) = \mathcal F_B^G(\underline L(\sigma'),1)\\
    & \into \mathcal F_B^G(\underline M(\sigma'),1) \cong (\Ind_{B}^{G}\sigma)^{\an} \into (\Ind_{B}^{G}\sigma)^{\cts},
  \end{align*}
  which proves the reducibility.
\end{proof}

\begin{remark}
  If $\alggrp G$ is one of the groups $\SL_2(D)$, quasisplit $\SU_3$, or the rank 1 inner form of $\Sp_4$ (see \S\ref{sec:rank-one-groups}), then the assumption that $\sigma$ is $\mathfrak z_C$-simple or that $U$ is abelian is satisfied: in the first and third cases $U$ is abelian, and in the second case $Z$ is abelian (so $\dim_C \tau = 1$, i.e.\ $\sigma$ is $\mathfrak z_C$-simple).
\end{remark}

In the following corollary we allow arbitrary $F/\Q_p$.

\begin{cor}\label{cor:rank-1-split}
  Assume Assumption~\ref{assump:on p}.
  Suppose that $\alggrp G$ is split of semisimple rank 1 over $F$, with unique simple root $\alpha$.
  Let $\chi\colon Z \to C^{\times}$ be a continuous character.
  Then $(\Ind_{B}^{G}\chi)^{\cts}$ is absolutely reducible if and only if $\chi \circ \alpha^\vee$ is non-positive algebraic.
\end{cor}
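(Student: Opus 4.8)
The plan is to reduce to the case $\alggrp{G} = \SL_{2}$ over $F$ and then invoke Theorem~\ref{thm:rank-1}. First I would apply Proposition~\ref{prop:isogenies} (and Remark~\ref{rk:isogenies}, valid since $\dim_{C}\chi = 1$) to the morphism $\varphi \colon \alggrp{G}^{\sc} \to \alggrp{G}$, where $\alggrp{G}^{\sc}$ is the simply-connected cover of $\alggrp{G}^{\der}$; since $\alggrp{G}$ is split of semisimple rank one, $\alggrp{G}^{\sc} \cong \SL_{2}$ over $F$. This reduces the absolute (ir)reducibility of $(\Ind_{B}^{G}\chi)^{\cts}$ to that of $(\Ind_{B^{\sc}}^{G^{\sc}}\chi^{\sc})^{\cts}$, where $\chi^{\sc} := \chi \circ \varphi|_{Z^{\sc}}$ and $Z^{\sc}$ is the diagonal torus of $\SL_{2}$. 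As $\varphi$ is compatible with coroots, the coroot $\alpha^{\vee}$ of $\alggrp{G}$ lifts along $\varphi$ to the coroot $(\alpha^{\sc})^{\vee}$ of $\SL_{2}$, so $\chi^{\sc} \circ (\alpha^{\sc})^{\vee} = \chi \circ \alpha^{\vee}$; hence it suffices to treat $\alggrp{G} = \SL_{2}$, in which case $\alggrp{Z} \cong \mathbb{G}_{m}$ via $\alpha^{\vee}$, so that $\chi$ may be regarded as a character of $F^{\times}$ with $\chi \circ \alpha^{\vee} = \chi$ under this identification.

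For $\alggrp{G} = \SL_{2}$ the unipotent radical $\alggrp{U}$ is abelian, so Theorem~\ref{thm:rank-1} applies (its statement covers $\SL_{2}$ over an arbitrary $F$ via the reduction to $F = \Q_{p}$ at the start of \S\ref{sec:rank-1-case}, since $\Res_{F/\Q_{p}}$ preserves abelianness of $\alggrp{U}$ and the property of having relative semisimple rank one), and gives: $(\Ind_{B}^{G}\chi)^{\cts}$ is absolutely reducible if and only if, after perhaps enlarging $C$, $\underline{L}(\chi') \in \mathcal{O}^{G}$. It remains to show that the latter condition is equivalent to $\chi$ (hence $\chi \circ \alpha^{\vee}$) being non-positive algebraic, a property insensitive to finite extension of $C$. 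Now $\underline{L}(\chi')$ is $\mathfrak{g}_{C}$-simple, being a Jordan--H\"older factor of $\underline{M}(\chi')$ in $\mathcal{O}^{B}$ (Lemma~\ref{lem:subobject-verma}), so if it lies in $\mathcal{O}^{G}$ it is finite-dimensional and, by Lemma~\ref{lem:semisimple-lift}\ref{lem:semisimple-lift-3}, of the form $M_{\alg} \otimes \psi$ with $M_{\alg}$ algebraic and $\psi$ a locally analytic character of $G$. But $G = \SL_{2}(F)$ coincides with its subgroup $G'$ generated by unipotents, so $\psi$ is trivial by Lemma~\ref{lm:loc-an-char}; thus $\underline{L}(\chi')$ is a finite-dimensional irreducible algebraic representation, and taking $U$-invariants (Lemma~\ref{lm:N-invts}) identifies $\chi' \cong \underline{L}(\chi')^{U}$ with its highest weight, which is therefore dominant. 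Conversely, if $\chi'$ is dominant algebraic then $\underline{L}(\chi') = L(\chi')$ is finite-dimensional algebraic and lies in $\mathcal{O}^{G}$. (Alternatively one may quote Lemma~\ref{lem:simple-in-O-Q}(ii) with $Q = G$, again using that the smooth and locally analytic characters occurring there are trivial on $\SL_{2}(F)$.) Finally, tracking the identification $\alggrp{Z} \cong \mathbb{G}_{m}$ and the decomposition of the weight lattice of $\Res_{F/\Q_{p}}\SL_{2}$ over $C$ along $\Hom(F,C)$, the condition ``$\chi'$ dominant algebraic'' translates into ``$\chi \circ \alpha^{\vee}(t) = \prod_{\kappa \colon F \to C}\kappa(t)^{a_{\kappa}}$ with all $a_{\kappa} \le 0$'', i.e.\ $\chi \circ \alpha^{\vee}$ is non-positive algebraic, the sign flip coming from $\chi' = \chi^{-1}$.

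The reductions in the first paragraph and the application of Theorem~\ref{thm:rank-1} are routine; the step requiring genuine care is the last one, matching the category-$\mathcal{O}$ conventions (dominance versus antidominance, and the occurrence of $\chi'$ rather than $\chi$) together with the effect of $\Res_{F/\Q_{p}}$ on the weight lattice against the explicit shape $\prod_{\kappa}\kappa(t)^{a_{\kappa}}$, $a_{\kappa} \in \Z_{\le 0}$, of a non-positive algebraic character. It is also worth recording that the ``if'' direction is visible concretely from the proof of Theorem~\ref{thm:rank-1}: when $\underline{L}(\chi') \in \mathcal{O}^{G}$, the chain $\mathcal{F}_{G}^{G}(\underline{L}(\chi'),1) \into \mathcal{F}_{B}^{G}(\underline{L}(\chi'),1) \into \mathcal{F}_{B}^{G}(\underline{M}(\chi'),1) \cong (\Ind_{B}^{G}\chi)^{\an} \into (\Ind_{B}^{G}\chi)^{\cts}$ realizes a finite-dimensional, hence proper nonzero closed, subrepresentation.
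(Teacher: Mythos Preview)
Your proposal is correct and follows essentially the same approach as the paper: reduce to $\SL_{2}$ via Proposition~\ref{prop:isogenies}/Remark~\ref{rk:isogenies}, apply Theorem~\ref{thm:rank-1}, and then translate the condition $\underline{L}(\chi') \in \mathcal{O}^{G}$ into $\chi \circ \alpha^{\vee}$ being non-positive algebraic. The only cosmetic differences are that the paper first takes a $z$-extension before reducing to $\alggrp{G}^{\der}$ (whereas you go directly to $\alggrp{G}^{\sc}$), and that the paper invokes Lemma~\ref{lem:semisimple-lift}\ref{lem:semisimple-lift-1} (using that $\Res_{F/\Q_{p}}\SL_{2}$ is semisimple simply connected with all simple factors isotropic, so $\mathcal{O}^{G} = \mathcal{O}^{G}_{\alg}$) rather than your route through Lemma~\ref{lem:semisimple-lift}\ref{lem:semisimple-lift-3} plus Lemma~\ref{lm:loc-an-char}.
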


\begin{proof}
    We first use a $z$-extension to reduce to the case where $\alggrp G^\der$ is simply connected.
  Moreover, by Proposition~\ref{prop:isogenies}, we may assume $\alggrp{G} = \alggrp{G}^{\der} (\cong \SL_2)$.
  Then $\alpha^{\vee} \colon \mathbb{G}_{\mathrm{m}}\to \alggrp Z$ is an isomorphism.
  Note that either condition in the corollary is unchanged if we replace $C$ by a finite extension, and we will assume for the rest of the proof that $C$ is sufficiently large without further comment.
  We set $\widetilde{\alggrp{G}} := \Res_{F/\Q_{p}}\alggrp{G}$.
  If $\chi\circ\alpha^{\vee}$ is non-positive algebraic, then $\chi$ is algebraic and $\underline{L}(\chi^{-1})\in \mathcal{O}^{\widetilde{G}}$ by Lemma~\ref{lem:simple-in-O-Q}.
  Hence $(\Ind_{B}^{G}\chi)^{\cts}$ is reducible by Theorem~\ref{thm:rank-1}.
  Conversely, if $(\Ind_{B}^{G}\chi)^{\cts}$ is reducible, then $\underline{L}(\chi^{-1})\in \mathcal{O}^{\widetilde{G}}$ by Theorem~\ref{thm:rank-1}.
  Hence by Lemma~\ref{lem:semisimple-lift}\ref{lem:semisimple-lift-1} we know that $\underline{L}(\chi^{-1})$ is algebraic.
  Therefore $\chi^{-1} \cong \underline{L}(\chi^{-1})^{\wt U}$ is algebraic, and from $L(-d\chi)\in \mathcal{O}^{\widetilde{\mathfrak g}}$ we deduce that the algebraic character $\chi\circ\alpha^{\vee}$ is non-positive.
  \end{proof}

\appendix
\newcommand{\Phir}{\Phi_{\red}}
\newcommand{\vp}{\varphi}
\newcommand{\cB}{\mathcal{B}}
\newcommand{\p}{\mathfrak p}
\newcommand{\cO}{\mathcal{O}}
\newcommand{\g}{\mathfrak g}
\newcommand{\copp}{\cO^{\p}}
\newcommand{\cop}{\cO^{P}}
\newcommand{\col}{\cO^{L_P}}
\newcommand{\coll}{\cO^{\mathfrak l_P}}
\newcommand{\z}{\mathfrak z}
\newcommand{\h}{\mathfrak h}
\newcommand{\m}{\mathfrak m}
\newcommand{\qp}{{\Q_p}}
\newcommand{\cH}{\mathcal{H}}
\newcommand{\zp}{\Z_p}
\newcommand{\del}{\mathfrak d}
\newcommand{\q}{\mathfrak q}
\newcommand{\cF}{\mathcal{F}}

\renewcommand{\b}{\mathfrak b}
\renewcommand{\u}{\mathfrak u}
\renewcommand{\t}{\mathfrak t}
\renewcommand{\l}{\mathfrak l}

\section{Orlik--Strauch: the general case}
\label{app:orlik-strauch}

The goal of this appendix is to generalize the main results of \cite{OS2}, \cite{OS3} to a general connected reductive group.
As much as possible we keep the notation of \cite{OS2}, \cite{OS3}. In particular, $L$ (not $F$) denotes the ground field and $K$ (not $C$) denotes the coefficient field.
Let $\alggrp G$ be a connected reductive group over $L$.
Fix a minimal parabolic subgroup $\alggrp B$ and let $\alggrp P = \alggrp L_{\alggrp P} \alggrp U_{\alggrp P}$ denote a standard parabolic subgroup.
Recall the abelian categories $\copp$ and $\cop$ defined in \S\ref{sec:funct-orlik-stra}.

Just as in \cite{OS3}, for $M \in \cop$ and $V$ an admissible smooth representation of $L_P$ we can then define $\cF_P^G(M,V)$ as follows.
Pick any finite-dimensional (locally analytic) $P$-subrepresentation $W \subset M$ that generates $M$ as $U(\g)$-module.
(We use the convention of \cite{OS2}, \cite{OS3} to write $U(\g)$ for $U(\g \otimes_L K)$.)
Then $\cF_P^G(M,V)$ is the closed subrepresentation of $(\Ind_P^G W' \otimes V)^\sm$ that is annihilated by $\ker(U(\g)\otimes_{U(\p)} W \onto M)$, cf.\ \cite[\S3.8]{OS3}.

\begin{thm}\label{thm:OS-main}
  Suppose that $K$ is sufficiently large (depending only on $\alggrp G$), and keep the notation above.
  The main results of \cite{OS3} hold for $\alggrp G$. This means:
  \begin{enumerate}
  \item $\cF_P^G$ is functorial and exact in both arguments.
  \item If $Q = L_Q U_Q \supset P$, $M \in \mathcal{O}^{Q}$, $V$ an admissible smooth representation of $L_P$, then
    $\mathcal{F}_{P}^{G}(M,V)\simeq \mathcal{F}_{Q}^{G}(M,(\Ind_{P \cap L_{Q}}^{L_{Q}}V)^{\sm})$.
  \item Suppose that $M \in \cop$ such that 
    \begin{enumerate}
    \item $\alggrp P$ is maximal among parabolic subgroups of $\alggrp G$ such that $M \in \copp$,
    \item $M$ is simple as $U(\g)$-module,
    \end{enumerate}
    and suppose that $V$ is an irreducible (admissible) smooth representation of $L_P$. Then $\cF_P^G(M,V)$ is topologically irreducible.
      \end{enumerate}
\end{thm}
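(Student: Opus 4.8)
The plan is to follow the strategy of Orlik--Strauch \cite{OS2}, \cite{OS3}, isolating every place where the hypothesis that $\alggrp G$ is split intervenes and checking that it is only a notational convenience: the substantive group-theoretic input is the relative Iwahori--Bruhat theory of $G$, which is available for any connected reductive $\alggrp G$, while the category $\copp$ is a BGG-type category over the \emph{split} reductive Lie algebra $\g\otimes_L K$ and so its structure theory (in particular Lemmas~\ref{lem:abs-simple-g-module} and \ref{lm:N-invts}) is unchanged. Throughout one works under Assumption~\ref{assump:on p}, which is what makes the distribution-algebra completions behave well.

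Parts (i) and (ii) are formal. First one checks that $\cF_P^G(M,V)$ does not depend on the choice of finite-dimensional generating $P$-subrepresentation $W\subset M$, exactly as in \cite[\S4]{OS2}: any two such $W$ lie in a common one, and the defining pairing is compatible with the inclusions $W\into W'$. Functoriality in both variables is then immediate. Exactness in the $\cop$-variable follows from the description of $\cF_P^G(M)'$ as a coadmissible $D(G)$-module built from $U(\g)\otimes_{U(\p)}W$ together with the flatness of the relevant completed distribution rings, and exactness in the smooth variable follows from exactness of $(\Ind_{P\cap L_Q}^{L_Q}-)^\sm$ and a cell filtration of $(\Ind_P^G W'\otimes-)^\sm$; none of these arguments see the root datum of $\alggrp G$. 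Part (ii) is transitivity of parabolic induction combined with the identity $\cF_P^G(M,V)\cong\cF_Q^G(M,(\Ind_{P\cap L_Q}^{L_Q}V)^\sm)$, proved by a direct comparison of the two induced spaces as in \cite{OS3}.

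For part (iii), by the order-reversing bijection between closed subrepresentations of an admissible locally analytic representation and closed $D(G)$-submodules of its strong dual, it suffices to prove that $\cF_P^G(M,V)'$ is a simple coadmissible $D(G)$-module. Fix a compact open $G_0\subset G$ with an Iwahori decomposition with respect to $\o{\alggrp P}\times\alggrp P$, an open normal $L_P$-uniform subgroup $H\lhd G_0$ with $H=H^-H^+$, and a sequence $r\uparrow 1$, all as in the appendix. Using Theorem~\ref{thm:OS-4.5} (the $D_r$-structure theorem) and a locally analytic section of $G\onto G/P$ with value $1$ over $\o 1$, one identifies $\cF_P^G(M,V)'$ with $\cF_P^G(M)'\,\widehat\otimes_K V'$ and obtains a $D_r(G_0)$-module decomposition
\[
  (\cF_P^G(M,V)')_r\;\cong\;\bigoplus_{w\in W^I}\ \bigoplus_{u}\ \delta_{\dot w}\,\delta_u\,\bigl(\mathfrak m_r\,\widehat\otimes_K V'\bigr),
\]
where $W^I$ denotes Kostant representatives for the \emph{relative} Weyl group modulo $W_I$, $u$ runs over finitely many cosets in $H^-/H^{-,m}$, and $\mathfrak m_r=U_r(\g)\otimes_{U(\g_K)}M$ is simple as $U_r(\g)$-module by Theorem~\ref{thm:OS-4.7}. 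The replacement of absolute roots by relative roots here is exactly the combinatorial input already used in the proofs of Propositions~\ref{prop:N-invariants} and \ref{prop:intertwiners}: for $w\notin W_I$ there is a reduced relative root $\beta\in\Phi^+\setminus\Phi_I^+$ with $w^{-1}\beta\in\Phi^-\setminus\Phi_I^-$. Given a nonzero closed $D(G)$-submodule $N\subseteq\cF_P^G(M,V)'$, one passes to $N_r$; since $V$ is irreducible as a smooth $L_P$-representation and $\mathfrak m_r$ is simple, $N_r$ contains some summand $\delta_{\dot w}\delta_u(\mathfrak m_r\,\widehat\otimes_K V')$ in its entirety. Taking $\mathfrak n$-invariants and running the equimaximality argument of Proposition~\ref{prop:N-invariants} (a root vector in $\g_{(w^{-1}\beta)}$ acts injectively on $\mathfrak m_r$ unless $-w^{-1}\beta\in\Phi_I^+$, a contradiction) forces $w=1$, and the twisted-isomorphism clause of Theorem~\ref{thm:OS-4.7} forces $u\in H^{-,m}$, i.e.\ the identity summand; acting by $\delta_{\dot w}$ and by $D_r(G_0)$ then spreads $N_r$ over all summands, so $N_r=(\cF_P^G(M,V)')_r$ for every $r$, whence $N=\cF_P^G(M,V)'$ by coadmissibility.

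The hard part will be making the identification $\cF_P^G(M,V)'\cong\cF_P^G(M)'\,\widehat\otimes_K V'$ fully compatible with the $D_r(G_0)$-module structures and the displayed decomposition — that is, propagating the bootstrap from $V=1$ to a general admissible irreducible $V$ (as in \cite[\S5]{OS3}) through the non-split setup, since the section of $G\onto G/P$ twists the $G$-action and one must track how this interacts with the $\delta_{\dot w}$'s and the completions. By contrast, the purely local simplicity statement Theorem~\ref{thm:OS-4.7} will have been established earlier in the appendix by adapting \cite[\S4]{OS3} with the only change being the systematic replacement of the absolute root datum by the relative one, the module theory of $\copp$ over $\g_K$ being used verbatim. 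Finally, one takes $K$ large enough (depending only on $\alggrp G$) that $\alggrp G$ splits over $K$ and that the finitely many smooth $L_P$-representations entering the argument are absolutely irreducible; this is harmless.
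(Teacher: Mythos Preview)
Your treatment of parts (i) and (ii) matches the paper's: these carry over verbatim from \cite{OS2}, \cite{OS3}. The divergence is in part (iii), and there is a genuine gap.

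You propose to prove the analogue of \cite[Theorem 4.7]{OS3} (simplicity of $\mathfrak m_r$ and the twisted-isomorphism statement) by ``the systematic replacement of the absolute root datum by the relative one''. The simplicity of $\mathfrak m_r$ does go through with relative roots (via the $U(\mathfrak a_P)$-eigenspace argument), but the twisted-isomorphism half does not. The obstruction is that hypothesis (a) says $\alggrp P$ is maximal among parabolics of $\alggrp G$, not that $\mathfrak p_K$ is maximal among parabolics of $\mathfrak g_K$. Over $K$ there is a (possibly strictly larger) parabolic $\mathfrak q'\supseteq\mathfrak p_K$ maximal for $M\in\mathcal O^{\mathfrak q'}$, and the injectivity input you need---\cite[Corollary 5.5]{OS3}---only guarantees that nonzero elements of $\mathfrak u_{Q'}^-$ act injectively on $M$. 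A relative root space $\mathfrak g_{(-\beta)}$ with $\beta\in\Phi^+\setminus\Phi_I^+$ decomposes as a sum of absolute root spaces, some of which may lie in $\mathfrak l_{Q'}$ rather than $\mathfrak u_{Q'}^-$; for those components no injectivity is available, and your contradiction ``$-w^{-1}\beta\in\Phi_I^+$'' does not follow.

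The paper's route around this is not to avoid absolute roots but to exploit them: one chooses a finite Galois splitting extension $L'/L$ such that the fixed special point remains special (Proposition~\ref{prop:special-point-base-change}), sets up compatible $L$- and $L'$-uniform groups $H\subset H'$ and a ring map $U_r(\mathfrak g)\to U_r(\mathfrak g')$ (Corollary~\ref{cor:distrib}), and then runs the \cite{OS3} argument for the split group $\alggrp G'=\alggrp G\times_L L'$ one absolute root $\beta'$ at a time. This yields $\beta'\in\Phi'^+_{J'}$; repeating with each Galois translate $\gamma(\beta')$ and intersecting gives $\beta'\in\bigcap_\gamma\gamma^{-1}(\Phi'^+_{J'})=\Phi'^+_{I'}$, the desired contradiction. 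A further descent step (using the integral model $\alggrp G'_{x'}$ and its Galois action) is then needed to pass from $H'^mQ'_0\cap G$ back to $H^{-,m}P_0$. None of this is captured by a relative-root substitution.

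Relatedly, your explanation of why $K$ must be large is off: the point is not absolute irreducibility of smooth constituents, but that $K$ must contain the specific splitting field $L'$ produced by Proposition~\ref{prop:special-point-base-change}, so that $U_r(\mathfrak g')$ and the comparison of Corollary~\ref{cor:distrib} make sense.
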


\begin{remark}
  Assumption (a) is weaker than saying $\p_K := \p \otimes_L K$ is maximal for $M$ because $\g_K$ can have more parabolic subalgebras than those
  coming from $G$ when $\alggrp G$ is nonsplit.
\end{remark}

Parts (i) and (ii) follow exactly as in \cite{OS2}, \cite{OS3} and the remainder of this appendix will focus on proving part (iii).
The basic idea is to deduce (iii) by comparison with the split case, by considering $\alggrp G \times_L L'$ for a carefully chosen finite extension $L'/L$, like in \cite[Appendice]{socle1} in the case of restrictions of scalars of split groups.
(We will need that $L'$ embeds in $K$, which is why we demand that $K$ be sufficiently large.)

Fix a maximal split torus $\alggrp S$ of $\alggrp G$ over $L$. Let $\Phi$ denote the possibly non-reduced root system of $(\alggrp G, \alggrp S)$ and $W$ its Weyl group.
Choose any special point $x$ of $\alggrp G$ in the apartment of $\alggrp S$.

Let $\Phi^+ \subset \Phi$ denote the set of positive roots corresponding to $\alggrp B$, with simple roots $\Delta$.
Choose a ``special'' subtorus $\alggrp T_s$ of $\alggrp G$ over $L$ containing $\alggrp S$, i.e.\ $\alggrp T_s$ becomes a maximal split torus after base change to the
maximal unramified extension of $L$. Let $\alggrp T$ denote the centraliser of $\alggrp T_s$ in $\alggrp G$, which is a maximal torus of $\alggrp G$ (as $\alggrp G$ becomes quasisplit
over the maximal unramified extension of $L$).

\begin{prop}\label{prop:special-point-base-change}
  There exists a finite Galois extension $L'/L$ splitting $\alggrp G$ such that $x$ remains special in the building of
  the split group $\alggrp G' : = \alggrp G \times_{L} L'$.
\end{prop}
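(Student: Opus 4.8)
Proposition~\ref{prop:special-point-base-change} will be proved by splitting $\alggrp{G}$ over a Galois extension whose ramification index is divisible by a suitable integer. First I would set up coordinates. Choose any finite Galois extension $L_1/L$ that splits $\alggrp{G}$, so that $\alggrp{T}_{L_1} := \alggrp{T}\times_L L_1$ is a maximal split torus of $\alggrp{G}_{L_1} := \alggrp{G}\times_L L_1$. The toral embedding of Bruhat--Tits buildings attached to $\alggrp{S}\subset \alggrp{T}$ (Landvogt, Prasad) gives an affine, $\alggrp{G}(L)$-equivariant inclusion $\mathcal{B}(\alggrp{G},L)\hookrightarrow \mathcal{B}(\alggrp{G}_{L_1},L_1)$ carrying the apartment $A$ of $\alggrp{S}$ into the apartment $A'$ of $\alggrp{T}_{L_1}$; from now on I regard $x$ as a point of $A\subset A'$. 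Since $\alggrp{G}_{L_1}$ is split it possesses a hyperspecial vertex $x_0\in A'$ which is fixed by the absolute Weyl group $\widetilde W := W(\alggrp{G}_{L_1},\alggrp{T}_{L_1})$; taking $x_0$ as origin, I identify $A'$ (the reduced apartment) with $X_*(\alggrp{T}_{L_1}^{\mathrm{ad}})\otimes \R$, so that $\widetilde W$ acts linearly. With these conventions, for any finite extension $L_2\supseteq L_1$ with ramification index $e=e(L_2/L)$ and value group $e^{-1}\Z$, a point $y\in A'$ is special in $\mathcal{B}(\alggrp{G}_{L_2},L_2)$ precisely when $\langle y-x_0,\widetilde\alpha\rangle\in e^{-1}\Z$ for every absolute root $\widetilde\alpha$ of $(\alggrp{G}_{L_2},\alggrp{T}_{L_2})$, equivalently when $y-x_0$ lies in $e^{-1}P^{\vee}$, where $P^{\vee}$ denotes the coweight lattice of the (fixed) absolute root system.

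Next I would show that $x-x_0$ has bounded denominators, i.e.\ that there is an integer $N\ge 1$, depending only on $\alggrp{G}$ and $x$, with $N(x-x_0)\in P^{\vee}$. This reduces to the rationality of $x$ as a point of $A'$, which follows from the hypothesis that $x$ is special for $\alggrp{G}$ over $L$: the canonical $\Q$-structure on the apartment of the split group $\alggrp{G}_{L_1}$ is $\Gal(L_1/L)$-equivariant and restricts to the canonical $\Q$-structure on $A$ (for which vertices of $\mathcal{B}(\alggrp{G},L)$ are rational points), and a special point $y$ of $\mathcal{B}(\alggrp{G},L)$ satisfies $\alpha(y-o)\in \Gamma'_{\alpha}\subset\Q$ for every relative root $\alpha\in\Phi$ and a fixed special vertex $o$, where $\Gamma'_{\alpha}$ is the (discrete, rational) set of values of affine roots of gradient $\alpha$; since the relative roots span $X^*(\alggrp{S}^{\mathrm{ad}})\otimes\Q$, this forces $y$, hence $x$, to be a rational point. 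Finitely many absolute roots then produce the required common denominator $N$.

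Finally I would choose $L'$. Starting from $L_1$, adjoin a totally ramified extension of degree divisible by $N$ (any Eisenstein polynomial $X^{d}-\pi$ with $N\mid d$ over a uniformiser $\pi$ of $L$ will do), and then pass to the Galois closure over $L$; the resulting extension $L'/L$ is finite Galois, still splits $\alggrp{G}$, and has $N\mid e(L'/L)=:e$. All the coordinates above persist over $L'$: the toral embedding is compatible with $L_1\subseteq L'$, and $x_0$ remains a special (indeed hyperspecial) vertex of $\mathcal{B}(\alggrp{G}_{L'},L')$ fixed by $\widetilde W$. Since $N\mid e$ we get $e(x-x_0)=(e/N)\cdot N(x-x_0)\in P^{\vee}$, so $x-x_0\in e^{-1}P^{\vee}$, and by the criterion recalled above $x$ is special in $\mathcal{B}(\alggrp{G}\times_L L',L')$, as desired. (If one prefers, one enlarges $N$ at the outset by the finitely many indices $c_{\widetilde\alpha}$ measuring the failure of the coroots to be primitive in $P^{\vee}$, so that the divisibility is literally as needed; this is harmless since the $c_{\widetilde\alpha}$ depend only on the root system.)

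The step I expect to be the main obstacle is the rationality input in the second paragraph: precisely pinning down that the $\Q$-structures on $A$ and $A'$ are compatible and that special points of $\mathcal{B}(\alggrp{G},L)$ have bounded denominators. This rests on a somewhat delicate analysis of the échelonnage — the value sets $\Gamma'_{\alpha}$ and the way relative roots and their value sets refine upon base change to a splitting field — for which I would lean on the Bruhat--Tits formalism (and Kaletha--Prasad's treatment). Granting that, the remainder is elementary lattice combinatorics.
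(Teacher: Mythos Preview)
Your overall strategy is the same as the paper's: show that the special point $x$ has rational coordinates in the apartment of a maximal torus over a splitting field, then take a sufficiently ramified extension to make those denominators disappear. The ramification step is fine and matches the paper exactly.

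The difference, and the place where your argument is incomplete, is precisely the rationality step you flag. You assert that the canonical $\Q$-structure on $A'$ restricts to the canonical $\Q$-structure on $A$, but this is not obvious when $L_1/L$ is ramified: the hyperspecial origin $x_0\in A'$ need not lie in $A$, and a special vertex of $\mathcal B(\alggrp G,L)$ need not map to a vertex (or even a rational point) of $A'$ under a ramified base change. Establishing that $A$ contains a rational point of $A'$ is essentially the whole content of the proposition. The paper avoids this circularity by a two-step reduction. First it passes to a finite \emph{unramified} extension $L_0/L$ over which $\alggrp G$ becomes quasisplit; under unramified base change the simplicial structure is preserved, so $x$ lands in a unique Galois-invariant facet of the apartment of $\alggrp T_s$ and equals the barycenter of its vertices. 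Second, in the quasisplit case it shows directly (using a Chevalley--Steinberg valuation and the explicit description of the value sets $\Gamma_a$ in \cite[4.2.21]{BT2}) that every vertex has rational coordinates with respect to the absolute root system, hence so does their barycenter $x$. This gives the bounded-denominator statement you want without having to compare $\Q$-structures across a ramified extension.

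So your proposal is correct in outline and morally the same proof, but the step you identify as the main obstacle is exactly where the paper supplies a concrete argument (the quasisplit reduction via unramified base change plus the barycenter trick) rather than an appeal to compatibility of échelonnages.
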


\begin{proof}
  Assume that the valuation $\omega$ of $L$ satisfies $\omega(L^\times) \subset \Q$ and extend $\omega$ uniquely to an algebraic closure $\o L$. 
  All extensions of $L$ below will be taken inside $\o L$. Let $\Phir \subset \Phi$ be the subset of reduced roots.

  We first suppose $\alggrp G$ quasisplit, with splitting field $\wt L$. Let $\wt\Phi$ denote the absolute roots.
  Let $\vp$ be the valuation defined by a Chevalley--Steinberg system as in \cite[\S4]{BT2},
  so the hyperplanes in the apartment of $\alggrp S$ are given by $\{ a(x-\vp)+r = 0\ (r \in \Gamma_a, a \in \Phir) \}$.
  From \cite[4.2.21]{BT2} we get that $\Gamma_a$ is an infinite cyclic subgroup of $\Q$ for all $a \in \Phir$, and from \cite[4.2.1]{BT2} that 
  $\Gamma_{\wt a}$ is an infinite cyclic subgroup of $\Q$ for all $\wt a \in \wt\Phi$.
  Hence there exists $e \in \Z_{> 0}$ such that $\Gamma_a \subset \frac 1e \Gamma_{\wt a}$ whenever $\wt a|_{\alggrp S} = a \in \Phir$
  and $2 \Gamma_a \subset \frac 1e \Gamma_{\wt b}$ whenever $\wt b|_{\alggrp S} = 2a \in \Phi$ and $a \in \Phir$.
  Letting $L'/\wt L$ denote any finite extension of ramification index a multiple of $e$, we get that $x$ is still special over $L'$ (cf.\ \cite[4.2.24]{BT2}).
  (In fact, $e = 2$ works always and $e = 1$ works if $\Phi$ is reduced.)
  The same argument shows that if $x'$ lies in the apartment of $\alggrp S$ such that $a(x'-\vp) \in \Q$ for all $a \in \Phir$,
  then $x'$ becomes special after base change to a suitably ramified extension of $\wt L$.

  For $\alggrp G$ general, 
  let $L_0/L$ be a finite unramified extension with Galois group $\Gamma$ such that $\alggrp T_s$ becomes split over $L_0$; in particular,
  $\alggrp G \times_L L_0$ becomes quasisplit.
  We consider the embedding of buildings $\cB_L(\alggrp G) \into \cB_{L_0}(\alggrp G \times_L L_0)$ (unramified base change).
  Then the vertex $x$ lies in a unique $\Gamma$-invariant facet $F$ in the apartment of $\alggrp T_s$ (inside $\cB_{L_0}(\alggrp G \times_L L_0)$), so $x$ equals the average of the vertices of $F$.
  By above it then suffices to show that any vertex $x'$ of the apartment of $\alggrp T_s$ satisfies $a(x'-\vp) \in \Q$ for all $a \in \Phir$
  (where $\Phir$ denotes the reduced roots of $\alggrp G \times_L L_0$). 
  This is clear: as $x'$ is a vertex we have $a(x'-\vp) \in \Gamma_a \subset \Q$ for $a \in X \subset \Phir$ for some maximal linearly independent subset $X$, 
  which implies $a(x'-\vp) \in \Q$ for all $a \in \Phir$ (as $\Z X \subset \Z \Phir$ has finite index).
\end{proof}

Let $L'/L$ be a finite Galois extension splitting $\alggrp G$ such that the image $x'$ of the special point $x$ 
in the building of $\alggrp G' : = \alggrp G \times_{L} L'$ is still special.
We assume that $K$ is large enough to contain an embedding $L' \into K$ and we fix such an embedding, extending the given embedding $L \into K$.
Let $k_L$ (resp.\ $k_{L'}$) denote the residue field of $L$ (resp.\ $L'$).

Let $\alggrp P$ be a (standard) parabolic subgroup of $\alggrp G$ containing $\alggrp S$ with Lie algebra $\p$.
Let $\alggrp U_{\alggrp P}^-$ be the unipotent radical of the opposite parabolic $\alggrp P^{-}$ (with respect to $\alggrp S$) with Lie algebra $\u_P^-$.

Let $\alggrp S' := \alggrp S \times_{L} {L'}$ and $\alggrp T' := \alggrp T \times_{L} {L'}$, so that by construction $x'$ is contained in the apartment of $\alggrp T'$.
Let $\alggrp G'_{x'}$ denote the connected reductive model of $\alggrp G'$ over $\cO_{L'}$ defined by $x'$, and
let $\alggrp S'_{x'}$ denote the scheme-theoretic closure of $\alggrp S'$ in $\alggrp G'_{x'}$ (a split torus extending $\alggrp S'$)
and similarly define the split torus $\alggrp T'_{x'}$.
We define the parabolic subgroup $\alggrp P' := \alggrp P \times_L L'$ of $\alggrp{G}'$ and its unipotent radical $\alggrp {U}^-_{\alggrp P'} := \alggrp U_{\alggrp P}^- \times_L L'$.
Let also $\alggrp G_x$ denote the connected reductive model of $\alggrp G$ over $\cO_{L}$ defined by $x$.

Let $G := \alggrp G(L)$, $G' := \alggrp G'(L')$, etc., so $G$ is a closed $L$-analytic subgroup of $G'$.
Let $G'_0 := \alggrp G'_{x'}(\cO_{L'})$ and $G_0 := G \cap G_0'$; these are compact open in $G'$ and $G$, respectively.
Let $P'_0 := P' \cap G'_0$, $U^-_{P',0} := U^-_{P'} \cap G'_0$, $P_0 := P \cap G_0$, $U_{P,0}^- := U_P^- \cap G_0$.

By construction $G_0$ contains $\alggrp G_x(\cO_L)$, so that $G = G_0 P$. (Any compact subgroup of $G'$ that fixes
$x'$ has to be contained in $G'_0$, as $\alggrp G'$ is split.)

Let $\g'_0 := \Lie \alggrp G'_{x'}$, which is an $\cO_{L'}$-Lie lattice inside $\g' := \Lie \alggrp G' = \g \otimes_L L'$ that is stable by the adjoint action of $G'_0$.
Let $\g_0 := \g \cap \g'_0$, which is an $\cO_{L}$-Lie lattice inside $\g$ that is stable by the action of $G_0$,
and moreover $\g_0 \otimes_{\cO_L} \cO_{L'}$ is of finite index in $\g'_0$.
By the algebraic action of $\alggrp S'_{x'}$ on $\g'_0=\Lie \alggrp G'_{x'}$ we see that
\begin{equation}\label{eq:g0'-decomp}
  \g'_0 = (\g'_0 \cap \u_{P'}^-) \oplus (\g'_0 \cap \p')
\end{equation}
as $\cO_{L'}$-modules.

By Lemma~\ref{lm:splitting} (applied with $\alggrp S'_0 = \alggrp S'_{x'}$, $V = \g$, $V_1 = \u_{P}^-$, $V_2 = \p$, $M' = \g'_0$) we also have
\begin{equation}\label{eq:g0-decomp}
  \g_0 = (\g_0 \cap \u_{P}^-) \oplus (\g_0 \cap \p)
\end{equation}
as $\cO_{L}$-modules;
we even see that $\g_0 \cap \u_{P}^- = \bigoplus_{\alpha \in \Phi^-\setminus \Phi_P^-} (\g_0 \cap \u_\alpha)$.

\begin{lem}\label{lm:splitting}
  Suppose $A \subset B$ are integral domains with fields of fractions $E \subset F$.

  \begin{enumerate}
  \item Suppose $\alggrp S$ is a split torus over $E$, $\alggrp S_0'$ a split torus over $B$, and we are given an isomorphism
    $\alggrp S \times_E F \cong \alggrp S_0' \times_{B} F$. Then there exists a unique split torus $\alggrp S_0$ over $A$
    together with isomorphisms $\alggrp S_0 \times_{A} E \cong \alggrp S$, $\alggrp S_0 \times_A B \cong \alggrp S_0'$ compatible with the 
    isomorphism above after base change to $F$.
  \item Keep the notation as in (i). 
    Suppose $V = V_1 \oplus V_2$ is an isomorphism of finite-dimensional $\alggrp S$-modules and suppose that
    $M'$ is an $\alggrp S_0'$-module together with an isomorphism $M' \otimes_B F \cong V \otimes_E F$ that is compatible with 
    the actions of $\alggrp S_0' \times_{B} F \cong \alggrp S \times_E F$.
    Let $M := M' \cap V$. If for all $\chi \in X^*(\alggrp S)$ we have $(V_1)_\chi = 0$ or $(V_2)_\chi = 0$, then 
    $M = (M \cap V_1) \oplus (M \cap V_2)$.
      \end{enumerate}
\end{lem}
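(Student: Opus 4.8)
The plan is to deduce both parts from two elementary facts: a split torus over a ring is the same datum as its character lattice, and any comodule over the group algebra of a finitely generated free abelian group $\Lambda$ (equivalently, any module with a split-torus action) decomposes canonically into $\Lambda$-graded weight submodules over an arbitrary base ring.

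For part (i) I would first recall that for any commutative ring $R$ the assignment $\Lambda\mapsto\operatorname{Spec}R[\Lambda]$ is an equivalence between finitely generated free abelian groups and split tori over $R$, with quasi-inverse $\alggrp T\mapsto X^*(\alggrp T)$, and that it is compatible with base change (the character lattice does not change under base change). Hence the given isomorphism $\alggrp S\times_E F\cong\alggrp S_0'\times_B F$ induces an isomorphism of character lattices $X^*(\alggrp S)\cong X^*(\alggrp S_0')$; call the common lattice $\Lambda$. I would then set $\alggrp S_0:=\operatorname{Spec}A[\Lambda]$ and take the isomorphisms $\alggrp S_0\times_A E\cong\alggrp S$ and $\alggrp S_0\times_A B\cong\alggrp S_0'$ to be the canonical ones coming from these identifications of character lattices; compatibility after base change to $F$ is then immediate, and uniqueness holds because any split torus over $A$ fitting into the diagram has its character lattice canonically identified with $\Lambda$. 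This part is purely formal.

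For part (ii) I would work entirely inside the $F$-vector space $W:=V\otimes_E F$, identified with $M'\otimes_B F$ via the given isomorphism. Since $V$ is an $\alggrp S$-module over the field $E$ it is $\Lambda$-graded, $V=\bigoplus_{\chi\in\Lambda}V_\chi$, hence so is $W=\bigoplus_\chi W_\chi$ with $W_\chi=V_\chi\otimes_E F$; likewise $V_1$ and $V_2$ are graded submodules with $V_\chi=(V_1)_\chi\oplus(V_2)_\chi$. On the other hand, as a comodule over $B[\Lambda]$ the module $M'$ decomposes as $M'=\bigoplus_\chi M'_\chi$ over $B$, and because the isomorphism $M'\otimes_B F\cong W$ is equivariant for $\alggrp S_0'\times_B F=\alggrp S\times_E F$ it is degree-preserving, so the image of $M'$ in $W$ is again a graded $B$-submodule. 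As the intersection of two graded submodules of a graded module is graded, $M=M'\cap V$ is graded with $M_\chi=M'_\chi\cap V_\chi\subseteq V_\chi$. Now the hypothesis that for each $\chi$ one of $(V_1)_\chi$, $(V_2)_\chi$ vanishes means $V_\chi$ coincides with one of them, so $M_\chi$ lies in $V_1$ or in $V_2$ accordingly (and $M_\chi=0$ when $V_\chi=0$); summing over the two types of $\chi$ gives $M\cap V_i=\bigoplus_{\chi:\,V_\chi=(V_i)_\chi}M_\chi$ and hence $M=(M\cap V_1)\oplus(M\cap V_2)$.

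The only genuine point that needs care — and it is mild — is checking that the image of $M'$ in $W$ really inherits the $\Lambda$-grading, i.e.\ that the comodule decomposition of $M'$ is matched with the weight decomposition of $W$ under the given equivariant isomorphism, together with the bookkeeping for modules $M'$ that are not torsion-free over $B$ (harmless, since $B$ is a domain, so $M'$ maps onto a lattice in $W$, which is all the formation of $M'\cap V$ uses); in the intended application $M'=\g'_0$ is free over $B=\mathcal{O}_{L'}$, so even this does not arise. One also gets for free, by tracking individual weight spaces, finer statements such as $\g_0\cap\u_P^-=\bigoplus_\alpha(\g_0\cap\u_\alpha)$ used right after the lemma.
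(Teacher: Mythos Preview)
Your proof is correct and follows essentially the same approach as the paper. For (i) both arguments use the equivalence between split (or diagonalizable) group schemes and their character groups, compatible with base change; for (ii) both reduce to the observation that $M$ inherits the weight grading and that each $M_\chi$ lies in $V_\chi=(V_i)_\chi$ for some $i$. The only cosmetic difference is that the paper phrases the key step as verifying that the comodule map $\Delta_V$ sends $M$ into $M\otimes_A A[X]$ (so $M$ is an $\alggrp S_0$-comodule over $A$ and hence weight-decomposes), whereas you phrase it as ``the intersection of two graded submodules of a graded module is graded''---these are the same fact.
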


\begin{proof}
  (i) We recall from \cite[\S I.2.5]{Jantzen} that over an integral domain $A$ we have an (anti-)equivalence of categories
  between abelian groups and diagonalizable group schemes over $A$, and this equivalence is 
  moreover compatible with base change $A \to B$. Thus the claim becomes obvious.
    
  (ii) Let $X := X^*(\alggrp S_0)$, so that we may identify $\alggrp S_0 \times_A C$ with the spectrum of the Hopf algebra $C[X]$ for any map of
  integral domains $A \to C$. Let $\Delta_V \colon V \to V \otimes_E E[X]$ denote the comodule corresponding to $V$ and
  $\Delta_{M'} \colon M' \to M' \otimes_B B[X]$ the comodule corresponding to $M'$, so that $\Delta_V$ and $\Delta_{M'}$ become
  identified after base change to $F$. It follows that $\Delta_V$ sends $M$ to $(V \otimes_E E[X]) \cap (M' \otimes_B B[X])$
  inside $V \otimes_E F[X]$. Hence $\Delta_V(M) \subset M \otimes_A A[X]$, i.e.\ $M$ becomes an $\alggrp S$-module whose
  base change to $B$ (resp.\ $E$) is identified with $M'$ (resp.\ $V$). Therefore, $M = \bigoplus_{\chi \in X} M_\chi$ \cite[\S I.2.11]{Jantzen}.
  For any $\chi \in X$ we have $M_\chi \subset V_\chi = (V_1)_\chi \oplus (V_2)_\chi = (V_i)_\chi$ for some $i$, by assumption,
  so $M_\chi \subset M \cap V_i$ and we are done.
\end{proof}

Recall the equivalence between uniform pro-$p$ groups $H$ and powerful $\Z_p$-Lie algebras $\h$ \cite[Theorem 9.10]{DDMS}.
If $\h$ is a powerful $\Z_p$-Lie algebra, then the corresponding uniform pro-$p$ group is defined by the convergence of the Baker--Campbell--Hausdorff formula \cite[Theorem 9.8]{DDMS}, and we denote it by $\BCH(\h)$.
It is locally $\Q_p$-analytic group with Lie algebra $\h \otimes_{\Z_p} \Q_p$.
If $\h' \subset \h$ is a saturated sub-$\Z_p$-Lie algebra of $\h$, then $\h'$ is also powerful and $\BCH(\h') \subset \BCH(\h)$ is a closed subgroup \cite[Scholium to Theorem 9.10]{DDMS}.
Conversely, if $H$ is a uniform pro-$p$ group, then it is (uniquely) a locally $\Q_p$-analytic group and the $\Z_p$-lattice $\log(H) \subset \Lie(H)$ is the corresponding powerful $\Z_p$-Lie algebra.
(Here, $\log$ denotes the functorial logarithm map of a locally analytic group, which is defined on the union of all compact subgroups \cite[III.7.6]{MR0573068}.)
An \emph{$L$-uniform group} is a uniform pro-$p$ group $H$ together with an $\mathcal O_L$-Lie algebra structure on $\log(H)$ that extends the given $\Z_p$-Lie algebra structure.

\begin{lem}\label{lem:L-uniform}
  Any $L$-uniform group $H$ is (uniquely) a locally $L$-analytic group.
\end{lem}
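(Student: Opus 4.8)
The plan is to build the locally $L$-analytic structure directly on the canonical global chart of the uniform pro-$p$ group $H$ and to check the group operations are $L$-analytic by a convergence comparison for the Baker--Campbell--Hausdorff series. First I would recall (e.g.\ \cite[Theorem~9.8, Theorem~9.10]{DDMS}) that for a uniform pro-$p$ group $H$ the logarithm is a homeomorphism $\log\colon H \xrightarrow{\sim} \Lambda$, where $\Lambda := \log(H)$ is the associated powerful $\Z_p$-Lie lattice, that under this homeomorphism the group law on $H$ becomes the convergent series $\BCH\colon \Lambda\times\Lambda\to\Lambda$ and inversion becomes $x\mapsto -x$. By hypothesis $\Lambda$ carries an $\mathcal{O}_L$-Lie algebra structure extending its $\Z_p$-structure; since $\mathcal{O}_L$ is a discrete valuation ring and $\Lambda$ is a finitely generated torsion-free $\mathcal{O}_L$-module, $\Lambda$ is free of finite rank over $\mathcal{O}_L$, and $\mathfrak{g} := \Lambda\otimes_{\Z_p}\Q_p = \Lambda\otimes_{\mathcal{O}_L}L$ is an $L$-Lie algebra inside which $\Lambda$ is a compact open $\mathcal{O}_L$-lattice. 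Thus $\Lambda$, with the chart $\log^{-1}$, is a compact locally $L$-analytic manifold.

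For existence, I would declare $\log\colon H \xrightarrow{\sim}\Lambda$ to be a global chart, making $H$ a locally $L$-analytic manifold with $\Lie(H)=\mathfrak{g}$, and then verify that $\BCH$ is locally $L$-analytic (inversion being $L$-linear). Each homogeneous component $P_k$ of the $\BCH$ series is a fixed $\Q$-linear combination of iterated Lie monomials in the two arguments; since the bracket on $\Lambda$ is $\mathcal{O}_L$-bilinear, each such monomial, hence $P_k$, is an $L$-polynomial map $\Lambda\times\Lambda\to\mathfrak{g}$, homogeneous of degree $k$. The series $\sum_k P_k$ converges $\Q_p$-analytically on $\Lambda\times\Lambda$ — this is exactly the uniformity of $H$. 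Comparing the Gauss norm of $P_k$ computed with respect to an $\mathcal{O}_L$-basis of $\Lambda$ to the one computed with respect to a $\Z_p$-basis — the two differ by a multiplicative factor bounded independently of $k$, because $[L:\Q_p]<\infty$ — one deduces that $\sum_k P_k$ converges $L$-analytically on a sufficiently small $\mathcal{O}_L$-polydisc around any point of $\Lambda\times\Lambda$. Hence $\BCH$ is locally $L$-analytic and $H$ is a locally $L$-analytic group with the prescribed Lie algebra.

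For uniqueness, I would argue as follows. Suppose $H$ is endowed with a locally $L$-analytic group structure whose Lie algebra, as an $L$-vector space, is the given $\mathfrak{g}$. The functorial logarithm of a locally $L$-analytic group is itself locally $L$-analytic, and since the logarithm map depends only on the underlying topological group it coincides with the logarithm of the associated locally $\Q_p$-analytic group, which for a uniform group is precisely the homeomorphism $\log\colon H\xrightarrow{\sim}\Lambda$ above. So $\log$ is an $L$-analytic isomorphism from an open subgroup of $H$ onto an open $\mathcal{O}_L$-submodule of $\mathfrak{g}$, which forces the $L$-analytic structure near $1$ to be that induced by $\log$ together with the standard structure on $\mathfrak{g}$. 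As left translations are $L$-analytic and cover $H$, the structure is then determined everywhere, and in particular agrees with the one constructed above.

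I expect the only substantive point to be the passage, in the existence step, from $\Q_p$-analytic to $L$-analytic convergence of the $\BCH$ series; everything else is bookkeeping with the $\mathcal{O}_L$-module structure and the intrinsic nature of the logarithm. (One could instead phrase the convergence estimate using the norms $q_r$ on distribution algebras recalled in the proof of Proposition~\ref{prop:N-invariants}, but the elementary comparison of Gauss norms seems cleaner.)
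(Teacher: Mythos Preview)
Your proof is correct and follows essentially the same line as the paper's: build the $L$-analytic structure via the Baker--Campbell--Hausdorff series and argue uniqueness through the intrinsic logarithm. The paper is simply terser, citing \cite[\S17]{MR2810332} for the fact that the BCH convergence estimate is base-field-independent (so your Gauss norm comparison, while correct, is not needed) and \cite[Theorem~29.8]{MR2810332} for the uniqueness.
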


\begin{proof}
  The Baker--Campbell--Hausdorff formula does not depend on the base field, so it converges on the $\mathcal O_L$-Lie algebra $\log(H)$, making it into a locally $L$-analytic group with Lie algebra $\Lie(H)$ \cite[\S17]{MR2810332}.
  The uniqueness follows exactly as in the proof of \cite[Theorem 29.8]{MR2810332}.
\end{proof}

This lemma implies that our definition agrees with the one in \cite[Remark 2.2.5]{OS}.
(If $H$ is an open uniform subgroup of a locally $L$-analytic group $G$ such that $\log(H) \subset \Lie(G)$ is $\mathcal O_L$-stable, then we give $\log(H)$ the induced $\mathcal O_L$-Lie algebra structure.
Conversely, if $H$ is $L$-uniform in our sense, then we take $G := H$ by Lemma~\ref{lem:L-uniform}.)
We get an equivalence between $L$-uniform groups (with locally $L$-analytic morphisms) and powerful $\mathcal O_L$-Lie algebras (meaning that the underlying $\Z_p$-Lie algebra is powerful).

If $\h$ is a powerful $\Z_p$-Lie lattice in $\g$, then so is $p^n\h$ for any $n \ge 0$.
For $n$ sufficiently large there exists a (non-canonical!)\ open embedding $i \colon \BCH(p^n\h) \into G_0$ of locally analytic groups whose associated map on Lie algebras is the identity \cite[Proposition 18.17]{MR2810332}.

\begin{lem}\label{lm:congruence-subgp}
  If $\ell \ge \kappa$, then $G'_{(\ell)} := \ker\big(\alggrp G'_{x'}(\cO_{L'}) \onto \alggrp G'_{x'}(\cO_{L'}/p^\ell)\big)$ is a uniform pro-$p$ group,
  and we have $\log(G'_{(\ell)}) = p^\ell \g'_0$ inside $\g'$.
\end{lem}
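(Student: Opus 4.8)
The plan is to set $\mathfrak h := p^{\ell}\g'_0$, show it is a powerful $\cO_{L'}$-Lie lattice in $\g' = \g'_0\otimes_{\cO_{L'}}L'$, and then identify $\BCH(\mathfrak h)$ with $G'_{(\ell)}$ by means of the exponential map.

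First I would record the bracket estimate. As $\alggrp G'_{x'}$ is a smooth affine $\cO_{L'}$-group scheme, $\g'_0 = \Lie\alggrp G'_{x'}$ is a finite free $\cO_{L'}$-module with $[\g'_0,\g'_0]\subseteq \g'_0$, and $\g'_0\otimes_{\cO_{L'}}L' = \g'$. Hence $[\mathfrak h,\mathfrak h] = p^{2\ell}[\g'_0,\g'_0]\subseteq p^{2\ell}\g'_0$. Recalling that $\kappa = 1$ when $p$ is odd and $\kappa = 2$ when $p = 2$, the hypothesis $\ell\ge\kappa$ gives $p^{2\ell}\g'_0\subseteq p^{\ell+1}\g'_0 = p\mathfrak h$ for $p$ odd and $p^{2\ell}\g'_0\subseteq p^{\ell+2}\g'_0 = 4\mathfrak h$ for $p = 2$. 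Viewed as a $\zp$-Lie lattice $\mathfrak h$ is finitely generated free, so it is powerful in the sense of \cite[\S9]{DDMS}; thus $\BCH(\mathfrak h)$ is a uniform pro-$p$ group by \cite[Theorem 9.8]{DDMS}, and since $\mathfrak h$ is $\cO_{L'}$-stable it is $L'$-uniform, hence (by Lemma~\ref{lem:L-uniform} and the ensuing discussion) a locally $L'$-analytic group with $\log\BCH(\mathfrak h) = \mathfrak h$.

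It remains to identify $\BCH(\mathfrak h)$ with $G'_{(\ell)}$ inside $G' = \alggrp G'(L')$. I would choose a closed immersion of $\cO_{L'}$-group schemes $\iota\colon \alggrp G'_{x'}\hookrightarrow \GL_{N,\cO_{L'}}$, which exists since $\alggrp G'_{x'}$ is flat, affine and of finite type over the Dedekind ring $\cO_{L'}$. Then $\iota$ identifies $G'$ with a closed subgroup of $\GL_N(L')$ and $G'_{(\ell)}$ with $G'\cap\bigl(1 + p^{\ell}M_N(\cO_{L'})\bigr)$, while $\Lie\iota$ identifies $\g'$ with a Lie subalgebra of $\mathfrak{gl}_N(L')$. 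Because $\iota$ is a closed immersion of smooth group schemes, its reduction modulo the maximal ideal of $\cO_{L'}$ is injective on Lie algebras, so $\g'_0$ is an $\cO_{L'}$-direct summand of $M_N(\cO_{L'})$, whence $\g'_0 = \g'\cap M_N(\cO_{L'})$ and $p^{\ell}M_N(\cO_{L'})\cap\g' = p^{\ell}\g'_0 = \mathfrak h$. For $\ell\ge\kappa$ the matrix exponential and logarithm are mutually inverse homeomorphisms between $p^{\ell}M_N(\cO_{L'})$ and $1 + p^{\ell}M_N(\cO_{L'})$, identifying the latter with an $L'$-uniform group whose logarithm is the evident inclusion (the standard-group case; cf.\ \cite[\S5]{DDMS} and \cite{MR2810332}). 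Since the matrix exponential restricts to the exponential of the closed subgroup $G'$, one has $\exp\mathfrak h\subseteq G'_{(\ell)}$; conversely, for $g\in G'_{(\ell)}$ the element $\log g$ lies in $\g'$ (it is the chart value of the submanifold $G'$ at $g$) and in $p^{\ell}M_N(\cO_{L'})$, hence in $p^{\ell}M_N(\cO_{L'})\cap\g' = \mathfrak h$. Thus $\log G'_{(\ell)} = \mathfrak h$, and applying $\exp$ gives $G'_{(\ell)} = \exp\mathfrak h = \BCH(\mathfrak h)$. In particular $G'_{(\ell)}$ is uniform and $\log G'_{(\ell)} = p^{\ell}\g'_0$ inside $\g'$, as desired.

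The only genuine work is the compatibility of the ambient matrix logarithm with $G'$ in the precise range $\ell\ge\kappa$: one must check that $\exp$ and $\log$ converge and are mutually inverse on $p^{\ell}M_N(\cO_{L'})$ for $\ell\ge\kappa$ — this holds because for $X\in p^{\ell}M_N(\cO_{L'})$ one has $v(X)\ge\ell > 1/(p-1)$ and $2\ell\ge\ell + 2/(p-1)$, both valid for either parity of $p$ — and that $\g'_0$ is saturated in $M_N(\cO_{L'})$, which is exactly where smoothness of $\alggrp G'_{x'}$ enters. I expect this to be routine; alternatively, all of it is subsumed by the $L$-uniform formalism developed in \cite{OS}, which could be invoked directly.
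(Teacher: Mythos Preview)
Your approach is correct and genuinely different from the paper's. The paper exploits that $\alggrp G'$ is split reductive: it decomposes $G'_{(\ell)}$ as a product of root-subgroup and torus congruence subgroups via the big-cell open immersion, then builds an explicit global chart from a Chevalley system and the commutation relations of \cite[\S3.2]{BT2} to exhibit $G'_{(\ell)}$ as a standard group in the sense of \cite[\S8]{DDMS}; the logarithm is then read off from a minimal generating set lying in the root and torus factors via \cite[Theorem~4.17]{DDMS}. Your argument instead passes through a closed immersion into $\GL_{N,\cO_{L'}}$ and the matrix exponential, which is conceptually cleaner and works for an arbitrary smooth affine $\cO_{L'}$-group scheme, not just a parahoric of a split group.

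One step deserves a sharper justification. You assert that the matrix exponential on $\mathfrak h$ lands in $G'$ and that the matrix logarithm on $G'_{(\ell)}$ lands in $\g'$, but ``it is the chart value of the submanifold $G'$ at $g$'' is not quite an argument: $\log$ is a chart for $\GL_N$, and a chart for an ambient manifold does not in general carry a submanifold to its tangent space. The clean way to get $\log(G'_{(\ell)})\subset\g'$ is functoriality of the logarithm for morphisms of $L'$-analytic groups \cite[III.7.6, Proposition~10 and Proposition~13]{MR0573068}, applied to the closed immersion $G'\hookrightarrow\GL_N(L')$ (the domain of $\log_{G'}$ contains every compact subgroup, in particular $G'_{(\ell)}$). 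For the other inclusion $\exp(\mathfrak h)\subset G'$, either invoke the same functoriality for $\exp$, or note that for each defining polynomial $f$ of $\alggrp G'_{x'}$ the map $X\mapsto f(\exp(X))$ is given by a single power series converging on the closed polydisc $p^\ell\g'_0$ and vanishing on the sub-polydisc $p^{\ell'}\g'_0$ for $\ell'\gg 0$ (where the local exponential of $G'$ applies), hence is identically zero. Your saturation argument showing $\g'_0$ is an $\cO_{L'}$-direct summand of $M_N(\cO_{L'})$ is fine; that is indeed where smoothness of the integral model enters.
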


\begin{proof}
  For any smooth group scheme $\alggrp H$ over $\cO_{L'}$ let $H_{(\ell)} := \ker\big(\alggrp H(\cO_{L'}) \onto \alggrp H(\cO_{L'}/p^\ell)\big)$.
  Let $\Phi'$ denote the roots of $(\alggrp G', \alggrp T')$ and fix some subset of positive roots $\Phi'^+$.
  Then $G'_{(\ell)} = G'_{x',(\ell)}$ is contained in the Iwahori subgroup defined by the Borel subgroup of $\alggrp G'_{x'} \times_{\cO_{L'}} k_{L'}$
  that contains $\alggrp T'_{x'} \times_{\cO_{L'}} k_{L'}$ and corresponds to $\Phi'^+$, hence
  \begin{equation*}
    G'_{x',(\ell)} \subset \prod_{a \in \Phi'^-} \alggrp U'_{a,x'}(\cO_{L'}) \cdot \alggrp T'_{x'}(\cO_{L'}) \cdot \prod_{a \in \Phi'^+} \alggrp U'_{a,x'}(\cO_{L'}),
  \end{equation*}
  where $\alggrp U'_{a,x'}$ is the scheme-theoretic closure of the root subgroup $\alggrp U'_{a}$ inside $\alggrp G'_{x'}$   and we fixed some ordering of the roots. As $\prod_{a \in \Phi'^-} \alggrp U'_{a,x'} \times \alggrp T'_{x'} \times \prod_{a \in \Phi'^+} \alggrp U'_{a,x'} \to \alggrp G'_{x'}$
  is an open immersion,    it follows that
  \begin{equation*}
    G'_{x',(\ell)} = \prod_{a \in \Phi'^-} U'_{a,x',(\ell)} \cdot T'_{x',(\ell)}\cdot \prod_{a \in \Phi'^+} U'_{a,x',(\ell)}.
  \end{equation*}
  By explicit calculation for $\alggrp U'_{a,x'} \cong \mathbb G_a$ and $\alggrp T'_{x'} \cong \mathbb G_m^r$
  we see that $\log(U'_{a,x',(\ell)}) = p^\ell \Lie \alggrp U'_{a,x'}$ and $\log(T'_{a,x',(\ell)}) = p^\ell \Lie \alggrp T'_{a,x'}$.

  To justify that $G'_{x',(\ell)}$ is uniform, it suffices to show it is a standard $\qp$-analytic group \cite[Theorem 8.31]{DDMS}.
  Fix any $\Z$-basis $\lambda_1,\dots,\lambda_r$ of $X_*(\alggrp T'_{x'})$.
  By \cite[\S3.2.13, \S4.6.15]{BT2} there exist $\cO_{L'}$-isomorphisms $x_a \colon \mathbb G_a \congto \alggrp U_{a,x'}$
  ($a \in \Phi'$) defining a Chevalley system as in \cite[\S3.2.2]{BT2}.
  Then we get an topological isomorphism
  \begin{align*}
    \theta \colon (p^\ell \cO_{L'})^{\Phi'} \times (p^\ell \cO_{L'})^r &\congto G'_{x',(\ell)} \\
    (u_a, t_i) &\mapsto \prod_{a \in \Phi'^-} x_a(u_a) \prod_{i=1}^r \lambda_i(1+t_i) \prod_{a \in \Phi'^+} x_a(u_a).
  \end{align*}
  By the commutation relations in \cite[\S3.2.3]{BT2}, the relation $x_a(u)x_{-a}(v) = x_{-a}(-v(1-uv)^{-1}) a^\vee(1-uv) x_a(u(1-uv)^{-1})$
  (which follows from \cite[\S3.2.1]{BT2}), as well as $\lambda_i(1+t)x_a(u) = x_a((1+t)^{\ang{a,\lambda_i}}u)\lambda_i(1+t)$ 
  it follows that $\theta$ is a global chart making $G'_{x',(\ell)}$ into a standard group and hence $G'_{x',(\ell)}$ is uniform.

    By using this as well as \cite[Theorem 4.17]{DDMS}
  with a minimal topological generating set contained in $\bigcup_{a \in \Phi'} U'_{a,x',(\ell)} \cup T'_{x',(\ell)}$ we deduce 
  $\log(G'_{x',(\ell)}) = p^\ell \g'_0$, as required.
  (Note that \cite[Theorem 4.17]{DDMS} shows that if $g_1,\dots,g_r$ is a minimal topological generating set of a uniform group
  $H$, then $\log(H) = \bigoplus_i \Z_p \log(g_i)$, and note also that $\log$ is functorial.)
\end{proof}

Let $\kappa := 1$ if $p > 2$ and $\kappa := 2$ if $p = 2$.
For $m_0 \ge \kappa$ the $\mathcal O_{L'}$-Lie lattice $p^{m_0} \g_0'$ in $\g'$ is powerful.
Define $L'$-uniform groups $H' := \BCH(p^{m_0} \g_0')$, $H'^- := \BCH(p^{m_0} \g_0' \cap \u_{P'}^-)$, $H'^+ := \BCH(p^{m_0} \g_0' \cap \p')$.
By Lemma~\ref{lm:congruence-subgp} and the equivalence between uniform groups and powerful $\Z_p$-Lie algebras we get a \emph{unique} open embedding $H' := \BCH(p^{m_0} \g_0') \into G'_0$ whose image is $G'_{(m_0)}$ and whose derivative is the identity.
For any $\gamma \in G'_0$, the conjugation action of $\gamma$ on $G'_0$ induces the adjoint action of $\gamma$ on $p^{m_0} \g_0'$.
Note that $H'^-$, $H'^+$ are closed subgroups of $H'$.
For $m_0$ sufficiently large we have $H'^- \subset U_{P',0}^-$ and $H'^+ \subset P'_0$ because this true on the level of Lie algebras.
(Actually, $m_0 \ge \kappa$ suffices, by a variant of Lemma~\ref{lm:congruence-subgp}.)
By~\eqref{eq:g0'-decomp} and \cite[Theorem 9.8]{DDMS} we see that the multiplication map  $H'^- \times H'^+ \to H'$ is a topological isomorphism and hence that $H'^- = H' \cap U_{P'}^-$, $H'^+ = H' \cap P'$.
From $H' \lhd G'_0$ it then follows that $H'^- \lhd U^-_{P',0}$ and $H'^+ \lhd P'_0$.

The closed subgroup $H := \BCH(p^{m_0} \g_0)$ of $H'$ is contained in $G$
for $m_0$ sufficiently large, because this is true on the level of Lie algebras.
As $G_0 = G \cap G_0'$ normalizes $\g_0 = \g \cap \g_0'$, it follows that $H\lhd G_0$.
Letting $H^- := \BCH(p^{m_0} \g_0 \cap \u_P^-)$, $H^+ := \BCH(p^{m_0} \g_0 \cap \p)$ we see that $H^- \lhd U_{P,0}^-$, $H^+ \lhd P_0$.
By~\eqref{eq:g0-decomp}, the multiplication map $H^- \times H^+ \to H$ is a topological isomorphism and hence $H^- = H \cap U_{P}^-$, $H^+ = H \cap P$.
By construction, $H$, $H^-$, $H^+$ are $L$-uniform groups.

For $m \ge 0$ let $H^m := P_{m+1}(H) = \BCH(p^{m+m_0} \g_0) \lhd G_0$ (where $(P_m(H))_{m \ge 1}$ is the lower $p$-series, cf.\ \cite[Definition 1.15]{DDMS}) and likewise $H^{-,m}\lhd U_{P,0}^-$, $H^{+,m}\lhd P_0$,
$H'^m\lhd G'_0$, $H'^{-,m}\lhd U^-_{P',0}$. These are all $L$- (resp.\ $L'$-)uniform groups. Note that by construction
$\g_0$ is an $\cO_L$-direct summand of $\g_0'$.
In particular, $H^m$ is a closed subgroup of $H'^m$ which is topologically a direct factor (an ordered basis of $H^m$ as a uniform
group can be extended to an ordered basis of $H'^m$).

For any $r \in p^{\Q} \cap (p^{-1},1)$ we recall that we have a continuous algebra (semi)norm $\lvert\cdot\rvert_r$ on the locally analytic distribution algebra $D(H)$, defined by the uniform structure of $H$ \cite[2.2.6]{OS}.
Its completion $D_r(H)$ (or more precisely $D^{(L)}_r(H)$) is a noetherian Banach algebra.
Let $U_r(\g) = U_{r}(\mathfrak{g},H)$ denote the closure of $U(\g)$ in $D^{(L)}_r(H)$ and $U_r(\g') = U_r(\g',H')$ the closure of $U(\g')$ in $D^{(L')}_r(H')$.
Then $D_r(H)$ is free as left and right $U_r(\g)$-module, admitting a basis in $K[H]$, and $U_r(\g)$ is noetherian \cite[Theorem 1.4.2]{MR2309143}.

\begin{cor}\label{cor:distrib}
    Suppose $r^{p^m} \in p^{\Q} \cap (p^{-1},p^{-1/\kappa(p-1)})$ for some $m \ge 0$.   Then we have a commutative diagram
  \begin{equation*}
    \xymatrix{
    U(\g) \ar@{=}[r]\ar[d] & U(\g')\ar[d] \\
    U_r(\g)\ar@{-->}[r] & U_r(\g')
    }
  \end{equation*}
  where the bottom map is a morphism of Banach algebras.
  Moreover, all maps are equivariant for the adjoint action of $G_0 \subset G'_0$.
\end{cor}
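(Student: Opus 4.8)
\textbf{Plan for the proof of Corollary~\ref{cor:distrib}.}

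The plan is to construct the dotted arrow $U_r(\g) \to U_r(\g')$ as the restriction of a continuous algebra homomorphism $D_r(H) \to D_r(H')$ coming from the closed inclusion $H \into H'$ of $L$- (resp.\ $L'$-)uniform groups, and then to check that this map is compatible with the canonical inclusion $U(\g) = U(\g')$ (equality as associative $K$-algebras, since $\g \otimes_L K \congto \g' \otimes_{L'} K$ by our fixed embedding $L' \into K$). The first step is to recall that, since $H$ is a closed subgroup of $H'$ which is moreover topologically a direct factor (an ordered basis of $H$ as a uniform group extends to one of $H'$, as noted just before the corollary), the induced map $D(H) \into D(H')$ of locally analytic distribution algebras is a continuous algebra homomorphism. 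Next I would compare the norms: the norm $\lvert\cdot\rvert_r$ on $D(H)$ is defined via the chosen ordered basis of $H$ and the canonical coordinates of the second kind, and the restriction of $\lvert\cdot\rvert_r$ on $D(H')$ to $D(H)$ agrees with $\lvert\cdot\rvert_r$ on $D(H)$ precisely because the basis of $H$ sits inside that of $H'$. This is the point where the hypothesis $r^{p^m} \in p^{\Q} \cap (p^{-1},p^{-1/\kappa(p-1)})$ enters: it is the range of $r$ for which $D_r(H)$, $D_r(H')$ are (noetherian) Banach algebras and for which the relevant norm estimates (submultiplicativity, independence of the diagram from the choice of basis) hold, as in \cite[\S2.2]{OS} and \cite[Theorem~1.4.2]{MR2309143}. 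Hence $D(H) \to D(H')$ extends to a continuous (indeed norm-nonincreasing) algebra homomorphism $D_r(H) \to D_r(H')$ on completions.

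Having this, I would then observe that $U_r(\g)$ is by definition the closure of $U(\g)$ inside $D_r(H)$ and $U_r(\g')$ the closure of $U(\g') = U(\g)$ inside $D_r(H')$; since the map $D_r(H) \to D_r(H')$ is continuous and sends $U(\g)$ into $U(\g') $ (it is the identity there), it sends the closure $U_r(\g)$ into $U_r(\g')$, giving the dotted arrow and the commutativity of the square. Finally, for the equivariance claim: the conjugation action of any $\gamma \in G_0 \subset G'_0$ on $H$ and $H'$ (which is defined because $H \lhd G_0$ and $H' \lhd G'_0$, and $G_0 = G \cap G'_0$ normalizes $\g_0 = \g \cap \g'_0$) induces on $\log(H) = p^{m_0}\g_0$, resp.\ $\log(H') = p^{m_0}\g'_0$, the adjoint action $\Ad(\gamma)$; this action preserves the uniform structure and hence is isometric for $\lvert\cdot\rvert_r$, so it extends to continuous automorphisms of $D_r(H)$, $D_r(H')$ restricting to the adjoint action on $U_r(\g)$, $U_r(\g')$. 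All the maps in the square are built from the inclusion $H \into H'$, which is $G_0$-equivariant for conjugation, so the whole diagram is $G_0$-equivariant by functoriality.

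I expect the main obstacle to be the norm-compatibility step: one must verify carefully that the norm $\lvert\cdot\rvert_r$ on $D(H')$ restricts to the norm $\lvert\cdot\rvert_r$ on $D(H)$, i.e.\ that the two uniform structures are compatible in the precise sense needed to compare canonical coordinates and the associated Fr\'echet--Stein norms. This requires the fact, used above, that a basis (as a uniform pro-$p$ group) of each $H^m = P_{m+1}(H)$ can be chosen to extend to a basis of $H'^m = P_{m+1}(H')$, which in turn rests on $\g_0$ being an $\cO_L$-direct summand of $\g_0'$ (hence $p^{m+m_0}\g_0$ a $\Z_p$-direct summand of $p^{m+m_0}\g_0'$). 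Once that structural fact is in hand, the norm identity and the extension to completions are essentially formal, following \cite[\S2.2]{OS}; similarly the $G_0$-equivariance is a routine consequence of functoriality of all the constructions involved.
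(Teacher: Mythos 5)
Your approach has a genuine gap at the norm-comparison step, which you yourself flag as the main obstacle but then dismiss too quickly. The claim that ``the restriction of $\lvert\cdot\rvert_r$ on $D(H')$ to $D(H)$ agrees with $\lvert\cdot\rvert_r$ on $D(H)$ because the basis of $H$ sits inside that of $H'$'' conflates the $\Z_p$-uniform structure with the $L$- (resp.\ $L'$-) uniform structures, and it is the latter that define the relevant norms. Concretely, the norm on $D^{(L)}_r(H)$ is built from an $\cO_L$-basis of $\log(H) = p^{m_0}\g_0$ (so $\dim_L\g$ coordinates), while the norm on $D^{(L')}_r(H')$ is built from an $\cO_{L'}$-basis of $\log(H') = p^{m_0}\g'_0$ (so $\dim_{L'}\g'$ coordinates). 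An $\cO_L$-basis of $\g_0$ is an $\cO_{L'}$-basis of $\g_0 \otimes_{\cO_L}\cO_{L'}$, which is only a finite-index $\cO_{L'}$-sublattice of $\g'_0$, so the two bases are not compatible and the norms do not agree on restriction. Extending a $\Z_p$-ordered basis of $H$ to one of $H'$, which is what the paper records, would let you compare $D^{(\Q_p)}_r(H)$ to $D^{(\Q_p)}_r(H')$, but not the $L$- and $L'$-analytic quotients that actually appear here.

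The paper's proof isolates exactly the missing base-change step. It first introduces the intermediate $L'$-uniform group $H'' := \BCH(p^{m_0}\g_0 \otimes_{\cO_L}\cO_{L'})$, an open subgroup of $H'$, and invokes Schmidt's base-change results to get the equality $U_r(\g,H) = U_r(\g',H'')$: this works precisely because the same $\cO_L$-basis of $\g_0$ is an $\cO_{L'}$-basis of $\g_0\otimes_{\cO_L}\cO_{L'}$, so the explicit Schmidt-style descriptions of the two closures coincide. Only then does one compare $U_r(\g',H'')$ to $U_r(\g',H')$, which is now an inclusion of two $L'$-uniform open subgroups of $G'$: choosing $\cO_{L'}$-bases related by scaling by powers of $\varpi_{L'}$ gives a norm-decreasing (not norm-preserving) map. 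Your argument collapses these two steps into a single claimed norm identity, and skips the base-change input entirely. Two further remarks: you try to build a map $D_r(H)\to D_r(H')$, but neither the statement nor the paper's proof needs (or asserts) such a map on the full completed distribution algebras — only on the closures $U_r(\g)\to U_r(\g')$ of the enveloping algebra. And the $G_0$-equivariance is obtained more cheaply in the paper, by observing that the bottom map is determined by density and hence is automatically an algebra map and $G_0$-equivariant, rather than by a functoriality argument.
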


\begin{proof}
  Let $s := r^{p^m}$. By density the bottom map is unique, if it exists, and it is automatically an algebra homomorphism and $G_0$-equivariant.
  It thus suffices to consider the underlying Banach spaces of the bottom row.

  Note that $p^{m_0}\g_0 \otimes_{\cO_L} \cO_{L'}$ is powerful and we let $H'' := \BCH(p^{m_0}\g_0 \otimes_{\cO_L} \cO_{L'}) \subset H'$ (another $L'$-uniform group, open inside $H'$).
  Then $U_r(\g,H) = U_r(\g',H'')$ by \cite[\S5, \S6]{schmidt} (taking the closure of $U(\g)$ in $D^{(L)}_s(H^m) \subset D^{(L)}_r(H)$, resp.\ of $U(\g')$ in $D^{(L')}_s(H''^{p^m}) \subset D^{(L')}_r(H'')$).
    We may thus work over $L'$ and it suffices to show that if $H^{(1)} \subset H^{(2)}$ are $L'$-uniform open subgroups of $G'$
  with corresponding powerful $\cO_{L'}$-Lie lattices $\h_0^{(1)} \subset \h_0^{(2)}$ in $\g'$,   we get a morphism of Banach spaces $U_r(\g',H^{(1)}) \to U_r(\g',H^{(2)})$
    (compatible with maps from $U(\g')$).
  
  Pick $(X_i)_{i=1}^d$ an ordered $\cO_{L'}$-basis of $p^m \h_0^{(2)}$ such that $(Y_i := \varpi_{L'}^{\ell_i} X_i)_{i=1}^d$ is
  an ordered $\cO_{L'}$-basis of $p^m \h_0^{(1)}$ (with $\ell_i \in \Z_{\ge 0}$). By \cite[\S5, \S6]{schmidt} we have
  \begin{align*}
    U_r(\g',H^{(2)}) &= \bigg\{ \sum_{\beta\in\Z_{\ge 0}^{d}} d_\beta \mathbf{X}^\beta : d_\beta \in K,\ |d_\beta| s^{\kappa|\beta|} \to 0 \bigg\},\\
    U_r(\g',H^{(1)}) &= \bigg\{ \sum_{\beta\in\Z_{\ge 0}^{d}} e_\beta \mathbf{Y}^\beta : e_\beta \in K,\ |e_\beta| s^{\kappa|\beta|} \to 0 \bigg\}
  \end{align*}
  and the topology can be defined by norms
  \begin{equation*}
    \bigg\lVert \sum_{\beta \in \Z_{\ge 0}^d} d_\beta \mathbf{X}^\beta \bigg\rVert_s = \sup_\beta |d_\beta| s^{\kappa|\beta|},\qquad
    \bigg\lVert \sum_{\beta \in \Z_{\ge 0}^d} e_\beta \mathbf{Y}^\beta \bigg\rVert_s = \sup_\beta |e_\beta| s^{\kappa|\beta|}.
  \end{equation*}
  Taking the inclusion, we get a norm-decreasing map of Banach spaces $U_r(\g',H^{(1)}) \to U_r(\g',H^{(2)})$ that is
  compatible with maps from $U(\g')$, as required.
\end{proof}

Let $D(\g,P_0)$ be the subring of $D(G_0)$ generated by $U(\g)$ and $D(P_0)$. 
Then any object of $\mathcal O^P$ becomes a $D(\g,P_0)$-module.
We define a continuous algebra norm $\lvert\cdot\rvert_r$ on $D(\wt H)$ for any subgroup $H \le \wt H \le G_0$ by using $\lvert\cdot\rvert_r$ on $D(H)$ \cite[(5.5.5)]{OS2}.
Let $D_r(\wt H)$ denote its completion.
Likewise we define $\lvert\cdot\rvert_r$ on $D(P_0)$ by using $\lvert\cdot\rvert_r$ on $D(H^+)$, and let $D_r(P_0)$ denote its completion, which is a closed subring of $D_r(G_0)$.
Let $D_r(\g,P_0)$ be the subring of $D_r(G_0)$ generated by $U_r(\g)$ and $D_r(P_0)$.
Then $D_r(\g,P_0)$ is a finitely generated $U_r(\g)$-module, hence a closed subring and noetherian.
Let $U_r(\u_P^-) := U_r(\u_P^-,H^-)$ and $U_r(\p) := U_r(\p,H^+)$.

\begin{lem}\label{lm:iwahori-decomp}
  We have canonical isomorphisms of Banach spaces:  \begin{align}
    D_r(H) &\cong D_r(H^-) \wh\otimes D_r(H^+), \label{eq:iw1} \\
    U_r(\g) &\cong U_r(\u_P^-) \wh\otimes U_r(\p), \label{eq:iw2} \\
    D_r(HP_0) &\cong D_r(H^-) \wh\otimes D_r(P_0), \label{eq:iw3} \\
    D_r(\g,P_0) &\cong U_r(\u_P^-) \wh\otimes D_r(P_0). \label{eq:iw4}
  \end{align}
\end{lem}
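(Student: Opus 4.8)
The plan is to prove \eqref{eq:iw1} and \eqref{eq:iw2} by hand from explicit coordinate descriptions of these completed algebras, and then to deduce \eqref{eq:iw3} and \eqref{eq:iw4} formally. For \eqref{eq:iw2}, I would use the decomposition \eqref{eq:g0-decomp} to choose an ordered $\cO_L$-basis of $p^{m_0}\g_0$ consisting of a basis of $p^{m_0}\g_0 \cap \u_P^-$ followed by a basis of $p^{m_0}\g_0 \cap \p$. Ordering the $\u_P^-$-part first, the Poincaré--Birkhoff--Witt theorem identifies the ordered PBW monomials in this basis with the products $(\mathbf X^-)^{\beta^-}(\mathbf X^+)^{\beta^+}$ of an ordered PBW monomial in $U(\u_P^-)$ and one in $U(\p)$. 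As recalled in the proof of Corollary~\ref{cor:distrib} (following \cite{schmidt}), for suitable $r$ the algebra $U_r(\g)$ (resp.\ $U_r(\u_P^-)$, $U_r(\p)$) consists of the convergent series in these monomials, with norm the supremum of $|d_\beta|$ times a power of $r$ depending only on the total degree $|\beta|$; since $|\beta| = |\beta^-| + |\beta^+|$, the bijection on monomials extends to an isomorphism of Banach spaces $U_r(\g) \cong U_r(\u_P^-)\wh\otimes U_r(\p)$.

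The same argument proves \eqref{eq:iw1}: since multiplication $H^- \times H^+ \to H$ is a topological isomorphism (established just before the lemma, using \eqref{eq:g0-decomp} and the Baker--Campbell--Hausdorff description of $L$-uniform groups), concatenating ordered bases of $H^-$ and $H^+$ gives coordinates of the second kind on $H$ (i.e.\ via the product chart $H^- \times H^+ \to H$), and the description of $D_r(H)$ in terms of such coordinates (cf.\ \cite[\S6]{schmidt}, \cite[\S5.5]{OS2}) again factors the monomials and hence the norm. For \eqref{eq:iw3}, note that $H$ is open in $G_0$, hence $H^+ = H \cap P_0$ is open in $P_0$, so $P_0 = \coprod_\gamma H^+\gamma$ is a finite disjoint union of cosets and, $H$ being normal in $G_0$, also $HP_0 = \coprod_\gamma H\gamma$. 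By the definition of $|\cdot|_r$ on $D(P_0)$ (via $H^+$) and on $D(HP_0)$ (via $H$) we get $D_r(P_0) = \bigoplus_\gamma D_r(H^+)\delta_\gamma$ and $D_r(HP_0) = \bigoplus_\gamma D_r(H)\delta_\gamma$ as finite direct sums of Banach spaces; combining with \eqref{eq:iw1} and the fact that $\wh\otimes$ commutes with finite direct sums,
\[ D_r(HP_0) = \bigoplus_\gamma \big(D_r(H^-)\wh\otimes D_r(H^+)\big)\delta_\gamma \cong D_r(H^-)\wh\otimes\Big(\bigoplus_\gamma D_r(H^+)\delta_\gamma\Big) = D_r(H^-)\wh\otimes D_r(P_0), \]
and one checks this isomorphism is the multiplication map.

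Finally, for \eqref{eq:iw4}, inside $D(G_0)$ one has the algebraic identification $D(\g,P_0) \cong U(\u_P^-)\otimes_K D(P_0)$ via multiplication, using PBW relative to $\g = \u_P^- \oplus \p$ together with $U(\p) \subset D(P_0)$ (as in \cite[\S5]{OS2}). Since $D_r(\g,P_0)$ is a closed subring of $D_r(G_0)$ containing $U(\g)$ and $D(P_0)$, it equals the closure of $D(\g,P_0)$. Taking this closure inside $D_r(HP_0) \cong D_r(H^-)\wh\otimes D_r(P_0)$ — where by \eqref{eq:iw3} the multiplication map identifies $D(\g,P_0)$ with the image of $U(\u_P^-)\otimes_K D(P_0)$ — and using that $U_r(\u_P^-)$ is by definition the closure of $U(\u_P^-)$ in $D_r(H^-)$, one obtains $D_r(\g,P_0) \cong U_r(\u_P^-)\wh\otimes D_r(P_0)$.

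The main obstacle is the bookkeeping in \eqref{eq:iw1} and \eqref{eq:iw2}: one must verify that the norm $|\cdot|_r$ defined via the uniform structure of $H$ genuinely admits the claimed coordinate description and that the relevant power of $r$ depends only on the total degree — equivalently, that it behaves multiplicatively with respect to coordinates of the second kind — so that it coincides with the completed tensor product norm. This is exactly where the restriction on $r$ (as in Corollary~\ref{cor:distrib}) and Schmidt's computations \cite{schmidt} are essential; once \eqref{eq:iw1} and \eqref{eq:iw2} are in hand, \eqref{eq:iw3} and \eqref{eq:iw4} are formal consequences.
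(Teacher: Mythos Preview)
Your proposal is correct and follows essentially the same strategy as the paper: the paper obtains \eqref{eq:iw1} and \eqref{eq:iw3} by citing \cite[Proposition~3.3.4]{OS} (whose content is precisely the coordinate/coset argument you spell out), derives \eqref{eq:iw2} from \eqref{eq:iw1}, and then proves \eqref{eq:iw4} by a double inclusion inside $D_r(HP_0)$ using \eqref{eq:iw3} and the identity $D_r(\g,P_0)=U_r(\g)K[P_0]$ together with \eqref{eq:iw2}. Your closure argument for \eqref{eq:iw4} is a minor variant of this double inclusion, and your explicit treatment of \eqref{eq:iw1}--\eqref{eq:iw3} simply unpacks the cited reference.
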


\begin{proof}
  As $H \lhd G_0$ is open we have $HP_0 = H^- P_0$ with open subgroup $H = H^-H^+$ (i.e.\ both having Iwahori decomposition with respect to $U_P^- \times P \into G$).
  The proof of \eqref{eq:iw1} and \eqref{eq:iw3} is exactly as in \cite[Proposition 3.3.4]{OS}, with $H$ playing the role of $I_0$ and $HP_0$ the role of $I$.
  We note that these isomorphisms are induced by convolution, as follows by comparison with \cite[Remark A.4]{ST-duality}.
  Then \eqref{eq:iw2} follows from \eqref{eq:iw1}.
  For \eqref{eq:iw4}, note using~\eqref{eq:iw3} that the right-hand side is contained in the left-hand side, as $D_r(\g,P_0)$ is a closed subring of $D_r(HP_0)$ (in turn a closed subring of $D_r(G_0)$).
  As $D_r(P_0) = U_r(\p) K[P_0]$ we have $D_r(\g,P_0) = U_r(\g) K[P_0]$, which is contained in the right-hand side by \eqref{eq:iw2}.
\end{proof}

\begin{lem}\label{lm:U_r-p}
  Suppose that $W$ is a finite-dimensional $\p_K$-module.
  If $r^{p^m} \in p^{\Q} \cap (p^{-1},p^{-1/\kappa(p-1)})$ for some $m \ge 0$ and $r$ is sufficiently close to 1, then we have $U_r(\p) \wh\otimes_{U(\p)} W \cong W$.
\end{lem}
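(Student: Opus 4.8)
The plan is to show that the natural surjection $U_r(\p)\wh\otimes_{U(\p)}W\onto W$, $u\otimes w\mapsto uw$, is an isomorphism. First I would observe that, since $W$ is finite‑dimensional over $K$, the module $U_r(\p)\otimes_{U(\p)}W$ is generated over $U_r(\p)$ by the elements $1\otimes w$ with $w$ ranging over a $K$‑basis of $W$ (using $U(\p)W=W$), hence is a finitely generated module over the noetherian Banach algebra $U_r(\p)$; such a module carries a canonical topology for which it is complete, so the completed tensor product equals the algebraic one and it suffices to treat $U_r(\p)\otimes_{U(\p)}W$.

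Next I would extend the $\p_K$‑action on $W$ to a continuous $U_r(\p)$‑module structure. Recall from the proof of Corollary~\ref{cor:distrib} (following \cite{schmidt}) that, with $s:=r^{p^m}\in(p^{-1},p^{-1/\kappa(p-1)})$ and $(X_i)_{i=1}^e$ an ordered $\cO_L$‑basis of $p^m\log(H^+)=p^{m+m_0}\g_0\cap\p$, the algebra $U_r(\p)$ consists of the series $\sum_\beta d_\beta\mathbf X^\beta$ with $d_\beta\in K$ and $|d_\beta|s^{\kappa|\beta|}\to 0$, the monomial $\mathbf X^\beta$ having norm $s^{\kappa|\beta|}$. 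Fixing a norm on $W$, the operators by which $X_i$ acts on $W$ has operator norm at most $p^{-(m+m_0)}C_0$, where $C_0$ depends only on the lattice $\g_0\cap\p$ and on $W$, not on $r$; meanwhile $s^\kappa$ stays in the fixed interval $(p^{-\kappa},p^{-1/(p-1)})$, and $m\to\infty$ as $r\to 1$. Hence for $r$ sufficiently close to $1$ we have $p^{-(m+m_0)}C_0<s^\kappa$, so every series $\sum_\beta d_\beta\mathbf X^\beta$ as above converges in $\End_K(W)$. This yields a continuous $K$‑algebra homomorphism $U_r(\p)\to\End_K(W)$ extending $U(\p)\to\End_K(W)$, i.e.\ a continuous $U_r(\p)$‑module structure on $W$.

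Finally I would deduce injectivity. Let $u\in U_r(\p)$ and $w\in W$, and choose $u_n\in U(\p)$ with $u_n\to u$. The map $U_r(\p)\to U_r(\p)\otimes_{U(\p)}W$, $x\mapsto x\otimes w$, is $U_r(\p)$‑linear between finitely generated $U_r(\p)$‑modules, hence continuous, so $u_n\otimes w\to u\otimes w$. But $u_n\otimes w=1\otimes u_nw$ because $u_n\in U(\p)$, and $u_nw\to uw$ in $W$ by continuity of the extended action, while $w'\mapsto 1\otimes w'$ is $K$‑linear on the finite‑dimensional $W$ and so continuous; therefore $u\otimes w=1\otimes uw$. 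Thus $U_r(\p)\otimes_{U(\p)}W=1\otimes W$, and since composing $w'\mapsto 1\otimes w'$ with the surjection $u'\otimes w'\mapsto u'w'$ gives the identity of $W$, the map $w'\mapsto 1\otimes w'$ is bijective and the surjection is an isomorphism.

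The main obstacle is the estimate in the second paragraph: one must unwind the explicit \cite{schmidt}‑description of $U_r(\p)$ and the normalization of the norm $\lvert\cdot\rvert_r$ to see that the distinguished topological generators of $H^+$ become arbitrarily $p$‑divisible as $r\to 1$, so that the operator norms of their images on $W$ eventually fall below the radius $s^\kappa$. Everything else — the reduction to the algebraic tensor product, the density of $U(\p)$ in $U_r(\p)$, and the continuity of the maps involved — is routine.
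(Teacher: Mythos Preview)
Your proof is correct but takes a different route from the paper's. The paper instead lifts the $\p_K$-action on $W$ to a locally analytic representation of a compact open subgroup $P_{00}\subset P$ (via \cite[Proposition~18.17]{MR2810332}), chooses $r$ close enough to $1$ that $H^{+,m}\subset P_{00}$, and then uses that $U_r(\p)=D_s(H^{+,m})$ together with coadmissibility of the finite-dimensional $D(H^{+,m})$-module $W$: the composite $W\to U_r(\p)\wh\otimes_{U(\p)}W\to D_s(H^{+,m})\wh\otimes_{D(H^{+,m})}W$ is an isomorphism, and the first map has dense (hence full, by finite-dimensionality) image, forcing both to be bijections. Your argument bypasses both the lift to a group action and the coadmissibility machinery, instead extending the enveloping-algebra action to $U_r(\p)$ by a direct operator-norm estimate on the PBW generators; this is more elementary and self-contained, at the cost of unwinding the Schmidt description of $U_r(\p)$ and tracking how $m$ grows with $r$. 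The paper's route is shorter once one has the distribution-algebra formalism in hand, while yours makes transparent exactly where ``$r$ sufficiently close to $1$'' enters.
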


\begin{proof}
  By \cite[Proposition 18.17]{MR2810332} there exists a compact open subgroup $P_{00}$ of $P$ such that $W$ lifts to a locally analytic representation of $P_{00}$.
  Take $r$ sufficiently close to 1 so that $H^{+,m} \subset P_{00}$.
  Note that $U_r(\p) = D_s(H^{+,m}) \subset D_r(H^+)$ and that $W$ is a coadmissible $D(H^{+,m})$-module, where $s := r^{p^m}$.
  We have
  \begin{equation*}
    W \to U_r(\p) \wh\otimes_{U(\p)} W \to D_s(H^{+,m}) \wh\otimes_{D(H^{+,m})} W
  \end{equation*}
  whose composition is an isomorphism for all $r$ sufficiently close to 1, by coadmissibility of $W$. But the first map has
  dense image, hence is surjective (as the image is finite-dimensional, hence complete), 
  so both maps are bijective.
\end{proof}

Fix again $r^{p^m} \in p^{\Q} \cap (p^{-1},p^{-1/\kappa(p-1)})$ for some $m \ge 0$ and $r$ sufficiently close to 1.
Recall that $\m_r$ is defined to be the $U_r(\g)$-submodule of $D_r(HP_0) \otimes_{D(\g,P_0)} M$ generated by $M$.
It is finitely generated and hence carries a canonical topology such that the image of $M$ in $\m_r$ is dense.
As in \cite[Sublemma 5.6]{OS2}
it follows that $\m_r \cong D_r(\g,P_0) \otimes_{D(\g,P_0)} M$.

\begin{lem}\label{lm:m_r}
  Assume $r^{p^m} \in p^{\Q} \cap (p^{-1},p^{-1/\kappa(p-1)})$ for some $m \ge 0$ and $r$ sufficiently close to 1.
  Then $\m_r \cong U_r(\g) \otimes_{U(\g)} M \cong U_r(\u_P^-) \otimes_{U(\u_P^-)} M$ (with canonical topologies).
\end{lem}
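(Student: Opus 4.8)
The plan is to prove the two isomorphisms $\m_r \cong U_r(\g) \otimes_{U(\g)} M$ and $U_r(\g) \otimes_{U(\g)} M \cong U_r(\u_P^-) \otimes_{U(\u_P^-)} M$ separately, using the Iwahori-type decompositions of Lemma~\ref{lm:iwahori-decomp} together with Lemma~\ref{lm:U_r-p}. First I would establish $\m_r \cong U_r(\g) \otimes_{U(\g)} M$. Recall that $\m_r \cong D_r(\g,P_0) \otimes_{D(\g,P_0)} M$, so using~\eqref{eq:iw4} and the fact that $D_r(\p,P_0) := D_r(P_0)$ contains $U_r(\p)$ and is generated by it over $K[P_0]$, one writes $D_r(\g,P_0) \cong U_r(\u_P^-) \wh\otimes U_r(\p) \wh\otimes_{U(\p)} D(P_0)$ in a manner compatible with the analogous decomposition $D(\g,P_0) \cong U(\u_P^-) \otimes U(\p) \otimes_{U(\p)} D(P_0) = U(\u_P^-) \otimes D(P_0)$. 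Tensoring with $M$ over $D(\g,P_0)$, the $D(P_0)$-factor gets absorbed into $M$ (since $M$ is already a $D(P_0)$-module) and one is left with $U_r(\u_P^-) \wh\otimes U_r(\p) \wh\otimes_{U(\p)} M$. By Lemma~\ref{lm:U_r-p}, $U_r(\p) \wh\otimes_{U(\p)} M \cong M$ (here one uses that $M \in \mathcal O^P$ is finite-dimensional as a $\p_K$-module, being locally finite and finitely generated, hence finite-dimensional since it is an object of $\mathcal O^{\mathfrak p}$ of finite length — more precisely we apply the lemma to a finite-dimensional $P$-subrepresentation generating $M$, or directly to $M$ itself as it is a locally finite $\p_K$-module which over the relevant uniform subgroup decomposes as a union of finite-dimensional coadmissible pieces; one should check carefully that the completed tensor product commutes with the relevant filtered colimit, or equivalently reduce to the finite-dimensional generating subrepresentation and propagate). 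This yields $\m_r \cong U_r(\u_P^-) \wh\otimes_{K} M$, and via~\eqref{eq:iw2} this is identified with $U_r(\g) \otimes_{U(\g)} M$, where one uses that $U(\g) \cong U(\u_P^-) \otimes_K U(\p)$ as a $K$-module (PBW) so that $U_r(\g) \otimes_{U(\g)} M \cong U_r(\u_P^-) \wh\otimes U_r(\p) \otimes_{U(\p)} M \cong U_r(\u_P^-) \wh\otimes M$ again by Lemma~\ref{lm:U_r-p}.

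For the second isomorphism $U_r(\g) \otimes_{U(\g)} M \cong U_r(\u_P^-) \otimes_{U(\u_P^-)} M$: again by~\eqref{eq:iw2} and PBW we have $U_r(\g) \cong U_r(\u_P^-) \wh\otimes U_r(\p)$ as a right $U_r(\p)$-module (and as a $(U_r(\u_P^-), U(\p))$-bimodule), and $U(\g) \cong U(\u_P^-) \otimes_K U(\p)$. Therefore
\[
  U_r(\g) \otimes_{U(\g)} M \cong \bigl(U_r(\u_P^-) \wh\otimes U_r(\p)\bigr) \otimes_{U(\u_P^-) \otimes U(\p)} M \cong U_r(\u_P^-) \otimes_{U(\u_P^-)} \bigl(U_r(\p) \otimes_{U(\p)} M\bigr),
\]
and the inner factor $U_r(\p) \otimes_{U(\p)} M$ (with the appropriate completion) is $M$ by Lemma~\ref{lm:U_r-p}. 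Here one must be slightly careful that the flatness properties make this chain of identifications legitimate: $U_r(\u_P^-)$ is noetherian and we are tensoring a finitely generated module, so all completed tensor products in sight are the ordinary algebraic ones up to completion, and since $M$ is finite-dimensional over $K$ the module $U_r(\u_P^-) \otimes_{U(\u_P^-)} M$ is already complete. The canonical topology on $\m_r$ matches that on $U_r(\g) \otimes_{U(\g)} M$ and $U_r(\u_P^-) \otimes_{U(\u_P^-)} M$ because in each case it is the quotient topology from a free finitely generated $U_r(\u_P^-)$-module of finite rank (equal to $\dim_K M$ times a basis of $\u_P^-$-monomials), and the isomorphisms are $U_r(\u_P^-)$-linear, hence continuous with continuous inverse by the open mapping theorem for Banach modules over the noetherian Banach algebra $U_r(\u_P^-)$.

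The main obstacle I anticipate is the careful bookkeeping around Lemma~\ref{lm:U_r-p}: that lemma is stated for a finite-dimensional $\p_K$-module $W$, and while $M \in \mathcal O^{\mathfrak p}$ is indeed finite length and its $\p_K$-action is locally finite, one needs to know $M$ itself is finite-dimensional over $K$ — which is true for objects of $\mathcal O^{\mathfrak p}$ that lie in $\mathcal O^P$ only when $N$ acts trivially, but \emph{not} in general (a generalized Verma module is infinite-dimensional). So the correct approach is to take $W \subset M$ a finite-dimensional $P$-subrepresentation generating $M$ as $U(\g)$-module (which exists by the definition of $\mathcal O^P$ and is used to define $\mathcal F_P^G$), apply Lemma~\ref{lm:U_r-p} to $W$, and then transport the conclusion to $M$ via the surjection $U(\g) \otimes_{U(\p)} W \onto M$ and its $U_r(\g)$-linear completion — checking that the kernel behaves well under the completed tensor product $U_r(\g) \wh\otimes_{U(\g)} -$. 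This compatibility with the generating subrepresentation, rather than with $M$ directly, is exactly the subtle point and the place where one should follow the structure of \cite[Sublemma 5.6, Lemma 5.6]{OS2} closely.
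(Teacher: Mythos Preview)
Your approach is correct and uses the same ingredients as the paper: the Iwahori-type decompositions~\eqref{eq:iw2}, \eqref{eq:iw4} of Lemma~\ref{lm:iwahori-decomp} together with Lemma~\ref{lm:U_r-p} applied to a finite-dimensional generating $P$-subrepresentation $W\subset M$, exactly as you diagnose in your final paragraph. The paper's execution is slightly cleaner than your sketch: rather than working with general $M$ and then ``checking that the kernel behaves well'', it first takes a finite presentation $\bigoplus_i U(\g)\otimes_{U(\p)}W_i \to U(\g)\otimes_{U(\p)}W \to M \to 0$ in $\cop$ and uses right-exactness of all three functors $M\mapsto \m_r$, $M\mapsto U_r(\g)\otimes_{U(\g)}M$, $M\mapsto U_r(\u_P^-)\otimes_{U(\u_P^-)}M$ to reduce outright to the case $M=U(\g)\otimes_{U(\p)}W$. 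In that case each of the three modules is computed directly to be $U_r(\u_P^-)\otimes_K W$ (using~\eqref{eq:iw2} and Lemma~\ref{lm:U_r-p} for the second, and~\eqref{eq:iw4} together with $D_r(P_0)\wh\otimes_{D(P_0)}W\cong W$ by coadmissibility for $\m_r$). This sidesteps the somewhat delicate chain of isomorphisms you write for the second isomorphism, where tensoring over $U(\g)$ is being traded for tensoring over $U(\u_P^-)\otimes U(\p)$ even though the latter is only the PBW vector space, not the ring.
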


\begin{proof}
  We prove this more generally for any $M \in \mathcal O^P$, where we define $\m_r$ to be $D_r(\g,P_0) \otimes_{D(\g,P_0)} M$.
  Then $\m_r$ is a finitely generated $D_r(\g,P_0)$-module (hence also a finitely generated $U_r(\g)$-module) and we endow it with its canonical topology.
  To show that the natural maps $U_r(\u_P^-) \otimes_{U(\u_P^-)} M \to U_r(\g) \otimes_{U(\g)} M \onto \m_r$ are isomorphisms,
  we use a finite presentation $\bigoplus_{i=1}^n U(\g) \otimes_{U(\p)} W_i \to U(\g) \otimes_{U(\p)} W \to M \to 0$ in $\cop$, where $W$ and $W_i$ are   finite-dimensional locally analytic representations of $P$ that are direct sums of absolutely simple $\l_{P,K}$-modules, 
  to reduce to the case where $M = U(\g) \otimes_{U(\p)} W$. 
  Then, using \eqref{eq:iw2} of Lemma~\ref{lm:iwahori-decomp} (and canonical $U_r(\g)$-topologies) as well as Lemma~\ref{lm:U_r-p}, we get
  \begin{align*}
    U_r(\g) \otimes_{U(\g)} M & \cong U_r(\g) \otimes_{U(\p)} W \cong U_r(\g) \wh\otimes_{U(\p)} W \\
    & \cong U_r(\u_P^-) \wh\otimes (U_r(\p) \wh\otimes_{U(\p)} W). \\
    & \cong U_r(\u_P^-) \otimes W.
  \end{align*}
  By comparing this with
  \begin{align*}
    \m_r &= D_r(\g,P_0) \otimes_{D(\g,P_0)} U(\g) \otimes_{U(\p)} W = D_r(\g,P_0) \otimes_{D(P_0)} W \\
           & = D_r(\g,P_0) \wh\otimes_{D(P_0)} W \\
    &= (U_r(\u_P^-) \wh\otimes D_r(P_0)) \wh\otimes_{D(P_0)} W \\
    &= U_r(\u_P^-) \wh\otimes (D_r(P_0) \wh\otimes_{D(P_0)} W) \\
    &= U_r(\u_P^-) \otimes W
  \end{align*}
  (using \eqref{eq:iw4} of Lemma~\ref{lm:iwahori-decomp}) we deduce the claim.
\end{proof}

\begin{lem}\label{lm:feaux}
  Suppose $V$ is a Banach space over $K$, $\cH$ a finitely generated commutative subalgebra of $\End_{\mathrm{cts}}(V)$,
    such that every $v \in V$ can be written as a convergent series $v = \sum_{\lambda} v_\lambda$ of $\lambda$-eigenvectors
  (with respect to the filter of cofinal subsets of $\Hom_{\mathrm{alg}}(\cH,K)$).
  If $M \subset V$ is a dense $\cH$-stable subspace such that $M = \bigoplus_\lambda M_\lambda$ and $M_\lambda$ is finite-dimensional for all $\lambda$,
  then we have an order-preserving bijection
  \begin{equation*}
    \{ \text{$\cH$-invariant closed subspaces of $V$} \} \longleftrightarrow \{ \text{$\cH$-invariant subspaces of $M$} \}
  \end{equation*}
  given by intersecting with $M$, respectively taking closure in $V$.
\end{lem}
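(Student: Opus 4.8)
The plan is to exhibit the bijection by showing that the two maps $N \mapsto N \cap M$ and $W \mapsto \overline{W}$ are well-defined, order-preserving, and mutually inverse. Monotonicity and well-definedness are immediate: $N \cap M$ is visibly $\cH$-stable, while $\overline{W}$ is closed and is $\cH$-stable because every element of $\cH$ acts by a continuous operator. So the whole content is to prove the two identities $\overline{N \cap M} = N$ and $\overline{W} \cap M = W$, and my approach is to deduce both from a good structural description of the eigenspace decomposition.

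First I would record the elementary algebra. For $\lambda \in \Hom_{\mathrm{alg}}(\cH,K)$ the eigenspace $V_\lambda = \bigcap_{h \in \cH} \ker(h - \lambda(h))$ is closed; for pairwise distinct $\lambda_1,\dots,\lambda_n$ the ideals $\ker\lambda_i$ are pairwise comaximal in $\cH$, so the $V_{\lambda_i}$ are independent and, by the Chinese remainder theorem, for any finite set $S$ and $\lambda \in S$ there is $P^S_\lambda \in \cH$ with $\mu(P^S_\lambda) = \delta_{\lambda\mu}$ for all $\mu \in S$. Since $M = \bigoplus_\lambda M_\lambda$ is an $\cH$-stable decomposition into eigenspaces, one checks at once that any $\cH$-stable subspace $W \subseteq M$ satisfies $W = \bigoplus_\lambda(W \cap M_\lambda)$, and in particular $M \cap V_\lambda = M_\lambda$. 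The analytic heart of the argument is then to show that $\{V_\lambda\}_\lambda$ is an \emph{unconditional Schauder decomposition} of the Banach space $V$: the expansion $v = \sum_\lambda v_\lambda$ is unique, so one gets well-defined projections $e_\lambda \colon V \to V_\lambda$, and these together with the finite partial-sum operators $e_S := \sum_{\lambda \in S} e_\lambda$ are continuous and uniformly bounded. I would prove this as in the classical case, via the auxiliary norm $\|v\|' := \sup_S \|e_S v\|$, completeness of $(V,\|\cdot\|')$, and the open mapping theorem applied to $(V,\|\cdot\|') \to (V,\|\cdot\|)$; here the non-archimedean nature of $K$, which forces $|\lambda(h)| \le \|h\|$ and hence boundedness of the set of eigenvalues of each $h \in \cH$, is what makes the argument run. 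From this I would extract two facts: that $V_\lambda = M_\lambda$ (given $w \in V_\lambda$ and $m \in M$ close to $w$, applying the continuous $e_\lambda$ shows the $\lambda$-component $m_\lambda \in M_\lambda$ of $m$ approximates $w$, so $w \in \overline{M_\lambda} = M_\lambda$, the latter closed as it is finite-dimensional); and that each $e_S$ lies in the strong-operator closure of $\cH$ — using the uniform bound together with the boundedness of the spectrum, one approximates $e_S$ strongly by suitable linear combinations of the $P^T_\lambda$ as the finite set $T \supseteq S$ grows. Consequently every closed $\cH$-invariant subspace of $V$ is stable under all $e_S$ and $e_\lambda$ and equals the closure of the direct sum of its eigencomponents.

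Granting this, the two identities are formal. For a closed $\cH$-invariant $N$: $e_\lambda(N) \subseteq N \cap V_\lambda = N \cap M_\lambda \subseteq N \cap M$, so $N = \overline{\bigoplus_\lambda e_\lambda(N)} \subseteq \overline{N \cap M}$, whence $\overline{N \cap M} = N$. For an $\cH$-invariant $W \subseteq M$: $\overline W$ is closed and $\cH$-invariant, so $e_\lambda(\overline W) \subseteq \overline W \cap M_\lambda$; on the other hand $e_\lambda(\overline W) \subseteq \overline{e_\lambda(W)} = \overline{W \cap M_\lambda} = W \cap M_\lambda$ since $M_\lambda$ is finite-dimensional; hence $e_\lambda(\overline W) = W \cap M_\lambda$, and any $m \in \overline W \cap M$, being the finite sum $\sum_\lambda e_\lambda(m)$ of elements of $W$, lies in $W$. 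Thus $\overline W \cap M = W$, and the two maps are mutually inverse and visibly order-preserving.

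The hard part will be the Schauder-decomposition input, and within it the claim that each $e_S$ belongs to the strong-operator closure of $\cH$ — equivalently, that closed $\cH$-invariant subspaces of $V$ are compatible with the eigenspace decomposition. This is the one step that is not purely formal, and it is exactly where the boundedness of the eigenvalues, forced by continuity of the operators in $\cH$ on a non-archimedean Banach space, must be exploited; everything else is either elementary algebra or standard non-archimedean functional analysis.
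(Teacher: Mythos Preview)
Your reduction of the lemma to the two identities $\overline{N\cap M}=N$ and $\overline W\cap M=W$, and your derivation of these from (a) continuity of the projections $e_\lambda$, (b) the equality $V_\lambda=M_\lambda$, and (c) stability of closed $\cH$-invariant subspaces under each $e_\lambda$, is correct; so are your arguments for (b) and for the final bijection once (a)--(c) are granted. The paper does not prove (a) or (c): it invokes \cite[Kor.~1.3.12]{feaux-diss} for uniqueness of the expansion and continuity of the projections $\pi_\lambda$, and then the ``last item'' of that same corollary for the bijection; the remark following the lemma cites \cite[Kor.~1.3.8]{feaux-diss} for (c). So you are attempting to reprove the cited input from F\'eaux de Lacroix, which is reasonable, but the sketch you give for it does not go through.

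There are two gaps. First, uniqueness of the expansion is asserted but not argued: the auxiliary-norm device you outline (set $\|v\|':=\sup_S\|e_Sv\|$ and apply the open mapping theorem) already needs the $e_S$ to be well defined, i.e.\ needs uniqueness as input, so it cannot supply it. Second, your claim that $e_S$ lies in the strong-operator closure of $\cH$ via the Lagrange-type elements $P^T_\mu$ fails as written: one computes $e_Sv-\bigl(\sum_{\mu\in S}P^T_\mu\bigr)v=-\bigl(\sum_{\mu\in S}P^T_\mu\bigr)(v-e_Tv)$, and while $\|v-e_Tv\|\to 0$ as $T$ grows, there is no bound on $\bigl\|\sum_{\mu\in S}P^T_\mu\bigr\|$ independent of $T$ --- the denominators $\prod_{\nu\in T\setminus\{\mu\}}(\mu(h)-\nu(h))$ in the Lagrange construction can be arbitrarily small when distinct eigenvalues are $p$-adically close, so the product need not tend to zero. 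Boundedness of the spectrum only yields $|\lambda(p)|\le\|p\|$ for $p\in\cH$, which is the wrong direction for this estimate. The actual arguments in \cite[Kor.~1.3.8, 1.3.12]{feaux-diss} are more delicate; you should consult that reference rather than rely on the interpolation sketch.
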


\begin{remark}\label{rk:feaux}
  In particular, $M_\lambda = V_\lambda$ for all $\lambda$. Also, if $W$ is an $\cH$-invariant closed subspace of
  $V$, then $M \cap W \subset W$ and $M/(M \cap W) \subset V/W$ also satisfy the conditions in the lemma.
  (Note that if $\sum_\lambda v_\lambda \in W$, then $v_\lambda \in W$ for any $\lambda$, by \cite[Kor.\ 1.3.8]{feaux-diss}.)
\end{remark}

\begin{proof}
  By the assumptions on $V$, \cite[Kor.\ 1.3.12]{feaux-diss} gives uniqueness of the representation $v = \sum_\lambda v_\lambda$ and
  continuous projection maps $\pi_\lambda : V \onto V_\lambda$, $v \mapsto v_\lambda$. As $M = \bigoplus_\lambda M_\lambda$ is dense in $V$ and
  $M_\lambda$ is finite-dimensional, $\pi_\lambda(M) = \pi_\lambda(M_\lambda)$ has to equal $V_\lambda$, i.e. $M_\lambda = V_\lambda$.
  Therefore, the last claim follows from the last item of \cite[Kor.\ 1.3.12]{feaux-diss}.
\end{proof}

\begin{thm}\label{thm:OS-4.7}
  Assume \cite[Assumption 4.1]{OS3} for the absolute root system. Let $M \in \cop$ be such that 
  \begin{enumerate}
  \item $P$ is maximal among parabolic subgroups of $G$ such that $M \in \copp$, and
  \item $M$ is simple as $U(\g)$-module.
  \end{enumerate}
  Assume $r^{p^m} \in p^{\Q} \cap (p^{-1},p^{-1/\kappa(p-1)})$ for some $m \ge 0$ and $r$ sufficiently close to 1, so in particular $\m_r \ne 0$.
  Then for every $g \in G_0$ the $U_r(\g)$-module $\delta_g \star \m_r$ is simple, and if $\delta_{g_1} \star \m_r \cong \delta_{g_2} \star \m_r$
  as $U_r(\g)$-modules for some $g_i \in G_0$, then $g_1 H^{-,m} P_0 = g_2 H^{-,m} P_0$.
\end{thm}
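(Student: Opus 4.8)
The plan is to deduce Theorem~\ref{thm:OS-4.7} from its split counterpart in \cite[Theorem 4.7]{OS3}, applied to $\alggrp G'$ over $L'$, by the base-change machinery set up above. The two hypotheses (a) and (b) will first need to be transported across the base change. For (b), suppose $M' \subset M \otimes_L L'$ were a nonzero proper $U(\g')$-submodule; since $M \otimes_L L' = \bigoplus_{\chi} M \otimes_{L,\chi} L'$ (the sum over embeddings $\chi \colon L' \into \overline L$ fixing $L$, after enlarging $\overline L$) and each summand is a $U(\g')$-module conjugate to $M \otimes_{L,\mathrm{id}} L'$, one reduces to showing $M \otimes_{L,\mathrm{id}} L'$ is simple over $U(\g')$; but any $U(\g')$-submodule is automatically a $U(\g)$-submodule, so this follows from the simplicity of $M$ over $U(\g)$ together with Lemma~\ref{lem:abs-simple-g-module} (or rather its proof: $M$ is absolutely simple as $U(\g)$-module). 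For (a), I would argue that $M \otimes_L L' \in \mathcal O^{\mathfrak q'}$ for a parabolic subalgebra $\mathfrak q'$ of $\g'$ forces the $L'/L$-Galois average of $\mathfrak q'$ to be a parabolic subalgebra of $\g$ defined over $L$, in which $M$ lies; by maximality of $\p$ this average is $\p$, and since $M \otimes_L L'$ is $\mathfrak g_C$-simple one gets that the largest parabolic subalgebra of $\g'$ in which it lies is $\p' = \p \otimes_L L'$ (here I use that $\alggrp P'$, being the base change of the maximal $\alggrp P$, is still maximal for $M \otimes_L L'$ among $L'$-rational parabolics of $\alggrp G'$, and that over the split group all parabolic subalgebras are $L'$-rational).

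Next I would compare the $U_r$-modules. The key input is Corollary~\ref{cor:distrib}: for $r$ in the stated range there is a $G_0$-equivariant morphism of Banach algebras $U_r(\g) \to U_r(\g')$ extending $U(\g) = U(\g')$. Writing $\m_r = U_r(\g) \otimes_{U(\g)} M$ (Lemma~\ref{lm:m_r}) and $\m'_r = U_r(\g') \otimes_{U(\g')} (M \otimes_L L')$, base change along $U_r(\g) \to U_r(\g')$ gives $U_r(\g') \otimes_{U_r(\g)} \m_r \cong \m'_r$. Using the Iwahori decompositions \eqref{eq:iw2} and Lemma~\ref{lm:m_r} again, $\m_r \cong U_r(\u_P^-) \otimes W$ and $\m'_r \cong U_r(\u_{P'}^-) \otimes W$, and $U_r(\u_P^-)$ is a topological direct factor of $U_r(\u_{P'}^-)$ (since $\g_0 \cap \u_P^-$ is an $\cO_L$-direct summand of $\g_0' \cap \u_{P'}^-$, which yields an ordered-basis extension), so the natural map $\m_r \to \m'_r$ is a closed embedding that is topologically split. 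Consequently any $U_r(\g)$-submodule $N \subset \m_r$ satisfies $N = \m_r \cap (U_r(\g') N)$: indeed $U_r(\g')N$ is a $U_r(\g')$-submodule of $\m'_r$, and the projection $\m'_r \to \m_r$ coming from the splitting restricts to the identity on $\m_r$ and carries $U_r(\g')N$ back into the closure of $N$, which equals $N$ since $N$ is a finitely generated module over the noetherian ring $U_r(\g)$ hence closed. Therefore simplicity of $\m'_r$ over $U_r(\g')$ — which is \cite[Theorem 4.7]{OS3} applied to $\alggrp G'$, noting that \cite[Assumption 4.1]{OS3} for $\alggrp G$ is exactly the assumption on the absolute root system — forces simplicity of $\m_r$ over $U_r(\g)$. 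The same argument applied to $\delta_g \star \m_r$ for $g \in G_0 \subset G_0'$ (using $G_0$-equivariance of $U_r(\g) \to U_r(\g')$) gives that $\delta_g \star \m_r$ is simple for all $g \in G_0$.

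For the second assertion, suppose $\delta_{g_1} \star \m_r \cong \delta_{g_2} \star \m_r$ over $U_r(\g)$ for $g_1, g_2 \in G_0$. Applying $U_r(\g') \otimes_{U_r(\g)} (-)$ and using the base-change identification above, I get $\delta_{g_1} \star \m'_r \cong \delta_{g_2} \star \m'_r$ over $U_r(\g')$. By \cite[Theorem 4.7]{OS3} for $\alggrp G'$ this yields $g_1 H'^{-,m} P'_0 = g_2 H'^{-,m} P'_0$ inside $G'_0$, i.e.\ $g_2^{-1} g_1 \in H'^{-,m} P'_0$. On the other hand $g_2^{-1} g_1 \in G_0 = G \cap G'_0$. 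So it remains to check the equality $G_0 \cap H'^{-,m} P'_0 = H^{-,m} P_0$; the inclusion $\supseteq$ is clear, and for $\subseteq$ I would use the Iwahori decomposition $H'^{-,m} P'_0 = H'^{-,m} H'^{+,m} P'_0$ together with the fact that $H^{-,m} = H'^{-,m} \cap U_P^-$, $P_0 = P'_0 \cap P$, and that $G$-points decompose compatibly ($G \cap U_{P'}^- P' = U_P^- P$ by the $\alggrp S$-equivariant direct sum decomposition and Lemma~\ref{lm:splitting}). This gives $g_2^{-1} g_1 \in H^{-,m} P_0$, hence $g_1 H^{-,m} P_0 = g_2 H^{-,m} P_0$.

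The main obstacle I anticipate is the base-change bookkeeping in the two preliminary steps: verifying cleanly that hypothesis (a) for $M$ over $\g$ is equivalent to the corresponding maximality for $M \otimes_L L'$ over $\g'$ among \emph{all} parabolic subalgebras of $\g'$ (not just the $L'$-rational ones), and making the "topologically split closed embedding $\m_r \hookrightarrow \m'_r$" argument fully rigorous — in particular that $U_r(\g')$ is free (or at least faithfully flat enough) over $U_r(\g)$ so that submodules and the operation $N \mapsto \m_r \cap U_r(\g')N$ behave as claimed. Both should follow from the explicit ordered-basis descriptions in Corollary~\ref{cor:distrib} and \cite[\S5, \S6]{schmidt}, but they require care. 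The simplicity transfer itself, once the flatness and splitting are in hand, is then a formal descent argument.
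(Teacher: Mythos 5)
Your overall strategy — reduce to the split group $\alggrp G'$ over $L'$ and invoke \cite[Theorem 4.7]{OS3} — is indeed the right one, but your implementation has a critical gap that is precisely the main technical difficulty the paper has to overcome.

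First, a conceptual confusion: since the coefficient field $K$ contains $L'$, we have $\g' \otimes_{L'} K = (\g \otimes_L L') \otimes_{L'} K = \g \otimes_L K$, so $U(\g) = U(\g')$ and $M$ is \emph{already} a $U(\g')$-module with no base change. Your ``$M \otimes_L L'$'' and its purported decomposition over embeddings $\chi \colon L' \into \overline L$ is the wrong object: $M$ is a $K$-vector space, not an $L$-vector space, and tensoring it with $L'$ over $L$ produces something larger that is certainly not simple and not what appears in the argument. The paper works with $M$ itself.

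Second, and fatally: you assert that the largest parabolic subalgebra of $\g'$ in which $M$ lies is $\p' = \p \otimes_L L'$. This is false in general. Hypothesis (a) says $\alggrp P$ is maximal among \emph{$L$-rational} parabolics of $\alggrp G$; over the split group $\alggrp G'$ there are many more parabolic subalgebras, and the maximal one $\q'$ with $M \in \mathcal O^{\q'}$ can be \emph{strictly larger} than $\p'$. (In the language of the paper's proof, the subsets of simple roots satisfy $I' = \bigcap_\gamma \gamma(J')$ with $J' \supseteq I'$ possibly strictly.) Your ``Galois average of $\mathfrak q'$'' argument is also not correct — one intersects the Galois conjugates, one doesn't average — and in any case this only shows that $\p$ is the intersection, not that $\q' = \p'$. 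As a consequence, the split theorem applied to $M$ viewed over $\alggrp G'$ does not directly produce the conclusion ``$g_1^{-1}g_2 \in H'^{-,m} P'_0$''; it only gives ``$g_1^{-1}g_2 \in H'^m Q'_0$'', and one must then argue separately — using the Bruhat--Tits parahoric models $\alggrp P'_{K',x'}$, the fact that the $\Gamma$-action stabilizes $x'$, and the fiber-product identity $\bigcap_\gamma \alggrp P'_{\gamma(J'),x'} = \alggrp P'_{I',x'}$ — that $H'^m Q'_0 \cap G = H'^m P'_0 \cap G$ before one can even begin the reduction to $H^{-,m} P_0$ that you perform at the end. Moreover, since $M$ need only lie in $\mathcal O^{\q'}$ (as a Lie-algebra module) and need not lie in $\mathcal O^{Q'}$, one must check that the portion of the split proof being invoked does not use the $Q'$-group action; the paper performs exactly this scrutiny after first reducing to $g_1 = 1$, $g_2 = u \in U^-_{P,0}$ via an Iwahori--Bruhat decomposition, a reduction your proof skips entirely.

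Finally, on the simplicity of $\m_r$: the paper proves this directly from Lemma~\ref{lm:feaux} and the $\mathfrak a_P$-eigenspace decomposition, never invoking split-case simplicity. Your descent argument via a ``topologically split closed embedding'' $\m_r \hookrightarrow \m'_r$ is plausible in spirit (faithfully flat descent of simplicity), but as you yourself acknowledge it is not made rigorous: the splitting you exhibit is of Banach spaces, not of $U_r(\g)$-modules, and the flatness of $U_r(\g')$ over $U_r(\g)$ is not established. The direct eigenspace argument avoids all of this.
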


This generalizes \cite[Theorem 4.7]{OS3}.

\begin{proof}
  \emph{We first show that $\m_r$ is simple (which implies that $\delta_g \star \m_r$ is simple), following the proof in \cite[Theorem 4.5]{OS3}.}
  For this it will suffice to show that the map $M \to \m_r = U_r(\g)\otimes_{U(\g)} M$ is injective and satisfies the hypotheses of Lemma~\ref{lm:feaux} with $V := \m_r$
  and $\cH := U(\mathfrak a_P)$, where $\alggrp A_{\alggrp P}$ is the maximal split subtorus of the center of $\alggrp L_{\alggrp P}$ (because then every (closed) $U_r(\g)$-submodule
  of $\m_r$ corresponds to a $U(\g)$-submodule of $M$). 
  We will show this claim is true more generally for any $M \in \copp$.

  Suppose first that $M = U(\g) \otimes_{U(\p)} W \in \copp$ for some $W$, a finite-dimensional $\p_K$-module that is a direct sum of absolutely simple $\l_{P,K}$-modules. 
  As in the proof of Lemma~\ref{lm:m_r}, $\m_r \cong U_r(\u_P^-) \otimes W$ as $U_r(\u_P^-)$-module (with canonical Banach topology), so 
  $M \to \m_r$ is identified with the injection $U(\u_P^-) \otimes W \into U_r(\u_P^-) \otimes W$.
  Note that $U(\mathfrak a_P)$ acts by continuous endomorphisms on $\m_r$ and that the $U(\mathfrak a_P)$-eigenspaces of $M$ are finite-dimensional 
  (as the roots $\Phi^-\setminus \Phi_P^-$ of $\u_P^-$ remain non-trivial on $\mathfrak a_P$).
  On the other hand, $U_r(\u_P^-) = D_s(H^{-,m})$ ($s = r^{p^m}$) has the description
  \begin{equation*}
    U_r(\u_P^-) = \bigg\{ \sum_{\beta \in \Z_{\ge 0}^d} d_\beta \mathbf{X}^\beta : d_\beta \in K,\ |d_\beta| s^{\kappa|\beta|} \to 0 \bigg\}
  \end{equation*}
  by \cite[\S5]{schmidt}, where $X_1,\dots,X_d$ is an ordered $\cO_L$-basis of
  $\log(H^{-,m}) = p^{m+m_0} \g_0 \cap \u_P^- = \bigoplus_{\alpha \in \Phi^-\setminus \Phi_P^-} (p^{m+m_0} \g_0 \cap
  \u_\alpha)$ consisting of $\mathfrak a_P$-eigenvectors. Moreover the norm is given by
  \begin{equation*}
    \bigg\lVert \sum_{\beta \in \Z_{\ge 0}^d} d_\beta \mathbf{X}^\beta \bigg\rVert_s = \sup_\beta |d_\beta| s^{\kappa|\beta|}.
  \end{equation*}
  By Lemma~\ref{lm:cat-O-p-equivalence}, $\mathfrak a_P$ acts diagonalizably on $W$, hence every vector of $\m_r$
  is a convergent sum of $\mathfrak a_P$-eigenvectors.

  For general $M \in \copp$ take a finite-dimensional $\p_K$-submodule $W \subset M$ giving an exact sequence
  $0 \to \del \to U(\g) \otimes_{U(\p)} W \to M \to 0$ in $\copp$.   Tensoring with $U_r(\g)$ gives a commutative diagram
  \begin{equation*}
    \xymatrix{
    0\ar[r] & {\mathstrut\del} \ar[r]\ar@{^{(}->}[d] & U(\g) \otimes_{U(\p)} W \ar[r]\ar@{^{(}->}[d] & M\ar[r]\ar[d] & 0 \\
    0\ar[r] & U_r(\g)\del \ar[r] & U_r(\g) \otimes_{U(\p)} W \ar[r] & \m_r \ar[r] & 0
    }
  \end{equation*}
  where the middle vertical arrow is injective by above, and the left square is Cartesian by Lemma~\ref{lm:feaux} (noting that $U_r(\g)\del$ is the closure of $\del$), hence the right vertical arrow is injective.
  By Remark~\ref{rk:feaux} we see that Lemma~\ref{lm:feaux} applies to $M \into \m_r$.

  \emph{Now assume that $\delta_{g_1} \star \m_r \cong \delta_{g_2} \star \m_r$ as $U_r(\g)$-modules for some $g_i \in G_0$.}
  Following the beginning of the proof of \cite[Theorem 4.7]{OS3} we will first reduce to the case where $g_1 = 1$, $g_2 \in U^-_{P,0}$.
    Let $I_0$ be the Iwahori that is the inverse image under $\alggrp G_x(\cO_L) \to \alggrp G_x^{\red}(k_L)$ of $\alggrp{\o B}_x$, where
  $\alggrp{\o B}_x$ is the minimal parabolic defined by $\Phi^+$ in the reductive quotient $\alggrp G_x^{\red}$ of $\alggrp G_x \times_{\cO_L} k_L$.
  Then we get an Iwahori--Bruhat decomposition $\alggrp G_x(\cO_L) = \coprod_{w \in W/W_I} I_0 \dot w (P \cap \alggrp G_x(\cO_L))$ using the argument of \cite[\S IV.5]{AHHV},
  where $I \subset \Delta$ is the subset corresponding to $\alggrp P$, $W_{I}$ is the subgroup of $W$ generated by reflections $s_\alpha$ for $\alpha \in I$,
  and where $\dot w$ denotes a choice of representative in $\alggrp G_x(\cO_L) \cap N_{G}(S)$.
    From $\alggrp{G}_x(\cO_L) \subset G_0$ and $G = \alggrp{G}_x(\cO_L)P$ we have $G_0 = \alggrp{G}_x(\cO_L) P_{0}$, so $G_0 = \coprod_{w \in W/W_I} I_0 \dot w P_0$.
  As in the proof of \cite[Theorem 4.7]{OS3} we reduce to $g_1 = \dot w$ and $g_2 = u$ for some $w \in W$, $u \in U^-_P \cap I_0 \subset U^-_{P,0}$.

  Let now $\alggrp Q'$ (containing $\alggrp P'$) denote the parabolic subgroup of $\alggrp G'$ that is maximal subject to $M \in \cO^{\q'}$.
  Choose a set of simple roots $\Delta'$ of $\Phi'$ such that $\Delta \subset \Delta'|_{\alggrp S} \subset \Delta \cup \{0\}$
  and let $\Delta'_0 \subset \Delta'$ be the subset of simple roots restricting trivially to $\alggrp S$.
  Recall that $\Delta'$ is endowed with the $*$-action of $\Gamma := \Gal(L'/L)$. 
  Let $J'$ (resp.\ $I'$) denote the subset of $\Delta'$ corresponding to $\alggrp Q'$ (resp.\ $\alggrp P'$).
  The maximality conditions on $P$ (resp.\ $Q'$) imply that $I' = \bigcap_{\gamma \in \Gamma} \gamma(J')$.
  Then $I = (I'|_{\alggrp S}) \setminus \{0\}$ as subset of $\Delta$.

  Assuming $w \notin W_I$, by Step 1 of \cite[Theorem 4.7]{OS3} there exists a reduced relative root $\beta \in \Phi^+ \setminus \Phi^+_I$ such
  that $y':=\Ad(u^{-1})(y) \in \u^-_{P,K}$ fails to be injective on $\m_r$ for any $y \in \g_{(-\beta),K}\setminus \{0\}$ (where $\g_{(-\beta)} := \g_{-\beta} \oplus \g_{-2\beta}$).
  Take any lift $\beta' \in \Phi'^+ \setminus \Phi'^+_{I'}$ of $\beta$ and $y \in \g'_{-\beta',K}\setminus \{0\}$.
  Writing $y' = \sum_{\gamma'\in \Phi'^+\setminus \Phi'^+_{I'}} y_{\gamma'}$ with $y_{\gamma'} \in \g'_{-\gamma'}$ the argument
  in \cite[Theorem 4.7]{OS3} shows that $\gamma'^+ \in \Phi'^+_{J'}$ for the minimal element $\gamma'^+$ 
  of $\{\gamma' : y_{\gamma'} \ne 0\}$ for a certain lexicographic order on $\Z_{\ge 0}\Delta'$. 
  (By \cite[Corollary 5.5]{OS3} any nonzero element of $\u_{Q',K}^-$ acts injectively on $M$.)
  As $y'=\Ad(u^{-1})(y)$ with $u \in U^-_P$ we have $y_{\beta'} = y \ne 0$, and $y_{\gamma'} \ne 0$ implies $\gamma' \in \beta'+\Z_{\ge 0}(\Delta'\setminus I')$.
  By minimality of $\gamma'^+$ we deduce $\gamma'^+ = \beta'$, so $\beta' \in \Phi'^+_{J'}$. 
  Applying the same argument to the lift $\gamma(\beta')$ of $\beta$ for $\gamma \in \Gamma$, we deduce that
  $\beta' \in \bigcap_{\gamma \in \Gamma} \gamma^{-1}(\Phi'^+_{J'}) = \Phi'^+_{I'}$, contradiction. 
  Hence $w \in W_I$ and so we may assume that $g_1 = 1$.

  By Lemma~\ref{lm:m_r} we have $\m_r = U_r(\g) \otimes_{U(\g)} M$. By Corollary~\ref{cor:distrib} we have
  \begin{equation*}
    \m'_r := U_r(\g') \otimes_{U(\g')} M = U_r(\g') \otimes_{U_r(\g)} \m_r,
  \end{equation*}
  and moreover $\m_r \cong \delta_{u} \star \m_r$ as $U_r(\g)$-modules implies 
  $\m'_r \cong \delta_{u} \star \m'_r$ as $U_r(\g')$-modules (as $u \in U^-_{P,0} \subset G_0$).   Note that $\m'_r \ne 0$     because the map $M \to \m'_r$ is injective by the first part of the proof (applied to $\alggrp G'$).

  Recall that $u \in U^-_{P,0} \subset U^-_{P',0}$.
  Write $u = u_1 u_2$ with $u_1 \in U^-_{Q',0}$, $u_2 \in U_{P',0}^- \cap L_{Q'}$. We now show that $\delta_{u_2} \star \m_r' \cong \m_r'$
  as $U_r(\g')$-modules, which implies that $\m_r' \cong \delta_{u_1} \star \m_r'$ as $U_r(\g')$-modules.
  Exactly as in Step 2 of \cite[Theorem 4.7]{OS3} (as $M \in \mathcal O^{\q'}$) we can integrate the locally finite action of $\u_{P'}^- \cap \l_{Q'}$ on $M$ to a locally finite locally analytic 
  action of the unipotent group $U_{P'}^- \cap L_{Q'}$. 
  Moreover, $u \circ X \circ u^{-1} = \Ad(u)(X)$ on $M$ for all $u \in U_{P'}^- \cap L_{Q'}$, $X \in \g'$. 
  (Use, for example, that $\log(\Ad(u)) = \ad(\log(u)) \in \GL(\g')$ for all $u \in U_{P'}^- \cap L_{Q'}$.)
    Then we have an isomorphism $\delta_{u_2} \star M \congto M$, $x \mapsto u_2 \cdot x$ as $U(\g')$-modules, which extends
  to an isomorphism $\delta_{u_2} \star \m'_r \congto \m'_r$ as the map $U(\g') \to U_r(\g')$ is $U_{P',0}^- \cap L_{Q'}$-equivariant.

  We now have $\m_r' \cong \delta_{u_1} \star \m_r'$ as $U_r(\g')$-modules.
  Applying \cite[Theorem 4.7]{OS3} for the split group $\alggrp G'$ (note that since we are now reduced to $g_1 = 1$ and $g_2 = u_1 \in U^-_{Q',0}$ 
  the proof there only uses that $Q'$ is maximal for $M \in \cO^{\q'}$ and that $\m'_r = U_r(\g') \otimes_{U(\g')} M \ne 0$, i.e.\ the $Q'$-action on $M$ is not used!)\ 
  we deduce that $u_1 \in H'^m Q'_0$ and hence $g_1^{-1} g_2 = u_1 \in H'^{m} Q'_0 \cap G$. 
  
  We next show that $g_1^{-1} g_2 \in H'^{m} Q'_0 \cap G = H'^{m} P'_0 \cap G$. Consider the reduction map $\pi \colon \alggrp G'_{x'}(\cO_{L'}) \onto \alggrp G'_{x'}(\cO_{L'}/p^{m+m_0})$,
  which has kernel $H'^m$ by Lemma~\ref{lm:congruence-subgp}. 
  For each subset $K' \subset \Delta'$ we get a parabolic subgroup $\alggrp P'_{K'}$ of $\alggrp G'$ that contains $\alggrp T'$,
  and its schematic closure of $\alggrp P'_{K'}$ in $\alggrp G'_{x'}$ is a smooth $\cO_{L'}$-group scheme $\alggrp P'_{K',x'}$ with generic fiber $\alggrp P'_{K'}$
  \cite[\S 2.9]{KP}. We note that $\alggrp P'_{K_1',x'} \times_{\alggrp G'_{x'}} \alggrp P'_{K_2',x'} = \alggrp P'_{K_1' \cap K_2',x'}$ for $K_1', K_2' \subset \Delta'$.
  (By \cite[Proposition 2.9.1(4), Proposition 2.9.2]{KP} the left-hand side is $\cO_{L'}$-smooth, hence $\cO_{L'}$-flat, and we can verify
  the equality on the geometric generic fiber, where it is clear.)
  For any $\gamma \in \Gamma$ we get a morphism of buildings $\gamma \colon \cB_{L'}(\alggrp G') \to \cB_{L'}(\alggrp G')$, commuting 
  with the embedding $\cB_L(\alggrp G) \into \cB_{L'}(\alggrp G')$, hence stabilizing $x'$. Therefore the $\gamma$-linear isomorphism
  $\alggrp G' \to \alggrp G'$ over $L'$ extends to a $\gamma$-linear isomorphism
  $\alggrp G'_{x'} \to \alggrp G'_{x'}$ over $\cO_{L'}$ \cite[Corollary 2.8.10]{KP}, and it identifies $\alggrp Q' = \alggrp P'_{J'}$ with $\alggrp P'_{\gamma(J')}$.
      It follows that $\pi(g_1^{-1} g_2) \in \bigcap_{\gamma \in \Gamma} \alggrp P'_{\gamma(J'),x'}(\cO_{L'}/p^{m+m_0}) = \alggrp P'_{x'}(\cO_{L'}/p^{m+m_0})$ 
  (where the equality holds by the fiber product computation above, and recall $\alggrp P' = \alggrp P'_{I'}$),
  so $g_1^{-1} g_2 \in H'^{m} P'_0 \cap G$.

  We now know that $g_1^{-1} g_2 \in H'^{m} P'_0 \cap G$.
  As $\alggrp U_{\alggrp P}^- \times \alggrp P \to \alggrp G$ is an open immersion of schemes over $L$, we finally deduce that $g_1^{-1} g_2 \in (H'^{-,m} \cap U_P^-)(P'_0 \cap P) = H^{-,m} P_0$.
    For the equality $H'^{-,m} \cap U_P^- = H^{-,m}$ we used that the logarithm map is bijective (even a locally analytic isomorphism) between $U_{P'}^-$ and $\u_{P'}^-$
  (as $\alggrp U_{\alggrp P'}^-$ is unipotent) and that $(p^{m+m_0} \g'_0 \cap \u_{P'}^-) \cap \u_P^- = p^{m+m_0} \g_0 \cap \u_P^-$.
\end{proof}

\begin{remark}\label{rk:OS-4.7}  The result is true even if we only assume that $M \in \copp$ (satisfying (i) and (ii)) and if we \emph{define} $\m_r := U_r(\g)\otimes_{U(\g)} M$. 
  The simplicity proof goes through as before.
  Now suppose that $\delta_{g_1} \star \m_r \cong \delta_{g_2} \star \m_r$ as $U_r(\g)$-modules for some $g_i \in G_0$.
  By Corollary~\ref{cor:distrib} we have $\delta_{g_1} \star \m_r' \cong \delta_{g_2} \star \m_r'$ as $U_r(\g')$-modules, where $\m_r' := U_r(\g') \otimes_{U(\g')} M = U_r(\g') \otimes_{U_r(\g)} \m_r$.
  If $\alggrp G^{\der}$ is simply connected, then $M \in \mathcal O^{\q'}$ lifts to an object of $\mathcal O^{Q'}$, after perhaps replacing $C$ by a finite extension, by Lemma~\ref{lem:semisimple-lift}.
  By \cite[Theorem 4.7]{OS3} for the split group $\alggrp G'$ we deduce $g_1^{-1} g_2 \in H'^{m} Q'_0 \cap G$, and the final part of the above proof shows that $H'^{m} Q'_0 \cap G = H^{-,m} P_0$.
  For general $\alggrp G$, let $\pi : \wt{\alggrp G}^{\lowprime} \onto \alggrp G'$ be a $z$-extension, so $(\wt {\alggrp G}^{\lowprime})^\der$ is simply connected and $\pi : \wt G' \onto G'$ surjects on $L'$-points.
  We may and will suppose that $\wt{\alggrp G}^{\lowprime}$ and the torus $\ker(\pi)$ are split as well.   By \cite[Proposition 1.1.4]{MR3905466}, $\pi$ extends to a surjective map $\wt{\alggrp G}^{\lowprime}_{x'} \onto \alggrp G'_{x'}$ of (connected reductive) parahoric group schemes over $\cO_{L'}$ with kernel a split torus.
  In particular, by checking on special fibers, the map on $\cO_{L'}$-Lie algebras $\wt{\g}'_0 \to \g'_0$ is surjective, where $\wt{\g}'_0 := \Lie \wt{\alggrp G}^{\lowprime}_{x'}$.
  (Lie algebras of smooth group schemes commute with base change.)   Also, by taking fppf-cohomology we obtain $\pi(\wt G'_0) = G'_0$.
  Following our construction above, we have $\wt H' := \BCH(p^{m_0} \wt \g'_0) \lhd \wt G'_0$ and $\pi(\wt H') = H'$.
  Letting $\wt{\alggrp Q}^{\lowprime}$ be the pre-image of $\alggrp Q'$ under $\pi$ and $\wt Q'_0 := \wt Q' \cap \wt G'_0$ we have $\pi(\wt Q'_0) = Q'_0$.
  As the map $\wt{\g}'_0 \onto \g'_0$ splits as $\Z_p$-modules, there exists a minimal set $\wt x_i'$ ($1 \le i \le d'$) of topological generators of $\wt H'$ such that $x'_i := \pi(\wt x'_i)$ ($1 \le i \le d$) is a minimal set of topological generators of $H'$ and $\pi(\wt x'_i) = 1$ for $d < i \le d'$ (for some $d' \ge d \ge 0$).
  The construction in \cite[\S2.2.6]{OS} then shows that the natural map $D^{(L)}(\wt H') \to D^{(L)}(H')$ is continuous with respect to $\lvert\cdot\rvert_r$-norms, so induces a map $D^{(L)}_r(\wt H') \to D^{(L)}_r(H')$ (in fact, both maps are strict surjections).
  By taking the closures of enveloping algebras we obtain a commutative square
  \begin{equation*}
    \xymatrix{
    U(\wt\g') \ar@{->>}[r]\ar[d] & U(\g')\ar[d] \\
    U_r(\wt\g',\wt H')\ar@{-->}[r] & U_r(\g',H')
    }
  \end{equation*}
  where the bottom map is a morphism of Banach algebras and $\wt\g' := \Lie \wt G'$.
  By density of the enveloping algebras, all maps are equivariant for the adjoint action of $\wt G'_0 \onto G'_0$.
  Pick $\wt g'_i \in \wt G'_0$ for $i = 1,2$ such that $\pi(\wt g'_i) = g_i$ in $G'_0$.
  From $\delta_{g_1} \star \m_r' \cong \delta_{g_2} \star \m_r'$ as $U_r(\g')$-modules we deduce that $\delta_{\wt g'_1} \star \m_r' \cong \delta_{\wt g'_2} \star \m_r'$ as $U_r(\wt\g')$-modules.
  Hence, as $(\wt {\alggrp G}')^\der$ is simply connected, we deduce $(\wt g'_1)^{-1}\wt g'_2 \in \wt H'^m \wt Q'_0$.
  Applying $\pi$ we obtain $g_1^{-1} g_2 \in H'^m Q'_0 \cap G$ and we conclude as before.
\end{remark}

We can now complete the proof of Theorem~\ref{thm:OS-main}.

\begin{proof}[Proof of Theorem~\ref{thm:OS-main}(iii)]
  In the proof of \cite[Proposition 3.7]{OS3} an Iwahori--Bruhat decomposition is used.  Here we temporarily forget our choice of $G_0$ and $P_0$ above
  and can use any special point $x$ (e.g.\ the one above). Let $\alggrp G_x$ over $\cO_L$ denote the (connected smooth) parahoric group scheme of \cite{BT2}, with generic
  fiber $\alggrp G$.  Let $I_0$ be the Iwahori that is the inverse image under $G_0 := \alggrp G_x(\cO_L) \to \alggrp G_x^{\red}(k_L)$ of $\alggrp{\o B}_x$, where
  $\alggrp{\o B}_x$ is the minimal parabolic defined by $\Phi^+$ in the reductive quotient $\alggrp G_x^{\red}$ of $\alggrp G_x \times_{\cO_L} k_L$. 
  (By \cite{BT2}, $\alggrp S$ extends to $\cO_L$, $\alggrp S_x \times k_L$ is a maximal split torus of $\alggrp G_x^{\red}$ and the
  root system of $(\alggrp G_x^{\red}, \alggrp S_x \times k_L)$ is naturally a subset of that of $(\alggrp G, \alggrp S)$.)  Then we get
  an Iwahori--Bruhat decomposition $G_0 = \coprod_{w \in W/W_I} I_0 \dot w (P \cap G_0)$ using the argument of \cite[\S IV.5]{AHHV}.

  For the proof of \cite[Theorem 4.5]{OS3} we define $G_0$, $P_0$, $H = H^- H^+$, $H^m = H^{-,m} H^{+,m}$ as in the proof of Theorem \ref{thm:OS-4.7}.
  The proof of \cite[Theorem 4.5]{OS3} (and \cite[Theorem 5.8]{OS2}) then proceeds just as before, except that we apply Theorem \ref{thm:OS-4.7} instead of \cite[Theorem 4.7]{OS3}.
\end{proof}

\newcommand{\etalchar}[1]{$^{#1}$}
\providecommand{\bysame}{\leavevmode\hbox to3em{\hrulefill}\thinspace}
\providecommand{\MR}{\relax\ifhmode\unskip\space\fi MR }
\providecommand{\MRhref}[2]{%
  \href{http://www.ams.org/mathscinet-getitem?mr=#1}{#2}
}
\providecommand{\href}[2]{#2}

\end{document}